\documentclass[10pt,a4article]{amsart}
\usepackage{amsthm,amsfonts,amsmath,amssymb,latexsym,amscd}
\usepackage{epsfig,graphics,color}
\usepackage[matrix,arrow,curve]{xy}
\usepackage{hyperref}
\usepackage{mathrsfs}
\usepackage{verbatim}
\usepackage{bm}
\usepackage{enumerate}
\usepackage{tikz-cd}
\usepackage{mathtools}
\title[String topology operations under Chen's iterated integrals]
{String topology operations under Chen's iterated integrals and homotopy transfer}
\author{Kai Cieliebak}
\address{Universit\"at Augsburg, Universit\"atsstrasse 14, 86159 Augsburg, Germany}
\email{kai.cieliebak@math.uni-augsburg.de}
\author{Evgeny Volkov}
\address{Universitat Polit\`ecnica de Catalunya,
Av.~Dr.~Mara\~n\'on 44--50, 08028 Barcelona, Spain}
\email{evgeny.volkov@upc.edu} 
\theoremstyle{plain}
\newtheorem{theorem}{Theorem}[section]

\newtheorem{corollary}[theorem]{Corollary}
\newtheorem{cor}[theorem]{Corollary}
\newtheorem{proposition}[theorem]{Proposition}
\newtheorem{prop}[theorem]{Proposition}
\newtheorem{lemma}[theorem]{Lemma}
\newtheorem{lem}[theorem]{Lemma}

\theoremstyle{remark}

\newtheorem{remark}[theorem]{Remark}

\newtheorem{example}[theorem]{Example}

\newtheorem{definition}{Definition}

\newcommand{\Id}{{{\mathchoice {\rm 1\mskip-4mu l} {\rm 1\mskip-4mu l}
{\rm 1\mskip-4.5mu l} {\rm 1\mskip-5mu l}}}}
\newcommand{\id}{{\rm id}}

\newcommand{\ol}{\overline}

\newcommand{\p}{\partial}

\newcommand{\om}{\omega}
\newcommand{\Om}{\Omega}
\newcommand{\eps}{\varepsilon}
\newcommand{\into}{\hookrightarrow}
\newcommand{\la}{\langle}
\newcommand{\ra}{\rangle}
\newcommand{\wt}{\widetilde}
\newcommand{\wh}{\widehat}
\newcommand{\onto}{\twoheadrightarrow}

\renewcommand{\AA}{\mathcal{A}}

\newcommand{\Z}{{\mathbb{Z}}}
\newcommand{\R}{{\mathbb{R}}}
\newcommand{\C}{{\mathbb{C}}}
\newcommand{\Q}{{\mathbb{Q}}}

\newcommand{\m}{{\bf m}}

\newcommand{\F}{{\bf F}}
\newcommand{\G}{{\bf G}}

\newcommand{\fs}{{\mathfrak s}}

\newcommand{\im}{{\rm im}\,}        

\newcommand{\vol}{{\rm vol}}

\newcommand{\conn}{{\rm conn}}

\newcommand{\inn}{{\rm int}}

\newcommand{\can}{{\rm can}}

\newcommand{\ev}{{\rm ev}}

\newcommand{\pt}{{\rm pt}}

\newcommand{\IBL}{{\rm IBL}}
\newcommand{\dIBL}{{\rm dIBL}}

\newcommand{\cross}{{\rm cross}}

\newcommand{\Flag}{{\rm Flag}}

\newcommand{\ver}{{\rm vert}}
\newcommand{\Ver}{{\rm Vert}}
\newcommand{\Edge}{{\rm Edge}}
\newcommand{\Leaf}{{\rm Leaf}}
\newcommand{\Hom}{{\rm Hom}}

\newcommand{\main}{{\rm main}}
\newcommand{\hidden}{{\rm hidden}}
\newcommand{\gen}{{\rm gen}}

\newcommand{\EE}{\mathcal{E}}
\newcommand{\BB}{\mathcal{B}}

\newcommand{\MM}{\mathcal{M}}
\newcommand{\CC}{\mathcal{C}}
\newcommand{\LL}{\mathcal{L}}

\newcommand{\HH}{\mathcal{H}}
\newcommand{\XX}{\mathcal{X}}
\renewcommand{\AA}{\mathcal{A}}
\newcommand{\UU}{\mathcal{U}}

\newcommand{\ZZ}{\mathcal{Z}}
\newcommand{\YY}{\mathcal{Y}}

\newcommand{\RR}{\mathcal{R}}

\newcommand{\WW}{\mathcal{W}}
\newcommand{\bs}{{\bf s}}
\newcommand{\cyc}{{\rm cyc}}
\newcommand{\g}{{\mathfrak g}}         

\newcommand{\fp}{{\mathfrak p}}

\newcommand{\ff}{{\mathfrak f}}
\newcommand{\fm}{{\mathfrak m}}
\newcommand{\fn}{{\mathfrak n}}
\newcommand{\fF}{{\mathfrak F}}

\newcommand{\fg}{{\mathfrak g}}
\newcommand{\fG}{{\mathfrak G}}

\newcommand{\bH}{{\bf H}}
\newcommand{\W}{\mathcal{W}}

\newcommand{\Bl}{{\rm Bl}_+}
\newcommand{\Alg}{A}
\newcommand{\KS}{{\rm KS}}

\begin{document}
\maketitle
 
\begin{abstract}
We develop the analytical foundations for integrals over configuration
spaces used to relate chain-level $S^1$-equivariant string topology to
perturbative Chern-Simons theory. As an application, we prove that the
composition of Chen's iterated integral with homotopy transfer
intertwines the involutive Lie bialgebra structures on homology.
\end{abstract}

\tableofcontents 


\section{Introduction}\label{sec:intro}

Let $M$ be a closed, connected, oriented $n$-dimensional manifold and
$\Lambda=C^\infty(S^1,M)$ its free loop space. 
In their seminal paper~\cite{Chas-Sullivan99} and its
sequel~\cite{Sullivan-open-closed}, M.~Chas and D.~Sullivan 
introduced operations on the homology of $\Lambda$ which go under the
name {\em string topology}.
In the equivariant setting, they consist of the {\em string bracket}
$\mu^{S^1}$ and the {\em string cobracket} $\lambda^{S^1}$ on the
$S^1$-equivariant homology\footnote{
In this article all (co)homology is with $\R$-coefficients.}
$H_*^{S^1}(\Lambda,\Lambda_0)$ relative to
the constant loops $\Lambda_0\subset\Lambda$. They define the
structure of an {\em involutive Lie bialgebra}. 

For applications in symplectic topology (see
e.g.~\cite{Fukaya-Lag,Cieliebak-Latschev}), it became important to
establish the underlying structure on the chain level. This structure
was algebraically described in~\cite{Cieliebak-Fukaya-Latschev} under
the name {\em $\IBL_\infty$-structure}. Moreover, an approach was
outlined for realizing this structure in the simply connected case.  
It is based on {\em Chen's iterated integrals}~\cite{Chen73,Chen77},
which give rise to a homomorphism 
\begin{equation}\label{eq:chen-intro}
   \bar J_{\lambda*}: H_*^{S^1}(\Lambda,q_0)\longrightarrow \ol{HC}^*_\lambda(\Om).
\end{equation}
Here $q_0$ is a basepoint in $M$, viewed as a constant loop, and 
$\ol{HC}^*_\lambda(\Om)$ is the reduced cyclic cohomology of the de Rham
complex $\Om=\Om^*(M)$, viewed as a differential graded
algebra; see~\S\ref{sec:chen}. Based on Jones' theorem~\cite{Jones},
it is proved in~\cite{Cieliebak-Volkov-cyc} that $\bar J_{\lambda*}$
is an isomorphism if $M$ is simply connected. Since the string bracket
and cobracket canonically lift to operations on $H_*^{S^1}(\Lambda,q_0)$
(denoted by the same letters, see~\S\ref{sec:stringtop}), this transfers
the problem to the right hand side of~\eqref{eq:chen-intro}. 
We can transform the problem further using the isomorphism
\begin{equation}\label{eq:G-intro}
  \G_\lambda^*:HC_\lambda^*(\Om)\stackrel{\cong}\longrightarrow
  H(B^{\cyc *}\HH).
\end{equation}
Here $B^{\cyc *}\HH$ is the {\em dual cyclic bar complex} of a harmonic
subspace $\HH\subset\Om$, and the map $\G_\lambda^*$ is defined via
homotopy transfer of $A_\infty$-algebras (\cite{Kontsevich-Soibelman},
see~\S\ref{sec:ainf}). 
In~\cite{Cieliebak-Volkov}, we construct an $\IBL_\infty$-structure on
$B^{\cyc *}\HH$ and prove that it is independent of all choices up to
$\IBL_\infty$-homotopy equivalence. To show that this is the desired
chain-level structure underlying equivariant string topology, it
remains to relate its induced operations on homology to the string
bracket and cobracket. This is the content of the present paper.

In order to formulate the main result, recall that the
$\IBL_\infty$-structure in~\cite{Cieliebak-Volkov} consists of
operations $\fp_{k,\ell,g}^\fm$ corresponding to compact, oriented,
connected surfaces of genus $g\geq 0$ with $k\geq 1$ incoming and
$\ell\geq 1$ outgoing boundary components (twisted by a Maurer-Cartan
element $\fm$, see~\S\ref{sec:alg}). The operation $\fp_{1,1,0}^\fm$
is the dual of the Hochschild differential on $B^{\cyc *}\HH$ (see
equation~\eqref{eq:p110m} below), and the operations
$\fp_{2,1,0}^\fm=\fp_{2,1,0}$ and $\fp_{1,2,0}^\fm$ descend to
operations on homology $H(B^{\cyc *}\HH)=H(B^{\cyc *}\HH,\fp_{1,1,0}^\fm)$
(denoted by the same letters) defining an involutive Lie bialgebra
structure. 
See also~\cite{Chen-Esh-Gan}.
Consider the composition of degree zero maps
\begin{equation*}
\xymatrix{
  H_*^{S^1}(\Lambda,q_0) \ar[r]^{\bar J_{\lambda *}} 
  & \ol{HC}_\lambda^*(\Om) \ar[r]^{\iota_*} 
  & HC_\lambda^*(\Om) \ar[r]^{\G_\lambda^*\ \ \ \ \ }_{\cong\ \ \ \ \ } 
  & H(B^{\text{\rm cyc}*}\HH,\fp_{1,1,0}^{\m}),
}
\end{equation*} 
where $\iota_*$ is the canonical map from reduced to non-reduced
cyclic cohomology. 
The following is our main theorem. 

\begin{theorem}\label{thm:intro}
In the setup above, the composed degree zero map
\begin{equation*}
  \F := \G_\lambda^*\circ\iota_*\circ \bar J_{\lambda *}:H_*^{S^1}(\Lambda,q_0)\longrightarrow
H(B^{\text{\rm cyc}*}\HH,\fp_{1,1,0}^{\m})
\end{equation*}
intertwines the string bracket $\mu^{S^1}$ with $\fp_{2,1,0}$
and the string cobracket $\lambda^{S^1}$ with $2\,\fp_{1,2,0}^\m$.
\end{theorem}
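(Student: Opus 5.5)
We prove the two intertwining statements separately, in each case by writing the composition $\F$ explicitly on chains and comparing the two sides via Stokes' theorem on compactified configuration spaces.

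\emph{Chain-level model for $\F$.} For a (generic, smooth) $S^1$-equivariant chain of loops $f:K\times S^1\to M$ and harmonic forms $h_1,\dots,h_k\in\HH$, $\F[f]$ evaluated on $h_1\otimes\dots\otimes h_k$ is a sum over planar ribbon trees. The leaves of a tree carry $h_1,\dots,h_k$, its interior edges carry the homotopy operator $Q$ (with $d Q + Q d = \Id - \Pi_\HH$), and its root is inserted into Chen's iterated integral along the loop. Expanding the propagator $Q$ through its Schwartz kernel, this becomes an integral over a configuration space: points $t_1<\dots<t_r$ on the circle are mapped by $f$ into $M$, and further points in $M$ are connected to them along the tree edges by the kernel of $Q$.

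\emph{The bracket.} The string bracket of $f$ and $g$ is defined by the intersection locus $\{f(t)=g(s)\}$. On the algebraic side, $\fp_{2,1,0}$ contracts one input of $\F[f]$ and one input of $\F[g]$ with the inverse of the intersection pairing on $\HH$. We compare the two by writing the diagonal $\Delta\subset M\times M$ as $\sum \pm e_i\times e^i+dT$, where $T$ is the kernel of $Q$. The harmonic part of this expression produces $\fp_{2,1,0}(\F f,\F g)$. The exact part $dT$, after Stokes, produces boundary terms, and we show that these boundary terms (i) cancel against the codimension-one faces of the tree configuration spaces where a tree edge collapses, and (ii) otherwise assemble into $\fp_{1,1,0}^\m$-exact terms plus the chain boundary of $f$ or $g$. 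They therefore vanish in homology. The constant loop $q_0$ and the reduced/non-reduced passage $\iota_*$ enter only through the unit, which pairs trivially with everything.

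\emph{The cobracket and the main obstacle.} The cobracket is the self-intersection $f(t)=f(s)$ for $t\neq s$ on the same loop, followed by splitting the loop into two. We apply the same decomposition of $\Delta$, now pulled back to the two-point configuration space of the circle. The harmonic part yields $\fp_{1,2,0}$, and the factor $2$ arises because the unordered pair $\{t,s\}$ is counted twice. The correction by $\m$ arises from trees attached at the point where the two arcs are cut. The main obstacle is analytic. The integrands have singularities along the diagonal $t=s$, and there the propagator $T$ is singular, so we need a Fulton--MacPherson/Axelrod--Singer-type compactification of these mixed configuration spaces (points on $S^1$ mapped into $M$ together with points in $M$). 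We must prove that the integrals converge and that Stokes' theorem applies, including the hidden faces where several points collide. The strategy here is to use a blow-up description with polyhomogeneous integrands and to show that hidden faces contribute zero by the usual degree/symmetry vanishing arguments. The face $t\to s$ then yields exactly the $\m$-twisting terms, which we identify by a local computation near the diagonal: there $T$ restricts to the fiberwise angular form, and its integral over the fiber sphere gives the Maurer--Cartan term. Combining these steps shows that $\F\circ\lambda^{S^1}=2\,\fp_{1,2,0}^\m\circ\F$ on homology, and similarly for the bracket.
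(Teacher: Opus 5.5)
There is a genuine gap, most visibly in the cobracket part. The twisting term $\wh\fp^{\conn}_{2,1,0}(\m_{2,0},\cdot)$ in $\fp_{1,2,0}^\m$ cannot come from a local computation at the face where the two self-intersection points collide. The reason is that $\m_{2,0}$ is a sum, over all trivalent annulus graphs, of global configuration space integrals over products of copies of $M$ with $G$ on every edge. Integrating the angular form over the fibre sphere only produces the constant $\int_{\p F}\wt G=(-1)^n$. Moreover, that collision face (over $Z_f$, where $f_p'(0)=0$) gives configurations in which one of the two split loops is constant. Its contribution vanishes because the cyclic Chen pairing vanishes on constant loops, which is exactly why $\ol\lambda$ is taken relative to $\Lambda_0$.

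In the paper the $\m_{2,0}$-term is instead one piece of a full differential. One sums $\bH_{\Gamma,l}$ (with $dG=\sum_a e_a\times e^a$ on $l$ and $G$ on all other edges) over all circular graphs $\Gamma$ with one special vertex and all edges $l$. If the special vertex is off the cycle and $l$ lies on the path joining it to the cycle, cutting $l$ splits $\Gamma$ into an annulus graph without special vertices (a summand of $\m_{2,0}$) and a tree with one special vertex (a summand of $\G$). Cycle edges give $\G\circ 2c_{210}$. Edges cutting off a tree without special vertices give the $\m_{1,0}$-terms $\F^2\circ b_\HH$, which your cobracket argument does not account for either.

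The same missing ingredient affects the bracket. Inserting the harmonic part $\sum_a e_a\times e^a$ at the intersection point does \emph{not} give $\fp_{2,1,0}(\F f,\F g)$ (resp.\ $\fp_{1,2,0}(\F f)$). It only gives the summands in which both contracted forms sit on leaves attached directly to the special vertices, i.e.\ graphs with a doubly special edge. The summands with $e_a$ or $e^a$ deeper in a tree come from graphs in which the edge carrying $dG$ lies on a longer path between the two special vertices (resp.\ on a longer cycle).

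So the object to which Stokes must be applied is not a single $dT$-term. It is the chain homotopy given by integrals over the compactified configuration spaces $\XX_\Gamma$ of \emph{all} trees with two special vertices (resp.\ all circular graphs with one special vertex), namely $\G^2$ and $\F^2$ in the paper, whose full differential is the left-hand side. The boundary faces are then matched as follows:
\begin{itemize}
\item simplex faces $\{t_{j-1}=t_j\}$ cancel against the collapse of the new edge in the graph obtained by attaching a leg (this is how longer paths and off-cycle special vertices are forced into the sum);
\item nonspecial edge faces cancel via the duality $I$;
\item hidden faces vanish by the flip involution at a $2$-valent vertex;
\item $\p B$ gives the $\p f$ term;
\item only the doubly special edge face, after integrating $\wt G$ over the $S^{n-1}$ fibre, produces $\mu$ (resp.\ $\ol\lambda$).
\end{itemize}
Your steps (i)--(ii) assert the outcome of this bookkeeping without the family of graphs that makes it work.
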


In view of the isomorphism $\G_\lambda^*$, we define the reduced homology
$$  
  \ol{H}(B^{\text{\rm cyc}*}\HH,\fp_{1,1,0}^\m) := \ol{HC}_\lambda^*(\Om).
$$
Then in the simply connected case we obtain the following corollary.

\begin{cor}\label{cor:intro}
In Theorem~\ref{thm:intro}, assume in addition that
$M$ is simply connected. Then the operations $\fp_{2,1,0}$ and
$2\,\fp_{1,2,0}^\m$ descend to operations $\ol\fp_{2,1,0}$ and $2\,\ol\fp_{1,2,0}^\m$
on $\ol{H}(B^{\text{\rm cyc}*}\HH,\fp_{1,1,0}^\m)$ which correspond to
$\mu^{S^1}$ and $\lambda^{S^1}$ under the isomorphism
\begin{equation*}
  \bar J_{\lambda *}:H_*^{S^1}(\Lambda,q_0)\stackrel{\cong}\longrightarrow
  \ol{H}(B^{\text{\rm cyc}*}\HH,\fp_{1,1,0}^{\m}).
\end{equation*}
\end{cor}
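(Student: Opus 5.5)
The corollary should follow formally from Theorem~\ref{thm:intro} together with two facts stated in the introduction. First, $\bar J_{\lambda*}$ is an isomorphism when $M$ is simply connected, by~\cite{Cieliebak-Volkov-cyc}. Second, $\G_\lambda^*$ is an isomorphism, see~\eqref{eq:G-intro}. The plan is to transport the operations $\fp_{2,1,0}$ and $2\fp_{1,2,0}^\m$ to the reduced group $\ol{HC}^*_\lambda(\Om)=\ol{H}(B^{\cyc*}\HH,\fp_{1,1,0}^\m)$ and then read off the correspondence.

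\emph{Step 1.} I will show that the image of $\iota_*\circ\bar J_{\lambda*}$ is closed under the operations. Set $V:=\G_\lambda^*\iota_*(\ol{HC}^*_\lambda(\Om))\subset H(B^{\cyc*}\HH,\fp_{1,1,0}^\m)$. Since $\bar J_{\lambda*}$ is surjective, every class in $V$ has the form $\F(a)$ with $a\in H^{S^1}_*(\Lambda,q_0)$. By Theorem~\ref{thm:intro},
\[
\fp_{2,1,0}(\F a\otimes \F b)=\F\mu^{S^1}(a\otimes b),
\qquad
2\fp_{1,2,0}^\m(\F a)=(\F\otimes\F)\lambda^{S^1}(a).
\]
Hence $\fp_{2,1,0}$ maps $V\otimes V$ into $V$, and $2\fp_{1,2,0}^\m$ maps $V$ into $V\otimes V$. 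For the cobracket I will work with the appropriate (completed or symmetric) tensor product in which $\F\otimes\F$ takes values, as in the theorem.

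\emph{Step 2.} I will lift the operations to the reduced group. If $\iota_*$ is injective, then $\G_\lambda^*\iota_*$ identifies $\ol{HC}^*_\lambda(\Om)$ with $V$, and I define $\ol\fp_{2,1,0}$ and $2\ol\fp_{1,2,0}^\m$ as the pullbacks of the restricted operations. If $\iota_*$ fails to be injective, I will avoid the question entirely and use the isomorphism $\bar J_{\lambda*}$ as the identification:
\[
\ol\fp_{2,1,0}:=\bar J_{\lambda*}\,\mu^{S^1}\,(\bar J_{\lambda*}^{-1}\otimes\bar J_{\lambda*}^{-1}),
\qquad
2\ol\fp_{1,2,0}^\m:=(\bar J_{\lambda*}\otimes\bar J_{\lambda*})\,\lambda^{S^1}\,\bar J_{\lambda*}^{-1}.
\]
By Theorem~\ref{thm:intro} these satisfy
\[
\G_\lambda^*\iota_*\circ\ol\fp_{2,1,0}=\fp_{2,1,0}\circ(\G_\lambda^*\iota_*)^{\otimes2},
\]
and the analogous identity holds for the cobracket. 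So they are lifts of $\fp_{2,1,0}$ and $2\fp_{1,2,0}^\m$, and this is the sense in which the operations ``descend''. The correspondence with $\mu^{S^1}$ and $\lambda^{S^1}$ under $\bar J_{\lambda*}$ then holds by construction.

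\emph{Main obstacle.} The delicate point is whether $\iota_*$ is injective, since only then are the descended operations determined uniquely by $\fp_{2,1,0}$ and $\fp_{1,2,0}^\m$. I would first check this via the standard splitting: in the simply connected case $\Om$ can be replaced by a connected model, which gives $HC_\lambda^*(\Om)\cong\ol{HC}^*_\lambda(\Om)\oplus HC^*_\lambda(\R)$. This splitting makes $\iota_*$ injective, in which case Step~2 applies in its first and cleaner form. If the splitting argument fails, I will present the corollary with the second definition, which still gives well-defined operations compatible with $\fp_{2,1,0}$ and $2\fp_{1,2,0}^\m$.
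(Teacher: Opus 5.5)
Your proposal is correct. In its second form it is exactly the paper's proof: the paper defines $\ol\fp_{2,1,0}$ and $2\,\ol\fp_{1,2,0}^\m$ by transporting $\mu^{S^1}$ and $\lambda^{S^1}$ along the isomorphism $\bar J_{\lambda *}$, and then uses Theorem~\ref{thm:intro} to check that $\G_\lambda^*\circ\iota_*$ intertwines them with $\fp_{2,1,0}$ and $2\,\fp_{1,2,0}^\m$, which is what ``descend'' means there.

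The paper never addresses injectivity of $\iota_*$, so your hedge is not needed for the statement. Injectivity does hold, however, without passing to a connected model: evaluation at $q_0$ gives an augmentation $\epsilon$ of $\Om$, and $C^\lambda(\Om)=C^\lambda(\ker\epsilon)\oplus C^\lambda_{(1)}(\Om)$ as complexes. This makes your first version available as well, and it shows the descended operations are uniquely determined.
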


\begin{remark}
A result very similar to Theorem~\ref{thm:intro} has been
obtained previously 
and independently by Naef and Willwacher in~\cite{Naef-Willwacher}. 
Their approach builds for simply connected $M$ on a finite
dimensional Poincar\'e duality model for $\Om^*(M)$ provided by
Lambrechts and Stanley~\cite{Lambrechts-Stanley}, and for non-simply
connected $M$ on a dgca model for the configuration space of points on
$M$ constructed by Campos and Willwacher~\cite{Campos-Willwacher}. 
By contrast, our approach is more analytic, working directly with
configuration space integrals and variants of Chen's iterated
integrals in the smooth setting. 
\end{remark} 

\begin{remark}
Fukaya~\cite{Fukaya-Lag} has suggested a different approach to
chain-level string topology via de Rham chains. This approach has the
advantage of not requiring simple connectivity; it has been successfully
implemented by Irie~\cite{Irie} in the non-equivariant case.
\end{remark}

\begin{remark}
The string bracket and cobracket actually lift to 
$H^{S^1}_*(\Lambda) / \chi(M) H_*^{S^1}(q_0)$, where $\chi(M)$ is the
Euler characteristic of $M$ (see~\S\ref{sec:stringtop}), while the the
operations $\fp_{2,1,0}^\fm=\fp_{2,1,0}$ and $\fp_{1,2,0}^\fm$ exist on 
non-reduced homology $H(B^{\cyc *}\HH,\fp_{1,1,0}^\fm)$. However,
according to~\cite[Theorem 3.5]{Cieliebak-Volkov-cyc}, the Chen map
$\bar J_{\lambda *}$ in~\eqref{eq:chen-intro} only becomes an isomorphism 
for simply connected $M$ if we take homology relative to $q_0$ on the
left hand side and reduced cohomology on the right hand side.
\end{remark}

{\bf Structure of the paper. }
The proof of Theorem~\ref{thm:intro} requires a combination of
algebraic, topological, and analytic techniques. Accordingly, this
paper consists of four parts, of which the first three are largely
independent. 

{\em I. Algebra (\S\S\ref{sec:alg}--\ref{sec:ainf}). }
Here we recall the necessary definitions and facts about cochain
complexes and DGAs with pairings, dIBL-algebras, and $A_\infty$-algebras.

{\em II. Topology (\S\S\ref{sec:stringtop}--\ref{sec:chen}). }
This part concerns string topology and Chen's iterated integrals. 
In~\S\ref{ss:stringtop} we determine the possible domains of
definition for non-equivariant and equivariant string topology
operations. In~\S\ref{ss:transvers} we present a chain level
definition of the loop coproduct which is suitable for combining it
with Chen's iterated integrals and homotopy transfer.

{\em III. Analysis (\S\S\ref{sec:fibre}--\ref{sec:analysis}). }
This part is the heart of the paper. Its results may be of independent
interest; for example, they also serve as the analytic underpinnings
for~\cite{Cieliebak-Volkov}. 
We begin in~\S\ref{sec:fibre} with a formula relating
pullback and fibre integration of integrable differential forms.
This is followed by a discussion of propagators in~\S\ref{sec:prop}.  
In~\S\ref{sec:blow-up} we recall oriented real blow-ups and proper
transforms and develop an abstract setting for Stokes' theorem. 
In~\S\ref{sec:general graphs} we apply this to generalized
configuration spaces associated to graphs. Building on results of
Paw\l{}ucki on semi-analytic sets~\cite{Pawlucki}, we prove Stokes' theorem for such
spaces. We also establish a general vanishing result for integrals
over hidden faces, generalizing earlier such results due to
Kontsevich~\cite{Kontsevich-feynman} and others (see
e.g.~\cite{Bott-Taubes,Cattaneo-Mnev}). 
In~\S\ref{sec:analysis} we apply the results of~\S\ref{sec:general
  graphs} to the configuration spaces that are relevant for the proof
of the main theorem.

{\em IV. Proof (\S\S\ref{sec:graphs}--\ref{sec:rel-stringtop}). }
Here we combine the results of the first three parts to prove
Theorem~\ref{thm:intro} and Corollary~\ref{cor:intro}.
We begin in~\S\ref{sec:graphs} with a detailed discussion of ribbon
graphs, labellings and their extensions, and operations on graphs.
In~\S\ref{sec:intconfig} we define integrals over configuration
spaces associated to graphs and establish their main properties.
In~\S\ref{sec:ops} we use configuration space integrals to define the
chain map and chain homotopies entering the proof and derive their
main properties.  
The actual proof is contained in~\S\ref{sec:rel-stringtop}. 

{\bf Acknowledgements. }
This article is part of the second author's habilitation thesis. 
We thank T.\,Ekholm, M.\,Hutchings and J.\,Latschev for valuable feedback.

\section{Cochain complexes and differential graded algebras with
pairings}\label{sec:alg}

In this section we collect some basic notions and facts about cochain
complexes and differential graded algebras with
pairings. See~\cite{Cieliebak-Fukaya-Latschev,Cieliebak-Hajek-Volkov}
for more background.

\subsection{Graded vector spaces}\label{ss:gradedvect}

Let $A=\bigoplus_{i\in \Z}A^i$ be a $\Z$-graded  
$\R$-vector space. For $m\in\Z$ let $A[m]$ be the degree shift of $A$ by $m$, i.e. 
$A[m]^i:=A^{i+m}$. Most often we will need the degree shift by $1$, that is $A[1]$.
For $x\in A$ of homogeneous degree, i.e. $x\in A^k$ for some $k\in\Z$,
the degree of $x$ as an element of $A$ will be denoted by $\deg x$ and
the degree of $x$ as an element of $A[1]$ will be denoted by $|x|$, so that
$$
  |x|=\deg x-1.
$$
We define the {\em graded dual} of $A$ by
$$
  A^*:=\bigoplus_{i\in \Z}\Hom(A^i,\R),
$$
and grade it by giving $\phi\in\Hom(A^i,\R)$ degree $i$.\footnote{ 
Another frequently used convention gives $\phi\in\Hom(A^i,\R)$ degree $-i$.} 

{\bf Permutation actions. } 
Consider an integer $n$ and an (ordered) partition
$\bs=(s_1,\dots,s_\ell)$ of $s=s_1+\dots+s_\ell$ with $s_b\geq 1$ for
all $b$.
(In later sections, $n$ will be the dimension of a manifold $M$ and 
$s_b$ will be the number of leaves on the $b$-th boundary component of
a ribbon surface, see~\S\ref{ss:basiccomb}.) 
We abbreviate
$$
\Alg(\bs):=\bigotimes_{b=1}^\ell\Alg[1]^{\otimes s_b}[3-n]\,.
$$
Note that abstractly 
$$
  \Alg(\bs)\cong \Alg[1]^{\otimes s}[(3-n)\ell],
$$
but it is important to keep in mind the additional structure induced by $\bs$.
Any decomposable tensor
$$
  \alpha=\alpha^1\otimes\dots\otimes\alpha^{\ell}\in \Alg(\bs)
$$ 
with
$$
  \alpha^b=\alpha^b_1\otimes\dots\otimes\alpha^b_{s_b}\in \Alg[1]^{\otimes s_b}
$$
can also be written in the form
$$
  \alpha=\alpha_1\otimes\dots\otimes\alpha_s\in \Alg[1]^{\otimes s}.
$$
We introduce the operation
\begin{equation}\label{eq:defP}
  P:\Alg^{\otimes s} \to \Alg^{\otimes s},\qquad 
  P(\alpha) = (-1)^{P(\alpha)}\alpha,\quad\text{with}\quad P(\alpha): = \sum_{j=1}^s(s-j)\deg\alpha_j
\end{equation}
and the operation
\begin{equation}\label{eq:defPb}
  P_b:\bigotimes_{b=1}^\ell\Alg[1]^{\otimes s_b}\to \bigotimes_{b=1}^\ell\Alg[1]^{\otimes s_b},\qquad 
  P_b(\alpha) = (-1)^{(3-n)P_b(\alpha)}\alpha
\end{equation}
(the subscript ``$b$'' stands for ``boundary'') with the sign exponent
$$
  P_b(\alpha) := \sum_{b=1}^\ell(\ell-b)|\alpha^b|,
$$
where $|\alpha^b|$ is the total shifted degree of
$\alpha^b\in\Alg[1]^{\otimes s_b}$. 

We denote by $S_s$ the group of permutations of $\{1,\dots,s\}$.
Let $S(\bs)\subset S_s$ be the subset of permutations $\eta$   
consisting of cyclic permutations within each consecutive group of
$s_b$ elements and a permutation $\eta_b\in S_\ell$ of the $\ell$ groups.
(This will later correspond to relabellings of a graph, see~\S\ref{ss:basiccomb}.)
A permutation $\eta\in S(\bs)$ gives rise to a new partition
$\bs\eta_b$ defined by
$$
  (\bs\eta_b)_j:=(\bs)_{\eta_b(j)}.
$$
Let us discuss how $\eta\in S(\bs)$ acts on the tensor product            
$\Alg(\bs)$.
We introduce the naive action permuting factors without signs by
$$
  \eta(\alpha) := \alpha_{\eta(1)}\otimes\cdots\otimes\alpha_{\eta(s)}.
$$
We define the analytic and algebraic actions
$$
  \Alg[1]^{\otimes s}[(3-n)\ell]\cong\Alg(\bs)\longrightarrow
  \Alg(\bs\eta_b)\cong\Alg[1]^{\otimes s}[(3-n)\ell]
$$
as follows:
$$
   \eta_{an}(\alpha):=(-1)^{\eta+(n-1)\eta_b+\eta_{an}(\alpha)}\eta(\alpha),\qquad
   \eta_{alg}(\alpha):=(-1)^{\eta_{alg}(\alpha)}\eta(\alpha).
$$
Here $(-1)^\eta$ is the sign of the permutation $\eta$, 
$\eta_{an}(\alpha)$ is the sign exponent for permuting the elements
$\alpha_j$ with their degrees in $\Alg$, and 
$\eta_{alg}(\alpha)$ is the
sign exponent for cyclically permuting the 
tensor factors of $\alpha^b$ for each $b$
with their shifted degrees $|\alpha_j|=\deg(\alpha_j)-1$, and
then permuting the elements $\alpha_b$
themselves according to $\eta_b$ with
degrees additionally shifted by $3-n\equiv n-1$.  
The two actions are related by the commuting diagram
\begin{equation}\label{eq:alg-ana-action2}
\xymatrix{
  \Alg^{\otimes s} \ar[d]_{P_b\circ P} \ar[r]^{\eta_{an}} &\Alg^{\otimes s} \ar[d]^{P_b\circ P} \\
  \Alg[1]^{\otimes s}[(3-n)\ell] \ar[r]^{\eta_{alg}} &\Alg[1]^{\otimes s}[(3-n)\ell].
}
\end{equation}

\begin{remark}
The above actions of $S(\bs)$ on $\Alg(\bs)$
are based on precomposition, therefore, these are right actions (even though we
write them on the left). We extend these actions to the dual 
$(\Alg(\bs))^*$ by taking conjugate maps.
Since conjugation is a contravariant functor,
the resulting actions on $(\Alg(\bs))^*$ are left actions.
\end{remark}

{\bf Cyclic operations. }
The above terminology allows us to introduce the following maps on tensor powers of $A[1]$. 
Let $\tau_{2\to 1}\in \Z_k$
denote the cyclic permutation $(1,2,\dots,k)\mapsto (k,1,\dots,k-1)$
and $\tau_{1\to 2}:=\tau_{2\to 1}^{-1}$.
Set $t_{an}:=(\tau_{2\to1})_{an}$ and
$t_{alg}:=(\tau_{2\to1})_{alg}$
its associated analytic and
algebraic actions on $A[1]^{\otimes k}$. Explicitly,
\begin{equation}\label{eq:def-talg}
\begin{aligned}
  t_{an}(x_1\otimes\cdots\otimes x_k) &:=
  (-1)^{k-1+x_k(x_1+\cdots+x_{k-1})} x_k\otimes x_1\otimes\cdots\otimes x_{k-1},\cr
  t_{alg}(x_1\otimes\cdots\otimes x_k) &:=
  (-1)^{|x_k|(|x_1|+\cdots+|x_{k-1}|)} x_k\otimes x_1\otimes\cdots\otimes x_{k-1}\,.
\end{aligned}
\end{equation}
Here $k-1$ in the sign exponent is the sign of the permutation $\tau_{2\to 1}$.
We set
\begin{equation}\label{eq:def-Nalg}
  N_{an}:=1+t_{an}+\dots+t_{an}^{k-1},
  \qquad N_{alg}:=1+t_{alg}+\dots+t_{alg}^{k-1}.
\end{equation}
We will be particularly interested in the case that $A=\Om^*(M)$ is
the de Rham algebra of an $n$-manifold $M$. We have the canonical cross product embedding
\begin{equation}\label{eq:cross-emb}
  \times:\Om^*(M)^{\otimes k}\into \Om^*(M^k).
\end{equation}
Define the operation $t_{an}$ (denoted by the same letter) on
$\Om^*(M^k)$ by the pullback
$$
  t_{an}:=(-1)^{k-1}(M^{\tau_{1\to 2}})^*,
$$
where for $\eta\in S_k$ we define $M^\eta:M^k\to M^k$ by
$(x_1,\dots,x_k)\mapsto (x_{\eta(1)},\dots,x_{\eta(k)})$.  
It is straightforward (see also Lemma~\ref{lem:acta}) 
to compute that 
$$
t_{an}\circ\times=\times\circ t_{an}.
$$
In other words, the operation $t_{an}$ on 
$\Om^*(M)^{\otimes k}$ extends to its completion
$\Om^*(M^k)$. This justifies the name ``analytic''.
Similarly for the other analytic operations. For example, $N_{an}$
can be extended to an operation on $\Om^*(M^k)$ by formula~\eqref{eq:def-Nalg}.
Moreover, since $\Z_k$ is generated by $\tau_{1\to 2}$, we can write any element
$\sigma\in\Z_k$ as $\sigma=\tau_{1\to 2}^m$ for some $m=0,\dots,k-1$
and define on $\Om^*(M^k)$ the operation
\begin{equation}\label{eq:def-sigma-an}
  \sigma_{an}:=(t_{an})^m.`
\end{equation}

{\bf The dual cyclic bar complex. }
We define the {\em bar complex} of $A$ and its graded dual by
$$
  BA := \bigoplus_{k=1}^{\infty}A[1]^{\otimes k},\qquad 
  B^*A := \prod_{k=1}^{\infty}(A[1]^{\otimes k})^*.
$$
Note that the direct sum becomes a direct product in the dual.
Similarly, we define the {\em cyclic bar complex} 
$$
  B^{\text{\rm cyc}}A := \bigoplus_{k=1}^{\infty}A[1]^{\otimes k}/\im(1-t_{alg})
$$
and its graded dual, the {\em dual cyclic bar complex} 
$$
  B^{\text{\rm cyc}*}A = \prod_{k=1}^{\infty}\Bigl(A[1]^{\otimes k}/\im(1-t_{alg})\Bigr)^*.
$$
Note that $t_{alg}$ generates the algebraic action of $\Z_k$ on 
$A[1]^{\otimes k}$, and dually on $(A[1]^{\otimes k})^*$, 
and we can identify 
$(A[1]^{\otimes k}/\im(1-t_{alg}))^*$ with the subcomplex of
functionals on $A[1]^{\otimes k}$ fixed under the action of $\Z_k$.


\subsection{Cochain complexes with pairing}\label{ss:coch}

A {\em cochain complex} $(A,d)$ is a graded vector space $A$ with a
differential $d$ of degree $1$. 
A {\em pairing of degree $n\in \Z$} on $(A,d)$ is a
bilinear form $(\cdot,\cdot)\colon A\times A\to \R$ which for all
homogeneous $x, y\in A$ satisfies the degree condition 
\begin{equation*}
	(x,y)\neq 0\quad\Longrightarrow\quad\deg x+\deg y=n,
\end{equation*}
graded symmetry
\begin{align}\label{Eq:Symmetry}
   ( x, y ) = (-1)^{\deg x \deg y} 
   ( y, x ),
\end{align}
and compatibility with the differential
\begin{equation}\label{eq:DiffCyclic}
   ( d x, y ) = (-1)^{1+\deg x} 
   ( x, d y ). 
\end{equation}
We write $x\perp y$ if $( x, y ) = 0$ and say that $x,y$ are \emph{orthogonal}.
The subcomplex of elements of $A$ orthogonal to a given subcomplex
$B\subset A$ will be denoted by  
\[
	B^{\perp} := \{ x\in A\mid x\perp B\}.
\]
We call a pairing $(\cdot,\cdot)\colon A\times
A\to\R$ \emph{nondegenerate} if the induced map
\begin{equation*}
	A\longrightarrow Hom(A,\R),\qquad
	x\longmapsto ( x,\cdot )
\end{equation*}
is injective, and {\em perfect} if it is an isomorphism. Observe that
a nonnegatively graded cochain complex with a perfect pairing is
finite dimensional.  
Following the terminology in~\cite{Cieliebak-Fukaya-Latschev}, a
cochain complex with a perfect pairing is called {\em cyclic}. If a
perfect pairing restricts to a subcomplex as a perfect pairing, then
the subcomplex is called {\em cyclic}.

{\bf Propagators and symmetric projections. }
Next we recall some notions and facts from~\cite{Cieliebak-Hajek-Volkov}.
Let $(A, d,(\cdot,\cdot))$ be a cochain complex with pairing.
A {\em projection} on $A$ is a degree $0$ chain map $\pi\colon A\to A$
which satisfies $\pi\circ \pi = \pi$.  
We say that a degree $-1$ map $P\colon A\to A$ is a 
\emph{homotopy operator} if the map 
$-(d\circ P + P \circ d)\colon A\to A$ is a projection.
Every homotopy operator $P$ determines a projection 
\[
	\pi_P := \Id+ d\circ P+P\circ d\colon A\longrightarrow A
\]
which is a {\em quasi-isomorphism}, i.e., the induced map on cohomology $H(\pi_P)\colon H( A)\to H( A)$ is an isomorphism.
Given a projection $\pi\colon A\to A$ which is a quasi-isomorphism, we say that 
a degree $-1$ map $P\colon A\to A$ is a \emph{homotopy operator with respect to $\pi$} if it is a homotopy operator and $\pi_P=\pi$, so that
\begin{equation}\label{eq:P2} 
	 d\circ P+P\circ d=\pi-\Id. 
\end{equation}
We say that a homotopy operator $P\colon  A\to A$ is a
\emph{propagator} if it satisfies the symmetry property 
\begin{equation}\label{eq:symmpropprelim}
	 (Px,y) = (-1)^{\deg x}(x,Py). 
\end{equation}
The associated projection 
$\pi_P\colon A\to A$ is then \emph{symmetric}:
\begin{equation*}
	 (\pi_P x,y) = ( x,\pi_P y).
\end{equation*}

Given a subcomplex $B\subset  A$, we say that a projection $\pi\colon
A\to A$ is onto $B$ if $\im\pi=B$, and we identify
$\pi$ with the induced surjection $\pi\colon A\onto B$ in this case. 
A homotopy operator~$P\colon A\to A$ with respect to a projection
$\pi\colon A\onto B$ exists if and only if~$\pi$ is a
quasi-isomorphism.  
In the case with pairing we have 

\begin{lemma}[{\cite[Lemma~2.2]{Cieliebak-Hajek-Volkov}}]\label{lem:retraction}
Let $( A, d,(\cdot,\cdot))$ be a cyclic cochain complex and
$B\subset  A$ a subcomplex. 
A propagator $P$ with respect to a projection $\pi\colon A\onto B$
exists if and only if~$\pi$ is symmetric and a quasi-isomorphism. 
\end{lemma}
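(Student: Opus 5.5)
The plan is to prove the two directions separately. Necessity is a direct computation. Sufficiency is a construction: first build some homotopy operator, then symmetrize it.

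\emph{Necessity.} Suppose $P$ is a propagator with respect to $\pi$. Then $\pi=\pi_P=\Id+dP+Pd$ is a quasi-isomorphism, since it is chain homotopic to the identity. Symmetry is a short calculation from \eqref{eq:P2}, \eqref{eq:DiffCyclic} and \eqref{eq:symmpropprelim}. Writing $\deg x=k$ and $\deg Px=k-1$, we get
\[
(dPx,y)=(-1)^{k}(Px,dy)=(x,Pdy),
\qquad
(Pdx,y)=(-1)^{k+1}(dx,Py)=(x,dPy).
\]
Summing these gives $(\pi x,y)=(x,\pi y)$.

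\emph{Sufficiency: decomposition.} Assume $\pi$ is symmetric and a quasi-isomorphism. Set $C:=\ker\pi$, so that $A=B\oplus C$ as complexes, and $C$ is acyclic because $\pi$ is a quasi-isomorphism. Symmetry of $\pi$ gives $B\perp C$: for $b=\pi b$ and $c\in C$ we have $(b,c)=(\pi b,c)=(b,\pi c)=0$. Since the pairing on $A$ is perfect and the sum is orthogonal, the pairing restricts to perfect pairings on $B$ and on $C$; in particular $C$ is itself a cyclic complex. Any operator supported on $C$ (zero on $B$, preserving $C$) is automatically compatible with the orthogonal splitting. It therefore suffices to construct a degree $-1$ map $Q\colon C\to C$ with $dQ+Qd=-\Id_C$ that is symmetric in the sense of \eqref{eq:symmpropprelim}. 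Extending $Q$ by zero on $B$ then gives a propagator $P$ with $dP+Pd=\pi-\Id$.

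\emph{Sufficiency: construction on $C$.} Since we work over a field, the acyclic complex $C$ admits a contracting homotopy; concretely, choose splittings $C^k=Z^k\oplus W^k$ with $d\colon W^k\xrightarrow{\cong}Z^{k+1}$ and let $Q_0$ invert $d$ on $Z$. Let $Q_0^\dagger$ be its adjoint under the perfect pairing, with sign normalized by $(Q_0^\dagger x,y)=(-1)^{\deg x}(x,Q_0y)$; using \eqref{eq:DiffCyclic}, $Q_0^\dagger$ is again a contracting homotopy. The average $Q_1=\tfrac12(Q_0+Q_0^\dagger)$ is then symmetric and satisfies $dQ_1+Q_1d=-\Id_C$, since averaging preserves the homotopy equation. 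Finally, the standard correction $Q:=-Q_1dQ_1$ keeps both properties, with adjoint signs checked using \eqref{eq:Symmetry} and \eqref{eq:DiffCyclic}. It also yields $Q^2=0$ and the side conditions, if the intended notion of homotopy operator requires them. If one does not pass to $C$ first and works on all of $A$, the same correction ensures that $-(dP+Pd)$ is idempotent.

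\emph{Main obstacle.} I expect the delicate step to be the sign bookkeeping: showing that the adjoint $Q_0^\dagger$ really is a homotopy with the correct sign, and that the correction $Q_1dQ_1$ remains symmetric. Perfectness of the pairing on $C$ is needed to make $Q_0^\dagger$ well defined. That is why the orthogonal splitting step, which uses $\pi$ symmetric, comes first.
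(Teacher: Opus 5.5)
Your proof is correct and self-contained. The paper itself gives no proof of this lemma; it simply quotes it from \cite[Lemma~2.2]{Cieliebak-Hajek-Volkov}, so there is no in-paper argument to compare against.

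The necessity part is the expected two-line computation. For sufficiency, your reduction to the acyclic cyclic complex $C=\ker\pi$ (which equals $B^\perp$) is the natural first step; it is the same splitting $A=B\oplus B^\perp$ the paper invokes right after the lemma.

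The route one usually sees next is a Hodge-type decomposition. In $C$ one has $(\im d)^\perp=\ker d=\im d$, so $\im d$ is Lagrangian. If $W$ is an \emph{isotropic} complement of $\im d$, then $P:=-(d|_W)^{-1}$ on $\im d$ and $P:=0$ on $W\oplus B$ is symmetric by a direct check. The price is proving that such an isotropic complement exists, which is a graded Lagrangian-complement lemma.

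Your averaging $Q_1=\tfrac12(Q_0+Q_0^\dagger)$ avoids that linear-algebra lemma. It works for any contracting homotopy and uses only that $\tfrac12\in\R$. The two facts it rests on do hold:
\begin{itemize}
\item $Q_0^\dagger$ satisfies the same homotopy equation. This is literally your necessity computation with $P$ replaced by the pair $Q_0,Q_0^\dagger$.
\item $(T^\dagger)^\dagger=T$ for every degree $-1$ map $T$, by two applications of \eqref{eq:Symmetry}; the total sign exponent is $2\deg x\deg y$. You should state this explicitly, since self-adjointness of the average depends on it.
\end{itemize}

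There are two small corrections.
\begin{itemize}
\item As written, $Q_0=(d|_W)^{-1}$ on $Z$ and $0$ on $W$ satisfies $dQ_0+Q_0d=+\Id_C$, so you want $-Q_0$.
\item The final correction $Q=-Q_1dQ_1$ is unnecessary. With the paper's definitions, once $dP+Pd=\pi-\Id$ holds, the map $-(dP+Pd)=\Id-\pi$ is automatically a projection, on $C$ as well as on all of $A$. So $Q_1$ extended by zero is already a propagator with respect to $\pi$. The correction is harmless: it preserves symmetry and the homotopy equation, and it buys the side conditions $P^2=0$, $P\pi=\pi P=0$ of a special propagator (cf.\ Remark~\ref{rem:reduced}). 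It is not, however, what makes $-(dP+Pd)$ idempotent, contrary to your closing remark about working on all of $A$.
\end{itemize}
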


If $(A,d,(\cdot,\cdot))$ is a cyclic cochain complex, then any
quasi-isomorphic cyclic subcomplex $B\subset A$ admits a unique
symmetric projection $\pi_B\colon A \onto B$ by sending the orthogonal
complement $B^\perp$ to $0$.
Lemma~\ref{lem:retraction} now implies

\begin{cor}\label{cor:retraction}
Let $( A, d,(\cdot,\cdot))$ be a cyclic cochain complex and
$B\subset  A$ a quasi-isomorphic cyclic subcomplex. 
Then there exists a propagator 
$P\colon A\to A$ such that $\im\pi_P=B$.
\qed
\end{cor}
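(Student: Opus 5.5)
The plan is to apply Lemma~\ref{lem:retraction} to the symmetric projection $\pi_B$. There are two things to check: that $\pi_B$ is well defined as a symmetric projection onto $B$, and that it is a quasi-isomorphism. Once both hold, the lemma gives a propagator $P$ with respect to $\pi_B$. Then $\pi_P=\pi_B$ by definition of ``homotopy operator with respect to $\pi$'', and so $\im\pi_P=\im\pi_B=B$.

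\textbf{Step 1: the splitting.} Since $B$ is cyclic, the pairing restricted to $B$ is perfect. Hence every $x\in A$ determines a unique $b\in B$ with $(x,\cdot)|_B=(b,\cdot)|_B$, so $x-b\in B^\perp$. Uniqueness follows from nondegeneracy on $B$, which also gives $B\cap B^\perp=0$. Together these give $A=B\oplus B^\perp$. Moreover $B^\perp$ is a subcomplex: if $x\perp B$ and $y\in B$, then by \eqref{eq:DiffCyclic}
\[
(dx,y)=\pm(x,dy)=0,
\]
because $dy\in B$. Define $\pi_B$ as the projection onto $B$ along $B^\perp$. It is idempotent, and it is a chain map because both summands are $d$-invariant. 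It has degree $0$ because the pairing has a fixed degree $n$, so the decomposition respects the grading.

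\textbf{Step 2: symmetry and quasi-isomorphism.} For symmetry, write $x=b+b'$ and $y=c+c'$ with $b,c\in B$ and $b',c'\in B^\perp$. Then
\[
(\pi_Bx,y)=(b,c)+(b,c')=(b,c)=(b,c)+(b',c)=(x,\pi_By),
\]
since $(b,c')=0$ and $(b',c)=0$. For the quasi-isomorphism property, note that $\pi_B$ restricts to the identity on $B$, so it is a left inverse of the inclusion $\iota\colon B\into A$. By hypothesis $\iota$ is a quasi-isomorphism, so $H(\pi_B)=H(\iota)^{-1}$ is an isomorphism. (If $\pi_B$ is regarded as an endomorphism of $A$, it is $\iota\circ\pi_B$, which induces the identity on $H(A)$, since $H(\iota)H(\pi_B)=\mathrm{id}$ once $H(\pi_B)H(\iota)=\mathrm{id}$ and $H(\iota)$ is bijective.)

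\textbf{Step 3: conclusion.} Lemma~\ref{lem:retraction} now produces a propagator $P$ with $\pi_P=\pi_B$, and hence $\im\pi_P=B$.

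The only step needing care is Step~1, specifically the direct sum decomposition. It uses that the pairing is perfect on $B$, not merely nondegenerate. This is fine in the present setting because $B$ is cyclic by assumption. No further obstacle is expected.
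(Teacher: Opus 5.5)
Your proof is correct and follows the same route as the paper. The paper likewise takes the symmetric projection $\pi_B$ onto $B$ with kernel $B^\perp$ and applies Lemma~\ref{lem:retraction}; you additionally supply the details it leaves implicit, namely the splitting $A=B\oplus B^\perp$, the $d$-invariance of $B^\perp$, the symmetry of $\pi_B$, and its quasi-isomorphism property.
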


{\bf Harmonic subspaces. }
Let $(A,d,(\cdot,\cdot))$ be a cochain complex with pairing. In view
of~\eqref{eq:DiffCyclic}, the pairing descends to its homology $H(A)$.  
In this subsection we consider the case that $H(A)$ is finite dimensional
and the induced pairing on $H(A)$ is nondegenerate, so $H(A)$ becomes
a cyclic cochain complex with trivial differential.
Our main example for this will be the de Rham complex 
$(\Om^*(M), d,(\cdot,\cdot))$ of a closed oriented manifold with the 
intersection pairing~\eqref{eq:defineintpair}.

By a {\em harmonic subspace} $\HH$ we mean any complement of $\im d$ in
$\ker d$, so that
$$
   \ker d=\HH\oplus\im d.
$$

\begin{lemma}\label{lem:Horthog}
Let $(A,d,(\cdot,\cdot))$ be a cochain complex with pairing such that
$H(A)$ is finite dimensional and the induced pairing on $H(A)$ is nondegenerate.
Let $\HH\subset\ker d$ be a harmonic subspace. Then we get a direct sum decomposition
\begin{equation}\label{eq:Horthog}
  A=\HH\oplus\HH^\perp\quad\text{with}\quad \HH^\perp\cap\ker d=\im d.
\end{equation}
The projection $\pi_\HH:A\to A$ onto $\HH$ along $\HH^\perp$ is
symmetric, and it is a quasi-isomorphism as a map $A\to\HH$. 
\end{lemma}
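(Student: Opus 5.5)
The argument is linear algebra, using that $H(A)$ is finite dimensional with a nondegenerate induced pairing. Both directions of the decomposition rest on one fact: a closed element orthogonal to $\HH$ pairs to zero with every closed element, because it is automatically orthogonal to $\im d$. I would first prove $\HH^\perp\cap\ker d=\im d$, then the splitting $A=\HH\oplus\HH^\perp$, and finally the properties of $\pi_\HH$.

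\emph{Step 1: $\HH^\perp\cap\ker d=\im d$.} For ``$\supseteq$'', take $x=dz$ and $h\in\HH\subset\ker d$. By \eqref{eq:DiffCyclic}, $(dz,h)=\pm(z,dh)=0$, so $\im d\subset\HH^\perp\cap\ker d$. For ``$\subseteq$'', let $x\in\ker d$ with $x\perp\HH$. By the same computation, $x$ is orthogonal to $\im d$, hence to all of $\ker d=\HH\oplus\im d$. So $[x]$ pairs to zero with every class in $H(A)$, and nondegeneracy on $H(A)$ gives $[x]=0$, i.e.\ $x\in\im d$.

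\emph{Step 2: $A=\HH\oplus\HH^\perp$.} If $h\in\HH\cap\HH^\perp$, then $h\in\ker d\cap\HH^\perp=\im d$ by Step 1. Since $\HH\cap\im d=0$, we get $h=0$. For the sum, note that $\HH\to H(A)$, $h\mapsto[h]$, is an isomorphism, and that the pairing restricted to $\HH$ is identified with the nondegenerate pairing on $H(A)$. Hence $\HH$ is finite dimensional and $\HH\times\HH\to\R$ is perfect. Given $x\in A$, the functional $(x,\cdot)|_\HH$ is therefore represented as $(h,\cdot)|_\HH$ for a unique $h\in\HH$. Then $x-h\in\HH^\perp$, so $x=h+(x-h)$. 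The only point needing care here is to keep orthogonality one-sided consistently; graded symmetry \eqref{Eq:Symmetry} makes left and right orthogonality agree, so this causes no trouble. This step is where finite dimensionality is genuinely used.

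\emph{Step 3: properties of $\pi_\HH$.} Let $\pi_\HH$ be the projection onto $\HH$ along $\HH^\perp$.
\begin{itemize}
\item \emph{Symmetry.} Write $x=h+x'$ and $y=k+y'$ with $h,k\in\HH$ and $x',y'\in\HH^\perp$. Then $(\pi_\HH x,y)=(h,k+y')=(h,k)$, and likewise $(x,\pi_\HH y)=(h+x',k)=(h,k)$.
\item \emph{Chain map.} We need $\pi_\HH d=0$ on $A$, since $d\pi_\HH=0$ because $\HH\subset\ker d$. This holds because $\im d\subset\HH^\perp$ by Step 1.
\item \emph{Quasi-isomorphism.} On $\ker d=\HH\oplus\im d$, the map $\pi_\HH$ kills $\im d$ and is the identity on $\HH$. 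So on homology it induces the inverse of the isomorphism $\HH\cong H(A)$, viewing $\HH$ as a complex with zero differential.
\end{itemize}

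I expect the main obstacle to be Step 2, specifically the existence of the decomposition $x=h+(x-h)$, since it relies on the perfectness of the pairing on $\HH$ derived in that step.
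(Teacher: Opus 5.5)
Your proposal is correct and follows essentially the paper's argument. The paper also deduces nondegeneracy of the pairing on $\HH$ from that on $H(A)$ and gets the splitting $A=\HH\oplus\HH^\perp$ from the dual-basis projection $\pi_\HH(x)=\sum_a(e_a,x)e^a$, which is your representation step written in coordinates; it proves $\HH^\perp\cap\ker d=\im d$ exactly as in your Step 1. The only differences are that you prove Step 1 first and use it for directness of the sum, and that you spell out the properties of $\pi_\HH$, which the paper treats as immediate.
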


\begin{proof}
Nondegeneracy of the pairing on $H(A)$ and the fact that every
cohomology class has a unique harmonic representative implies that 
the restriction of the pairing to $\HH$ is nondegenerate.
Pick a basis $\{e_a\}$ of $\HH$ and define its dual basis $\{e^b\}$ of
$\HH$ by $(e_a,e^b)=\delta_a^b$. Then
$$
\pi_\HH:A\to A,\qquad \pi_\HH(x) :=
\sum_a(e_a,x)e^a
$$
is a projection with image $\HH$ and kernel $\HH^\perp$, which
shows the first equation in~\eqref{eq:Horthog}. 
The inclusion $\HH^\perp\cap\ker d\supset\im d$ is obvious. 
For the converse inclusion, consider $x\in\HH^\perp\cap\ker d$.
In view of~\eqref{eq:DiffCyclic} this implies $x\perp \HH\oplus\im
d=\ker d$, and therefore $0=(x,y)=([x],[y])$ for all $y\in\ker d$,
where $[x],[y]$ denote the cohomology classes. By nondegeneracy of the 
pairing on homology this implies $[x]=0$, hence $x\in\im d$. 
This proves the second equation in~\eqref{eq:Horthog}, and the
asserted properties of $\pi_\HH$ follow directly from this. 
\end{proof}

We combine Corollary~\ref{cor:retraction} 
and Lemma~\ref{lem:Horthog} to get the following statement.

\begin{cor}\label{cor:harm-prop}
Let $(A,d,(\cdot,\cdot))$ be a cochain complex with pairing such that
$H(A)$ is finite dimensional and the induced pairing on $H(A)$ is nondegenerate.
Let $\HH\subset\ker d$ be a harmonic subspace.
Then there exists a propagator 
$P\colon A\to A$ such that its projection  
$\pi_P$ is exactly the orthogonal projection onto $\HH$.
\qed
\end{cor}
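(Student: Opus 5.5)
The plan is to deduce the statement by combining Lemma~\ref{lem:Horthog} with Corollary~\ref{cor:retraction}. One issue needs care: Corollary~\ref{cor:retraction} is stated for a \emph{cyclic} complex, meaning one whose pairing is perfect, and $A$ (for instance $\Om^*(M)$) need not be cyclic in that sense. So we will construct the propagator directly. The construction follows the standard argument behind Lemma~\ref{lem:retraction}, using only the decomposition~\eqref{eq:Horthog}.

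\textbf{Step 1: Hodge-type decomposition of $\HH^\perp$.} By Lemma~\ref{lem:Horthog} we have $A=\HH\oplus\HH^\perp$ with $\HH^\perp\cap\ker d=\im d$. The projection $\pi_\HH$ is a symmetric chain map, so $\HH^\perp=\ker\pi_\HH$ is a subcomplex. Its cohomology vanishes, because $\ker d|_{\HH^\perp}=\im d$ and $\im d\subset \HH^\perp$ by~\eqref{eq:DiffCyclic}. Choose a complement $C$ of $\im d$ inside $\HH^\perp$, so that $\HH^\perp=\im d\oplus C$ and $d:C\to\im d$ is an isomorphism. Define $P_0$ to be $-(d|_C)^{-1}$ on $\im d$ and zero on $\HH\oplus C$. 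Then $dP_0+P_0d=\pi_\HH-\Id$. This gives a homotopy operator with respect to $\pi_\HH$, but it is not yet symmetric.

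\textbf{Step 2: symmetrization.} Let $P_0^\dagger$ be the adjoint of $P_0$ with the sign of~\eqref{eq:symmpropprelim}, i.e.\ $(P_0^\dagger x,y)=(-1)^{\deg x}(x,P_0y)$. This adjoint exists because $P_0$ is built from $d$ and $\pi_\HH$, which have adjoints. Alternatively, one can choose $C$ so that the adjoint exists, e.g.\ as a suitable orthogonal complement; in the de Rham case one would take the Hodge codifferential. Using~\eqref{eq:DiffCyclic} and the symmetry of $\pi_\HH$, we check that $P_0^\dagger$ satisfies the same identity~\eqref{eq:P2}. Then the standard trick yields a propagator:
\[
P:=P_0^\dagger\,d\,P_0+\ldots\quad\text{or more simply}\quad P:=P_0-P_0\,\pi'\ldots,
\]
where concretely we take $P:=-(P_0^\dagger d P_0)$ corrected as in \cite[Lemma~2.2]{Cieliebak-Hajek-Volkov}. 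Finally we verify that $P$ is symmetric, satisfies~\eqref{eq:P2} with $\pi=\pi_\HH$, and hence (since $\pi_\HH$ is a symmetric projection onto $\HH$ along $\HH^\perp$) has $\pi_P$ equal to the orthogonal projection.

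\textbf{Main obstacle.} The hard part is Step~2 in the non-finite-dimensional setting, namely ensuring that the adjoint $P_0^\dagger$ exists. If we instead apply Corollary~\ref{cor:retraction} directly, the cyclicity hypothesis fails. The first thing to try is to exploit the splitting $A=\HH\oplus\HH^\perp$. On $\HH$ the pairing is perfect because $\HH$ is finite dimensional with nondegenerate pairing, and on $\HH^\perp$ we only need an operator that is symmetric with respect to the given pairing. So we choose $C$ to be the ``coexact'' part, for which the inverse of $d$ is formally self-adjoint up to sign. In the main application $A=\Om^*(M)$ this is supplied by Hodge theory: take $P=-d^*G$, where $G$ is the Green operator. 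Since $\HH$ is an arbitrary harmonic subspace, we then compose with the symmetric chain isomorphism relating $\pi_\HH$ to the Hodge projection.
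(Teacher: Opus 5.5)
You are right that Corollary~\ref{cor:retraction} needs $A$ to be cyclic, and your Step~1 is correct: $\HH^\perp$ is an acyclic subcomplex, $d\colon C\to\im d$ is an isomorphism, and $P_0$ is a homotopy operator with respect to $\pi_\HH$. The gap is Step~2, which never produces a definite $P$.

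\textbf{Step 2 as written.} Your reason for the existence of $P_0^\dagger$ is wrong. $P_0$ is built from $(d|_C)^{-1}$ and the arbitrary complement $C$, not just from $d$ and $\pi_\HH$. For a pairing that is not perfect, a linear map in general has no adjoint. The displayed formulas for $P$ are placeholders. Your last paragraph only treats $\Om^*(M)$, via Hodge theory and an unspecified pairing-preserving chain isomorphism.

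\textbf{Why it cannot be repaired in this generality.} Without a perfect pairing the statement itself fails. Let $V,W$ be free on an uncountable totally ordered set $I$, with bases $\delta_s,e_t$ and $\langle\delta_s,e_t\rangle=\delta_{st}$; view elements of $V$ as finitely supported functions on $I$.
\begin{itemize}
\item Take the acyclic complex $V\to V\oplus W\to W$, $v\mapsto(v,0)$, $(v,w)\mapsto w$, in degrees $0,1,2$.
\item Give it the degree~$2$ pairing given by $\langle\cdot,\cdot\rangle$ between degrees $0$ and $2$, and by $\omega((v,w),(v',w'))=-\langle v,w'\rangle+\langle v',w\rangle+\sigma(w,w')$ in degree~$1$. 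Here $\sigma(e_s,e_t)$ is $1$, $-1$, $0$ for $s<t$, $s>t$, $s=t$.
\item All axioms of \S\ref{ss:coch} hold, and the pairing is even nondegenerate. Since $H=0$, we have $\HH=0$.
\item Every $P$ with $dP+Pd=-\Id$ has the form $P(v,w)=-v+\alpha w$, $P(u)=(-\alpha u,-u)$ for some $\alpha\colon W\to V$. The symmetry \eqref{eq:symmpropprelim} becomes $\langle\alpha w,u\rangle-\langle\alpha u,w\rangle=\sigma(w,u)$.
\item Choose a countably infinite $J\subset I$, and $s\in I\setminus J$ outside the supports of all $\alpha e_t$, $t\in J$. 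Then the identity forces $(\alpha e_s)(t)\neq0$ for every $t\in J$, which contradicts finite support of $\alpha e_s$.
\end{itemize}

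\textbf{The cyclic case.} The corollary must therefore be read for cyclic $A$. That is the only case in which the paper uses it; for $\Om^*(M)$ it invokes Proposition~\ref{prop:existsprop} instead. There the paper's proof is immediate. Lemma~\ref{lem:Horthog} shows that $\pi_\HH$ is a symmetric projection which is a quasi-isomorphism onto $\HH$. Lemma~\ref{lem:retraction} (packaged as Corollary~\ref{cor:retraction}) then gives a propagator $P$ with $\pi_P=\pi_\HH$.

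Your direct route also works in the cyclic case once Step~2 is done properly, in either of two ways.
\begin{itemize}
\item Adjoints now exist, and $\tfrac12(P_0+P_0^\dagger)$ is symmetric. It still satisfies $dP+Pd=\pi_\HH-\Id$, by \eqref{eq:DiffCyclic} and the symmetry of $\pi_\HH$.
\item Alternatively, choose $C$ with $(C,C)=0$. This is possible because $\im d$ is its own orthogonal complement in $\HH^\perp$. Then $P_0$ is already symmetric, since $(P_0dc_1,dc_2)=-(c_1,dc_2)=(-1)^{\deg dc_1}(dc_1,P_0dc_2)$.
\end{itemize}
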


\subsection{$\IBL_\infty$ and ${\rm dIBL}$ structures}\label{ss:dIBL} 

Here we recall from~\cite{Cieliebak-Fukaya-Latschev} the notions of 
$\IBL_\infty$ structures and their Maurer-Cartan elements, and spell
them out in the special case of $\dIBL$ structures relevant to this paper. 

{\bf $\IBL_\infty$-structures. }
Consider a $\Z$-graded $\R$-vector space $C$. For $k\geq 1$ we
define the $k$-fold symmetric product
$$
    E_kC := C[1]^{\otimes k}/S_k 
$$
as the quotient by the algebraic action of the symmetric group $S_k$. 
According to~\cite{Cieliebak-Fukaya-Latschev}, an {\em $\IBL_\infty$
structure of degree $d\in\Z$} on $C$ is a collection of linear maps
$$
    \fp_{k,\ell,g} : E_kC \to E_{\ell}C,\qquad k\ge 1,\ \ell \ge 1,\
    g\geq 0
$$ 
of degree 
$$
    |\fp_{k,\ell,g}| = -2d(k+g-1)-1
$$ 
satisfying a series of quadratic relations which can formally be
written as $\hat\fp\circ\hat\fp=0$ for the generating series
$\hat\fp$ of the operations $\fp_{k,\ell,g}$. We will spell out these
relations below in the special case of a $\dIBL$ structure relevant
for this paper. 

In~\cite{Cieliebak-Fukaya-Latschev}, the notions of
$\IBL_\infty$-morphisms and their homotopies are defined, and it is
proved that quasi-isomorphisms are homotopy equivalences. Morphisms
from the ground field give rise to the notion of a {\em Maurer-Cartan
  element}. This is a collection of elements 
$$
    \m_{\ell,g}\in \wh E_\ell C,\qquad \ell\geq 1,\ g\geq 0
$$
in a suitable completion $\wh E_\ell C$ of $E_\ell C$ of degree
$$
   |\m_{\ell,g}| = -2d(g-1)
$$ 
satisfying a series of quadratic relations which can formally be
written as $\hat\fp(e^{\m}) = 0$.
A Maurer-Cartan element $\m=\{\m_{\ell,g}\}$ gives rise to a {\em
  twisted $\IBL_\infty$-structure} $\fp^\m=\{\fp_{k,\ell,g}^\m\}$. 
Again, we will spell this out in the relevant special case to which we
now turn. 

{\bf $\dIBL$ structures. }
Let 
$$
  \tau_{alg}:C[1]^{\otimes 2}\to C[1]^{\otimes 2},\qquad 
  N_{alg}:C[1]^{\otimes 3}\to C[1]^{\otimes 3}
$$
be the algebraic action of the flip of two elements and
the symmetrization operator defined in~\S\ref{ss:gradedvect}.
A {\em $\dIBL$-structure of degree $d$} is an $\IBL_\infty$-structure
consisting of only the three operations
\begin{align*}
  \fp_{1,1,0}:C[1]\to C[1],\qquad
  \fp_{2,1,0}:C[1]^{\otimes 2}\to C[1],\qquad
  \fp_{1,2,0}:C[1]\to C[1]^{\otimes 2}
\end{align*}
(with all operations zero) satisfying the symmetry properties 
\begin{align*}
  \fp_{2,1,0}\circ \tau_{alg}=\fp_{2,1,0},\qquad
  \tau_{alg}\circ\fp_{1,2,0}=\fp_{1,2,0}.
\end{align*}
Their degrees are
$$
  |\fp_{1,1,0}|=-1,\qquad |\fp_{2,1,0}|=-2d-1,\qquad |\fp_{1,2,0}|=-1
$$
and the quadratic relations $\hat\fp\circ\hat\fp=0$ spell out as follows:
\begin{align*}
&\fp_{1,1,0}\circ\fp_{1,1,0}=0,\cr
& \fp_{1,1,0}\circ \fp_{2,1,0}+
\fp_{2,1,0}\circ(\fp_{1,1,0}\otimes \id+\id\otimes \fp_{1,1,0})=0,\cr
&(\fp_{1,1,0}\otimes \id+\id\otimes \fp_{1,1,0})\circ \fp_{1,2,0}+
\fp_{1,2,0}\circ\fp_{1,1,0}=0, \cr
&\fp_{2,1,0}\circ(\fp_{2,1,0}\otimes \id)\circ N_{alg}=0,\cr
&N_{alg}\circ(\fp_{1,2,0}\otimes \id)
\circ\fp_{1,2,0}=0,\cr
&\fp_{1,2,0}\circ\fp_{2,1,0}
+(\fp_{2,10}\otimes \id)\circ(\id\otimes\fp_{1,2,0})
+(\id\otimes\fp_{2,10})\circ(\fp_{1,2,0}\otimes \id)=0,\cr
&\fp_{2,1,0}\circ\fp_{1,2,0}=0.
\end{align*}
The first relation says that $\fp_{1,1,0}$ is a differential, the next
two relations show that $\fp_{2,1,0}$ and $\fp_{1,2,0}$ descend to
homology $H(C[1],\fp_{1,1,0})$, and the last four relations mean that
they define on homology the structure of an involutive Lie bialgebra,
see~\cite{Cieliebak-Fukaya-Latschev}. 

{\bf Maurer-Cartan elements in $\dIBL$-algebras. }
Consider now a Maurer-Cartan element $\m=\{\m_{\ell,g}\}$ in a
$\dIBL$-algebra $(C,\fp_{1,1,0},\fp_{2,1,0},\fp_{1,2,0})$. For the
purposes of this paper, we will only need the following two facts.
First, according to~\cite[equation (9.5)]{Cieliebak-Fukaya-Latschev},
if the only nontrivial term in $\m$ is $\m_{1,0}$ then the
Maurer-Cartan equation $\hat\fp(e^{\m}) = 0$ spells out as the two
equations  
\begin{equation}\label{eq:discMC}
\fp_{1,1,0}\m_{1,0}+\frac{1}{2}\fp_{2,1,0}
(\m_{1,0}\otimes\m_{1,0})=0,
\end{equation}
\begin{equation}\label{eq:p120MC}
\fp_{1,2,0}\m_{1,0}=0.
\end{equation}
Second, according to~\cite[equation (9.5)]{Cieliebak-Fukaya-Latschev},
the first three operations of the twisted $\IBL_\infty$ structure
associated to an arbitrary Maurer-Cartan element $\m$ are given by
\begin{equation}\label{eq:twistedoper}
\begin{aligned}
  \fp_{1,1,0}^\m =& \fp_{1,1,0}+\fp_{2,1,0}(\m_{1,0},\cdot),\qquad
  \fp_{2,1,0}^\m = \fp_{2,1,0}, \cr
  \quad &\fp_{1,2,0}^\m = \fp_{1,2,0} + \wh\fp_{2,1,0}^\conn(\m_{2,0},\cdot).
\end{aligned}
\end{equation}
To spell out the expression $\wh\fp_{2,1,0}^\conn(\m_{2,0},\cdot)$, we abbreviate
\begin{equation}\label{eq:p21012}
  p_{210}^{12} := \fp_{2,1,0}\otimes\id: C[1]^{\otimes 3}\to\R.
\end{equation}
According to~\cite[equation~(2.1)]{Cieliebak-Fukaya-Latschev} we then have
\begin{equation}\label{eq:210120hat}
  \wh\fp_{2,1,0} = p_{210}^{12} \circ
  (\id+\tau_{23}+\tau_{23}\circ\tau_{12}): C[1]^{\otimes 3}\to C[1]
\end{equation}
with the algebraic action of the transpositions $\tau_{12},\tau_{23}$
on $C[1]^{\otimes 3}$. Using this, we compute for any $\psi\in C[1]$:
\begin{equation}\label{eq:hat-unwrap}
\begin{aligned}
  \wh\fp_{2,1,0}^\conn(\m_{2,0}\otimes \psi) 
  &\stackrel{(1)}{=} p_{210}^{12}\circ(\tau_{23} +
  \tau_{23}\circ\tau_{12}) (\m_{2,0}\otimes \psi) \cr
  &\stackrel{(2)}{=} 2p_{210}^{12}\circ\tau_{23}(\m_{2,0}\otimes \psi).
\end{aligned}
\end{equation}
Here equality~(1) follows from~\eqref{eq:210120hat}, noting that the
term with $\id$ drops out because it gives rise to a disconnected
surface (see~\cite[\S2]{Cieliebak-Fukaya-Latschev}), and equality~(2)
follows from the symmetry $\m_{2,0}\circ\tau_{12}=\m_{2,0}$.

{\bf The ${\rm dIBL}$ structure associated to a cyclic cochain
  complex. }
Let now $(A=\bigoplus_{i=0}^nA^i,d,(\cdot,\cdot))$ be a cyclic cochain
complex of degree $n$. 
In the following the Einstein summation convention will be understood. 
Let $\{e_a\}$ be a basis of $A$ and $\{e^a\}$ the dual basis with
respect to the {\em cyclic pairing}
\begin{equation}\label{eq:defcycpair}
  \la x,y\ra: =( -1)^{\deg x}(x,y),
\end{equation} 
i.e.
$$
     \la e_a,e^b\ra = \delta_a^b.
$$
We define the coproduct
\begin{equation*}
   c_{120}:BA\longrightarrow BA\otimes BA
\end{equation*}
on elementary tensors $x=x_1\otimes\cdots\otimes x_k$ of homogeneous degree by 
\begin{equation}\label{eq:defc120}
   \begin{aligned}
   c_{120}(x):=\sum_{k_1=0}^k(-1)^{|e^a|\,|x^{(1)}|+|e_a|+(n-1)|e_ax^{(1)}|}(e_a\otimes
   x_1\otimes\cdots \otimes x_{k_1})\otimes\cr 
   (e^a\otimes x_{k_1+1}\otimes\cdots\otimes x_k),
   \end{aligned}
\end{equation}
where we have abbreviated $x^{(1)}:=x_1\otimes\cdots\otimes x_{k_1}$.
Similarly, we define the product 
\begin{equation*}
   c_{210}:BA\otimes BA\longrightarrow BA
\end{equation*}
on elementary tensors $x:=x_1\otimes\cdots\otimes x_{k_1}$ and
$y:=y_1\otimes\cdots\otimes y_{k_2}$ by
\begin{equation}\label{eq:defc210}
   c_{210}(x\otimes y):=\frac{1}{2}(-1)^{|e^a|\,|x|+|e_a|+ (n-1)|x|}e_a\otimes
   x_1\otimes\cdots\otimes x_{k_1}\otimes e^a\otimes y_1\otimes\cdots\otimes
   y_{k_2}. 
\end{equation}
It is straightforward but tedious to verify that the maps
$c_{120}$ and $c_{210}$ do not depend on the chosen basis $\{e_a\}$.
Using the cyclization operator $N_{alg}$ from~\eqref{eq:def-Nalg}, we define
\begin{equation}\label{eq:dIBLcanon}
  \fp_{1,1,0}:=d^*,\qquad
  \fp_{2,1,0}:=(c_{120}\circ N_{alg})^*,\qquad
  \fp_{1,2,0}:=(c_{210}\circ N_{alg}^{\otimes 2})^*.
\end{equation}

\begin{proposition}[{\cite{Cieliebak-Fukaya-Latschev}}]\label{prop:canondIBL}
The triple 
$(\fp_{1,1,0},\,\fp_{2,1,0},\,\fp_{1,2,0})$ defined in~\eqref{eq:dIBLcanon}
constitutes a ${\rm dIBL}$ structure  of degree $n-3$ on $B^{\text{\rm cyc}*}A[2-n]$.
\end{proposition}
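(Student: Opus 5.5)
The plan is to verify the seven quadratic relations directly by dualizing, working on the predual level $B^{\rm cyc}A$. Since $A$ is cyclic, hence finite dimensional in each degree, every relation for the dual operations $\fp_{k,\ell,0}$ is the transpose of a corresponding relation for $d$, $c_{120}\circ N_{alg}$ and $c_{210}\circ N_{alg}^{\otimes2}$. The last two act on cyclic words: $c_{120}$ cuts a cyclic word into two, and $c_{210}$ glues two cyclic words.

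The first step is well-definedness and symmetry. The composite with $N_{alg}$ is needed because $N_{alg}$ factors through $\im(1-t_{alg})$. Concretely, $N_{alg}\circ(1-t_{alg})=0$, so $c\circ N_{alg}$ descends to the cyclic quotient. After a further cyclic symmetrization of the outputs, the operations take values in cyclic words, which gives the maps on $B^{\rm cyc*}A$. Next I check the symmetries $\fp_{2,1,0}\circ\tau_{alg}=\fp_{2,1,0}$ and $\tau_{alg}\circ\fp_{1,2,0}=\fp_{1,2,0}$. These come from swapping the roles of $e_a$ and $e^a$, using the graded symmetry \eqref{Eq:Symmetry} of the cyclic pairing: $e^a$ and $\pm e_a$ form a dual pair again. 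The degree shift $[2-n]$ and the factors $(n-1)|\cdot|$ in the signs are arranged exactly so that the operations have degree $-1$, $-2d-1$ and $-1$ with $d=n-3$. I would also confirm basis independence, since the tensor $e_a\otimes e^a$ is the canonical element dual to the pairing.

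The second step is the compatibility relations with $\fp_{1,1,0}=d^*$. Here the key identity is $d\,e_a\otimes e^a\pm e_a\otimes d\,e^a=0$ in $A\otimes A$. It follows from \eqref{eq:DiffCyclic}: the canonical element is $d$-closed because $d$ is skew-adjoint. Extending $d$ as a derivation to words, the inserted pair $e_a,e^a$ then contributes no extra terms, and $c_{120},c_{210}$ become chain maps. The relation $\fp_{1,1,0}^2=0$ is immediate from $d^2=0$.

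The third step covers the four involutive Lie bialgebra relations. These are the combinatorial heart of the argument. For the Jacobi identity I cut a cyclic word at two pairs of positions using two copies of the canonical element. The three cyclic terms from $N_{alg}$ then cancel pairwise; equivalently, the resulting three-component configuration arises in two ways with opposite signs. Co-Jacobi is handled dually by gluing. The Drinfeld compatibility relation says that cutting after gluing equals gluing after cutting, with the terms matched according to where the cut sits relative to the glued chords. Involutivity $\fp_{2,1,0}\circ\fp_{1,2,0}=0$ needs a word to be cut by one chord and reglued along another. The contribution then forms a cyclic sum in which each term appears twice with opposite sign, or vanishes by the symmetry of $e_a\otimes e^a$.

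I expect the main obstacle to be the sign bookkeeping in this last step, especially for Drinfeld compatibility and involutivity. To keep it manageable I would first treat the case $n$ odd with ungraded intuition, establishing the cancellations as a matching of chord diagrams. Then I would reinstate the signs using the algebraic actions from \S\ref{ss:gradedvect}. Alternatively, and probably more cleanly, I would pass through the diagram \eqref{eq:alg-ana-action2} and do the computation with the analytic action, where the signs become Koszul signs of permutations. This is the route taken in \cite{Cieliebak-Fukaya-Latschev}, which I would follow.
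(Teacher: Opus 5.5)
Your plan is the standard one: the paper gives no argument of its own and simply imports the statement from \cite{Cieliebak-Fukaya-Latschev}, where it is proved by this kind of direct check of the seven relations, dualized to the cyclic bar complex and driven by the $d$-closedness and graded symmetry of the canonical element $e_a\otimes e^a$. One caveat for when you write out the Drinfeld compatibility relation: gluing then cutting also produces terms in which the cutting chord crosses the gluing chord, so that each output word contains one inserted letter from each chord. These terms match no cut-then-glue term, and they must cancel among themselves in pairs, via the same interchange of the two chords that you use for involutivity.
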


Following~\cite{Hajek-thesis}, we denote this dIBL-algebra by 
$$
  \dIBL(A) := (B^{\text{\rm cyc}*}A[2-n],\,\fp_{1,1,0},\,\fp_{2,1,0},\,\fp_{1,2,0}).
$$

\subsection{Differential graded algebras with pairings}\label{ss:dga1} 

In this subsection we recall the Hochschild and cyclic complexes of a
differential graded algebra (DGA), and the canonical Maurer-Cartan
element of a cyclic DGA. 

{\bf DGAs and their Hochschild and cyclic complexes. }
A {\em differential graded algebra (DGA)} $(A,d,\wedge)$ is a 
nonnegatively graded cochain complex $(A,d)$ equipped with an
associative product $\wedge\colon A\times A \to A$ of degree  
$0$ satisfying the Leibniz rule
$$
	d(x\wedge y) = d x \wedge y + 
	(-1)^{\deg x} x \wedge d y.
$$
Let us recall the definition of its Hochschild and cyclic complexes,
see~\cite{Loday} and~\cite[Example 2.8]{Cieliebak-Volkov-cyc}.   
We define 
$$
  C(A) := \bigoplus_{l=0}^\infty C_l(A),\qquad C_l(A):=A\otimes A[1]^{\otimes l}.
$$
Note that this differs from the bar complex $BA$ by a missing degree
shift in the first tensor component, so the identity defines a canonical map of degree $0$
\begin{equation}\label{eq:can-deg-0}
  \iota: C(A)[1]\longrightarrow BA. 
\end{equation}
It is customary to denote decomposable elements of $C_l(A)$ by
$[a_0\mid a_1\mid \cdots\mid a_l]$ with vertical bars in place of
tensor product signs when dealing with the Hochschild complex.
The image $\iota [a_0\mid a_1\mid \cdots\mid a_l]$
in $BA$ will usually be renumbered from 
$1$ to $l+1$. If the degree shift and renumbering are clear from
context we will omit $\iota$ in the notation.

We extend the differential $d$ from $A$ to $C(A)$ as a derivation with
respect to the {\it nonshifted} degrees.
The classical operations on $C(A)$ are defined on decomposable elements
of homogeneous degree by the formulas
\begin{equation}\label{eq:di}
\begin{aligned}
   d_i[a_0\mid\dots\mid a_l] &:= [a_0\mid\dots\mid
   a_i\wedge a_{i+1}\mid\dots\mid a_l]\quad\text{for }0\leq i\leq l-1,\cr
   d_l[a_0\mid\dots\mid a_l] &:= (-1)^{\deg a_l(\deg a_0+\dots+\deg a_{l-1})}
   [a_l\wedge a_0\mid a_1\mid\dots\mid a_{l-1}],\cr 
   t_l[a_0\mid \dots\mid a_l] &:= (-1)^{\deg a_l(\deg a_0+\dots+\deg a_{l-1})}
   [a_l\mid a_0\mid\dots\mid a_{l-1}]. 
\end{aligned}
\end{equation}
These operations give rise to operations $b$, $t$ and $N$ defined 
on homogeneous elements $c\in C_l(A)$ as follows
(cf.~\cite[equation~(4)]{Cieliebak-Volkov-cyc}): 
\begin{equation}\label{eq:cyc-signs}
\begin{aligned}
   b(c) &:=(-1)^{\deg c-l+1}\sum_{i=0}^l(-1)^id_i(c)\in C_{l-1}(A), \cr 
   t &:= (-1)^lt_l:C_l(A)\to C_l(A), \cr
   N &:= 1+t+\cdots+t^l:C_l(A)\to C_l(A).
\end{aligned}
\end{equation}
Note that in the notation of \S\ref{ss:gradedvect} we have $t=t_{an}$
and $N=N_{an}$,
since the sign of the cyclic rotation of $l+1$ elements is $(-1)^l$.
The above operations give rise to the {\em Hochschild complex}
$$
   \bigl(C(A),d+b\bigr)
$$ 
and the {\em Connes cyclic complex} 
\begin{equation}\label{eq:Connes-complex}
   \Bigl(C^{\lambda}(A):=\iota(C(A)[1])/\im(1-t),d+b\Bigr).
\end{equation}
Note that the canonical projection
\begin{equation}\label{eq:connes-deg-0}
C(A)[1]\rightarrow C^{\lambda}(A)
\end{equation}
has degree $0$.
If $A$ is unital we denote by $C^{\lambda}_{(1)}(A)\subset
C^{\lambda}(A)$ the subspace generated 
by elements containing $1$ at some position. Since the differential $d+b$ preserves 
$C^{\lambda}_{(1)}(A)$, this gives rise to the {\em reduced Connes cyclic complex} 
$$
   \Bigl(\overline{C^{\lambda}}(A):=C^{\lambda}(A)/C^{\lambda}_{(1)}(A), d+b\Bigr).
$$
The corresponding homologies are 
\begin{itemize}
\item $HH_*(A):=H(C(A),d+b)$ --- Hochschild homology,
\item $HC^\lambda_*(A):=H(C^{\lambda}(A),d+b)$ --- cyclic homology, 
\item $\overline{HC}^{\lambda}_*(A):=H(\overline{C^{\lambda}}(A),d+b)$
  --- reduced cyclic homology. 
\end{itemize}
The homologies of the corresponding dual complexes are
\begin{itemize}
\item $HH^*(A)$ --- Hochschild cohomology, 
\item  $HC_{\lambda}^*(A)$ --- cyclic cohomology
\item $\overline{HC}_{\lambda}^*(A)$ --- reduced cyclic cohomology.
\end{itemize}

{\bf DGAs with pairing. }
A {\em pairing} on a DGA $(A,d,\wedge)$ is a pairing 
$(\cdot,\cdot)\colon A \times A\to\R$ on the cochain complex $(A,d)$ 
satisfying in addition
\begin{equation}\label{Eq:ProdCyclic}
	( x \wedge y,z) = (x, y\wedge z).
\end{equation}
Using \eqref{Eq:Symmetry}, condition \eqref{Eq:ProdCyclic} is
equivalent to the \emph{cyclicity condition}
\begin{equation}\label{Eq:ProdCyclicOld}
	(x\wedge y, z)  = (-1)^{\deg z(\deg x+\deg y)} (z\wedge x, y).
\end{equation}
The last condition is in turn equivalent to the {\em triple intersection product}
\begin{equation}\label{eq:deftriple}
   \m^\can(x_0,x_1,x_2):= 
   (-1)^{\deg x_1+n}(x_0\wedge x_1,x_2)
\end{equation}
defining an element in $B_3^{\text{\rm cyc}*}A$. 

Our main example of a DGA with pairing is the de Rham algebra
$(\Om^*(M),d,\wedge)$ of a closed oriented manifold $M$ with the wedge
product $\wedge$ and the {\em intersection pairing}
\begin{equation}\label{eq:defineintpair}
  (\alpha,\beta):=\int_M\alpha\wedge\beta.
\end{equation}
Note that the intersection pairing 
$(\cdot,\cdot)$ is nondegenerate but not perfect. 

{\bf Cyclic DGAs and their Maurer-Cartan elements. }
A {\em cyclic DGA} is a DGA with a perfect pairing.  
In this context we have two important examples of Maurer-Cartan
elements. 

\begin{example}\label{ex:canon}
Let $(A,d,\wedge,(\cdot,\cdot))$ be a cyclic DGA. Recall
from~\S\ref{ss:dIBL} the canonical $\dIBL$ structure $\dIBL(A)$
associated to the cyclic cochain complex $(A,d,(\cdot,\cdot))$.
According to~\cite[Proposition 12.5]{Cieliebak-Fukaya-Latschev}, the
element $\m^\can\in B_3^{{\cyc}*}A$ from~\eqref{eq:deftriple} gives
rise to a Maurer-Cartan element $\m^\can$ in $\dIBL(A)$ (denoted by
the same letter by a slight abuse of notation) by setting
$\m^\can_{1,0}:=\m^\can$ and $\m^\can_{\ell,g}:=0$ for $(\ell,g)\neq(1,0)$. We
will call it {\em the canonical Maurer-Cartan element}
associated to a cyclic $DGA$.
\end{example}

The following example goes back to~\cite[Remark~12.11]{Cieliebak-Fukaya-Latschev}. 
Although it will not be used in this paper, we include it for
completeness because it motivates the analytic Maurer-Cartan element
in~\S\ref{ss:MCan}.

\begin{example}\label{ex:pushfalg}
Let $(A,d,\wedge,(\cdot,\cdot))$ be a cyclic DGA and $\HH\subset \AA$
a harmonic subspace. Let $P$ be a propagator corresponding to $\HH$
(which exists according to Corollary~\ref{cor:harm-prop}). 
Then the canonical Maurer-Cartan element $\m^\can$ on $\dIBL(A)$
induces via pushforward under homotopy transfer a Maurer-Cartan
element $\m^\HH$ on $\dIBL(\HH)$. According
to~\cite[Corollary~5.7]{Cieliebak-Volkov}, the value of
$\m^\HH_{\ell,g}$ on a tensor product $\alpha=\alpha^1_1\cdots
\alpha^1_{s_1}\otimes\dots\otimes \alpha^\ell_1\cdots \alpha^\ell_{s_\ell}$
of $\alpha^b_j\in \HH$ is given by  
\begin{equation}\label{eq:pushfalg-lg}
   \m^\HH_{\ell,g}(\alpha) = 
   \sum_{\Gamma\in \RR_{\ell,g}}\m^\HH_\Gamma(\alpha)
\end{equation} 
with
\begin{align}\label{eq:pushfalg}
    \m^\HH_\Gamma(\alpha)
    := \frac 1 {\ell!}\sum_{\frak i \in
      \frak I}\pm\hspace{-10pt} \prod_{t\in
    \Edge(\Gamma)} \hspace{-10pt}P^{\frak i(1,t),\frak i(2,t)}\hspace{-10pt}
    \prod_{v \in \Ver(\Gamma)}\hspace{-10pt} \m^{\can}(e_{\frak i(v,1)}, e_{\frak i(v,2)}, e_{\frak i(v,3)}).
\end{align} 
Here $\RR_{\ell,g}$ denotes the set of isomorphism classes of
connected trivalent ribbon graphs $\Gamma$ of genus $g$ with $\ell$
boundary components (see~\S\ref{sec:graphs}). Given such a graph
$\Gamma$, we denote by $\frak I$ the set of maps from the interior
flags of $\Gamma$ to the index set $I$ for a chosen basis
$\{e_a\}_{a\in I}$ of $A$, and by $P^{ab}=\la Pe^a,e^b\ra$ the
components of $P$ with respect to the dual basis $\{e^a\}$.
\end{example}

\section{$A_\infty$-algebras}\label{sec:ainf}

In this section we recall some background on $A_\infty$-algebras, see
e.g.~\cite{Keller}, and generalize some notions of
Section~\S\ref{ss:gradedvect} to this setting.  

{\bf Notation.}
Throughout this paper we abuse notation by writing the sign
$(-1)^{\deg a}$ associated to an element as $(-1)^a$, the sign of
a permutation $\sigma$ as $(-1)^\sigma$, etc. This becomes a bit awkward
when we combine sign exponents to expressions like $a+\eta$, but it
should always be clear what is meant. 
\smallskip

Let $\AA=\bigoplus_{i\geq 0}\AA^i$ be a nonnegatively graded $\R$-vector space.
A sequence $\fm=\{\fm_k\}_{k\ge 1}$ of operations 
$$
   \fm_k:\AA[1]^{\otimes k}\longrightarrow \AA[1],\quad k\ge 1
$$
of degree $1$ is called an {\em $A_\infty$-structure} on $\AA$ if for each
integer $r\geq 1$ the following holds:
$$
\sum_{k+l=r+1,\atop k,l\ge 1}\sum_{c=1}^k
\fm_k\circ (\id^{\otimes (c-1)}\otimes \fm_l\otimes\id^{\otimes (k-c)})=0.
$$
Here we adopt the convention to write operations to the left of elements and stipulate 
that moving an element $x$ past an operation of degree $q$ produces a sign $(-1)^{q|x|}$.
The pair $(\AA,\fm)$ is then called an {\em $A_\infty$-algebra}. 
Note that $\fm_1$ is a differential on $\AA$.

Let $(\AA,\fm)$ and $(\BB,\fn)$ be two $A_\infty$-algebras. A collection
$\ff=\{\ff_k\}_{k\ge 1}$ of linear maps
$$
  \ff_k:\AA[1]^{\otimes k}\longrightarrow \BB[1]
$$
of degree $0$ is called an {\em $A_\infty$-morphism} if the following
identity holds for all $k\ge 1$: 
$$
\sum_{i+j<k}\ff_{i+j+1}\circ (id^{\otimes i}\otimes \fm_{k-(i+j)}\otimes id^{\otimes j})=
\sum_{i_1+\cdots+i_r=k}\fn_r\circ (\ff_{i_1}\otimes\cdots\otimes\ff_{i_r}).
$$
The map $\ff_1:\AA[1]\rightarrow \BB[1]$ is called the {\em linear
  part} of $\ff$. Note that $\ff_1$ is a chain map 
with respect to the differentials $\fm_1$ on $A[1]$ and $\fn_1$ on $B[1]$. 
A morphism $\ff$ is called a {\em quasi-isomorphism} if its linear
part induces an isomorphism on homology, and in this case the two
$A_\infty$- algebras are called {\em quasi-isomorphic}. It follows
from~\cite[Section~10.4.3]{Loday-Vallette} that being quasi-isomorphic
is an equivalence relation. For the notions of homotopy of
$A_\infty$-morphisms and homotopy equivalence see~\cite[Section~3.7]{Keller}.
The following classical result can be 
found in~\cite{Lefevre-Hasegawa}. See also~\cite[Theorem 6]{Vallette}. 

\begin{theorem}\label{thm:quismhequiv}
Two $A_\infty$-algebras are quasi-isomorphic if and 
only if they are homotopy equivalent.
\end{theorem}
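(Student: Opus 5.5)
The statement is Theorem~\ref{thm:quismhequiv}: two $A_\infty$-algebras are quasi-isomorphic if and only if they are homotopy equivalent. The easy direction is that homotopy equivalent implies quasi-isomorphic. If $\ff:\AA\to\BB$ and $\fg:\BB\to\AA$ are $A_\infty$-morphisms with $\fg\circ\ff$ and $\ff\circ\fg$ homotopic to the identities, then the linear part of a homotopy is a chain homotopy between linear parts. Hence $H(\ff_1)$ and $H(\fg_1)$ are mutually inverse on $H(\AA[1],\fm_1)$ and $H(\BB[1],\fn_1)$, so $\ff$ is a quasi-isomorphism.

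For the converse, it suffices to show that every $A_\infty$-quasi-isomorphism $\ff:\AA\to\BB$ has a homotopy inverse. Since quasi-isomorphism is generated by such morphisms, a chain of them composes to a homotopy equivalence, as homotopy equivalence is transitive. The plan is to pass through minimal models via homotopy transfer, in the form of the Kadeishvili/Kontsevich--Soibelman construction. Over $\R$ every cochain complex deformation retracts onto its homology. So choose a harmonic-type splitting $\AA[1]=H_\AA\oplus\im\fm_1\oplus C$ with inclusion $i$, projection $p$, and a homotopy $h$ satisfying $\fm_1h+h\fm_1=ip-\id$. The tree formulas, sums over planar rooted trees with vertices $\fm_k$ and internal edges $h$, then produce:
\begin{itemize}
\item a minimal $A_\infty$-structure $\fm'$ on $H_\AA$;
\item an $A_\infty$-quasi-isomorphism $\mathfrak{i}:H_\AA\to\AA$ extending $i$;
\item an $A_\infty$-morphism $\mathfrak{p}:\AA\to H_\AA$ extending $p$;
\item homotopies $\mathfrak{p}\circ\mathfrak{i}=\id$ and $\mathfrak{i}\circ\mathfrak{p}\simeq\id$.
\end{itemize}
Thus each $A_\infty$-algebra is homotopy equivalent to its minimal model. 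Do the same for $\BB$.

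Next consider the composite $\mathfrak{p}_\BB\circ\ff\circ\mathfrak{i}_\AA:H_\AA\to H_\BB$. This is an $A_\infty$-morphism between minimal algebras whose linear part is the isomorphism $H(\ff_1)$. An $A_\infty$-morphism with invertible linear part between minimal algebras is strictly invertible. One constructs the inverse $\fg_k$ inductively on $k$ by solving the morphism equation, where the $k$-th component is forced by lower components composed with $\ff_1^{-1}$. This is an elementary recursion on the bar coalgebra: a coalgebra map of $T^c(H[1])$ with invertible linear part is invertible. Composing, $\fg:=\mathfrak{i}_\AA\circ(\text{strict inverse})\circ\mathfrak{p}_\BB$ is a homotopy inverse to $\ff$. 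This uses that homotopy of $A_\infty$-morphisms is an equivalence relation compatible with composition, for the notion of homotopy in \cite[Section~3.7]{Keller}.

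The main obstacle is verifying that $\mathfrak{i}\circ\mathfrak{p}$ is homotopic to the identity in the precise sense of $A_\infty$-homotopy, and that homotopy is compatible with composition. I would handle this by the standard approach of encoding morphisms as dg coalgebra maps of the bar constructions $B\AA$. Homotopies then become $(\ff,\fg)$-coderivations, and the homotopy $\id\simeq\mathfrak{i}\mathfrak{p}$ is given explicitly by tree sums with one distinguished $h$ at the root, following \cite{Lefevre-Hasegawa}. Since this result is classical, in the paper itself I would simply cite \cite{Lefevre-Hasegawa} and \cite[Theorem~6]{Vallette} rather than reproduce these verifications.
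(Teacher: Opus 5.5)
Your proposal is correct and ends where the paper does: the paper gives no argument of its own and simply cites \cite{Lefevre-Hasegawa} and \cite[Theorem~6]{Vallette}. Your sketch (linear parts of homotopies for the easy direction; minimal models via homotopy transfer, strict invertibility of an $A_\infty$-morphism with invertible linear part, and composition for the converse) is the standard argument behind that citation. The genuinely nontrivial inputs --- that $A_\infty$-homotopy is an equivalence relation compatible with composition, and that $\mathfrak{i}\circ\mathfrak{p}\simeq\id$ --- are precisely what you correctly import from those references; Vallette instead obtains the result model-categorically, as Whitehead's theorem for bifibrant objects in dg coalgebras.
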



\subsection{Hochschild and cyclic homology of $A_\infty$-algebras}\label{ss:hochcycainf}

Let $(\AA,\fm)$ be an $A_\infty$-algebra. We define its bar complex
as in~\S\ref{ss:gradedvect} as the graded vector space
$$
  B\AA:=\bigoplus_{l\ge 1}B_l\AA,\qquad B_l\AA:=\AA[1]^{\otimes l}.
$$
Recall the maps $t_{alg}$ and $N_{alg}$ from~\S\ref{ss:gradedvect},
which in the context of $A_\infty$-algebras we denote by 
$$
  t_\AA:=t_{alg},\qquad N_\AA:=N_{alg}.
$$
We define the differential $b_\AA$ on $B_l\AA$
by the formula
$$
  b_\AA := \sum_{k=1}^l(\fm_k\otimes id^{\otimes (l-k)})\circ
  (1+t_\AA+\cdots+t_\AA^{k-1}) + \sum_{k=1}^l\sum_{c=1}^{l-k}id^{\otimes c}\otimes
  \fm_k\otimes id^{\otimes (l-k-c)}.
$$
This differential descends to the cyclic bar complex
$$
   B^{\text{\rm cyc}}\AA = B\AA/\im(1-t_\AA).
$$
It agrees with~\cite[Remark~2.4]{Mescher} after suitably adjusting signs.
The corresponding discussion with our sign conventions can be found 
in~\cite[Section~3.3]{Hajek-thesis}.
The complexes $(B\AA,b_\AA)$ and $(B^{\text{\rm cyc}}\AA,b_\AA)$ are called the
{\em Hochschild and cyclic complex}, respectively, and the
corresponding homologies 
$$
    HH(\AA) := H(B\AA,b_\AA),\qquad HC^\lambda(\AA) := H(B^{\text{\rm cyc}}\AA,b_\AA)
$$
are called the {\em Hochschild and cyclic homology} of the $A_\infty$-algebra $\AA$.

\begin{remark}
We use the notation $B\AA$ and $B^{\text{\rm cyc}}\AA$ in order to distinguish it
from the Hochschild and cyclic complexes $C(A)$ and $C^\lambda(A)$ of a DGA.
\end{remark}

\subsection{From DGAs to $A_\infty$-algebras}\label{ss:dgaainf} 

Next we explain the transition between DGAs and
$A_\infty$-algebras. This will be used frequently in the sequel to
match the de Rham complex (which is a DGA) with the results
in~\cite{Cieliebak-Fukaya-Latschev} (which are formulated for
$A_\infty$-algebras).  

Let $(A,\wedge,d)$ be a DGA. We turn it into an $A_\infty$-algebra as follows. 
Set $\AA:=A$ as a graded vector space and equip it with the operations
\begin{equation}\label{eq:dgatoainf}
   \fm_1:=d,\qquad \fm_2(x,y):=(-1)^xx\wedge y,\qquad
   \fm_k:=0 \text{ for }k\ge 3.
\end{equation}

In this situation we have two Hochschild complexes, $(B\AA,b_\AA)$
from~\S\ref{ss:hochcycainf} and $(C(A),d+b)$ from~\S\ref{ss:dga1}. 
The underlying vector spaces are the same by construction, but their
gradings differ by $1$. 
Our next goal is to relate the differentials.      
Recall the sign operator $P$ from~\eqref{eq:defP}. 
We introduce another sign operator
$$
  Q:\bigoplus_{l\ge 1} A^{\otimes l}\longrightarrow \bigoplus_{l\ge 1} A^{\otimes l}
$$
acting on decomposable elements 
$x=x_1\otimes\cdots\otimes x_l$ of homogeneous degree by
\begin{equation}\label{eq:defQ}
  Q(x_1\otimes\cdots\otimes
  x_l):=(-1)^{Q(x)}x_1\otimes\cdots\otimes x_l,\qquad
  Q(x)=x(l+1)+\frac{l(l+1)}{2}.
\end{equation}
The following lemma is a straightforward but tedious sign computation,
see~\cite{Volkov-thesis}. 

\begin{lemma}\label{lem:QP}
In the setup above and with $t$ defined in~\eqref{eq:cyc-signs} we have
$$
Q\circ t=t\circ Q,\qquad P\circ t_\AA=t\circ P
$$
and
$$
Q\circ P\circ b_\AA=(d+b)\circ Q\circ P.
$$
\end{lemma}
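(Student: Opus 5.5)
The plan is a direct sign computation on decomposable elements $x=x_1\otimes\cdots\otimes x_l$ of homogeneous degree, organised so that only a few local sign comparisons are needed. Throughout I write $\deg x$ for the total unshifted degree $\sum_j\deg x_j$, which is the quantity entering $Q(x)$; the total shifted degree is then $\sum_j|x_j|=\deg x-l$.

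\emph{Commutation with $t$.} Since $Q$ multiplies $x$ by a sign depending only on $\deg x$ and $l$, and $t=t_{an}$ preserves both, we get $Q\circ t=t\circ Q$ at once. For $P\circ t_\AA=t\circ P$ we compare signs on $x$. The left side has sign exponent
$$|x_l|\sum_{j<l}|x_j|+P(x_l\otimes x_1\otimes\cdots\otimes x_{l-1}),$$
and the right side has exponent
$$P(x)+(l-1)+\deg x_l\sum_{j<l}\deg x_j.$$
Using $P(x_l,x_1,\dots,x_{l-1})-P(x)=(l-1)\deg x_l-\sum_{j<l}\deg x_j$, together with $|x_j|=\deg x_j-1$ expanded modulo $2$, the difference reduces to $0$. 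This is a short check. It is essentially the case $\ell=1$ of diagram~\eqref{eq:alg-ana-action2}, which I would simply cite.

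\emph{Differentials.} For the identity $Q\circ P\circ b_\AA=(d+b)\circ Q\circ P$, I split $b_\AA$ into its $\fm_1$-part and its $\fm_2$-part, since $\fm_k=0$ for $k\ge3$ by~\eqref{eq:dgatoainf}.

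The $\fm_1$-part is $\sum_c \id^{\otimes c}\otimes d\otimes\id$ with Koszul signs $(-1)^{|x_1|+\cdots+|x_{c}|}$. It should match $d$ extended as a derivation with respect to unshifted degrees. The operator $QP$ changes $P$ by $l-c$ for $d$ applied to $x_{c+1}$, and changes $Q$ by $l+1$. Combined, these turn the shifted Koszul sign into the unshifted one, up to a global sign that is uniform in $c$ and that I expect to cancel.

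The $\fm_2$-part consists of the terms $\id^{\otimes c}\otimes\fm_2\otimes\id$, with $\fm_2(a,b)=(-1)^a a\wedge b$, together with the wrap-around term $\fm_2\circ(1+t_\AA)$. Its second summand produces $x_l\wedge x_1$ and should correspond to $d_{l-1}$ in~\eqref{eq:di}; the remaining summands correspond to $d_i$. These terms map $B_l$ to $B_{l-1}$, so $Q$ changes by $\deg x\cdot 1+l$. This accounts for the prefactor $(-1)^{\deg c-l+1}$ in $b$. Meanwhile the shift in $P$ from merging positions $i+1,i+2$ produces $\sum_{j>i+1}\deg x_j+\dots$, which should combine with the Koszul sign of moving $\fm_2$ past $x_1,\dots,x_i$ to give exactly $(-1)^i$.

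I expect the main obstacle to be the wrap-around term, because there three signs interact: the cyclic sign of $t_\AA$, the sign in $d_l$, and the renumbering convention for $\iota$ (which sends $a_0$ to position $1$). I would handle it by using the already proven $P t_\AA=tP$ to rewrite $QP\fm_2 t_\AA=Q\fm_2' tP$, reducing it to the non-wrapped case composed with $t$, and then using $d_l=d_0\circ t_l$ up to the sign $(-1)^l$ relating $t$ and $t_l$. All remaining steps are bookkeeping modulo $2$, as in~\cite{Volkov-thesis}.
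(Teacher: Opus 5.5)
Your proposal is correct and follows the same route as the paper, which records the lemma as a straightforward but tedious sign check on decomposable tensors and refers to Volkov's thesis for the details. Your reduction of the wrap-around term to the $c=0$ term, via $P\circ t_\AA=t\circ P$, $Q\circ t=t\circ Q$, $t=(-1)^{l-1}t_{l-1}$ and $d_0\circ t_{l-1}=d_{l-1}$ on $C_{l-1}(A)$, works and is a clean way to organise it.

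When you write it out, fix three bookkeeping points:
\begin{itemize}
\item The $P$-shift for $d$ on $x_{c+1}$ is $l-c-1$, not $l-c$. With $l-c-1$ no global sign survives; with your count a spurious $-1$ would remain.
\item The $P$-shift for merging positions $i+1,i+2$ is $-\sum_{j\le i+1}\deg x_j$. Together with $\sum_{j\le i}|x_j|$ and the sign $(-1)^{\deg x_{i+1}}$ in $\fm_2$, this gives exactly $(-1)^i$.
\item The prefactor of $b$ on $C_{l-1}(A)$ is $(-1)^{\deg c-(l-1)+1}$. This equals the change $(-1)^{\deg x+l}$ in $Q$ only if $\deg c$ is read as the unshifted degree $\sum_j\deg x_j$; with the shifted reading the identity fails for even $l$.
\end{itemize}
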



\subsection{Functoriality of Hochschild and cyclic homology}\label{ss:functoriality}

It is a standard fact
that the Hochschild complex is functorial, see
e.g.~\cite[Theorem~2.8]{Mescher}. That is,  
a morphism $\ff:\AA\rightarrow \BB$ of $A_\infty$-algebras induces 
a chain map 
\begin{equation}\label{eq:fF}
  \fF = \bigoplus_{1\le l\le N}\fF_{N;l}:(B\AA,b_\AA)\longrightarrow (B\BB,b_\BB)
\end{equation}
defined by
\begin{equation}\label{eq:fFlevelwise}
\fF_{N;l}:=\sum_{i_1+\cdots+i_l=N}
(\ff_{i_1}\otimes\cdots\otimes\ff_{i_l})\circ 
(1+t_\AA+\cdots+t_\AA^{i_1-1}):B_N\AA\to B_l\BB.
\end{equation}
The cyclic complex is also functorial. Indeed, observe that
$$
(1+t_\AA+\cdots+t_\AA^{i_1-1})\circ(1-t_\AA)=1-t_\AA^{i_1},\quad 
(f_{i_1}\otimes\cdots\otimes f_{i_l})\circ t_\AA^{i_1}=
t_\BB\circ (f_{i_2}\otimes\cdots\otimes f_{i_l}\otimes f_{i_1}),
$$
and therefore
$$
\fF_{N;l}\circ (1-t_\AA)=\sum_{i_1+\cdots+i_l=N}(\ff_{i_1}\otimes\cdots\otimes\ff_{i_l})\circ (1-t_\AA^{i_1})
=(1-t_\BB)\sum_{i_1+\cdots+i_l=N}\ff_{i_1}\otimes\cdots\otimes\ff_{i_l}.
$$
Hence $\fF_{N;l}$ maps $\im(1-t_\AA)$ to 
$\im(1-t_\BB)$, so the map $\fF$ descends to a chain map between the cyclic complexes.

\begin{lemma}\label{lem:iso1}
Let $\ff:(\AA,\fm)\rightarrow (\BB,\fn)$ be a morphism of
$A_\infty$-algebras whose linear part induces an isomorphism on
homology 
$$
\ff_{1*}:H_*(\AA,\fm_1)\stackrel{\cong}{\longrightarrow}H_*(\BB,\fn_1).
$$
Then $\fF$ induces isomorphisms on Hochschild and cyclic homology
$$
\fF_{*}: HH_*(\AA)\stackrel{\cong}\longrightarrow HH_*(\BB),\qquad
\fF_{*}: HC_*^{\lambda}(\AA)\stackrel{\cong}\longrightarrow HC_*^{\lambda}(\BB).
$$
\end{lemma}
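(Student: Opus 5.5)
The plan is a filtration and spectral sequence comparison. Filter $B\AA$ by tensor length, setting $F_pB\AA:=\bigoplus_{l\le p}B_l\AA$, and filter $B\BB$ in the same way. The differential $b_\AA$ only lowers or preserves tensor length: the terms involving $\fm_1$ preserve it and the terms involving $\fm_k$ with $k\ge2$ lower it strictly. Hence $F_p$ is a subcomplex. By~\eqref{eq:fFlevelwise}, $\fF$ maps $B_N\AA$ into $\bigoplus_{l\le N}B_l\BB$, so it is filtration preserving. Its component $\fF_{N;N}$ preserves length and is equal to $\ff_1^{\otimes N}$, since the only decomposition $i_1+\dots+i_N=N$ has all $i_j=1$ and then the cyclic sum reduces to $1$.

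First I would identify the associated graded complexes. We have $\mathrm{gr}_N B\AA=(\AA[1]^{\otimes N},\fm_1^{\otimes})$, where $\fm_1^{\otimes}$ is the induced tensor differential. The induced map on associated graded pieces is $\ff_1^{\otimes N}$. Working over the field $\R$, the K\"unneth theorem shows that $\ff_1^{\otimes N}$ is a quasi-isomorphism, because $\ff_{1*}$ is an isomorphism. The filtration is exhaustive and bounded below, with $F_0=0$ and $F_p$ finite in each length. The classical comparison theorem therefore gives that $\fF$ induces an isomorphism on $HH$. One can also avoid spectral sequences: induct on $p$, using the five lemma on the long exact sequences of $0\to F_{p-1}\to F_p\to \mathrm{gr}_p\to0$, and then pass to the colimit, since homology commutes with direct limits (direct sums).

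For cyclic homology I would run the same argument on $B^{\rm cyc}\AA$ with the induced length filtration. We showed above that $\fF$ descends to a chain map there. The associated graded piece is now $(\AA[1]^{\otimes N}/\im(1-t_\AA),\fm_1^{\otimes})$, and the induced map is $\ff_1^{\otimes N}$ on coinvariants. This step is the only one that needs care. Over $\R$, coinvariants of the finite group $\Z_N$ are isomorphic to invariants via the averaging projection $\frac1N N_\AA$. This projection commutes with $\fm_1^{\otimes}$ and with $\ff_1^{\otimes N}$, because both are $\Z_N$-equivariant with respect to the algebraic action under the Koszul sign conventions. So the homology of the coinvariant complex is the $\Z_N$-coinvariant part of $H(\AA[1]^{\otimes N})$, i.e.\ a natural direct summand. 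The quasi-isomorphism $\ff_1^{\otimes N}$ restricts to an isomorphism on this summand. The same five lemma and colimit argument then gives the isomorphism on $HC^\lambda$.

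I expect the main obstacle to be only the bookkeeping: checking that $\fm_1^{\otimes}$ commutes with $t_\AA$ and that $\ff_1^{\otimes N}$ commutes with $t_\AA$, $t_\BB$ with matching signs. Both follow because these maps have degree $1$ and degree $0$ respectively, and they are applied factorwise under the Koszul rule.
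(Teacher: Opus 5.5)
Your proposal is correct and follows the paper's route: filter by word length, identify the associated graded as $(\AA[1]^{\otimes N},\fm_1)$ on which $\fF$ acts by $\ff_1^{\otimes N}$, apply K\"unneth, and conclude with the comparison theorem for exhaustive, bounded-below filtrations; your five-lemma-plus-colimit variant is an equivalent elementary replacement for that last step. For the cyclic case, you use that in characteristic zero homology commutes with $\Z_N$-coinvariants via the averaging idempotent $\frac1N N_\AA$, which correctly fills in the detail the paper leaves as ``analogous''.
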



\begin{proof}
We give the argument for Hochschild homology (see
also~\cite[Theorem~2.10]{Mescher}); the argument for cyclic homology 
is analogous. 
It uses standard techniques from spectral sequences, see e.g.~\cite{Weibel}.  
Consider the word-length filtration on $B\AA$. The corresponding spectral sequence is 
exhaustive and bounded from below. Therefore, by the Classical Convergence Theorem~\cite[Theorem~5.5.1]{Weibel}, it converges to $HH_*(\AA)$. Its first page is
$$
   E_1^{(p,q)}(\AA) = H^p(\AA^{\otimes q},\fm_1),
$$
where $\fm_1$ is extended to $\AA^{\otimes q}$ as a derivation. An
analogous discussion applies to $\BB$. By the K\"unneth formula,
$\ff_1$ induces an isomorphism between the first pages 
$$
   E_1^{(p,q)}(\AA)\stackrel{\cong}\longrightarrow E_1^{(p,q)}(\BB).
$$
Therefore, by~\cite[Theorem~5.2.12]{Weibel}, the map $\fF$ induces an isomorphism 
between the Hochschild homologies.  
\end{proof}

\subsection{Homotopy transfer}\label{ss:homottrans}

In this subsection we recall the homotopy transfer theorem by
Kontsevich and Soibelman.

\begin{prop}[Homotopy transfer for
$A_\infty$-algebras~\cite{Kontsevich-Soibelman}]\label{prop:KS}  
Let $(\AA,\{\fm_k\}_{k\ge 1})$ be an $A_\infty$-algebra and write $d=\fm_1$.  
Suppose we are given a subcomplex $\BB\subset(\AA,d)$ with inclusion
$\iota:\BB\into \AA$, a projection $\pi:\AA\to \BB$ left inverse to
$\iota$, and a homotopy operator 
$P$ with respect to $\pi$ (see~\S\ref{ss:coch}). 
Then there exists a canonical $A_\infty$-structure
$\{\fm^\KS_k\}_{k\ge 1}$ on $\BB$ with $\fm^\KS_1=d$,
together with an $A_\infty$-homotopy equivalence
$$
  \fg=\{\fg_k\}_{k\ge 1}:(\BB,\{\fm^\KS_k\}_{k\ge 1})
  \stackrel{\sim}\longrightarrow (\AA,\{\fm_k\}_{k\ge 1}) 
$$ 
with $\fg_1=\iota$.
\end{prop}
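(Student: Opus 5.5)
The plan is the standard tree-formula construction of Kontsevich--Soibelman, with everything built from $\iota$, $\pi$ and $P$. Write $\fm_{\ge2}$ for the higher operations of $\AA$, and view $P$ as a degree $0$ map on $\AA[1]$ (it has degree $-1$ on $\AA$). Then define $\fg_k:\BB[1]^{\otimes k}\to\AA[1]$ recursively by $\fg_1:=\iota$ and
\begin{equation*}
  \fg_k := \sum_{r\ge 2}\ \sum_{i_1+\dots+i_r=k} P\circ\fm_r\circ(\fg_{i_1}\otimes\cdots\otimes\fg_{i_r}),\qquad k\ge 2,
\end{equation*}
with a global sign convention chosen to match the $A_\infty$-relations as stated in this section. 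Next set $\fm^\KS_1:=d|_\BB$ and, for $k\ge2$,
\begin{equation*}
  \fm^\KS_k := \sum_{r\ge2}\ \sum_{i_1+\dots+i_r=k}\pi\circ\fm_r\circ(\fg_{i_1}\otimes\cdots\otimes\fg_{i_r}).
\end{equation*}
Unwinding the recursion expresses both $\fg_k$ and $\fm^\KS_k$ as sums over planar rooted trees with $k$ leaves and internal vertices of valence $\ge3$: leaves carry $\iota$, vertices carry $\fm_r$, internal edges carry $P$, and the root carries $P$ (for $\fg$) or $\pi$ (for $\fm^\KS$). These formulas are canonical, since they depend only on $(\iota,\pi,P)$.

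The key identity to prove is the $A_\infty$-morphism equation for $\fg$, and it goes by induction on $k$. Introduce the defect
\begin{equation*}
  \Phi_k := \sum_{r}\sum_{i_1+\dots+i_r=k}\fm_r(\fg_{i_1}\otimes\cdots\otimes\fg_{i_r}) - \sum \fg_{i+j+1}\circ(\id^{\otimes i}\otimes\fm^\KS_{k-i-j}\otimes\id^{\otimes j}),
\end{equation*}
which we want to show vanishes. Split off the $r=1$ term $d\fg_k$. Then apply \eqref{eq:P2} in the form $dP=\pi-\Id-Pd$ to that term:
\begin{enumerate}[(a)]
\item the $\pi$-part cancels against $\iota\circ\fm^\KS_k$, which is the $\fg_1\fm^\KS_k$ term in the second sum;
\item the $-\Id$ part cancels the $r\ge2$ terms of the first sum;
\item the $-Pd$ part leaves $P$ applied to $d\circ\fm_r(\fg\otimes\cdots)$.
\end{enumerate}
On the remaining $-Pd$ term, use the $A_\infty$-relations of $\AA$ to replace $\fm_1\fm_r$ by the quadratic terms $-\sum\fm_a(\dots\fm_b\dots)$ and by $\fm_r$ composed with $d$ acting on an input. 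Then apply the induction hypothesis $\Phi_{i}=0$ for $i<k$ to the inputs $d\fg_{i}$.

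What survives after this substitution is $P$ applied to the terms with $\fm^\KS$ inserted inside lower $\fg$'s. This is exactly $\sum_{i+j+1<k}\fg_{\dots}(\dots\fm^\KS\dots)$, which completes the cancellation, so $\Phi_k=0$. As a by-product, composing this identity with $\pi$ and using $\pi\circ P=0$ would be the natural route to $\pi\iota=\id$ giving the $A_\infty$-relations for $\fm^\KS$. The subtlety is that $\pi P=0$ is not assumed: only $\pi^2=\pi$ and \eqref{eq:P2} are given. So I would instead derive the $A_\infty$-relations for $\fm^\KS$ directly. Since $\fg_1=\iota$ is injective, this can be done via the tree expansion: the relation $\sum\fm^\KS\circ\fm^\KS=0$ follows by applying $\pi$ to the morphism identity and checking tree by tree, using $\pi d=d\pi$ and $\pi(\pi-\Id)=0$. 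If necessary, one first replaces $P$ by $P':=(\Id-\pi)P(\Id-\pi)$ and then $P'':=P'dP'$-type corrections to achieve the side conditions $\pi P=P\iota=P^2=0$. This replacement does not change the fact that $P$ is a homotopy to $\pi$.

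Finally, $\fg$ is a quasi-isomorphism because $\fg_1=\iota$ and $\pi\iota=\id$, while $\iota\pi\simeq\id$ via $P$, so $\iota$ induces an isomorphism on $\fm_1$-homology. Theorem~\ref{thm:quismhequiv} then upgrades $\fg$ to an $A_\infty$-homotopy equivalence. I expect the main obstacle to be the sign bookkeeping in the inductive cancellation, together with the side-condition issue above, since the argument needs $\pi P=0$ to make the trees closed under the recursion.
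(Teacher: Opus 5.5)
Your recursive definitions of $\fg_k$ and $\fm^\KS_k$ unwind to exactly the tree formulas the paper outlines (the paper otherwise defers to the literature), and your inductive proof of the morphism identity for $\fg$ is sound. Two small corrections there. First, $P$ has degree $-1$ on $\AA[1]$ too, since a shift does not change the degree of an endomorphism; this is precisely why $P\circ\fm_r\circ(\cdots)$ has degree $0$. Second, the leftover terms must include the insertions of $\fm^\KS_1=d$ into $\fg_k$, i.e.\ all terms other than $\fg_1\fm^\KS_k$, not only those with $i+j+1<k$.

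The genuine gap is in the $A_\infty$-relations for $\fm^\KS$. Write $M_k:=\sum_{r\ge2}\sum\fm_r(\fg_{i_1}\otimes\cdots\otimes\fg_{i_r})$, so that $\fg_k=PM_k$ and $\fm^\KS_k=\pi M_k$ for $k\ge2$.
\begin{itemize}
\item Applying $\pi$ to the level-$k$ morphism identity only gives $\pi P\bigl(\fm_1M_k+\sum M_m(\cdots\fm^\KS_j\cdots)\bigr)=0$. This carries no information, and it is literally $0=0$ when $\pi P=0$. So neither route you describe produces $\sum\fm^\KS(\cdots\fm^\KS\cdots)=0$.
\item Side conditions are not the issue. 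Your fallback of replacing $P$ by $(\Id-\pi)P(\Id-\pi)$ and then $P'dP'$ would change the transferred structure, whereas the proposition asserts the canonical one built from the given $P$.
\item The paper needs exactly that canonical structure. It identifies $\fm^\KS$ and $\fg$ with configuration space integrals of the original propagator (Propositions~\ref{prop:consist} and~\ref{prop:G-consist}), and for that propagator $P\circ P=0$ is unavailable (Remark~\ref{rem:reduced}).
\end{itemize}

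The fix is already inside your induction. Before applying $P$, your cancellation via the $A_\infty$-relations of $\AA$ and the lower morphism identities proves the stronger identity
\[
  \fm_1M_k+\sum_{a+j+b=k,\ a+b\ge1}M_{a+b+1}\circ(\id^{\otimes a}\otimes\fm^\KS_j\otimes\id^{\otimes b})=0 .
\]
Apply $\pi$ to it, using only that $\pi$ is a chain map: $\pi\fm_1M_k=d\,\pi M_k=\fm^\KS_1\fm^\KS_k$ and $\pi M_m=\fm^\KS_m$. This is exactly the full $A_\infty$-relation for $\fm^\KS$ at level $k$.

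Alternatively, let $\wh\fg:B\BB\to B\AA$ be the induced coalgebra map and $D_\AA,D_\BB$ the coderivations of $\fm,\fm^\KS$. The morphism identity says $\wh\fg\circ D_\BB=D_\AA\circ\wh\fg$, so $\wh\fg\circ D_\BB^2=D_\AA^2\circ\wh\fg=0$. Moreover $\wh\fg$ is injective because $\fg_1=\iota$ is (by the word-length filtration), hence $D_\BB^2=0$.

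Neither argument uses $\pi P=0$, $P\iota=0$ or $P^2=0$. Your final quasi-isomorphism step via Theorem~\ref{thm:quismhequiv} is fine.
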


\begin{proof}
This is proved in~\cite{Kontsevich-Soibelman},
see also~\cite[Proposition~6.1]{Cieliebak-Hajek-Volkov}.
For later purposes, we outline the construction using rooted trees
(see~\S\ref{sec:graphs} for background on graphs and trees).
Here a {\em rooted tree} $T$ is a planar tree with a distinguished vertex 
of valency $1$ (the {\em root or root vertex}) 
and all the other vertices of valency at least $3$.
We number its leaves counterclockwise from the root as $1,\dots,k$
and we orient its leaves and edges towards the root. 
We associate to $T$
three operations
\begin{equation}\label{eq:mKS}
  \fm^\KS_T:\BB^{\otimes k}\rightarrow \BB,\qquad 
  \wh\fm^\KS_T:\AA^{\otimes k}\rightarrow \AA,\qquad 
  \fg_T:\BB^{\otimes k}\rightarrow \AA 
\end{equation}
by assigning the inputs to the leaves, moving them through the tree
using its orientation, and reading off the output at the root, where
we apply
\begin{itemize}
\item the operation $\fm_l$ at each vertex with $l$ inputs (i.e., of
  valency $l+1$); 
\item the homotopy operator $P$ along each non-root edge;
\item along the root edge the projection $\pi$ for $\fm^\KS_T$, the
  identity map $\id$ for $\wh\fm^\KS_T$, and the homotopy operator $P$
  for $\fg_T$.   
\end{itemize}
(The operation $\wh\fm^\KS_T$
is not needed in the present proof and only included for later reference.)
Denoting by $RT_k$ the set of isomorphism classes of 
rooted trees with $k$ leaves, the desired $A_\infty$-structure
$\fm^\BB$ and $A_\infty$-morphism $\g$ are given by
\begin{equation}\label{eq:def-htstr}
  \fm^\KS_1 := \m_1,\qquad
  \fm^\KS_k := \sum_{T\in RT_k}\fm^\KS_T \quad\text{for }k\ge 2,
\end{equation}
\begin{equation}\label{eq:def-htrmap}
  \fg_1:=\iota,\qquad \fg_k := \sum_{T\in RT_k}\fg_T \quad\text{for }k\ge 2.
\end{equation}
\end{proof}

\subsection{Cyclic $A_\infty$-algebras and Maurer-Cartan elements}
\label{ss:cyclicainf}

A {\em cyclic $A_\infty$-algebra} is an $A_\infty$-algebra $(\AA,\fm)$ 
endowed with a pairing $(\cdot,\cdot)$ of degree $n$ such that
$(\AA,\fm_1,(\cdot,\cdot))$ is a cyclic cochain complex in the sense
of~\S\ref{ss:coch} and for all $x_0,x_1,\dots x_k$, $k\ge 2$, we have
\begin{equation}\label{eq:cyc}
  \la\fm_k(x_1,\dots,x_k),x_0\ra = (-1)^{|x_0|(|x_1|+\cdots+|x_k|)}
  \la\fm_k(x_0,\dots,x_{k-1}),x_k\ra. 
\end{equation}
Here $|x_i|$ are the shifted degrees in $\AA[1]$ and
$\la\cdot,\cdot\ra$ is the cyclic pairing defined in~\eqref{eq:defcycpair}. 
Note that by cyclicity of $(\AA,\fm_1,(\cdot,\cdot))$ this relation
also holds for $k=1$. The definition of a cyclic $A_\infty$-algebra
is consistent with that of a cyclic DGA, i.e., 
a DGA $A$ is cyclic if and only if the corresponding $A_\infty$-algebra
$\AA$ is cyclic. 

Recall from Example~\ref{ex:canon} the canonical MC element $\m^\can$ for 
$\dIBL(A)$ if $A$ is a cyclic DGA. This has the following
generalization to the $A_\infty$ case. 
Let $(\AA,d,(\cdot,\cdot))$ be a cyclic cochain complex of degree $n$.
Following~\cite{Cieliebak-Fukaya-Latschev}, for a degree $1$ linear map
$$
   \fm_k:\AA[1]^{\otimes k}\rightarrow \AA[1], \quad k\ge 2
$$  
we define $\fm_k^+\in {\rm Hom}(\AA[1]^{\otimes(k+1)},\R)$ by
\begin{equation}\label{eq:ainfmck}
   \fm_k^+(x_0,x_1,\dots,x_k) := (-1)^n \la\fm_k(x_0,\dots,x_{k-1}),x_k\ra.
\end{equation}
Then $\fm_k$ satisfies \eqref{eq:cyc} if and only if $\fm_k^+\in
B^{cyc*}_{k+1}\AA$. If this holds for all $k$ we set
$$
  \fm^+ := (\fm_k^+)_{k\ge 2} \in B^{\cyc *}\AA. 
$$ 
Recall the canonical $\dIBL$ structure $\dIBL(\AA)$ associated to
$\AA$ as a cyclic cochain complex. The following statement is~\cite[
Proposition~12.3 and equation~(12.6)]{Cieliebak-Fukaya-Latschev}.

\begin{proposition}[{\cite{Cieliebak-Fukaya-Latschev}}]\label{prop:ainfmc} 
Let $(\AA,d,(\cdot,\cdot))$ be a cyclic cochain complex, and let
$\{\fm_k\}_{k\ge 2}$ satisfy \eqref{eq:cyc}. Then $\{\fm_k\}_{k\ge 1}$ 
with $\fm_1:=d$ constitutes an $A_\infty$-structure if and only if
$\fm^+$ satisfies equation~\eqref{eq:discMC}. 
Moreover, if this is the case, then the twisted differential 
$$
  \fp_{1,1,0}^{\fm^+} = d^*+\fp_{2,1,0}(\fm^+,\cdot)
$$ 
is the dual map to the Hochschild differential $b_\AA$.
\end{proposition}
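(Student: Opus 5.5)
The plan is a direct computation in the dual bar complex. We identify $\fm^+$ with the element $\m_{1,0}$, whose only nonzero component is $\m_{1,0}=\fm^+$. We then expand the quadratic term $\fp_{2,1,0}(\fm^+\otimes\fm^+)=(c_{120}\circ N_{alg})^*(\fm^+\otimes\fm^+)$, evaluated on a cyclic word $x_1\otimes\cdots\otimes x_r$, using the explicit coproduct \eqref{eq:defc120}. Summing over the dual bases $\{e_a\},\{e^a\}$ for the cyclic pairing produces terms
$$\sum_a \fm_k^+(e_a,x_{i+1},\dots,x_{i+k})\,\fm_l^+(e^a,x_{i+k+1},\dots,x_{i})$$
over cyclic rotations. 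By \eqref{eq:ainfmck} and perfectness of the pairing, the sum $\sum_a \la y,e_a\ra\la e^a,z\ra$ collapses to $\la y,z\ra$ up to sign. Each such term therefore becomes $\pm\la \fm_k(\dots,\fm_l(\dots),\dots),x_j\ra$. The linear term $\fp_{1,1,0}\fm^+=d^*\fm^+$ gives the terms in which $\fm_1=d$ is inserted into $\fm_k$ or applied to the output; for the output we use \eqref{eq:DiffCyclic}.

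The steps, in order, are as follows. First, for each $r$, compute the degree-$(r+1)$ component of the left-hand side of \eqref{eq:discMC} as the functional
$$(x_0,\dots,x_r)\mapsto \sum_{k+l=r+1}\sum_c \pm\bigl\la \fm_k(x_0,\dots,\fm_l(x_c,\dots),\dots),x_r\bigr\ra,$$
cyclically symmetrized; the factor $\tfrac12$ cancels against the symmetry of $\fp_{2,1,0}$ in its two inputs. Second, use the cyclicity \eqref{eq:cyc} of each $\fm_k$ to move every summand into a normal form in which $x_r$ is the slot paired against the output. Third, conclude by nondegeneracy of $\la\cdot,\cdot\ra$: the functional vanishes if and only if the $A_\infty$ relation for $r$ holds. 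This gives both directions at once. For the twisted differential, evaluate $\fp_{2,1,0}(\fm^+,\psi)$ on a cyclic functional $\psi$ by the same contraction. The result is $\psi$ precomposed with $\fm_k$ inserted into consecutive cyclic slots, including insertions that wrap around; these are exactly the terms $(\fm_k\otimes\id)\circ(1+t_\AA+\cdots+t_\AA^{k-1})$ and $\id^{\otimes c}\otimes\fm_k\otimes\id$ in $b_\AA$. Adding $d^*$ supplies the $k=1$ terms, giving $b_\AA^*$.

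The main obstacle is the sign bookkeeping. The exponents in \eqref{eq:defc120} involve shifted degrees and the $(n-1)$ twist, while \eqref{eq:ainfmck} carries $(-1)^n$. These must combine with the Koszul signs from $t_{alg}$ and the dual-basis identity $\la e^a,e_b\ra=(-1)^{|e_a||e^a|}\delta$ so that the contracted terms agree, with uniform sign, with those in the $A_\infty$ relation. I would first verify the case $r=2$, where the DGA example \eqref{eq:deftriple} serves as a check. I would then organize the general case by choosing a reference summand and showing that every other summand differs from it by exactly the cyclic-rotation sign prescribed by $t_{alg}$. This is essentially the argument of Cieliebak--Fukaya--Latschev, Proposition 12.3.
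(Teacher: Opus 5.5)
Your proposal is correct. The paper gives no argument of its own; it imports the statement from \cite{Cieliebak-Fukaya-Latschev} (Proposition~12.3 and equation~(12.6)), and your term-by-term matching of cyclic two-arc splittings, contracted through the dual bases $\{e_a\},\{e^a\}$ and closed off by nondegeneracy of the pairing, is the standard verification of that result.

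One small correction to your sign-checking plan. The quadratic term $\tfrac12\fp_{2,1,0}(\fm^+\otimes\fm^+)$ only has components on words of length at least $4$, so the case $r=2$ tests nothing beyond $d^*\fm_2^+=0$ (the Leibniz rule). The first genuine check of the contraction signs is therefore $r=3$, where the cyclic DGA example \eqref{eq:deftriple} reproduces associativity.
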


Equation~\eqref{eq:p120MC} does not in general hold. 
By~\cite[Proposition 12.5]{Cieliebak-Fukaya-Latschev}, it holds for the
triple intersection product $\m^\can$ of a cyclic DGA from~\eqref{eq:deftriple}.

Let $\AA$ be either a cyclic $DGA$ (the {\em algebraic case}), or the
de Rham algebra of a closed oriented manifold. Let $\HH\subset \AA$ be
a harmonic subspace and $P$ an associated propagator (which exists by
Corollary~\ref{cor:harm-prop} in the algebraic case, and by
Proposition~\ref{prop:existsprop} in the de Rham case).

Then on the one hand, Proposition~\ref{prop:KS} gives us the
pushforward $A_\infty$-structure $\fm^{\KS}$ on $\HH$. Let 
$\fm^{\KS+}=\{\fm^{\KS+}_k\}_{k\ge 2}$ be defined by~\eqref{eq:ainfmck}. 
On the other hand, let $\m^\HH$ be the Maurer-Cartan element on $\dIBL(\HH)$
defined in Example~\ref{ex:pushfalg} in the algebraic case, and
in~\S\ref{ss:MCan} in the de Rham case (always using the same propagator $P$). 

\begin{proposition}\label{prop:consist}
In the setup above, we have
\begin{equation}\label{eq:consist}
  \fm^{\KS+}=\m^\HH_{1,0}.
\end{equation}
In particular, $\fm_k^{\KS+}\in B^{cyc*}_{k+1}\HH$, so the
$A_\infty$-structure $\fm^{\KS}$ on $\HH$ is cyclic. 
\end{proposition}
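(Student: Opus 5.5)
The plan is to compare both sides of~\eqref{eq:consist} tree by tree. On the left, by~\eqref{eq:def-htstr} and~\eqref{eq:ainfmck},
$$
\fm^{\KS+}_k(x_0,\dots,x_k)=(-1)^n\sum_{T\in RT_k}\la\fm^\KS_T(x_0,\dots,x_{k-1}),x_k\ra ,
$$
and each $\fm^\KS_T$ is built from $\fm_2(x,y)=(-1)^xx\wedge y$ at the vertices (the DGA case, so only trivalent trees contribute), $P$ on interior edges and $\pi$ on the root edge. On the right, $\m^\HH_{1,0}$ is a sum over connected trivalent ribbon graphs of genus $0$ with one boundary component, i.e.\ planar trivalent trees, with leaves on the single boundary. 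In the algebraic case this sum is~\eqref{eq:pushfalg}; in the de Rham case it is the configuration-space analogue from~\S\ref{ss:MCan}, with $\m^\can$ at the vertices and the propagator $P$ on the edges. Note that $1/\ell!=1$ here.

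First I set up a bijection between $RT_k$ and the isomorphism classes of trivalent planar trees with $k+1$ cyclically ordered leaves in $\RR_{1,0}$. A rooted tree is obtained by declaring leaf $x_k$ to be the root. Cyclic relabellings of an unrooted tree correspond to the $k+1$ choices of root, and these are matched by the cyclic symmetrization implicit in regarding $\m^\HH_{1,0}$ as an element of $B^{\cyc*}_{k+1}\HH$.

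Second, for a fixed tree I expand $\la\fm^\KS_T(\dots),x_k\ra$ in a basis $\{e_a\}$ of $\AA$ with dual basis $\{e^a\}$. At each interior edge I insert $P=\sum P^{ab}\,e\otimes e$ via $Py=\la Py,e_a\ra e^a$, so that every vertex becomes a factor $\la\fm_2(e_{i},e_{j}),e_{l}\ra$, which equals $\pm\m^\can(e_i,e_j,e_l)$ by~\eqref{eq:deftriple}. On the root edge, $\la\pi y,x_k\ra=\la y,x_k\ra$ for $x_k\in\HH$, by symmetry of $\pi$ and $\pi x_k=x_k$, so the root edge simply becomes the leaf input $x_k$. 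This gives exactly the product structure in~\eqref{eq:pushfalg}. In the de Rham case the same identification uses the Schwartz kernel of $P$ as a form on $M\times M$, with the triple product integrated over the vertex copies of $M$, via Fubini and the fibre integration formula of~\S\ref{sec:fibre}. There one must check that the integrals converge, which the analytic part supplies.

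Third, and this is the main obstacle, I have to match signs. The left-hand signs come from Koszul rules in $\AA[1]$ with $\fm_2(x,y)=(-1)^xx\wedge y$ and the factor $(-1)^n$. The right-hand $\pm$ is defined through orientations of the ribbon graph together with the analytic and algebraic actions related by~\eqref{eq:alg-ana-action2}. I would proceed by induction on the number of vertices: prove the base case $k=2$ directly, where both sides reduce to $\m^\can$, and then show that grafting a vertex onto a leaf changes both signs by the same factor, using~\eqref{eq:symmpropprelim} to move $P$ across the pairing.

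The final assertion then follows immediately. Since $\m^\HH_{1,0}$ lies in $B^{\cyc*}\HH$ by construction, so does $\fm^{\KS+}_k$, and by the remark after~\eqref{eq:ainfmck} this is equivalent to the cyclicity condition~\eqref{eq:cyc} for $\fm^\KS$.
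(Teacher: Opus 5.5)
Your proposal is correct and follows the paper's proof. The paper also builds the bijection $\RR_s\cong RT^3_{s-1}$ by turning leaf number $s$ of a labelled tree into the root, and matches each tree's contribution by iterated basis expansion, using Fubini in the de Rham case. It handles the root-edge projection as you do: its splitting into $\HH\oplus\HH^\perp$ amounts to your identity $\la\pi y,x_k\ra=\la y,x_k\ra$. It too defers the sign bookkeeping.

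Your remark about cyclic symmetrization is unnecessary. Because $\RR_{k+1}$ consists of \emph{labelled} trees, rooting at the last leaf is already a bijection onto $RT^3_k$ for a fixed ordered input, so no symmetrization or extra factor enters the comparison.
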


\begin{proof}
We will sketch the proof and refer to 
~\cite{Volkov-thesis} for details, in particular regarding signs.
Consider a labelled trivalent planar tree $\Gamma$ with $s$ leaves, where a
labelling is a numbering of the leaves compatible with their cyclic
order (see Definition~\ref{def:labelling}). Converting the leaf
number $s$ of $\Gamma$ into an edge and declaring the univalent vertex
at its end the root, we get a rooted tree that we call $T(\Gamma)$.
This gives us a canonical bijection
\begin{equation}\label{eq:bijection-trees}
  \RR_s \stackrel{\cong}\longrightarrow RT_{s-1}^3,\qquad \Gamma\mapsto T(\Gamma)
\end{equation}
between isomorphism classes of labelled trivalent trees with $s$ leaves and
isomorphism classes of rooted trivalent trees with $s-1$ leaves.

Fix $\Gamma$ as above and $\alpha=\alpha_1\otimes\dots\otimes\alpha_s\in B_s\AA$
with $s\ge 3$. On the one hand, applying~\eqref{eq:ainfmck} to the
operation $\wh\fm_{T(\Gamma)}^{\KS}$ defined in~\eqref{eq:mKS} we get
\begin{equation}\label{eq:def-MC-KS+}
  \wh\fm_{T(\Gamma)}^{\KS+}(\alpha) :=
  (-1)^n\la\wh\fm_{T(\Gamma)}^\KS(\alpha_1 \otimes\dots\otimes\alpha_{s-1}),\alpha_s\ra.
\end{equation} 
On the other hand, we have $\m^\HH_\Gamma(\alpha)$ defined in~\eqref{eq:pushfalg}
in the algebraic case, and in~\eqref{eq:mGamma} in the de Rham case
(applied to inputs from $\AA$ rather than $\HH$). We claim that 
\begin{equation}\label{eq:maineq-MC+}
  \wh\fm_{T(\Gamma)}^{\KS+}(\alpha) = \m^\HH_\Gamma(\alpha).
\end{equation}
To see this in the algebraic case, we orient the edges of $T(\Gamma)$
towards the root and order the leaves counterclockwise starting from
the root. We pick an ordering of the vertices and a basis $(e_i)$ of
$A$ and consider the coordinate expression~\eqref{eq:pushfalg}. 
Now~\eqref{eq:maineq-MC+} follows by performing the sum
in~\eqref{eq:pushfalg} iteratively starting at the leaves, using
the identities
$$
	 P e^i = \sum_j P^{ij} e_j\quad\text{and}\quad x = \sum_i \la
         x, e^i \ra e_i = \sum_i \la e_i, x\ra e^i.
$$
In the de Rham case, the proof is structurally the same.
Technically, one has to convert the iterated integral
on the left hand side of~\eqref{eq:maineq-MC+} 
into an integral over the product of many copies of $M$ on the right
hand side of~\eqref{eq:maineq-MC+} by Fubini's theorem. 

Now we restrict our attention to the case $\alpha\in\HH^{\otimes
  s}$. Splitting $\wh\fm_{T(\Gamma)}^{\KS}(\alpha_1
\otimes\dots\otimes\alpha_{s-1})$ into its components in $\HH$ and in
$\HH^\perp$, we see that  
$$
  \wh\fm_{T(\Gamma)}^{\KS+}(\alpha) = (-1)^n\la\fm_{T(\Gamma)}^{\KS}(\alpha_1
  \otimes\dots\otimes\alpha_{s-1}),\alpha_s\ra
  = \fm_{T(\Gamma)}^{\KS+}(\alpha).
$$
Therefore, equation~\eqref{eq:maineq-MC+} becomes
$$
  \fm_{T(\Gamma)}^{\KS+}(\alpha) = \m^\HH_\Gamma(\alpha).
$$
We sum both sides of the last equation over all $\Gamma\in \RR_s$. The
right hand side gives us $\m^\HH_{1,0}(\alpha)$ according  
to~\eqref{eq:pushfalg-lg} in the algebraic case, respectively
to~\eqref{eq:mlg} in the de Rham case. The left hand side gives us
$\fm^{\KS+}(\alpha)$ according to~\eqref{eq:def-htstr}
and~\eqref{eq:ainfmck}, using the bijection~\eqref{eq:bijection-trees}.
\end{proof}

In context of Proposition~\ref{prop:consist}, set 
$$
  \m:=\fm^{\KS+}=\m^\HH_{1,0}.
$$
Then we have the following implication:
\begin{equation}\label{eq:p110m}
  b_\HH^* = \fp_{1,1,0}^{\fm} =\fp_{2,1,0}(\fm,\cdot).
\end{equation}

{\bf Application to Hochschild and cyclic homology. }
In the context of Proposition~\ref{prop:KS}, let $\AA:=\Om^*(M)$ be
the de Rham algebra of a closed oriented manifold $M$ and
$\BB:=\HH\subset \Om^*(M)$ a harmonic subspace, both viewed as a  
$A_\infty$-algebras via~\eqref{eq:dgatoainf}.
Let $\fg:\HH\to\Om^*(M)$ be the $A_\infty$-homotopy equivalence
provided by Proposition~\ref{prop:KS}. 
According to~\S\ref{ss:functoriality}, the morphism $\fg$ induces a
chain map between the bar complexes 
$$
   \fG=\bigoplus_{1\le l\le N}\fG_{N;l}:
   B\HH\longrightarrow B\Om^*(M)
$$
defined as in~\eqref{eq:fF}. By Lemma~\ref{lem:iso1},
the map $\fG$ induces isomorphisms on Hochschild and cyclic homology
(of $A_\infty$-algebras).
Together with Lemma~\ref{lem:QP}, this implies that the degree $0$ chain map
\begin{equation}\label{eq:bG}
   \G:=(-1)^{n+1}QP\fG: B\HH\longrightarrow C(\Om^*(M))[1]
\end{equation}
induces an isomorphism between the Hochschild homology of
$\HH$ (viewed as an $A_\infty$-algebra) and the degree shifted
Hochschild homology of $\Om^*(M)$ (now viewed as a DGA).
Here the degree shift by $1$ results from
equation~\eqref{eq:can-deg-0} relating the Hochschild complex 
and the bar complex. 
The map $\G$ induces a degree $0$ chain map between the cyclic
  complexes 
\begin{equation}\label{eq:bG-lambda}
   \G_\lambda : B^\cyc\HH \longrightarrow C^\lambda(\Om^*(M)).
\end{equation}
Note that according to the definition~\eqref{eq:Connes-complex} of the
Connes cyclic complex there is no degree shift on the right hand side. 
Lemmas~\ref{lem:iso1} and~\ref{lem:QP} together with 
the universal coefficient theorem and equation~\eqref{eq:p110m} 
imply that $\G_\lambda$ induces a degree $0$ isomorphism on cyclic cohomology
\begin{equation}\label{eq:G-iso}
  \G_\lambda^*:HC_\lambda^*(\Om^*(M))\stackrel{\cong}\longrightarrow
  H(B^{\cyc *}\HH,\fp_{1,1,0}^\m).
\end{equation}


\section{Equivariant and non-equivariant string topology}\label{sec:stringtop}

In this section we recall the algebraic structures on equivariant and
non-equivariant loop space homology and discuss their relation.  
All homology groups will be with $\R$-coefficients.

\subsection{Loop space homology}\label{ss:loop}

In this subsection, $X$ denotes a connected topological space and
$\Lambda=C^0(S^1,X)$ its free loop space with the obvious action of
the circle $S^1=\R/\Z$.  
Let $\Lambda_0\subset \Lambda$ be the subspace of constant loops. We
fix a base point $q_0\in X$. The natural inclusion of constant loops 
$$
\iota:X\cong \Lambda_0\into\Lambda
$$
induces the inclusion of pairs
$$
\iota_0:(\Lambda,q_0)\longrightarrow (\Lambda,\Lambda_0).
$$
Let $ES^1\to BS^1$ be the universal $S^1$-bundle. The quotient of
$\Lambda\times ES^1$ by the (free) diagonal circle action defines a circle bundle
$$
  \pi:\Lambda\times ES^1 \to \Lambda\times_{S^1}ES^1 =: \Lambda_{S^1}
$$
whose base is the Borel space $\Lambda_{S^1}$ of $\Lambda$.
The $S^1$-equivariant homology of $\Lambda$ is defined as 
$$
  H_*^{S^1}(\Lambda):=H_*(\Lambda_{S^1}),
$$
and similarly for the pairs of spaces $(\Lambda,\Lambda_0)$ and
$(\Lambda,q_0)$. The Gysin of the preceding circle bundle is
$$
   \cdots H_i(\Lambda\times ES^1) \stackrel{\pi_*}\longrightarrow
   H_i(\Lambda_{S^1}) \stackrel{\cap e}\longrightarrow
   H_{i-2}(\Lambda_{S^1})\stackrel{\pi^*}\longrightarrow
   H_{i-1}(\Lambda\times ES^1)\cdots
$$
where $e$ is the Euler class of the bundle. Following Chas and
Sullivan~\cite{Chas-Sullivan99} we write this as 
\begin{equation}\label{eq:Gysin}
   \cdots H_i\Lambda \stackrel{\EE}\longrightarrow
   H_i^{S^1}\Lambda \stackrel{\cap e}\longrightarrow
   H_{i-2}^{S^1}\Lambda \stackrel{\MM}\longrightarrow
   H_{i-1}\Lambda\cdots
\end{equation}
with the ``mark'' and ``erase'' maps $\MM=\pi^*$ and $\EE=\pi_*$. Here  
$\EE\MM=0$ and $\MM\EE=\Delta:H_i\Lambda\to H_{i+1}\Lambda$ is the BV operator. 

Let us introduce chain level versions of $\EE$, $\Delta$, $\MM$
that we denote by the same letters.
The chain level version of $\EE$ is clear --- postcomposition with
$\pi$. For $\Delta$, let $f:B\rightarrow \Lambda$ be a singular
simplex. The application of $\Delta$ to $f$ is defined by
(a triangulation of) the map 
$$
  \Delta f:S^1\times B\rightarrow \Lambda,\qquad (\Delta f)_{(s,p)}(t):=f_p(s+t).
$$
For $\MM$, let $g:B\to\Lambda_{S^1}$ be a singular simplex. 
Pick a lift $\wt g:B\to \Lambda\times ES^1$ of $g$ (which exists
because $\pi:\Lambda\times ES^1\to \Lambda_{S^1}$ is a circle bundle
and $B$ is contractible) and consider the diagram
\begin{equation}\label{eq:lift}
\xymatrix{
  & \Lambda\times ES^1 \ar[d]^{\pi} \ar[r]^-{pr_1} & \Lambda \\
  B \ar[ur]^{\wt g} \ar[r]^-{g} & \Lambda_{S^1}, \\
}
\end{equation}
where $pr_1$ is the projection onto the first component. 
Now the application of map $\MM$ to $g$ is defined as
\begin{equation}\label{eq:chainmark}
  \MM g:=\Delta(pr_1\circ \wt g).
\end{equation}
The based loop space $\Om$ is the fibre of the path fibration
$p:\Lambda\to X$, $p(\gamma)=\gamma(0)$, and the inclusion of constant
loops $\iota:\Lambda_0\into\Lambda$ is a section of this fibration 
(i.e.~$p\iota=\id_X$).
Hence, the long exact sequence of the pair $(\Lambda,\Lambda_0)$ decomposes
into split short exact sequences
$$
   0\longrightarrow H_*\Lambda_0 \stackrel{\iota_*}\longrightarrow H_*\Lambda
   \longrightarrow H_*(\Lambda,\Lambda_0) \longrightarrow 0.
$$
Since the subspace 
$\Lambda_0\subset\Lambda$ is $S^1$-invariant, we also get a
long exact sequence of this pair in equivariant homology
$$
   \cdots H^{S^1}_*\Lambda_0 \stackrel{\iota^{S^1}_*}\longrightarrow H^{S^1}_*\Lambda
   \longrightarrow H^{S^1}_*(\Lambda,\Lambda_0) \longrightarrow H^{S^1}_{*-1}\Lambda_0 \cdots
$$
where
$$
   H^{S^1}_*\Lambda_0 = H_*(\Lambda_0\times BS^1) 
   \cong H_*\Lambda_0\otimes H_*(BS^1) \cong
   H_*(X)[u],\qquad |u|=2.  
$$
\begin{remark}
Since the projection $p:\Lambda\to X$ is not $S^1$-equivariant, the
map $\iota^{S^1}_*$ in the long exact sequence above need not be injective.
For example, Frauenfelder and Pajitnov~\cite[Proposition 6.1]{Frauenfelder-Pajitnov}
have shown that for a $\Q$-inessential closed oriented manifold $M$, the map 
$\iota^{S^1}_*$ sends the
fundamental class $[M]$ to zero. Their argument uses Goodwillie's theorem.
The same argument shows that for $M$ simply connected (which is a
special case of being $\Q$-inessential), $\iota^{S^1}_*$ sends everything to
zero except multiples $[q_0]u^k$ of the point class $[q_0]\in H_0(M)$. 
As a consequence, the map
$$
   H^{S^1}_*\Lambda / \iota^{S^1}_*H_*^{S^1}\Lambda_0 \into
   H^{S^1}_*(\Lambda,\Lambda_0) 
$$
is injective but not necessarily surjective. By contrast, we have an isomorphism
$$
   H^{S^1}_*\Lambda / \iota^{S^1}_*H_*^{S^1}(q_0) \cong
   H^{S^1}_*(\Lambda,q_0). 
$$
\end{remark}
   
The circle bundle 
$\Lambda\times ES^1\to\Lambda_{S^1}$ restricts over
the subset $X\cong \Lambda_0\subset\Lambda$ to the bundle $X\times ES^1\to X\times BS^1$.
So the Gysin sequence of the restriction to $X\subset\Lambda$,
$$
   \cdots H_iX \stackrel{\EE}\longrightarrow
   H_i^{S^1}X \stackrel{\cap e}\longrightarrow
   H_{i-2}^{S^1}X \stackrel{\MM}\longrightarrow
   H_{i-1}X\cdots
$$
is given by tensoring with $H_*X$ the Gysin sequence of the circle
bundle $ES^1\to BS^1$,
$$
   \cdots H_i(ES^1) \stackrel{\EE}\longrightarrow
   H_i(BS^1) \stackrel{\cap e}\longrightarrow
   H_{i-2}(BS^1) \stackrel{\MM}\longrightarrow
   H_{i-1}(ES^1)\cdots
$$
Since the mark map has degree $+1$ and $H_*(ES^1)$ vanishes in all
positive degrees, the mark map vanishes in the last sequence and thus
also in the previous one,
\begin{equation}\label{eq:mark0}
   \MM=0:H_*^{S^1}X\to H_{*+1}X. 
\end{equation}
So the Gysin sequence over the constant loops becomes the short exact sequence
$$
   0 \longrightarrow H_*X \longrightarrow H_*X[u] \stackrel{\cdot
     u^{-1}}\longrightarrow H_{*-2}X[u] \longrightarrow 0.
$$

\begin{proposition}\label{prop:const2}
Let $q_0\in X\cong\Lambda_0\subset \Lambda$ be as above
and assume in addition that $X$ is simply connected.
Then the map 
\begin{equation}\label{eq:inj}
  \iota_{0*}^{S^1}:H_*^{S^1}(\Lambda,q_0)\longrightarrow H^{S_1}_*(\Lambda,\Lambda_0)
\end{equation}
induced by $\iota_0$ on relative equivariant homology is injective.
\end{proposition}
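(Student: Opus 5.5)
We compare the long exact sequences of the triples $(\Lambda,\Lambda_0,q_0)$ in equivariant homology. The triple sequence reads
$$
\cdots\to H^{S^1}_{*}(\Lambda_0,q_0)\stackrel{j_*}\longrightarrow H_*^{S^1}(\Lambda,q_0)\stackrel{\iota^{S^1}_{0*}}\longrightarrow H^{S^1}_*(\Lambda,\Lambda_0)\to H^{S^1}_{*-1}(\Lambda_0,q_0)\to\cdots
$$
Exactness gives $\ker\iota^{S^1}_{0*}=\im j_*$. So it suffices to show that $j_*:H^{S^1}_*(\Lambda_0,q_0)\to H^{S^1}_*(\Lambda,q_0)$ vanishes.

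For the domain, the circle acts trivially on $\Lambda_0\cong X$. Hence
$$
H^{S^1}_*(\Lambda_0,q_0)\cong H_*(X,q_0)\otimes H_*(BS^1)=\wt H_*(X)[u].
$$
For the target, the remark preceding the proposition gives
$$
H^{S^1}_*(\Lambda,q_0)\cong H^{S^1}_*\Lambda/\iota^{S^1}_*H^{S^1}_*(q_0).
$$
Under this identification $j_*$ is induced by $\iota^{S^1}_*:H_*X[u]\to H^{S^1}_*\Lambda$ restricted to reduced classes. The claim thus reduces to the following: for simply connected $X$, $\iota^{S^1}_*$ kills every class $a\,u^k$ with $a\in\wt H_*(X)$. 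Equivalently, $\iota^{S^1}_*$ maps everything into the span of the images of $[q_0]u^k$. This is precisely the statement attributed in the remark to the Frauenfelder--Pajitnov argument, and establishing it is the main step.

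To prove it directly, I would use that for simply connected $X$ the inclusion of constant loops is, rationally, split by a circle-equivariant retraction up to homotopy in the relevant sense. The first thing I would try is Goodwillie's theorem / Jones' isomorphism: $H^{S^1}_*\Lambda\cong HC_*$ of the chains on the based loop space (or dually, cyclic cohomology of the de Rham algebra, available via $\bar J_\lambda$ and Jones' theorem cited in the introduction). Under this isomorphism the constant loops $H_*X[u]$ correspond to the image of the unit. The unit sits in word length zero, and its cyclic classes are only $1\cdot u^k$, that is, the point class.

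A more geometric alternative also seems workable. A class $a\in\wt H_i(X)$ with $i\ge2$ satisfies $a\,u^k=\cap e$-preimages. In the Gysin sequence \eqref{eq:Gysin} one has $\EE(\text{class})\cdot$ and $\MM=0$ on constant loops by \eqref{eq:mark0}. One then shows that $\iota_*a$ lies in the image of $\Delta=\MM\EE$ on $H_*\Lambda$: for simply connected $X$, the $(i-1)$-cycle can be swept into an $S^1$-family of loops bounding the constant family. Then $\EE\iota_*a=\EE\MM(\cdot)=0$, and one inducts on $k$ using $\cap e$.

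I expect the main obstacle to be this vanishing of $\iota^{S^1}_*$ on $\wt H_*(X)[u]$, which genuinely needs simple connectivity and a Goodwillie/Jones-type input. The exact-sequence bookkeeping around it is routine. I would rely on the Frauenfelder--Pajitnov argument as indicated in the remark.
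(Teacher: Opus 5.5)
Your reduction is correct and is essentially the paper's. Via the triple sequence, $\ker\iota^{S^1}_{0*}=\im j_*$, and $j_*=0$ is equivalent to $\iota^{S^1}_*\bigl(H^{S^1}_*\Lambda_0\bigr)\subset\im\bigl(H^{S^1}_*(q_0)\to H^{S^1}_*\Lambda\bigr)$. This is exactly the equality $\im\iota_*=\im\alpha$ that the paper proves before comparing the exact sequences of $(\Lambda,q_0)$ and $(\Lambda,\Lambda_0)$.

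\textbf{The gap.} That inclusion is the whole content of the proposition, and you do not prove it.
\begin{itemize}
\item Deferring to the Frauenfelder--Pajitnov remark is not enough. The remark is an aside stated without proof, and only for manifolds, whereas $X$ here is any simply connected space. The appendix is where the claim is actually established.
\item Your first sketch assumes the conclusion. Already non-equivariantly, $\iota_*:H_*X\to H_*\Lambda$ is injective. So under Jones' isomorphism the constant loops are not ``the image of the unit'' for word-length reasons. The collapse of $H_*X[u]$ onto the point classes is a genuinely equivariant phenomenon.
\item Your second sketch leaves $\iota_*a\in\im\Delta$ unjustified. The induction on $k$ also does not close. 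If $\iota^{S^1}_*(au^{k-1})$ lies in the span of point classes, you only learn that $\iota^{S^1}_*(au^k)$ minus point classes lies in $\ker(\cap e)=\im\EE$. Nothing you have forces that class to vanish.
\end{itemize}

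\textbf{The missing input} is Goodwillie's theorem in its periodic form, combined with the tautological exact sequence of the mixed complex $(C_*(\Lambda),d,\Delta)$:
$$
H_*^{[[u,u^{-1}]}\Lambda\stackrel{p_*}{\longrightarrow}H_*^{[u^{-1}]}\Lambda\stackrel{\Delta_{0*}}{\longrightarrow}H_{*+1}^{[[u]]}\Lambda,
$$
where $H_*^{[u^{-1}]}\cong H^{S^1}_*$.
\begin{enumerate}
\item For simply connected $X$, Goodwillie's theorem says the inclusion of $q_0$ induces an isomorphism on the periodic groups $H_*^{[[u,u^{-1}]}$.
\item Since $p_*$ is onto for a point, naturality gives $\im p_*=\im\alpha$, where $\alpha:H^{S^1}_*(q_0)\to H^{S^1}_*\Lambda$.
\item Since $S^1$ acts trivially on $\Lambda_0$, the BV operator vanishes on its homology, so $\Delta_{0*}\circ\iota_*=0$.
\end{enumerate}
Hence $\im\iota_*\subset\ker\Delta_{0*}=\im p_*=\im\alpha\subset\im\iota_*$, which is precisely your $j_*=0$.

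For $X=M$ a closed simply connected manifold there is also the shortcut of Proposition~\ref{prop:const}. There $\bar J_{\lambda *}$ kills constant loops and is an isomorphism on $H^{S^1}_*(\Lambda,q_0)$, hence factors through $\iota^{S^1}_{0*}$. But this relies on Theorem~\ref{thm:cyc} and the manifold setting, and it is not the argument you gave.
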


The proof is given in Appendix~\ref{app:const2}. 
The smooth version of this result is Proposition~\ref{prop:const} below.

\subsection{String topology operations}\label{ss:stringtop}

In this subsection we recall the string topology operations on
non-equivariant and equivariant loop space homology.
The discussion mostly follows~\cite{Chas-Sullivan99}. 

Throughout the rest of this section, $M$ denotes an oriented connected
manifold of dimension $n$ and 
$$
  \Lambda:=C^\infty(S^1,M)
$$
its space of smooth loops. Since the inclusion of smooth loops into
continuous loops is an $S^1$-equivariant homotopy equivalence, this
causes no ambiguity when it comes to homology and all results of the
previous subsection carry over to the smooth setting. 
As before, we fix a base point $q_0\in M$ and denote the natural inclusions
$$
  \iota:M\cong \Lambda_0\into\Lambda,\qquad 
  \iota_0:(\Lambda,q_0)\longrightarrow (\Lambda,\Lambda_0).
$$
The following result follows from~\cite[Lemma~4.7]{Cieliebak-Volkov-cyc}
by elementary algebraic topology. 

\begin{lemma}\label{lem:loophomfindim}
If the manifold $M$ is closed and simply connected, then the graded 
$\R$-vector spaces $H_*\Lambda$, $H_*^{S^1}\Lambda$, $H^*_{S^1}\Lambda$,
$H_*^{S^1}(\Lambda,q_0)$ and $H^*_{S^1}(\Lambda,q_0)$ are finite
dimensional in each degree.
\end{lemma}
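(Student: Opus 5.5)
The lemma says that if $M$ is closed and simply connected, then the graded vector spaces $H_*\Lambda$, $H_*^{S^1}\Lambda$, $H^*_{S^1}\Lambda$, $H_*^{S^1}(\Lambda,q_0)$ and $H^*_{S^1}(\Lambda,q_0)$ are finite dimensional in each degree. The plan is to prove this first for non-equivariant homology, then pass to the Borel space $\Lambda_{S^1}$, then to the relative groups, and finally to cohomology by duality.

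\emph{Step 1: non-equivariant homology.} We get degreewise finiteness of $H_*\Lambda$ from \cite[Lemma~4.7]{Cieliebak-Volkov-cyc}. If one wants to see it directly, use the path fibration $\Om\to\Lambda\stackrel{p}{\to}M$. Since $M$ is simply connected, $\Om$ is connected and $\pi_1(M)=0$ acts trivially on $H_*\Om$. Since $M$ is closed and simply connected, its rational homotopy groups are finitely generated in each degree, so by Serre's mod-$\mathcal{C}$ theory (with $\mathcal{C}$ the finite dimensional $\R$-vector spaces) $H_*(\Om;\R)$ is finite dimensional in each degree. The Serre spectral sequence $E^2_{p,q}=H_p(M)\otimes H_q(\Om)$ then has finitely many finite dimensional entries on each total degree line, because $0\le p\le n$. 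Hence $H_i\Lambda$ is finite dimensional for every $i$.

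\emph{Step 2: equivariant homology.} Use the fibration $\Lambda\to\Lambda_{S^1}\to BS^1$ with simply connected base $BS^1=\C P^\infty$. Its Serre spectral sequence has $E^2_{p,q}=H_p(BS^1)\otimes H_q\Lambda$, and these entries vanish unless $p\ge0$ is even and $q\ge0$. So the total degree $i$ line contains only the finitely many pairs with $p\le i$, each finite dimensional by Step 1. Therefore $H_i^{S^1}\Lambda$ is finite dimensional. Alternatively, one can induct on $i$ in the Gysin sequence~\eqref{eq:Gysin}, using the exact segment $H_i\Lambda\to H_i^{S^1}\Lambda\to H_{i-2}^{S^1}\Lambda$ and the fact that $H_j^{S^1}\Lambda=0$ for $j<0$.

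\emph{Step 3: relative groups.} The pair $(\Lambda_{S^1},(q_0)_{S^1})$ has long exact sequence
\[
H_i^{S^1}(q_0)\longrightarrow H_i^{S^1}\Lambda\longrightarrow H_i^{S^1}(\Lambda,q_0)\longrightarrow H_{i-1}^{S^1}(q_0).
\]
Here $H_*^{S^1}(q_0)=H_*(BS^1)$ is finite dimensional in each degree. A middle term of an exact sequence whose outer terms are finite dimensional is finite dimensional, so $H_i^{S^1}(\Lambda,q_0)$ is finite dimensional.

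\emph{Step 4: cohomology.} We work over the field $\R$, so the universal coefficient theorem gives $H^i=\Hom(H_i,\R)$. This applies both to $\Lambda_{S^1}$ and to the pair, and the dual of a finite dimensional space is finite dimensional, which finishes the proof.

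\emph{Main obstacle.} The only nonformal input is the degreewise finiteness of $H_*(\Om;\R)$ in Step 1, which is where simple connectivity and compactness of $M$ enter. I would simply cite it from \cite{Cieliebak-Volkov-cyc}, or obtain it from Serre's theorem as indicated. Everything after that is elementary spectral sequence and exact sequence bookkeeping.
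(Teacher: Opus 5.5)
Your proposal is correct and is essentially the paper's argument: the paper only cites \cite[Lemma~4.7]{Cieliebak-Volkov-cyc} for $H_*\Lambda$ and leaves the rest as ``elementary algebraic topology''. Your Steps~2--4 spell that rest out, via the Serre or Gysin sequence for $\Lambda\to\Lambda_{S^1}\to BS^1$, the long exact sequence of the pair with $H_*^{S^1}(q_0)=H_*(BS^1)$, and universal coefficients over $\R$. If you want Step~1 to be self-contained, the cleanest route to degreewise finiteness of $H_*(\Om;\R)$ is the Serre spectral sequence of the path--loop fibration $\Om\to PM\to M$ with induction on degree; a mod-$\mathcal{C}$ Hurewicz argument is awkward here because $\Om$ need not be simply connected.
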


{\bf Products. }
The loop homology $H_*\Lambda$ carries the degree $-n$ {\em loop
  product} $\mu$, which together with the BV operator gives
$H_{*+n}\Lambda$ the structure of a BV-algebra~\cite{Chas-Sullivan99}.
It induces the degree $2-n$ {\em string bracket} $\mu^{S^1}$ on
$H^{S^1}_*\Lambda$ defined by 
$$
   \mu^{S^1} := \EE\mu(\MM\otimes\MM).
$$
The relations of a BV-algebra for $(\mu,\Delta)$ imply that
$\mu^{S^1}$ is a Lie bracket. 
Since by~\eqref{eq:mark0} the mark map vanishes on constant loops, the image of the map
$\iota^{S^1}_*:H^{S^1}_*M\to H^{S^1}_*\Lambda$ lies in the center of $\mu^{S^1}$.
Hence, the string bracket descends to the quotient space
$
   H^{S^1}_*\Lambda / H_*^{S^1}\Lambda_0
$ 
as well as quotients by any subspaces of $H_*^{S^1}\Lambda_0$.
Of these, we will be interested in the loop homology relative to a point and
the {\em reduced loop homology},
$$
   H^{S^1}(\Lambda,q_0) = H^{S^1}_*\Lambda / H^{S^1}(q_0) \quad\text{and}\quad
   \ol{H}^{S^1}(\Lambda) := H^{S^1}_*\Lambda / \chi H^{S^1}(q_0),
$$
where  $\chi=\chi(M)$ is the Euler characteristic of $M$.

{\bf Coproducts. }
The loop homology $H_*(\Lambda,\Lambda_0)$ relative to the constant
loops carries the degree $1-n$  {\em Goresky--Hingston coproduct}
$\ol\lambda$, which was introduced in~\cite{Sullivan-open-closed}
and studied further in~\cite{Goresky-Hingston}. 
It is shown in~\cite{Cieliebak-Hingston-Oancea-III} that
$\ol\lambda$ extends to a coproduct $\lambda$ on reduced
loop homology $\ol{H}_*\Lambda:=H_*\Lambda/\chi H_*(q_0)$. 
We will refer to $\lambda$ as the {\em loop coproduct}.
The operations $\mu,\Delta$ also descend to reduced loop homology,
where together with $\lambda$ they define the structure of a 
BV unital infinitesimal bialgebra~\cite{Latschev-Oancea}.
The coproduct $\lambda$ induces the degree $2-n$ {\em string
  cobracket} $\lambda^{S^1}$ on $\ol{H}^{S^1}_*\Lambda$ via
$$
   \lambda^{S^1} := (\EE\otimes \EE)\lambda \MM.
$$
More precisely, consider the composition
$$
   H^{S^1}_*\Lambda \stackrel{\MM}\longrightarrow
   H_*\Lambda \longrightarrow
   \ol{H}_*\Lambda \stackrel{\lambda}\longrightarrow
   \ol{H}_*\Lambda\otimes\ol{H}_*\Lambda \stackrel{\EE\otimes \EE}\longrightarrow   
   \ol{H}^{S^1}_*\Lambda\otimes\ol{H}^{S^1}_*\Lambda.
$$
Since $\MM$ vanishes on constant loops, we can pass to
$\ol{H}^{S^1}_*\Lambda$ in the first term to obtain the desired map 
$\lambda^{S^1}$.
The relations satisfied by $(\mu,\lambda,\Delta)$ imply that
$(\mu^{S^1},\lambda^{S^1})$ define on $\ol{H}^{S^1}_*\Lambda$ the
structure of an involutive Lie bialgebra, see~\cite{Latschev-Oancea}. 
Since both $\mu^{S^1}$ and $\lambda^{S^1}$ vanish on constant loops,
this structure descends to further quotients such as 
$H^{S^1}_*(\Lambda,q_0)$ and $H^{S^1}_*\Lambda / H_*^{S^1}\Lambda_0$.
Note that the induced cobracket $\ol\lambda^{S^1}$ on
$H^{S^1}_*\Lambda / H_*^{S^1}\Lambda_0$
is related to the Goresky--Hingston
coproduct by the same relation as above, 
\begin{equation}\label{eq:GH-relation}
   \ol\lambda^{S^1} = (\EE\otimes \EE)\ol\lambda \MM.
\end{equation}

\begin{remark}
For non-simply connected $M$, the extension $\lambda$ of the coproduct
$\ol\lambda$ is not unique but depends on the choice of a generic
vector field on $M$. For example, for $M=S^1$ there are two choices
corresponding to the two classes of nowhere vanishing vector
fields~\cite{Cieliebak-Hingston-Oancea-III}, 
and this ambiguity persists for the cobracket $\lambda^{S^1}$. 
Since the ambiguity lies in the constant loops, which are anihilated by
Chen's iterated integrals, everything in the sequel (in particular the
discussion in~\ref{sec:rel-stringtop}) will be true for {\em any}
choice of $\lambda$ and $\lambda^{S^1}$. 
\end{remark}

\subsection{Definition of the loop product}\label{ss:defchainprod}

In this and the following subsection we provide chain-level
definitions of the loop product and the Goresky--Hingston coproduct. While for the product 
this is just the one given in~\cite{Chas-Sullivan99}, for the
coproduct we adapt the definition to the purposes of this paper.

We retain the setting of the previous subsection: $M$ is an oriented
$n$-dimensional manifold and $\Lambda=C^\infty(S^1,M)$, where $S^1=\R/\Z$. 
Moreover, we will
use some basic definitions and properties of manifolds with corners
and real oriented blow-ups from~\S\ref{sec:blow-up}. In particular, 
a {\em nice} submanifold of a manifold with corners $B$ is a
submanifold with corners $C$ which is closed as a subset such that
$C\cap \p_kB=\p_k C$ and $C$ is transverse to $\p_kB$ for all $k$.  
Recall that $S^1=\R/\Z$. A map
$$
  f:B\longrightarrow \Lambda
$$
is called {\em smooth (resp.~analytic)} if the associated map $B\times
S^1\to M$ is smooth (resp.~analytic). 
To $f$ we associate its time zero evaluation map
$$
  \ev^0f:B\to M,\qquad p\mapsto f_p(0). 
$$
Consider now two compact manifolds with corners $B_j$ and smooth maps
$$
  f_j:B_j\to \Lambda,\qquad j=1,2
$$ 
such that their time zero evaluation maps
are transverse to each other. In other words, the product map
$$
  \ev^0f_1\times\ev^0f_2:B_1\times B_2\to M\times M
$$
is transverse to the diagonal $\Delta_2\subset M\times M$. We define the
domain of the loop product as the fibre product 
\begin{equation}\label{eq:def-D-mu}
  D_{\mu(f_1,f_2)}
  := (\ev^0f_1\times\ev^0f_2)^{-1}(\Delta_2)
  = B_1\times_{\Delta_2} B_2.
\end{equation}
The above transversality implies that $D_{\mu(f_1,f_2)}\subset 
B_1\times B_2$ is a nice submanifold and we define
$\mu(f_1,f_2):D_{\mu(f_1,f_2)}\to\Lambda$ by\footnote{
The map $\mu(f_1,f_2)$ actually lands in the space
$\Lambda^{ps}\subset\Lambda$ of piecewise smooth loops (the same
occurs for the coproduct); since
we are only interested in statements on homology and
$H_*\Lambda^{ps}=H_*\Lambda$, we will ignore this distinction in the notation.
}
\begin{equation}
  \mu(f_1,f_2)_{(p_1,p_2)}(t)  :=
  \begin{cases}
  f_{1,p_1}(2t), & t\in [0,1/2],  \\
  f_{2,p_2}(2t-1), & t\in [1/2,1].
\end{cases}
\end{equation}
Now two homology classes $c_1,c_2\in H_*\Lambda$ can be represented by
smooth cycles $\sum_ia_if_1^i$ and $\sum_jb_jf_2^j$, $a_i,b_j\in\R$, such
that the time zero evaluations of $f_1^i$ and $f_2^j$ are transverse
for all $i,j$. One easily sees that
$\mu(c_1,c_2):=\sum_{i,j}a_ib_j[\mu(f_1^i,f_2^j)]$ is independent of
these representations and defines an operation on homology
$H_*\Lambda$, which is the loop product and denoted 
by the same letter $\mu$.

\subsection{Definition of the loop coproduct}\label{ss:transvers}

Let $B$ be a compact manifold with corners and
$$
  f:B\longrightarrow \Lambda
$$
a smooth map. The domain of definition of the loop coproduct is defined as
\begin{equation}\label{eq:defDf}
  D_f:=Closure(\overset{\circ}{D_f})\subset B\times [0,1],
\end{equation}
where
$$
\overset{\circ}{D_f}:=\{(p,t)\in B\times (0,1)\mid f_p(0)=f_p(t)\}.
$$
Consider the evaluation map 
$$
  e_f:B\times [0,1]\longrightarrow M\times M,\qquad (p,t)\mapsto (f_p(0),f_p(t))
$$
and the time zero derivative
$$
  v_f:B\to TM,\qquad p\mapsto f_p'(0).
$$

\begin{definition}\label{def:nondeg}
A smooth map
$f:B\to \Lambda$ is called {\em nondegenerate} if the following
conditions are satisfied (where transversality is meant stratawise on $B$):
\begin{enumerate} 
\item the restriction $e_f|_{B\times (0,1)}$ is transverse 
to the diagonal $\Delta\subset M\times M$; 
\item the map $v_f$ is transverse to the zero section $M\subset TM$. 
\end{enumerate}
\end{definition}

\begin{lemma}\label{lem:trans}
The set of nondegenerate maps $f:B\to\Lambda$ is open in
$C^\infty(B\times S^1,M)$ with respect to the $C^2$-topology,
and the set of real analytic nondegenerate maps is dense with respect
to the $C^\infty$-topology.  
\end{lemma}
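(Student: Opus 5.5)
Openness and density are proved separately, each by a standard transversality argument for the two conditions of Definition~\ref{def:nondeg}; the only non-routine issue is that condition (1) lives on the non-compact set $B\times(0,1)$.

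\textbf{Openness.} Condition (2) is straightforward. $v_f$ depends only on the first derivative of $f$ in the $S^1$-direction. Transversality of $v_f$ to the zero section on the compact $B$ (stratawise) involves first derivatives of $v_f$, so it is a $C^2$-condition on $f$. Transversality to a closed submanifold over a compact domain is open.

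Condition (1) is harder, because $B\times(0,1)$ is not compact and transversality could fail as $t\to 0$ or $t\to 1$. Condition (2) controls this. Near $t=0$, write $f_p(t)=f_p(0)+t\,f_p'(0)+O(t^2)$ in local coordinates. Then $e_f(p,t)\in\Delta$ with $t$ small forces $f_p'(0)$ to be small, i.e.\ $p$ is close to $v_f^{-1}(0)$. Consider the difference quotient
\[
g_f(p,t):=\frac{f_p(t)-f_p(0)}{t},
\]
extended to $t=0$ by $v_f$. It is $C^1$ on $B\times[0,1)$ when $f$ is $C^2$. Transversality of $e_f$ to $\Delta$ at $(p,t)$ with $t>0$ is equivalent to $g_f(p,t)=0$ being a regular zero, where the differential is taken in all variables including $t$. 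At $t=0$, condition (2) says that $\partial_p g_f$ is already surjective on zeros. Hence $g_f$ is transverse to $0$ on all of $B\times[0,\epsilon]$. This is now a compact domain, so the property persists under $C^2$-small perturbations of $f$, since $g_f$ depends $C^1$-continuously on $f$ in the $C^2$-topology. The same argument applies near $t=1$, using $f_p(t)-f_p(1)$ and $f_p(1)=f_p(0)$.

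On $B\times[\epsilon,1-\epsilon]$ the domain is compact, so openness follows directly. I expect the main work to lie in this uniformity of $\epsilon$ under perturbation, including the stratawise version on the corners of $B$.

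\textbf{Density.} First approximate the given $f$ by a real analytic map $\tilde f:B\times S^1\to M$ in the $C^\infty$-topology. To do this, embed $M$ analytically in $\R^N$ (Grauert/Morrey), approximate the components analytically (extending $B$ slightly to an open analytic manifold), and project back with an analytic tubular retraction.

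Then apply a finite-dimensional analytic parametric family $F:B\times S^1\times\R^K\to M$ with $F(\cdot,\cdot,0)=\tilde f$. Choose it as the composition with exponentials of analytic vector fields whose values span $TM$, with coefficients that are analytic functions of $t$ (trigonometric polynomials) and span enough jets that the following holds. The parametric maps $(p,t,s)\mapsto g_{F_s}(p,t)$ and $(p,s)\mapsto v_{F_s}(p)$ must be submersions onto the relevant targets, stratawise on $B$. Here the parametric difference quotient is handled the same way as $g_f$ above.

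By the parametric transversality theorem (Sard), for almost every small $s$ the map $F_s$ is nondegenerate. $F_s$ is analytic and $C^\infty$-close to $f$.
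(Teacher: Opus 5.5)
Your openness argument is essentially the paper's. Your difference quotient $g_f$ is the function $b(x,t)$ from the proof of Lemma~\ref{lem:coprod-trans}. The paper likewise argues that (ii) forces (i) near $B\times\{0,1\}$, while (i) is open on the compact middle part. (The paper's phrase ``(ii) is open away from a neighbourhood of $B\times\{0,1\}$'' should read (i).)

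For density you take a genuinely different route. The paper first gets a \emph{smooth} nondegenerate approximant from Thom transversality. It then $C^\infty$-approximates that map by a real analytic one, using the openness just proved to keep nondegeneracy. You reverse the order. You first approximate by an analytic $\tilde f$, which need not be nondegenerate. You then perturb inside a finite-dimensional analytic family and apply the elementary parametric transversality theorem, so openness is not needed for this half.

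Your route is self-contained and gives a slightly stronger conclusion: almost every parameter of the family yields a nondegenerate map. Applying Sard to the parametric difference quotient on $B\times[0,1]$, with $t=0,1$ as boundary strata, also treats (i), (ii) and the degeneration as $t\to 0,1$ in one stroke. The paper leaves this to the cited transversality theorem, even though (i) is a constrained two-point condition at $(p,0),(p,t)$.

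The price is that you must actually exhibit the family, and you only asserted it exists. Let $X_i$ be analytic vector fields spanning $TM$ everywhere, set $Y=\sum_i(s_i\sin 2\pi t+s_i'\cos 2\pi t)X_i$, and let $\Phi^Y_1$ be the time-one flow. Then $F(p,t,s,s')=\Phi^Y_1(\tilde f(p,t))$ works. At a zero, where $\tilde f(p,t)=\tilde f(p,0)=q$, the parameter derivatives of the quotient at $s=s'=0$ are $\frac{\sin 2\pi t}{t}X_i(q)$ and $\frac{\cos 2\pi t-1}{t}X_i(q)$. These never vanish simultaneously on $[0,1)$; their limit at $t=0$ is $(2\pi,0)$, and the situation near $t=1$ is symmetric. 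A single coefficient $\sin 2\pi t$ would fail at $t=1/2$.
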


\begin{proof}
Openness of condition (ii) in the $C^2$-topology is clear; openness of
nondegeneracy follows because (ii) implies (i) along $B\times\{0,1\}$
(see the proof of Lemma~\ref{lem:coprod-trans} below) 
and (ii) is open away from a neighbourhood of $B\times\{0,1\}$.
The Thom transversality theorem (see~\cite[Theorem
  2.3.2]{Cieliebak-Eliashberg-Mishachev24})  yields density of 
smooth nondegenerate maps in the $C^\infty$-topology, and
by~\cite[Theorem 5.53]{Cieliebak-Eliashberg12} smooth nondegenerate
maps can be $C^\infty$-approximated by real analytic ones.
\end{proof}

Recall from~\S\ref{sec:blow-up} the notions of a nice submanifold and
the oriented real blow-up. Nondegeneracy of $f$ has the following consequences.

\begin{lemma}\label{lem:coprod-trans}
For $f:B\to \Lambda$ nondegenerate the following holds. 

(a) The set 
\begin{equation*}
  Z_f:=\{p\in B\mid f_p'(0)=0\}
\end{equation*}
is a nice codimension $n$ submanifold of $B$.

(b) The set $D_f$ is a nice codimension $n$ submanifold of $B\times [0,1]$
with boundary
\begin{equation}\label{eq:pDf}
  \p D_f=\p_0D_{f}\cup \p_1D_{f}\cup \p_BD_f,
\end{equation}
where 
$$
  \p_0D_{f}=Z_f\times\{0\},\quad \p_1D_{f}=Z_f\times\{1\},\quad
  \p_BD_f=D_f\cap\p B.
$$
(c) The map $e_f$ lifts to a smooth maps between the oriented real blow-ups 
$$
\wt e_f:\Bl(B\times [0,1],D_f)\longrightarrow \wt M^2.
$$
\end{lemma}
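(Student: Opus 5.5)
Part (a) is a preimage statement. By condition (ii) of Definition~\ref{def:nondeg}, the map $v_f:B\to TM$ is stratawise transverse to the zero section $M\subset TM$. The zero section has codimension $n$. So $Z_f=v_f^{-1}(M)$ is a submanifold of codimension $n$ in each stratum $\p_kB$. Stratawise transversality gives $Z_f\cap\p_kB=\p_kZ_f$ and transversality of $Z_f$ to $\p_kB$, so $Z_f$ is nice.

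For (b) we split $B\times[0,1]$ into the interior region $t\in(0,1)$ and neighbourhoods of $t=0$ and $t=1$.

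\emph{Interior region.} By condition (i), $\overset{\circ}{D_f}=e_f^{-1}(\Delta)\cap(B\times(0,1))$ is a nice codimension $n$ submanifold of $B\times(0,1)$, and its intersection with the boundary is $D_f\cap(\p B\times(0,1))$.

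\emph{Near $t=0$.} We work in a chart of $M$ around $f_p(0)$ and use Hadamard's lemma:
$$
f_p(t)-f_p(0)=t\,g(p,t),\qquad g(p,t)=\int_0^1 f_p'(st)\,ds,
$$
where $g$ is smooth on $B\times[0,\eps)$ and $g(p,0)=f_p'(0)=v_f(p)$. For $t>0$ the equation $f_p(t)=f_p(0)$ is equivalent to $g(p,t)=0$. At $t=0$, the $B$-derivative of $g$ is the derivative of $v_f$, so by (ii) the map $g$ is transverse to $0$ along $Z_f\times\{0\}$, stratawise in $B$. Shrinking $\eps$, $g$ stays transverse to $0$ on $B\times[0,\eps)$ near $Z_f\times\{0\}$. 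Hence $\{g=0\}$ is a nice codimension $n$ submanifold of $B\times[0,\eps)$ meeting $\{t=0\}$ exactly in $Z_f\times\{0\}$, and transversely there. Since $\{g=0\}\cap\{t>0\}$ coincides with $\overset{\circ}{D_f}$ near $t=0$, taking the closure gives $D_f\cap(B\times[0,\eps))=\{g=0\}$, with boundary piece $\p_0D_f=Z_f\times\{0\}$.

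\emph{Near $t=1$.} The same argument applies with $f_p(t)-f_p(1)=(t-1)\tilde g(p,t)$, using $f_p(1)=f_p(0)$ and $\tilde g(p,1)=f_p'(0)$. This yields $\p_1D_f=Z_f\times\{1\}$.

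This transversality also shows that (ii) implies (i) near $B\times\{0,1\}$, as used in Lemma~\ref{lem:trans}. Gluing the three regions gives the decomposition~\eqref{eq:pDf}. The main obstacle is checking niceness at the corners where $t\in\{0,1\}$ meets $\p B$. I would handle this by applying the above argument stratum by stratum in $B$ and using that the transversality in (ii) is stratawise.

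For (c), recall that $\wt M^2=\Bl(M\times M,\Delta)$. Away from $\p_0D_f\cup\p_1D_f$, the set $D_f=e_f^{-1}(\Delta)$ is cut out transversely, so $e_f$ lifts by the functoriality of oriented blow-ups under maps transverse to the blown-up submanifold (\S\ref{sec:blow-up}). Near $t=0$ we have $e_f(p,t)=(f_p(0),\,f_p(0)+t\,g(p,t))$ in a chart. Here $D_f=\{g=0\}$, and $g$ is a local submersion to $\R^n$ onto the normal directions. On $\Bl(B\times[0,1],D_f)$ the normalized vector $g/|g|$ extends smoothly to the exceptional boundary, as a smooth function of blow-up coordinates. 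Since $t\ge0$, the normalized difference
$$
\frac{f_p(t)-f_p(0)}{|f_p(t)-f_p(0)|}=\frac{g}{|g|}
$$
also extends smoothly; at $t=0$ it is well defined because $t$ cancels. Hence $\wt e_f$ is defined as $\bigl(f_p(0),\,g/|g|\bigr)$ in the sphere-bundle coordinates of $\wt M^2$, together with the radial coordinate $t|g|$. Near $t=1$ the argument is analogous, but the factor $(t-1)<0$ introduces a sign and the lift is $-\tilde g/|\tilde g|$. Smoothness of the radial part needs care: the function $t|g|$ vanishes also at $t=0$ off $D_f$. However, there $e_f$ maps into the diagonal at $t=0$ only where $g\neq0$, so the lift is determined by continuity along the direction $g/|g|$. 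I would verify this carefully in local blow-up coordinates, and I expect this to be the most delicate point of (c).
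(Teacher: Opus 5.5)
Your proposal is correct and is essentially the paper's argument: the paper writes $f(x,t)=a(x)+tb(x,t)$ in charts (its $b$ is your $g$), uses (ii) to get $D_xb(p,0)$ surjective, presents $D_f$ near $t=0$ as a graph over $(x_1,t)$ via the implicit function theorem so that taking the closure gives $\p_0D_f=Z_f\times\{0\}$ (and likewise at $t=1$), and defines the lift on the exceptional boundary by $\wt e_f((x,0),[\xi_2])=(a(x),[D_{x_2}b(x,0)\xi_2])$. The point you flag as delicate in (c) is harmless: off $D_f$ at $t=0$ you have $g\neq 0$, so $t|g|$ and $g/|g|$ are smooth there, and along $D_f$ you may use $g$ itself as normal coordinates (it is a submersion onto the normal directions of $D_f=\{g=0\}$), writing $g=\rho\,\omega$ in blow-up coordinates, so the lift becomes $(a,\,t\rho,\,\omega)$ and is manifestly smooth.
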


\begin{proof}
Part (a) follows immediately from condition (ii), and parts (b) and
(c) are clear outside the set $D_f\times\{0,1\}\subset B\times[0,1]$.
Consider therefore a point $(p,0)\in D_f$ (the case $(p,1)\in D_f$ is analogous).
We first claim that $p\in Z_f$. 

To see this, we pick coordinates $x\in\R^d$ near $p\in B$ and $y\in\R^n$
near $f_p(0)\in M$. We write 
$$
  f(x,t) = a(x) + tb(x,t)
$$
with smooth functions $a,b$, so that
$$
  \overset{\circ}{D_f} = \{(x,t)\mid t>0,\ b(x,t)=0\}.   
$$
Since $(p,0)\in D_f$, by definition of $D_f$ there exists a sequence
$(x_k,t_k)\to (p,0)$ with $t_k>0$ and $b(x_k,t_k)=0$. Then
$$
  f(p,t_k)-f(p,0) =  t_kb(p,t_k) = t_k\bigl(b(p,t_k)-b(x_k,t_k)\bigr).
$$
Since both terms in the last bracket converge to $b(p,0)$ as
$k\to\infty$, we get
$$
  f_p'(0) = \lim_{k\to\infty}\frac{f(p,t_k)-f(p,0)}{t_k} = 0
$$
and thus $p\in Z_f$. This proves the claim. 
Next, note that 
$$
  v_f(x) = b(x,0). 
$$
Since $v_f(p)=0$, condition (ii) implies that $D_xb(p,0)$ is surjective.
Let us choose the coordinates $x=(x_1,x_2)\in\R^d=\R^{d-n}\times\R^n$
near $p$ such that $Z_f$ corresponds to $\R^{d-n}\times\{p_2\}$. Since
$b\equiv 0$ on $Z_f\times\{0\}$, we conclude that $D_{x_1}b(p,0)=0$
and $D_{x_2}b(p,0)$ is an isomorphism. By the implicit function theorem,
on a neighbourhood $U=V\times[0,\eps)$ of $(p,0)$ we have
$$
  b(x,t)=0 \Longleftrightarrow x_2=g(x_1,t)
$$
for a smooth function $g$. Therefore, the zero set
$$
  A:=\{(x,t)\in U \mid b(x,t)=0\}
$$
is a nice submanifold of $U$. Its description as a graph shows that
$A$ is the closure in $U$ of 
$A\cap(V\times(0,\eps))=\overset{\circ}{D_f}\cap U$, and therefore
$$
  A=D_f\cap U.
$$
Applying the same argument at $t=1$, this shows that $D_f$ is a nice
submanifold. The proof also gives the description of its boundary, so
part (b) is proved. 

For part (c), we retain the notation from above, so the evaluation map
near $(p,0)$ writes
$$
  e_f(x,t) = \bigl(a(x),a(x)+tb(x,t)\bigr). 
$$
As normal direction to $D_f$ at $(x,0)$ near $(p,0)$ we can use
$\{0\}\times\R^n$ in the splitting above. Then the evaluation map
extends to the boundary of the blow-ups by
$$
  \wt e_f\bigl((x,0),[\xi_2]\bigr) = \bigl(a(x),[D_{x_2}b(x,0)\cdot\xi_2]\bigr),
$$
where $0\neq\xi_2\in \{0\}\times\R^n$ and the brackets $[\ ]$ denote
the class in the oriented projectivization (see~\S\ref{ss:blow-up}).
One readily verifies that this induces a smooth map $\wt e_f$. 
\end{proof}

For a loop $\gamma\in\Lambda$ and $t\in[0,1]$ such that
$\gamma(0)=\gamma(t)$ we obtain two loops
$\gamma|_{[0,t]},\gamma|_{[t,1]}\in\Lambda$ defined by 
$$
  \gamma|_{[0,t]}(s):=\gamma(ts)\quad\text{and}\quad
  \gamma|_{[t,1]}(s):=\gamma\bigl(t+(1-t)s\bigr),\qquad s\in S^1.
$$
Let now $f:B\to\Lambda$ be nondegenerate. We define
\begin{equation}\label{eq:chain-coproduct}
\ol\lambda f:D_f\to\Lambda\times\Lambda,\qquad
(p,t)\mapsto \Bigl(f(p)|_{[0,t]},f(p)|_{[t,1]}\Bigr).
\end{equation}
Recall the description of $\p D_f$ from~\eqref{eq:pDf} and note that
$$
  \ol\lambda f(\p_0D_f)\subset\Lambda_0\times\Lambda,\qquad
  \ol\lambda f(\p_1D_f)\subset\Lambda\times\Lambda_0.
$$
Thus $\ol\lambda$ defines a chain map
$$
   \ol\lambda:NC_*(\Lambda)\to C_*(\Lambda\times\Lambda,
   \Lambda_0\times\Lambda\cup\Lambda\times\Lambda_0),
$$
where $NC_*(\Lambda)\subset C_*(\Lambda)$ denotes the subspace of
nondegenerate chains (linear combinations of nondegenerate maps from simplices).
Via approximation by nondegenerate chains (see Lemma~\ref{lem:trans})
and the K\"unneth formula, it induces on homology a map (denoted by
the same letter)
$$
  \ol\lambda:H_*\Lambda\to H_*(\Lambda\times\Lambda,
   \Lambda_0\times\Lambda\cup\Lambda\times\Lambda_0) \cong
   H_*(\Lambda,\Lambda_0)^{\otimes 2}.
$$
Since the map $\ol\lambda$ vanishes on constant loops in homology, it
descends to a coproduct on $H_*(\Lambda,\Lambda_0)$ which is the
Goresky--Hingston coproduct.
   
\begin{remark}
To obtain the induced map $\ol\lambda$ on homology, we need to
approximate a singular cycle in $\Lambda$ by a nondegenerate {\em
  cycle}. This is possible using Lemma~\ref{lem:trans}, which also
holds in relative form, and induction over the strata of the standard
simplex. Alternatively, we
can use Thom's representability theorem~\cite{Thom54} to represent 
a class $c\in H_*\Lambda$ by a map $f:B\to\Lambda$ from a {\em closed}
oriented manifold $B$ (recall that we are using $\R$-coefficients).
\end{remark}

\section{Chen's iterated integrals}\label{sec:chen}

Let $M$ be a closed connected oriented manifold. In this section we
define various versions of Chen's iterated integrals, relating chain
complexes built out of the de Rham complex $\Om^*(M)$ to the singular
chain complex of the free loop space $\Lambda=C^\infty(S^1,M)$. 
The discussion follows~\cite{Cieliebak-Volkov-cyc}. 
We borrow the notation about loops spaces from~\S\ref{sec:stringtop},
and about Hochschild and cyclic complexes from~\S\ref{ss:dga1}.
Throughout this section $B,B_j$ denote compact connected oriented
manifolds with corners. 

\subsection{Chen's iterated integrals in the Hochschild setting}\label{ss:chenHoch} 

Consider the $k$-dimensional standard simplex
$$
   \Delta^k = \{(t_1,\dots,t_k)\in \R^k\mid 0\le t_1\le t_2\le\dots\le
   t_k\le 1\},\qquad \Delta^0=\{0\} 
$$
with its face maps (parametrizing the boundary faces)
$\delta_j:\Delta^{k-1}\longrightarrow \p\Delta^k$ defined by
\begin{gather*}
   \delta_0(t_1,\dots,t_{k-1})=(0,t_1,\dots,t_{k-1}),\qquad
   \delta_k(t_1,\dots,t_{k-1})=(t_1,\dots,t_{k-1},1), \cr
   \delta_j(t_1,\dots,t_{k-1})=(t_1,\dots,t_j,t_j,\dots,t_{k-1}),\qquad j=1,\dots,k-1. 
\end{gather*}
For later use we denote
\begin{equation}\label{eq:bdrycyc}
  \p_j\Delta^k := \im \delta_{j-1},\qquad j=1,\dots,k+1.
\end{equation}
We give $\Delta^k$ the induced orientation from $\R^k$ and
$\p\Delta^k$ the boundary orientation. Then $\delta_0$ is orientation
reversing, 
and in general $\delta_j$ changes orientation by $(-1)^{j+1}$. 



Consider a smooth map
$$
  f:B\longrightarrow \Lambda,
$$
where smoothness is understood in terms of the corresponding map
$B\times S^1\rightarrow M$.
We denote the value of $f$ at $p\in B$ by $f_p$, and the evaluation of
$f_p$ at time $t\in S^1$ by $f_p(t)=f(p,t)$. For $k\ge 0$ we define the evaluation map
$$
   ev_f:B\times \Delta^k\longrightarrow M^{k+1}
$$
by
$$
   ev_f(p,t_1,\dots,t_k) := \bigl(f_p(0),f_p(t_1),\dots,f_p(t_k)\bigr). 
$$
We define the {\em Chen pairing} 
$$
   \la\cdot,\cdot\ra:\Om^{*+k}(M^{k+1})\times C_*(\Lambda)\longrightarrow\R
$$
on $\om\in \Om^{i+k}(M^{k+1})$ and a smooth map $f:B\to \Lambda$ with
$\dim B=i$ by
\begin{equation}\label{eq:chenpair}
    \la\om,f\ra := \int_{B\times\Delta^k}ev_f^*\om.
\end{equation}
It gives rise to degree preserving linear maps called {\em Chen's iterated integrals}
$$ 
  I:\bigoplus_{k\geq 0}\Om^{*+k}(M^{k+1})\to C^*(\Lambda),\qquad (I\om)(f):=\la\om,f\ra,
$$
and dually 
$$
   J:C_*(\Lambda)\to \bigoplus_{k\geq 0}\bigl(\Om^{*+k}(M^{k+1})\bigr)^\vee,\qquad (Jf)(\om):=\la\om,f\ra.
$$
Composition with the canonical cross product maps
$$
   \times : C_k(\Om^*(M)) = \Om^*(M)\otimes\Om^{*+1}(M)^{\otimes k}\to \Om^{*+k}(M^{k+1})
$$
resp.~their duals allows us to view $I$ and $J$ as maps from the
Hochschild complex resp.~to its dual (denoted by the same letters)
$$ 
   I:C_*(\Om^*(M))\to C^*(\Lambda),\qquad 
   J:C_*(\Lambda)\to C_*\bigl(\Om^*(M)\bigr)^{\vee}.
$$
Moreover, equation~(22) of~\cite{Cieliebak-Volkov-cyc} yields the following
compatibility of the operators $I$ and $J$ with the BV operators (the
BV operator $\Delta^*$ was called $P$ there):  
\begin{equation}\label{eq:BV-compat}
  I\circ B=-\Delta^*\circ I,\qquad B^*\circ J=-J\circ \Delta.
\end{equation}
The main takeaway from Chen's iterated integrals is the following result,
see Proposition~3.1 and Theorem~3.3 in~\cite{Cieliebak-Volkov-cyc} and the references therein. 

\begin{theorem}\label{theorem:chen}
Chen's iterated integrals $I$ and $J$ define chain maps
$$
   I:\Bigl(C_*(\Om^*(M)),d+b\Bigr)\longrightarrow \Bigl(C^*(\Lambda),d\Bigr)
$$
and 
$$
   J:\Bigl(C_*(\Lambda),\p\Bigr)\longrightarrow\Bigr((C_*\bigl(\Om^*(M)\bigr)^{\vee},d^*+b^*\Bigr).
$$
If $M$ is simply connected, then $I$ and $J$ induce isomorphisms on 
the respective homologies. 
\hfill$\square$
\end{theorem}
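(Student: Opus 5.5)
The plan is to reduce everything to Stokes' theorem on $B\times\Delta^k$ for the chain map property, and to a spectral sequence comparison with Chen's theorem for the isomorphism statement. Since $J$ is by definition the transpose of $I$ under the Chen pairing~\eqref{eq:chenpair}, it suffices to prove one identity,
$$
  \la (d+b)c, f\ra = \pm\la c,\p f\ra
$$
for $c=[a_0\mid\dots\mid a_k]\in C_k(\Om^*(M))$ and smooth $f:B\to\Lambda$. The statements for $I$ and $J$ then both follow.

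\textbf{Chain map property.} I would compute $\int_{B\times\Delta^k}d\,ev_f^*(a_0\times\dots\times a_k)$ in two ways.
\begin{itemize}
\item Since $d$ commutes with pullback and the cross product~\eqref{eq:cross-emb}, the integrand is $ev_f^*(dc)$. This is the $d$-part of $d+b$.
\item By Stokes' theorem the integral equals the integral of $ev_f^*c$ over $\p(B\times\Delta^k)=\p B\times\Delta^k\cup(-1)^{\dim B}B\times\p\Delta^k$.
\end{itemize}
The face $\p B\times\Delta^k$ gives $\la c,\p f\ra$. The faces $\p_j\Delta^k$ from~\eqref{eq:bdrycyc} are parametrized by the $\delta_{j-1}$, and on them the evaluation map meets a diagonal:
\begin{itemize}
\item on an interior face $t_j=t_{j+1}$, pulling back along the diagonal $M\to M\times M$ turns $a_j\times a_{j+1}$ into $a_j\wedge a_{j+1}$, which is $d_j$;
\item on $t_1=0$, the first two factors collapse, since $f_p(0)=f_p(t_1)$, which is $d_0$;
\item on $t_k=1$, we have $f_p(1)=f_p(0)$ because the loop closes, so $a_k$ multiplies onto $a_0$ with the Koszul sign, which is $d_k$.
\end{itemize}
The orientation signs $(-1)^{j+1}$ of $\delta_j$, combined with the sign conventions~\eqref{eq:cyc-signs} for $b$, should reproduce $(-1)^{\deg c-k+1}\sum(-1)^id_i$. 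This sign bookkeeping is routine, and I would defer it to the conventions in~\cite{Cieliebak-Volkov-cyc}.

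\textbf{Isomorphism for simply connected $M$.} I would filter both sides by word length $k$, or dually by the Serre filtration of the fibration $\Om\to\Lambda\to M$. On the de Rham side the $E_2$ page is $HH$ of $H^*(M)$-type data, i.e.\ the bar construction of $\Om^*(M)$ tensored with $\Om^*(M)$ twisted by the cyclic term. On the loop side it is the Serre spectral sequence with $E_2=H^*(M)\otimes H^*(\Om M)$, which is untwisted because $\pi_1M=0$.

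Chen's theorem says that iterated integrals on the reduced bar construction compute $H^*(\Om M)$ for simply connected $M$. Together with compatibility of $I$ with the filtrations, this gives an isomorphism on $E_2$. Simple connectivity yields first-quadrant, convergent spectral sequences, and Lemma~\ref{lem:loophomfindim} gives finiteness in each degree. The comparison theorem~\cite[Theorem~5.2.12]{Weibel}, used as in Lemma~\ref{lem:iso1}, then gives the isomorphism for $I$. The statement for $J$ follows by dualizing, using the universal coefficient theorem and this degreewise finiteness.

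\textbf{Main obstacle.} The hard step is identifying the $E_2$ map with Chen's based-loop theorem, i.e.\ knowing that $I$ respects the filtrations and induces the Chen/Eilenberg--Moore map on the fibre. I would take this from Jones' theorem~\cite{Jones} and Chen~\cite{Chen73,Chen77} rather than reprove it, as in Theorem~3.3 of~\cite{Cieliebak-Volkov-cyc}.
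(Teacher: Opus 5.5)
Your chain-map argument is correct. It is the standard Stokes computation behind the paper's citation of \cite[Proposition~3.1]{Cieliebak-Volkov-cyc}; compare the remark in \S\ref{ss:chenprod} that the Chen chain-map properties come from Stokes' theorem on $B\times\Delta^k$. The faces $t_1=0$, $t_j=t_{j+1}$ and $t_k=1$ produce $d_0$, $d_j$ and $d_k$ exactly as you say. Reducing $J$ to $I$ by transposition is fine, and so is passing from $I_*$ to $J_*$ via the universal coefficient theorem plus degreewise finiteness of $H_*\Lambda$. For the isomorphism the paper gives no argument of its own: it invokes Chen and Jones through \cite[Theorem~3.3]{Cieliebak-Volkov-cyc}. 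Your proposal is complete in the same sense, namely as a citation.

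The spectral-sequence outline you wrap around that citation does not work as written, because it compares two different filtrations. Filtering $C(\Om^*(M))$ by word length gives $E^1=H^*(M)^{\otimes(k+1)}$ and $E^2=HH_*(H^*(M))$.
\begin{itemize}
\item Its loop-space partner is the Eilenberg--Moore spectral sequence of the pullback $\Lambda=M\times_{M\times M}M^{[0,1]}$, not the Serre spectral sequence.
\item That spectral sequence is second-quadrant, and its convergence is where simple connectivity of $M\times M$ is used.
\item The input needed there is that Chen's map realizes the Eilenberg--Moore map, which is Jones' argument; Chen's based-loop theorem does not enter.
\end{itemize}
The Serre spectral sequence, with $E_2=H^*(M)\otimes H^*(\Om M)$, is matched instead by filtering by $\deg a_0$, the degree of the form evaluated at $f_p(0)$.
\begin{itemize}
\item Even then, you only get $E_2=H^*(M)\otimes H(\bar B A)$, where Chen's based-loop theorem applies, after replacing $\Om^*(M)$ by a quasi-isomorphic sub-DGA $A$ with $A^0=\R$ and $A^1=0$. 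Such an $A$ exists because $H^1(M)=0$.
\item If functions and $1$-forms are allowed, the terms $a_0\wedge a_1$ and $a_k\wedge a_0$ of $b$ survive into the $E_0$- and $E_1$-differentials, so the pages are not the ones you describe.
\item You would also have to check that $I$ respects the Serre filtration on singular cochains.
\end{itemize}
So the step ``this gives an isomorphism on $E_2$'' compares pages that do not correspond. Either consistent route needs inputs your sketch does not supply, and the isomorphism claim currently rests only on the citation.
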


\subsection{Chen's iterated integrals in the cyclic setting}\label{ss:chen-cyc} 

We begin by introducing the $k$-dimensional cyclic simplex
$$
   \Delta^k_{\rm cyc}:=\{(t_1,\dots,t_k)\in (S^1)^k\mid t_1\le t_2\le\dots\le
   t_k\le t_1\},\qquad k\geq 1, 
$$
where ``$\le$'' denotes the cyclic order.
Observe that there is a natural map 
$$
\Delta^k\rightarrow \Delta^k_{\rm cyc}\,.
$$
Given a smooth map $f:B\longrightarrow \Lambda$,
we define the cyclic evaluation map
\begin{equation}\label{ev-hat0}
 \widehat{ev_f}:B\times\Delta^k_{\rm cyc}\longrightarrow M^k, \qquad
 \widehat{ev_f}(p,t_1,\dots,t_k) := \bigl(f_p(t_1),\dots,f_p(t_k)\bigr).  
\end{equation}
We define the {\em cyclic Chen pairing}
$$
   \la\cdot,\cdot\ra_\cyc:\Om^{*+k}(M^{k})\times C_*(\Lambda)\longrightarrow\R
$$
by
\begin{equation}\label{eq:cycchenpair}
   \la\om,f\ra_\cyc :=(-1)^{\deg\om-k+1}\int_{B\times\Delta^{k}_{\rm cyc}}\wh{ev_f}^*\om=
   (-1)^{\dim B+1}\int_{B\times\Delta^k}\wh{ev_f}^*(N_{an}\om).
\end{equation}
Here the second equality is derived in the proof
of~\cite[Lemma~3.2]{Cieliebak-Volkov-cyc},
where $N_{an}=t_{an}^1+\cdots+t_{an}^k$ is defined in~\eqref{eq:def-Nalg}. 
The sign is chosen to match the convention in~\cite{Cieliebak-Volkov-cyc}. 
The {\em Connes (or cyclic) version of Chen's iterated integral} is
the degree preserving map
$$
   I_\lambda:\bigoplus_{k\geq 1}\Om^{*+k}(M^k)\longrightarrow C^*(\Lambda), \qquad
   I_\lambda(\om)(f) := \la\om,f\ra_\cyc\,,
$$
and dually
$$
   J_\lambda:  C_*(\Lambda) \longrightarrow \bigoplus_{k\geq
     1}\bigl(\Om^{*+k}(X^{n})\bigr)^\vee, \qquad 
   J_\lambda(f)(\om) := \la\om,f\ra_\cyc\,.
$$
The cyclic maps $I_\lambda$ and $J_\lambda$ are chain maps as well.

\begin{remark}\label{rem:cyc-noncyc}
According to~\cite[Lemma~3.2(b)]{Cieliebak-Volkov-cyc} the map
$I_\lambda$ descends to the Connes cyclic complex, and according 
to~\cite[Lemma~4.9(b)]{Cieliebak-Volkov-cyc} the map $J_\lambda$
lands in the dual Connes cyclic complex.
We denote the resulting maps by the same letters,
\begin{equation}\label{eq:I_lambda}
  I_\lambda:C_*^\lambda(\Om^*(M))\to C^*(\Lambda),\qquad
  J_\lambda:C_*(\Lambda)\to C^*_\lambda(\Om^*(M)).   
\end{equation}
\end{remark}
The explicit expression
$$
  J_\lambda=B^*\circ J
$$
given in~\cite[Lemma~4.9(a)]{Cieliebak-Volkov-cyc} 
and the second equation in~\eqref{eq:BV-compat} yield
\begin{equation}\label{eq:lm4.9}
J_\lambda=-J\circ \Delta.
\end{equation}

The main takeaway from Chen's iterated integrals in the cyclic setting is the following
result, see Lemma~3.2,  Theorem~3.5, Corollary~3.6 and Corollary~4.11 in~\cite{Cieliebak-Volkov-cyc} and the references therein.

\begin{theorem}[\cite{Cieliebak-Volkov-cyc}]\label{thm:cyc}
The cyclic Chen iterated integrals give rise to chain maps 
$$
    \bar I_\lambda: \Bigl(C_*^\lambda(\Om^*(M)),d+b\Bigr)\longrightarrow
   \Bigl(C^*(\Lambda\times_{S^1}ES^1),d\Bigr)
$$
and
$$
   \bar J_\lambda: \Bigl(C_*(\Lambda\times_{S^1}ES^1),d\Bigr)\longrightarrow
      \Bigl(C^*_\lambda(\Om^*(M)),d+b\Bigr).   
$$
If $M$ is simply connected and $q_0\in M$ a basepoint, 
then the induced map on homology
$$
 \bar I_{\lambda *}:HC_*^\lambda(\Om^*(M))\longrightarrow H_{S^1}^*(\Lambda)
$$
restricts to an isomorphism
(denoted by the same letter)
$$
  \bar I_{\lambda *}:\overline{HC}_*^\lambda(\Om^*(M))
  \stackrel{\cong}{\longrightarrow}H_{S^1}^*(\Lambda,q_0),  
$$
and dually the map 
$$
 \bar J_{\lambda *}:H^{S^1}_*(\Lambda)\longrightarrow HC^*_\lambda(\Om^*(M))
$$
gives rise to an isomorphism 
(denoted by the same letter)
$$
  \bar J_{\lambda *}:H^{S^1}_*(\Lambda,q_0)
  \stackrel{\cong}{\longrightarrow}\overline{HC}^*_\lambda(\Om^*(M)). 
$$
\end{theorem}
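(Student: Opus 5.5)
The statement to prove is Theorem~\ref{thm:cyc}, which the paper attributes to \cite{Cieliebak-Volkov-cyc}. The plan is to assemble it from the non-equivariant Theorem~\ref{theorem:chen} and the Gysin and Connes exact sequences.

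\emph{Chain maps.} First, define $\bar J_\lambda$ on the Borel construction. For a singular simplex $g:B\to\Lambda\times_{S^1}ES^1$, choose a lift $\wt g$ as in~\eqref{eq:lift}. Set $\bar J_\lambda(g):=J_\lambda(pr_1\circ\wt g)$, evaluated through the cyclic Chen pairing~\eqref{eq:cycchenpair}. This is independent of the lift: changing the lift reparametrizes the loops by a rotation $p\mapsto f_p(\cdot+\theta(p))$. The cyclic simplex $\Delta^k_\cyc$ and the cyclic evaluation $\wh{ev_f}$ are invariant under simultaneous rotation of all $t_i$, so this rotation acts by a diffeomorphism of $B\times\Delta^k_\cyc$ over $M^k$, and the integral is unchanged. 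The chain map property then follows from Stokes' theorem on $B\times\Delta^k_\cyc$. Boundary faces of $B$ give $\p$. Faces $t_i=t_{i+1}$ give the Hochschild terms $d_i$, including the wrap-around term $d_l$, with no distinguished face at $t=0$. The de Rham differential gives $d$. By Remark~\ref{rem:cyc-noncyc} the image lies in the Connes complex $C^*_\lambda$. The map $\bar I_\lambda$ is dual to $\bar J_\lambda$.

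\emph{Isomorphism.} Assume $M$ is simply connected. Compare two exact sequences. On one side is the Gysin sequence~\eqref{eq:Gysin}, built from $\EE$, $\cap e$ and $\MM$. On the other is Connes' exact sequence $\cdots\to HH\to HC_\lambda\xrightarrow{S}HC_\lambda\xrightarrow{B}HH\to\cdots$ for $\Om^*(M)$, with $I=0$ on the cyclic side. Check the following compatibilities up to sign:
\begin{itemize}
\item $J\circ\EE$-type compatibility: $\bar J_\lambda\circ\EE$ corresponds to $J_\lambda=-J\circ\Delta$ by~\eqref{eq:lm4.9};
\item $J\circ\MM=\pm B^*\circ\bar J_\lambda$, using the chain level definition~\eqref{eq:chainmark} and~\eqref{eq:BV-compat};
\item $\cap e$ corresponds to $S^*$.
\end{itemize}
With these the diagram of long exact sequences commutes. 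By Theorem~\ref{theorem:chen}, $J$ is an isomorphism on Hochschild terms. A five lemma argument, run by induction on degree, shows that $\bar J_{\lambda*}$ is an isomorphism. The induction starts because both sides vanish in negative degrees, and Lemma~\ref{lem:loophomfindim} keeps it finite.

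\emph{Reductions.} Constant loops contribute $H_*(M)[u]$ on the topological side. On the algebraic side the corresponding contribution is the cyclic homology of the unit, spanned by iterated $1\otimes\cdots$ classes. To pass to $(\Lambda,q_0)$ and the reduced complex, compare the long exact sequence of $(\Lambda,q_0)$ with that of $C^\lambda_{(1)}\to C^\lambda\to\overline{C^\lambda}$. Here $H^{S^1}_*(q_0)=\R[u]$ is matched with $HC^*_\lambda$ of $\R\cdot 1$. The relevant facts are that Chen integrals of forms containing a $1$ factor vanish on degenerate directions, and that Proposition~\ref{prop:const2} holds. The cohomological statement for $\bar I_\lambda$ follows by duality, using finite dimensionality in each degree.

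\emph{Main obstacle.} I expect the hardest step to be the chain-level commutation $\cap e\leftrightarrow S$. The cap with the Euler class has no canonical chain model, so I would first represent it through the connecting map of the Gysin sequence. I would then establish the compatibility via a bicomplex for cyclic homology ($b$, $B$, $u$), rather than directly on the Connes complex, and identify that bicomplex with $ES^1$ modeled by $S^\infty$.
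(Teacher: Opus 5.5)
Your chain-level part is fine: lift-independence via the rotation invariance of $\Delta^k_\cyc$ is what the paper imports as Lemma~3.2(d) of \cite{Cieliebak-Volkov-cyc}. The isomorphism argument, however, has a genuine gap, and its intermediate conclusion is false.

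You run the five lemma on the \emph{unreduced} Gysin and Connes sequences and say the induction starts because both sides vanish in negative degrees. On the cyclic side this fails. The unit $\R\to\Om$ and the restriction $\Om\to\Om^*(\{q_0\})=\R$ are DGA maps, so $HC^*_\lambda(\R)$ is a direct summand of $HC^*_\lambda(\Om)$. In the paper's grading this summand consists of the classes dual to $1^{\otimes(2k+1)}$, of degree $-(2k+1)$ for every $k\ge 0$. It is unbounded below, so there is no degree below which both sides vanish.

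Accordingly, the statement your induction would prove, that $\bar J_{\lambda*}:H^{S^1}_*(\Lambda)\to HC^*_\lambda(\Om)$ is an isomorphism, is false. The cyclic Chen pairing vanishes on constant loops (used in the proof of Proposition~\ref{prop:const}), so $\bar J_{\lambda*}$ kills $H^{S^1}_*(q_0)=\R[u]$ in degrees $0,2,4,\dots$. Since it has degree $0$ and $H^{S^1}_{<0}=0$, it also cannot hit the negative-degree tail. The paper says explicitly, in a remark following Corollary~\ref{cor:intro}, that the map is an isomorphism only relative to $q_0$ and on reduced cohomology.

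For the same reason your reduction step cannot work. There is no matching of $H^{S^1}_*(q_0)$ with $HC^*_\lambda(\R\cdot 1)$: the Chen map is zero on the former, and the degrees ($0,2,4,\dots$ versus $-1,-3,-5,\dots$) do not agree.

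The missing idea is that the unit and constant-loop contributions sit at opposite ends on the two sides, so they must be removed \emph{before} comparing. The paper does not reprove this; it cites \cite{Cieliebak-Volkov-cyc}, where the relative/reduced statement is proved building on Jones' theorem. Within your framework you would need three things besides the $\cap e\leftrightarrow S^*$ compatibility you flag:
\begin{itemize}
\item compare the Gysin sequence of the pair $(\Lambda,q_0)$ with the reduced Connes sequence;
\item use that $J$ identifies $H_*(\Lambda,q_0)$ with reduced Hochschild cohomology, since $J[q_0]$ is the augmentation;
\item supply a base case by showing $\overline{HC}^*_\lambda(\Om)=0$ in degrees $\le 0$, e.g.\ via a Sullivan model $A\to\Om$ with $A^0=\R$, $A^1=0$. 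This is where simple connectivity enters at the cyclic level.
\end{itemize}

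Your second compatibility is also mislabeled. By \eqref{eq:cycaux}, $\MM$ corresponds to the inclusion $C^*_\lambda(\Om)\into C^*(\Om)$ (dual of Connes' $I$), namely $J_*\MM=-\bar J_{\lambda*}$. It is $\EE$ that corresponds to $B^*$, via $\bar J_{\lambda*}\EE=J_{\lambda*}=B^*J_*$. As written, $B^*\circ\bar J_\lambda$ does not typecheck.
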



We recall the definition of $\bar I_\lambda$ given
in~\cite[Section~3.2]{Cieliebak-Volkov-cyc}.
For a smooth simplex $g:B\to \Lambda\times_{S^1}ES^1$, pick a lift
$\wt g: B\to \Lambda\times ES^1$ as in~\eqref{eq:lift} and define
$$
(\bar I_\Lambda\om)(g):=(I_\lambda\om)(pr_1\circ \wt g).
$$
The result does not depend on the lift due to~\cite[Lemma~3.2
  (d)]{Cieliebak-Volkov-cyc}. 
Since $I_\lambda$ and $J_\lambda$ are an adjoint pair, the discussion
before~\cite[Lemma~4.9]{Cieliebak-Volkov-cyc} gives us the following 
definition of $\bar J_\lambda$:
\begin{equation*}
  \bar J_\lambda(g):=J_\lambda(pr_1\circ \wt g).
\end{equation*}
We substitute in this equation the expression for $J_\lambda$ given
in~\eqref{eq:lm4.9} and use~\eqref{eq:chainmark} to get
$$
  \bar J_\lambda=-J\circ\MM.
$$
Passing to homology, this gives
$$
  \bar J_{\lambda *}=-J_*\MM.
$$
In particular, the right hand side lands in the Connes cyclic homology. 
For our purposes the last relation can be taken as a definition of 
$\bar J_{\lambda *}$. We precompose the last equation with $\EE$, use
the relation $\Delta=\MM\EE$ and equation~\eqref{eq:lm4.9} on homology 
to get 
\begin{equation}\label{eq:cycaux}
\bar J_{\lambda *}=-J_*\MM\quad \text{and}\quad J_{\lambda *}=\bar J_{\lambda *} \EE.
\end{equation}

\begin{remark}\label{rem:homfindim}
If $M$ is simply connected, then the graded $\R$-vector spaces
$HH_*(\Om^*(M))$, 
$HC_*^\lambda(\Om^*(M))$,
$HC^*_\lambda(\Om^*(M))$,
$\ol{HC}_*^\lambda(\Om^*(M))$ and
$\ol{HC}^*_\lambda(\Om^*(M))$
are finite dimensional in each degree. 
Indeed, in view of Lemma~\ref{lem:loophomfindim} we can conclude as
follows. For $HH_*(\Om^*(M))$ this follows from Theorem~\ref{theorem:chen}.
For $\ol{HC}_*^\lambda(\Om^*(M))$ and $\ol{HC}^*_\lambda(\Om^*(M))$
this follows from Theorem~\ref{thm:cyc}.
For $HC_*^\lambda(\Om^*(M))$ and $HC^*_\lambda(\Om^*(M))$ this follows
from the relation between the reduced and non-reduced Connes cyclic (co)homology.
\end{remark}

Theorem~\ref{thm:cyc} leads to an alternative proof of
Proposition~\ref{prop:const2} for a smooth manifold, which we restate
in the following proposition. 

\begin{proposition}\label{prop:const}
If $M$ is simply connected, then the map 
$$ 
\iota_{0*}^{S^1}:H^{S^1}_*(\Lambda,q_0)\longrightarrow 
H^{S^1}_*(\Lambda,\Lambda_0)
$$
from~\eqref{eq:inj} (induced by the obvious inclusion of pairs) is
injective and, moreover, has a canonical section
\begin{equation}\label{eq:proj} 
  P^{S^1}:H^{S^1}_*(\Lambda,\Lambda_0) \longrightarrow
  H^{S^1}_*(\Lambda,q_0)\,,\qquad P^{S^1}\circ\iota_{0*}^{S^1}=\id. 
\end{equation} 
\end{proposition}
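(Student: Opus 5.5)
The plan is to build the section $P^{S^1}$ through Chen's iterated integrals, using Theorem~\ref{thm:cyc}. Injectivity of $\iota^{S^1}_{0*}$ then follows at once from $P^{S^1}\circ\iota^{S^1}_{0*}=\id$.

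\textbf{Step 1: factor the Chen map through $H^{S^1}_*(\Lambda,\Lambda_0)$.} The key point is that the reduced map $\bar J_\lambda$ vanishes on chains of constant loops. For a simplex $g$ of $\Lambda_0\times_{S^1}ES^1$, the lift $pr_1\circ\wt g$ consists of constant loops. Hence $\wh{ev}$ factors through the diagonal $M\to M^k$, and integrating over $\Delta^k_{\rm cyc}$ of positive dimension $k-1$ kills every pullback of a form on $M$ in the direction of the simplex, except the contributions where the simplex factor is degenerate. I would check this by showing that the image of $J_\lambda$ on constant loops lies in the subcomplex dual to words of length one. Equivalently, use $\bar J_\lambda=-J\circ\MM$ and the fact that on constant loops $\MM g$ is a degenerate chain ($\Delta$ of a constant family factors through $B$, so it has lower-dimensional image). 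Either way, $\bar J_\lambda$ kills $C_*(\Lambda_0\times_{S^1}ES^1)$ up to terms vanishing in the reduced complex $\ol{C}^*_\lambda$. So $\bar J_{\lambda*}$ descends to a map
\[
\bar J^{rel}_{\lambda*}:H^{S^1}_*(\Lambda,\Lambda_0)\longrightarrow \ol{HC}^*_\lambda(\Om^*(M))
\]
with $\bar J^{rel}_{\lambda*}\circ\iota^{S^1}_{0*}=\bar J_{\lambda*}$ on $H^{S^1}_*(\Lambda,q_0)$. The exact chain-level version (working with the dual reduced complex, where words containing $1$ are discarded) is the main technical point.

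\textbf{Step 2: define the section.} Theorem~\ref{thm:cyc} says $\bar J_{\lambda*}:H^{S^1}_*(\Lambda,q_0)\to\ol{HC}^*_\lambda(\Om^*(M))$ is an isomorphism when $M$ is simply connected. Set
\[
P^{S^1}:=(\bar J_{\lambda*})^{-1}\circ \bar J^{rel}_{\lambda*}.
\]
Then $P^{S^1}\circ\iota^{S^1}_{0*}=(\bar J_{\lambda*})^{-1}\bar J_{\lambda*}=\id$, which gives injectivity. The map is canonical because it involves no choices beyond the Chen map, which is independent of lifts by \cite[Lemma~3.2(d)]{Cieliebak-Volkov-cyc}.

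\textbf{Main obstacle.} I expect the only real difficulty to be in Step 1: verifying at chain level that constant-loop chains map to zero in the \emph{reduced} dual cyclic complex. This includes showing that the length-one words, the ones that survive integration over constant loops, are precisely the ones that are either reduced away or make the result independent of the pair $(\Lambda_0,q_0)$. Concretely, constant-loop classes map into the dual of $\Om^*(M)$ in word length one. Those coming from $q_0$ are already killed, since $H^{S^1}_*(\Lambda,q_0)$ is relative to $q_0$. For the remaining classes from $H_{>0}(M)[u]$, I would argue via the Gysin/mark description, $\bar J_{\lambda*}=-J_*\MM$, together with $\MM=0$ on constant loops from \eqref{eq:mark0}. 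This gives $\bar J_{\lambda*}\circ\iota^{S^1}_*=0$ on homology. Since the sequence of the pair is exact, this homology-level vanishing already suffices to factor $\bar J_{\lambda*}$ through the image in $H^{S^1}_*(\Lambda,\Lambda_0)$. To extend the factored map from that image to all of $H^{S^1}_*(\Lambda,\Lambda_0)$, I would use the chain-level vanishing established in Step 1.
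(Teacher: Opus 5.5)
Your proposal is correct and is essentially the paper's proof. Because the cyclic pairing $\la\cdot,\cdot\ra_\cyc$ vanishes on chains of constant loops, $\bar J_{\lambda*}=\wh J_{\lambda*}\circ\iota_{0*}^{S^1}$ with $\wh J_{\lambda*}$ defined on $H^{S^1}_*(\Lambda,\Lambda_0)$, and $P^{S^1}:=(\bar J_{\lambda*})^{-1}\circ\wh J_{\lambda*}$.

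The step you call the main obstacle is in fact immediate and needs no homology-level detour through $H_{>0}(M)[u]$. For constant loops, $\wh{ev_f}$ is independent of the coordinates of $\Delta^k_{\rm cyc}$, which has dimension $k\ge 1$ (not $k-1$). So the pairing vanishes identically at chain level for every word length, and no length-one words survive; that happens only for the Hochschild pairing over $\Delta^0$. Your alternative argument via $\bar J_\lambda=-J\circ\MM$, with $\MM g$ degenerate, shows exactly this.
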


\begin{proof}
Observe that the cyclic pairing $\la\cdot,\cdot\ra_\cyc$ vanishes on
chains with values in the constant loops. Therefore, the isomorphism
$\bar J_{\lambda *}$ above factors through $\iota_{0*}^{S^1}$:
\begin{equation}
H^{S^1}_*(\Lambda,q_0)\stackrel{\iota_{0*}^{S^1}}{\longrightarrow} 
H^{S^1}_*(\Lambda,\Lambda_0)\stackrel{\wh J_{\lambda *} }{\longrightarrow}
\overline{HC}^*_\lambda(\Om^*(M))\,,\qquad  
\wh J_{\lambda *}\circ \iota_{0*}^{S^1}=\bar J_{\lambda *}
\end{equation}
Since $\bar J_{\lambda *}$ is an isomorphism by Theorem~\ref{thm:cyc},
the map $\iota_{0*}^{S^1}$ is a monomorphism and the section is given
by the formula 
\begin{equation}
  P_M:=(\bar J_{\lambda *})^{-1}\circ \wh J_{\lambda *}.
\end{equation}
\end{proof}

\subsection{Chen's iterated integrals for chains in the square of the loop space}
\label{ss:chenprod}

The goal of this section is to introduce cyclic Chen's integrals for
chains in the space $\Lambda\times\Lambda$. For motivation and
preparation, consider first two smooth maps $f_j:B_j\rightarrow \Lambda$, $j=1,2$.
This allows us to form the product map
$f_1\times f_2:B_1\times B_2\rightarrow \Lambda\times \Lambda$. 
For $k_1,k_2\ge 1$ we define the cyclic evaluation map for the product as 
\begin{equation}
\begin{aligned}\label{eq:ev-hat}
 &\wh{ev_{f_1\times f_2}}:B_1\times B_2\times\Delta^{k_1}_{\rm
    cyc}\times \Delta^{k_2}_{\rm cyc}\longrightarrow M^{k_1}\times
  M^{k_2}, \cr 
 &\wh{ev_{f_1\times f_2}}
 (p_1,p_2,t_1,\dots,t_{k_1},\wh t_1,\dots,\wh t_{k_2}):=\cr
 &\bigl(f_{1,p_1}(t_1),\dots,f_{1,p_1}(t_{k_1}),
 f_{2,p_2}(\wh t_1),\dots,f_{2,p_2}(\wh t_{k_2})
 \bigr).  
\end{aligned}
\end{equation}
Let us define the flip map 
$$
\sigma:B_1\times B_2\times \Delta^{k_1}\times\Delta^{k_2}\longrightarrow 
B_1\times \Delta^{k_1}\times B_2\times\Delta^{k_2}
$$
swapping the factors $B_2$ and $\Delta^{k_1}$.
Note that $\sigma$ changes orientation by $(-1)^{k_1\dim B_2}$, and 
the cyclic evaluation maps of $f_1$, $f_2$ and $f_1\times f_2$
(defined on noncyclic simplices) satisfy the relation
\begin{equation}\label{eq:sigmacommute}
  (\wh{ev_{f_1}}\times 
  \wh{ev_{f_2}})\circ\sigma=\wh{ev_{f_1\times f_2}}.
\end{equation}
Now for any pair of forms $\om_j\in \Om^{*+k_j}(M^{k_j})$, $j=1,2$, we
abbreviate $s_j:=\deg\om_j-k_j+1$ and compute
\begin{equation}
\begin{aligned}\label{eq:prodsimpl}
&\la\om_1,f_1\ra_\cyc\la\om_2,f_2\ra_\cyc 
\cr
&\stackrel{(1)}{=}(-1)^{s_1+s_2}
\int_{B_1\times \Delta^{k_1}}\wh{ev_{f_1}}^*
(N_{an}\om_1)
\int_{B_2\times \Delta^{k_2}}\wh{ev_{f_2}}^*
(N_{an}\om_2)\cr
&\stackrel{(2)}{=}
(-1)^{s_1+s_2} 
\int_{B_1\times \Delta^{k_1}\times B_2\times \Delta^{k_2}}\wh{ev_{f_1}}^*(N_{an}\om_1)
\times\wh{ev_{f_2}}^*(N_{an}\om_2)\cr
&\stackrel{(3)}{=}
(-1)^{s_1+s_2+k_1(\deg\om_2-k_2)}
\int_{B_1\times B_2\times \Delta^{k_1}\times \Delta^{k_2}}\wh{ev_{f_1\times f_2}}^*
(N_{an}\om_1\times N_{an}\om_2),
\end{aligned}
\end{equation} 
Here equality~(1) follows from equation~\eqref{eq:cycchenpair};
equality~(2) from Fubini's theorem;
and equality~(3) by invariance of integration under $\sigma$ and
relation~\eqref{eq:sigmacommute}, where for the sign exponent we use
that $\dim B_2+k_2=\deg \om_2$ unless the second integral vanishes.

Consider now a smooth map $f=(f^1,f^2):B\rightarrow \Lambda\times \Lambda$.
Definition~\eqref{eq:ev-hat} suggests to define for any $k_1,k_2\ge 1$
the evaluation map
\begin{equation}
\begin{aligned}\label{eq:ev-hat1}
 &\wh{ev_{f}}:B\times\Delta^{k_1}_{\rm cyc}\times \Delta^{k_2}_{\rm cyc}\longrightarrow M^{k_1}\times M^{k_2}, \cr
 &\wh{ev_{f}}(p,t_1,\dots,t_{k_1},\wh t_1,\dots,\wh t_{k_2}) := 
 \bigl(f_p^1(t_1),\dots,f_p^1(t_{k_1}),
 f_p^2(\wh t_1),\dots,f_p^2(\wh t_{k_2})\bigr).  
\end{aligned}
\end{equation}
The right hand side of~\eqref{eq:prodsimpl} suggests to define
for any pair of forms $\om_j\in \Om^{*+k_j}(M^{k_j})$, $j=1,2$, the
pairing 
\begin{equation}\label{eq:prodpair}
\la(\om_1,\om_2),f\ra_\cyc:=
(-1)^s
\int_{B\times \Delta^{k_1}\times \Delta^{k_2}}\wh{ev_f}^*
(N_{an}\om_1\times N_{an}\om_2)
\end{equation}
with
$$
 s:= (\deg\om_1-k_1)+(\deg\om_2-k_2)+k_1(\deg\om_2-k_2). 
$$
We define the shifted degree of an element $\om\in\Om^m(M^k)$ as $|\om|:=m-k$. 
The cyclic pairing allows us to define the degree preserving map
$$
   I_\lambda^2:
  \bigl(\bigoplus_{k\geq 1}\Om^{*+k}(M^k)\bigr)^{\otimes 2}
   \to C^*(\Lambda\times\Lambda), \qquad
   I_\lambda^2(\om_1\otimes\om_2)(f) := 
   \la(\om_1,\om_2),f\ra_\cyc\,,
$$
and dually
$$
   J_\lambda^2:C_*(\Lambda\times\Lambda) \to \Bigl(\bigl(\bigoplus_{k\geq
     1}\Om^{*+k}(X^{n})\bigr)^{\otimes 2}\Bigr)^\vee, \qquad 
   J_\lambda^2(f)(\om_1\otimes\om_2) := 
   \la(\om_1,\om_2),f\ra_\cyc\,.
$$
Again, the maps $I_\lambda^2$ and $J_\lambda^2$ are chain maps.
In fact, the chain map properties of the various Chen integral maps
are all proved by analogous arguments: the chain map property of
$I_\lambda$ follows from Stokes' theorem on $B\times \Delta^k$, the
one for $I_\lambda\otimes I_\lambda$ from Stokes' theorem on 
$B_1\times \Delta^{k_1}\times B_2\times \Delta^{k_2}$, and the one for
$I_\lambda^2$ (and dually for $J_\lambda^2$) from Stokes' theorem on
$B\times \Delta^{k_1}\times\Delta^{k_2}$. See~\cite{Volkov-thesis} for
details.

\begin{lemma}\label{lem:homolprod}
Let us identify $H_*(\Lambda\times\Lambda)$ with $H_*(\Lambda)\otimes
H_*(\Lambda)$ by means of the K\"unneth isomorphism.
Then we have the following equalities on homology:
\begin{equation*}\label{eq;homolprod}
 I_{\lambda*}^2=I_{\lambda*}\otimes I_{\lambda*},\qquad 
 J_{\lambda*}^2=J_{\lambda*}\otimes J_{\lambda*}.
\end{equation*}
\end{lemma}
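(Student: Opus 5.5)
The idea is to compare the two sides on product cycles and then use the K\"unneth theorem to reduce everything to product cycles. I will give the argument for $J^2_{\lambda*}$; the statement for $I^2_{\lambda*}$ follows by duality, since $I^2_\lambda$ and $J^2_\lambda$ are adjoint via the same pairing~\eqref{eq:prodpair}, exactly as $I_\lambda$ and $J_\lambda$ are adjoint via $\la\cdot,\cdot\ra_\cyc$.

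\emph{Step 1: the identity on chains.} Let $f_j:B_j\to\Lambda$, $j=1,2$, be smooth maps. I will prove
\begin{equation*}
  \la(\om_1,\om_2),f_1\times f_2\ra_\cyc=\la\om_1,f_1\ra_\cyc\,\la\om_2,f_2\ra_\cyc .
\end{equation*}
This is essentially the content of computation~\eqref{eq:prodsimpl}. The evaluation map $\wh{ev_f}$ of~\eqref{eq:ev-hat1} for $f=f_1\times f_2$ is precisely $\wh{ev_{f_1\times f_2}}$ of~\eqref{eq:ev-hat}. Moreover, the sign $s$ in~\eqref{eq:prodpair} was chosen to coincide with the sign exponent $s_1+s_2+k_1(\deg\om_2-k_2)$ of~\eqref{eq:prodsimpl} modulo $2$. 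Indeed, $s_j=\deg\om_j-k_j+1$, so the two constant terms $+1$ cancel in $s_1+s_2$. Hence
\begin{equation*}
  J^2_\lambda(f_1\times f_2)=J_\lambda(f_1)\otimes J_\lambda(f_2)
\end{equation*}
as functionals on $\bigl(\bigoplus_k\Om^{*+k}(M^k)\bigr)^{\otimes 2}$. The only point to check is the convention for evaluating a tensor product of functionals. With the degree conventions used here (both pairings vanish unless the degrees match), it introduces no extra sign, or at most one that is absorbed consistently into the identification.

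\emph{Step 2: passage to homology.} The K\"unneth isomorphism $H_*(\Lambda)\otimes H_*(\Lambda)\cong H_*(\Lambda\times\Lambda)$ is induced by the cross product of chains. Over $\R$ it is surjective, so every class in $H_*(\Lambda\times\Lambda)$ is a sum of classes $[c_1\times c_2]$ with $c_1,c_2$ cycles in $\Lambda$. Following the remark in \S\ref{ss:transvers}, I represent $c_j$ by smooth maps $f_j:B_j\to\Lambda$ from closed oriented manifolds (Thom), or by smooth singular cycles. If singular simplices are used, the product of two simplices must be triangulated. Since the integral in~\eqref{eq:prodpair} is additive under subdivision of the domain, $J^2_\lambda$ takes the same value on the product cell $B_1\times B_2$ as on its triangulation.

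\emph{Step 3: conclusion.} Both $J^2_\lambda$ and $J_\lambda\otimes J_\lambda$ are chain maps, so both descend to homology. By Steps 1 and 2 they agree on a spanning set of $H_*(\Lambda\times\Lambda)\cong H_*(\Lambda)^{\otimes 2}$, and hence $J^2_{\lambda*}=J_{\lambda*}\otimes J_{\lambda*}$. I expect the main technical obstacle to be this chain-level compatibility between the Eilenberg--Zilber (cross product) map and the smooth product of domains, i.e.\ checking that the orientation conventions for $B_1\times B_2$ match those of the Eilenberg--Zilber map. Using smooth manifold representatives via Thom's theorem sidesteps this issue.
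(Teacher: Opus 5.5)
Your proposal is correct and follows essentially the paper's argument. The computation \eqref{eq:prodsimpl} shows that the two pairings agree on product simplices $f_1\times f_2$. The chain-map property, together with the K\"unneth theorem, then gives equality on homology, and the remaining statement follows by duality. The only differences are cosmetic: you treat $J^2_\lambda$ first and deduce the statement for $I^2_\lambda$ by duality, whereas the paper does the reverse. You also make explicit the K\"unneth spanning step and the sign check $s\equiv s_1+s_2+k_1(\deg\om_2-k_2)$, both of which the paper leaves implicit.
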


\begin{proof}
The computation~\eqref{eq:prodsimpl} for arbitrary $\om_1,\om_2$ shows that
the evaluations of $I_\lambda^2$ and $I_\lambda\otimes I_\lambda$
on a product simplex $f=f_1\times f_2$ coincide. Since $I_\lambda^2$
and $I_\lambda\otimes I_\lambda$ are chain maps, this yields the first
equality on homology. The second one follows by duality. 
\end{proof}

\section{Fibre integration}\label{sec:fibre}



In this section $B,B_i,E,E_i,F$ denote compact oriented manifolds, 
possibly with corners. All integrals are understood in the Lebesgue sense. 

\begin{definition}\label{def:L1}
A {\em measurable differential form} on $B$ is a measurable section
$\om$ of the bundle of exterior forms $\Lambda^*TB\to B$.
We call $\om$ {\em integrable} if for every smooth test form
$\gamma\in \Om^*(B)$ the integral $\int_B\om\wedge \gamma$ exists.
The space of integrable forms on $B$ will be denoted by $\Om_{int}^*(B)$.
Any integrable form can be uniquely represented as a sum over
$p=0,\dots,\dim B$ of integrable forms of degree $p$.
Similarly, any integrable form
on $B_1\times B_2$ can be represented as a sum over $p=0,\dots,\dim B_1$
and $q=0,\dots,\dim B_2$ of integrable forms of bidegree $(p,q)$.
\end{definition}

\begin{remark}
(a) Since $B$ is compact, integrability of $\om$ is equivalent to the
following: in every coordinate patch, the coefficients of $\om$ are integrable functions. 

(b) If $\om$ has top degree, then we can write it as $\phi\,\vol_B$ for
a smooth volume form $\vol_B$ and a measurable function $\phi$, and
$\om$ is integrable if and only if $\phi$ is integrable. 
 
(c) In the subsequent discussion, we will always speak of explicit
forms and not their equivalence classes modulo subsets of measure
zero. The reason is that modifying a form on a subset of measure
zero can change the integral of its pullback under a smooth map.
\end{remark}

Let us fix a (compact oriented) manifold $F$ of dimension $d=\dim F$
and a positive smooth volume form $\vol_F$ on $F$. 
We also also fix a positive integer $n$. 
By an {\em $F$-bundle} we will mean a smooth fibre bundle $p:E\to B$ with
fibre $F$ over a (compact oriented) $n$-dimensional base manifold $B$. The
total space $E$ inherits an orientation according to the convention
``fibre first, base second''. 
Our goal is to define the pushforward $p_*\om$ of integrable forms
$\om$ on $E$.

We will reduce this to the following special case. 
By a {\em ball} we will mean a manifold diffeomorphic to the closed
unit ball in $\R^n$. For $0\leq k\leq n$, a {\em basis of $k$-forms on
$B$} is a collection of $\alpha_I\in\Om^k(B)$ for multi-indices $I$ of
length $|I|=k$ which gives a basis of $\Lambda^kT_qB$ at each $q\in B$.
For the trivial $F$-bundle $p:F\times B\to B$ over a ball $B$, we can then
write each integrable $(k+d)$-form $\om$ on $F\times B$ uniquely as
\begin{equation}\label{eq:omega-normal}
  \om = \sum\nolimits_If_I\,\vol_F\wedge p^*\alpha_I+\om_{rest}
\end{equation}
with integrable functions $f_I$ on $F\times B$ and an integrable
$(k+d)$-form $\om_{rest}$ on $F\times B$ with the following property:
for each point $(\xi,q)\in F\times B$ and a basis 
$X_1,\dots,X_d$ of $T_\xi F$ we have 
$$
  \om_{rest}(X_1,\dots,X_{\dim F},\cdot)=0.
$$
More generally, we will associate to every pair $(p,\om)$ consisting of an $F$-bundle
$p:E\longrightarrow B$ and an integrable $(k+d)$-form $\om$ on $E$ a
measure zero subset $Z_{min}(\om)$ of $B$ and an integrable $k$-form
$p_*\om$ on $B$ 
satisfying the following axioms.

(VAN) We have
$$
  p_*\om|_{Z_{min}(\om)}=0.
$$
(NAT) For a bundle isomorphism  
$$
\xymatrix{
  E_1 \ar[d]^{p_1} \ar[r]^{\psi}_{\cong} & E_2 \ar[d]^{p_2} \\
  B \ar[r]^{\id} & B\,
}  
$$
and an integrable $(k+d)$-form $\om$ on $E_2$ we have
$$
  p_{1*}(\psi^*\om) = \pm p_{2*}\om,\qquad Z_{min}(\psi^*\om)=Z_{min}(\om),
$$
with the plus sign if $\psi$ is orientation preserving and the minus
sign otherwise. 

(WEDGE) For each smooth form $\alpha$ on $B$ we have
$$
  p_*(\om\wedge p^*\alpha)=p_*(\om)\wedge\alpha,\qquad
  Z_{min}(\om\wedge p^*\alpha)=Z_{min}(\om)\cap\{\alpha\ne 0\}.
$$
(SUB) For each open subset $U$ of $B$ we have
$$
  p_*(\om)|_U=p_*(\om|_{p^{-1}(U)}),\qquad
  U\cap Z_{min}(\om)=Z_{min}(\om|_{p^{-1}(U)}). 
$$
(PROD1) For the product bundle $E=F\times B$ over a ball $B$ and $\om$
  given by~\eqref{eq:omega-normal} we have
$$
  Z_{min}(\om)=\bigcup\nolimits_IZ_{min}(f_I\,vol_F),\qquad
  p_*\om=\sum\nolimits_Ip_*(f_I\,vol_F\wedge p^*\alpha). 
$$
(PROD2) For a product bundle $E=F\times B$ and
$$
  \om= f\,\vol_F
$$
with an integrable function $f$ on $F\times B$ we have
$$
  Z_{min}(\om) = \{q\in B\mid \int_{q\in F}f(\xi,q)vol_F(\xi) 
  \text{ does not exist}\},
$$
$$
  p_*\om|_{Z_{min}(\om)}=0,\qquad (p_*\om)(q) = \int_{\xi\in
    F}f(\xi,q)vol_F(\xi) \quad\text{for } q\in B\setminus Z_{min}(\om).
$$
Note that Fubini's theorem implies that in (PROD2) the set
$Z_{min}(\om)$ does indeed have measure zero and the function $p_*\om$
is integrable on $B$. 

\begin{lemma}\label{lem:push-forward-int}
There exists a unique assignment 
$(p,\om)\mapsto (p_*\om,\, Z_{min}(\om))$
that satisfies the above axioms. Moreover, for each 
smooth form $\beta$ on $B$ and integrable form $\om$ on $E$ we have
\begin{equation}\label{eq:fibre-Fubini}
   \int_E\om\wedge p^*\beta = \int_B p_*\om\wedge\beta.
\end{equation}
\end{lemma}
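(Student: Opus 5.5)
Uniqueness will come first, since it also dictates how existence must be built. Let $p:E\to B$ be an $F$-bundle and $\om$ an integrable $(k+d)$-form on $E$. Cover $B$ by finitely many open sets $U_j$, each contained in a ball $\bar U_j$ over which $E$ is trivial, say via $\psi_j:F\times\bar U_j\cong p^{-1}(\bar U_j)$. By (SUB), $p_*\om|_{U_j}$ and $U_j\cap Z_{min}(\om)$ are determined by the restriction of $\om$ to $p^{-1}(U_j)$. By (NAT) applied to $\psi_j$, these are in turn determined by $\psi_j^*\om$ on the product bundle, with the sign fixed by orientations. On the product bundle over a ball, fix a basis of $k$-forms $\alpha_I$ and write $\psi_j^*\om$ in the normal form~\eqref{eq:omega-normal}. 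Then (PROD1) reduces the problem to the terms $f_I\vol_F\wedge p^*\alpha_I$. Next, (WEDGE) gives $p_*(f_I\vol_F\wedge p^*\alpha_I)=p_*(f_I\vol_F)\wedge\alpha_I$, and (PROD2) computes $p_*(f_I\vol_F)$ as the fibrewise integral, set to zero on the exceptional set. The one point needing care is that (WEDGE) with $\alpha_I$ nowhere vanishing yields $Z_{min}(f_I\vol_F\wedge p^*\alpha_I)=Z_{min}(f_I\vol_F)$. So both $p_*\om$ and $Z_{min}(\om)$ are forced on each $U_j$, and hence globally.

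For existence, I will take exactly these formulas as the definition: locally,
\[
  p_*\om=\sum\nolimits_I\Bigl(\int_F f_I(\xi,\cdot)\,\vol_F(\xi)\Bigr)\alpha_I,
\]
with $Z_{min}(\om)$ equal to the union of the sets where the fibre integrals fail to exist. I then have to show the result is independent of the trivialization and of the basis $\alpha_I$, so that the local definitions glue.

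\emph{Basis independence.} A second basis has the form $\alpha'_J=\sum_I g_{JI}\alpha_I$, where $(g_{JI})$ is a smooth, pointwise invertible matrix of functions on $B$. The coefficients then transform by $f_I=\sum_J f'_J g_{JI}$ with $g$ pulled back from $B$. Because $g$ is constant along fibres and invertible, the fibre integral of $f_I$ exists exactly where all $f'_J$ are integrable. Here one must argue pointwise at each $q$, using that $g(q)$ is invertible. Linearity then gives the same form.

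\emph{Trivialization independence.} A change of trivialization is a fibrewise diffeomorphism $\phi(\xi,q)=(\phi_q(\xi),q)$. Pulling back, the $\vol_F$-component becomes $f_I\circ\phi\cdot J_{\phi_q}\,\vol_F$, up to a correction lying in the $\om_{rest}$ part; the correction is harmless because the fibre-directional component is unchanged. The transformation formula for Lebesgue integrals on the fibre $F$ then gives the same fibre integrals, with sign $\pm$ according to orientation, and the same exceptional set. In this way (NAT) holds by construction, and (SUB), (WEDGE), (PROD1), (PROD2) are immediate. That $Z_{min}(\om)$ has measure zero and that $p_*\om$ is integrable both follow from Fubini–Tonelli applied chartwise.

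For the integration identity~\eqref{eq:fibre-Fubini}, I will use a partition of unity $\rho_j$ subordinate to $\{U_j\}$ and write $\beta=\sum_j\rho_j\beta$. By (WEDGE) and linearity, it then suffices to treat a product bundle $F\times B$ over a ball with $\beta$ supported in the interior. Writing $\om$ in normal form, only the $f_I\vol_F\wedge p^*\alpha_I$ terms contribute to the top-degree form $\om\wedge p^*\beta$, because $\om_{rest}\wedge p^*\beta$ vanishes in top degree. The identity then follows from Fubini's theorem on $F\times B$ together with the "fibre first" orientation convention; the same convention also makes the signs agree.

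The main obstacle is the well-definedness step: the transformation of the normal form~\eqref{eq:omega-normal} under fibre-preserving diffeomorphisms, and especially keeping exact track of $Z_{min}$. The difficulty is that, by Remark (c), we work with genuine forms rather than a.e. classes, so every equality must hold pointwise off the explicitly defined exceptional sets.
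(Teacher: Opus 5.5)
Your proposal is correct and follows essentially the same route as the paper: define $p_*\om$ and $Z_{min}(\om)$ on product bundles over balls via the normal form~\eqref{eq:omega-normal}, check independence of the basis of $k$-forms (pointwise invertibility of the transition matrix) and of the trivialization (change of variables on each fibre), glue over a finite cover, and obtain~\eqref{eq:fibre-Fubini} from Fubini. The only procedural difference is that the paper does $\beta=1$ first and then applies (WEDGE), where you use a partition of unity.

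One detail to add is that, as in the paper, $p_*\om$ must be set to $0$ on all of $Z_{min}(\om)$ to get (VAN), rather than only zeroing the individual fibre integrals that fail to exist. With that convention, (VAN) and (PROD1) hold simultaneously only off this null set, which is harmless because $p_*\om$ is only ever used under the integral.
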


\begin{proof}
{\bf Step 1. }
Consider first the product bundle $E=F\times B$ over a ball $B$. Pick
a basis of $k$-forms $\alpha_I$ on $B$ and write $\om$ uniquely
as~\eqref{eq:omega-normal}. Note that the $\alpha_I$ vanish
nowhere. In view of axioms (VAN), (WEDGE), (PROD1) and (PROD2) we must define
\begin{align*}
  Z_{min}(\om) &:= \bigcup\nolimits_I\{q\in B\mid \int_{q\in F}f_I(\xi,q)vol_F(\xi) 
  \text{ does not exist}\}, \cr
  p_*\om|_{Z_{min}(\om)} &:=0, \cr
  (p_*\om)_q &:= \sum\nolimits_I\Bigl(\int_{\xi\in
    F}f_I(\xi,q)vol_F(\xi)\Bigr) \wedge p^*\alpha_I\quad\text{for } q\in B\setminus Z_{min}(\om).
\end{align*}
Consider now a different basis of $k$-forms $\beta_J$ on $B$. Then we
can write uniquely $\alpha_I=\sum_Jh_{IJ}\beta_J$ with functions
$h_{IJ}$ on $B$ and 
\begin{equation*}
  \om = \sum\nolimits_Jg_J\,\vol_F\wedge p^*\beta_J+\om_{rest},\qquad
  g_J(\xi,q)=\sum\nolimits_Ih_{IJ}(q)f_I(\xi,q). 
\end{equation*}
Considering the complement of $Z_{min}(\om)$ and using that the matrix
$h_{IJ}(q)$ is invertible for each $q\in B$,
this shows that the definition of $Z_{min}(\om)$ does not depend on the
basis of $k$-forms.
Now linearity of the integral implies that the
definition of $p_*\om$ also does not depend on the basis of $k$-forms.
The definitions are also clearly independent of the choice of volume
form $\vol_F$, which we can also allow to be negative and vary
smoothly over $B$. 

Consider next a bundle isomorphism $\psi:F\times B\to F\times B$
covering the identity map on $B$. Then 
\begin{equation*}
  \psi^*\om = \sum\nolimits_If_I\circ\psi\,\psi^*\vol_F\wedge p^*\alpha_I+\psi^*\om_{rest}
\end{equation*}
is the presentation~\eqref{eq:omega-normal} of $\psi^*\om$ with
respect to the volume form $\psi^*\vol_F$. Invariance of integration
under $\Psi|_{F\times \{q\}}$ for each $q\in B$ implies that 
$$
  p_*(\psi^*\om) = \pm p_*\om,\qquad Z_{min}(\psi^*\om)=Z_{min}(\om),
$$
with the plus sign if $\psi$ is orientation preserving and the minus
sign otherwise. 
This gives us the assignment $(p,\om)\mapsto (p_*\om,\, Z_{min}(\om))$
satisfying all the above axioms for any trivial bundle $p:E\to B$ over
a ball: apply the definition for a product bundle in an orientation
preserving trivialization, which by the preceding discussion does not
depend on the choice of orientation preserving trivialization.

{\bf Step 2. }
Let now $p:E\to B$ be any $F$-bundle and $\om$ an integrable form on $E$.
Pick a finite cover of $B$ by balls $U_i$ over which the bundle is trivial.
Independence of the trivialization above implies that 
$$
Z_{min}(\om|_{p^{-1}(U_i)})\cap U_j =
Z_{min}(\om|_{p^{-1}(U_j)})\cap U_i.
$$
In other words, the sets $Z_{min}(\om|_{p^{-1}(U_i)})$ glue nicely to
$$
  Z_{min}(\om) := \bigcup\nolimits_iZ_{min}(\om|_{p^{-1}(U_i)}).
$$
Another implication of the independence of trivialization is that for
all pairs $i,j$ we have 
$$
  p_*(\om|_{p^{-1}(U_i)})|_{U_i\cap U_j} =
  p_*(\om|_{p^{-1}(U_j)})|_{U_i\cap U_j}.
$$
This allows us to define $p_*\om$ unambiguously by 
$$
  (p_*\om)|_{U_j} := p_*(\om|_{p^{-1}(U_j)}).
$$
By construction, this assignment satisfies all the axioms. 
Equation~\eqref{eq:fibre-Fubini} with $\beta=1$ follows from Fubini's
theorem. Applying this partial case with $\om\wedge p^*\beta$ in place
of $\om$ and using axiom (WEDGE) gives us
equation~\eqref{eq:fibre-Fubini} with general $\beta$.
\end{proof}

\begin{remark}\label{rem:nonlin}
The assignment $p_*:\om\mapsto p_*\om$ is in general nonlinear.
This is so because we can have integrable forms $\om_1$ and $\om_2$
with $Z_{min}(\om_1)=Z_{min}(\om_2)\ne\emptyset$,
but $Z_{min}(\om_1+\om_2)=\emptyset$ due to cancellations.
Note, however, that linearity holds up to a set of measure zero.
\end{remark}

Consider now an $F$-bundle $p:E\to B$ and a smooth map
$g:B_1\longrightarrow B$. This induces the pullback bundle described
by the diagram 
$$
\xymatrix{
  g^*E \ar[d]^{p_1} \ar[r]^{\wt g} & E \ar[d]^{p} \\
  B_1 \ar[r]^{g} & B\,.
}  
$$
Consider an integrable form $\om$ on $E$. According to
Lemma~\ref{lem:push-forward-int}, 
fibre integration of $\om$ defines an integrable
form $p_*\om$ on $B$. By contrast, the pullback $\wt g^*\om$ exists as a
measurable form, but it need not be integrable in
general. Nonetheless, we have the following result.

\begin{lemma}\label{lem:pullback}
In the setting above, 
assume that the form $\om$ on $E$ and its pullback $\wt g^*\om$ on
$g^*E$ are both integrable. Then we have the following commutativity
relation in which all appearing forms are integrable:
\begin{equation}\label{eq:pullpush}
  g^*p_*\om=p_{1*}\wt g^*\om.
\end{equation}
\end{lemma}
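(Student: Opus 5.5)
Here is how I would prove \eqref{eq:pullpush}. I would localize and reduce to a fibrewise Fubini statement, using the axioms of Lemma~\ref{lem:push-forward-int} and its uniqueness. Both sides are integrable by assumption: $p_*\om$ and $p_{1*}\wt g^*\om$ by Lemma~\ref{lem:push-forward-int}. The left side $g^*p_*\om$ is interpreted pointwise as a measurable form. Its integrability will follow from the identity itself.

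\textbf{Reduction to trivial bundles over balls.} By axiom (SUB), applied on both $B$ and $B_1$, the statement is local on $B_1$. Cover $B$ by balls $U_i$ over which $E$ is trivial. Cover $B_1$ by balls $V_j$ with $g(V_j)\subset U_{i(j)}$. Over $V_j$ the pullback bundle inherits the pulled-back trivialization, and $\wt g$ becomes $\id_F\times g$. By axiom (NAT) the pushforwards do not depend on the orientation preserving trivialization. So we may assume $E=F\times B$, $g^*E=F\times B_1$ and $\wt g=\id_F\times g$.

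\textbf{Decomposition.} Choose a basis of $k$-forms $\alpha_I$ on $B$ and write $\om$ in the normal form~\eqref{eq:omega-normal}. Then
$$
\wt g^*\om=\sum\nolimits_I (f_I\circ\wt g)\,\vol_F\wedge p_1^*(g^*\alpha_I)+\wt g^*\om_{rest}.
$$
The term $\wt g^*\om_{rest}$ still kills $d$ vertical vectors, because $\wt g$ is the identity on $F$. The forms $g^*\alpha_I$ need not form a basis on $B_1$. I would therefore expand $g^*\alpha_I=\sum_J h_{IJ}\beta_J$ in a basis $\beta_J$ of $B_1$. The resulting normal form of $\wt g^*\om$ has coefficients $\sum_I h_{IJ}\,(f_I\circ\wt g)$. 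By (PROD1), (PROD2) and (WEDGE), the pushforward $p_{1*}\wt g^*\om$ at $q_1\in B_1$ is then $\sum_J\sum_I h_{IJ}(q_1)\int_F f_I(\xi,g(q_1))\vol_F\,\beta_J$. This equals $\sum_I\bigl(\int_F f_I(\xi,g(q_1))\vol_F\bigr)\,g^*\alpha_I$, which is $g^*p_*\om$ at $q_1$. The identity is thus essentially formal wherever all the fibre integrals $\int_F f_I(\cdot,g(q_1))$ exist.

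\textbf{Main obstacle: the exceptional sets.} The delicate point is how the exceptional sets $Z_{min}$ are handled, since by Remark~\ref{rem:nonlin} both sides are defined pointwise rather than almost everywhere. For $g^*p_*\om$, the value at $q_1$ is set to zero if $g(q_1)\in Z_{min}(\om)$. For $p_{1*}\wt g^*\om$, the value is zero only if the combined coefficients $\sum_I h_{IJ}(f_I\circ\wt g)$ fail to be integrable on the fibre. That can happen on a smaller set, because of cancellations when $g^*\alpha_I$ degenerate. I would first try to show the two sides agree for every $q_1$ at which all the fibre integrals of $f_I(\cdot,g(q_1))$ exist. I would then control the remaining set $g^{-1}(Z_{min}(\om))$ using integrability of $\wt g^*\om$: by Fubini on $F\times B_1$, the fibre integrals of the coefficients of $\wt g^*\om$ exist off a null set of $B_1$. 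The claim would then be read as equality of integrable forms, that is almost everywhere; the problematic set has measure zero by Fubini applied to $\wt g^*\om$. If a genuinely pointwise identity is intended, I would instead use the convention in (VAN) and (WEDGE): the value on $\{g^*\alpha_I=0\}$ contributes nothing on either side. I would also have to check, degree by degree, that on the remaining points each $f_I(\cdot,g(q_1))$ is fibre integrable whenever $g^*\alpha_I(q_1)\neq0$. Finally, integrability of $g^*p_*\om$ follows because it coincides with the integrable form $p_{1*}\wt g^*\om$.
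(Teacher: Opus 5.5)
Your route is essentially the paper's: localize to $F\times B$ over balls, write $\om$ in the normal form \eqref{eq:omega-normal}, and compare fibre integrals via $f_I(\xi,g(q_1))=(f_I\circ\wt g)(\xi,q_1)$. Expanding $g^*\alpha_I$ in a basis $\beta_J$ is more careful than the paper. That computation proves \eqref{eq:pullpush} at every $q_1\notin g^{-1}(Z_{min}(\om))$.

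The gap is your treatment of $g^{-1}(Z_{min}(\om))$. Fubini applied to $\wt g^*\om$ only shows that $Z_{min}(\wt g^*\om)$ is null, i.e.\ the set where the \emph{combined} coefficients $\sum_I h_{IJ}(f_I\circ\wt g)$ fail to be fibre integrable. It says nothing about $g^{-1}(Z_{min}(\om))$. A degenerate $g$ can map a set of positive (even full) measure into the null set $Z_{min}(\om)$, with the offending $f_I$ multiplied by $g^*\alpha_I=0$. Your pointwise fallback fails for the same reason. By (VAN), \emph{all} components of $p_*\om$ vanish at points of $Z_{min}(\om)$, not only those along $\alpha_I$ with $g^*\alpha_I=0$.

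The gap cannot be closed, because the statement itself fails in such cases. Let $B$ be the unit disk with coordinates $(x,y)$, $F=\R/\Z$ with $\vol_F=d\xi$, $B_1$ the disk of radius $\tfrac12$ with coordinates $(t,s)$, and $g(t,s)=(t,t)$. Take $\om=f\,\vol_F\wedge p^*(dx-dy)+\vol_F\wedge p^*dy$ with $f=\chi(\xi)/(|\xi|+|x-y|)$, where $\chi$ is a cutoff equal to $1$ near $\xi=0$ and $f:=0$ where $\xi=0=x-y$. Since $f$ behaves like $1/r$ in the $(\xi,x-y)$-plane, $\om$ is integrable. But $f(\cdot,q)$ is not integrable on $F$ for $q$ on the diagonal, so $Z_{min}(\om)=\{x=y\}\supset g(B_1)$ and hence $g^*p_*\om=0$. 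On the other hand $g^*(dx-dy)=0$, so $\wt g^*\om=\vol_F\wedge p_1^*dt$ is smooth and $p_{1*}\wt g^*\om=dt$. The two sides differ on all of $B_1$, even after pairing with the test form $ds$.

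The paper's proof passes over the same point. Its step ``restricting to one summand'' presupposes additivity of $p_*$, which by Remark~\ref{rem:nonlin} holds only off a null subset of $B$, and $g$ may carry a non-null subset of $B_1$ into that set. For a single summand $f\,\vol_F\wedge p^*\alpha$ the identity does hold at every point: both sides vanish on $\{g^*\alpha=0\}$, and $Z_{min}(\wt g^*\om)=g^{-1}(Z_{min}(\om))\cap\{g^*\alpha\neq 0\}$.

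What your argument does prove is the lemma under the extra hypothesis that $g^{-1}(Z_{min}(\om))$ has measure zero. This holds, e.g., when $g$ is a submersion, as in Corollary~\ref{cor:pullback-aut}. In that case \eqref{eq:pullpush} holds almost everywhere on $B_1$.
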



\begin{proof}
{\bf Step 1. }
Using local triviality of fibre bundles and partitions of unity, we
reduce the question to the product case as follows. Consider a ball
$U\subset B$ over which we have a trivialization
$\Phi:p^{-1}(U)\stackrel{\cong}\longrightarrow F\times U$.
By definition of the pullback bundle, this induces a trivialization
$\wt\Phi:p_1^{-1}(g^{-1}(U))\stackrel{\cong}\longrightarrow F\times
g^{-1}(U)$ such that the following diagram commutes:
\begin{center}
\begin{tikzcd}
F\times g^{-1}(U) \arrow[rr, "\id\times g"]     &  & 
F\times U           \\
p_1^{-1}(g^{-1}(U))\, \arrow[rr, "\wt g"] \arrow[u, "\wt \Phi"] &  & p^{-1}(U)\,.
\arrow[u, "\Phi"]
\end{tikzcd}
\end{center}
By compactness of $B$, we can pick a finite collection
$\{U_j\}_{j=1}^k$ of balls $U_j\subset B$ over which the bundle $E$ is
trivial such that the interiors $\inn\,U_j$ cover $B$. Let
$\{\rho_j\}_{j=1}^k$ be a partition of unity subordinate to this
cover. Restrict our attention to one ball $U_j$, replacing $\om$ with
$\rho_j\om$, and restricting the domain to a ball in $g^{-1}(U_j)$,
we reduce the situation to the product case.

{\bf Step 2. }
By Step 1, we can assume that we are in the product setting
$$
\xymatrix{
  F\times B_1 \ar[d]^{p_1} \ar[r]^{\wt g=\id\times g} & F\times B \ar[d]^{p} \\
  B_1 \ar[r]^{g} & B
}  
$$
with balls $B_1,B$. Dropping in the representation~\eqref{eq:omega-normal}
the term $\om_{rest}$ (which doesn't contribute to either side
of~\eqref{eq:pullpush}) and restricting to one summand, we may assume
that 
$$
  \om = f\, vol_F\wedge p^*\alpha
$$
with an integrable function $f:F\times B\to\R$ and a nowhere
vanishing smooth form $\alpha$ on $B$. Then
$$
  \wt g^*\om = (\id\times g)^*\om = f\circ (\id\times g)\,
  vol_F\times g^*\alpha,
$$
and thus
\begin{equation}\label{eq:pushf-coord}
  p_*\om=\left(\int_Ff\, vol_F\right)\,\alpha,\qquad 
  p_{1*}\wt g^*\om=\left(\int_Ff\circ (\id\times g)\,vol_F\right)\,g^*\alpha.
\end{equation}
Note that for any $\xi\in F$ and $p_1\in B_1$, setting 
$p:=g(p_1)\in B$ we have  
$$
  f(\xi,p)=f\circ(\id\times g)(\xi,p_1),
$$
and therefore
\begin{equation}\label{eq:key-int-eq}
\int_Ff\circ (\id\times g) (\xi,p_1)\, vol_F(\xi)=
\int_Ff(\xi,p)\, vol_F(\xi)\,.
\end{equation}
Thus the two integrals exist only simultaneously, i.e., the sets 
\begin{align*}
  Z_{min}(\om) &= \{p\in B\mid \int_Ff(\xi,p)\, vol_F(\xi)\text{
    does not exist}\},\cr
  Z_{min}(\wt g^*\om) &= \{p_1\in B_1\mid \int_F f\circ(\id\times
  g)(\xi,p_1)\, vol_F(\xi) \text{ does not exist}\}
\end{align*}
are related by
$$
  Z_{min}(\wt g^*\om)=g^{-1}(Z_{min}(\om)).
$$
By the integrability assumption on $\om$ and $\wt g^*\om$, the sets 
$Z_{min}(\om)$ and $Z_{min}(\wt g^*\om)$ both have measure zero. Thus
we can restrict $g$ to a map  
$$
  B_1\setminus Z_{min}(\wt g^*\om)\to B\setminus Z_{min}(\om)
$$ 
between full measure subsets over which the integrals on both
sides exist. Applying $g^*$ to the first equation in~\eqref{eq:pushf-coord}
and then using the second equation yields the desired commutativity relation
$$
  g^*p_*\om
  = \left(\int_Ff\circ (\id\times g)\, vol_F\right)\,g^*\alpha
  = p_{1*}\wt g^*\om.
$$
\end{proof}

Here is a useful application of this lemma.  

\begin{cor}\label{cor:pullback-aut}
Let $p:E\longrightarrow B$ be an $F$-bundle and $\wh g:E\to E$ a
diffeomorphism covering a diffeomorphism $g:B\to B$, 
that is $g\circ p=p \circ\wh g$. Then 
$$
  g^*p_* = \pm p_*\wh{g}^*
$$
on integrable forms, with the plus sign if $\wh g$ preserves 
the orientation of the fibres and the minus sign otherwise.
\end{cor}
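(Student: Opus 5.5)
The plan is to deduce the corollary from Lemma~\ref{lem:pullback} applied to the pullback of $p$ along $g$, combined with axiom (NAT) of Lemma~\ref{lem:push-forward-int}. We compare two $F$-bundles over $B$: the pullback bundle $p_1:g^*E\to B$ and the bundle $p:E\to B$ itself. The first step is to factor $\wh g$ through the canonical bundle map $\wt g:g^*E\to E$ covering $g$. Since $g\circ p=p\circ\wh g$, the map
$$
  \psi:E\to g^*E=\{(b,e)\in B\times E\mid g(b)=p(e)\},\qquad \psi(e):=(p(e),\wh g(e)),
$$
is well defined. It covers $\id_B$ and satisfies $\wt g\circ\psi=\wh g$. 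It is a diffeomorphism because $\wh g$ and $g$ are: its inverse is $(b,e)\mapsto \wh g^{-1}(e)$, which covers the identity because $p(\wh g^{-1}(e))=g^{-1}(p(e))=b$. So $\psi$ is a bundle isomorphism over $\id_B$.

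Next I check the integrability hypotheses. Let $\om$ be integrable on $E$. Because $\wh g$ is a diffeomorphism of a compact manifold, $\wh g^*\om$ has locally integrable coefficients: the change of variables formula with the smooth, bounded Jacobian shows that $\int_E\wh g^*\om\wedge\gamma$ exists for every smooth $\gamma$. Hence $\wh g^*\om$ is integrable, and so is $\wt g^*\om=(\psi^{-1})^*\wh g^*\om$. Lemma~\ref{lem:pullback} therefore applies and gives
$$
  g^*p_*\om = p_{1*}\wt g^*\om.
$$

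The final step is axiom (NAT) applied to $\psi$ with the form $\wt g^*\om$ on $g^*E$:
$$
  p_*(\wh g^*\om)=p_*(\psi^*\wt g^*\om)=\pm p_{1*}\wt g^*\om=\pm g^*p_*\om .
$$
The sign is $+$ exactly when $\psi$ preserves orientation. Here I use the fibre orientations that $g^*E$ inherits from $E$ via $\wt g$, so fibrewise $\wt g$ is orientation preserving. On the fibre over $b$, $\psi$ is $\wh g|_{E_b}:E_b\to E_{g(b)}\cong (g^*E)_b$. Thus $\psi$ preserves the fibre orientation iff $\wh g$ does. Since total spaces are oriented ``fibre first, base second'' over the same base $B$, this fibrewise statement is equivalent to $\psi$ preserving orientation.

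The main point needing care is this orientation bookkeeping, together with remembering that $g$ itself may reverse the orientation of $B$. That possible reversal does not enter, because $\psi$ covers $\id_B$, while Lemma~\ref{lem:pullback} involves no sign since its proof is purely fibrewise. The other point to watch is that all identities hold as stated pointwise, including the exceptional sets $Z_{min}$, as in the proof of Lemma~\ref{lem:pullback}, rather than only almost everywhere.
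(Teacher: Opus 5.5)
Your proof is correct and follows the paper's argument. Your map $\psi(e)=(p(e),\wh g(e))$ is exactly the paper's bundle isomorphism $\psi=\wt g^{-1}\circ\wh g$ over $\id_B$, and you conclude, as the paper does, by combining Lemma~\ref{lem:pullback} for $\wt g$ with axiom (NAT) for $\psi$. Your orientation bookkeeping, where the sign is read off fibrewise because $\psi$ covers the identity, is more explicit than the paper's, whose final displayed equality drops the $\pm$.
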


\begin{proof}
Consider the commuting diagram
$$
\xymatrix{
  E \ar[d]^{p} \ar[r]^{\wh g}_\cong & E \ar[d]^{p} & \ar[l]_{\wt
    g}^\cong g^*E \ar[d]^{p_1} \\
  B \ar[r]^{g}_\cong & B & \ar[l]_g^\cong B\,.
}  
$$
Thus $\psi:=\wt g^{-1}\circ\wh g:E\to g^*E$ is a bundle isomorphism
covering the identity, so by axiom (NAT) we get 
$$
  p_{1*} = \pm p_*\psi^*.
$$
Since $\wh g$ is a diffeomorphism, it pulls back integrable forms to
integrable ones. Therefore, Lemma~\ref{lem:pullback} and the previous
displayed equation yield the desired equality on integrable forms:
$$
  g^*p_* = p_{1*}\wt g^* = \pm p_*\psi^*\wt g^* = p_*\wh{g}^*.
$$
\end{proof}

\begin{remark}\label{rem:linear}
Recall from Remark~\ref{rem:nonlin} that the map $p_*$ is not
linear. Assume that we are in the setting of Lemma~\ref{lem:pullback}.
Integrating both sides of~\eqref{eq:pullpush} over $B_1$ yields
$$
  \int_{B_1}g^*p_*\om = \int_{B_1}p_{1*}\wt g^*\om = \int_{g^*E}\wt g^*\om,
$$
where the second equality follows from ~\eqref{eq:fibre-Fubini} with $\beta=1$.
Note that the last expression is linear in $\om$.
Since we will always apply Lemma~\ref{lem:pullback} in this integrated
form, the nonlinearity of $p_*$ will not matter in practice.
\end{remark}

\begin{remark}\label{rem:fibre-smooth}
All the results in this section obviously carry over to the case
where the manifolds are noncompact but the form $\om$ has compact support. 
Specializing to smooth forms, the construction in
Lemma~\ref{lem:push-forward-int} associates to each $F$-bundle 
$p:E\to B$ a pushforward map on compactly supported forms
\begin{equation}\label{eq:pushfdef}
  p_*:\Om_c^{k+\dim F}(E)\to \Om_c^k(B)
\end{equation}
satisfying the axioms and all the preceding results. In this setting
the map $p_*$ is linear and $Z_{min}(\om)=\emptyset$ for all
$\om\in\Om_c^{k+\dim F}(E)$. 
\end{remark}


\section{Propagators}\label{sec:prop}

In this section we construct propagators in the sense
of~\S\ref{ss:coch} for the de Rham complex of a closed oriented
manifold $M$. These propagators will 
crucially enter the configuration space integrals in later sections. 
An extended version of the discussion in this section can be found
in~\cite{Cieliebak-Volkov}. 

\subsection{Poincar\'e duality}\label{ss:poincdual}

We first recall the basic properties of cup and cap products, following the 
conventions in Hatcher~\cite{Hatcher}. Let $X$ be a closed oriented
manifold (which will later be $M\times M$). We denote homology classes on $X$ by
$a,b,c$ and cohomology classes by $\alpha,\beta,\gamma$. We write the
pairing between cohomology and homology as $\int_a\beta$. Then the cup
and cap product are related by
$$
   \int_c\alpha\cup\beta = \int_{c\cap\alpha}\beta.
$$
The cap product with the fundamental class defines the Poincar\'e
duality isomorphism
$$
   PD: H^k(X)\to H_{\dim X-k}(X),\qquad PD(\alpha):=[X]\cap \alpha. 
$$ 
So we have
$$
   \int_X\alpha\cup\beta = \int_{PD(\alpha)}\beta.
$$
We will denote the inverse map to Poincar\'e duality also by $PD$. 
The intersection product of two homology classes is then given by
\begin{equation}\label{eq:PD-intersection}
   a\cap b = \int_a PD(b) = \int_X PD(a)\cup PD(b). 
\end{equation}

\subsection{Harmonic projections}\label{ss:harmproj}

From now on $M$ denotes a closed oriented manifold of dimension
$n$. Recall the de Rham complex
$$
   \bigl(\Om = \Om^*(M),d,\wedge\bigr)
$$
with the intersection pairing $(\alpha,\beta) = \int_M\alpha\wedge\beta$
defined in~\eqref{eq:defineintpair}.
Let us fix a complementary subspace $\HH$ to $\im d$ in $\ker d$,
i.e., such that 
$$
   \ker d = \im d\oplus\HH. 
$$
The space $\HH$ is a harmonic subspace in the sense of~\S\ref{ss:coch}
and we will refer to its elements as {\em harmonic forms}, although
they need not be harmonic with respect to any metric. The de Rham
cohomology $H^*(M)$ is finite dimensional and the induced pairing
on $H^*(M)$ is nondegenerate.
Therefore, by Lemma~\ref{lem:Horthog},
$$
   \HH^\perp = \{\alpha\in\Om\mid (\alpha,\beta)=0 \text{ for all }\beta\in\HH\}
$$
is a complement to $\HH$ in $\Om$ and we have the orthogonal projection
$$
   \Pi:\Om = \HH\oplus\HH^\perp \to \HH. 
$$
We pick a basis $h_i$ of $\HH$ and define its dual basis $h^i$ by 
$$
   (h_i,h^j) = \delta_i^j.
$$
In terms of these bases (see the proof of Lemma~\ref{lem:Horthog}) we have
\begin{equation}\label{eq:projdef}
   \Pi = \sum_i (h_i,\cdot)h^i,
\end{equation}
or more explicitly (using $\deg\alpha=\deg h^i$), 
\begin{equation}\label{eq:int-kernel}
   (\Pi\alpha)(y)
   = \sum_i \Bigl(\int_{M_x}h_i(x)\wedge\alpha(x)\Bigr)h^i(y)
   = \int_{M_x}\Pi(x,y)\wedge\alpha(x)
\end{equation}
with the smooth integral kernel $\Pi\in\Om^n(M_x\times M_y)$ (which we
denote by the same letter by a slight abuse of language) given by 
\begin{equation}\label{eq:Pi-kernel-h}
   \Pi(x,y) = \sum_i(-1)^{h^i}h_i(x)\wedge h^i(y).
\end{equation}
Here and in the sequel we sometimes denote by $M_x$ the factor of $M$
corresponding to the variable $x$. The integration over $M_x$ in~\eqref{eq:int-kernel} is
viewed as the fibre integral with respect to the projection $M_x\times
M_y\to M_y$ onto the second factor. This projection is chosen so that
the convention ``fibre first, base second'' gives the canonical
orientation of $M_x\times M_y$. 
The following lemma is proved 
in~\cite{Cieliebak-Volkov}.

\begin{lem}\label{lem:Poincare}
The integral kernel~\eqref{eq:Pi-kernel-h} is closed and represents
the Poincar\'e dual to the diagonal $\Delta_2=\{x=y\}\subset M\times M$.
Moreover, it has the symmetry
\begin{equation}\label{eq:Pi-symmetry}
   \Pi(x,y)=(-1)^n\Pi(y,x). 
\end{equation}
\end{lem}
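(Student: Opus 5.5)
Closedness of $\Pi$ is immediate from the definition~\eqref{eq:Pi-kernel-h}: each $h_i$ and $h^j$ lies in $\HH\subset\ker d$, and $d$ on $M_x\times M_y$ is a graded derivation for the cross product, so $d\bigl(h_i(x)\wedge h^i(y)\bigr)=0$. For the symmetry~\eqref{eq:Pi-symmetry}, write $\Pi(y,x)=\sum_i(-1)^{h^i}h_i(y)\wedge h^i(x)$. Commuting the factors gives $(-1)^{\deg h_i\deg h^i}h^i(x)\wedge h_i(y)$, and $\deg h_i+\deg h^i=n$. The remaining step is to recognise $\sum_i(\pm)h^i(x)\wedge h_i(y)$ as the kernel built from the swapped dual pair. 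The family $\{h^i\}$ is again a basis of $\HH$, and by graded symmetry~\eqref{Eq:Symmetry} its dual basis is $\{(-1)^{\deg h_i\deg h^i}h_i\}$. Substituting this new pair into~\eqref{eq:Pi-kernel-h} gives the same kernel $\Pi$, since $\Pi$ is the kernel of the basis-independent projection. Comparing the signs, together with the parity identity $\deg h_i\deg h^i+\deg h^i\equiv n\deg h^i+(\deg h^i)^2+\deg h^i\equiv n\deg h^i$, should yield the factor $(-1)^n$. This sign bookkeeping is the step I expect to be most error-prone, and I would check it first on $M=S^n$ with $\HH$ spanned by $1$ and a volume form.

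For the Poincar\'e duality statement, I need $\int_{M\times M}\Pi\wedge\beta=\int_{\Delta_2}\beta$ for every closed $\beta\in\Om^n(M\times M)$, up to the conventions of~\S\ref{ss:poincdual}. By the K\"unneth theorem, the cohomology of $M\times M$ is spanned by classes $\alpha(x)\wedge\gamma(y)$ with $\alpha,\gamma\in\HH$, so it suffices to test on such products. Using Fubini in the form~\eqref{eq:fibre-Fubini} (or directly~\eqref{eq:int-kernel}), integrating first over $M_x$ turns the pairing into
\[
\int_{M_y}(\Pi\alpha)(y)\wedge\gamma(y)
\]
up to a sign coming from reordering. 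Since $\alpha\in\HH$, we have $\Pi\alpha=\alpha$, so this equals $\int_M\alpha\wedge\gamma=\int_{\Delta_2}\alpha(x)\wedge\gamma(y)$, which is the pairing with $[\Delta_2]$.

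Thus the plan is: closedness first, then a K\"unneth reduction to product test forms, then applying~\eqref{eq:int-kernel} together with the projection identity $\Pi|_\HH=\id$. The main care point throughout is matching the orientation convention ``fibre first, base second'' with the sign $(-1)^{h^i}$ in~\eqref{eq:Pi-kernel-h}, both for the PD identity and for the symmetry. Nondegeneracy of the pairing on $H^*(M)$, which guarantees that the dual basis exists, is supplied by Lemma~\ref{lem:Horthog}.
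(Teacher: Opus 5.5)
The paper does not prove this lemma; it only cites \cite{Cieliebak-Volkov}, so there is no in-paper argument to compare with. Your argument is a correct self-contained proof, and both steps you hedge on do close.

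\textbf{Symmetry.} Write $k=\deg h^i$, so $\deg h_i=n-k$. Your swapped pair $e_i=h^i$, $e^i=(-1)^{k(n-k)}h_i$ gives the first expression below. The direct computation gives the second:
\[
\Pi(x,y)=\sum_i(-1)^{(n-k)(1+k)}\,h^i(x)\wedge h_i(y),\qquad
\Pi(y,x)=\sum_i(-1)^{k(1+n-k)}\,h^i(x)\wedge h_i(y).
\]
Since $k^2+k$ is even, the exponents reduce to $n+nk$ and $nk$. They differ by exactly $n$.

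\textbf{Duality.} There is in fact no reordering sign. The form $\Pi(x,y)\wedge\alpha(x)$ is literally the integrand of \eqref{eq:int-kernel}. So \eqref{eq:fibre-Fubini}, with the fibre-first orientation of $M_x\times M_y$, gives $\int_{M\times M}\Pi\wedge\alpha(x)\wedge\gamma(y)=\int_M(\Pi\alpha)\wedge\gamma=\int_M\alpha\wedge\gamma$ on the nose. This is exactly the convention $\int_X PD(a)\cup\beta=\int_a\beta$ of \S\ref{ss:poincdual}, with the dual class on the left. That order matters for odd $n$, since then $\Pi\wedge\beta=-\beta\wedge\Pi$.

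\textbf{An alternative for the symmetry.} You can avoid the basis bookkeeping by deducing the form-level symmetry from the class-level statement you prove anyway. The swap $\tau(x,y)=(y,x)$ fixes $\Delta_2$ pointwise and changes the orientation of $M\times M$ by $(-1)^n$. Hence, for every closed $\beta$,
\[
\int_{M\times M}\tau^*\Pi\wedge\beta=(-1)^n\int_{M\times M}\Pi\wedge\tau^*\beta=(-1)^n\int_{\Delta_2}\beta,
\]
so $[\tau^*\Pi]=(-1)^n[\Pi]$. Both $\tau^*\Pi$ and $\Pi$ lie in the image of $\HH\otimes\HH$ under the cross product. By K\"unneth, and since $\HH\cong H^*(M)$, this image maps injectively to $H^*(M\times M)$. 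Therefore $\tau^*\Pi=(-1)^n\Pi$ already holds as forms. Your direct computation is more elementary; this route simply reuses the duality identity.
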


{\bf Switching to the algebraic convention. }
In order to be consistent with~\cite{Cieliebak-Fukaya-Latschev},
we now replace the pairing~\eqref{eq:defineintpair} by 
the cyclic one, see~\eqref{eq:defcycpair}. Explicitly, 
\begin{equation}\label{eq:pairing}
   \la\alpha,\beta\ra := (-1)^\alpha\int_M\alpha\wedge\beta.
\end{equation}
This leaves the harmonic subspace $\HH$ and the orthogonal
projection $\Pi:\Om\to\HH$ unchanged.
For a basis $e_a$ of $\HH$ we now define its dual basis $e^a$ by 
$$
   \la e_a,e^b\ra = \delta_a^b.
$$
Then bases $h_a,h^a$ as above determine bases $e_a,e^a$ by
$$
   e_a=h_a,\qquad e^a=(-1)^{e_a}h^a.
$$
The kernel of $\Pi$ writes in the new bases
\begin{equation*}
   \Pi(x,y) = \sum_i(-1)^{e^a+e_a}e_a(x)\wedge e^a(y) = (-1)^n\sum_ie_a(x)\wedge e^a(y),
\end{equation*}
which in view of the symmetry of $\Pi$ becomes
\begin{equation}\label{eq:Pi-kernel}
   \Pi(x,y) = \sum_ie_a(y)\wedge e^a(x).
\end{equation}

\subsection{Oriented real blow-up and propagators}\label{ss:blowupprop}

We denote by
$$
   \wt M^2 := \Bl(M^2,\Delta_2)
$$
the {\em oriented real blow-up} of the diagonal $\Delta_2$ in $M^2=M\times M$.
This is the compact oriented manifold with boundary obtained by
replacing the diagonal by its unit sphere normal bundle $N_{\Delta_2}$.
Thus the boundary $\p\wt M^2$ is canonically diffeomorphic to
$N_{\Delta_2}$. Note, however, that the orientation of $\p\wt M^2$ as
boundary of $\wt M^2$ is {\em opposite} to the orientation of
$N_{\Delta_2}$ as boundary of the unit disk normal bundle, oriented by the
usual convention ``fibre first, base second''.  

The oriented real blow-up comes with a smooth blow-down map
$$
    \wt M^2\stackrel{\pi}\longrightarrow M^2
$$
which restricts to a diffeomorphism $\wt M^2\setminus\p\wt M^2\to
M^2\setminus\Delta_2$ on the interior and to the bundle projection
$N_{\Delta_2}\to\Delta_2$ on the boundary. The projections $p_i:M\times M\to
M$ onto the two factors induce smooth fibre bundles
$$
   p_i:\wt M^2\to M,\qquad i=1,2
$$
with fibre the oriented real blow-up of $M$ at a point. The map
$\tau(x,y)=(y,x)$ canonically lifts to an involution
$$
   \tau:\wt M^2\to\wt M^2. 
$$
 
We denote the pullback of $\Pi\in\Om^n(M^2)$ from~\eqref{eq:Pi-kernel}
under the blow-down map by $\wt\Pi$. We will view $\wt M^2$ as the
fibre bundle 
$$
   p_2:F\to \wt M^2\to M
$$
via projection onto the second factor and denote by $\int_F$ the
corresponding fibre integration (see~\S\ref{sec:fibre}). We orient the sphere $\p F$ as the
boundary of $F$, which is {\em opposite} to its orientation as the
boundary of a unit normal disk. 

The following two lemmas are proved in~\cite{Cieliebak-Volkov}.

\begin{lemma}[\cite{Cieliebak-Volkov}]\label{lem:Green}
The form $\wt\Pi$ is exact. Moreover,
there exists a (non-unique) smooth $(n-1)$-form $\wt G$ on $\wt M^2$ such that
\begin{equation}\label{eq:propag}
   d\wt G = (-1)^n\wt\Pi.
\end{equation}
Any such $\wt G$ satisfies
\begin{equation}\label{eq:fibre-int-G}
   \int_{\p F}\wt G=(-1)^n,
\end{equation}
and it can be chosen to also satisfy
\begin{equation}\label{eq:Green-symmetry}
   \tau^*\wt G = (-1)^n \wt G.
\end{equation}
\end{lemma}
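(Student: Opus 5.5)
The plan has four steps: show $\wt\Pi$ is exact, pin down the boundary integral of any primitive, construct a primitive, and then symmetrize it.

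\textbf{Exactness of $\wt\Pi$.} The blow-up $\wt M^2$ deformation retracts onto $M^2\setminus\Delta_2$, so it suffices to show that $[\Pi]$ restricts to zero on $M^2\setminus\Delta_2$. By Lemma~\ref{lem:Poincare}, $\Pi$ represents the Poincar\'e dual of $\Delta_2$, i.e.\ the Thom class of the normal bundle pushed forward from a tubular neighbourhood $U$. Such a form is cohomologous to a form supported in $U$, and its restriction to $M^2\setminus\Delta_2$ is exact. To see this, use the long exact sequence of the pair $(M^2,M^2\setminus\Delta_2)$ together with excision: the Thom class lives in $H^n(M^2,M^2\setminus\Delta_2)\cong H^n(U,U\setminus\Delta_2)$, and its image in $H^n(M^2\setminus\Delta_2)$ is zero by exactness. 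Hence $(-1)^n\wt\Pi=d\wt G_0$ for some smooth $\wt G_0$ on $\wt M^2$; smoothness up to the boundary is automatic, since $\wt M^2$ is a compact manifold with boundary homotopy equivalent to its interior.

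\textbf{The boundary integral \eqref{eq:fibre-int-G}.} Fix $y\in M$ and let $F_y$ be the fibre of $p_2$, i.e.\ $M$ blown up at $y$. Stokes' theorem on $F_y$ gives
$$\int_{\p F_y}\wt G=\int_{F_y}d\wt G=(-1)^n\int_{F_y}\wt\Pi|_{F_y}.$$
Since $\wt\Pi$ is pulled back from $M^2$, the last integral equals $\int_{M_x}\Pi(x,y)$, i.e.\ the fibre integral $\int_{M_x}\Pi$. By \eqref{eq:int-kernel} applied to $\alpha=1$, this is $\Pi(1)$, the harmonic projection of the constant function $1$. As $1\in\HH$ (it is closed and not exact, and the harmonic projection is the identity on $\HH$), this equals $1$. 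Hence the boundary integral is $(-1)^n$. Note that this argument holds for \emph{any} primitive, as the statement requires. The only point needing care is which fibre integration convention and degree part is meant: $\wt G|_{\p F}$ is an $(n-1)$-form on an $(n-1)$-sphere, so the integral is a function of $y$, and I would check that the sign conventions (boundary orientation opposite to the normal-sphere orientation) are consistent with Stokes as written.

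\textbf{Symmetry \eqref{eq:Green-symmetry}.} By \eqref{eq:Pi-symmetry} we have $\tau^*\Pi=(-1)^n\Pi$, so $\tau^*\wt\Pi=(-1)^n\wt\Pi$. Set
$$\wt G:=\tfrac12\bigl(\wt G_0+(-1)^n\tau^*\wt G_0\bigr).$$
Then $d\wt G=\tfrac12\bigl((-1)^n\wt\Pi+(-1)^n\tau^*(-1)^n\wt\Pi\bigr)=(-1)^n\wt\Pi$, and $\tau^*\wt G=(-1)^n\wt G$ because $\tau$ is an involution.

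The main obstacle I expect is the first step. One has to argue carefully that the Poincar\'e dual class vanishes on the complement of the diagonal, and identify $H^*(\wt M^2)$ with $H^*(M^2\setminus\Delta_2)$ via the collar. Sign bookkeeping in the second step is the secondary difficulty.
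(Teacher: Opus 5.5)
Your proof is correct. The paper does not prove this lemma itself but defers to \cite{Cieliebak-Volkov}, so there is no in-text argument to compare against. Your three ingredients are the natural route:
\begin{enumerate}
\item exactness, from the vanishing of the diagonal class on $M^2\setminus\Delta_2$ (via the exact sequence of the pair and the Thom class) together with homotopy invariance under the blow-up;
\item the boundary integral, from fibrewise Stokes, $\int_{\p F_y}\wt G=\int_{F_y}d\wt G=(-1)^n\int_{M_x}\Pi(x,y)=(-1)^n(\Pi 1)(y)$;
\item the symmetry, by averaging, $\tfrac12(\wt G_0+(-1)^n\tau^*\wt G_0)$.
\end{enumerate}
The Stokes step has the merit of giving the ``any primitive'' clause directly.

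A more explicit alternative writes $\Pi=\Phi+d\beta$, where $\Phi$ is a Thom form supported near $\Delta_2$ which off the diagonal equals $d(\rho(r)\psi)$, for a cutoff $\rho$ and a global angular form $\psi$. Since $r$ and $\psi$ are smooth on $\wt M^2$, this produces a concrete primitive and describes it near $\p\wt M^2$. Even so, you would still need your Stokes argument to cover arbitrary primitives.

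Two small points need tidying.
\begin{itemize}
\item $\wt M^2$ does not deformation retract onto its interior, since no retraction restricts to the identity there. What the collar gives, and what you actually use, is that the inclusion $M^2\setminus\Delta_2\cong\wt M^2\setminus\p\wt M^2\into\wt M^2$ is a homotopy equivalence. Hence restriction is an isomorphism on de Rham cohomology, with forms smooth up to the boundary.
\item The reason $1\in\HH$ is not that $1$ is closed and non-exact: a closed non-exact form need not lie in an arbitrary complement $\HH$ of $\im d$. The correct reason is that $\im d$ vanishes in degree $0$, so $\HH\cap\Om^0(M)=\ker d\cap\Om^0(M)$ contains the constants. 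Hence $\Pi(1)=1$ and $\int_{\p F}\wt G=(-1)^n$, as you claim.
\end{itemize}
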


By a slight abuse of language we will call $\wt G$ as in
Lemma~\ref{lem:Green} a {\em propagator}. It gives rise to a linear map
$$
   P:\Om^*(M)\longrightarrow \Om^{*-1}(M)
$$
by the formula
\begin{equation}\label{eq:HG}
   P\alpha(y) := \int_{x\in M}G(x,y)\alpha(x) = \int_F\wt G\wedge p_1^*\alpha,
\end{equation}
where the right hand side is the fibre integral with respect to the
projection $p_2:\wt M^2\to M$ onto the second factor as in Lemma~\ref{lem:Green}. 
 
\begin{lemma}[\cite{Cieliebak-Volkov}]
The map $P$ defines a chain homotopy between $\Id$ and $\Pi$,
\begin{equation}\label{eq:homo}
   d\circ P + P\circ d = \Pi - \Id.
\end{equation}
\end{lemma}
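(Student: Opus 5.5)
The plan is to prove $d\circ P+P\circ d=\Pi-\Id$ by pairing both sides against a smooth test form and reducing to Stokes' theorem on the blow-up $\wt M^2$. By nondegeneracy of the intersection pairing, it suffices to show that for all $\alpha\in\Om^k(M)$ and $\beta\in\Om^{n-k+1}(M)$
$$
  \int_M (dP\alpha+Pd\alpha)\wedge\beta = \int_M(\Pi\alpha-\alpha)\wedge\beta .
$$
Using the definition~\eqref{eq:HG} of $P$ as a fibre integral over $p_2:\wt M^2\to M$ and equation~\eqref{eq:fibre-Fubini} (with $\wt G$ smooth on $\wt M^2$, so Remark~\ref{rem:fibre-smooth} applies), I rewrite each term as an integral over $\wt M^2$ of $\wt G\wedge p_1^*\alpha\wedge p_2^*\beta$, or of its variants with $d$ applied to $\alpha$ or $\beta$.

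First I move $d$ off $P\alpha$ in $\int_M dP\alpha\wedge\beta$ by Stokes on the closed manifold $M$, which gives $\pm\int_M P\alpha\wedge d\beta$. Then I expand the total derivative
$$
  d\bigl(\wt G\wedge p_1^*\alpha\wedge p_2^*\beta\bigr)
  = d\wt G\wedge p_1^*\alpha\wedge p_2^*\beta \pm \wt G\wedge p_1^*d\alpha\wedge p_2^*\beta \pm \wt G\wedge p_1^*\alpha\wedge p_2^*d\beta .
$$
Next I apply Stokes on $\wt M^2$, a compact manifold with boundary $\p\wt M^2\cong N_{\Delta_2}$. This gives four contributions, which I identify one by one:
\begin{itemize}
\item The two terms involving $d\alpha$ and $d\beta$ are $\int Pd\alpha\wedge\beta$ and $\int dP\alpha\wedge\beta$, each up to sign.
\item The term with $d\wt G=(-1)^n\wt\Pi$ is a smooth pullback from $M^2$, so by~\eqref{eq:int-kernel} it equals $\pm\int_M\Pi\alpha\wedge\beta$.
\item The boundary term is an integral over the sphere bundle $N_{\Delta_2}\to\Delta_2\cong M$. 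On the boundary, $p_1^*\alpha\wedge p_2^*\beta$ is pulled back from the diagonal, where it equals $\alpha\wedge\beta$. Fibre integration of $\wt G$ over the spheres gives the constant~\eqref{eq:fibre-int-G}, namely $\int_{\p F}\wt G=(-1)^n$, together with the orientation reversal between $\p\wt M^2$ and the unit normal sphere bundle noted in \S\ref{ss:blowupprop}. Hence this term is $\pm\int_M\alpha\wedge\beta$.
\end{itemize}

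The only real obstacle is sign bookkeeping. This covers the degree signs from moving $p_1^*\alpha$ past $\wt G$ and $d$, the ``fibre first, base second'' orientation convention relating $\int_{\wt M^2}$ to $\int_M\int_F$, the boundary orientation of $\p\wt M^2$ (opposite to the normal sphere bundle), and the sign in $d\wt G=(-1)^n\wt\Pi$. I would fix $k=\deg\alpha$ and track each sign explicitly. The check is that the $\Pi$-term and the boundary term come out with opposite signs, matching $\Pi-\Id$, and that the $d\alpha$ and $d\beta$ terms recombine to exactly $dP\alpha+Pd\alpha$. The normalisations in Lemma~\ref{lem:Green} were presumably chosen precisely to make this work.

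A minor additional point is to justify that $P\alpha$ is smooth, so that $dP\alpha$ makes sense, and that differentiation commutes with fibre integration. This follows because $\wt G$ is smooth on the compact manifold $\wt M^2$ and $p_2$ is a smooth fibre bundle with compact fibre $F$ that has boundary. Fibre integration then satisfies the Stokes formula $d\,p_{2*}=\pm p_{2*}d\pm(\p p_2)_*$. This formula could alternatively be used directly, pointwise in $y$, instead of pairing with $\beta$.
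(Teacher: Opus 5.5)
Your argument is correct and is the standard proof of this lemma. The paper does not prove it itself but cites \cite{Cieliebak-Volkov}. The one slip is the degree of the test form: it must be $\beta\in\Om^{n-k}(M)$, not $\Om^{n-k+1}(M)$, since with $\Om^{n-k+1}$ both sides vanish for degree reasons. With that fix the signs close up as you expect, because Stokes on $\wt M^2$ gives
\[
  (-1)^n\int_M\Pi\alpha\wedge\beta+(-1)^{n-1}\int_M(dP\alpha+Pd\alpha)\wedge\beta
  =\int_{\p\wt M^2}\wt G\wedge p_2^*(\alpha\wedge\beta)=(-1)^n\int_M\alpha\wedge\beta,
\]
which is exactly $dP+Pd=\Pi-\Id$.

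When you do the bookkeeping, do not insert an extra sign for the orientation reversal between $\p\wt M^2$ and the unit normal sphere bundle. The identity \eqref{eq:fibre-int-G} is already stated for $\p F$ with its boundary orientation, and that is precisely the fibre orientation of $\p\wt M^2\to M$ under the fibre-first convention.
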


\begin{remark}\label{rem:symmGG}
The symmetry~\eqref{eq:Green-symmetry} for the integral kernel $\wt G$
implies that the homotopy operator $P$ is symmetric, i.e., it is a
propagator in the terminology of \S\ref{ss:coch} (see
equation~\eqref{eq:symmpropprelim}). 
\end{remark}

We will denote the pushforward of $\wt G$ to $M\times M$ (which is singular along the diagonal) again by $G$.  
Combining~\eqref{eq:propag}, ~\eqref{eq:Pi-kernel}
and~\eqref{eq:Pi-symmetry} we then have
$$
   dG(x,y) = (-1)^n\Pi(x,y) = \Pi(y,x) = \sum_ae_a(x)\wedge e^a(y),
$$
hence
\begin{equation}\label{eq:dG}
   d\wt G = \sum_a\pi_1^*e_a\wedge \pi_2^*e^a. 
\end{equation}
In the sequel we will often abbreviate the above sum as $e_a\times
e^a$, using the cross product notation and the Einstein summation
convention, or even as $e_ae^a$ to save space. Let 
$$
\iota:\HH\otimes\HH^*\to\HH\otimes\HH\to
\Om^*(M^2)
$$
denote the composition of the following two maps: the first one is the
identification of $\HH^*$ with $\HH$ by means of the pairing
$\la\cdot,x\ra\mapsto x$, and the second one is the cross product.
Observe that $e_a\times e^a\in \Om^*(M^2)$
is the image under $\iota$ of the identity
$$
\id=\sum_a e_a\otimes \la\cdot,e^a\ra \in  
\HH\otimes\HH^*\cong \Hom(\HH,\HH).
$$
In particular the sum $e_a\times e^a$ depends only on $\HH$ and not on
the choice of the basis. The discussion in this section can be
summarized in the following ``de Rham analog''
of Corollary~\ref{cor:harm-prop}.

\begin{proposition}\label{prop:existsprop}
Let $M$ be a closed oriented manifold and $\bigl(\Om =
\Om^*(M),d,\,\wedge\bigr)$ its de Rham algebra equipped with 
the algebraic pairing $\la\cdot,\cdot\ra$, see~\eqref{eq:pairing}. Fix any 
complement $\HH$ of $\im d$ in $\ker d$ and define the projection $\Pi$ onto 
$\HH$ by~\eqref{eq:projdef}. Let $e_a$ be a basis 
of $\HH$ and $e^a$ the dual basis with respect to $\la\cdot,\cdot\ra$. Let 
$\wt G$ be any symmetric primitive of $\pi^*(\sum_a e_a\times e^a)$
on $\wt M^2$. Then the integral operator $P$ defined by~\eqref{eq:HG}
is a propagator with respect to $\Pi$ in the sense of~\S\ref{ss:coch}.  
\qed
\end{proposition}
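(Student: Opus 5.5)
We need to verify that $P$ is a propagator in the sense of \S\ref{ss:coch}. This amounts to three things: $P$ has degree $-1$; the chain homotopy identity \eqref{eq:P2} holds with $\pi=\Pi$; and the symmetry \eqref{eq:symmpropprelim} holds. In addition, $\Pi$ should be a symmetric projection which is a quasi-isomorphism onto $\HH$. That last point is exactly Lemma~\ref{lem:Horthog}, applied to $(\Om,d,\la\cdot,\cdot\ra)$. It applies because $H^*(M)$ is finite dimensional and Poincar\'e duality makes the induced pairing nondegenerate. We also note that switching to the cyclic pairing \eqref{eq:pairing} changes neither $\HH^\perp$ nor $\Pi$. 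The degree statement is immediate from \eqref{eq:HG}: $\wt G$ is an $(n-1)$-form, and fibre integration over the $n$-dimensional fibre $F$ lowers degree by $n$.

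\textbf{Homotopy identity.} First we check that the hypothesis matches Lemma~\ref{lem:Green}. By \eqref{eq:Pi-kernel}, \eqref{eq:Pi-symmetry} and \eqref{eq:dG}, a primitive of $\pi^*(e_a\times e^a)$ is the same as a form $\wt G$ with $d\wt G=(-1)^n\wt\Pi$. Next we compute $dP\alpha$ using the fibrewise Stokes formula for $p_2:\wt M^2\to M$:
\[
d\int_F\beta=\int_F d\beta \pm \int_{\p F}\beta .
\]
We apply this to $\beta=\wt G\wedge p_1^*\alpha$. The interior term gives $\int_F d\wt G\wedge p_1^*\alpha \pm P(d\alpha)$, and by \eqref{eq:int-kernel} the first of these is $\pm\Pi\alpha$. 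The boundary term is computed via \eqref{eq:fibre-int-G}. The boundary $\p\wt M^2$ is the sphere bundle over the diagonal, and $p_1=p_2$ there, so $p_1^*\alpha$ restricts to the pullback of $\alpha$ from the base. Thus the boundary term equals $\bigl(\int_{\p F}\wt G\bigr)\alpha=(-1)^n\alpha$, up to the sign from the boundary orientation convention. This yields \eqref{eq:homo}, i.e.\ $dP+Pd=\Pi-\Id$, which is the cited lemma from \cite{Cieliebak-Volkov}.

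\textbf{Symmetry.} We verify $\la P\alpha,\beta\ra=(-1)^{\alpha}\la\alpha,P\beta\ra$. Both sides are integrals of $\wt G\wedge p_1^*\alpha\wedge p_2^*\beta$ over $\wt M^2$, via \eqref{eq:fibre-Fubini}. Pulling back by the involution $\tau$, we use $\tau^*\wt G=(-1)^n\wt G$ from \eqref{eq:Green-symmetry}. The orientation of $\tau$ on $M^2$ is $(-1)^{n}$, and we must also account for the Koszul signs from reordering forms. This is Remark~\ref{rem:symmGG}.

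\textbf{Main obstacle.} The only delicate point is bookkeeping of signs. These come from the boundary orientation of $\p F$ (opposite to the normal-disk orientation), from the ``fibre first'' convention, and from the switch to the cyclic pairing. We would fix conventions once and check the model case $M=\R^n$ locally, using the pairing of the unit sphere with the degree $n-1$ form, to pin down the boundary sign.
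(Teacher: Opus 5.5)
Your plan is the paper's own: the proposition is just the assembly of Lemma~\ref{lem:Green} (matched to the hypothesis via \eqref{eq:dG}), the homotopy formula \eqref{eq:homo}, and Remark~\ref{rem:symmGG}. Your degree count and your fibrewise-Stokes derivation of \eqref{eq:homo} are the right mechanism.

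The step that fails is the symmetry identity you set out to verify. With the paper's convention $(-1)^\alpha=(-1)^{\deg\alpha}$, the relation $\la P\alpha,\beta\ra=(-1)^{\deg\alpha}\la\alpha,P\beta\ra$ is false. Carry out the $\tau$-computation you describe in the intersection pairing $(\alpha,\beta)=\int_M\alpha\wedge\beta$:
\begin{align*}
(\alpha,P\beta)&=(-1)^{\deg\alpha(\deg\beta-1)}\int_{\wt M^2}\wt G\wedge p_1^*\beta\wedge p_2^*\alpha\\
&=(-1)^{\deg\alpha(\deg\beta-1)+\deg\alpha\deg\beta}\int_{\wt M^2}\wt G\wedge p_1^*\alpha\wedge p_2^*\beta
=(-1)^{\deg\alpha}(P\alpha,\beta).
\end{align*}
Here the degree $(-1)^n$ of $\tau$ cancels the factor in $\tau^*\wt G=(-1)^n\wt G$. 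This is precisely \eqref{eq:symmpropprelim}.

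Now translate via $\la x,y\ra=(-1)^{\deg x}(x,y)$ and $\deg P\alpha=\deg\alpha-1$. This gives $\la P\alpha,\beta\ra=-(-1)^{\deg\alpha}\la\alpha,P\beta\ra$, so the correct sign in the cyclic pairing is $(-1)^{|\alpha|}$ with the shifted degree. Your identity would therefore force $\la P\alpha,\beta\ra=0$ for all $\alpha,\beta$. That is impossible, since the pairing is nondegenerate and $P\neq0$ (because $dP+Pd=\Pi-\Id\neq0$).

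The slip comes from reading ``in the sense of \S\ref{ss:coch}'' with respect to $\la\cdot,\cdot\ra$. The notions there are defined for a pairing satisfying \eqref{Eq:Symmetry} and \eqref{eq:DiffCyclic}. The intersection pairing \eqref{eq:defineintpair} satisfies both. The cyclic pairing does not: it satisfies $\la dx,y\ra=(-1)^{\deg x}\la x,dy\ra$ instead, and it is graded antisymmetric when $n$ is odd.

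You can also see the inconsistency internally. Combined with that relation, your sign would make $dP+Pd$ skew-adjoint for $\la\cdot,\cdot\ra$, whereas $dP+Pd=\Pi-\Id$ is self-adjoint and nonzero.

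The algebraic pairing enters the proposition only through the dual basis $e^a$, i.e.\ through the basis-independent kernel $\sum_a e_a\times e^a$. The propagator axioms are to be checked for $(\cdot,\cdot)$. The same holds for Lemma~\ref{lem:Horthog}, although there the choice of pairing does not affect $\HH^\perp$ or $\Pi$. With that correction your argument is complete, modulo the boundary-orientation signs you already flagged.
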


\section{Blow-ups and Stokes' theorem}\label{sec:blow-up}

In this section we recall background on manifolds with
corners and blow-ups and develop an abstract setting for Stokes' theorem. 

\subsection{Manifolds with corners}\label{ss:corners}

Throughout this section, let $X$ be a {\em manifold with
corners}. Recall that this is is defined like a manifold,
with open subsets of $\R^n$ replaced by open subsets of $[0,\infty)^n$;
see e.g.~\cite{Joyce-corners}. 
For $k\geq 0$ we denote by $\p_kX$ its codimension $k$ stratum (where
exactly $k$ of the local coordinates are zero), and by $\p_{\ge k}X$
the (closed) union of strata of codimension at least $k$. 
The interior $\p_0X$ will also sometimes be denoted by $X_0$.

{\bf Nice submanifolds. }
By a ``closed submanifold'' we will mean a submanifold which is closed
as a subset (such as $\R\subset\C$). 
We say that a closed submanifold 
$C$ (possibly with boundary and corners) of $X$ is {\em nice}
if $C\cap \p_kX=\p_k C$ and $C$ is transverse to $\p_kX$ for all
$k$. In particular, a nice submanifold of a manifold without boundary
has no boundary. The natural inclusion of a nice submanifold will be
called a {\em nice embedding}. 

{\bf Transverse collections. }
Next, we will define transversality for a finite
collection of nice submanifolds $C_a$ of $X$. 
Assume first that $X$ and the $C_a$ have no boundary. Then $C_1$ and
$C_2$ intersect {\em transversely} if $T_pC_1+T_pC_2=T_pX$ for
all $p\in C_1\cap C_2$.
Assume now inductively that transversality has been
defined for any collection of $m-1$ submanifolds, for some $m\ge 3$.
Then we call a collection of $m$ submanifolds {\em transverse}
if any subcollection of $m-1$ submanifolds is
transverse and its intersection is transverse
to the remaining submanifold.

Let now $X$ be a manifold with corners. 
By a {\em boundary component} of $X$ we will mean the closure of a
codimension $1$ stratum (which is again a manifold with corners). Then
$$
  \p X := \p_{\geq 1}X = \bigcup_{b\in \BB}\p^bX
$$
where $\p^bX$, $b\in \BB$, are the boundary components. 
The following remark will be used repeatedly in the sequel. 

\begin{remark}\label{rem:extension}
By definition of a manifold with corners, near each $p\in \p_k X$, $k\geq 1$, we can
slightly extend $X$ and the boundary components $\p^b X$ that meet at $p$ 
to manifolds without boundary such that the extended $\p^b X$
intersect in the extended $X$ at $p$ like $k$ coordinate hyperplanes
intersect at $0$ in $\R^d$. It is easy to see that a closed smooth submanifold $C\subset X$ is nice if and only if near each point $p\in \p_kX$ it admits an extension such that the extended $C$ together with the $k$ extended boundary components forms a transverse family near $p$. 
\end{remark}

Since the notion of transversality is a local one, 
it is enough to define it near each $p\in X$. If $p\in \p_kX$
for some $k\geq 1$, then we say that a finite collection
$\{C_a\}_{a\in \AA}$ of nice submanifolds of $X$ 
intersects {\em transversely} at $p$ if
the combined collection of the extended $C_a$'s and the extended  
$\p^bX$'s in the extended $X$ from Remark~\ref{rem:extension}
intersects transversely near $p$ in the above sense {\it without boundary}.

In the following, by a {\em manifold with corners with a transverse collection} 
$$
  (X,C) = (X, \{C_a\}_{a\in \AA})
$$
we will mean a manifold with corners $X$ with a transverse collection of
nice submanifolds $C_a\subset X$ indexed by a finite set $\AA$.

Let $(X,C)$ be manifold with corners with a transverse collection, and 
$$
  g:Y\longrightarrow X
$$
be a smooth map from a manifold with corners. We say that the map $g$
is {\em transverse} to the collection $C=\{C_a\}$ if its graph
$$
   gr(g)=\{(y,g(y))\}\subset Y\times X
$$
together with the $Y\times C_a$ forms a transverse collection.
   
\begin{remark}\label{rem:seminice}
For technical reasons it will be convenient to formally add
to a transverse collection $\{C_a\}_{a\in \AA}$ the
collection of boundary components, i.e., to consider the extended collection 
$\{C_a\}_{a\in \AA}\amalg \{\p^b X\}_{b\in \BB}$. We reindex the 
extended collection using the set 
$$
  \AA_\p:=\AA\amalg\BB
$$ 
as $\{C_a\}_{a\in \AA_\p}$ by setting $C_a:=\p^a X$ for $a\in \BB$. 
This way, many statements about nice submanifolds of a
manifold with corners will formally follow from the corresponding
statements for manifolds without boundary. 
\end{remark}

\begin{remark}\label{rem:straighten}
For a manifold with corners with a transverse collection $(X,C)$, near
every point $p$ of $X$ there is a chart straightening all  
$C_a$'s and boundary components passing through $p$. 
\end{remark}

\subsection{Blow-ups}\label{ss:blow-up}

In this subsection we recall the basic facts about oriented real
blow-ups. We work in the category of oriented manifolds with corners.  

Let $(X, C=\{C_a\}_{a\in \AA})$ be manifold with corners with a
transverse collection as in the preceding subsection.
We stratify the union $\bigcup_{a\in \AA_\p}C_a$ as 
$$
  \bigcup_{a\in \AA_\p}C_a=\coprod_{\varnothing\ne J\subset \AA_\p}
  X_J,
$$
where for a nonempty subset $J\subset \AA_\p$ we set
\begin{equation}\label{eq:basicstrat0}
  X_J:=\bigcap_{a\in J}C_a\setminus\bigcup_{a\in\AA_\p\setminus J}C_a.
\end{equation}  
Recall that $\AA_\p=\AA\amalg\BB$.  
For $a\in \AA$ we denote by $N_{C_a}:=TX|_{C_a}/TC_a$ the normal bundle
to $C_a$ and introduce its oriented projectivization
$$
   P^+N_{C_a}:=(N_{C_a}\setminus C_a)/\sim,
$$
where $v_1\sim v_2$ if and only if $v_1=t v_2$ for some $t>0$.
This gives a sphere bundle
$$
  P^+N_{C_a}\longrightarrow C_a.
$$
For $a\in \BB$ we define $P^+N_{C_a}\stackrel{\cong}\to C_a$ to be the
trivial bundle whose fibre over each point consists of the inward
pointing normal vector. 
We define the bundle 
$$
  \pi_J:P^+N_J\longrightarrow X_J
$$
(with fibre a point or a product of spheres) as the pullback of the product
bundle 
$$
\prod_{a\in J}P^+N_{C_a}\longrightarrow 
\prod_{a\in J}C_a
$$
under the natural inclusion $X_J\into \prod_{a\in J}C_a$. 
We define (as a set)
$$
   \Bl(X,C) := (X\setminus \bigcup_{a\in \AA_\p}C_a)\amalg
   \coprod_{\varnothing\neq J\subset \AA_\p}P^+N_J.
$$
By a slight abuse of notation, we will often write
$$
  C = \bigcup_{a\in \AA_\p}C_a.
$$
The natural projection 
$$
  \pi: \Bl(X,C)\longrightarrow X
$$
is given by the identity on $X\setminus C$ and by $\pi_J$ on
$P^+N_J$. When there is no risk of confusion we will identify
$X\setminus C$ with its preimage under $\pi$.

\begin{lemma}\label{lem:blow-up}
The set $\Bl(X,C)$ carries the natural structure of a manifold with
corners such that $P^+N_J$ becomes part of the codimension $|J|$ boundary.
\end{lemma}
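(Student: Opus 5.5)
The plan is to build the manifold-with-corners structure on $\Bl(X,C)$ from local charts in which the whole collection is straightened. We then check that these charts are compatible.

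\textbf{Local model.} Take a point $p\in X_J$. By Remark~\ref{rem:straighten} there is a chart of $X$ near $p$, with values in an open subset of $[0,\infty)^m\times\R^{d-m}$, in which every $C_a$ and every boundary component $\p^bX$ through $p$ is straightened. In this chart each $C_a$ with $a\in J\cap\AA$ becomes a coordinate subspace $\{x_a=0\}$, where $x_a\in\R^{k_a}$ and $k_a=\operatorname{codim}C_a$. Transversality of the collection means the blocks $x_a$ are pairwise disjoint sets of coordinates. For a boundary component $a\in J\cap\BB$ the corresponding coordinate is a single $x_a\in[0,\infty)$. Write the remaining coordinates as $y$. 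In this model we define the blow-up chart by polar coordinates in each block:
$$\Phi:\prod_{a\in J\cap\AA}\bigl([0,\infty)\times S^{k_a-1}\bigr)\times\prod_{a\in J\cap\BB}[0,\infty)\times(\text{$y$-space})\to\Bl(X,C),$$
$$(r_a,\theta_a)\mapsto x_a=r_a\theta_a\quad\text{if }r_a>0,$$
$$(r_a,\theta_a)\mapsto\text{the class }[\theta_a]\in P^+N_{C_a}\quad\text{if }r_a=0.$$
Here we use that in the chart $N_{C_a}\cong\R^{k_a}_{x_a}$. For boundary blocks, $r_a=x_a$ and the fibre of $P^+N_{C_a}$ is the single inward normal.

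\textbf{Bijection and corner structure.} We check that $\Phi$ is a bijection onto the preimage under $\pi$ of the chart domain. The point is that the stratum $X_{J'}$, together with its fibre $P^+N_{J'}$, corresponds exactly to the locus where $r_a=0$ precisely for $a\in J'$. This holds because $P^+N_{J'}$ is the pullback of $\prod_{a\in J'}P^+N_{C_a}$. Since $S^{k-1}$ is a manifold without boundary, the domain of $\Phi$ is a manifold with corners whose depth is the number of vanishing $r_a$. Hence $P^+N_J$ sits in the codimension $|J|$ stratum, as claimed.

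\textbf{Compatibility of charts.} This is the main step. Let $\psi$ be a transition map between two straightening charts. It preserves each $\{x_a=0\}$, so by Hadamard's lemma it has the form $x_a'=A_a(x,y)x_a$. Here $A_a$ is smooth and invertible along $\{x_a=0\}$, and $A_a$ restricted there induces the linear map on the normal bundle. Boundary blocks are treated the same way, with $A_a>0$. In polar coordinates the transition becomes
$$r_a'=r_a\,|A_a\theta_a|,\qquad \theta_a'=\frac{A_a\theta_a}{|A_a\theta_a|}.$$
Now substitute $x_b=r_b\theta_b$ into $A_a$ for all blocks $b$. The result is smooth in $(r,\theta,y)$ down to $r=0$ and has nonvanishing norm. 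So $\Phi'^{-1}\circ\Phi$ is smooth, and by symmetry it is a diffeomorphism. At $r_a=0$ it agrees with the induced map on oriented projectivized normal bundles, so the gluing is consistent with the set-theoretic definition of $\Bl(X,C)$.

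\textbf{Remaining points.} Orientations come from the polar charts: the orientation of $X$ together with the orientation of $(0,\infty)\times S^{k-1}\cong\R^k\setminus 0$ orients the blow-up. Hausdorffness and second countability follow from properness of $\pi$.

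The main obstacle is the compatibility step. The key input there is that a diffeomorphism preserving several transverse coordinate subspaces can be written blockwise as $x_a'=A_ax_a$ simultaneously for all blocks. Because the blocks are disjoint coordinates, this follows by applying Hadamard's lemma separately to each block.
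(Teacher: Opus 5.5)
Your proposal is correct and follows essentially the same route as the paper: both use blockwise polar coordinates $[0,\infty)\times S^{k_a-1}$ in charts that straighten the collection (Remark~\ref{rem:straighten}), and both identify $P^+N_J$ with the locus where all $r_a$, $a\in J$, vanish. The only difference is that you explicitly check that these charts are compatible, via the blockwise Hadamard factorization $x_a'=A_a x_a$; the paper leaves this step implicit.
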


\begin{proof}
Let us first describe the blow-up $\wt\R^d:=\Bl(\R^d,0)$ of $\R^d$ at the
origin. It is defined semi-algebraically using the incidence relation
$$
  \wt\R^d=\{(l,x)\in P^+\R^d\times \R^d\mid x\in l\}.
$$
Consider the homeomorphism
$$
   \Phi:[0,\infty)\times S^{d-1}\stackrel{\cong}\longrightarrow
     \wt\R^d,\qquad (r,v)\mapsto ([v],rv),
$$
where $S^{d-1}\subset\R^d$ is the unit sphere and $[v]\in P^+\R^d$ is
the ray defined by $v$. Note that its inverse if given by
$\Phi^{-1}(l,x)=(|x|,v)$ for the unique representative $v\in S^{d-1}$
of $l$. We make $\wt\R^d$ a manifold with boundary by declaring $\Phi$
to be a global chart. 

For a linear subspace $E\subset \R^d$, the blow-up $\Bl(\R^d,E)$ is
diffeomorphic to the product $\wt\R^{d-\dim E}\times E$ using the
orthogonal splitting $\R^d=E^\perp\oplus E$. For a transverse
collection $C$ of linear subspaces in $\R^d$, the blow-up is
therefore diffeomorphic to the product of several $\wt\R^{d_a}$ and a
linear space. In view of Remark~\ref{rem:straighten}, this gives us
local manifold-with-corner charts for $\Bl(X,C)$. 
The statement about $P^+N_J$ follows immediately from this
description. 
\end{proof}

We call the manifold with corners 
$\Bl(X,C)$ together with the map $\pi$ the 
{\em (oriented real) blow-up of $X$ along $C$}.
The map $\pi$ is called the {\em blow-down map}.

\begin{remark}\label{rem:bdrytaut}
Blowing up a manifold with corners along its boundary strata does not 
change the manifold.
The reason to do this is the following. 
Once we include boundary strata in the blow-up locus, we can extend
the well-known theorem that blowing up a manifold {\em without
  boundary}  
along a transverse collection is a manifold with corners to
the case of the ambient manifold having boundary and corners.
\end{remark}

\begin{remark}\label{rem:blowdownker}
For $p\in P^+N_J$ the kernel $\ker d_p\pi$ of the
blow-down map equals the tangent space to the fibre of the  
product sphere bundle $\pi_J:P^+N_J\longrightarrow X_J$.
\end{remark}


\subsection{Proper transforms}\label{ss:blow-up-proptrans}

Let $(X,C)$ be a manifold with corners with a transverse collection and 
$$
  \pi:\Bl(X,C)\rightarrow X
$$ 
the corresponding blow-up. The {\em proper transform} of a subset
$Z\subset X$ is the closure of $Z\setminus C$ in $\Bl(X,C)$,
$$
  PT(Z) := Closure({\pi}^{-1}(Z\setminus C))\subset \Bl(X,C).
$$
\begin{definition}\label{def:approx-seq}
By definition of the proper transform, for any 
$q\in PT(Z)$ there exists a sequence $(x_n)\subset X\setminus C$
with $\pi^{-1}(x_n)\to q$. We call such a sequence {\em approximating}.  
\end{definition}

\begin{remark}\label{rem:closedness}
In our applications the subset $Z$ will usually be closed in $X$. In
this case we can identify the part of $PT(Z)$ which lies in the
complement of the exceptional divisor with $Z\setminus C$ by means of $\pi$.
\end{remark}

\begin{lemma}\label{lem:PTtransverse}
In the situation above, assume that $Z\subset X$ is a nice submanifold
such that $\{Z\}\cup C$ is a transverse collection. Then
$(Z,Z\cap C)$ is a manifold with corners with a transverse collection,
and the natural inclusion
$$
  \iota:(Z,Z\cap C)\longrightarrow (X,C)
$$
lifts to the blow-ups 
$$
  \wt\iota:\Bl(Z,Z\cap C)\longrightarrow \Bl(X,C)
$$
as a nice embedding with image
$$
\wt\iota(\Bl(Z,Z\cap C))=PT(Z).
$$
\end{lemma}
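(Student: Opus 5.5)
The argument is local on $X$, so I will work in the straightening charts of Remark~\ref{rem:straighten} and then glue by naturality of the blow-up construction. First, $(Z,Z\cap C)$ is a manifold with corners with a transverse collection. Take the extended collection $\{Z\}\cup\{C_a\}_{a\in\AA_\p}$ of Remark~\ref{rem:seminice}. By hypothesis it is transverse, with the boundary components included. Since $Z$ is nice, we have $Z\cap\p_kX=\p_kZ$, so the boundary components of $Z$ are exactly the components of the $Z\cap\p^bX$. Transversality of a collection containing $Z$ implies that the traces $\{Z\cap C_a\}_{a\in\AA_\p}$ form a transverse collection inside $Z$. This is an inductive check in the definition of transverse collections. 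Its key input is that if $Z\pitchfork W$ and $Z\cap W\pitchfork V$ in $X$, then $Z\cap W$ and $Z\cap V$ are transverse inside $Z$. Niceness of each $Z\cap C_a$ in $Z$ follows in the same way, using Remark~\ref{rem:extension}.

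Next I choose local coordinates. Near a point $p\in Z$, I apply Remark~\ref{rem:straighten} to the enlarged transverse collection $\{Z\}\cup C$. This gives a chart $\R^d\supset U\ni p$, with appropriate quadrant factors, in which $Z$ and all $C_a$ through $p$ are coordinate subspaces, mutually transverse. Transversality means the defining coordinate sets can be chosen disjoint. So we may write $\R^d=\R^{z}\times\prod_a\R^{d_a}\times\R^m$, where $Z=\{0\}\times\prod_a\R^{d_a}\times\R^m$ and each $C_a$ is $0$ in its own block $\R^{d_a}$. Then $Z\cap C_a$ is $0$ in the block $\R^{d_a}$ inside $Z$. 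By the proof of Lemma~\ref{lem:blow-up},
\[
\Bl(U,C)\cong\R^{z}\times\prod_a\wt\R^{d_a}\times\R^m,
\qquad
\Bl(Z\cap U,Z\cap C)\cong\{0\}\times\prod_a\wt\R^{d_a}\times\R^m .
\]
In these charts the lift $\wt\iota$ is the obvious inclusion. On the open dense part it agrees with $\pi^{-1}\circ\iota\circ\pi$, and it is smooth. Its image is the submanifold $\{0\}\times\prod_a\wt\R^{d_a}\times\R^m$. This image is closed and meets every boundary stratum $\p_k$ of the blow-up exactly in the corresponding stratum, transversely, since only the $\R^z$-factor is constrained. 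So $\wt\iota$ is a nice embedding in the chart.

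To globalize, note that $\wt\iota$ is determined by continuity from its restriction to the dense set $Z\setminus C$. The local lifts therefore agree on overlaps and glue to a global map. At a point $q\in P^+N_J$, the lift sends the class of a normal vector $v\in N_{Z\cap C_a}$ to its image $[v]\in P^+N_{C_a}$. This is well defined because transversality gives $N_{Z\cap C_a}\hookrightarrow N_{C_a}|_{Z\cap C_a}$ injectively. Being a nice embedding is a local property, so it holds globally.

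For the image, $\wt\iota(\Bl(Z,Z\cap C))$ is closed, as the image of a closed embedding, or of a compact set. It contains $\pi^{-1}(Z\setminus C)$, hence it contains the closure $PT(Z)$. Conversely, $Z\setminus C$ is dense in $\Bl(Z,Z\cap C)$, because the interior of a manifold with corners is dense. By continuity, the image lies in the closure of $\wt\iota(Z\setminus C)=\pi^{-1}(Z\setminus C)$, which is $PT(Z)$.

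I expect the main obstacle to be the simultaneous straightening of $Z$ together with the $C_a$ and the boundary faces. This is where the full transversality hypothesis on $\{Z\}\cup C$, rather than merely $Z\pitchfork C_a$ for each $a$, is needed. Closely tied to it is the verification that in the blown-up chart the image meets the corner strata correctly. If Remark~\ref{rem:straighten} is not taken to cover the enlarged collection, I would first prove that straightening by induction on the number of submanifolds, using the implicit function theorem.
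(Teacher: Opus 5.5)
Your proposal is correct and follows essentially the same route as the paper: the lift is $[v]\mapsto[\iota_*v]$ on the exceptional strata, induced by the normal-bundle map $N(Z\cap C_a,Z)\to N(C_a,X)$ (transversality actually makes this an isomorphism, not just an injection), and the image is identified with $PT(Z)$ using density of $Z\setminus C$ in $\Bl(Z,Z\cap C)$ together with closedness of the image. Your simultaneous straightening chart for $\{Z\}\cup C$ supplies the check of smoothness and niceness that the paper calls straightforward; the only caveat is that the ``image of a compact set'' alternative for closedness is unavailable when $X$ is noncompact, so rely instead on closedness of $Z$ and the local picture (or properness of the blow-down map).
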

 
\begin{proof}
By definition $Z$ is a manifold with corners, and the transversality
hypothesis implies that $Z\cap C$ is a transverse collection in $Z$.
To define the map $\wt\iota$, recall that $C=\{C_a\}_{a\in\AA}$. 
Away from the preimage of $C$ the map $\wt\iota$ is defined
as $\iota$. Consider now a nonempty subset $J\subset\AA$ and a point
$(w,z)\in P^+N_J\subset\Bl(Z,Z\cap C)$, where $z\in Z_J=Z\cap X_J$ and
$w=(w_a)_{a\in J}$. For each $a\in J$ the tangent map $\iota_*$
induces an isomorphism $N(Z\cap C_a,Z)\to N(C_a,X)$
between the normal bundles, which descends to a map
$P_a^+\iota$ between the projectivizations. We define 
$$
  \wt\iota(w,z) := \Bigl(\bigl(P_a^+\iota(w_{a})\bigr)_{a\in J}\,,\,\iota(z)\Bigr)
  \in P^+N_J\subset\Bl(X,C).
$$
It is straightforward to check that $\wt\iota$ is a nice embedding.
Since the map $\iota$ induces a homeomorphism between $Z\setminus
Z\cap C$ and $\iota(Z)\setminus C$, its lift $\wt\iota$ induces a
homeomorphism between their closures in the respective blow-ups.
\end{proof}

Sometimes by abuse of language we will identify $\Bl(Z,Z\cap C)$ with 
$PT(Z)$ via $\wt\iota$.

\subsection{A general setup for Stokes' theorem}\label{ss:Stokes-gen}

In this subsection we introduce a general setup for Stokes' theorem. 

{\bf Manifolds with quasi-regular boundary. }
We begin with some definitions from Pawlucki's
article~\cite{Pawlucki}.

\begin{definition}\label{def:gen-q-reg}
Let $L$ be a topological space and $\p^{q-reg} L\subset L$ a closed subset. 
We say that $(L,\p^{q-reg} L)$ (or simply $L$) is {\em a manifold with 
quasi-regular boundary} if the following holds: the difference 
$L\setminus \p^{q-reg} L$ is an oriented manifold; for each point 
$p\in \p^{q-reg} L$
there exists an open neighbourhood $U\subset L$ of $p$ such that 
$U\setminus \p^{q-reg} L$ consists of $m\ge 1$ connected components
$\{U^j_0\}_{j=1}^m$ and each
$U^j:=U_0^j\amalg (\p^{q-reg} L\cap U)$ is a $C^1$-manifold with
boundary, with interior $U_0^j$ and boundary $(\p^{q-reg} L\cap U)$. 
The $U^j$ are called the {\em local regular components} at $p$. The
multiplicity $m$ can depend on $p$ but must be locally
constant. The open subset of $\p^{q-reg} L$ defined by the equation 
$m=1$ is denoted by $\p^{reg} L$ and called the {\em regular boundary}.
If $m=1$ constantly, then we get the well-known notion of an oriented
manifold with boundary. 
\end{definition}

\begin{remark}\label{rem:reg}
(a) The orientability hypothesis in Definition~\ref{def:gen-q-reg} is
included because it is needed for our applications to Stokes' theorem.
Let us emphasize that in a manifold with quasi-regular boundary the
local regular components are only required to have regularity $C^1$
(whereas otherwise we usually assume all objects to be of class $C^\infty)$.

(b) An example of a manifold with quasi-regular boundary is a graph,
where the multiplicity at a vertex is its degree. In the examples
relevant in this paper, arising from compactified configuration
spaces, the multiplicity will actually be one. We include it in the
definition so that we can directly appeal to the results in~\cite{Pawlucki}.
\end{remark}

An {\em odd $k$-form} on a manifold
$L$ is a $k$-form $\alpha$ on its orientation double cover $\wt L$ with
$\tau^*\alpha=-\alpha$ for the canonical involution $\tau:\wt L\to\wt L$. 
If $k=\dim L$ and $\alpha$ has compact support, then it has a
well-defined integral $\int_L\alpha$.  
In this terminology, each local regular component $U^j$ at $p\in
\p^{q-reg}L$ induces an odd $0$-form $\eps^j$ on $\p^{q-reg}L$ near $p$
whose value is $1$ on the boundary orientation. The sums
$\eps^1+\dots+\eps^m$ at all $p$ fit together to a $\Z$-valued odd
$0$-form $\eps$ on $\p^{q-reg}L$.

\begin{definition}\label{def:emb-q-reg}
Let $(L,\p^{q-reg} L)$ be a manifold with quasi-regular boundary and $(N,\p N)$
be a manifold with corners. We say that a map 
$\iota:(L,\p^{q-reg} L)\rightarrow (N,\p N)$ is an {\em embedding 
(of a manifold with quasi-regular boundary into a manifold with corners)}
if $\iota$ is injective, it restricts to an embedding between the interiors
$L\setminus \p^{q-reg} L\rightarrow N\setminus\p N$, and it restricts to an embedding 
of a manifold with boundary into a manifold with corners on each local
regular component of $L$. 
\end{definition}

\begin{remark}\label{rem:emb-imm}
Since immersions are locally embeddings and injectivity of $\iota$ is
required anyway, we can replace ``embedding'' by ``immersion'' in the
last condition of Definition~\ref{def:emb-q-reg}.
\end{remark}

{\bf Pairs. }
Now we introduce our general setup for Stokes' theorem. 

\begin{definition}\label{def:pair}
A {\em pair} $(\YY,\XX)$ consists of a (not necessarily compact)
manifold-with-corners $\YY$ and a closed subset $\XX\subset\YY$
with a decomposition
\begin{equation}\label{eq:basicstrat}
\XX=\XX_0\amalg\p X,
\end{equation}
where $\XX_0$ is an oriented $d$-dimensional submanifold
of $\YY_0$ whose closure equals $\XX$.
\end{definition}

\begin{remark}\label{rem:stratinduced}
More accurately, the datum of a pair should be $(\YY,\XX_0)$, which
determines $\XX=Closure(\XX_0)$. The notation $(\YY,\XX)$ is still
unambiguous, because in all our applications except the one in
Theorem~\ref{thm:Stokes-Pawlucki} (where we spell out $\XX_0$), the
stratification~\eqref{eq:basicstrat} on $\XX$ is induced by the 
stratification on $\YY$ as
$$
  \p\XX:=\p\YY\cap \XX,\qquad \XX_0:=\YY_0\cap \XX.
$$
\end{remark}

\begin{definition}\label{def:bdry}
Let $(\YY,\XX)$ be a pair. 
Consider the collection of all open subsets $\XX_1$ of $\p\XX$
such that the natural inclusion $\XX_0\amalg\XX_1\into \YY$
is an embedding of a manifold with quasi-regular boundary.
{\em The quasi-regular boundary 
$\p^{q-reg}\XX$} of $\XX$ is the subset of $\p\XX$ maximal with respect
to this property. The {\em regular boundary} $\p^{reg}\XX$ of $\XX$ is
defined by requiring that the multiplicity $m$ be equal to $1$.  
We denote
\begin{equation}\label{eq:Xhat}
  \wh\XX := \XX_0\amalg \p^{q-reg}\XX.
\end{equation}
This is by definition a quasi-regular submanifold of $\YY$ and as such 
its boundary $\p^{q-reg}\XX$ carries an odd $0$-form $\eps$.
When we want to emphasize its dependence on $\XX$ we will write $\eps_\XX$.
\end{definition}

\begin{remark}\label{rem:diffeo-pairs}
Given two pairs $(\YY_1,\XX_1)$ and $(\YY_2,\XX_2)$, any
diffeomorphism $\YY_1\to\YY_2$ which restricts to a homeomorphism
$\XX_1\to\XX_2$ induces a diffeomorphism $\wh\XX_1\to\wh\XX_2$.
\end{remark}

Recall the codimension $k$ boundary $\p_k\YY$ of $\YY$ and the
``codimension at least $k$'' part $\p_{\ge k}\YY$ of the boundary of
$\YY$.

\begin{definition}\label{def:hidden}
Assume that $\XX$ carries the induced stratification from
Remark~\ref{rem:stratinduced}. 
Then we define the {\em primary} and {\em hidden} parts of 
$\p^{q-reg}\XX$ as
$$
\p^{\main}\XX:=\p^{q-reg}\XX\cap\p_1\YY,
\qquad 
\p^{\hidden}\XX:=\p^{q-reg}\XX\cap\p_{\ge 2}\YY.
$$
\end{definition}

{\bf Stokes' theorem. }
Let $(\YY,\XX)$ be a pair and 
$\XX_1\subset \p^{q-reg}\XX$ be any open subset. The central object 
for Stokes' theorem is the following equation for any 
$\beta\in \Om^*(\YY)$:
\begin{equation}\label{eq:Stokes}
  \int_{\XX_0} d\beta=\int_{\XX_1}\eps\beta.
\end{equation}
Here $\int_{\XX_1}\eps\beta$ is understood as the integral of an odd
$(d-1)$-form.

\begin{definition}\label{def:Stokes}
We say that {\em Stokes' theorem holds} (or simply {\em Stokes holds})
for a pair $(\YY,\XX)$ if for every $d$-form $\alpha\in \Om^d(\YY)$ such that
$supp\,\alpha\cap\XX$ is compact the integral $\int_{\XX_0} \alpha$
exists, and for every $(d-1)$-form $\beta\in \Om^{d-1}(\YY)$ 
such that $supp\,\beta\cap\XX$ is compact equation~\eqref{eq:Stokes} holds with $\XX_1=\p^{q-reg}\XX$.
\end{definition}
 
\begin{remark}\label{rem:integrability}
A Riemannian metric on $\YY$ induces a $d$-form $\mu$ on $\XX_0$
defined by $\mu(v_1,\dots,v_d)=1$ on a positive orthonormal basis of a
tangent space, so that integration of $\mu$ defines the
$d$-dimensional Lebesgue measure on $\XX_0$ (cf.~\cite{Pawlucki}). For
$\alpha\in\Om^d(\YY)$ we can uniquely write $\alpha=f\mu$ for a smooth function
$f:\AA_0\to\R$ satisfying $|f|\leq|\alpha|$. Therefore, the existence 
of the integral $\int_{\XX_0} \alpha$ for every $\alpha\in \Om^d(\YY)$
such that $supp\,\alpha\cap\XX$ is compact is equivalent to
$\int_{K\cap\XX_0}\mu<\infty$ for every compact subset $K\subset\YY$. 
Note that this condition does not depend on the choice of Riemannian
metric on $\YY$. 
An analogous discussion applies to the integral $\int_{\XX_1}\eps\beta$.
\end{remark}

The reason to introduce an open subset $\XX_1$ of $\p^{q-reg}\XX$
in Definition~\ref{def:Stokes} instead of writing directly
$\p^{q-reg}\XX$ in equation~\eqref{eq:Stokes} is the following lemma.

\begin{lemma}\label{lem:Stokesimplies}
Let $(\YY,\XX)$ be a pair and
$\XX_1\subset \p^{q-reg}\XX$ be an open set.
Assume that equation~\eqref{eq:Stokes} holds for every $(d-1)$-form $\beta\in \Om^{d-1}(\YY)$ such that $supp\,\beta\cap\XX$ is compact.
Then 
\begin{equation}\label{eq:modmeszero}
\XX_1=\p^{q-reg}\XX
\end{equation}
up to subsets of $\{\eps=0\}$ and sets of measure $0$ in $\p^{q-reg}\XX$. 
\end{lemma}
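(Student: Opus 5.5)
The plan is to compare the hypothesis, namely equation~\eqref{eq:Stokes} with $\XX_1$, against the version of the same equation that we would get if Stokes held with $\XX_1$ replaced by $\p^{q-reg}\XX$. Since the hypothesis only concerns $\XX_1$, we cannot assume Stokes for $\p^{q-reg}\XX$ globally. Instead we localize to points of $\p^{q-reg}\XX$, where the structure of a manifold with quasi-regular boundary gives local Stokes' theorems. The goal is to show that $\int_{\p^{q-reg}\XX\setminus\XX_1}\eps\beta=0$ for all $\beta$ supported near such points. From this we will conclude that $\eps=0$ almost everywhere on $\p^{q-reg}\XX\setminus\XX_1$. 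Note that $\XX_1$ is open, so this set is closed in $\p^{q-reg}\XX$.

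\textbf{Step 1: local Stokes near a point of $\p^{q-reg}\XX$.} Fix $p\in\p^{q-reg}\XX$ and a neighbourhood $U$ as in Definition~\ref{def:gen-q-reg}, with local regular components $U^1,\dots,U^m$. Because $\wh\XX\subset\YY$ is an embedding of a manifold with quasi-regular boundary, we may shrink $U$ to a small open set $V\subset\YY$ with $V\cap\XX\subset\wh\XX$. This uses that $\p^{q-reg}\XX$ is open in $\p\XX$, so points of $\p\XX\setminus\p^{q-reg}\XX$ stay away from $p$. Then for $\beta\in\Om^{d-1}(\YY)$ with compact support in $V$, the classical Stokes theorem for $C^1$ manifolds with boundary, applied to each $U^j$ and summed over $j$, gives
\[
\int_{\XX_0}d\beta=\sum_j\int_{\p U^j}\beta=\int_{\p^{q-reg}\XX}\eps\beta .
\]
Here the boundary orientations of the $U^j$ assemble exactly into the odd $0$-form $\eps$. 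Subtracting the hypothesis~\eqref{eq:Stokes} yields
\[
\int_{(\p^{q-reg}\XX\setminus\XX_1)\cap V}\eps\beta=0
\]
for all such $\beta$. The main technical point of the proof is this step: making $V$ small enough that no other parts of $\XX$ enter, and checking that integrability holds so that both sides are finite. The latter is automatic for compactly supported smooth forms on a $C^1$ manifold with boundary.

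\textbf{Step 2: from vanishing integrals to vanishing of $\eps$ a.e.} Set $K:=\p^{q-reg}\XX\setminus\XX_1$. Near $p$ the set $\p^{q-reg}\XX\cap V$ is an embedded $C^1$ hypersurface $S$ of dimension $d-1$. Every smooth compactly supported $(d-1)$-form on $S$ extends to a form $\beta$ on $V$, and the restrictions of such forms are dense in $L^1$-type pairings against bounded measurable densities on $S$. The function $\eps\cdot\chi_K$ is a bounded locally constant-times-indicator density, so the vanishing of all its pairings, combined with Lebesgue's differentiation theorem or a density argument in $L^1(S)$, gives $\eps\chi_K=0$ almost everywhere on $S\cap V$. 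Covering $\p^{q-reg}\XX$ by countably many such $V$ shows that $K\subset\{\eps=0\}$ up to a set of measure zero. Since $\XX_1\subset\p^{q-reg}\XX$, this is exactly~\eqref{eq:modmeszero}.
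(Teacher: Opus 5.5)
Your proposal is correct and follows essentially the paper's argument: localize at a point of $\p^{q-reg}\XX$, apply the classical Stokes theorem on each local regular component to get $\int_{\XX_0}d\beta=\int_{\p^{q-reg}\XX}\eps\beta$ for $\beta$ supported near that point, subtract the hypothesis, and conclude from the vanishing of $\int_{(\p^{q-reg}\XX\setminus\XX_1)\cap V}\eps\beta$ for all such $\beta$. The only difference is presentation. The paper argues by contradiction, assuming $Z=\{\eps\neq0\}\setminus\XX_1$ has positive measure and choosing $\beta$ with $\int_{U\cap Z}\eps\beta\neq0$, whereas you conclude directly by a density argument; both rely on the same fact that restrictions of ambient smooth forms separate bounded densities on $\p^{q-reg}\XX$.
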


\begin{proof}
Assume by contradiction that the set 
$Z:=\{\eps\ne 0 \}\setminus\XX_1$ has positive measure in
$\p^{q-reg}\XX$. Pick an open neighbourhood $U\subset\XX$ of a 
quasi-regular boundary point as in Definition~\ref{def:gen-q-reg} such
that $U\cap \p^{q-reg}\XX\subset\{\eps\ne 0\}$ and $U\cap Z\subset
\p^{q-reg}\XX$ has positive measure.  
Pick a compactly supported $(d-1)$-form $\beta$ on $U\cap\p^{q-reg}\XX$
such that $\int_{U\cap Z}\eps\beta\neq 0$. Extend $\beta$ 
to a $(d-1)$-form on $\YY$ with compact support in $U$ which we still
denote $\beta$.
Let $\XX^j_U$, $j=1,\dots,m$ be the local regular components in
$U$. Then the usual Stokes' theorem yields
$\int_{\p\XX^j_U}\eps^j\beta = \int_{\XX^j_U}d\beta$ for all $j$,
which together with equation~\eqref{eq:Stokes} yields 
$$
   \int_{U\cap\p^{q-reg}\XX}\eps\beta =
   \sum_j\int_{\p\XX^j_U}\eps^j\beta = \sum_j\int_{\XX^j_U}d\beta = 
   \int_{U\cap\XX_0}d\beta = \int_{U\cap\XX_1}\eps\beta.
$$
Thus $0 = \int_{U\cap\p^{q-reg}\XX\setminus\XX_1}\eps\beta =
\int_{U\cap Z}\eps\beta$, contradicting the choice of $\beta$.
\end{proof}

This lemma will be used as follows. By definition,  
the quasi-regular boundary $\p^{q-reg}\XX$ is the set of {\it all}
points with a certain property. While it is often difficult to
identify exactly all quasi-regular points of $\p\XX$, sometimes there
is an open subset $\XX_1$ of $\p^{q-reg}\XX$ for which we can prove
equation~\eqref{eq:Stokes}. Then equation~\eqref{eq:modmeszero} allows 
us to identify $\p^{q-reg}\XX$ up to a subset of $\{\eps=0\}$ and a
set of measure zero. For
an implementation of this idea see the proofs of
Lemma~\ref{lem:StokesProd} and Lemma~\ref{lem:StokesFibre}. 

\begin{remark}\label{rem:local}
(a) Stokes' theorem is local. More precisely, if Stokes' theorem holds
for the pair $(\YY,\XX)$, then for any open subset $U$ of $\YY$ it also 
holds for $(\YY\cap U,\XX\cap U)$. Conversely, given a pair $(\YY,\XX)$ 
and an open cover $\{U_i\}_{i\in I}$ of $\YY$ the following is true:
if Stokes' theorem holds for all $(\YY\cap U_i,\XX\cap U_i)$,
then it holds for $(\YY,\XX)$. 
The proof is straightforward using a partition of unity.\\
(b) Suppose that $\ZZ$ is a manifold with corners, $\YY\subset\ZZ$ a
nice submanifold, and $(\YY,\XX)$ a pair. Then $(\ZZ,\XX)$ is also a
pair, and Stokes' theorem holds for $(\YY,\XX)$ if and only if it
holds for $(\ZZ,\XX)$. 
\end{remark}

\section{Stokes' theorem for configuration spaces}\label{sec:general graphs}

In this section we prove a version of Stokes' theorem for
configuration spaces associated to graphs, building on results of
Paw\l{}ucki~\cite{Pawlucki} in the semi-analytic setting. Moreover, we
derive a vanishing result for integrals over hidden faces.

\subsection{Graphs and their configurations spaces}\label{ss:graphs}

We begin by briefly describing a suitable class of graphs.
See~\S\ref{sec:graphs} for a more extensive discussion of graphs.

Let $\Gamma$ be a finite (not necessarily connected)
graph. Informally, this is a finite set of vertices connected by
edges. We also allow free edges adjacent to only one vertex which we
call {\em leaves}. A {\em flag} is then a pair of a vertex with an
adjacent edge or leaf. 

More formally, we define $\Gamma$ as a finite set $\Flag$ of flags 
together with a collection $\Edge$ of disjoint two-element subsets
called edges and a decomposition of into disjoint subsets called
vertices. Flags not belonging to any edge are called leaves. 

We assume that some (possibly empty) subset of the vertices are
designated as {\em special}, and we 
denote by $\Ver$ the set of nonspecial vertices. Flags adjacent to
special vertices will also be called special. 
We denote by $d_j$ ($j\in\Ver$) the valencies of the nonspecial
vertices, by $d$ the total number of special flags, 
by $e$ the number of edges, and by $s$ the number of leaves. 

We order the nonspecial and special vertices, as well as the flags
around each vertex. This gives us the so-called {\em vertex order} on
the set of flags. On the other hand, we also order and orient the
edges and order the leaves. This gives us the so-called {\em edge
  order} on the flags.
In the remainder of this section, by a {\em graph} we will mean a
graph with special vertices and chosen orderings as above (these
choices correspond to extended labellings in the terminology
of~\S\ref{ss:basiccomb}).

{\bf Spaces associated to a graph. }
Consider now an $n$-dimensional manifold oriented $M$ without boundary. 
We associate to each flag a variable with values in $M$. Using the
vertex and edge orders we can write this space in two ways with a
canonical reordering diffeomorphism between them,
\begin{equation}\label{eq:reoder}
R_\Gamma:Y_\Gamma:=\Bigl(\prod_{j\in\Ver}M^{d_j}\Bigr)\times
M^d\stackrel{\cong}{\longrightarrow} X_\Gamma:=(M^2)^e\times M^s .
\end{equation}
We will identify the two spaces $X_\Gamma$ and $Y_\Gamma$
in~\eqref{eq:reoder} via the map $R_\Gamma$.

Each factor $M^{d_j}$ has the slim diagonal $M_j$ naturally
diffeomorphic to $M$. We define the {\em vertex diagonal} by 
\begin{equation}\label{eq:vertexdiag}
\Delta_{\ver}:=\prod_{j\in\Ver} M_j\stackrel{\iota_\ver}{\into}\prod_{j\in\Ver} M^{d_j}.
\end{equation}
It corresponds to setting the flags at each {\it nonspecial} vertex
equal. Let
$$
\Delta_2:=\{x=y\}\subset M\times M=M^2
$$
denote the diagonal in $M^2$. 
For each $l\in \Edge$ we define the double diagonal
corresponding to this edge by
$$
 \Delta_2^l:=(M^2\times\cdots\times M^2\times 
 \Delta_2\times
 M^2\times\cdots\times M^2)\times M^s\subset X_\Gamma,
$$
where $\Delta_2$ comes at the position corresponding to the edge $l$. The collection of all double diagonals forms a transverse family in $X_\Gamma$ and we define
the (fat) {\em edge diagonal}
$$
   \Delta_2^\Gamma := \bigcup_{l\in\Edge(\Gamma)}\Delta_2^l\subset X_\Gamma.
$$
Let $\wt M^2$ denote oriented real the blow-up of $M^2$ along the
diagonal $\Delta_2$. Then the oriented real blow-up of
$X_\Gamma$ along $\Delta_2^\Gamma$ is given by
$$
  \wt X_\Gamma := \Bl(X_\Gamma,\Delta_2^\Gamma) = (\wt M^2)^e\times M^s.
$$

{\bf Basic pairs. }
Let $\WW$ be a manifold with corners and $(M^d\times \WW,\ZZ)$ be a
pair as in Definition~\ref{def:pair}.  
Assume that the stratification of $\ZZ$ is induced by that of 
$M^d\times \WW$:
$$
\ZZ_0:=\ZZ\cap (M^d\times\WW_0),\quad \p\ZZ:=\ZZ\cap(M^d\times\p\WW).
$$
See Remark~\ref{rem:stratinduced}. We construct another pair,
incorporating the information carried by the graph $\Gamma$. 
Recall the blow-up $\wt X_\Gamma$ of $X_\Gamma$ along the edge
diagonal $\Delta_2^\Gamma$, and consider the
product of the natural blow-down map with the identity 
\begin{equation}\label{eq:pi}
\pi:\wt X_\Gamma\times \WW\longrightarrow X_\Gamma\times\WW.
\end{equation}

\begin{definition}\label{def:basic-pair}
The {\em basic pair} associated to the above setup is 
\begin{equation}\label{eq:graphtopair}
  (\YY,\XX) := (\YY_\Gamma,\XX_\Gamma) := \Bigl(\wt X_\Gamma\times \WW,PT(\Delta_{\ver}\times \ZZ)\Bigr),
\end{equation}
where the stratification of $\XX$ is induced from that of $\YY$ as
$$
\XX_0:=\XX\cap \YY_0,\quad \p\XX:=\XX\cap\p \YY.
$$
\end{definition}

Note that the natural blow-down map identifies the interior $\XX_0$ with
$\Delta_{\ver}\times\ZZ_0\setminus \Delta_2^\Gamma\times \WW_0$, which 
is an oriented open submanifold of $X_\Gamma\times \WW_0$. Thus
$(\YY,\XX)$ is indeed a pair. 

\begin{example}\label{ex:Bott-Taubes}
Let $\Gamma_{k,t}$ be the full graph on $k+t$ vertices,
where the first $k$ vertices are special.
The valency at each vertex equals $k+t-1$, we have 
$k$ special vertices and thus $d=k(k+t-1)$ special flags. Write
$$
  M^d=(M^{k+t-1})^k,
$$
so that each $M^{k+t-1}$ corresponds to its own special vertex. 
Let $\iota:N\into M$ be an embedding of a closed submanifold. Consider the map
$$
  ev_\iota:N^k\longrightarrow (M^{k+t-1})^k,\qquad (q_1,\dots,q_k)\mapsto 
  (x_1,\dots,x_k),
$$
where
$$
  x_j:=(\iota(q_j),\dots,\iota(q_j))\in M^{k+t-1}
$$
corresponds to the special vertex number $j$. Set 
$$
  \WW:=N^k\quad \text{and}\quad \ZZ:=gr(ev_\iota)\subset M^d\times N^k.
$$
Then the corresponding compactification $\XX$ recovers the space
$C_{k,t}(M;N)$ of Bott and Taubes~\cite{Bott-Taubes}.\footnote{
More precisely, this corresponds to the space
$C_t(M;(N_1,n_1),\dots,(N_m,n_m))$ in the Appendix of~\cite{Bott-Taubes}
with $m=1$, $N_1=N$ and $n_1=k$; the general case can be treated similarly.
}
According to~\cite[Proposition A.3]{Bott-Taubes}, $C_{k,t}(M;N)$ is
actually a manifold with corners.
Bott and Taubes use the following variant of this construction for
knot-theoretic purposes. Recall from~\S\ref{ss:chen-cyc} the cyclic simplex
$$
  \Delta^k_{\rm cyc} = \{(t_1,\dots,t_k)\in (S^1)^k\mid t_1\le t_2\le\dots\le t_k\le t_1\}.
$$
Let $\gamma:S^1\longrightarrow \R^3$ be a knot and
consider the evaluation map
$$
  ev_{\gamma}:\Delta^k_{\rm cyc}\longrightarrow
  ((\R^3)^{k+t-1})^k,\quad (t_1,\dots,t_k)\mapsto (x_1,\dots,x_k),
$$
where 
$$
  x_j:=(\gamma(t_j),\dots,\gamma(t_j))\in (\R^3)^{k+t-1}
$$
corresponds to the special vertex number $j$. The desired
compactification is then the compactification $\XX$ above in whose
construction $\Delta^k_{\rm cyc}$ plays the role of $N^k$ and
$ev_\gamma$ plays the role of $ev_\iota$.
The resulting space
$$
  C_{k,t}(\gamma):=C_{k,t}(\R^3;S^1)
$$
is a compactification of the configuration spaces of $k+t$ distinct
points in $\R^3$ of which the first $k$ lie on the knot and are
cyclicly ordered.
Bott and Taubes view $C_{k,t}(\gamma)$ as the fibers of a bundle
over the space of knots and use fibre integration of suitable
differential forms
to produce knot invariants.
\end{example}

\begin{example}\label{ex:string}
Let $M$ be a closed oriented manifold and $\Lambda M$ its free loop
space. Let $\Gamma$ be a graph with one special 
$d$-valent vertex. Consider a smooth map $f:B\longrightarrow \Lambda
M$ from a compact manifold with corners $B$ and its evaluation map
$$
ev_f:B\times \Delta^{d-1}\longrightarrow M^d,\quad
(p,t)\mapsto f_p(0),f_p(t_1),\dots,f_p(t_{d-1}).
$$
Here 
$$
\Delta^{d-1}:=\{t=(t_1,\dots,t_{d-1})\mid 
0\le t_1\le\dots\le t_{d-1}\le 1\}
$$
is the standard $(d-1)$ simplex.
Set $\WW:=B\times \Delta^{d-1}$ and $\ZZ:=gr(ev_f)$. 
The resulting compactification $\XX$,
which is in general not a manifold with corners,
will play a crucial role in~\S\ref{sec:analysis}.
\end{example}

\subsection{Hidden faces}\label{ss:hidden}

Let $J\subset \Edge$ be some collection of edges of $\Gamma$, and 
$\Gamma_J$ be the subgraph of $\Gamma$ formed by the edges in $J$ and
their adjacent vertices. Let $\Ver_J$ be the set of nonspecial
vertices of $\Gamma_J$.
Consider the subset\footnote{This definition differs slightly from
that in~\eqref{eq:basicstrat0} where we used the extended transverse collection.} 
$$
X_J:=\bigcap_{l\in J}\Delta_{2}^l\setminus (\bigcup_{l\notin J}\Delta_{2}^l)
$$
of $X_\Gamma$ and set 
\begin{equation}\label{eq:VJ}
V_J:=(X_J\times \WW)\cap (\Delta_{\ver}\times \ZZ)\quad \text{and}\quad 
\p_J\XX:=\XX\cap \pi^{-1}(V_J).
\end{equation}
Suppose that the intersection 
\begin{equation}\label{eq:q-reg-J}
\p_J^{q-reg}\XX:=\p_J\XX\cap \p^{q-reg}\XX
\end{equation}
is nonempty and consider
a point $q$ in it. By definition of $\p^{q-reg}\XX$, there exists near
$q$ a manifold-with-corners chart for $\YY$ in which $\p^{q-reg}\XX$
is a linear subspace. Then the linear inequalities for $\YY$
corresponding to indices in $J$ must be equalities on this subspace,
so an open neighbourhood of $q$ in $\p^{q-reg}\XX$ is contained in
$\pi^{-1}(X_J\times \WW)$. Therefore, $\p_J^{q-reg}\XX$ is open in 
$\p^{q-reg}\XX$, in particular $\p_J^{q-reg}\XX$ is a smooth
submanifold of $\YY$. Observe that 
\begin{equation}\label{eq:dec-J}
  \p^{q-reg}\XX=\coprod_{J\subset \Edge}\p_J^{q-reg}\XX
\end{equation}
is a finite disjoint union over the subsets $J$ of $\Edge$ for
which $\p_J^{q-reg}\XX$ is nonempty. Since the union is disjoint and
its members are open,
each $\p_J^{q-reg}\XX$ is also closed in $\p^{q-reg}\XX$.

\begin{lemma}\label{lem:flip}
In the setting above, assume that $\Gamma_J$ has a $2$-valent
nonspecial vertex $B$ with adjacent oriented edges $(A,B)$ and $(B,C)$
(see Figure~\ref{fig:hidden}, where $A$ and $C$ can be equal but are
different from $B$). Let 
\begin{equation*}
\tau:\wt X_\Gamma\times \WW\longrightarrow \wt X_\Gamma\times \WW
\end{equation*}
denote the involution swapping the two $\wt M^2$ factors of $\wt
X_\Gamma$ that correspond to the oriented edges $(A,B)$ and $(B,C)$. 
Then the map $\tau$ preserves the boundary locus $\p_J\XX$ and its
quasi-regular part $\p_J^{q-reg}\XX$. Moreover, the restriction  
of $\tau$ to $\p_J^{q-reg}\XX$ is orientation preserving for even $n$
and orientation reversing for odd $n$. 
\end{lemma}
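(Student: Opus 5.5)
The strategy is to define the involution at the level of $X_\Gamma$, compare it with $\tau$ on $\wt X_\Gamma\times\WW$, and compute orientations on the boundary face. Write the flags of the two edges as $(a,b_1)$ for $(A,B)$ and $(b_2,c)$ for $(B,C)$, with $b_1,b_2$ the two flags at $B$. On $X_\Gamma\times\WW$ consider the map $\sigma$ that swaps the two $M^2$ factors of these edges. Via $R_\Gamma$ this is $(x_a,x_{b_1},x_{b_2},x_c)\mapsto(x_{b_2},x_c,x_a,x_{b_1})$. This map does not preserve $\Delta_\ver\times\ZZ$ in general, since it moves the variable at $A$ onto a flag at $B$. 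On $V_J$, however, both edges are collapsed, so $x_a=x_{b_1}=x_{b_2}=x_c$, and $\sigma$ acts trivially there. I therefore propose to replace $\sigma$ by the honest diffeomorphism of $\Delta_\ver\times\ZZ\setminus\Delta_2^\Gamma$ that fixes the vertex variables at $A$ and $C$ and sends the common position $x_B$ to the reflection $x_A+x_C-x_B$. On the exceptional divisor, this map interchanges the normal directions of the two edges. Its lift to the blow-up agrees with $\tau$ on $\pi^{-1}(V_J)$.

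I would carry this out in local coordinates near $\pi^{-1}(V_J)$: fix a chart of $M$ around the common point, so that positions $x_A,x_B,x_C$ are near each other. The blow-up coordinates of the two edges are then $(r_1,u_1)$ and $(r_2,u_2)$, where $x_B-x_A=r_1u_1$ and $x_C-x_B=r_2u_2$. On $\p_J\XX$ both $r_i=0$, and $\tau$ acts by swapping $(r_1,u_1)\leftrightarrow(r_2,u_2)$ while fixing all other coordinates. Since $B$ is $2$-valent and nonspecial, no other edges or special flags involve $x_B$, and $\ZZ$ does not involve $x_B$ either. Hence $\tau$ preserves the closure $\XX$ near such points, and therefore preserves $\p_J\XX$. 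Because $\tau$ is a diffeomorphism of $\YY$ preserving $\XX$, Remark~\ref{rem:diffeo-pairs} shows that it preserves $\p^{q-reg}\XX$, and so it preserves $\p_J^{q-reg}\XX$. (If $A=C$, the same coordinates apply with $x_A=x_C$.)

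For the orientation I would compute in these coordinates. On $\p_J^{q-reg}\XX$ the free parameters near $B$ are the unit vectors $u_1,u_2\in S^{n-1}$ (up to the constraint coming from the other collapsed edges of $J$, which $\tau$ does not touch) together with the remaining coordinates. Swapping $(u_1,u_2)$ on $S^{n-1}\times S^{n-1}$ contributes $(-1)^{(n-1)^2}=(-1)^{n-1}$. The induced boundary orientation also involves how $\tau$ acts on the outward normal directions. Here $\tau$ swaps $r_1,r_2$ inside the interior orientation. Equivalently, in the interior $\tau$ corresponds to $x_B\mapsto x_A+x_C-x_B$ on an $n$-dimensional factor, whose determinant is $(-1)^n$. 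Since $\tau$ preserves $\YY_0$ and $\XX_0$, it acts on $\XX_0$ with sign $(-1)^n$, and it preserves the inward direction. The boundary orientation is therefore multiplied by the same sign $(-1)^n$: orientation preserving for even $n$ and reversing for odd $n$.

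The main obstacle is consistency. The swap $\tau$ on $\wt X_\Gamma$ (edge order, flags $(a,b_1)\leftrightarrow(b_2,c)$) must agree on the boundary face with the lift of the reflection, so that orientations can be read off in the interior. I will check this by verifying the comparison on an approximating sequence, as in Definition~\ref{def:approx-seq}, and using that $\p_J^{q-reg}\XX$ is open in $\p^{q-reg}\XX$ by \eqref{eq:dec-J}.
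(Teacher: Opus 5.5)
The reflection $x_B\mapsto x_A+x_C-x_B$ is the right idea, but the proof does not carry it through. At the decisive step it falls back on $\tau$ itself, and that step fails.

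In your second paragraph you assert that $\tau$ preserves $\XX$ near $\p_J\XX$ and is ``a diffeomorphism of $\YY$ preserving $\XX$''. You then apply Remark~\ref{rem:diffeo-pairs} to it, and your orientation paragraph likewise uses that $\tau$ preserves $\XX_0$. This is false, as your own first paragraph notes: $\tau$ does not preserve $\Delta_\ver\times\ZZ$ off the edge diagonal. For example, an interior configuration with $x_{b_1}=x_{b_2}=x_B$ is sent to one with $x_{b_1}=x_C\neq x_A=x_{b_2}$, which is not in $\XX$. So Remark~\ref{rem:diffeo-pairs} cannot be applied to $\tau$.

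Your reflection cannot simply be substituted either, because you define it only on $\Delta_\ver\times\ZZ\setminus\Delta_2^\Gamma$. It is therefore not a diffeomorphism of $\YY$. Extending it continuously to $\XX$ via approximating sequences gives at best set-theoretic invariance of $\p_J\XX$. Invariance of $\p_J^{q-reg}\XX$ needs more, because quasi-regularity is a $C^1$ condition on the local regular components inside $\YY$. The same holds for the claim that the boundary orientation changes by the interior sign. Both require a map that is smooth in the ambient space near $\p_J\XX$.

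The missing step is an ambient extension of the reflection that lifts to the blow-up. Write $x,y_-,y_+,z$ for the flags $(A,B),(B,A),(B,C),(C,B)$. The obvious extension $y_\pm\mapsto x+z-y_\pm$ does not work: it carries the diagonal $\{x=y_-\}$ to $\{y_-=z\}$, which is not in the blow-up locus, so it does not lift.

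What works is to combine the reflection with the interchange of the two flags at $B$. Take a neighbourhood $U$ of a point of $V_J$ disjoint from all double diagonals not in $J$, and set
\[
F(y_-,y_+,y^*)=(x+z-y_+,\;x+z-y_-,\;x+z-y^*),
\]
with $F$ the identity on all other variables. Here $y^*$ ranges over the remaining flags at $B$. Since $B$ is only $2$-valent in $\Gamma_J$, such flags (edges outside $J$, leaves) can exist, contrary to your claim that no other edges involve $x_B$.

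This $F$ has the properties you need:
\begin{itemize}
\item It preserves $(\Delta_\ver\times\ZZ)\cap U$, where it reduces to your reflection.
\item It interchanges $y_--x$ and $z-y_+$ and fixes the other diagonals in $J$, so it lifts to a diffeomorphism $\wt F$ of $\pi^{-1}(U)$.
\item $\wt F$ preserves $\XX\cap\pi^{-1}(U)$, hence also its quasi-regular boundary.
\item $\wt F$ coincides with $\tau$ on $\p_J\XX$.
\end{itemize}
From these, $\tau$ preserves $\p_J^{q-reg}\XX$. The orientation sign is $(-1)^n$, read off from $\det(y\mapsto x+z-y)$.
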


\begin{proof}
Denote by $\Flag$ the set of flags of $\Gamma$, and by
$\Flag_J$ the subset of all flags in $\Gamma_J$ together with all
flags in $\Gamma$ adjacent to nonspecial vertices in $\Gamma_J$.

Consider a point $\xi=(p,q,w)\in V_J$, 
where $p\in \Delta_{\ver}\cap X_J$, $q\in M^d$, and $w\in \WW$ 
(with $(q,w)\in \ZZ$). Let $U$ be a small open neighbourhood of $\xi$ in 
$X_\Gamma\times\WW$ disjoint from all double diagonals not in $J$.   
Write $(p,q)=r=(r_a)_{a\in\Flag}$ and note that the 
$r_a$ for $a\in\Flag_J$ are all equal to the same
point $r^*\in M$.
Pick local coordinates near $r^*$ and near all the $r_b$ with
$b\notin\Flag_J$ to identify points of $X_\Gamma$ near $r$ with 
points in $(\R^n)^\Flag$. By shrinking $U$ if necessary, we may
assume that this defines coordinates on the projection of $U$
to $X_\Gamma$. 

{\bf The map $F$.}
Let $x,y_-,y_+,z\in\R^n$ denote the variables corresponding to the
flags $(A,B)$, $(B,A)$, $(B,C)$ and $(C,B)$, respectively.
See Figure~\ref{fig:hidden}.
\begin{figure}
\begin{center}
\includegraphics[width=\textwidth]{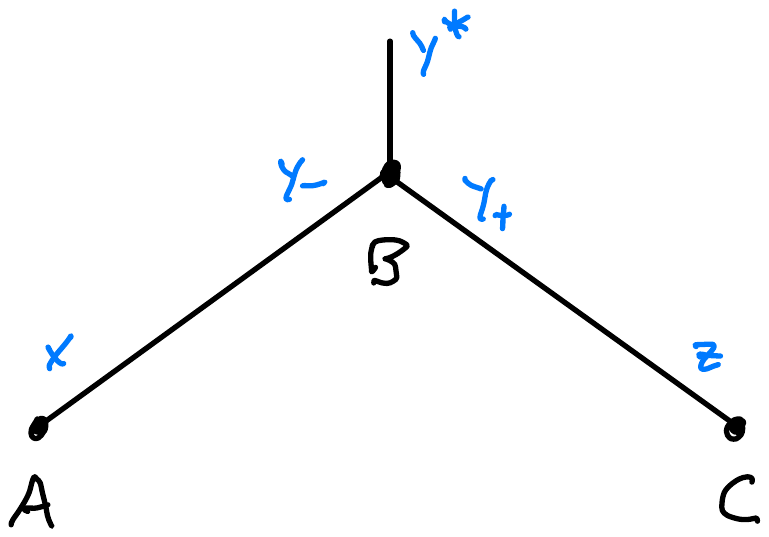}
\vspace{-4cm}
\caption{The involution on hidden faces}
\label{fig:hidden} 
\end{center}
\end{figure}
Let $y^*\in\R^n$ denote the variable corresponding to any other flag
adjacent to $B$.
We can assume that at the point $r=(p,q)$ we have 
$x=y_-=y_+=y^*=z=0$.
In these coordinates the involution $\tau$ is given by
$$
  (x,y_-,y_+,z)\mapsto (y_+,z,x,y_-)
$$
and the identity on all other variables. Note that $\tau$ does {\em not}
preserve the vertex diagonal $\Delta_{\ver}$ (corresponding to
$y_-=y_+=y^*$) away from the edge diagonal (corresponding to $x=y_-$ and
$y_+=z$). To remedy this, we consider the involution
$$
F:U\longrightarrow U
$$
defined by 
\begin{equation}\label{eq:defF}
(y_-,\,y_+,y^*)\mapsto (x+z-y_+,\,x+z-y_-,\,x+z-y^*) 
\end{equation}
in the $y$ variables, and the identity in all the other variables and 
in the $\WW$ factor. 
Note that $(\Delta_{\ver}\times\ZZ)\cap U$ is defined by the equations
\begin{equation*}
y_-=y_+=y^*.
\end{equation*}
On the subspace defined by these equations, the relation~\eqref{eq:defF}
reduces to 
\begin{equation}\label{eq:defKonts}
y\mapsto x+z-y 
\end{equation}
with $y:=y_-=y_+=y^*$
and we see that the following holds:
\begin{equation}\label{eq:pres-Delta}
\text{The map $F$ preserves $(\Delta_{\ver}\times\ZZ)\cap U$.}
\end{equation}
The double diagonals corresponding to $(A,B)$ and $(B,C)$ intersect
$U$ along $\{x=y_-\}$ and $\{y_+=z\}$, respectively, and $F$ acts
on $y_--x$ and $z-y_+$ as follows:
\begin{equation}\label{eq:swap}
\begin{cases}
y_--x\mapsto (x+z-y_+)-x=z-y_+,\cr  
z-y_+\mapsto z-(x+z-y_-)=y_--x.
\end{cases}
\end{equation}
Furthermore, the restriction of $\wh F$ to the intersection of any
other double diagonal with $U$ is the identity. 

{\bf The map $\wt F$.}
Property~\eqref{eq:swap} implies that $F$ extends to the blow-up of
$U$ along the product of the double diagonals in $J$ with 
$\WW$. 
This last blow-up is naturally identified with the preimage of $U$
under the map $\pi$ from~\eqref{eq:pi} and we get an involution
$$
\wt F:\pi^{-1}(U)\longrightarrow \pi^{-1}(U)
$$
covering $F$. The involution $\wt F$ allows us to make all the desired
conclusions.
By property~\eqref{eq:swap}, the restriction of $\wt F$ to
$\pi^{-1}(X_J\times \WW)$ coincides with the restriction of $\tau$ to
this set.
By property~\eqref{eq:pres-Delta}, the map $\wt F$ preserves the set  
$$
\XX_{U}:=\XX\cap \pi^{-1}(U),
$$ 
We conclude that the map $\wt F$ preserves $\p_J\XX\cap
\pi^{-1}(U)$ and acts on it as the restriction of $\tau$.
We need to improve this to preservation of the quasi-regular 
part $\p_J^{q-reg}\XX$ of $\p_J\XX$. Restricting the
decomposition~\eqref{eq:dec-J} to $\pi^{-1}(U)$ gives us that 
$\p_J^{q-reg}\XX\cap \pi^{-1}(U)$ is an open and closed subset of
the quasi-regular boundary 
$\p^{q-reg}(\XX_{U})$ of $\XX_{U}$ (as a subspace of $\pi^{-1}(U)$). 
The manifold $\p^{q-reg}(\XX_{U})$ is preserved under the 
ambient self-diffeomorphism $\wt F$ (because $\wt F$ preserves the
interior of $\XX_{U}$), and thus by $\tau$.
Since $\p_J^{q-reg}\XX$ can be covered by open neighborhoods of the form
$\pi^{-1}(U)$, this discussion implies that the map $\tau$ preserves 
$\p_J\XX$ as well as $\p_J^{q-reg}\XX=\p_J\XX\cap \p^{q-reg}\XX$.
The statement about orientations follows from
equation~\eqref{eq:defKonts} for the map $F|_{(\Delta_{\ver}\times \ZZ)\cap U}$. 
\end{proof}

\begin{remark}\label{rem:transfer}
(a) Let $\Gamma_J'$ be the graph obtained from $\Gamma$ removing all edges
in $\Edge\setminus J$. Then in the notation of the proof above,
the pair $(\pi^{-1}(U),\XX_U)$ can be seen
as a pair associated as in~\eqref{eq:graphtopair} to the 
graph $\Gamma_J'$ with $\R^n$ playing the role of $M$.
 
(b) The hypothesis of Lemma~\ref{lem:flip} implies that $J$ contains
at least two edges, so $\p_J^{q-reg}\XX$ is a hidden face in the sense
of Definition~\ref{def:hidden}. See~\cite[\S A.6]{Cieliebak-Volkov}
for the simplest example of a graph exhibiting a hidden face. 
\end{remark}

In the setup above, consider now an $(n-1)$-form $\eta$ on the blow-up 
$\wt M^2$ and a form $\alpha$ on $M^s\times\WW$.
Their cross product gives a form on $\wt X_\Gamma\times\WW$ defined by
\begin{equation}\label{eq:forms-int}
  \eta^e(\alpha):=\eta^{\times e}\times \alpha\,.
\end{equation}

\begin{cor}[Vanishing of integrals over hidden faces]\label{cor:cancelbasic}
Let $J$ a subset of the set of edges of the graph 
$\Gamma$ such that the graph $\Gamma_J$ has a $2$-valent nonspecial
vertex. Then for any $\eta\in \Om^{n-1}(\wt M^2)$ and
compactly supported $\alpha\in \Om^*(M^s\times\WW)$ we have
\begin{equation}\label{eq:cancellation}
\int_{\p_J^{q-reg}\XX_\Gamma}\eta^e(\alpha)=0.
\end{equation}
\end{cor}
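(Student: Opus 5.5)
The plan is to use the involution $\tau$ from Lemma~\ref{lem:flip}, which swaps the two $\wt M^2$ factors belonging to the oriented edges $(A,B)$ and $(B,C)$ at the $2$-valent nonspecial vertex $B$ of $\Gamma_J$. The aim is to show that $\tau^*$ changes the integrand by a sign that is the opposite of the orientation behaviour of $\tau$ on $\p_J^{q-reg}\XX_\Gamma$, so that the integral equals its own negative.

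\emph{Step 1: pullback of the form.} The form $\eta^e(\alpha)=\eta^{\times e}\times\alpha$ is a cross product in which every factor $\eta$ has degree $n-1$. The factor $\alpha$ lives on $M^s\times\WW$, and $\tau$ does not touch it. Hence $\tau^*\eta^e(\alpha)$ is obtained by transposing two factors of odd/even degree $n-1$. This gives
$$
\tau^*\eta^e(\alpha)=(-1)^{(n-1)^2}\eta^e(\alpha)=(-1)^{n-1}\eta^e(\alpha).
$$
The only points to check are that no other factors lie between the two swapped ones in a way that contributes extra signs, and that they do not. Any intermediate factors are copies of $\eta$, of degree $n-1$. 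Moving the first swapped factor past $k$ intermediate ones and then moving the second back past the same $k$ produces the sign $(-1)^{2k(n-1)^2}=1$, so the net sign is exactly that of the single transposition.

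\emph{Step 2: orientation of $\tau$ on the face.} By Lemma~\ref{lem:flip}, $\tau$ maps $\p_J^{q-reg}\XX_\Gamma$ to itself. On it, $\tau$ is orientation preserving for $n$ even and orientation reversing for $n$ odd, that is, it changes orientation by $(-1)^n$. Here the orientation is the one carried by the odd $0$-form $\eps$. I also need $\tau^*\eps=(-1)^n\eps$, meaning that $\tau$ respects the local regular components and their multiplicities. This should follow from the proof of Lemma~\ref{lem:flip}, since $\tau$ locally agrees there with the ambient diffeomorphism $\wt F$ preserving $\XX_U$, and such a diffeomorphism carries local regular components to local regular components.

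\emph{Step 3: conclusion.} By the change of variables formula for the diffeomorphism $\tau$ of $\p_J^{q-reg}\XX_\Gamma$,
$$
\int_{\p_J^{q-reg}\XX_\Gamma}\eta^e(\alpha)=(-1)^n\int_{\p_J^{q-reg}\XX_\Gamma}\tau^*\eta^e(\alpha)=(-1)^n(-1)^{n-1}\int_{\p_J^{q-reg}\XX_\Gamma}\eta^e(\alpha).
$$
Therefore the integral vanishes.

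\emph{Main obstacle: existence of the integral.} The change of variables requires the integral to exist. $\p_J^{q-reg}\XX_\Gamma$ need not be compact, although $\alpha$ has compact support. I would handle this either by assuming absolute integrability, which is presumably guaranteed by the surrounding Stokes setting (Remark~\ref{rem:integrability}), or by noting that $\tau$ preserves $\pi^{-1}(\text{supp})$ and applying the argument to $|\eta^e(\alpha)|$ as well. The Step 1 sign bookkeeping is routine.
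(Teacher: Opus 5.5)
Your proposal is correct and follows the paper's proof. Both use the involution $\tau$ of Lemma~\ref{lem:flip}, which preserves $\p_J^{q-reg}\XX_\Gamma$ and changes its orientation by $(-1)^n$, together with $\tau^*\eta^e(\alpha)=(-1)^{n-1}\eta^e(\alpha)$ (from swapping two factors of degree $n-1$), so the integral equals its own negative. Your extra care about $\eps$ and about existence of the integral goes beyond what the paper writes; the paper tacitly relies on the Stokes setting of Definition~\ref{def:Stokes} for finiteness, and applying the argument to $|\eta^e(\alpha)|$ would not by itself establish integrability.
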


\begin{proof}
We use the involution $\tau$ from Lemma~\ref{lem:flip}, which preserves 
$\p_J^{q-reg}\XX_\Gamma$ and acts on its orientation as $(-1)^n$.
We claim that 
$$
\tau^*\eta^e(\alpha)=(-1)^{n-1}\eta^e(\alpha).
$$ 
For this, consider the corresponding map $\tau$ on 
$\wt M^2\times \wt M^2$ (ignoring the other factors in $\wt X_\Gamma$
and $\WW$) and the projections $p_1,\,p_2$ onto the two factors. Then
$$
\tau^*(p_1^*\eta\wedge p_2^*\eta) = p_2^*\eta\wedge p_1^*\eta = (-1)^{n-1}p_1^*\eta\wedge p_2^*\eta,
$$ 
where the last equality holds because $\eta$ has degree $n-1$,
and the claim follows.

We conclude by invariance of integration. Namely, the map $\tau$
acts on the integrand as $(-1)^{n-1}$ and multiplies the orientation 
of the manifold we integrate over by $(-1)^n$. Therefore, the 
integral~\eqref{eq:cancellation} vanishes. 
\end{proof}

\begin{remark}\label{rem:vanish-extend}
Corollary~\ref{cor:cancelbasic} holds true for a larger class of
forms, namely for all compactly supported forms $\om$ on $\wt
X_\Gamma\times\WW$ satisfying for each involution $\tau$ as in
Lemma~\ref{lem:flip} (associated to a $2$-valent nonspecial
vertex $B$) the property
\begin{equation*}
  \tau^*\om=(-1)^{n-1}\om.
\end{equation*}
\end{remark}

\subsection{Stokes' theorem in the semi-analytic setting}
\label{ss:Stokes-ana}

Now we describe the main result of Paw\l{}ucki~\cite{Pawlucki} and apply
it to our situation. 

{\bf Analytic and semi-analytic sets. }
Let us first describe a special case of the setting  
of~\cite{Pawlucki}. Let $E$ be a finite dimensional real vector space. 
A subset $A$ of $E$ is called {\em semi-analytic} if for each point of
$E$ there exists an open neighbourhood $U\subset E$ and two
collections of real analytic functions
$\{f_i^j,g_i^j\}_{i=1,\dots,p}^{j=1,\dots,q}$ on $U$ such that 
$$
A\cap U=\bigcup_{i=1}^p\bigcap_{j=1}^q\{f_i^j=0,\,g_i^j>0\}.
$$
A semi-analytic subset $A$ of $E$ is called {\em analytic} if 
the functions $g_i^j$ can be taken constant equal to $1$.

We call manifold with corners $X$ {\em analytic} if the transition
maps between manifold-with-corner charts are restrictions of real analytic maps.
We can then extend the boundary components at a point $p\in\p_kX$ by the
corresponding coordinate hyperplanes in a real analytic chart. 
We call a subset $A\subset X$ {\em analytic} if in real analytic
charts it corresponds to the intersection of a quadrant with a real
analytic subset defined in a neighbourhood in $\R^n$. 
This ensures that the intersection $A\cap X_0$ is a semi-analytic set.
We call an analytic subset $C\subset X$ a {\em nice analytic submanifold}
if $C\cap X_0$ is a submanifold, and near each point $p\in \p_kX$ it admits an analytic extension
such that the extended $C$ together with the $k$ extended boundary
components forms a transverse family near $p$. 
A map between manifolds with corners is called {\em analytic} if it
locally extends to an analytic map between open subsets of linear spaces.

{\bf Stokes' theorem. }
%
%
The following theorem corresponds to the main result in Paw\l{}ucki~\cite{Pawlucki}.

\begin{theorem}\label{thm:Stokes-Pawlucki}
Let $\AA_0\subset E$ be a semi-analytic subset which is also an
oriented $d$-dimensional submanifold. Then
Stokes' theorem holds for the pair $(E,\,Closure(\AA_0))$. 
\end{theorem}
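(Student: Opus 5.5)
The plan is to deduce the statement directly from Paw\l{}ucki's theorem in~\cite{Pawlucki}. The work consists of translating his hypotheses and conclusions into the language of pairs from~\S\ref{ss:Stokes-gen}. We regard $(E,Closure(\AA_0))$ as a pair in the sense of Definition~\ref{def:pair}, with $\YY=E$ (no boundary), $\XX=Closure(\AA_0)$, $\XX_0=\AA_0$ and $\p\XX=\XX\setminus\AA_0$. Definition~\ref{def:Stokes} then demands two things for forms whose support meets $\XX$ compactly: integrability of $d$-forms over $\AA_0$, and equation~\eqref{eq:Stokes} with $\XX_1=\p^{q-reg}\XX$.

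\textbf{Integrability.} By Remark~\ref{rem:integrability}, the first property is equivalent to $\int_{K\cap\AA_0}\mu<\infty$ for every compact $K\subset E$. This is the local finiteness of the $d$-dimensional Hausdorff measure of a bounded $d$-dimensional semi-analytic set. I would quote it from~\cite{Pawlucki}, where it is a preliminary step, or alternatively from {\L}ojasiewicz's stratification theory: a bounded semi-analytic set is a finite union of analytic strata of bounded volume.

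\textbf{Stokes formula.} Paw\l{}ucki's main theorem states the following. Let $\p^{P}$ be the set of points of $\overline{\AA_0}\setminus\AA_0$ near which $\overline{\AA_0}$ is a finite union of $C^1$ manifolds with boundary, glued along a common boundary and with interiors the components of $\AA_0$ nearby. Then $\p^{P}$ is a $(d-1)$-dimensional $C^1$ manifold, its complement in the frontier has dimension at most $d-2$, and $\int_{\AA_0}d\beta=\int_{\p^P}\eps\beta$ for compactly supported smooth $\beta$, with $\eps$ the sum of the boundary orientations of the local components.

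I would compare $\p^P$ with our $\p^{q-reg}\XX$ from Definition~\ref{def:bdry}. Paw\l{}ucki's local description is exactly our Definition~\ref{def:gen-q-reg} with $L=\AA_0\amalg\p^P$ embedded in $E$, so $\p^P$ is an admissible $\XX_1$. Maximality of $\p^{q-reg}\XX$ therefore gives $\p^P\subset\p^{q-reg}\XX$, and the odd forms $\eps$ agree on $\p^P$. Paw\l{}ucki's Stokes identity is then precisely~\eqref{eq:Stokes} with $\XX_1=\p^P$. Lemma~\ref{lem:Stokesimplies} shows that $\p^P$ and $\p^{q-reg}\XX$ differ only by a subset of $\{\eps=0\}$ and a set of measure zero. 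Neither contributes to $\int\eps\beta$, so~\eqref{eq:Stokes} also holds with $\XX_1=\p^{q-reg}\XX$. Since $\p^P$ is also contained in the frontier, we could instead use the dimension bound $\le d-2$ on the rest of the frontier, but the lemma makes this unnecessary.

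\textbf{Main obstacle.} The delicate point is bookkeeping rather than analysis: matching Paw\l{}ucki's orientation and multiplicity conventions, and the convergence of the boundary integral, with ours. For convergence, the boundary integral $\int_{\p^{q-reg}\XX}\eps\beta$ must itself exist. I would obtain this from the same local finite-volume statement, applied to the $(d-1)$-dimensional semi-analytic set $\p^P$, combined with the measure-zero comparison above. For orientation, I would first check locally that the orientation of each component $U^j_0$ induces the boundary orientation used in Definition~\ref{def:gen-q-reg}.
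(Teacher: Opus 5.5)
Your proposal is correct and follows the paper's own proof. Integrability comes from Remark~\ref{rem:integrability} together with Paw\l{}ucki's finite-volume lemma \cite[Lemma 3.5]{Pawlucki}, and equation~\eqref{eq:Stokes} is exactly his Stokes formula \cite[Theorem 3.7]{Pawlucki}. Your extra step matching his boundary with $\p^{q-reg}\XX$ (via maximality and Lemma~\ref{lem:Stokesimplies}) is harmless but unnecessary, since Definitions~\ref{def:gen-q-reg} and~\ref{def:bdry} take the notion of quasi-regular boundary from Paw\l{}ucki. One small correction: for the convergence of the boundary integral, use that $\p^{q-reg}\XX$ lies in the semi-analytic frontier $Closure(\AA_0)\setminus\AA_0$, which has dimension $\le d-1$, rather than asserting that $\p^{q-reg}\XX$ is itself semi-analytic.
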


\begin{proof}
Note first that $(E,\AA)$ is a pair as in Definition~\ref{def:pair}, 
so it remains to verify the conditions in Definition~\ref{def:Stokes}.
Integrability over $\AA_0$ of every $d$-form $\alpha\in \Om^d(E)$ with
$supp\,\alpha\cap\AA$ compact follows from
Remark~\ref{rem:integrability} and~\cite[Lemma 3.5]{Pawlucki} . 
Equation~\eqref{eq:Stokes} is the content of~\cite[Theorem 3.7]{Pawlucki}. 
\end{proof}

We will apply this result to the following setting. Let $X$ be a
subset of a Euclidean space $F$ cut out by finitely many linear
inhomogeneous inequalities, 
$$
  X=\{h_i\ge 0\text{ for }i=1,\dots,K\}\subset F\cong \R^{\dim X}.
$$
We assume that the corresponding affine subspaces
$\{h_i=0\}_{i=1,\dots,K}$ form a transverse family, so that $X$ is a
manifold with corners and $X_0=\{h_i> 0\text{ for all }i\}$. Let 
$$
  C=\{C_a\}_{a\in \AA}
$$
be a transverse family of (finitely many) nice submanifolds of $X$, each
obtained by intersecting a linear subspace with $X$.

\begin{lemma}\label{lem:blow-up-analytic}
The manifold with corners $\YY:=\Bl(X,C)$ carries a canonical
analytic structure which agrees with the one induced from $F$ on its
interior $\YY_0\subset F$. 
\end{lemma}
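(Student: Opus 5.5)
The plan is to build the analytic structure chart by chart, using the explicit local models for the blow-up from the proof of Lemma~\ref{lem:blow-up}. Near a point of $X$ lying on the linear subspaces $C_a$, $a\in J$, and on the boundary hyperplanes $\{h_i=0\}$, I will choose an affine coordinate system on $F$ adapted to the family. Since everything is linear and transverse, one affine change of coordinates on $F$ straightens all the $C_a$ and all the $\{h_i=0\}$ through the point simultaneously. Such a change is analytic, so the straightening charts of Remark~\ref{rem:straighten} can be taken affine. In these coordinates $F\cong E\times\prod_{a\in J}\R^{d_a}$, each $C_a$ is $\{x_a=0\}$, and the blow-up is locally the product of the spaces $\wt\R^{d_a}$ with a quadrant in $E$.

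Each factor $\wt\R^{d_a}$ gets the analytic structure from the chart $\Phi(r,v)=([v],rv)$ on $[0,\infty)\times S^{d_a-1}$, where $S^{d_a-1}$ carries its standard analytic structure. On the interior $r>0$ the map $(r,v)\mapsto rv$ is an analytic diffeomorphism onto $\R^{d_a}\setminus 0$, with analytic inverse $x\mapsto(|x|,x/|x|)$. So this structure agrees with the one induced from $F$ on the interior.

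The main step is to check that the transition maps between two such charts are analytic up to and including the boundary. Suppose two adapted affine coordinate systems are related by an affine map $A$ that preserves each $C_a$. The radial coordinates then transform as $x_a'=L_a x_a+(\text{terms in the other variables that vanish on }C_a)$. Because $A$ preserves $C_a$, the new $x_a'$ is a linear function of $x_a$ whose coefficients depend affinely on the remaining coordinates. Writing $x_a=r_av_a$, I obtain $x_a'=r_a\,M_a(\cdot)v_a$ with $M_a$ analytic and invertible. Hence
\[
r_a'=r_a\,|M_av_a|,\qquad v_a'=\frac{M_av_a}{|M_av_a|},
\]
which are analytic because $|M_av_a|$ is an analytic function bounded away from $0$. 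The boundary factors $h_i$ transform affinely, so they are handled trivially.

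I expect the real obstacle to be the bookkeeping when the two charts straighten different subfamilies near overlapping points, and when charts are adapted at points with different index sets $J$. I would handle this by reducing to a common refinement, namely charts adapted to the full set of $C_a$ through the point. Away from a given $C_a$, the polar coordinate $(r_a,v_a)$ with $r_a>0$ is analytically equivalent to Cartesian coordinates, so the two descriptions are compatible. Canonicity follows because any two such atlases are compatible by the computation above, and both restrict to the structure from $F$ on $\YY_0$. Since $\YY_0$ is dense, the analytic structure is uniquely determined by this agreement on the interior.
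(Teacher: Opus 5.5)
Your proposal follows the paper's proof: you use the local product models from Lemma~\ref{lem:blow-up}, declare the polar chart $\Phi(r,v)=([v],rv)$ analytic via the standard analytic structure on the spheres, and get agreement with $F$ on $\YY_0$ because $(r,v)\mapsto rv$ is an analytic diffeomorphism for $r>0$. Your explicit check that transition maps are analytic is a correct addition the paper leaves implicit; in fact an affine change preserving $C_a$ gives simply $x_a'=L_ax_a$ with $L_a$ a constant invertible matrix, so there are no ``coefficients depending on the remaining coordinates''.

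The one thing to drop is your closing claim that density of $\YY_0$ determines the analytic structure. It is false: the coordinates $(r,v)$ and $(r^2,v)$ on $\wt\R^d$ induce the same structure on $r>0$ but different ones at $r=0$. You do not need it, since canonicity already follows from your compatibility computation.
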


\begin{proof}
By Lemma~\ref{lem:blow-up}, $\YY$ is a manifold with corners.
We give it an analytic structure by equipping the sphere
$S^{d-1}\subset\R^d$ with its standard analytic structure and
declaring the homeomorphism 
$\Phi:[0,\infty)\times S^{d-1}\to\wt\R^d$ in the proof of
Lemma~\ref{lem:blow-up} to be analytic. Since the second component
of $\Phi$ sending $(r,v)$ to $rv\in\R^d$ is analytic, this analytic
structure agrees with the one induced from $F$ on $\YY_0\subset F$.
Note that local analytic manifold-with-corner charts are obtained from
local analytic coordinates (e.g.~generalized polar coordinates) on the
involved spheres.  
\end{proof}

Continuing the above setup, let $Z\subset X$ be an analytic subset
such that $Z_0=Z\cap X_0\subset X_0$ is an oriented submanifold.

\begin{proposition}\label{prop:Stokes-ana}
Stokes' theorem holds for the pair $(\Bl(X,C),PT(Z))$.
\end{proposition}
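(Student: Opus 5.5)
Since Stokes' theorem is local (Remark~\ref{rem:local}(a)), it suffices to verify it on a neighbourhood of each point of $\YY=\Bl(X,C)$. The idea is to reduce, locally, to Paw\l{}ucki's Theorem~\ref{thm:Stokes-Pawlucki} by embedding a neighbourhood in $\YY$ analytically into a Euclidean space, in such a way that $PT(Z)$ becomes the closure of a semi-analytic oriented submanifold.

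Fix $q\in\YY$. By Lemma~\ref{lem:blow-up} and Lemma~\ref{lem:blow-up-analytic}, $q$ has a neighbourhood $V\subset\YY$ with an analytic manifold-with-corners chart. The chart comes from the product description $\prod_a\wt\R^{d_a}\times\R^m$, via generalized polar coordinates $\Phi(r,v)=([v],rv)$ on the sphere factors. Using such a chart, $V$ embeds analytically as a quadrant-type region $Q=\{r_a\geq 0,\ h_i\geq 0\}$ inside an open subset of a Euclidean space $E$. Here the coordinates are the $r_a$, local analytic coordinates on the spheres, and the linear coordinates, and the extended boundary hyperplanes are coordinate hyperplanes. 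By Remark~\ref{rem:local}(b), or directly, Stokes for $(V,PT(Z)\cap V)$ is equivalent to Stokes for $(E',PT(Z)\cap V)$, where $E'\supset Q$ is a small open set in $E$.

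Next I identify $\AA_0:=PT(Z)_0\cap V$. By Remark~\ref{rem:closedness}, this is $\pi^{-1}(Z_0\setminus C)\cap V$, viewed in the chart. Its image is
\[
\{x\in Q_0 : \Phi(x)\in Z_0\setminus C\},
\]
where $Q_0$ is the open quadrant. Since $\Phi$ is analytic and $Z$ is analytic (locally the intersection of the quadrant with the zero set of analytic functions $f_j$), this set equals
\[
\{f_j\circ\Phi=0\}\cap\{r_a>0,\ h_i>0\}.
\]
This is semi-analytic in $E'$; the strict inequalities also remove $C$, since $r_a=0$ is exactly the preimage of $C_a$. The set is an oriented $d$-dimensional submanifold because $\Phi$ is a diffeomorphism from $Q_0$ onto an open subset of $X_0\setminus C$, and it pulls back $Z_0\setminus C$. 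Its closure in $E'$ lies in $Q$ and equals $PT(Z)\cap V$, by the definition of the proper transform as a closure, computed inside the closed set $Q$.

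Applying Theorem~\ref{thm:Stokes-Pawlucki} then gives integrability and equation~\eqref{eq:Stokes} for the pair $(E',Closure(\AA_0))$. The remaining issue is that Paw\l{}ucki's quasi-regular boundary is defined intrinsically from the pair, with the stratification coming from $\XX_0$. For $PT(Z)$ the paper instead uses the induced stratification of Remark~\ref{rem:stratinduced}, with $\XX_0=PT(Z)\cap\YY_0$. These agree here because $\AA_0$ is exactly $PT(Z)\cap\YY_0$ by construction. The quasi-regular boundary is likewise intrinsic to the embedding of the closure, so it is invariant under the analytic diffeomorphism (Remark~\ref{rem:diffeo-pairs}). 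Globalizing with a partition of unity (Remark~\ref{rem:local}(a)) finishes the proof.

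The main obstacle is the point $q$ in a corner stratum, where several blown-up loci and boundary faces meet. There one must check that a single analytic chart on $E'$ exists in which the whole neighbourhood $V$ sits as a closed quadrant, and that compact support in $\YY$ corresponds to compact support in $E'$. The hypothesis that all $C_a$ and all boundary faces are affine-linear and transverse is what makes such straightened product charts exist, via Remark~\ref{rem:straighten}.
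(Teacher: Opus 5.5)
Your proposal is correct and is essentially the paper's proof: localize via Remark~\ref{rem:local}(a), use the analytic manifold-with-corners chart from Lemma~\ref{lem:blow-up-analytic} so that the image of $Z_0\setminus C$ is a semi-analytic oriented submanifold of a Euclidean space whose closure is the image of $PT(Z)$, apply Theorem~\ref{thm:Stokes-Pawlucki}, and transfer back through the chart. Your extra check that Paw\l{}ucki's $\AA_0$ coincides with $PT(Z)\cap\YY_0$ is a point the paper leaves implicit. Your ``direct'' argument is the one to keep for passing from $V$ to $E'$: Remark~\ref{rem:local}(b) does not literally apply, because a full-dimensional quadrant is not a nice submanifold of $E'$.
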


\begin{proof}
As in Remark~\ref{rem:seminice}, we formally add the boundary
faces $\{h_i=0\}_{i=1,\dots,K}$ to the collection $C$ to treat them on
an equal footing. 
Since by Remark~\ref{rem:local} Stokes' theorem is local, it suffices
to prove it for a neighbourhood of every point of $\YY:=\Bl(X,C)$.
By Lemma~\ref{lem:blow-up-analytic}, for a sufficiently small such
neighbourhood there exists a homeomorphism $\Phi:U\to V$ onto an open
subset $V\subset[0,\infty)^k\times\R^{n-k}$ whose restriction
$\Phi|_{U_0}:U_0:=U\cap \YY_0\cong X_0\setminus C\to V_0:=V\cap
  (0,\infty)^k\times\R^{n-k}$ is analytic. Thus
$\AA_0:=\Phi(Z_0\setminus C)\subset V_0\subset\R^n$ is semi-analytic,
so by Theorem~\ref{thm:Stokes-Pawlucki} Stokes' theorem holds for the pair
$(\R^{\dim X},\AA)$ with $\AA:=Closure(\AA_0)=\Phi(U\cap PT(Z))$, and
therefore for the pair $(U,U\cap PT(Z))$. 
\end{proof}

\begin{lemma}\label{lem:Stokes-an}
Let $(M^d\times\WW,\ZZ)$ be a pair as in~\S\ref{ss:graphs}. Assume
that $M,\WW$ are real analytic and $\ZZ\subset M^d\times\WW$ is an
analytic subset. Then Stokes' theorem holds for the basic pair $(\wt
X_\Gamma\times \WW, PT(\Delta_{\ver}\times \ZZ))$ in~\eqref{eq:graphtopair}. 
\end{lemma}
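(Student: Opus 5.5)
The plan is to reduce Lemma~\ref{lem:Stokes-an} to Proposition~\ref{prop:Stokes-ana} by localization. By Remark~\ref{rem:local}(a), Stokes' theorem is local, so it suffices to prove it for $(\pi^{-1}(U)\cap\YY,\ \XX\cap\pi^{-1}(U))$ for a cover of $\YY=\wt X_\Gamma\times\WW$ by sets $\pi^{-1}(U)$. Here $U$ ranges over small open subsets of $X_\Gamma\times\WW$, and $\pi$ is the blow-down map~\eqref{eq:pi}. Fix a point $\xi=(r,w)\in X_\Gamma\times\WW$. Choose real analytic charts of $M$ around each component $r_a$ of $r$, chosen to be the same chart for flags whose components coincide. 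Also choose an analytic manifold-with-corners chart of $\WW$ around $w$, which straightens the boundary faces linearly. Together these identify a neighbourhood $U$ of $\xi$ with an open subset of a set $X=\{h_i\ge 0\}\subset F=(\R^n)^{\Flag}\times\R^{\dim\WW}$, whose faces are coordinate hyperplanes and hence a transverse affine family.

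In these coordinates the double diagonals $\Delta_2^l\times\WW$ meeting $U$ become the linear subspaces $\{x_l=y_l\}$. Here $x_l,y_l$ are the coordinates of the two flags of the edge $l$; this is possible precisely because coinciding points were given the same chart. These subspaces are transverse to one another and to the faces of $\WW$, since they involve disjoint sets of $\R^n$-variables. Only double diagonals passing near $\xi$ matter, and we shrink $U$ to avoid the others. Since the blow-up construction is local and chart-natural (Lemma~\ref{lem:blow-up} and its proof), $\pi^{-1}(U)$ is identified with an open subset of $\Bl(X,C)$, where $C$ is this collection of linear subspaces. Likewise, $\XX\cap\pi^{-1}(U)$ is identified with the proper transform $PT(Z)$ of $Z:=(\Delta_\ver\times\ZZ)\cap U$. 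We also need to check that the analytic structure coming from Lemma~\ref{lem:blow-up-analytic} matches the charts used. This holds because polar coordinates along the normal directions of $\{x_l=y_l\}$ are analytic, and the transition maps between charts of $M$ are analytic, so they lift analytically to the blow-ups.

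It remains to check the hypotheses on $Z$. The vertex diagonal $\Delta_\ver$ is, in these coordinates, a linear subspace given by equalities of flag variables at each nonspecial vertex, so it is analytic. Moreover, $\ZZ$ is analytic by assumption, and its pullback to $U$ under the analytic chart is still analytic. Hence $Z$ is an analytic subset of $X$, being an intersection of two analytic sets. Its interior part $Z_0$ is the oriented submanifold $\Delta_\ver\times\ZZ_0$ of the interior, which is how the pair $(\YY,\XX)$ was defined. Proposition~\ref{prop:Stokes-ana} then gives Stokes' theorem for $(\Bl(X,C),PT(Z))$, and by Remark~\ref{rem:local}(a) also for its open piece. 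Remark~\ref{rem:diffeo-pairs} transports the statement, including the quasi-regular boundary and the form $\eps$, back to $(\pi^{-1}(U),\XX\cap\pi^{-1}(U))$. Covering $\YY$ by such sets and gluing with a partition of unity, as in Remark~\ref{rem:local}(a), completes the argument.

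The main obstacle I expect is the bookkeeping in the middle step. One must ensure that a single chart simultaneously linearizes all the double diagonals and the boundary of $\WW$. One must also check that the identification of $\pi^{-1}(U)$ with $\Bl(X,C)$ is a diffeomorphism compatible with the analytic structures. If the chart matching becomes awkward, I would instead use translation-type coordinates $x_l-y_l$ near coincident points, or extend $\WW$ across its boundary in the spirit of Remark~\ref{rem:local}(b).
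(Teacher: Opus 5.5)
Your proposal is correct and follows the paper's proof. You localize via Remark~\ref{rem:local}, pick analytic charts on $M$ (using the same chart for coinciding flag components) and an analytic corner chart on $\WW$, so that the double diagonals and $\Delta_\ver$ become linear subspaces and $\Delta_\ver\times\ZZ$ becomes an analytic subset, and then apply Proposition~\ref{prop:Stokes-ana}. Your concern about matching analytic structures on the blow-up is unnecessary: the identification of $\pi^{-1}(U)$ with an open piece of $\Bl(X,C)$ only needs to be a diffeomorphism of pairs, and analyticity of the charts is used only to make $Z$ analytic in the linear coordinates.
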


\begin{proof}
Since by Remark~\ref{rem:local} Stokes' theorem is local, it suffices
to prove it for a neighbourhood of every point.
Consider a point $\xi\in \Delta_{\ver}\times \ZZ$. As in the proof of
Lemma~\ref{lem:flip} we pick local coordinates for $M$ near the
components of $\xi$, using the same coordinates near components which
agree. We also pick manifold-with-corner coordinates for $\WW$ near
the corresponding component of $\xi$. We choose the coordinates on $M$
and $\WW$ to be analytic. In these coordinates,
\begin{itemize}
\item $X_\Gamma$ corresponds to $\R^{a+b}$ and $\WW$ to $[0,\infty)^k\times \R^\ell$
  for some $a,b,k,\ell$; 
\item the edge diagonal $\Delta_2^\Gamma$ (the blow-up locus)
  corresponds to a transverse collection of linear subspaces of $\R^{a+b}$;
\item $\Delta_\ver$ corresponds to a linear subspace of $\R^a$;
\item $\ZZ$ corresponds to an analytic subset of
  $\R^b\times[0,\infty)^k\times\R^\ell$ whose intersection with
  the interior $\R^b\times(0,\infty)\times\R^\ell$ is an oriented
  submanifold. 
\end{itemize}  
Hence, the result follows from Proposition~\ref{prop:Stokes-ana}
applied to $X=\R^{a+b}\times[0,\infty)^k\times\R^\ell$,
$C=\Delta_2^\Gamma\times[0,\infty)^k\times \R^\ell$, and
$Z=\Delta_\ver\times\ZZ$. 
\end{proof}

\subsection{Chopping off trees}\label{ss:chopping}

In this subsection we show that Stokes' theorem passes to fibre
bundles and apply this to the operation of attaching trees to a graph. 
We begin with Stokes' theorem for products. 

\begin{lemma}\label{lem:StokesProd}
Let $(\YY^1,\XX^1)$ and $(\YY^1,\XX^1)$ be two pairs as in
Definition~\ref{def:pair}. Assume that Stokes holds for both
$(\YY^1,\XX^1)$ and $(\YY^2,\XX^2)$. Then Stokes holds for the pair 
$(\YY^1\times \YY^2,\XX^1\times \XX^2)$ with 
\begin{equation}\label{eq:prodregbdry}
\p^{q-reg}(\XX^1\times \XX^2)=
(\p^{q-reg}\XX^1)\times \XX^2_0+(-1)^{\XX^1}
\XX^1_0\times \p^{q-reg}\XX^2
\end{equation}
modulo subsets of $\{\eps_{\XX^1\times\XX^2}=0\}$ and sets of measure zero.
\end{lemma}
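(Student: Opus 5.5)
The plan is to apply Lemma~\ref{lem:Stokesimplies} to the product. First I will check that $(\YY^1\times\YY^2,\XX^1\times\XX^2)$ is a pair. Its interior is $\XX^1_0\times\XX^2_0$, an oriented submanifold of $\YY^1_0\times\YY^2_0$ of dimension $d_1+d_2$ with the product orientation, and its closure is $\XX^1\times\XX^2$. Next I will consider the open subset
$$
\XX_1:=(\p^{q-reg}\XX^1)\times\XX^2_0\ \amalg\ \XX^1_0\times\p^{q-reg}\XX^2
$$
of the boundary. Near a point of either piece, the product of local regular components is again a $C^1$ manifold with boundary. So $\XX_1$ lies in $\p^{q-reg}(\XX^1\times\XX^2)$, and the odd $0$-form is $\eps_{\XX^1}\times 1$ on the first piece and $1\times\eps_{\XX^2}$ on the second. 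The boundary orientation of the second piece carries the usual sign $(-1)^{d_1}=(-1)^{\XX^1}$, which explains the sign in~\eqref{eq:prodregbdry}.

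For integrability, Remark~\ref{rem:integrability} reduces the question to finiteness of the volume of $K\cap\XX_0$ for compact $K$. Taking $K\subset K_1\times K_2$, this volume is bounded by the product of the volumes of $K_i\cap\XX^i_0$, which are finite because Stokes holds for each factor.

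For equation~\eqref{eq:Stokes}, I will first treat $\beta$ of product type, $\beta=\pi_1^*\beta_1\wedge\pi_2^*\beta_2$, with each $\beta_i$ compactly supported near $\XX^i$. Then $d\beta=d\beta_1\times\beta_2\pm\beta_1\times d\beta_2$, and by Fubini only the bidegree $(d_1,d_2)$ parts contribute. Applying Stokes on each factor gives
$$
\int_{\XX^1_0\times\XX^2_0}d\beta=\int_{\p^{q-reg}\XX^1}\eps\beta_1\int_{\XX^2_0}\beta_2+(-1)^{d_1}\int_{\XX^1_0}\beta_1\int_{\p^{q-reg}\XX^2}\eps\beta_2 ,
$$
which is exactly $\int_{\XX_1}\eps\beta$ for the $\XX_1$ above.

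The main obstacle is extending from product forms to general $\beta\in\Om^{d-1}(\YY^1\times\YY^2)$, since both sides are only known to be continuous in a suitable topology. Localizing with Remark~\ref{rem:local}(a), I may assume $\beta$ has support in a product of coordinate charts. There I will approximate $\beta$ in the $C^1$-topology by finite sums of product forms with supports in a fixed compact set. Polynomial coefficients times cutoffs, via Stone--Weierstrass, should suffice.

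Both sides are continuous under $C^1$-convergence with uniformly compact supports. The left side involves $\sup|d\beta|$ times the finite volume of $K\cap\XX_0$. The right side involves $\sup|\beta|$ times the finite volume of $\XX_1\cap K$. The latter is finite because the boundary volumes of the factors are finite, which follows from Stokes holding for each factor via Remark~\ref{rem:integrability} applied to the boundary integrals.

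This proves~\eqref{eq:Stokes} with this $\XX_1$ for all $\beta$. Lemma~\ref{lem:Stokesimplies} then shows that $\XX_1$ equals $\p^{q-reg}(\XX^1\times\XX^2)$ modulo $\{\eps=0\}$ and null sets. This gives~\eqref{eq:prodregbdry}, and hence Stokes for the full quasi-regular boundary.
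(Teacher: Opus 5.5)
Your proposal is correct and follows the paper's own argument: establish \eqref{eq:Stokes} for product forms via Fubini and Stokes on each factor, extend to general forms by approximating with sums of product forms, and invoke Lemma~\ref{lem:Stokesimplies} to identify $\p^{q-reg}(\XX^1\times\XX^2)$. You also spell out two points the paper leaves implicit. First, you check that your candidate set is an open subset of $\p^{q-reg}(\XX^1\times\XX^2)$, which Lemma~\ref{lem:Stokesimplies} requires. Second, you bound both sides by finite volumes so the limit can be taken. One small correction: plain Stone--Weierstrass only gives $C^0$ approximation, so for the $C^1$ approximation you need the standard density of polynomials in $C^1$ on compact sets.
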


\begin{proof}

The integrability statement for the product follows from those for the
$(\YY^i,\XX^i)$ by Remark~\ref{rem:integrability} applied to the
product metric. 

Consider next two forms $\alpha\in \Om^*(\YY^1)$
and $\beta\in \Om^*(\YY^2)$ with 
$supp\,\alpha\cap\XX^1$ and $supp\,\beta\cap\XX^2$ compact.
For the following computation we stipulate that the integral of a differential form over a manifold 
is zero unless degree of the form equals 
dimension of the manifold. Then Fubini's theorem and Stokes' theorem
for $(\YY^1,\XX^1)$ and $(\YY^2,\XX^2)$ yields
\begin{align*}
&\int_{\XX^1\times\XX^2}
d(\alpha\times\beta)\cr
= &\int_{\XX^1\times\XX^2}
d\alpha\times\beta+(-1)^\alpha\alpha
\times d\beta\cr
= &\int_{\XX^1}d\alpha\int_{\XX^2}\beta+
(-1)^\alpha\int_{\XX^1}\alpha\int_{\XX^2}d\beta\cr
= &\int_{\p^{q-reg}\XX^1}\eps_{\XX^1}\alpha\int_{\XX^2}\beta+
(-1)^\alpha\int_{\XX^1}\alpha\int_{\p^{q-reg}\XX^2}\eps_{\XX^2}\beta\cr
= &\int_{(\p^{q-reg}\XX^1)\times\XX^2_0}\eps_{\XX^1\times\XX^2}\alpha\times\beta+
(-1)^\alpha\int_{\XX^1_0\times\p^{q-reg}\XX^2}\eps_{\XX^1\times\XX^2}\alpha\times\beta\cr
= &\int_{(\p^{q-reg}\XX^1)\times \XX^2_0+(-1)^{\XX^1}
\XX^1_0\times \p^{q-reg}\XX^2}\eps_{\XX_1\times\XX_2}\alpha\times\beta\,.
\end{align*}
Let now $\beta\in \Om^*(\YY^1\times\YY^2)$
be an arbitrary form with $supp\,\beta\cap(\XX^1\times\XX^2)$ compact.
It can be arbitrarily well $C^\infty$-approximated by 
$\sum_{j=1}^{k_n}\alpha_j^n\times\beta_j^n$ with 
$\alpha_j^n$ and $\beta_j^n$ such that 
$supp\,\alpha_j^n\cap \XX^1$ and $supp\,\beta_j^n\cap \XX^2$ are compact
for all $j$ and $n$. 
The above computation applied to $\alpha_j^n,\beta_j^n$ together with the limit as $n\to\infty$
shows equation~\eqref{eq:Stokes} with $(\p^{q-reg}\XX^1)\times \XX^2_0+(-1)^{\XX^1}\XX^1_0\times \p^{q-reg}\XX^2$ as codimension $1$ boundary of 
Now $\XX^1\times \XX^2$. Lemma~\ref{lem:Stokesimplies} implies the result.
\end{proof}

Consider now a fibre bundle of manifolds with corners
$$
  F\to \UU\stackrel{\pi}{\rightarrow}\YY
$$ 
For a subspace $A\subset\YY$ we denote $\UU_A:=\pi^{-1}(A)$, and
similarly for other bundles discussed below. Associated to $\UU$ are
the fibre bundles $\UU^{F_0}$ and $\UU^{\p_1F}$ with fibre $F_0$ (the
interior of $F$) and $\p_1F$ (the codimension $1$ boundary of $F$),
respectively.

\begin{lemma}\label{lem:StokesFibre}
In the above setup, let $(\YY,\XX)$ be a pair. 
Consider the restriction $\UU_\XX$ of $\UU$ to $\XX$. If Stokes holds for 
the pair $(\YY,\XX)$, then it also holds for the pair $(\UU,\UU_\XX)$ with
\begin{equation}\label{eq:q-reg-dec}
\p^{q-reg}\UU_\XX=\UU^{F_0}_{\p^{q-reg}\XX}\amalg \UU^{\p_1F}_{\XX_0}
\end{equation}
modulo subsets of $\{\eps_{\UU_\XX}=0\}$ and sets of measure zero.
\end{lemma}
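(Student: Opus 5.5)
The plan is to reduce to the product case, Lemma~\ref{lem:StokesProd}, by working locally over $\YY$, and then to identify the quasi-regular boundary using Lemma~\ref{lem:Stokesimplies}.

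\textbf{Locality.} By Remark~\ref{rem:local}(a), Stokes' theorem is local. So it suffices to cover $\UU$ by open sets of the form $\pi^{-1}(V)\cap W$, where $V\subset\YY$ is open and $\UU_V$ is trivial. I would choose a trivialization $\UU_V\cong V\times F$; it is a diffeomorphism of manifolds with corners. It carries $\UU_{\XX\cap V}$ onto $(\XX\cap V)\times F$ and preserves the interiors. By Remark~\ref{rem:diffeo-pairs} it therefore identifies the two quasi-regular boundaries, and the orientations match by the fibre-first convention up to the sign fixed by the convention. It thus suffices to treat the product pair $(V\times F,(\XX\cap V)\times F)$, after reordering the factors to $F\times V$ if the orientation convention requires it.

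\textbf{Product case.} The fibre $F$ is a compact manifold with corners, so the pair $(F,F)$ satisfies Stokes' theorem with $\p^{q-reg}F\supset\p_1F$; this is the classical Stokes theorem for manifolds with corners. Here the codimension $\ge2$ strata have measure zero in $\p F$, and $\p_1F$ is regular boundary with $\eps=1$. The pair $(V,\XX\cap V)$ satisfies Stokes by the locality remark. Lemma~\ref{lem:StokesProd} now gives Stokes' theorem for the product, with quasi-regular boundary $(\p^{q-reg}\XX\cap V)\times F_0\cup(\XX_0\cap V)\times\p_1F$. This holds modulo $\{\eps=0\}$ and measure-zero sets, and up to the orientation sign $(-1)^{\dim\XX}$ or $(-1)^{\dim F}$ coming from the ordering. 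Transported back, this is exactly $\UU^{F_0}_{\p^{q-reg}\XX}\amalg\UU^{\p_1F}_{\XX_0}$ over $V$.

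\textbf{Gluing.} I would glue the local statements with a partition of unity on $\YY$, as in Remark~\ref{rem:local}(a). Summing the local Stokes identities gives equation~\eqref{eq:Stokes} for $(\UU,\UU_\XX)$ with $\XX_1$ equal to the open set on the right of~\eqref{eq:q-reg-dec}. The integrability condition also globalizes, since the support of the form meets $\UU_\XX$ in a compact set and is covered by finitely many charts.

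\textbf{Main obstacle.} The main obstacle is that the right side of~\eqref{eq:q-reg-dec} is a priori only an open subset of $\p^{q-reg}\UU_\XX$. It is quasi-regular because locally it is a product of quasi-regular and regular boundaries, but the whole quasi-regular boundary may a priori be larger. To rule this out I would apply Lemma~\ref{lem:Stokesimplies} to $\XX_1$ given by the right-hand side, for which Stokes was established above. This yields the identification modulo $\{\eps=0\}$ and null sets. The same device handles the mild mismatch in the product case that the classical boundary of $F$ includes codimension $\ge2$ corners.
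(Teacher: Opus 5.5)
Your proposal is correct and is essentially the paper's argument: the paper's proof is the single sentence that Stokes' theorem is local (Remark~\ref{rem:local}), so local triviality of $\UU$ reduces everything to Lemma~\ref{lem:StokesProd} applied to $(V,\XX\cap V)$ and $(F,F)$, which is exactly what you spell out. Your final global appeal to Lemma~\ref{lem:Stokesimplies} is harmless but redundant, since quasi-regularity of a boundary point is a local property and Lemma~\ref{lem:StokesProd} already identifies the quasi-regular boundary over each trivializing set.
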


\begin{proof}
Since Stokes' theorem is local, the result follows from local
triviality of fibre bundles and Lemma~\ref{lem:StokesProd}.
\end{proof}


{\bf Application 1. }
Let $\Gamma_1$ and $\Gamma_2$ be two graphs as in~\S\ref{ss:graphs}.
We form the disjoint union graph $\Gamma:=\Gamma_1\amalg \Gamma_2$.
Consider pairs $(M^{d_i}\times \WW_i,\ZZ_i)$, $i=1,2$, and set
$(M^d\times \WW,\ZZ):=(M^{d_1+d_2}\times\WW_1\times\times\WW_2,\ZZ_1\times\ZZ_2)$.
Then the basic pairs associated via~\eqref{eq:graphtopair} to these
pairs and the graphs $\Gamma_1$, $\Gamma_2$ and $\Gamma$ are related by
\begin{equation*}
\YY_\Gamma=\YY_{\Gamma_1}\times \YY_{\Gamma_2},\quad 
\XX_\Gamma=\XX_{\Gamma_1}\times \XX_{\Gamma_2}.
\end{equation*}
If Stokes holds for the pairs $(\YY_{\Gamma_i},\XX_{\Gamma_i})$, then
by Lemma~\ref{lem:StokesProd} it holds for $(\YY_\Gamma,\XX_\Gamma)$
and equation~\eqref{eq:prodregbdry} describes quasi-regular boundary 
of $\XX_\Gamma$. We put this in words as follows: 
\begin{itemize}
\item [(GEN1)] A connected component of the quasi-regular boundary of
  $\XX_\Gamma$ is generated by the quasi-regular boundary of either
  $\XX_{\Gamma_1}$ or $\XX_{\Gamma_2}$.
\end{itemize}

{\bf Basic pairs associated to trees. }
Recall that s {\em rooted tree} is a tree with a chosen univalent
vertex (the {\em root vertex}). In the sequel, when we say ``tree''
(or ``rooted tree'') we will always mean ``tree (respectively rooted
tree) without special vertices'' unless otherwise specified. Moreover,
in that case we will always set $\WW:=\ZZ:=\pt$. 
For trees, the associated basic pairs are particularly nice. 

\begin{lemma}\label{lem:tree-struct}
For a tree $T$, let $(\YY_T,\XX_T)$ be the
associated basic pair as in~\eqref{eq:graphtopair}. Then the natural inclusion 
$$
  \XX_T\into \YY_T
$$
is a nice embedding of manifolds with corners.
\end{lemma}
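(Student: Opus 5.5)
We argue by induction on the number of edges of $T$, chopping off one leaf vertex (a univalent nonspecial vertex adjacent to a single edge) at a time. Recall that for a tree $T$ we set $\WW=\ZZ=\pt$, so $\YY_T=\wt X_T=(\wt M^2)^e\times M^s$ and $\XX_T=PT(\Delta_\ver)$. If $T$ has no edges, then $T$ is a single vertex with $s$ leaves. In this case $\XX_T=\Delta_\ver\cong M$ is the slim diagonal in $M^s$, which is a closed submanifold without boundary, so the base case is clear.

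For the inductive step we pick a univalent (non-root) vertex $v$ of $T$. It is joined by an edge $l$ to a vertex $w$, and $T'$ denotes the tree obtained by deleting $v$ and $l$ and replacing the flag of $w$ on $l$ by a leaf. The key observation is the following. On $\Delta_\ver$, the flag variable at $v$ is unconstrained, because $v$ has no other flags. The $l$-factor of $\Delta_\ver$ is therefore $\{(x,y)\in M^2\}$ with $y$ equal to the common value at $w$. The corresponding factor of $\XX_T$ is the full space $\wt M^2$, fibred over $M$ via $p_2:\wt M^2\to M$, recording the $w$-end of $l$. This gives an identification
\[
  \XX_T \cong \XX_{T'}\times_M \wt M^2 ,
\]
namely the pullback of the fibre bundle $p_2:\wt M^2\to M$, whose fibre $F$ is the blow-up of $M$ at a point (a manifold with boundary), along the smooth evaluation map $\XX_{T'}\to M$ at the new leaf. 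To justify this identification we will check in local coordinates, as in the proof of Lemma~\ref{lem:flip}, that the proper transform commutes with this decomposition. The point is that the blow-up along $\Delta_2^l$ involves only the variable $x-y$, and $y$ is tied to the $w$-variable through the vertex diagonal. The same reasoning identifies the ambient space as $\YY_T=\YY_{T'}\times \wt M^2$ (after reordering factors), with $\XX_T$ embedded as the graph-type set $\{(\xi,\tilde z)\mid \mathrm{ev}(\xi)=p_2(\tilde z)\}$.

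Niceness then follows from transversality. By the inductive hypothesis, $\XX_{T'}\subset\YY_{T'}$ is a nice submanifold. Because $p_2:\wt M^2\to M$ is a submersion, including on the boundary $\p\wt M^2$ (it is a bundle projection there), the fibre product of $\XX_{T'}$ with $\wt M^2$ over $M$ is a nice submanifold of $\XX_{T'}\times\wt M^2$. Its corner strata are exactly the products of the strata of $\XX_{T'}$ and of $\wt M^2$, and it meets each stratum transversely. Composing with the nice embedding $\XX_{T'}\times\wt M^2\subset\YY_{T'}\times\wt M^2$ then shows that $\XX_T\into\YY_T$ is a nice embedding. Here we use that a composition of nice embeddings is nice, and that a product of nice embeddings with an identity is nice.

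The main obstacle is the identification of the proper transform with the fibre product near points where several edge diagonals degenerate at once, i.e.\ near higher-codimension corners. This includes the case where $w$ itself collides with further vertices. We have to rule out the proper transform acquiring extra components there, of the kind that produce non-manifold hidden faces for general graphs. We expect the tree structure to handle this. Ordering edges away from the root and using the coordinates $x_l-y_l$ edge by edge gives an affine change of variables in which the vertex diagonal becomes a product and all edge diagonals become coordinate subspaces, so the blow-ups are independent. We would verify this first in the linear model $M=\R^n$ and then transfer it to $M$ by choosing charts, as in Lemma~\ref{lem:Stokes-an}.
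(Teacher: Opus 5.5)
Your induction does not work as written. It starts from a vertex $v$ whose only flag lies on $l$, but such a vertex need not exist. In a trivalent tree (type (o), the main case of interest) every vertex has three flags, so a vertex meeting only one edge carries two leaves. You should instead take a vertex meeting exactly one edge; every tree with an edge has one. Its leaf variables then equal the $v$-end of $l$ and contribute only a graph factor. Also, $\YY_T\neq\YY_{T'}\times\wt M^2$. The $M$-factor of the new leaf of $T'$ is identified with the $w$-end of $l$, so $\YY_T\cong\YY_{T'}\times_M\wt M^2$ (times that graph), and niceness must be checked inside this fibre product. Locally over $U\subset M$ it is $\YY_{T'}|_U\times F\supset\XX_{T'}|_U\times F$, where niceness is immediate. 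Most importantly, the identification $\XX_T\cong\XX_{T'}\times_M\wt M^2$, on which the whole induction rests, is left as an expectation rather than proved.

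That identification needs no corner analysis. The fibre product is closed and contains $\XX_T^0$, hence it contains $\XX_T$. Conversely, take $(\xi,\tilde z)$ in the fibre product. Approximate $\xi$ by interior points $\xi_n$ of $\XX_{T'}$, and use a trivialization $p_2^{-1}(U)\cong F\times U$ to choose interior points $\tilde z_n\to\tilde z$ with $p_2(\tilde z_n)=\mathrm{ev}(\xi_n)$. This is the approximating-sequence argument of Lemma~\ref{lem:tree-attach}, applied to the one-edge tree, where $\XX_T=\wt M^2$ and no circularity arises. With these repairs your route works, and it produces the fibration structure of Lemmas~\ref{lem:tree-fibre} and~\ref{lem:tree-attach} along the way.

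Note, however, what your last paragraph already contains. The coordinate change there, which turns $\Delta_\ver$ and every $\Delta_2^l$ into coordinate subspaces, says precisely that $\{\Delta_\ver\}\cup\{\Delta_2^l\}_{l\in\Edge}$ is a transverse collection in $X_T$; this holds because $T$ has no cycles. That is the paper's entire proof. Lemma~\ref{lem:PTtransverse} then identifies $\XX_T=PT(\Delta_\ver)$ with $\Bl(\Delta_\ver,\{\Delta_\ver\cap\Delta_2^l\}_{l})$, nicely embedded in $\YY_T$, with no induction needed.
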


\begin{proof}
Recall the collection of double diagonals:
$$
\Delta_{2T}=\{\Delta_2^l\}_{l\in \Edge}.
$$
Since $T$ has no cycles, the collection 
$\Delta_{2T}$ is transverse to $\Delta_{\ver}$.
Lemma~\ref{lem:PTtransverse} implies that the intersections 
$$
D:=\{\Delta_{\ver}\cap \Delta_2^l\}_{l\in \Edge}
$$
form a transverse collection of submanifolds in 
$\Delta_{\ver}$ and the natural inclusion 
$$
\iota:\Delta_{\ver}\into X_T
$$
lifts to a nice embedding 
$$
\wt\iota:\Bl(\Delta_{\ver},D)\into \YY_T=\Bl(X_T,\Delta_{2T})
$$
with $\im\wt\iota=\XX_T$, giving the latter the 
desired manifold with corners structure.
\end{proof}

\begin{lemma}\label{lem:tree-fibre}
Let $T$ be a rooted tree and
$(\YY_T,\XX_T)$ the associated basic pair as 
in~\eqref{eq:graphtopair} with 
$\WW=\ZZ=\pt$. Let 
$$
  \pi_{root}:\XX_T\to M
$$ 
be the composition of the natural blow-down map and
the projection onto the $M$ factor corresponding to the root vertex. 
Then $\pi_{root}$ is a fibre bundle projection.
\end{lemma}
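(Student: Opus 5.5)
The plan is to use the description of $\XX_T$ from the proof of Lemma~\ref{lem:tree-struct}, namely $\XX_T\cong\Bl(\Delta_\ver,D)$. Here $\Delta_\ver\cong M^{\Ver}$ has one factor for each nonspecial vertex, and leaves count as free $M$-factors, so $\Delta_\ver\times M^s$ plays the role of the vertex configuration space. The collection $D$ consists of the diagonals $\{x_v=x_w\}$, one for each edge $(v,w)$. Since $T$ is a tree, we argue by induction on the number of edges, removing one edge at a time, working from the leaves toward the root.

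For the inductive step, pick an edge $l=(v,w)$ such that $w$ is farther from the root than $v$ and the subtree $T_w$ hanging below $w$ has no further edges. The subtree $T_w$ then consists only of $w$ and its leaves. Set $T':=T\setminus(\{l\}\cup T_w)$. Projection onto the variables of $T'$ gives a map $\XX_T\to\XX_{T'}$. Its fibre over a point $\xi'$ is $\Bl(M_{x_w},\{x_w=x_v(\xi')\})\times M^{\#\text{leaves of }w}$, and this is canonically isomorphic to the fibre of $p_2:\wt M^2\to M$ from~\S\ref{ss:blowupprop}, times a product of copies of $M$. Concretely, this step realizes $\XX_T$ as the fibre product $\XX_{T'}\times_M \wt M^2\times M^{k}$, taken along the map $x_v:\XX_{T'}\to M$ and $p_2$. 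It therefore exhibits $\XX_T\to\XX_{T'}$ as the pullback of the fibre bundle $p_2:\wt M^2\to M$ (multiplied by a trivial factor). Being a pullback of a fibre bundle, it is itself a fibre bundle.

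Composing these maps, $\pi_{root}$ becomes a tower of fibre bundles $\XX_T\to\XX_{T'}\to\dots\to\XX_{\{root\}}=M$. A composition of fibre bundles is not automatically a fibre bundle. Here, however, all spaces are compact and every map is a proper submersion between manifolds with corners that is also a submersion on every boundary stratum, since the fibres are $\wt M^2$-fibres, which have boundary $\p F$, and $p_2$ restricted to the boundary $\p\wt M^2\to M$ is still a submersion. So I would invoke Ehresmann's theorem for manifolds with corners to conclude that $\pi_{root}$ is a locally trivial fibre bundle. Alternatively, $\pi_{root}$ is equivariant under a local diffeomorphism action: near $q\in M$, a family of diffeomorphisms $\phi_y$ of $M$ moving $q$ to $y$ acts diagonally on all vertex variables. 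This action preserves all diagonals, lifts to the blow-ups, and yields an explicit local trivialization. I would present this diagonal-action argument as the main proof, since it sidesteps corner subtleties.

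The main obstacle is checking that the fibre-product identification is a diffeomorphism of manifolds with corners, and not merely a bijection. This requires the normal-bundle identification used in Lemma~\ref{lem:PTtransverse}. It also requires checking that the diagonal action $\phi_y$ lifts smoothly to $\Bl(\Delta_\ver,D)$. That holds because $\phi_y\times\phi_y$ preserves $\Delta_2$ and hence lifts to $\wt M^2$ via its derivative on the normal bundle.
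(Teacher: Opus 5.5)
Your proof is correct, and your main route (the diagonal action) is genuinely different from the paper's.

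\textbf{The paper's route.} The paper applies Ehresmann's theorem directly to the compact manifold with corners $\XX_T$ from Lemma~\ref{lem:tree-struct}. On the boundary stratum lying over $X_J$, the blow-down map is the product-of-spheres bundle projection onto $X_J$. On $X_J$, which is an open subset of an intersection of diagonals in $\Delta_\ver$, the root coordinate is a submersion onto $M$. Hence $\pi_{root}$ is a submersion on every stratum.

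\textbf{Your route.} You build explicit trivializations from homogeneity. The map $(y,x)\mapsto(y,(\phi_y\times\dots\times\phi_y)(x))$ on $U\times X_T$ preserves each $U\times\Delta_2^l$ and $U\times\Delta_\ver$. It therefore lifts to a diffeomorphism of $U\times\wt X_T$ preserving $U\times\XX_T$; lifting this parametrized map, rather than each $\phi_y$ separately, is what gives smoothness in $y$. The lift intertwines $\pi_{root}$ with $\phi_y$, so $\pi_{root}^{-1}(U)\cong U\times\pi_{root}^{-1}(q)$.

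\textbf{Comparison.} Your argument needs no version of Ehresmann's theorem for manifolds with corners. It also never uses that $T$ is a tree: it gives local triviality of the projection to any vertex for every graph without special vertices (with $\WW=\ZZ=\pt$), even when $\XX_\Gamma$ is not a manifold with corners. The paper's argument, in turn, directly yields the stratum-wise submersion property, which is the form in which the fibration is used later in the corner setting.

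\textbf{Your secondary tower argument.} This is less economical. Passing from the tower to the composite needs exactly the paper's Ehresmann criterion. Also, the fibre-product step is Lemma~\ref{lem:tree-attach} for a one-edge tree, and the paper's proof of that lemma uses the present lemma. So that step must be proved directly from the local product structure of the blow-up, as you indicate, not by citation.

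\textbf{A correction to your setup.} Leaf flags belong to nonspecial vertices, so on $\Delta_\ver$ their variables equal the vertex variable. There are no free $M^s$ factors: $\XX_T=\Bl(\Delta_\ver,D)$ with $\Delta_\ver\cong M^{\Ver}$. The fibre in your inductive step is therefore just $M$ blown up at the point $x_v$. This does not affect either of your arguments.
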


\begin{proof}
By Ehresmann's theorem, it suffices to show that the restriction of
$\pi_{root}$ to each boundary stratum of $\XX_T$ is a submersion onto
$M$. The restriction of the blow-down map to a boundary stratum is a
submersion onto its image 
$$
  X_J=\bigcap_{l\in J}
  (\Delta_{\ver}\cap \Delta_2^l)\setminus
  \bigcup_{l\in \Edge\setminus J} \Delta_2^l
$$
for some subset $J$ of $\Edge$. The projection onto the $M$ factor
corresponding to the root vertex restricts to $X_J$ as a submersion onto $M$. 
Thus, as a composition of two surjective submersions, the restriction
of $\pi_{root}$ to any boundary stratum is a submersion onto $M$.
\end{proof}

{\bf Application 2: Attaching trees. }
The main application of Lemma~\ref{lem:StokesFibre} can 
be described as ``attaching a tree to a compactification''. 
To describe it, let $(\YY,\XX)$ be a pair and $T$ be a rooted 
tree. Let $(\YY_T,\XX_T)$ be the basic pair 
associated by~\eqref{eq:graphtopair} to $T$. Let 
$$
\pi_{root}:\XX_T\longrightarrow M
$$
be the projection onto the factor corresponding to the root
vertex. Note that $\pi_{root}$ defines a smooth fibre bundle of
manifolds with corners (with base being a manifold without
boundary). For example, if the tree $T$ has just one edge, then   
$\XX_T=\wt M^2$ and the fibre of 
$\pi_{root}$ is $M$ blown up at a point. Let $h:\YY\longrightarrow M$
be any smooth map and consider the pullback bundle
\begin{equation}\label{eq:the map h}
\begin{tikzcd}
\UU:=h^*\XX_T\arrow[rr] \arrow[d, "\pi"] &  & \XX_T \arrow[d, "\pi_{root}"] \\
\YY \arrow[rr, "h"]               &  & M               
\end{tikzcd}
\end{equation}
If Stokes holds for the pair $(\YY,\XX)$, then by
Lemma~\ref{lem:StokesFibre} it holds for the pair 
$(\UU,\UU_\XX)=(h^*\XX_T,,h_\XX^*\XX_T)$ and
equation~\eqref{eq:q-reg-dec} describes quasi-regular boundary 
of $\UU_\XX$. Again, a connected component of the quasi-regular
boundary of $\UU_\XX$ is generated 
by the quasi-regular boundary of either $\XX$ or $\XX_T$.

We now specialize this to the following setup.
Let $\Gamma_-$ be a graph as in~\S\ref{ss:graphs} and $T$ be a rooted 
tree. 
Let $h$ be one of the leaf flags of 
$\Gamma_-$. We attach the 
tree $T$ to $\Gamma_-$ by identifying the flag 
$h$ with the root flag of $T$ to get the graph $\Gamma$.
Let $(\YY_T,\XX_T)$ be the basic pair 
associated by~\eqref{eq:graphtopair} to $T$ 
, and 
let $(\YY_{\Gamma_-},\XX_{\Gamma_-})$ be the basic pair associated to
$\Gamma_-$ with some pair $(M^d\times \WW,\ZZ)$.
We apply the above procedure of ``attaching a tree to 
a compactification'' with $(\YY,\XX):=(\YY_{\Gamma_-},\XX_{\Gamma_-})$
and the evaluation map $h:\YY_{\Gamma_-}\to M$ at the leaf flag $h$
(denoting it by the same letter by a severe abuse of notation).
Let $\Gamma_-^{-h}$ denote the graph $\Gamma_-$ with the leaf flag $h$
removed from the set of flags. 

\begin{lemma}\label{lem:tree-attach}
In the situation above we have
\begin{equation*}
(\YY_\Gamma,\XX_\Gamma)=(\YY_{\Gamma_-}
\times_{(h,\pi_{root})} \YY_T,\, h|_{\XX_{\Gamma_-}}^*\XX_T),
\end{equation*}
where the first space on the right hand side is the 
fibre product of $\YY_{\Gamma_-}$ and $\YY_T$.
\end{lemma}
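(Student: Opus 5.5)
The plan is to unwind the definitions on both sides and show that they describe the same space with the same stratification. First I identify the ambient spaces. The flags of $\Gamma$ are the flags of $\Gamma_-^{-h}$ together with all flags of $T$. The root flag of $T$ becomes the flag $h$, now an interior flag adjacent to the vertex of $\Gamma_-$. The edges of $\Gamma$ are the edges of $\Gamma_-$ together with those of $T$, and its leaves are the leaves of $\Gamma_-$ other than $h$ together with the non-root leaves of $T$. Since the root vertex of $T$ is univalent and nonspecial, the vertex diagonal of $T$ imposes no condition at the root. I will check carefully, using the reordering maps $R_\Gamma$ from~\eqref{eq:reoder}, that the root variable of $T$ (the $M$-factor on which $\pi_{root}$ is evaluated) is identified in $\Gamma$ with the variable of the flag $h$. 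This gives a canonical diffeomorphism
$$
  \wt X_\Gamma\times\WW \cong (\wt X_{\Gamma_-}\times\WW)\times_{(h,\pi_{root})}\wt X_T ,
$$
in which the blow-up loci (edge diagonals) are the products of the loci of the two factors. Here the fibre product is a manifold with corners because $\pi_{root}$ (the projection to the root factor) is a submersion on every stratum of $\YY_T$, as in the proof of Lemma~\ref{lem:tree-fibre}.

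Next I compare the interiors. By definition, $\XX_{\Gamma,0}$ is identified via the blow-down map with $(\Delta_\ver^\Gamma\times\ZZ_0)\setminus(\Delta_2^\Gamma\times\WW_0)$. The vertex diagonal of $\Gamma$ is the intersection of the vertex diagonals of $\Gamma_-$ and of $T$, pulled back to the fibre product, because the gluing vertex belongs to $\Gamma_-$ and all other nonspecial vertices of $T$ are internal to $T$. Removing the edge diagonals of $\Gamma$ amounts to removing those of $\Gamma_-$ and those of $T$. Hence the interior of $\XX_\Gamma$ equals the set of pairs $(y,x)$ with $y\in\XX_{\Gamma_-,0}$, $x\in\XX_{T,0}$ and $h(y)=\pi_{root}(x)$. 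This is an open dense subset of $h|_{\XX_{\Gamma_-}}^*\XX_T$. Orientations match as well, since both sides carry the fibre-first/base-second conventions induced from the same products of copies of $M$.

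Finally I pass to closures, which I expect to be the main point. $\XX_\Gamma$ is the closure of its interior in $\YY_\Gamma$, so I must show that the closure of the set above in the fibre product is exactly $h^*\XX_T$. For this I use Lemma~\ref{lem:tree-fibre}: $\pi_{root}:\XX_T\to M$ is a fibre bundle whose fibres are manifolds with corners with dense interior. Hence $h^*\XX_T\to\XX_{\Gamma_-}$ is a locally trivial bundle, locally of the form $\XX_{\Gamma_-}\times F$. In such a local trivialization the closure of $\XX_{\Gamma_-,0}\times F_0$ is $\XX_{\Gamma_-}\times F$, because closure commutes with products. Density of the interior of $\XX_T$ in each fibre follows from Lemma~\ref{lem:tree-struct}, since $\XX_T$ is a nice submanifold with corners whose boundary strata submerse onto $M$. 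The remaining subtlety is that the local trivialization of $\pi_{root}$ must be compatible with the ambient $\YY_T$. I handle this by trivializing $\YY_T\to M$ near the relevant point using a coordinate chart on $M$ and translating all $T$-variables simultaneously, as in the coordinate description in the proof of Lemma~\ref{lem:flip}. These translations preserve both $\XX_T$ and the stratification.
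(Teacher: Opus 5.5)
Your proposal is correct and follows essentially the paper's proof. You identify $\YY_\Gamma$ with the fibre product through the flags, get $\XX_\Gamma\subset h|_{\XX_{\Gamma_-}}^*\XX_T$ from the description of the interiors, and obtain the reverse inclusion from the fibre bundle structure of $\pi_{root}$ on $\XX_T$ (Lemma~\ref{lem:tree-fibre}); the paper does this last step with approximating sequences, you with local trivializations, and the extra compatibility with the ambient $\YY_T$ that you raise at the end is not needed, because $h|_{\XX_{\Gamma_-}}^*\XX_T$ is closed in the fibre product and the Ehresmann trivializations of $\XX_T$ already preserve its interior.
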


\begin{proof}
First, note that the map $\pi_{root}$ extends to the projection onto the
root flag 
$$
  \pi_{root}:\YY_T\longrightarrow M
$$
that we denote by the same letter. We have natural inclusions
$$
  \iota_-:\Flag(\Gamma_-)\into \Flag(\Gamma),\qquad
  \iota_T:\Flag(T)\into \Flag(\Gamma). 
$$
Note that these maps respect edges in the following sense: two flags
in the image of $\iota_-$ form an edge if and only if their preimages
in $\Flag(\Gamma_-)$ form an edge, and the same holds for the map
$\iota_T$. Thinking of each edge as parameterizing a copy of  
$\wt M^2$, the above maps on flags induce the horizontal maps in the
following commutative diagrams (the vertical maps are the natural
blow-down maps):  
\begin{equation}
\begin{tikzcd}
\YY_\Gamma\arrow[rr, "\wt\phi_-"] \arrow[d, ] &  & 
\YY_{\Gamma_-}\arrow[d, ] \\
X_\Gamma\times \WW \arrow[rr, "\phi_-"]               &  & X_{\Gamma_-}\times \WW,               
\end{tikzcd}
\qquad
\begin{tikzcd}
\YY_\Gamma\arrow[rr, "\wt\phi_T"] \arrow[d, ] &  & 
\YY_T\arrow[d, ] \\
X_\Gamma \arrow[rr, "\phi_T"]               &  & X_T.               
\end{tikzcd}
\end{equation}
The map $\wt\phi_T$ forgets the $\WW$ factor and some $\wt M^2$
factors and tautologically identifies the other $\wt M^2$ factors. The
map $\wt\phi_-$ is the identity on the $\W$ factor, and its component
landing in the $M$ factor of $\YY_{\Gamma_-}$ corresponding to the flag
$h$ is simply the map $h$. On all the other factors it tautologically
identifies exactly the copies of $\wt M^2$ forgotten by
$\wt\phi_T$. In particular,
the map $\wt\phi_-\times\wt\phi_T$ is proper.

The image of the flag $h$ under 
$\iota_-$ is the same as the image of the root 
flag of $T$ under $\iota_T$. This leads to the 
fibre product description of the image of 
the map $\wt\phi_-\times \wt\phi_T$:
$$
\im(\wt\phi_-\times \wt\phi_T)=\YY_{\Gamma_-}
\times_{(h,\pi_{root})} \YY_T:=
\{(p,q)\in \YY_{\Gamma_-}\times \YY_T\mid h(p)=\pi_{root}(q)\}.
$$
Note that the factors forgotten by $\wt\phi_-$ are picked up by
$\wt\phi_T$ and vice versa. Therefore, the product map  
$\wt\phi_-\times \wt\phi_T$ is an injective immersion. Together with
properness this implies that it is an embedding. We identify
$\YY_\Gamma$ with its diffeomorphic image under this map. 

Observe now that the maps $\iota_-$ and $\iota_T$ respect vertices: 
two flags in the image of $\iota_-$ are adjacent to the same vertex if
and only if their preimages in $\Flag(\Gamma_-)$ are, and the same
holds for the map $\iota_T$. Therefore, the map $\phi_-\times \phi_T$
restricts to a map (denoted by the same letter)
$$
\phi_-\times \phi_T:\Delta_{vert\Gamma}\times \ZZ\longrightarrow 
(\Delta_{vert\Gamma_-}\times \ZZ)\times 
\Delta_{vert T}.
$$
Recall that both $\iota_-$ and $\iota_T$ respect edges. This gives us
the further restriction to the complement of the respective blow-up
loci: 
$$
\phi_-\times \phi_T:\Delta_{vert\Gamma}\times \ZZ
\setminus \Delta_2^\Gamma\times \WW
\longrightarrow 
\left((\Delta_{vert\Gamma_-}\times \ZZ)
\setminus (\Delta_{2\Gamma_-}\times \WW)\right) 
\times (\Delta_{vert T}\setminus \Delta_{2T}).
$$
By definition the closure of the domain of this map in $\YY_\Gamma$ is
exactly $\XX_\Gamma$. On the other hand, the image is contained in 
the fibre product $h|_{\XX_{\Gamma_-}}^*\XX_T$ of $\XX_\Gamma$ and $\XX_T$.
Identifying subsets with their diffeomorphic images under the map
$\wt\phi_-\times \wt\phi_T$, we get
$$
  \XX_\Gamma\subset h|_{\XX_{\Gamma_-}}^*\XX_T.
$$
For the converse inclusion we produce an approximating sequence (see
Definition~\ref{def:approx-seq}) for any  
$$
(p,q)\in h|_{\XX_{\Gamma_-}}^*\XX_T\subset \XX\times \XX_T.
$$
For this, let $(p_n)$ be an approximating sequence for $p$. Note that
$h(p_n)\to h(p)$ as $n\to\infty$. The fibre bundle structure for 
$\pi_{root}$ allows us to pick an approximating sequence $(q_n)$ for
$q$ with the additional property that $\pi_{root}(q_n)=h(p_n)$. Then  
$(p_m,q_m)$ is the desired approximating sequence for $(p,q)$. Therefore,
$$
\XX_\Gamma=h|_{\XX_{\Gamma_-}}^*\XX_T.
$$
\end{proof}

Recall from Lemma~\ref{lem:tree-struct} that $\XX_T\subset\YY_T$
is a nice submanifold (with corners). It follows that 
$\YY_{\Gamma_-}\times_{(h,\pi_{root})}\XX_T \subset \YY_{\Gamma_-}
\times_{(h,\pi_{root})} \YY_T$ is also a nice submanifold. 
Suppose now that Stokes holds for the pair $(\YY_{\Gamma_-},\XX_{\Gamma_-})$. 
Then by Lemma~\ref{lem:StokesFibre} it holds for the pair 
$(\YY_{\Gamma_-}\times_{(h,\pi_{root})} \XX_T,\, h|_{\XX_{\Gamma_-}}^*\XX_T)$,
and thus by Remark~\ref{rem:local}(b) also for the pair
$(\YY_\Gamma,\XX_\Gamma)=(\YY_{\Gamma_-}\times_{(h,\pi_{root})}
\YY_T,\, h|_{\XX_{\Gamma_-}}^*\XX_T)$ in Lemma~\ref{lem:tree-attach}.

Moreover, equation~\eqref{eq:q-reg-dec} describes quasi-regular boundary 
of $\XX_\Gamma$: 
\begin{itemize}
\item [(GEN2)] A connected component of the quasi-regular boundary of 
$\XX_\Gamma$ is generated by the quasi-regular boundary of either 
$\XX_{\Gamma_-}$ or $\XX_T$.
\end{itemize}
To clarify the last statement, recall that a point in $h|_{\XX_{\Gamma_-}}^*\XX_T$
is a pair $(p,q)\in \XX_{\Gamma_-}\times\XX_T$ satisfying 
$h(p)=\pi_{root}(q)$. Equation~\eqref{eq:q-reg-dec} says that $(p,q)$
belongs to the quasi-regular boundary of $h|_{\XX_{\Gamma_-}}^*\XX_T$
if and only if either $p$ belongs to the quasi-regular boundary of
$\XX_{\Gamma_-}$ and $q$ to the interior of $\XX_T$, or the
other way around.

\begin{remark}\label{rem:hidden-attach}
In the setting above, observe that
$$
  \p\YY_\Gamma = (\p_1\YY_{\Gamma_-})\times_{(h,\pi_{root})}(\YY_T)_0
  \amalg (\YY_{\Gamma_-})_0\times_{(h,\pi_{root})}\p_1\YY_T.
$$
Recall Definition~\ref{def:hidden} of the hidden part of the boundary,
and note that $\p^\hidden\XX_T=\emptyset$ because the embedding
$\XX_T\into \YY_T$ is nice. Hence, using (GEN2) we conclude the implication
\begin{equation}\label{eq:hidden-attach}
\p^\hidden\XX_{\Gamma_-}=\emptyset \implies \p^\hidden\XX_\Gamma=\emptyset.
\end{equation}
\end{remark}

{\bf Chopping off trees. }
Above we have discussed the attaching of trees. Now we will discuss
the converse operation. Let $\Gamma$ be a graph as in~\S\ref{ss:graphs}. 

\begin{definition}\label{def:subtree}
A {\em rooted subtree} $(T,r)$ is a connected subtree $T\subset\Gamma$
with a root vertex $r\in T$ such that the following holds:
\begin{enumerate}
\item $T$ contains at least one edge.
\item $T$ contains no special vertices of $\Gamma$ except possibly $r$;
\item if $T$ contains one flag of an edge $l$ of $\Gamma$, then it also
  contains the other flag of $l$;
\item if $T$ contains one flag of a vertex $v\neq r$ of $\Gamma$, then it also
  contains all other flags of $v$;
\item $(T,r)$ is maximal with properties (i)--(iii). 
\end{enumerate}
\end{definition}

\begin{remark}
Thinking of a graph as a topological space, conditions (iii) and (iv)
say that $T\subset\Gamma$ is closed and $T\setminus\{r\}\subset\Gamma$
is open. Together with the maximality condition (v), this implies
that two distinct rooted subtrees can meet at most at their root
vertices. Hence, all rooted subtrees of $\Gamma$ form a finite
collection $\{(T_i,r_i)\}$ meeting at most at their root vertices. 
\end{remark}

Let $(T,r)$ be a rooted subtree of $\Gamma$. We define the graph
$\Gamma_-$ obtained by {\em chopping off $(T,r)$ from $\Gamma$}
as the graph obtained by removing from $\Gamma$ all flags of $T$
except the root flag (which becomes a leaf of $\Gamma_-$).
Thus attaching the rooted tree $T$ to $\Gamma_-$ at this leaf gives
back $\Gamma$. 
We denote by
\begin{equation}\label{eq:Gamma-cyc}
  \Gamma_\cyc\subset\Gamma
\end{equation}
the unique subgraph obtained
from $\Gamma$ by first chopping off all rooted subtrees, and then
removing all nonspecial leaves from the resulting graph. See
Figure~\ref{fig:chopping-trees}.
Note that $\Gamma_\cyc$ contains all cycles of $\Gamma$, as well as all
special vertices with their adjacent flags. 
\begin{figure}
\begin{center}
\includegraphics[width=\textwidth]{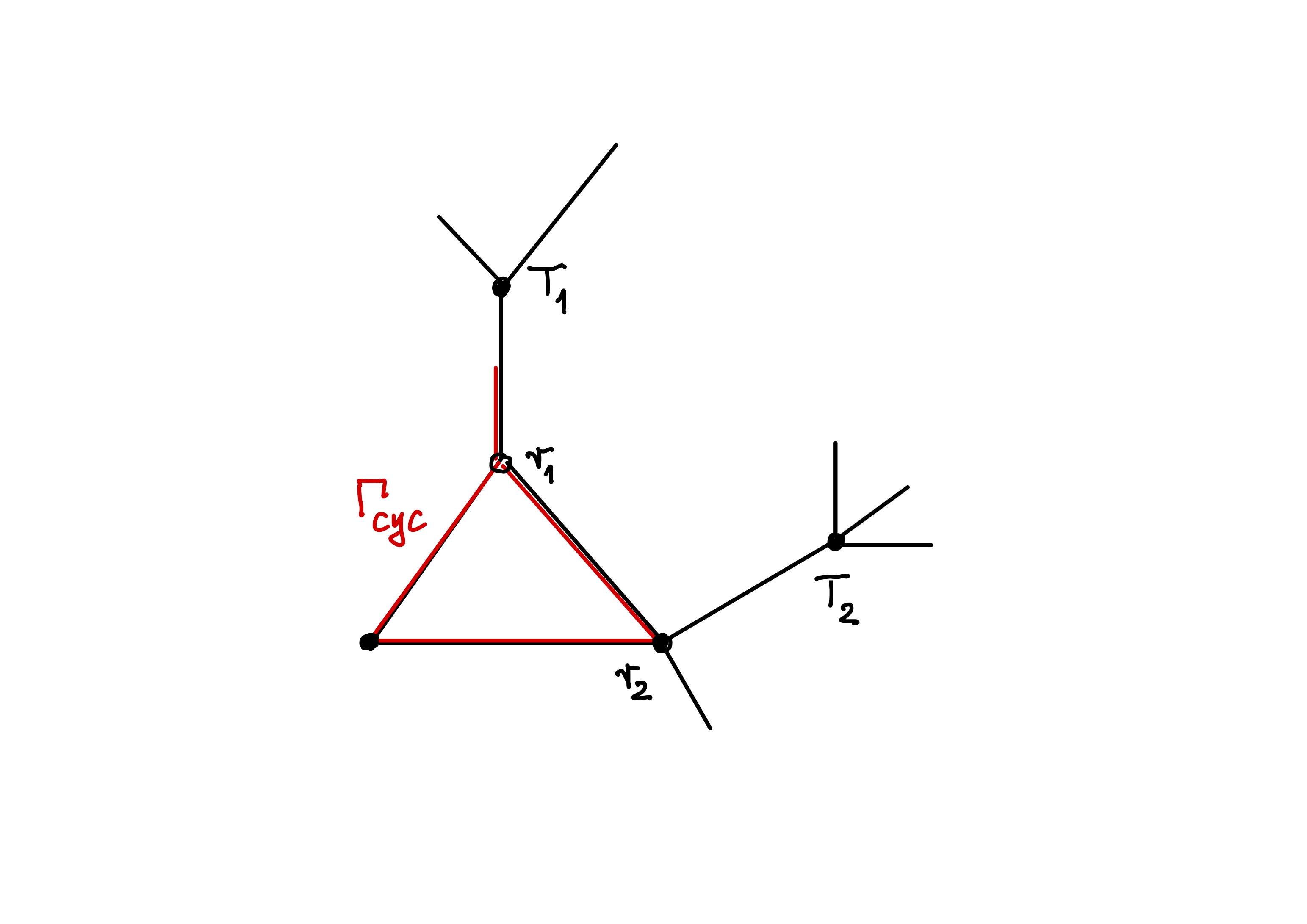}
\vspace{-2cm}
\caption{A graph $\Gamma$ and its subgraph $\Gamma_\cyc$. Here
  $(T_i,r_i)$ are the rooted trees of $\Gamma$, with $r_1$ special and
$r_2$ nonspecial.}
\label{fig:chopping-trees} 
\end{center}
\end{figure}

\begin{definition}\label{def:doublyspec}
An edge $l$ of $\Gamma$ is called {\em doubly special} if both of its
flags are special.
\end{definition}

\begin{proposition}\label{prop:reg-char}
Let $(\YY_\Gamma,\XX_\Gamma)$ be the basic pair associated
by~\eqref{eq:graphtopair} to a graph $\Gamma$ and a pair
$(M^d\times\WW,\ZZ)$.   
Assume that $\ZZ\subset M^d\times\WW$ is an analytic subset. 
Let $J$ be a subset of the set of edges of $\Gamma$ such that the
subset $\p_J^{q-reg}\XX_\Gamma\subset\p^{q-reg}\XX_\Gamma$ 
defined by~\eqref{eq:q-reg-J} is nonempty. Then the following holds.

(A) If the graph $\Gamma_J$ is disconnected, then every connected
component of $\Gamma_J$ contains a special vertex of $\Gamma$.  

(B) If $J$ contains more than one edge, then the graph $\Gamma_J$ 
has no rooted trees.

(C) Assume that $J$ consists of just one edge $l$ which is not doubly
special. Then  
$$
\p_l\XX:=\p_J^{q-reg}\XX_\Gamma
$$ 
is a primary face of the regular boundary of $\XX_\Gamma$. Moreover, 
$\p_J^{q-reg}\XX_\Gamma$ is an $S^{n-1}$-fibration over 
\begin{equation}\label{eq:whdelta2}
\wh \Delta_2^l:=(\Delta_{\ver}\times \ZZ_0)
\cap (X_{\{l\}}\times \WW_0).
\end{equation}
\end{proposition}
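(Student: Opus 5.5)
The plan is to localize near a point of $V_J$ and read off the structure of $\p_J\XX_\Gamma$ there. Fix $q\in\p_J^{q-reg}\XX_\Gamma$ lying over $\xi\in V_J$. By Remark~\ref{rem:transfer}(a), after choosing coordinates as in the proof of Lemma~\ref{lem:flip}, a neighbourhood of $q$ in $(\YY_\Gamma,\XX_\Gamma)$ is the basic pair of the graph $\Gamma_J'$ (all edges outside $J$ removed), with $\R^n$ in place of $M$. Since $\p_J^{q-reg}\XX_\Gamma$ is open in $\p^{q-reg}\XX_\Gamma$, it is locally a $C^1$ hypersurface of the $\dim\XX_0$-dimensional set $\wh\XX$. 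In every case the argument is a dimension count against this fact: the face must have codimension exactly one in $\XX_\Gamma$.

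For (A), suppose $\Gamma_J$ has at least two components and one of them, $K$, contains no special vertex. Then the vertices of $K$ are all nonspecial, so their positions are free variables in $\Delta_{\ver}$, constrained neither by $\ZZ$ nor by $\WW$. Near $\xi$, I would use the rescaling $y\mapsto y^*+\lambda(y-y^*)$ of the flags of $K$ about a fixed vertex $y^*$ of $K$ (in the spirit of the map $F$ in Lemma~\ref{lem:flip}). This is an ambient diffeomorphism of $U$ that preserves $\Delta_{\ver}\times\ZZ$ and all double diagonals, so it lifts to the blow-up and preserves $\XX_U$. The radial parameter $\lambda$ of $K$ is therefore independent of the collapse parameters of the other components. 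Points of $\pi^{-1}(V_J)\cap\XX$ require both $\lambda=0$ and the vanishing of at least one independent radial parameter coming from another component. Hence $\p_J\XX$ has codimension at least $2$ in $\XX$, and so its $(\dim\XX_0-1)$-dimensional measure vanishes. This contradicts $\p_J^{q-reg}\XX_\Gamma$ being a nonempty open subset of the $C^1$ hypersurface $\p^{q-reg}\XX_\Gamma$.

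For (B), suppose $\Gamma_J$ has a rooted subtree $(T,r)$ and $|J|\ge2$. Working with $\Gamma_J'$, I would chop off $T$ and write the local pair as an attached tree, using Lemma~\ref{lem:tree-attach}. By (GEN2), a quasi-regular boundary point $(p,q')$ has either $p\in\p^{q-reg}\XX_{\Gamma_-}$ with $q'$ interior, or $p$ interior with $q'\in\p^{q-reg}\XX_T$. In the second case, the fact that $\XX_T\subset\YY_T$ is a nice submanifold (Lemma~\ref{lem:tree-struct}) forces $q'$ to lie on a codimension-one face of $\XX_T$, so exactly one tree edge collapses. This rules out $J$ containing a tree edge together with any other edge. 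In the first case no tree edge lies in $J$, contradicting $T\subset\Gamma_J$.

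For (C), let $J=\{l\}$ with $l$ not doubly special, so some flag of $l$ sits at a nonspecial vertex $v$. Moving the position of $v$ in $\Delta_{\ver}$ moves that endpoint of $l$ freely. This makes $\Delta_{\ver}\times\ZZ_0$ transverse to $\Delta_2^l\times\WW$ near $\wh\Delta_2^l$, and away from the other double diagonals. Then Lemma~\ref{lem:PTtransverse} gives that, near $\p_J\XX$, the proper transform is the nice manifold $\Bl(\Delta_{\ver}\times\ZZ_0,\wh\Delta_2^l)$. Its boundary is the sphere bundle $P^+N$ of fibre $S^{n-1}$ over $\wh\Delta_2^l$, with multiplicity one, and it lies in $\p_1\YY$; hence it is a primary face of the regular boundary.

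The main obstacle is making the codimension count in (A) rigorous, since $\ZZ$ is only an analytic subset and the rescaling must be checked to act compatibly with the blow-up chart. I would lean on the analytic Stokes setting (Lemma~\ref{lem:Stokes-an}) together with Lemma~\ref{lem:Stokesimplies} to pass from ``measure zero in codimension one'' to ``not quasi-regular''.
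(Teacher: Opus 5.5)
Your argument for (B) is the paper's: localize via Remark~\ref{rem:transfer}, chop off the rooted tree with Lemma~\ref{lem:tree-attach}, and apply (GEN2). There is one small difference. The paper disposes of the case where all edges of $J$ lie in $T$ by moving an edge from $T$ into the remaining graph. You instead use that $\p^{q-reg}\XX_T=\p_1\XX_T$ for the nicely embedded manifold with corners $\XX_T$ (Lemma~\ref{lem:tree-struct}). Both work.

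For (A) and (C) your routes differ from the paper's.

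In (A) the paper invokes (GEN1), i.e.\ formula~\eqref{eq:prodregbdry}, which only holds up to $\{\eps=0\}$ and null sets. Your dimension count avoids this and is slightly more robust. However, the rescaling $y\mapsto y^*+\lambda(y-y^*)$ is not what makes it work. What you actually use is that, since $K$ has no special vertex, the local pair splits as $\XX_{K'}\times\XX_{\mathrm{rest}}$ (Application~1). Hence $\p_J\XX\subset\p\XX_{K'}\times\p\XX_{\mathrm{rest}}$, and this set has dimension at most $\dim\XX_0-2$ because frontiers of semi-analytic sets drop dimension. A nonempty open piece of the $C^1$ manifold $\p^{q-reg}\XX$ cannot lie in such a set; Lemma~\ref{lem:Stokesimplies} is not needed for this.

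In (C) the paper again uses (GEN2), with the single edge $l$ as the attached tree. You use the transversality of Remark~\ref{rem:trans} together with Lemma~\ref{lem:PTtransverse}. This gives a genuine manifold-with-boundary chart over $\wh\Delta_2^l$, so regularity, multiplicity one and primariness are immediate rather than holding up to null sets.

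There is, however, one unjustified step in (C): the passage from ``near $\wh\Delta_2^l$'' to ``near $\p_J\XX$''.
\begin{itemize}
\item Lemma~\ref{lem:PTtransverse} needs a nice submanifold. $\Delta_\ver\times\ZZ$ is one only over $\WW_0$; over $\p\WW$ the analytic set $\ZZ$ may be singular and $\WW$ has corners.
\item Yet $\p_J\XX=\XX\cap\pi^{-1}(V_J)$ also contains the points over $(X_{\{l\}}\times\p\WW)\cap(\Delta_\ver\times\ZZ)$. The claim that $\p_J^{q-reg}\XX$ fibres over $\wh\Delta_2^l$ requires that none of these is quasi-regular.
\end{itemize}
The paper gets this from (GEN2) (``the other factor being interior''). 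You can supply it as follows.

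By Lemma~\ref{lem:tree-attach}, or Application~1 if both ends of $l$ are nonspecial, $\XX$ is locally a product of two factors. The first is an edgeless factor $\Delta'\times\ZZ$, with $\Delta'$ the vertex diagonal of the edgeless part. The second is a factor $F$ along whose boundary exactly the edge $l$ collapses. The points in question then form $\bigl(\Delta'\times(\ZZ\setminus\ZZ_0)\bigr)\times\p F$.

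Each of these points is a limit of points of $\bigl(\Delta'\times(\ZZ\setminus\ZZ_0)\bigr)\times F_0\subset\p\XX\setminus\p_J\XX$. This is incompatible with $\p^{q-reg}\XX$ being open in $\p\XX$ and $\p_J^{q-reg}\XX$ being open in $\p^{q-reg}\XX$. Up to a null set, the codimension-two count from your (A) also excludes them.

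With this added, your proof of (C) is complete.
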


\begin{remark}\label{rem:trans}
The intersection in~\eqref{eq:whdelta2} is transverse in
  $X_\Gamma\times \WW_0$. To see this, 
assume that one flag of $l$ is special and the other nonspecial 
(if both flags are nonspecial the argument is similar but simpler).
To avoid unnecessarily cumbersome notation, assume that the graph
$\Gamma$ has only two vertices, one being special and univalent and
the other being nonspecial and $k$-valent. In this case 
$$
  X_\Gamma\times \WW_0\cong M^k\times M\times \WW_0.
$$
Let the variables $(x_1,\dots,x_k)$ parametrize $M^k$, the variable
$u$ parametrize the copy of $M$ corresponding to the special flag, and
$w$ parametrize $\WW_0$. Assume without loss of generality that the
double diagonal corresponding to $l$ is described by $\{u=x_1\}$. 
With this notation the main players are described as follows: 
\begin{equation}\label{eq:descr-trans}
\begin{cases}
  \Delta_\ver\times \ZZ_0=\{x_1=\dots=x_k,\ (u,w)\in\ZZ_0\},\cr
  X_{\{l\}}\times \WW_0=\{x_1=u,\ w\in \WW_0\},\cr
  \wh\Delta_2^l=\{x_1=\dots=x_k=u,\ (u,w)\in \ZZ_0\}\cong \ZZ_0.
\end{cases}
\end{equation}
For any $p=(x^*,\dots,x^*;x^*,w^*)\in \wh\Delta_2^l$ pick a chart in
$M$ centered at $x^*$ and a chart in $\WW_0$ centered at $w^*$ and
take the product chart for $X_\Gamma\times \WW_0$ centered at $p$. 
In this chart description~\eqref{eq:descr-trans}
yields the following relation between tangent spaces:
$$
T_p(\Delta_\ver\times \ZZ_0)\cap 
T_p(X_{\{l\}}\times \WW_0)=T_p(\wh\Delta_2^l).
$$
Transversality now follows from the dimension computation
\begin{align*}
&\dim(\Delta_\ver\times \ZZ_0)+\dim(X_{\{l\}}\times \WW_0)-\dim \wh \Delta_2^l\cr
=&(n+\dim\ZZ_0)+(nk+\dim\WW_0)-\dim\ZZ_0\cr
=&n(k+1)+\dim\WW_0\cr
=&\dim (X_\Gamma\times \WW_0).
\end{align*}
\end{remark}

\begin{remark}\label{rem:trans-DS}
For $l$ doubly special, the intersection~\eqref{eq:whdelta2}
is in general not transverse. It is transverse under the assumption
that $\ZZ_0$ is transverse to $D_l\times \WW_0$ in $M^d\times \WW_0$,
where $D_l\subset M^d$ is the diagonal corresponding to $l$. 
To see this, note that then $\Delta_\ver\times \ZZ_0$ is transverse to
$\Delta_\ver\times D_l\times \WW_0$ in $\Delta_\ver\times M^d\times \WW_0$.
Since $\prod_{j\in\Ver}M^{d_j}\times D_l=X_{\{l\}}$ and
$\prod_{j\in\Ver}M^{d_j}\times M^d=X_\Gamma$, composition with the
natural embedding $\iota_\ver:\Delta_\ver\into
\prod_{j\in\Ver}M^{d_j}$ from~\eqref{eq:vertexdiag} and the identity
on the other factors yields the desired transversality.
\end{remark}

\begin{remark}
If the graph $\Gamma$ contains at most one special vertex, then
assertion (A) implies connectedness of $\Gamma_J$.  
\end{remark}

\begin{proof}[Proof of Proposition~\ref{prop:reg-char}]
By Lemma~\ref{lem:Stokes-an}, analyticity of $\ZZ$ implies that Stokes
holds for all the graphs mentioned below. We borrow the notation from
the proof of Lemma~\ref{lem:flip}, replacing $\XX$ by $\XX_\Gamma$.    
Recall the neighbourhood $U$, the pair $(\pi^{-1}(U), \XX_{\Gamma U})$,
and the fact that the intersection $\p_J^{q-reg}\XX_\Gamma\cap
\pi^{-1}(U)$ is a union of connected components of the quasi-regular
boundary $\p^{q-reg}(\XX_{\Gamma U})$ of 
$\XX_{\Gamma U}$. Moreover, according to Remark~\ref{rem:transfer}
the pair $(\pi^{-1}(U),\XX_{\Gamma U})$ can be seen
as a basic pair associated as in~\eqref{eq:graphtopair} to the 
graph $\Gamma_J'$, where $\Gamma_J'$ is the result of cutting 
in $\Gamma$ all the edges in the complement of $J$. 

For (A), suppose that $\Gamma_J$ is disconnected and some
connected component $\Gamma_1$ of $\Gamma_J$ contains no special
vertices of $\Gamma$. Then the graph $\Gamma_J'$ can be represented as  
a disjoint union
$$
\Gamma_J'=\Gamma_1'\amalg\Gamma_2',
$$
where $\Gamma_1'$ contains no special vertices and both graphs have a
positive number of edges from $J$.  
Since $\Gamma_1'$ contains no special vertices, by associating
the pair $(M^d\times\WW,\ZZ)$ to $\Gamma_2'$ (and $\ZZ=\WW=\pt$ to $\Gamma_1'$)
we find ourselves in the situation of Application 1 above.
By property (GEN1) above, each component of the quasi-regular boundary
$\p_J^{q-reg}\XX_\Gamma\cap \pi^{-1}(U)$
corresponds to the quasi-regular boundary of either $\XX_{\Gamma_1'}$
or $\XX_{\Gamma_2'}$ (the other factor being interior). But this
contradicts the fact that $\p_J^{q-reg}\XX_\Gamma\cap \pi^{-1}(U)$
sits over the edge diagonals of all edges in $J$, which includes edges
from both $\Gamma_1'$ and $\Gamma_2'$.

For (B), suppose that $J$ contains more than one edge and $\Gamma_J$
is the result of attaching a rooted tree $T$ to some graph
$\Gamma_1$. Since $J$ contains more than one edge, after possibly
moving an edge from $T$ to $\Gamma_1$ we may assume that both
$\Gamma_1$ and $T$ have at least one edge. 
Then the graph $\Gamma_J'$ is the result of attaching a rooted tree
$T'$ to a graph $\Gamma_1'$ with at least one edge.
Since $T'$ contains no special vertices, by associating
the pair $(M^d\times\WW,\ZZ)$ to $\Gamma_1'$ (and $\ZZ=\WW=\pt$ to $T'$)
we find ourselves in the situation of Application 2 above.
By property (GEN2) above, each component of the quasi-regular boundary
$\p_J^{q-reg}\XX_\Gamma\cap \pi^{-1}(U)$
corresponds to the quasi-regular boundary of either $\XX_{\Gamma_1'}$
or $\XX_{T'}$ (the other factor being interior). But this
contradicts the fact that $\p_J^{q-reg}\XX_\Gamma\cap \pi^{-1}(U)$
sits over the edge diagonals of all edges in $J$, which includes edges
from both $\Gamma_1'$ and $T'$. 

For (C), assume that $J=\{l\}$ and $l$ contains exactly one special
flag. (The case where both flags of $l$ are nonspecial is similar but
simpler.) We repeat the construction and borrow the notation from case (B)
above. The difference is that now the tree $T$ consists of only the edge
$l$ and its adjacent vertices, and the graph $\Gamma_1$ consists of
only one (special) vertex.  
Thus the graph $\Gamma_J'$ is the result of attaching a rooted tree
$T'$ with exactly one edge $l$ to a graph $\Gamma_1'$ without edges.
Again, by associating
the pair $(M^d\times\WW,\ZZ)$ to $\Gamma_1'$ (and $\ZZ=\WW=\pt$ to $T'$)
we find ourselves in the situation of Application 2, so 
By property (GEN2) each component of the quasi-regular boundary
$\p_J^{q-reg}\XX_\Gamma\cap \pi^{-1}(U)$
corresponds to the quasi-regular boundary of either $\XX_{\Gamma_1'}$
or $\XX_{T'}$ (the other factor being interior). 
Since $\p_J^{q-reg}\XX_\Gamma\cap \pi^{-1}(U)$
sits over the edge diagonal of the edge $l$ of $T$, only the case of
the quasi-regular boundary of $\XX_{T'}$ and the interior of
$\XX_{\Gamma_1'}$ occurs. 
Thus, $\p_J^{q-reg}\XX_\Gamma$ corresponds to the second 
summand on the right hand side of~\eqref{eq:q-reg-dec}, which is a
smooth fibration over $\XX_0$ with fibre $\p_1F$. 
Here the base $\XX_0$ is the interior $\XX_{\Gamma_1'0}$, which is
naturally diffeomorphic to $\wh\Delta_l^2$. The space $F$
is $\R^n$ blown up at a point (corresponding to the nonspecial vertex
of $l$), so $\p_1F$ is diffeomorphic to $S^{n-1}$. 
\end{proof}

\begin{corollary}\label{cor:noDS}
In the setting of Proposition~\ref{prop:reg-char}, assume in addition
that $\ZZ\subset M^d\times\WW$ is a nice submanifold and the graph
$\Gamma$ contains no doubly special edges. Then  
$$
  \p^\hidden\XX_\Gamma=\coprod_{J\in \EE}\p_J^{q-reg}\XX_\Gamma,
$$
where $\EE$ is some (possibly empty) collection of subsets of $\Edge$
each having at least two elements. 
\qed
\end{corollary}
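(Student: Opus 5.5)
The plan is to read the corollary off the decomposition~\eqref{eq:dec-J} of the quasi-regular boundary into the pieces $\p_J^{q-reg}\XX_\Gamma$, $J\subset\Edge$. Each such piece is open and closed in $\p^{q-reg}\XX_\Gamma$ and lies over the stratum $X_J\times\WW$. So it suffices to sort the nonempty pieces by the cardinality of $J$ and decide which of them lie in $\p_1\YY_\Gamma$ (primary) and which lie in $\p_{\ge2}\YY_\Gamma$ (hidden). We then take $\EE$ to be the collection of those $J$ with $\p_J^{q-reg}\XX_\Gamma\neq\emptyset$ and $|J|\ge 2$.

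First, $J=\emptyset$. Points of $\p_\emptyset\XX_\Gamma$ lie off the exceptional divisors, so their $\YY$-boundary codimension comes only from $\p\WW$. Here I use that $\ZZ$ is a nice submanifold of $M^d\times\WW$. Away from the edge diagonals $\XX_\Gamma$ coincides with $\Delta_\ver\times\ZZ$, and this is a nice submanifold of $X_\Gamma\times\WW$, because $\Delta_\ver$ is a closed submanifold of the boundaryless factor. Its boundary therefore meets $\p_k\YY$ exactly in $\p_k$ of itself. Thus quasi-regular (multiplicity one, codimension one) points of this piece lie in $\p_1\YY$, and none of them are hidden.

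Second, $|J|=1$, say $J=\{l\}$. Since $\Gamma$ has no doubly special edges, Proposition~\ref{prop:reg-char}(C) applies and $\p_l\XX$ is a primary face. I want to make sure it is entirely primary, i.e.\ that it does not also meet $\p\WW$. By Proposition~\ref{prop:reg-char}(C) it is an $S^{n-1}$-fibration over $\wh\Delta_2^l$, which involves only $\ZZ_0$ and $\WW_0$. Hence it sits in $\p_1\YY$, namely the single exceptional divisor of $l$ times $\WW_0$, and it contributes nothing hidden. The potential subtlety is a point over $X_{\{l\}}\times\p\WW$ that is quasi-regular. Niceness of $\ZZ$ together with the transversality in Remark~\ref{rem:trans} shows that near such a point $\XX_\Gamma$ is a manifold with corners of codimension two. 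So such a point is not a codimension-one quasi-regular boundary point. I expect this check to be the main, though mild, obstacle, and I would handle it with the local product picture from Application~2 (the tree consisting of $l$ alone, attached to a graph without edges) and Lemma~\ref{lem:StokesFibre}.

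Finally, every remaining nonempty piece has $|J|\ge2$. It lies over at least two exceptional divisors, hence in $\p_{\ge2}\YY$, so it is hidden. Together with disjointness of~\eqref{eq:dec-J}, this gives $\p^\hidden\XX_\Gamma=\coprod_{J\in\EE}\p_J^{q-reg}\XX_\Gamma$. The collection $\EE$ may be empty.
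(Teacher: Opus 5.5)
Your proposal is correct and follows the paper's proof: you use the disjoint decomposition~\eqref{eq:dec-J}, rule out $J=\emptyset$ by niceness of $\ZZ$, and rule out $|J|=1$ by Proposition~\ref{prop:reg-char}(C). Your observation that every nonempty piece with $|J|\ge 2$ lies in $\p_{\ge 2}\YY$, so that the hidden part is a union of whole pieces, fills in a step the paper leaves implicit; your extra check over $X_{\{l\}}\times\p\WW$ is a harmless safeguard, since (C) already places $\p_{\{l\}}^{q-reg}\XX_\Gamma$ over $\WW_0$.
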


\begin{proof}
Consider a subset $J\subset\Edge$ such that $\p_J^{q-reg}\XX_\Gamma$
contains a hidden face. Since $\ZZ$ is a nice submanifold, its
boundary has no hidden faces, and thus $J$ cannot be empty. 
By Proposition~\ref{prop:reg-char}(C), $J$ cannot consist of one
element. 
\end{proof}

For a graph $\Gamma$, recall the definition of $\Gamma_\cyc$
from~\eqref{eq:Gamma-cyc}. 

\begin{definition}\label{def:cancel}
We say that a graph $\Gamma$ {\em admits cancellation} if for 
every subgraph $\Gamma'\subset\Gamma$
the associated graph $\Gamma'_{\cyc}$ has either no edges or at least
one $2$-valent nonspecial vertex as in Figure~\ref{fig:hidden}. 
\end{definition}

\begin{remark}\label{rem:doubly-special-cancel}
If $\Gamma$ admits cancellation then it has no doubly special edges.
(If $l$ is a doubly special edge, then the subgraph $\Gamma'$
consisting of $l$ and its adjacent vertices satisfies
$\Gamma'_\cyc=\Gamma'$ and violates Definition~\ref{def:cancel}).
\end{remark}

The following example of such a graph is central in~\cite{Cieliebak-Volkov}.

\begin{lemma}\label{lem:trivalent-cancel}
Let $\Gamma$ be a connected trivalent ribbon graph without special
vertices. Then $\Gamma$ admits cancellation if and only if it has at
least one leaf. 
\end{lemma}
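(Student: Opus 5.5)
The plan is to prove the two directions separately, using only the combinatorics of Definition~\ref{def:subtree}, of $\Gamma_\cyc$ in~\eqref{eq:Gamma-cyc}, and of Definition~\ref{def:cancel}. Since $\Gamma$ has no special vertices, every vertex of every subgraph is nonspecial. The valency of a vertex in $\Gamma'_\cyc$ is then just the number of edge-flags (counted with multiplicity, so a loop contributes $2$) that survive at it.

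For the ``only if'' direction, suppose $\Gamma$ has no leaf, so every flag of $\Gamma$ belongs to an edge. I take $\Gamma'=\Gamma$ and claim that $\Gamma$ has no rooted subtree. Let $(T,r)$ be a rooted subtree. By conditions (iii) and (iv), every vertex $v\neq r$ of $T$ carries all three of its flags in $T$, and each of these flags lies in an edge of $T$. Thus $T$ is a finite tree with at least one edge in which every non-root vertex has degree $3$. But a finite tree with at least one edge has at least two vertices of degree $1$, while here at most one vertex ($r$) can have degree less than $3$. This is a contradiction. Since $\Gamma$ also has no leaves to remove, we get $\Gamma_\cyc=\Gamma$. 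This graph has edges and all its vertices are $3$-valent, so it has no $2$-valent vertex, and $\Gamma$ does not admit cancellation.

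For the ``if'' direction, suppose $\Gamma$ has a leaf, and let $\Gamma'\subset\Gamma$ be an arbitrary subgraph with $\Gamma'_\cyc$ having at least one edge. I plan three steps.

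First, I show that every vertex of $\Gamma'_\cyc$ that is incident to an edge has edge-valency at least $2$. Suppose a vertex $v$ of $\Gamma'$ ends up with exactly one edge-flag after chopping. I want to show that $v$ together with that edge lies in a rooted subtree of the chopped graph, which would contradict the maximality in (v) and the fact that all rooted subtrees were already chopped off. Concretely, I will argue that chopping all maximal rooted subtrees is equivalent to iteratively pruning degree-$1$ vertices, so the result has minimum degree at least $2$ on its non-isolated part. I expect this step to be the main technical point: one has to check that the maximal rooted subtrees of Definition~\ref{def:subtree} exactly capture these ``hanging trees''. The rest is simple.

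Second, I pick a connected component $C$ of $\Gamma'_\cyc$ containing an edge. Since all valencies are at most $3$, every vertex of $C$ has valency $2$ or $3$.

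Third, suppose for contradiction that every vertex of $C$ has valency $3$. Then at each vertex of $C$ all three flags of the corresponding vertex of $\Gamma$ lie in edges of $C$. Hence no edge of $\Gamma$ leaves $C$, and no flag of a vertex of $C$ is a leaf of $\Gamma$. By connectedness of $\Gamma$ this forces $C=\Gamma$, so $\Gamma$ would have no leaf, contradicting the hypothesis. Therefore $C$ contains a $2$-valent nonspecial vertex, as required in Definition~\ref{def:cancel}.
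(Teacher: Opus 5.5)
Your proof is correct and is essentially the paper's argument: the paper proves the same two implications, running the ``if'' direction in contrapositive form (a $\Gamma'_\cyc$ with edges but no $2$-valent vertex has no univalent vertices, hence is trivalent, hence equals $\Gamma$ by connectedness, so $\Gamma$ has no leaves). Like you, it takes the absence of univalent vertices in $\Gamma'_\cyc$ as immediate from the chopping construction, so your passage to a component $C$ and your degree count excluding rooted subtrees are slightly more explicit versions of the same steps; one point left implicit in both arguments is that the $2$-valent vertex must not carry a self-loop (cf.\ Figure~\ref{fig:hidden} and the proof of Lemma~\ref{lem:ex-circ}), which holds in the intended setting because a self-loop at a trivalent vertex bounds a boundary component without leaves, excluded by~\eqref{eq:one-boundary-vertex}.
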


\begin{proof}
Since $\Gamma$ has no special vertices, we can omit the specification
``nonspecial'' in the subsequent discussion. 

Assume first that $\Gamma$ does not admit cancellation. 
Then there exists a subgraph $\Gamma'\subset\Gamma$
with at least one edge and without $2$-valent vertices. Since each
vertex of $\Gamma'_{\cyc}$ is
at most trivalent (since $\Gamma'_{\cyc}$ is a subgraph of $\Gamma$)
and not univalent (by definition of $\Gamma'_{\cyc}$ in terms of
chopping off trees, using that it has at least one edge), this implies
that $\Gamma'_{\cyc}$ is trivalent. Since  
$\Gamma$ is trivalent and connected, the inclusion
$\Gamma'_{\cyc}\subset \Gamma$ cannot be strict and we get
$\Gamma'_{\cyc}=\Gamma$. By definition of 
$\Gamma'_{\cyc}$, this implies
that $\Gamma$ has no leaves.

Conversely, assume that $\Gamma$ has no leaves. Then it has no rooted
trees (because each rooted tree must contain a leaf or a univalent
vertex), and therefore $\Gamma_\cyc=\Gamma$. Since $\Gamma$ is
trivalent, it follows that $\Gamma_\cyc$ has at least one edge and no
$2$-valent vertices, thus (with $\Gamma'=\Gamma$) the graph $\Gamma$
does not admit cancellation. 
\end{proof}

The following example will be crucial for the discussion of coproducts
below. We define a {\em circular graph} as a graph $\Gamma$ whose ribbon surface 
$\Sigma_\Gamma$ is an annulus. It has a unique embedded cycle such
that $\Gamma$ is obtained by attaching trees to this cycle.

\begin{lemma}\label{lem:ex-circ}
Let $\Gamma$ be a circular graph with one special vertex. Then $\Gamma$
admits cancellation if and only if its cycle consists of more than one edge.
\end{lemma}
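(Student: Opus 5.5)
The plan is to treat the two directions separately, modelling the argument on the proof of Lemma~\ref{lem:trivalent-cancel}. Throughout, $\Gamma$ has exactly one special vertex $v_0$, and $\Gamma$ is its unique embedded cycle $C$ with rooted trees attached. The first thing to settle is where $v_0$ sits. If $v_0$ lay on an attached tree, chopping off rooted subtrees and removing nonspecial leaves would still leave $C$ together with the path from $C$ to $v_0$. This is because special vertices are never chopped (condition (ii) of Definition~\ref{def:subtree}).

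For the direction ``cancellation $\Rightarrow$ the cycle has more than one edge'', argue by contraposition. Suppose $C$ consists of a single edge $l$, a loop at some vertex $w$. Take $\Gamma'$ to be the subgraph consisting of $l$ and $w$.
\begin{itemize}
\item If $w$ is nonspecial, then $\Gamma'_\cyc=\Gamma'$. It has one edge and its only vertex $w$ has valency $2$ in $\Gamma'$ via the loop. Here I must check whether such a $w$ counts as a ``$2$-valent nonspecial vertex as in Figure~\ref{fig:hidden}''. That figure and Lemma~\ref{lem:flip} require $A,C\neq B$. A loop at $B$ gives $A=B$, so the vertex does not qualify. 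Hence $\Gamma'$ violates Definition~\ref{def:cancel}.
\item If $w=v_0$ is special, then $l$ is doubly special. By Remark~\ref{rem:doubly-special-cancel}, $\Gamma$ does not admit cancellation.
\end{itemize}

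For the converse, assume $C$ has $m\ge 2$ edges, and let $\Gamma'\subset\Gamma$ be an arbitrary subgraph. Compute $\Gamma'_\cyc$ by chopping rooted subtrees and deleting nonspecial leaves. There are two cases.
\begin{itemize}
\item If $\Gamma'$ does not contain all of $C$, then $\Gamma'$ is a forest with at most one special vertex. Each component without $v_0$ gets chopped away entirely. For the component containing $v_0$, chopping the rooted subtrees at $v_0$ leaves only $v_0$ with its flags, so no edges survive. One subtlety is that a component consisting of a single edge with no vertex of valency $\ge 3$ must still be chopped; I would check this against maximality in Definition~\ref{def:subtree}, using that $T$ only needs one edge.
\item If $\Gamma'\supset C$, then $\Gamma'_\cyc$ is $C$ together with possibly the path from $C$ to $v_0$, since everything else is a tree hanging off and gets chopped. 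Because $v_0$ is the only special vertex, at most one vertex of $C$ has valency $3$ in $\Gamma'_\cyc$, namely the attaching point of that path (or $v_0$ itself). All remaining vertices of $C$ are $2$-valent on the cycle. Since $m\ge2$, there are at least $m-1\ge1$ such vertices. I must also choose one that is nonspecial and whose two neighbours are distinct from it. The latter holds automatically because $C$ is embedded with $m\ge2$, which rules out loops.
\end{itemize}

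The remaining point needs care. If $m=2$ and the two cycle vertices are $v_0$ and an attaching vertex, there might be no valid vertex. In that situation, however, $v_0$ lies on $C$, so the attaching point is $v_0$ itself and there is no path. The other cycle vertex is then nonspecial and $2$-valent, as required.

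I expect the main obstacle to be bookkeeping rather than ideas. First, one must verify precisely what chopping does to subgraphs that are not closed under the vertex condition, including leftover half-edges and leaves. Second, one must confirm that the ``$A,C\neq B$'' requirement indeed excludes loops, which is the crux of the one-edge case.
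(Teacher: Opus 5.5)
Your argument is correct and follows the paper's proof. For a one-edge cycle you take $\Gamma'$ to be the self-loop; the paper treats the special and nonspecial cases uniformly, while you split off the doubly special case via Remark~\ref{rem:doubly-special-cancel}. For the converse you use the same dichotomy as the paper, according to whether $\Gamma'$ contains the cycle. Your observation that $\Gamma'_\cyc$ may also keep the path from the cycle to an off-cycle special vertex is slightly more precise than the paper's claim that it consists exactly of the cycle edges, but this does not affect the conclusion, since at most one cycle vertex becomes $3$-valent.
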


\begin{proof}
If the cycle of $\Gamma$ consists of just one edge, then this edge is
a self-loop based at a certain vertex (special or nonspecial). 
Choose the subgraph $\Gamma'\subset\Gamma$ in
Definition~\ref{def:cancel} to be the self-loop. Then $\Gamma'$ does
not contain any rooted subtrees in the sense of
Definition~\ref{def:subtree}. Thus, $\Gamma'_\cyc=\Gamma'$ consists of
just the above self-loop, violating Definition~\ref{def:cancel}.

Assume now that the cycle of $\Gamma$ contains more than one
edge. Then we have two possibilities for the subgraph $\Gamma'$.
The first possibility is that $\Gamma'$ does not contain the cycle of
$\Gamma$. Then the graph $\Gamma'_\cyc$ has no edges.
The second possibility is that $\Gamma'$ contains the cycle of
$\Gamma$. Then $\Gamma'_\cyc$ contains exactly the edges forming the
cycle of $\Gamma$. Since the cycle has at least two edges and at most
one special vertex, it must contain a $2$-valent nonspecial vertex. 
Hence, $\Gamma$ admits cancellation by Definition~\ref{def:cancel}.
\end{proof}

Now we are ready to formulate the main result of this section.

\begin{proposition}[Vanishing for hidden faces]\label{prop:cancel}
In the setting of Proposition~\ref{prop:reg-char}, assume in addition
that $\ZZ\subset M^d\times\WW$ is a nice submanifold and $\Gamma$
admits cancellation. Then for every 
$\eta\in \Om^{n-1}(\wt M^2)$ 
and compactly supported $\alpha\in \Om^*(M^s\times\WW)$ we have
$$
\int_{\p^\hidden\XX}\eta^e(\alpha)=0.
$$
\end{proposition}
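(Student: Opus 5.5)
By Corollary~\ref{cor:noDS} we can first decompose the hidden boundary and then kill each piece with Corollary~\ref{cor:cancelbasic}. Since $\Gamma$ admits cancellation, Remark~\ref{rem:doubly-special-cancel} shows that $\Gamma$ has no doubly special edges. Because $\ZZ$ is a nice analytic submanifold, Corollary~\ref{cor:noDS} applies and gives
$$
\p^\hidden\XX=\coprod_{J\in\EE}\p_J^{q-reg}\XX,
$$
with every $J\in\EE$ having at least two edges. These pieces are open and closed in $\p^{q-reg}\XX$, so the integral over $\p^\hidden\XX$ is the finite sum of the integrals over the $\p_J^{q-reg}\XX$. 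It therefore suffices to show
$$
\int_{\p_J^{q-reg}\XX}\eta^e(\alpha)=0
$$
for each $J\in\EE$.

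Fix such a $J$ and apply Definition~\ref{def:cancel} to the subgraph $\Gamma'=\Gamma_J$. We need $(\Gamma_J)_\cyc$ to have at least one edge. By Proposition~\ref{prop:reg-char}(B), since $|J|\ge2$, the graph $\Gamma_J$ has no rooted trees, so no chopping occurs. Passing to $\Gamma_\cyc$ then only removes nonspecial leaves, and $\Gamma_J$ has none, because it is formed by edges and their endpoint vertices. Hence $(\Gamma_J)_\cyc=\Gamma_J$, which has $|J|\ge2$ edges. The cancellation hypothesis then yields a $2$-valent nonspecial vertex of $\Gamma_J$, as in Figure~\ref{fig:hidden}.

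Corollary~\ref{cor:cancelbasic} (via Lemma~\ref{lem:flip}) now gives the vanishing of the integral over $\p_J^{q-reg}\XX$. The involution $\tau$ preserves this face, acts on its orientation by $(-1)^n$, and acts on the integrand by $(-1)^{n-1}$. Summing over $J\in\EE$ proves the proposition.

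The main point to check is the identification $(\Gamma_J)_\cyc=\Gamma_J$. Two things need care here. First, Definition~\ref{def:subtree} of rooted subtrees, applied inside $\Gamma_J$, must exactly match the notion of rooted tree used in Proposition~\ref{prop:reg-char}(B). Second, a univalent nonspecial vertex of $\Gamma_J$ would produce a rooted subtree, so it is excluded by (B); consistently, any cycle-free dangling part would also be ruled out by (B). If the matching of definitions is delicate, I would instead argue directly with the (GEN2) mechanism from the proof of (B): any rooted subtree of $\Gamma_J$ with an edge would force the face to be generated by only one factor, contradicting that the face lies over all edge diagonals in $J$.
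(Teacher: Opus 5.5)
Your proposal is correct and follows the paper's proof essentially step by step. You rule out doubly special edges via Remark~\ref{rem:doubly-special-cancel}, decompose the hidden boundary into the pieces $\p_J^{q-reg}\XX$ with $|J|\ge 2$ by Corollary~\ref{cor:noDS}, use Proposition~\ref{prop:reg-char}(B) to see that no chopping occurs so that $(\Gamma_J)_\cyc$ has at least two edges and hence a $2$-valent nonspecial vertex, and conclude with Corollary~\ref{cor:cancelbasic}; your caveat about matching ``rooted trees'' in (B) with Definition~\ref{def:subtree} is a point the paper passes over in the same way.
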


\begin{proof}
By Remark~\ref{rem:doubly-special-cancel}, the graph $\Gamma$ has no
doubly special edges. Therefore, by Corollary~\ref{cor:noDS}, each
subset $J\subset\Edge$ for which $\p_J^{q-reg}\XX$ contains a hidden
face has at least $2$ elements. 
Proposition~\ref{prop:reg-char} (B) implies that $\Gamma_J$ has no
rooted subtrees, so $\Gamma_J$ is the union of $\Gamma_{J \cyc}$ and
all flags at special vertices of $\Gamma_{J \cyc}$. In particular, 
$\Gamma_{J \cyc}$ has at least two edges. 
Since $\Gamma$ admits cancellation, it follows that $\Gamma_{J \cyc}$
has a $2$-valent nonspecial vertex $v$. Then $v$ is also a
$2$-valent nonspecial vertex of $\Gamma_J$, and
Corollary~\ref{cor:cancelbasic} yields vanishing of the integral.
\end{proof}

\begin{remark}\label{rem:vanish-extend-fin}
Proposition\ref{prop:cancel} holds true for the larger class of forms
described in Remark~\ref{rem:vanish-extend}.  
\end{remark}

\begin{remark}
In the setting of Proposition~\ref{prop:reg-char}, assume in addition
that the restriction of the natural blow-down map to $\p_J^{q-reg}\XX$
is a fibre bundle projection. This is true, for example, for the  
Fulton-MacPherson compactification of the configuration space of $m$
distinct points on $M$ introduced in~\cite{Fulton-MacPherson}.  
In this case, the cancellation procedure in the proof
of~\cite[Lemma~19]{Campos-Willwacher} yields the vanishing 
result~\eqref{eq:cancellation} if all nonspecial vertices of
$\Gamma_J$ are at least $3$-valent and $\dim M\ge 3$.   
\end{remark}

\section{Stokes' theorem in the string topology setting}\label{sec:analysis}

In this section we apply the results of the previous section to the
configuration spaces that are relevant for the string topology
operations and the proof of the main theorem. 

\subsection{Configuration spaces relevant for string topology}\label{ss:general-setting}

We work in the setting of~\S\ref{sec:general graphs}. Thus
\begin{itemize}
\item $M$ is a closed oriented $n$-dimensional manifold;
\item $\Lambda=C^\infty(S^1,M)$ is its free loop space, where we
write $S^1=\R/\Z$;
\item $\Gamma$ is a graph with some special vertices and $d$ special flags;
\item $X_\Gamma$ is the associated configuration space with its subset
$\Delta_\ver\times M^d$;
\item $\wt X_\Gamma = \Bl(X_\Gamma,\Delta_2^\Gamma)$ is its blow-up along
the edge diagonal;
\item $\WW$ is a compact manifold with corners.
\end{itemize}
For $\ZZ$ we make the following more specific choice. We endow $\WW$
and $M$ with analytic structures and consider an analytic map 
$$
  \phi:\WW\longrightarrow M^d.
$$
We set 
\begin{equation}\label{eq:gen-set}
  \ZZ:=gr(\phi):=\{(\phi(w),w)\mid w\in \WW\}\subset M^d\times \WW.
\end{equation}
The basic pair~\eqref{eq:graphtopair} associated to this data is then
\begin{equation}\label{eq:graphtopair2}
  (\YY,\XX) = \Bigl(\wt X_\Gamma\times \WW,PT(\Delta_{\ver}\times gr(\phi))\Bigr).
\end{equation}
Note that $\ZZ$ is a nice analytic submanifold of $\WW$. Therefore, 
by Lemma~\ref{lem:Stokes-an}, Stokes' theorem holds 
for the pair $(\YY,\XX)$. Moreover, the vanishing results
of~\S\ref{ss:chopping} apply whenever the graph $\Gamma$ admits
cancellation in the sense of Definition~\ref{def:cancel}.

We assume that the graph $\Gamma$ does
not have self-loops at nonspecial vertices.
Then for each $l\in \Edge$ the corresponding double diagonal
$\Delta_2^l$ is transverse to $\Delta_\ver\times M^d$ in $X_\Gamma$. Set 
$$
  \ol{\Delta}^l_{2}:=\Delta_2^l\cap (\Delta_\ver\times M^d),\qquad
  \ol\Delta_2^\Gamma:=\bigcup_{l\in \Edge}\ol{\Delta}^l_{2}.
$$
Then $\{\ol{\Delta}^l_{2}\}_{l\in\Edge}$ is a (not necessarily
transverse) collection of nice submanifolds of positive codimension in  
$\Delta_\ver\times M^d$. In particular, their union
$\ol\Delta_2^\Gamma$ has measure zero in $\Delta_\ver\times M^d$.
By a slight abuse of language, we will sometimes call
$\ol\Delta_2^\Gamma$ a collection of nice submanifolds.

We impose the following condition on the map $\phi$:
\begin{itemize}
\item [(MES)] The intersection
$$
\LL_\phi:=(\Delta_\ver\times gr(\phi))
\cap (\Delta_2^\Gamma\times \WW)
$$
has measure zero in $\Delta_\ver\times gr(\phi)$.
\end{itemize}

\begin{remark}
If $\WW=\{\pt\}$ and the graph $\Gamma$ has no special vertices, then
the absence of self-loops at nonspecial vertices implies that
$\Delta_\ver$ is transverse to each member of the family
$\Delta_2^\Gamma$ and condition (MES) follows.
In general, the absence of self-loops at nonspecial vertices is not
eneough to ensure condition (MES). Consider for example the graph
$\Gamma$ with one special vertex, no nonspecial vertices and one
(doubly special) edge. Let $f:B\to \Lambda$ be a constant simplex at a
constant loop and $\phi:=e_f:B\times [0,1]\to M^2$ be its constant
evaluation map. Here $\WW=B\times [0,1]$ and $\Delta_\ver=\{\pt\}$. 
Then the image of $e_f$ is a subset of the diagonal in $M^2$, and thus
$gr(\phi)\cap (\Delta_2^\Gamma\times \WW)=gr(\phi)$.
This example shows that condition (MES) imposes certain restrictions on
the map $\phi$; see Remark~\ref{rem:AT-MES} for an example of how 
condition (MES) is typically satisfied. 
\end{remark}

Consider the following commutative diagram:
\begin{equation}\label{eq:commdiag}
\begin{gathered}
  \xymatrix{
  \Delta_\ver\times M^d\setminus \ol\Delta_2^\Gamma \ar[r]^{\iota_1}
  & \wt X_\Gamma\setminus \wt\Delta_2^\Gamma \\ 
  & \wt X_\Gamma\times \WW\setminus (\wt\Delta_2^\Gamma\times\WW) \ar[u]^{p} \\
  \Delta_\ver\times\WW\setminus (\iota^\phi)^{-1}(\LL_\phi)
  \ar[uu]^{\id\times\phi} \ar[r]^{\iota^\phi} & \Delta_\ver\times
  gr(\phi)\setminus \LL_\phi\,. \ar[u]^{\iota_2} 
}  
\end{gathered}
\end{equation}
Here $\wt\Delta_2^\Gamma$ is the preimage of $\Delta_2^\Gamma$ under
the blow-down map $\wt X_\Gamma\to X_\Gamma$,
$p$ is the natural projection forgetting the $\WW$ factor,
$\iota_1$ and $\iota_2$ are the natural inclusions away of the blow-up
locus, and $\iota^\phi$ is the canonical diffeomorphism 
\begin{equation}\label{eq:canphi}
\iota^\phi:\Delta_\ver\times \WW
\stackrel{\cong}{\longrightarrow}
\Delta_\ver\times gr(\phi) 
,\qquad
(x,q)\mapsto (x,\phi(q),q).
\end{equation}
Consider now a smooth form $\om\in\Om^*(\wt X_\Gamma)$. We denote
\begin{equation}\label{eq:restr-om}
   \om|_{\Delta_{\ver}\times M^d} := \iota_1^*\om,
\end{equation}
viewed as a measurable form on $\Delta_{\ver}\times M^d$ by extending
it by $0$ over the measure zero subset $\ol\Delta_2^\Gamma$. 

\begin{lemma}\label{lem:graph-L1}
In the setting above, the measurable forms 
$\om|_{\Delta_{\ver}\times M^d}$ on $\Delta_{\ver}\times M^d$
and $(\id\times\phi)^*(\om|_{\Delta_{\ver}\times M^d})$ on
$\Delta_{\ver}\times \WW$ are integrable. 
\end{lemma}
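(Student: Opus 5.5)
The plan is to deduce both integrability statements from the integrability half of ``Stokes holds'' (Definition~\ref{def:Stokes}) for suitable basic pairs, via Lemma~\ref{lem:Stokes-an}, rather than estimating the singular coefficients of $\om$ near the edge diagonal by hand. Recall that by Definition~\ref{def:L1} we must show that for every smooth test form $\gamma$ the integral of $\om|_{\Delta_\ver\times M^d}\wedge\gamma$ (resp.\ of $(\id\times\phi)^*(\om|_{\Delta_\ver\times M^d})\wedge\gamma$) exists in the Lebesgue sense. It suffices to treat homogeneous components of complementary degree.

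\textbf{Extending test forms.} Let $\mathrm{pr}:\prod_{j\in\Ver}M^{d_j}\to\prod_{j\in\Ver}M\cong\Delta_\ver$ be the smooth map remembering, for each nonspecial vertex $j$, the variable of its first flag. Its restriction to $\Delta_\ver$ is the identity under $\iota_\ver$. Given a test form $\gamma$ on $\Delta_\ver\times\WW$, set
\[
  \wt\gamma:=(\mathrm{pr}\times\id_{M^d}\times\id_\WW)^*\gamma\quad\text{on }X_\Gamma\times\WW,
\]
and then $\pi^*\wt\gamma$ on $\YY=\wt X_\Gamma\times\WW$, with $\pi$ from~\eqref{eq:pi}. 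The restriction of $\pi^*\wt\gamma$ to $\XX_0$ corresponds, under the identification of $\XX_0$ with an open dense subset of $\Delta_\ver\times gr(\phi)$, to $\gamma$ transported via $\iota^\phi$. The same construction with $\WW=\pt$ handles test forms on $\Delta_\ver\times M^d$.

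\textbf{First statement.} Take $\WW=\pt$ and $\ZZ=M^d$, which is trivially analytic. Lemma~\ref{lem:Stokes-an} then says Stokes holds for the basic pair $(\wt X_\Gamma,PT(\Delta_\ver\times M^d))$. Its interior $\XX_0$ is identified by the blow-down map with $\Delta_\ver\times M^d\setminus\ol\Delta_2^\Gamma$. The form $\om\wedge\pi^*\wt\gamma$ is a smooth form on the compact space $\wt X_\Gamma$, so Definition~\ref{def:Stokes} together with Remark~\ref{rem:integrability} gives that $\int_{\XX_0}\om\wedge\pi^*\wt\gamma$ exists. This uses Pawłucki's finite-volume statement, which yields absolute integrability. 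Since $\ol\Delta_2^\Gamma$ has measure zero and $\om|_{\Delta_\ver\times M^d}$ was extended by $0$ there, this integral equals $\int_{\Delta_\ver\times M^d}\om|_{\Delta_\ver\times M^d}\wedge\gamma$.

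\textbf{Second statement.} Use the pair~\eqref{eq:graphtopair2} with $\ZZ=gr(\phi)$, for which Stokes holds as noted after~\eqref{eq:graphtopair2}. Apply the same argument to the smooth form $p^*\om\wedge\pi^*\wt\gamma$ on $\YY$, where $p$ forgets the $\WW$ factor. Here $\XX_0$ is identified with $(\Delta_\ver\times gr(\phi))\cap(X_\Gamma\times\WW_0)\setminus\LL_\phi$. By the commutative diagram~\eqref{eq:commdiag}, on the complement of $(\iota^\phi)^{-1}(\LL_\phi)$ we have
\[
  (\id\times\phi)^*\iota_1^*\om=(\iota^\phi)^*\iota_2^*p^*\om .
\]
Pulling back along the diffeomorphism $\iota^\phi$ converts the integral over $\XX_0$ into the integral of $(\id\times\phi)^*(\om|_{\Delta_\ver\times M^d})\wedge\gamma$ over $\Delta_\ver\times\WW$ minus two sets: $(\iota^\phi)^{-1}(\LL_\phi)$, which has measure zero by (MES), and $\Delta_\ver\times\p\WW$, which has measure zero. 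The integrability of the former therefore gives the claim.

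\textbf{Main obstacle.} I expect the delicate step to be the bookkeeping of these identifications, rather than any analytic estimate. One must check that restricting the pullbacks to $\XX_0$ really reproduces the pointwise forms defined via $\iota_1$ and $\iota^\phi$. One must also check that every discrepancy lies in a measure zero set; this is exactly where (MES) enters, and it is also why the forms are taken to be explicit rather than modulo null sets. Should Pawłucki's integrability fail to give absolute convergence directly, I would fall back on writing coefficients in blow-up coordinates, $|dv|\lesssim|x-y|^{-1}$, together with the finite-volume estimate of Remark~\ref{rem:integrability}.
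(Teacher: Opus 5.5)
Your proposal is correct and is essentially the paper's proof. Both arguments derive integrability against an arbitrary test form from the integrability half of Stokes' theorem for the basic pair $(\YY,\XX)$ (Lemma~\ref{lem:Stokes-an}), by extending the test form to a smooth form on the compact space $\YY$ and identifying $\XX_0$, via the blow-down map and $\iota^\phi$, with a full-measure subset of $\Delta_\ver\times gr(\phi)\cong\Delta_\ver\times\WW$ (this is where (MES) enters). Two small remarks: for the first statement your choice $\WW=\pt$, $\ZZ=M^d$ is the correct specialization (the paper's phrase ``$\WW:=\ZZ:=\{\pt\}$'' only makes literal sense when $d=0$), and in the second statement your $\wt\gamma$ should be pulled back along the map that forgets the $M^d$ factor, since $\gamma$ lives on $\Delta_\ver\times\WW$.
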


\begin{proof}
Let us show that $(\id\times\phi)^*(\om|_{\Delta_{\ver}\times
  M^d})=(\id\times\phi)^*\iota_1^*\om$ is integrable.
Since $\iota^\phi$ is a diffeomorphism, by the diagram above this is
equivalent to showing integrability of $\iota_2^*p^*\om$, viewed as a
measurable form on $\Delta_\ver\times gr(\phi)$ by extending it by $0$
over the measure zero subset $\LL_\phi$. Let $\gamma$ be any smooth test 
form on $\Delta_\ver\times gr(\phi)$. We extend it to
$X_\Gamma\times\WW$, still denoting it by the same letter $\gamma$. We
need to show integrability of the form $\iota_2^*p^*\om\wedge\gamma$ over  
$\Delta_\ver\times gr(\phi)$. For this, we use the basic pair
$(\YY,\XX)$ above. 
Identifying the interior $\XX_0\subset\XX$
with its diffeomorphic image under the blow-down map $\wt
X_\Gamma\times\WW\to X_\Gamma\times\WW$, we get an inclusion of  
full measure subsets (the first one being a diffeomorphism away
  from the boundary of $\WW$)
\begin{equation}\label{eq:X0-full-measure}
  \XX_0\subset \Delta_\ver\times gr(\phi)
\setminus \LL_\phi \subset \Delta_\ver\times gr(\phi).
\end{equation}
The crucial observation is now that the form $p^*\om$ is a smooth form 
on the compact manifold $\YY=\wt X_\Gamma\times\WW$, and $\gamma$ can
be considered as such via pullback under the natural blow-down
map. This makes $(\iota_2^*p^*\om\wedge\gamma)|_{\XX_0}$ 
the restriction of a smooth form on $\YY$ to $\XX_0$, and 
Stokes' theorem for $(\YY,\XX)$ from Lemma~\ref{lem:Stokes-an} implies
the desired integrability.

The argument for integrability of the form $\om|_{\Delta_{\ver}\times
  M^d}$ is analogous but simpler, using Stokes' theorem for the
basic pair associated to $\Gamma$ via equation~\eqref{eq:graphtopair}
with the data $\WW:=\ZZ:=\{\pt\}$.
\end{proof}

Consider now the pullback diagram
$$
\xymatrix{
\Delta_\ver\times \WW \ar[d] \ar[r]^{\id\times\phi} &
\Delta_\ver\times M^d \ar[d] \\
\WW \ar[r]^\phi & M^d.
}  
$$
In view of Lemma~\ref{lem:graph-L1}, we can apply
Lemma~\ref{lem:pullback} to obtain 
\begin{equation*}
\phi^*\int_{\Delta_\ver}\om=
\int_{\Delta_\ver}(\id\times\phi)^*\om,
\end{equation*}
where $\int_{\Delta_\ver}$ denotes the respective fibre integrations
and both sides of the equation are integrable forms on $\WW$. 
So we can integrate them over $\WW$ and apply Fubini's theorem to get 
\begin{equation}\label{eq:keychenan1}
\int_\WW\phi^*\int_{\Delta_\ver}\om=
\int_{\WW}\left(\int_{\Delta_\ver}(\id\times\phi)^*\om\right)=
\int_{\Delta_\ver\times \WW}(\id\times\phi)^*\om.
\end{equation}
Let us denote the pullback of $\om$ under the composition
$p\circ\iota_2$ in diagram~\eqref{eq:commdiag} still by the same letter
$\om$. Then we can rewrite the right hand side of the last equation
using~\eqref{eq:restr-om} and restricting the integral to the 
full measure subset in~\eqref{eq:X0-full-measure} to get
\begin{equation}\label{eq:angenfin}
  \int_{\WW}\left(\phi^*\int_{\Delta_\ver}\om\right) = \int_{\XX_0}\om.
\end{equation}
Equations~\eqref{eq:keychenan1} and~\eqref{eq:angenfin} will play a
crucial role in~\S\ref{sec:rel-stringtop}.
Now we will specify the setup further.
Recall from~\eqref{eq:vertexdiag} the embedding
$\iota_\ver:\Delta_{\ver} \into \prod_{j\in\Ver} M^{d_j}$. 
We begin with the 


{\bf Dream case. }
Assume the following condition:
\begin{itemize}
\item [(T)]
The map
$$
\iota_\ver\times\phi:\Delta_\ver\times\WW\longrightarrow X_\Gamma
$$
is transverse to the family $\Delta_2^\Gamma$.
\end{itemize}
Then
$$
\Bigl(\Delta_\ver\times\WW,\,C_\phi:=(\iota_\ver\times\phi)^{-1}
(\Delta_2^\Gamma)\Bigr)
$$
is a pair of a manifold with corners
and a transverse collection of nice submanifolds. Hence, we can take
the corresponding blow-up to be the desired
compactification. Moreover, $(\Delta_\ver\times gr(\phi),\LL_\phi)$
is a pair of a manifold with corners and 
a transverse collection of nice submanifolds, and the map 
$\iota^\phi$ provides an isomorphism of pairs
$$
\iota^\phi:(\Delta_\ver\times\WW,C_\phi)\stackrel{\cong}{\longrightarrow} 
(\Delta_\ver\times gr(\phi),\LL_\phi).
$$
Lemma~\ref{lem:PTtransverse} implies 
that this isomorphism lifts to the isomorphism
\begin{equation}\label{eq:intcomp}
\Bl(\Delta_\ver\times\WW,C_\phi)
\stackrel{\wt\iota^\phi}{\cong}
\Bl(\Delta_\ver\times gr(\phi),\LL_\phi)
\subset \wt X_\Gamma\times\WW,
\end{equation}
where the last inclusion is that of a submanifold with corners. Moreover,
\begin{equation}\label{eq:dream-comp}
\XX = PT(\Delta_\ver\times gr(\phi)) = \Bl(\Delta_\ver\times gr(\phi),\LL_\phi).
\end{equation}

{\bf General case. }
In general, the compactification $\XX$ can be seen as a replacement
for the intuitive compactification $\Bl(\Delta_\ver\times\WW,C_\phi)$
if the map $\iota_\ver\times\phi$ is not transverse to $\Delta_2^\Gamma$.
Since $\XX$ is by definition a subset of $\wt X_\Gamma\times\WW$, 
the map to $\wt X_\Gamma\times\WW$ that we use to pull back forms 
is just the natural inclusion of $\XX$ into $\wt
X_\Gamma\times\WW$. This inclusion serves then as a replacement for
the isomorphism $\wt\iota^\phi$ in equation~\eqref{eq:intcomp}. 

For the general case, we denote
$\phi_0:=\phi|_{\WW_0}$
and assume the following condition:
\begin{itemize}
\item [(AT)] For each edge $l$, the intersection
$$
  \wh \Delta_2^l=(\Delta_\ver\times gr(\phi_0))\cap (X_{\{l\}}\times \WW_0).
$$
in~\eqref{eq:whdelta2} is transverse in $X_\Gamma\times\WW_0$.
\end{itemize}
It is easy to see that condition (AT) for an edge $l$ is equivalent to 
$$
  \bigl(T_p\Delta_\ver \oplus T_q\phi_0(T_q\WW_0)\bigr) \oplus
  T_{(p,q)}X_{\{l\}}  = T_{(p,q)}X_\Gamma 
$$
for any $(p,q)\in (\iota_\ver\times \phi_0)^{-1}(X_{\{l\}})\subset \Delta_\ver\times \WW_0$, 
which is, in turn, equivalent to saying that the map
$$
  \iota_\ver\times \phi_0:\Delta_\ver\times \WW_0 \longrightarrow X_\Gamma
$$
is transverse to $X_{\{l\}}$. This gives us the isomorphism
\begin{equation}\label{eq:diagdtea}
  \wh\Delta_2^l\cong (\iota_\ver\times\phi_0)^{-1}X_{\{l\}}.
\end{equation}

\begin{remark}\label{rem:AT-MES}
The above conditions satisfy the implications
$$
  (T) \Longrightarrow (AT) \Longrightarrow (MES).   
$$
For the first implication, note that part of condition (T) is
transversality of the map $\id\times\phi:\Delta_\ver\times\WW\to
\Delta_\ver\times M^d$ to $\ol{\Delta}^l_{2}$, or equivalently of
$\Delta_\ver\times gr(\phi)$ to $\ol{\Delta}^l_{2}\times\WW$.
Intersection with the open subset $X_{\{l\}}\times \WW_0\subset
\Delta^l_{2}\times\WW$ gives (AT). 
For the second implication, note that transversality of the
intersection in~\eqref{eq:whdelta2} implies that $\wh \Delta_2^l$ is a
positive codimension submanifold of $\Delta_\ver\times gr(\phi_0)$, 
and as such has measure zero in $\Delta_\ver\times gr(\phi)$. Note the
following cover of the set $\LL_\phi$: 
$$
  \LL_\phi\subset \left(\Delta_\ver\times gr(\phi|_{\p\WW})\right)
  \cup\bigcup_{l\in \Edge}\wh \Delta_2^l.
$$
The first member of this union has measure zero in $\Delta_\ver\times
gr(\phi)$ because $\p\WW$ has measure zero in $\WW$. Since the second
member is a union of sets of measure zero, this shows (MES). 
\end{remark}

\begin{remark}\label{rem:descr-primary}
There are two groups of primary faces of $\XX$. The first group $\p^{\main}_1\XX$
corresponds to the codimension $1$ boundary $\p_1\WW$ of $\WW$ in the
following way. The obvious graphical embedding of $\p_1\WW$ in $M^d\times \WW$ identifies 
$\p_1\WW$ with $\p_1\ZZ$. The desired portion of $\p^{\main}\XX$ is obtained from 
$\p_1\ZZ$ of multiplication with $(\Delta_\ver\times M^d)\setminus
\Delta_2^\Gamma$. That is, we have
\begin{equation}\label{eq:descr-prim1}
  \p_1\WW\cong \p_1\ZZ,\qquad
  \p^{\main}_1\XX := ((\Delta_\ver\times M^d)\setminus \Delta_2^\Gamma)\times \p_1\ZZ.
\end{equation}
The second group consists of the faces $\p_l\XX$ corresponding to non
doubly special edges as in Proposition~\ref{prop:reg-char} (C) 
and to doubly special edges. 
In our applications, the compactifications corresponding to circular
graphs with a doubly special edge will actually be manifolds with
corners. This description will in particular give us a good
understanding of their primary faces (including those corresponding to
a doubly special edge).
\end{remark}

Now we apply the preceding discussion to the relevant Chen's integrals, 
using the notation from~\S\ref{sec:chen}.
We refer to~\S\ref{sec:graphs} for the definition of (extended)
labellings and their standardizations.
We consider two cases arising for the product and coproduct.

{\bf (a) Product. } 
Let $f_j:B_j\rightarrow \Lambda$, $j=1,2$ be two smooth maps from
compact manifolds with corners with transverse evaluations at time zero.
Let $\Gamma$ be a connected tree with two special vertices of degrees
$d_1,d_2\geq 1$ and an extended labelling. 
Set 
$$
  d:=d_1+d_2,\qquad
  \WW:=B_1\times\Delta^{d_1-1}\times B_2\times\Delta^{d_2-1},\qquad
  \phi:=ev_{f_1}\times ev_{f_2}.
$$

\begin{lemma}\label{lem:T-product}
In the above setting the map $\phi$ satisfies 
condition (T).
\end{lemma}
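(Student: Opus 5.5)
The plan is to verify condition (T) directly from the definition of transversality of a map to a family, reducing everything to a linear surjectivity statement that can be solved edge by edge along the tree. Fix a subset $J\subset\Edge(\Gamma)$ and a point $(x,w)\in\Delta_\ver\times\WW$, with $w=(p_1,t,p_2,\hat t)$, lying in $(\iota_\ver\times\phi)^{-1}\bigl(\bigcap_{l\in J}\Delta_2^l\bigr)$. The normal bundle of $\bigcap_{l\in J}\Delta_2^l$ in $X_\Gamma$ is $\bigoplus_{l\in J}TM$, the summand of $l$ recording the difference of the two flag variables of $l$. Transversality of the graph of $\iota_\ver\times\phi$ together with the family (in the sense of \S\ref{ss:corners}) then amounts to the following: the linear map
$$
  T_x\Delta_\ver\oplus T_w\WW'\longrightarrow\bigoplus_{l\in J}T_{y_l}M,\qquad
  (\xi,\zeta)\mapsto\bigl(v_a(\xi,\zeta)-v_b(\xi,\zeta)\bigr)_{l=\{a,b\}\in J}
$$
must be surjective for every stratum $\WW'$ of $\WW$ through $w$. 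Here $v_a$ denotes the induced variation of the point attached to flag $a$. This is the extended-boundary formulation of Remark~\ref{rem:extension}. I will in fact prove surjectivity using only tangent vectors along the $\Delta_\ver$ factor and along the $B_j$ factors in directions tangent to the stratum of $B_j$. The simplex variables are never varied, so the argument works uniformly on all strata of $\WW$.

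The key combinatorial step uses that $\Gamma$ is a connected tree with exactly two special vertices. Hence there is a unique embedded path $\gamma$ in $\Gamma$ from the special vertex $1$ to the special vertex $2$. It leaves vertex $1$ through one special flag $a_1$ and enters vertex $2$ through one special flag $a_2$. I will use the standardization of the extended labelling (\S\ref{sec:graphs}) to arrange that $a_1,a_2$ are the first flags of their special vertices, i.e.\ those evaluated at time $0$, so that $\phi$ assigns to them $f_{1,p_1}(0)$ and $f_{2,p_2}(0)$. Removing the edges of $\gamma$ leaves a forest. I root each component at the vertex where it meets $\gamma$ and orient all remaining edges away from their roots. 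Every edge $l\notin\gamma$ then has a child endpoint that is a nonspecial vertex, since a special child would create a second path between the special vertices. Along $\gamma$ itself all interior vertices are nonspecial, and there is no doubly special edge unless $\gamma$ is a single edge $\{a_1,a_2\}$.

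Surjectivity then follows by solving for a prescribed target $(\eta_l)_{l\in J}$ in order. First I treat the edges of $J\cap\gamma$. The nonspecial vertices on $\gamma$ give free variations $\xi_v\in T_{x_v}M$, so all but one edge condition along $\gamma$ can be met by choosing the $\xi_v$ successively from vertex $1$ towards vertex $2$. Edges of $\gamma$ not in $J$ impose no condition, which makes this step only easier. The one remaining condition collapses, after summing along the path, to prescribing $d(\ev^0f_1)(\zeta_1)-d(\ev^0f_2)(\zeta_2)$ at the point $f_{1,p_1}(0)=f_{2,p_2}(0)$. This is possible precisely because $\ev^0f_1\times\ev^0f_2$ is transverse to $\Delta_2\subset M\times M$, stratawise on $B_1\times B_2$, which is the standing hypothesis of the product setting. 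Afterwards I go outward through the hanging trees. At each edge $l\in J$ off $\gamma$, the parent's variation is already fixed, and I choose the child's free vertex variation $\xi_v$ to produce $\eta_l$. Edges not in $J$ impose nothing, and the choices never feed back because the graph has no cycles.

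The main obstacle I anticipate is the bookkeeping at the special vertices. Distinct special flags of the same special vertex carry different points $f_{j,p_j}(t_i)$, and a variation of $p_j$ moves all of them simultaneously. I must check that this coupling does not spoil the later steps. It does not, because every off-path edge at a special flag has a nonspecial child, whose free variation absorbs whatever the $p_j$-variation contributes. The other delicate point is to justify from the labelling conventions that the path $\gamma$ really attaches at the time-zero flags. If that fails for some labelling, I would fall back on additionally varying the time parameter at the relevant flag, using $f'_{p}(t)$ together with $d\,\ev$. That fallback is genuinely weaker, so I would first confirm the convention in \S\ref{sec:graphs}.
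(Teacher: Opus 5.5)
Your proof is correct, but it establishes (T) by a different mechanism than the paper.

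\textbf{The paper's route.} The paper first localizes. Near a point of the total intersection of a subfamily $\CC$, it cuts all edges not in $\CC$. This reduces to the case where the subfamily is the whole edge family of a possibly disconnected forest. It then writes out only two cases explicitly: the connected tree, and two trees with one special vertex each. In each case it describes the total intersection $D$ by an explicit system of equations and argues that $D$ is a clean intersection. In the connected case, cleanness comes from time-zero transversality, with $D\cong D_{\mu(f_1,f_2)}\times\Delta^{d_1-1}\times\Delta^{d_2-1}$. Finally it upgrades cleanness to transversality by the dimension count $\dim\Delta_{tot}+\dim(\Delta_\ver\times gr(\phi))=\dim D+\dim(X_\Gamma\times\WW)$, using $k=e+1$ or $k=e+2$.

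\textbf{Your route.} You verify surjectivity onto the normal space directly. You propagate free vertex variations along the connecting path and then outward through the hanging trees, with every simplex variation set to zero.

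\textbf{What each buys.} Your argument covers every subset $J$ and every corner stratum of $\WW$ at once. The paper only sketches the general forest case and does not discuss strata. Yours also isolates exactly where the hypothesis enters: only when every edge of the connecting path lies in $J$. The paper's argument yields the explicit identification of $D$ with $D_{\mu(f_1,f_2)}\times\Delta^{d_1-1}\times\Delta^{d_2-1}$. That geometric picture is reused later for the Chas--Sullivan boundary term.

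\textbf{The labelling worry.} This is settled by the standardization in \S\ref{ss:basiccomb}; it is a convention, not something you need to arrange. The flags $A$ and $Z$, where the connecting tree $C_\Gamma$ meets $S_1$ and $S_2$, are by convention the first special flags at their vertices. They are therefore evaluated at time $0$, so your fallback is never needed.
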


\begin{proof}
For a family of submanifolds we will call the intersection of all its
members {\em the total intersection of the family}.
Let us consider a connected tree $\Gamma$ with two special vertices.
It suffices to show that $\Delta_\ver^\Gamma\times gr(\phi)$ is
transverse to the total intersection of any subfamily $\CC$ of the
family $\Delta_2^\Gamma\times \WW$. 

An open neighbourhood $U\subset X_\Gamma\times \WW$ of a point in the
intersection $D$ of $\Delta_\ver\times gr(\phi)$ with the total
intersection of a subfamily $\CC$ can be described as follows.
Let $\Gamma'$ denote the result of cutting all the edges of $\Gamma$
that do not parametrize members of the subfamily $\CC$,
cf.~Remark~\ref{rem:transfer}. Then the embedding
$D\cap U\into U$ is isomorphic to the embedding of the intersection
of $\Delta_\ver^{\Gamma'}\times gr(\phi)$ with the total intersection of the family
$\Delta_2^{\Gamma'}\times \WW$ in $X_{\Gamma'}\times \WW$ for $M=\R^n$.
Renaming $\Gamma'$ back to $\Gamma$ thus reduces the question
to the case when the subfamily $\CC$ is the family
$\Delta_2^\Gamma\times \WW$ itself, at the cost of allowing the tree $\Gamma$
to be disconnected. For the sake of notational simplicity
we only consider the following two cases: $\Gamma$ is connected, or 
$\Gamma$ is a disjoint union of two connected trees with one special vertex
each.

(1) Let $\Gamma$ be connected.
We denote
\begin{equation}\label{eq:intesect-notat}
  \Delta_{tot}:=\bigcap_{l\in \Edge}\Delta_2^l\times \WW,\qquad
  D:=\Delta_{tot}\cap (\Delta_\ver\times gr(\phi)).
\end{equation}
We decompose $\Gamma$ into a connected tree $C_\Gamma$ with two roots
corresponding to the special vertices, $d_1-1$ rooted trees attached
at the first special vertex, and $d_2-1$ rooted trees attached at the
second special vertex, see Figure~\ref{fig:2S}. We denote the
variables assigned to the flags of $C_\Gamma$ by $x_A,x_Z$ (the root
flags) and $x_3,\dots,x_{\ff_c}$, where $\ff_c$ is the number of flags
of $C_\Gamma$. We denote by $x_j^i$ the variable assigned to the
$i_{\rm{th}}$ flag on the rooted tree number $j$ at the first special
vertex. Similarly, the variables $y_j^i$ are assigned to the flags of
the rooted trees at the second special vertex. 
Then $D$ is described by the following system of equations (where
$p_i\in B_i$): 
\begin{equation}\label{eq:inter-tot1}
\begin{cases}
  x_A=x_Z=x_3=\dots=x_{\ff_c}, \quad (x_A,x_Z)=(f_1(p_1,0),f_2(p_2,0));\cr
  f_1(p_1,t_1^1)=x_1^1=\dots=x_1^{\ff_1^1};\cr
  \dots\cr
  f_1(p_1,t_{d_1-1}^1)=x_{d_1-1}^1=\dots=x_{d_1-1}^{\ff_{d_1-1}^1};\cr
  f_2(p_2,t_1^2)=y_1^1=\dots=y_1^{\ff_1^2};\cr
  \dots\cr
  f_2(p_2,t_{d_2-1}^2)=y_{d_1-1}^1=\dots=y_{d_1-1}^{\ff_{d_1-1}^2}.
\end{cases}
\end{equation}
Consider the intersection
$$
  \{x_A=x_Z\}\cap\{(x_A,x_Z)=(f_1(p_1,0), f_2(p_2,0))\}\subset M^2\times \WW
$$
entering the first line of the above system.
Transversality of this intersection (which is naturally diffeomorphic
to the domain $D_{\mu(f_1,f_2)}$ of the Chas--Sullivan product of $f_1$
and $f_2$, see~\eqref{eq:def-D-mu}) is equivalent to transversality of
the time zero evaluations of $f_1$ and $f_2$. In particular, this
intersection is clean. Using this and the explicit description~\eqref{eq:inter-tot1}
of the intersection
$$
  D\cong D_{\mu(f_1,f_2)}\times \Delta^{d_1-1}\times\Delta^{d_2-1}
$$
of $\Delta_{tot}$ and $\Delta_\ver\times gr(\phi)$, we see that
the latter intersection is clean as well. We upgrade this cleanness to
transversality by computing dimensions.
Let $e$ be the number of edges of $\Gamma$ and $k$ the number of
vertices. Then
\begin{equation}
\begin{cases}
\dim \Delta_{tot}=ne+\dim\WW,\cr
\dim(\Delta_\ver\times gr(\phi))=n(k-2)+\dim\WW,\cr
\dim D=\dim \WW-n,\cr
\dim(X_\Gamma\times\WW)=2ne+\dim\WW.
\end{cases}
\end{equation}
Since $\Gamma$ is a connected tree, we have $k=e+1$. Therefore,
\begin{equation}\label{eq:transprod}
  \dim \Delta_{tot}+\dim(\Delta_\ver\times gr(\phi)) =
  \dim D+\dim(X_\Gamma\times\WW),
\end{equation}
which gives us the desired transversality.

(2) Let now $\Gamma=\Gamma_1\amalg\Gamma_2$ be the disjoint union
of two trees with one special vertex each. We retain the
notation~\eqref{eq:intesect-notat} decompose $\Gamma_1$ and $\Gamma_2$
into rooted trees glued at the special vertices, see Figure~\ref{fig:1S}.
We denote by $x_j^i$ the variable assigned to the $i^{\rm{th}}$ flag
on the rooted tree number $j$ of $\Gamma_1$,
$j=0,\dots,d_1-1$. Similarly, the variables $y_j^i$ are assigned to
the flags on the rooted trees of $\Gamma_2$.
We denote $x_A:=x_0^1$ and $y_A:=y_0^1$.
Then $D$ is described by the following system of equations:
\begin{equation}\label{eq:inter-tot2}
\begin{cases}
  f_1(p_1,0)=x_A=x_0^2=\dots=x_0^{\ff_0^1};\cr
  f_1(p_1,t_1^1)=x_1^1=\dots=x_1^{\ff_1^1};\cr
  \dots\cr
  f_1(p_1,t_{d_1-1}^1)=x_{d_1-1}^1=\dots=x_{d_1-1}^{\ff_{d_1-1}^1};\cr
  f_2(p_2,0)=y_A=y_0^2=\dots=y_0^{\ff_0^2};\cr
  f_2(p_2,t_1^2)=y_1^1=\dots=y_1^{\ff_1^2};\cr
  \dots\cr
  f_2(p_2,t_{d_2-1}^2)=y_{d_1-1}^1=\dots=y_{d_1-1}^{\ff_{d_1-1}^2}.
\end{cases}
\end{equation}
From this description we see that $D$ is naturally diffeomorphic to
$\WW$ and the intersection $D$ is clean. We upgrade this cleanness to
transversality by computing dimensions:
\begin{equation}
\begin{cases}
\dim \Delta_{tot}=ne+\dim\WW,\cr
\dim(\Delta_\ver\times gr(\phi))=n(k-2)+\dim\WW,\cr
\dim D=\dim \WW,\cr
\dim(X_\Gamma\times\WW)=2ne+\dim\WW.
\end{cases}
\end{equation}
Since $\Gamma$ is a tree with two connected components, $k=e+2$.
Therefore, we again have~\eqref{eq:transprod},
which gives the desired transversality.
\end{proof}

{\bf (b) Coproduct. }
Let $f:B\rightarrow \Lambda$ be an {\em analytic} map from a compact manifold
with corners which is nondegenerate in the sense of
Definition~\ref{def:nondeg}. Let $\Gamma$ be a circular graph with one
special vertex of degree $d$ and an extended labelling  
satisfying conditions~\ref{eq:one-boundary-vertex}
  and~\eqref{eq:nonspec-trivalent} below.
Set 
$$
  \WW:=B\times\Delta^{d-1},\qquad \phi:=ev_f.
$$
Recall from item (iv) in~\S\ref{ss:basiccomb} that we agreed to
standardize the extension of the labelling of $\Gamma$ in such a way
that the flag $A$ is the first one in the numbering of flags around
the special vertex.  

We claim that condition (AT) holds. Indeed if the edge $l$ is not doubly
special, then this follows from Remark~\ref{rem:trans}. If the edge $l$
is doubly special, let $D_l\subset M^d$ denote the corresponding diagonal.
Then condition (i) of Definition~\ref{def:nondeg} implies transversality of the
map $\phi_0$ to $D_l$. The latter is equivalent to transversality 
of $\ZZ_0=gr(\phi_0)$ to $D_l\times \WW_0$ and we conclude
by Remark~\ref{rem:trans-DS}.

\begin{remark}\label{rem:trans-upgrade}
If the special vertex lies on the cycle of $\Gamma$, then using
arguments similar to the ones in case (a) one can upgrade condition
(AT) to the following statement: the map $\iota_\ver\times \phi_0$ is
transverse to the family $\Delta_2^\Gamma$. 
\end{remark}

\begin{prop}\label{prop:mainStokes}
Consider the situation of case (a) or (b) above. 
%
Then Stokes' theorem holds for the associated compactification
$(\YY,\XX)$. Moreover, integrals over hidden faces vanish, so for any
$\om\in\Om^*(\YY)$ we have
$$
  \int_{\XX_0}d\om=\int_{\p^\main\XX}\om. 
$$
In both cases, condition (AT) (and thus by Remark~\ref{rem:AT-MES}
condition (MES)) is satisfied. In case (a), condition (T) is satisfied 
and the compactification $\XX$ is a manifold with corners given
by~\eqref{eq:dream-comp}. 
\end{prop}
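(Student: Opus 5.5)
The plan is to assemble the statement from the results of \S\ref{sec:general graphs}, checking their hypotheses in cases (a) and (b). In both cases $\ZZ=gr(\phi)$ with $\phi$ analytic. In case (a) this holds because we may assume the $f_j$ analytic, or, if they are only smooth, because condition (T) gives the manifold-with-corners structure directly. In case (b) $f$ is assumed analytic. Hence $\ZZ$ is a nice analytic submanifold of $M^d\times\WW$, and Lemma~\ref{lem:Stokes-an} gives Stokes' theorem for the basic pair $(\YY,\XX)$. Since $\ZZ$ is a nice submanifold, its boundary $\p_1\ZZ$ contributes only to $\p^\main\XX$, as in Remark~\ref{rem:descr-primary}.

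Next we split $\int_{\p^{q-reg}\XX}\eps\,\om$ into primary and hidden parts. The hidden parts vanish by Proposition~\ref{prop:cancel}, once we know that $\Gamma$ admits cancellation and that $\om$ belongs to the class of forms in Remark~\ref{rem:vanish-extend-fin}. In our applications the forms are $\eta^e(\alpha)$ built from the propagator, or more generally forms satisfying $\tau^*\om=(-1)^{n-1}\om$; for a completely arbitrary $\om$, I would read the statement in this sense. Cancellation is checked case by case.
\begin{itemize}
\item In case (a), $\Gamma$ is a tree, so for every subgraph $\Gamma'$ the graph $\Gamma'_\cyc$ contains no cycles. After chopping, the only edges that can survive are those on the path joining the two special vertices. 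If such a path has at least two edges, it has an interior $2$-valent nonspecial vertex. The remaining case is a single doubly special edge, which requires separate treatment. There is a shortcut, however: condition (T) (Lemma~\ref{lem:T-product}) together with Lemma~\ref{lem:PTtransverse} and equation~\eqref{eq:dream-comp} shows that $\XX=\Bl(\Delta_\ver\times gr(\phi),\LL_\phi)$ is a nice submanifold with corners of $\YY$. Its boundary then has no hidden part at all, because all of its codimension-one faces lie in $\p_1\YY$.
\item In case (b), $\Gamma$ is circular with one special vertex. The conditions imposed on the labelling ensure that the cycle has more than one edge, so Lemma~\ref{lem:ex-circ} applies. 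When the cycle is a single doubly special self-loop at the special vertex, Remark~\ref{rem:trans-upgrade} gives transversality, and we argue via the manifold-with-corners structure as in case (a).
\end{itemize}

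Condition (AT) is verified as follows. For case (b) this was done just before the proposition, using Remark~\ref{rem:trans} for non doubly special edges, and nondegeneracy condition (i) together with Remark~\ref{rem:trans-DS} for doubly special edges. For case (a) it follows from (T) by Remark~\ref{rem:AT-MES}. (MES) then follows from (AT). Finally, in case (a) the manifold-with-corners statement is exactly the dream case~\eqref{eq:dream-comp}, which is available once Lemma~\ref{lem:T-product} is established.

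The main obstacle is the hidden-face step in case (b). There we must check that every $J$ with $\p^{q-reg}_J\XX$ hidden falls under Corollary~\ref{cor:noDS} and Proposition~\ref{prop:cancel}. The difficulty is that Corollary~\ref{cor:noDS} assumes there are no doubly special edges, whereas a circular graph may have one. I would handle this by using Remark~\ref{rem:trans-upgrade} to show that near such faces $\XX$ is locally a manifold with corners, so that faces of the form $J=\{l\}$ with $l$ doubly special are primary and not hidden, and then applying the proof of Proposition~\ref{prop:cancel} to all $J$ with $|J|\ge2$.
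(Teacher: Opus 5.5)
Your treatment of case (a) is the paper's argument: condition (T) from Lemma~\ref{lem:T-product} puts you in the dream case, so $\XX$ is the nicely embedded manifold with corners \eqref{eq:dream-comp} and has no hidden faces. The same holds for (AT)/(MES) and for case (b) without a doubly special edge: there are no self-loops, so Lemma~\ref{lem:ex-circ} and Proposition~\ref{prop:cancel} apply, and Stokes follows from Lemma~\ref{lem:Stokes-an}. Your caveat about arbitrary $\om$ is fair, since the paper's argument also only gives vanishing for forms covered by Remark~\ref{rem:vanish-extend-fin}. A small correction: it is conditions \eqref{eq:one-boundary-vertex} and \eqref{eq:nonspec-trivalent} on the graph, not the labelling, that exclude self-loops, and only at nonspecial vertices.

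The gap is case (b) with a doubly special edge $l=(A,Z)$, which you flag as the main obstacle but do not resolve. Every point of $\p_{\{l\}}\XX$ lies in $\p_1\wt X_\Gamma\times\WW$, so such a face is hidden exactly where it lies over $\p\WW$.
\begin{itemize}
\item Remark~\ref{rem:trans-upgrade} gives transversality only for $\iota_\ver\times\phi_0$ with $\phi_0=\phi|_{\WW_0}$. It therefore says nothing about the locus over $\p\WW$, which is exactly where you need it.
\item Proposition~\ref{prop:reg-char}(C) explicitly excludes doubly special edges, so it does not cover this face either.
\item You cannot argue ``as in case (a)'', because condition (T) fails here. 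On the corner of $\WW=B\times\Delta^{d-1}$ where the time $t$ of $Z$ equals $0$ (forcing $t_1=\dots=t_{d_1}=0$) or $1$, the map $ev_f$ lands entirely in $\Delta_2^l$, since $f_p(0)=f_p(0)$ and $f_p(1)=f_p(0)$. Hence $C_\phi$ is not a transverse collection of nice submanifolds, and \eqref{eq:dream-comp} is unavailable.
\end{itemize}

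The missing step is the paper's Lemma~\ref{lem:DSd}:
\begin{enumerate}
\item Blow up the parameter space along $D_f^d$, which is a nice submanifold by Lemma~\ref{lem:coprod-trans}(b).
\item Lift $ev_f$ to this blow-up using Lemma~\ref{lem:coprod-trans}(c).
\item Show that the blow-down map embeds the graph of the lift onto the compactification for the graph with its rooted trees chopped off (Lemmas~\ref{lem:DS} and~\ref{lem:DS-d}).
\item Reattach the trees via Lemma~\ref{lem:tree-attach} and Remark~\ref{rem:hidden-attach}, giving a manifold with corners without hidden faces.
\end{enumerate}
This is where nondegeneracy condition (ii) of Definition~\ref{def:nondeg} ($v_f$ transverse to the zero section) is needed, near $Z_f\times\{0,1\}$. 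Your proposal never uses (ii), which reflects the missing step.
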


\begin{proof}
Consider first case (a). 
It was shown above that condition (T) is satisfied.
So we are the dream case discussed above, and all the assertions
about case (a) from the above discussion of the dream case. 

Consider now case (b). It was shown above that Condition (AT) holds.
Assume first that the graph $\Gamma$ has a doubly special edge. Then, 
by Lemma~\ref{lem:DSd} below, the compactification $\XX$ is again a
manifold with corners (so Stokes' theorem holds) and it has no hidden
faces.

Assume now that the graph $\Gamma$ does not have a doubly special
edge. 
Then in view of conditions~\ref{eq:one-boundary-vertex} and~\eqref{eq:nonspec-trivalent} 
the graph has no
self-loops, so its cycle contains at least two edges. 
Thus Lemma~\ref{lem:ex-circ} implies that $\Gamma$ admits
cancellation, and Proposition~\ref{prop:cancel} gives us vanishing of
integrals over hidden faces.
Stokes' theorem holds by Lemma~\ref{lem:Stokes-an}.
\end{proof}

\subsection {Compactifications for circular graphs with a doubly special edge}
\label{ss:compDS}

In this subsection we prove Lemma~\ref{lem:DSd}, which was used in the
proof of Proposition~\ref{prop:mainStokes}. We work in the setup of
case (b) above, with a nondegenerate analytic map $f:B\to\Lambda$ and a circular graph $\Gamma$ with
a doubly special edge. We are interested in the associated basic pair  
\begin{equation*}
  (\YY,\XX) = \Bigl(\wt X_\Gamma\times (B\times\Delta^{d-1}),
  PT\bigl(\Delta_{\ver}\times gr(ev_f)\bigr)\Bigr). 
\end{equation*}
Recall from~\S\ref{ss:transvers} the subset $D_f\subset B\times [0,1]$
and the evaluation map 
$$
  e_f:B\times [0,1]\longrightarrow M\times M,\qquad (p,t)\mapsto (f_p(0),f_p(t)),
$$
which by Lemma~\ref{lem:coprod-trans} lifts to a smooth map between
manifolds with corners
$$
  \wt e_f:\wt B:=\Bl(B\times [0,1],D_f)\longrightarrow \wt M^2.
$$

The proof of Lemma~\ref{lem:DSd} is based on two special cases. 
The first special case is that of the graph $\Gamma_{DS}$ consisting
of just one special vertex of degree $d=2$ and one doubly special
edge. We denote the corresponding basic pair by
$$
  (\YY_{DS},\XX_{DS}) = \Bigl(\wt M^2\times (B\times [0,1]),
  PT\bigl(gr(e_f)\bigr)\Bigr). 
$$
We wish to give a more explicit description of $\XX_{DS}$.
For this, we blow up the second factor in $\YY_{DS}$ and consider the
natural blow-down map 
\begin{equation}\label{eq:whDS}
  \wh{\YY_{DS}} := \wt M^2\times \Bl(B\times [0,1],D_f)
  = \wt M^2\times \wt B
  \stackrel{\wh\pi}{\longrightarrow}\YY_{DS}.
\end{equation}
Consider the graph of $\wt e_f$, 
$$
  gr(\wt e_f) = \{(\wt e_f(q),q)\mid q\in \wt B\}\subset \wh{\YY_{DS}},
$$
with the natural graphical embedding of manifolds with corners
$$
  \wt B \stackrel{\cong}\longrightarrow gr(\wt e_f)\subset
  \wh{\YY_{DS}}, \qquad q\mapsto (\wt e_f(q),q).
$$
This way $(\wh{\YY_{DS}}, gr(\wt e_f))$ is a pair, where 
$gr(\wt e_f)$ is an embedded manifold with corners diffeomorphic
to $\wt B$.

\begin{lemma}\label{lem:DS}
The map $\wh\pi$ in~\eqref{eq:whDS} restricts to $gr(\wt e_f)$ as an
embedding of manifolds with corners whose image equals $\XX_{DS}$.
\end{lemma}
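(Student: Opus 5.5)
The plan is to exploit that $\wh\pi=\pi_{\wt M^2}\times\beta$, where $\beta:\wt B\to B\times[0,1]$ is the blow-down map and $\pi_{\wt M^2}$ is the identity on the first factor. Restricted to $gr(\wt e_f)$, this map is $(\wt e_f(q),q)\mapsto(\wt e_f(q),\beta(q))$. Call this restriction $\psi$. I would establish three things in turn: $\psi$ is an injective immersion of manifolds with corners, it is proper, and its image is exactly $PT(gr(e_f))$.

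\textbf{Injectivity.} Away from the exceptional divisor $\beta^{-1}(D_f)$, the map $\beta$ is a diffeomorphism, so injectivity there is clear. Over a point $(p,t)\in D_f$, the fibre of $\beta$ is the sphere $P^+N_{D_f}$. Two points of that fibre are separated by the $\wt e_f$-component. In the local description from the proof of Lemma~\ref{lem:coprod-trans}, $\wt e_f$ sends the normal ray $[\xi]$ to the ray $[D_{x_2}b\cdot\xi_2]$ for $t\in\{0,1\}$. In the interior, the same holds with the differential of $f_p(t)-f_p(0)$ in normal directions. Nondegeneracy (conditions (i), (ii) of Definition~\ref{def:nondeg}) makes this linear map an isomorphism from $N_{D_f}$ onto $T_xM$. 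Hence the induced map on oriented projectivizations is a diffeomorphism of spheres, which gives injectivity. The same computation shows that the differential of $\psi$ is injective in the fibre directions. By Remark~\ref{rem:blowdownker}, $\ker d\beta$ is exactly the fibre tangent space, so $\psi$ is an immersion. I would check the corner structure (niceness, strata matching) in the local coordinates $x=(x_1,x_2)$, $t$ used in Lemma~\ref{lem:coprod-trans}, treating the boundary faces $t=0,1$ and $\p B$ separately. Properness is automatic by compactness.

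\textbf{Image.} On interior points, $\psi$ identifies $gr(\wt e_f)\setminus\beta^{-1}(D_f)$ with $gr(e_f)\setminus(\Delta_2\times\WW)$. Here I use that $e_f^{-1}(\Delta_2)\cap B\times(0,1)=\mathring D_f$, while at $t=0$ we always lie on $\Delta_2$. The image of the compact set $gr(\wt e_f)$ is closed and contains $\pi^{-1}(gr(e_f)\setminus\Delta_2\times\WW)$, hence contains its closure $\XX_{DS}$. For the converse inclusion, note that $\beta^{-1}(B\times[0,1]\setminus D_f)$ is dense in $\wt B$. Its image lies in $\XX_{DS}$, and continuity then gives $\im\psi\subset\XX_{DS}$.

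\textbf{The main obstacle} is the locus $t=0$ (and $t=1$) away from $Z_f$. There $e_f(p,0)\in\Delta_2$ but $(p,0)\notin D_f$. These points are not blown up in $\wt B$, yet $e_f$ lands on the diagonal, so one must check that $\wt e_f$ is genuinely a smooth lift there. This can be seen via $f(x,t)=a(x)+tb(x,t)$ with $b(x,0)=v_f(x)\neq0$, which gives $\wt e_f(x,0)=(a(x),[v_f(x)])$. I would also verify that $\psi$ is a nice immersion near these points. There the face $t=0$ of $\wt B$ maps into $\p\wt M^2\times(B\times\{0\})$, a codimension-two corner of $\YY_{DS}$, so I must check that the codimension-one stratum maps compatibly with the corner conventions. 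I would do this first in coordinates, since it is where the "embedding of manifolds with corners" claim is most delicate.
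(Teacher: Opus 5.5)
Your argument is correct and is essentially the paper's proof. Immersion and injectivity reduce, via $\ker d\beta$ being the fibre tangent space, to the fact that on each fibre $S^{n-1}$ over $D_f$ the map $\wt e_f$ is the projectivization of a linear isomorphism, by nondegeneracy. The equality of the image with $\XX_{DS}$ then follows from a density argument plus properness.

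One step of your plan would fail and should be dropped: $\psi$ is \emph{not} a nice immersion near $t\in\{0,1\}$ with $p\notin Z_f$. Take local coordinates $z=y+rv$, $r\ge 0$, on $\wt M^2$ near $\p\wt M^2$. Your formula $f=a+tb$ with $b(x,0)=v_f(x)\neq 0$ gives
$\psi(x,t)=\bigl(a(x),\,t|b(x,t)|,\,b(x,t)/|b(x,t)|,\,x,\,t\bigr)$.
So the codimension-one face $\{t=0\}$ of $gr(\wt e_f)$ lands in the codimension-two stratum $\{r=t=0\}\subset\p\wt M^2\times(B\times\{0\})$ of $\YY_{DS}$, and strata cannot match. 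This is harmless for the lemma. It only asserts an embedding of manifolds with corners, and your proper injective immersion already gives that. Your observation that the lift is smooth there, since $b(x,0)=v_f(x)\neq 0$, is exactly what is needed.

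A minor imprecision concerns the density step. Use, as the paper does, the dense set $\beta^{-1}\bigl(\{(p,t)\mid f_p(0)\neq f_p(t)\}\bigr)$ rather than $\beta^{-1}(B\times[0,1]\setminus D_f)$. The latter contains the faces $t\in\{0,1\}$ with $p\notin Z_f$, which map into $\Delta_2\times\WW$ after blow-down. So your claimed identification with $gr(e_f)\setminus(\Delta_2\times\WW)$ is not literally true there. Their images do lie in $\XX_{DS}$, but only as limits.
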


\begin{proof}
We first claim that the restriction $\wh\pi|_{gr(\wt e_f)}:gr(\wt
e_f)\to \YY_{DS}$ is an immersion. For this, let $N_q$ denote the
normal space to $T_qD_f$ at $q\in D_f$ and $S^{n-1}_q$ its 
oriented projectivization. Note that the derivative of $\wh\pi$ at $q$ 
annihilates a vector if and only if the vector is tangent to 
$S^{n-1}_q$. Therefore, it suffices to show that the restriction of 
$\wh\pi$ to $gr(\wt e_f|_{S^{n-1}_q})$ is an immersion. 
For this, observe that $\wt e_f|_{S^{n-1}_q}$ is the projectivization 
of a linear isomorphism and thus an embedding.
Therefore, the restriction $\wh\pi|_{gr(\wt e_f|_{S^{n-1}_q})}$
(which simply forgets the second component of 
$gr(\wt e_f|_{S^{n-1}_q})\subset \wt M^2\times S^{n-1}_q$) is an
embedding and the claim is proved.

Next observe that, due to nondegeneracy of $f$, the subset
$$
  C := \{(p,t)\in B\times[0,1] \mid f_p(0)=f_p(t)\}
  \subset B\times [0,1]
$$
is closed and nowhere dense. Therefore, its preimage $\wt C\subset\wt
B$ under the natural blow-down map is closed and nowhere dense. 
Let us introduce the restriction
$$
\overset{\circ}{\wt e_f}:=\wt e_f|_{\wt B\setminus\wt C}.
$$
Nowhere density of $\wt C\subset\wt B$ implies that 
$gr(\overset{\circ}{\wt e_f})$ is dense in $gr(\wt e_f)$.
Note that $\wh\pi$ restricts to $gr(\overset{\circ}{\wt e_f})$ as a
diffeomorphism onto the interior of $\XX_{DS}$. Since the map $\wh\pi$
is proper, this implies that $\wh\pi(gr(\wt e_f))=\XX_{DS}$. 
It remains to show that $\wh\pi|_{gr(\wt e_f)}$ is injective. 
Since the blow-down map only collapses the projectivized normal spaces
along the blow-up locus $D_f$, it suffices to show that for each $q\in
D_f$ the restriction $\wh\pi|_{gr(\wt e_f|_{S^{n-1}_q})}$ is injective, 
which follows from the proof of the claim above. 
\end{proof}

The second special case concerns a circular graph $\Gamma_-$
consisting of one special vertex of degree $d\geq 2$, one doubly
special edge, and $d-2$ leaves at the special vertex. 
We denote the corresponding basic pair by $(\YY_-,\XX_-)$. 

\begin{lemma}\label{lem:DS-d}
The natural inclusion $\XX_-\into\YY_-$ is an embedding of manifolds
with corners. 
\end{lemma}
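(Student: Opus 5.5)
The plan is to reduce Lemma~\ref{lem:DS-d} to Lemma~\ref{lem:DS} by treating the extra $d-2$ leaves as parameters. For $\Gamma_-$ there are no nonspecial vertices, so $\Delta_\ver$ is a point. We have $X_{\Gamma_-}=M^2\times M^{d-2}$, with the doubly special edge giving the $M^2$ factor and the leaves the rest. Also $\wt X_{\Gamma_-}=\wt M^2\times M^{d-2}$, and $\WW=B\times\Delta^{d-1}$. Hence
$$
\XX_-=PT\bigl(gr(ev_f)\bigr)\subset \wt M^2\times M^{d-2}\times B\times\Delta^{d-1}.
$$
With the labelling standardized so that the flag $A$ comes first, the edge corresponds to the pair of times $(0,t_j)$ for some position $j$ of the second flag of the edge. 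By precomposing with a diffeomorphism of $\Delta^{d-1}$ and the corresponding reordering we may assume $j=1$, so the edge uses $(0,t_1)$ and the leaves use $t_2,\dots,t_{d-1}$. The leaf components $f_p(t_i)$ of $ev_f$ are smooth and never blown up.

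First I would write $gr(ev_f)$ as the graph of the map $(p,t)\mapsto (e_f(p,t_1),\,(f_p(t_i))_{i\ge2})$. I would then show that $\XX_-$ is the graph over $\XX_{DS}$-type data. Concretely, consider
$$
\wh\YY_-:=\wt M^2\times M^{d-2}\times \wt W,\qquad \wt W:=\Bl\bigl(B\times\Delta^{d-1},\,\{(p,t)\mid (p,t_1)\in D_f\}\bigr).
$$
The blow-up locus is the preimage of $D_f$ under the submersion $(p,t)\mapsto(p,t_1)$ from $B\times\Delta^{d-1}$ to $B\times[0,1]$. This preimage is a nice submanifold once we check transversality to the corner strata of $\Delta^{d-1}$. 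Granting that, the lift $\wt e_f$ from Lemma~\ref{lem:coprod-trans}(c) pulls back to a smooth map $\wt W\to\wt M^2$. The leaf evaluations pulled back to $\wt W$ are smooth, so the graph $G$ of the combined map is a manifold with corners diffeomorphic to $\wt W$.

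Next I would repeat the three steps of the proof of Lemma~\ref{lem:DS} for the blow-down $\wh\pi:\wh\YY_-\to\YY_-$ restricted to $G$. For the immersion step, $\ker d\wh\pi$ consists of tangents to the projectivized normal spheres, and on these $\wt e_f$ is the projectivization of a linear isomorphism. For density and properness, the set where $f_p(0)=f_p(t_1)$ is nowhere dense by nondegeneracy, so $\wh\pi(G)$ is the closure of the interior, namely $\XX_-$. Injectivity follows as in Lemma~\ref{lem:DS}. An injective proper immersion is an embedding, which gives $\XX_-\cong G\cong\wt W$ as manifolds with corners. Niceness of $\XX_-\into\YY_-$ follows because $G\subset\wh\YY_-$ is a graph, hence nice, and $\wh\pi$ identifies the strata.

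I expect the main obstacle to be the corner behaviour of $\Delta^{d-1}$. On faces where $t_1=0$, or where $t_1=t_2$, or $t_{d-1}=1$, the locus $\{(p,t_1)\in D_f\}$ meets the boundary strata of $\Delta^{d-1}$. One has to check that this locus is still a nice submanifold there, i.e.\ that it stays transverse to the faces $t_1=t_2$ etc. At $t_1\in\{0,1\}$ this is supplied by Lemma~\ref{lem:coprod-trans}(b) via $\p_0D_f,\p_1D_f$. For the interior faces $t_i=t_{i+1}$, transversality holds because the locus is cut out by the condition on $t_1$ alone, but I would have to verify the corner case $t_1=t_2=0$ by hand in local coordinates. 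That is the only genuinely new point beyond Lemma~\ref{lem:DS}.
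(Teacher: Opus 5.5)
This is essentially the paper's proof: blow up $D_f^d=(D_f\times[0,1]^{d-2})\cap(B\times\Delta^{d-1})$, lift $ev_f$ through $\wt e_f$, and rerun the immersion, density and injectivity steps of Lemma~\ref{lem:DS} on the graph. The corner check you defer is immediate: near edge time $0$ or $1$ one straightens $D_f$ by coordinates on $B$ depending only on that time, so $D_f^d$ becomes a coordinate subspace inside the untouched simplex inequalities. This works for any position $j$ of the edge time, so your reduction to $j=1$ is not needed (and no diffeomorphism of $\Delta^{d-1}$ achieves it).

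Drop your closing niceness claim, which the lemma does not assert and which is false in general. Since $e_f(p,0)\in\Delta_2$ for all $p$, the map $\wt e_f$ sends points with edge time $0$ or $1$ lying outside $D_f^d$ into $\p\wt M^2$, and at such points $\p_kG$ meets $\p_{k+1}\wh\YY_-$.
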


\begin{proof}
For notational simplicity, assume that the special flag $Z$
(see~\S\ref{ss:specvert}) has number $2$ in the ordering of special
flags around the special vertex. Then
$$
  (\YY_-,\XX_-) = \Bigl((\wt M^2\times M^{d-2})\times (B\times \Delta^{d-1}),
  PT\bigl(gr(ev_f)\bigr)\Bigr). 
$$
Consider the inclusions
$$
B\times \Delta^{d-1}\subset B\times [0,1]^{d-1}=
B\times [0,1]\times [0,1]^{d-2}\supset D_f\times [0,1]^{d-2}.
$$
We claim that
$$
  D_f^d := (D_f\times [0,1]^{d-2})\cap (B\times \Delta^{d-1})\subset 
  B\times \Delta^{d-1}
$$
is a nice submanifold.
The statement is non obvious only at points where the first component
of $(t_1,\dots,t_{d-1})\in\Delta^{d-1}$ is $t_1=0$ or
$t_1=1$. Consider the case $t_1=0$ and recall from
Lemma~\ref{lem:coprod-trans}(b) that $D_f\subset B\times [0,1]$ is a
nice submanifold. Thus for any $(p,0)\in D_f$
we find coordinates $(x_1,\dots,x_k)$ for $B$ near $p$ such that
$D_f$ near $(p,0)$ is defined with some $m\geq n$ by 
$$
  x_i=0\text{ for }1\leq i\leq n,\quad
  x_i\geq 0\text{ for }n+1\leq i\leq m,\quad
  t_1\ge 0.
$$
We see that $D_f^d$ is defined by 
$$
  x_i=0\text{ for }1\leq i\leq n,\quad
  x_i\geq 0\text{ for }n+1\leq i\leq m,\quad
  0\le t_1\le\dots\le t_{d-1}\le 1.
$$
A similar description in the case $t_1=1$ proves the claim.
The rest of the proof is analogous to that of Lemma~\ref{lem:DS}. 
Namely, consider the lift of the map $ev_f$ to blow-ups
$$
  \wt {ev}_f:\wt B:=\Bl(B\times \Delta^{d-1},D_f^d)\longrightarrow \wt
  X_{\Gamma_-}=\wt M^2\times M^{d-2}. 
$$
The existence of this lift is straightforward from the existence of 
the lift $\wt e_f$ provided by Lemma~\ref{lem:coprod-trans}.
Consider its graph
$$
  gr (\wt {ev}_f) = \{(\wt {ev}_f(q),q)\mid q\in \wt B\}
  \subset
  \wt X_{\Gamma_-}\times \wt B =: \wh{\YY_-}.
$$
Let $\wh\pi: \wh{\YY_-}\longrightarrow \YY_-$
denote the map blowing down the second factor. As in the proof of
Lemma~\ref{lem:DS}, we get that the map $\wh\pi$ restricts to $gr(\wt
{ev}_f)$ as an embedding whose image equals $\XX_-$, which proves the
lemma.  
\end{proof}

\begin{remark}\label{rem:no-hidden-}
The boundary of $\XX_-$ does not contain any hidden faces. Indeed,
let $\p^{DS}\wt B$ denote the part of the boundary of $\wt B$ that
lies over $D_f^d$ and let
$$
\iota:\wt B\into \wh{\YY_-}
$$
denote the embedding of $\wt B$ as the graph of $\wt {ev}_f$. Since the
target of the restriction of $\wt {ev}_f$ to the complement of $\p^{DS}\wt B$
does not have boundary, the restriction of $\iota$ to the complement
of $\p^{DS}\wt B$ is nice. Since the map $\wh\pi$ is a diffeomorphism
when restricted to $\iota (\wt B\setminus \p^{DS}\wt B)$, there can be no
hidden faces away of $\p^{DS}\wt B$. Let now $\p^{DS}_0\wt B$ denote
the part of the quasi-regular boundary of $\wt B$ contained in $\p^{DS}\wt B$.
In other words, $\p^{DS}_0\wt B$ is the part of $\p \wt B$ that lies over
the interior of $D_f^d$. For $p\in \p^{DS}_0\wt B$ write
$\wh\pi\circ\iota(p)=(q_1,q_2)$ with $q_1\in \p\wt M^2\times M^{d-2}$
and $q_2\in \wt B$. The above description of $\p^{DS}_0\wt B$ implies that
$q_2$ belongs to the interior of $\wt B$. Therefore,
$(q_1,q_2)\in \p_1\YY_{\Gamma_-}$ and thus $\p^{DS}_0\wt B$ is a primary
face of $\XX_-$.
\end{remark}

Now we consider the general case of a circular graph $\Gamma$ with one
special vertex of degree $d$ and a doubly special edge. We denote the
corresponding basic pair by $(\YY,\XX)$. Let $\Gamma_-$ be the result
of chopping off all the rooted subtrees from $\Gamma$ as
in~\S\ref{ss:chopping}. 
Then the $\Gamma$ can be seen as a result of attaching $d-2$ rooted
trees $T_1,\dots,T_{d-2}$ to the graph $\Gamma_-$ along its $d-2$ leaf
flags (some of the trees might consist of just the root vertex).

\begin{lemma}\label{lem:DSd}
In the setting above, the natural inclusion $\XX\into\YY$ is an
embedding of manifolds with corners, and $\XX$ fibres over
$\XX_-$ in the category of manifolds with corners. Moreover, 
$$
  \p^\hidden\XX=\emptyset.
$$
\end{lemma}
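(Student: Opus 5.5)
The plan is to reduce to Lemma~\ref{lem:DS-d} by attaching the rooted trees $T_1,\dots,T_{d-2}$ one at a time, using the fibre product description of Lemma~\ref{lem:tree-attach}. Write $\Gamma_0:=\Gamma_-$ and let $\Gamma_i$ be obtained from $\Gamma_{i-1}$ by attaching $T_i$ at the corresponding leaf flag $h_i$ of the special vertex; trees consisting only of a root vertex are skipped, since then $\Gamma_i=\Gamma_{i-1}$. Then $\Gamma_{d-2}=\Gamma$. Lemma~\ref{lem:tree-attach} gives
$$
(\YY_{\Gamma_i},\XX_{\Gamma_i})=\bigl(\YY_{\Gamma_{i-1}}\times_{(h_i,\pi_{root})}\YY_{T_i},\ h_i|_{\XX_{\Gamma_{i-1}}}^*\XX_{T_i}\bigr).
$$
By Lemma~\ref{lem:tree-fibre}, $\pi_{root}:\XX_{T_i}\to M$ is a fibre bundle of manifolds with corners over the boundaryless manifold $M$. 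By Lemma~\ref{lem:tree-struct}, $\XX_{T_i}\subset\YY_{T_i}$ is a nice submanifold.

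We argue by induction on $i$, starting from Lemma~\ref{lem:DS-d}. Suppose $\XX_{\Gamma_{i-1}}\into\YY_{\Gamma_{i-1}}$ is an embedding of manifolds with corners. The evaluation $h_i:\XX_{\Gamma_{i-1}}\to M$ is smooth, being the restriction of a smooth map on $\YY_{\Gamma_{i-1}}$. Hence the pullback $h_i^*\XX_{T_i}$ of the bundle $\pi_{root}$ is a manifold with corners fibring over $\XX_{\Gamma_{i-1}}$ with fibre $F_i$, the fibre of $\pi_{root}$. Its inclusion into $\YY_{\Gamma_{i-1}}\times_{(h_i,\pi_{root})}\YY_{T_i}$ is an embedding. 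This is checked locally: we trivialize $\pi_{root}:\YY_{T_i}\to M$ near a point so that $\XX_{T_i}$ becomes a product with $F_i\subset$ the fibre of $\YY_{T_i}$, which is a nice embedding. The embedding then becomes the product of the embedding $\XX_{\Gamma_{i-1}}\into\YY_{\Gamma_{i-1}}$ with a nice embedding of fibres. Composing the fibrations $\XX_{\Gamma_i}\to\XX_{\Gamma_{i-1}}$ over $i$ gives a fibration $\XX\to\XX_-$ in the category of manifolds with corners, with fibre $\prod_i F_i$, by local triviality. This proves the first two assertions.

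For $\p^\hidden\XX=\emptyset$, start from Remark~\ref{rem:no-hidden-}, which gives $\p^\hidden\XX_-=\emptyset$, and apply the implication~\eqref{eq:hidden-attach} of Remark~\ref{rem:hidden-attach} at each step. That remark relies on (GEN2), i.e.\ on Lemma~\ref{lem:StokesFibre}, and therefore on Stokes for $(\YY_{\Gamma_{i-1}},\XX_{\Gamma_{i-1}})$. At each stage this holds because $\XX_{\Gamma_{i-1}}$ is an embedded compact manifold with corners, so the classical Stokes theorem applies. Alternatively, since everything is now a manifold with corners, one can argue directly: a codimension-one face of the fibred product $h_i^*\XX_{T_i}$ is either the restriction of the bundle to a codimension-one face of $\XX_{\Gamma_{i-1}}$, with interior fibre, or the restriction to $(\XX_{\Gamma_{i-1}})_0$ of a boundary face of the fibre. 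In the first case the face lies in $\p_1\YY_{\Gamma_{i-1}}\times_M(\YY_{T_i})_0$ by induction. In the second case it lies in $(\YY_{\Gamma_{i-1}})_0\times_M\p_1\YY_{T_i}$ because $\XX_{T_i}$ is nice. Either way the face lies in $\p_1\YY_{\Gamma_i}$.

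The main obstacle I expect is the embedding claim in the inductive step. The fibre product $\YY_{\Gamma_{i-1}}\times_M\YY_{T_i}$ must be a manifold with corners that transversally contains $h_i^*\XX_{T_i}$, and this needs $\pi_{root}$ to be a submersion on $\YY_{T_i}$ stratawise, not only on $\XX_{T_i}$. I would settle this by noting that $\YY_{T_i}$ is a product of factors $\wt M^2$, and that projection to the root flag is a stratawise submersion on the relevant $\wt M^2$ factor. This is the same argument as in Lemma~\ref{lem:tree-fibre}.
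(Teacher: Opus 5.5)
Your proposal is correct and follows the paper's proof: the embedding and fibration statements come from Lemma~\ref{lem:DS-d} together with successive application of Lemma~\ref{lem:tree-attach}, and $\p^\hidden\XX=\emptyset$ comes from Remark~\ref{rem:no-hidden-} combined with the implication~\eqref{eq:hidden-attach} of Remark~\ref{rem:hidden-attach}, applied at each attachment. Your extra details make explicit what the paper leaves implicit: the local product structure via Lemmas~\ref{lem:tree-fibre} and~\ref{lem:tree-struct}, the stratawise submersivity of $\pi_{root}$ on $\YY_{T_i}$, and the justification of Stokes needed for (GEN2).
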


\begin{proof}
The first sentence follows from Lemma~\ref{lem:DS-d} together with
successive application of 
Lemma~\ref{lem:tree-attach}. 
The last assertion follows from Remark~\ref{rem:no-hidden-} and
Remark~\ref{rem:hidden-attach}. 
\end{proof}


\begin{remark}\label{rem:smooth-L1}
Let us elaborate on the boundary component $\p_l\XX$ corresponding to
the doubly special edge $l=(A,Z)$. The transversality condition (AT)
for $l$ implies that $\p_l\XX$ is an $S^{n-1}$-bundle over $\wh\Delta_2^l$.
Recall from Remark~\ref{rem:trans-upgrade} that
the map $\iota_\ver\times \phi_0$ is transverse to the family
$\Delta_2^\Gamma$. Note that $\wh\Delta_2^l$ is the preimage
inder $\iota_\ver\times \phi_0$ of the set of points that belong to
$\Delta_2^l$ and no other members of the family $\Delta_2^\Gamma$,
whereas $\Delta_{\ver}\times gr(\phi_0|_{\overset{\circ} {D_f^d}})$ 
is the preimage under $\iota_\ver\times \phi_0$ of the set of points
that belong to $\Delta_2^l$ as well as possibly other members of the
family $\Delta_2^\Gamma$. 
This shows that $\wh\Delta_2^l$ is a full measure subset of 
$\Delta_{\ver}\times gr(\phi_0|_{\overset{\circ} {D_f^d}})$.
Let $\Gamma_1,\Gamma_2$ be the two trees with special vertices
obtained by collapsing the doubly special edge of $\Gamma$. 
Then $\Delta_\ver=\Delta_{\ver}^{\Gamma_1}\times
\Delta_{\ver}^{\Gamma_2}$, so we get an inclusion as a full measure subset
\begin{equation}\label{eq:smooth-L1}
  \wh \Delta_2^l\into \Delta_{\ver}^{\Gamma_1}\times
  \Delta_{\ver}^{\Gamma_2} \times gr(\phi_0|_{\overset{\circ}{D_f^d}}). 
\end{equation}
\end{remark}

\section{Ribbon graphs}\label{sec:graphs}

This section contains background material on ribbon graphs. It is
structured as follows. 
In~\S\ref{ss:ribbonbasicdef} we introduce the basic notions concerning
ribbon graphs, and 
in~\S\ref{ss:specvert} we define the six main types of graphs used in
this paper. 
In~\S\ref{ss:basiccomb} we introduce (extended) labellings for ribbon
graphs, the reordering map $\bar R_\Gamma$, and the sign exponent
$\eta_3(\Gamma)$. 
In~\S\ref{ss:automorph} we discuss automorphisms. 
In~\S\ref{ss:op-graphs} we introduce three operations on ribbon
graphs: disjoint union, cutting, and gluing.
In~\S\ref{ss:op-graphs2} we introduce three more
operations: duality, attaching a leg, and attaching a tree.

\subsection{Basic definitions}\label{ss:ribbonbasicdef}

In this paper, by a {\em ribbon graph}
(or just a {\em graph}) $\Gamma$ we mean a finite graph
with a cyclic ordering of the adjacent edges at each vertex and with
some degree $1$ vertices removed.\footnote{
In contrast to~\cite{Cieliebak-Fukaya-Latschev} we do not require
$\Gamma$ to be connected.} 
Note that if $\Gamma$ is a tree then its ribbon structure can
equivalently be given by embedding it in the plane, so in this case
``planar'' and ``ribbon'' are synonymous. 
We assume that at most one vertex is removed from each edge, and we
will refer to the edges with a vertex removed not as edges but as {\em
  leaves}, so that an {\em edge} still ends in two vertices.
We denote the sets of vertices, edges and leaves of $\Gamma$ by $\Ver(\Gamma)$,
$\Edge(\Gamma)$ and $\Leaf(\Gamma)$, and 
their cardinalities by
$$
   k=|\Ver(\Gamma)|,\qquad e=|\Edge(\Gamma)|,\qquad s=|\Leaf(\Gamma)|.
$$
We require $k\geq 1$. A ribbon graph $\Gamma$ can be thickened in a unique way 
(up to orientation preserving homeomorphism)
to a compact oriented surface $\Sigma_\Gamma$ with boundary such that
each leaf ends on the boundary $\p\Sigma_\Gamma$.
We denote
$$
  \ell = \text{number of boundary components of }\Sigma_\Gamma,\qquad 
   g = \text{genus of }\Sigma_\Gamma. 
$$
Note that the boundary components of $\Sigma_\Gamma$ induce additional
structure on the set of leaves: it gets subdivided into
subsets according to the boundary components, and each subset obtains
a cyclic order according to the boundary orientation. 

A {\em flag} in $\Gamma$ is a pair $(v,t)$ consisting of a 
vertex $v$ and an adjacent 
edge or leaf $t$. 
A flag $(v,t)$ is called {\em interior} if $t$ is an edge, and {\em
  exterior} if $t$ is a leaf. 
We denote the set of flags of $\Gamma$ by $\Flag(\Gamma)$ and its
cardinality by
$$
  f=|\Flag(\Gamma)|. 
$$
For our purposes it will be
convenient to describe a ribbon graph in terms of its flags: a leaf
corresponds to a single flag $x$, an edge corresponds to an
unordered pair $\{x,y\}$ of flags, and a vertex corresponds to a cyclically ordered
tuple $[x_1,\dots,x_d]$ of flags. An oriented edge corresponds to an
ordered pair $(x,y)$, and a vertex with an ordering of the
adjacent edges corresponds to an ordered tuple $(x_1,\dots,x_d)$.
An {\em isomorphism} $\Gamma\to\Gamma'$ between ribbon graphs is a
bijection $\Flag(\Gamma)\to \Flag(\Gamma')$ mapping leaves to leaves,
vertices to vertices, edges to edges, and preserving the cyclic orderings at the
vertices. It induces a homeomorphism $\Sigma_{\Gamma}\to\Sigma_{\Gamma'}$.

{\bf Marked ribbon graphs. }
We say that a connected ribbon graph $\Gamma$ is {\em marked} if an
edge is chosen, and {\em o-marked} if the marked edge is in addition
oriented. A (o-)marked graph will be denoted by $(\Gamma,l)$ with $l$
the (oriented) marked edge.

{\bf Special vertices. }
A graph may have some chosen vertices called {\em special
  vertices}. Flags adjacent to special vertices will be called {\em
  special flags} (they can be either interior or exterior). An edge
containing a special flag is called {\em special}, and an edge 
containing two special flags is called {\em doubly special}. 
For certain types of marked graphs (see below) with special 
vertices the marked edge can (and will) be canonically oriented.

{\bf Rooted trees. }
A {\em root} in a tree is a distinguished univalent vertex. By a 
{\em rooted tree} we will mean a connected planar tree with a
unique root.

\subsection{The six types of graphs}\label{ss:specvert}

Now we describe the six main types of graphs used in this paper. 
They are all required to satisfy the following two conditions:
\begin{equation}\label{eq:one-boundary-vertex}
\text{Each boundary component of $\Sigma_\Gamma$ has at least one leaf
  ending on it.}
\end{equation}
\begin{equation}\label{eq:nonspec-trivalent}
\text{All nonspecial vertices of $\Gamma$ are trivalent.}
\end{equation}
Special vertices are allowed to have any positive valency.

{\bf (s) Trivalent graphs. }
Graphs of this type are connected trivalent ribbon graphs without
special vertices.

{\bf (o) Trivalent trees. }
Graphs of this type are connected trivalent planar trees without special vertices. 

{\bf (i) Trees with one special vertex (see Figure~\ref{fig:1S}). }
\begin{figure}
\begin{center}
\includegraphics[angle=0,origin=c,width=\textwidth]{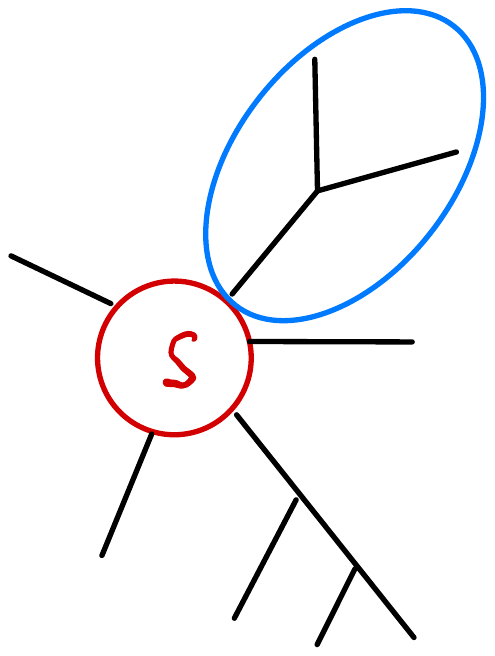}
\vspace{-3cm}
\caption{Tree with one special vertex}
\label{fig:1S} 
\end{center}
\end{figure}
Graphs $\Gamma$ of this type are connected planar trees with one
special vertex $S$ of degree $d\ge 1$. 
Then $\Gamma$ can be represented as a union of $d$ planar rooted trees 
with their roots glued together at the special vertex $S$. We call
them the {\em rooted components} of $\Gamma$. Figure~\ref{fig:1S} 
shows a rooted component encircled in blue.
Here and in the following special vertices are drawn as red circles.
Observe the inequality (with $s$ the number of leaves)
\begin{equation}\label{eq:tree-1-spec-num}
d\le s.
\end{equation}
{\bf (ii) Trees with two special vertices (see Figure~\ref{fig:2S}). }
\begin{figure}
\begin{center}
  \includegraphics[
    origin=c,width=\textwidth]{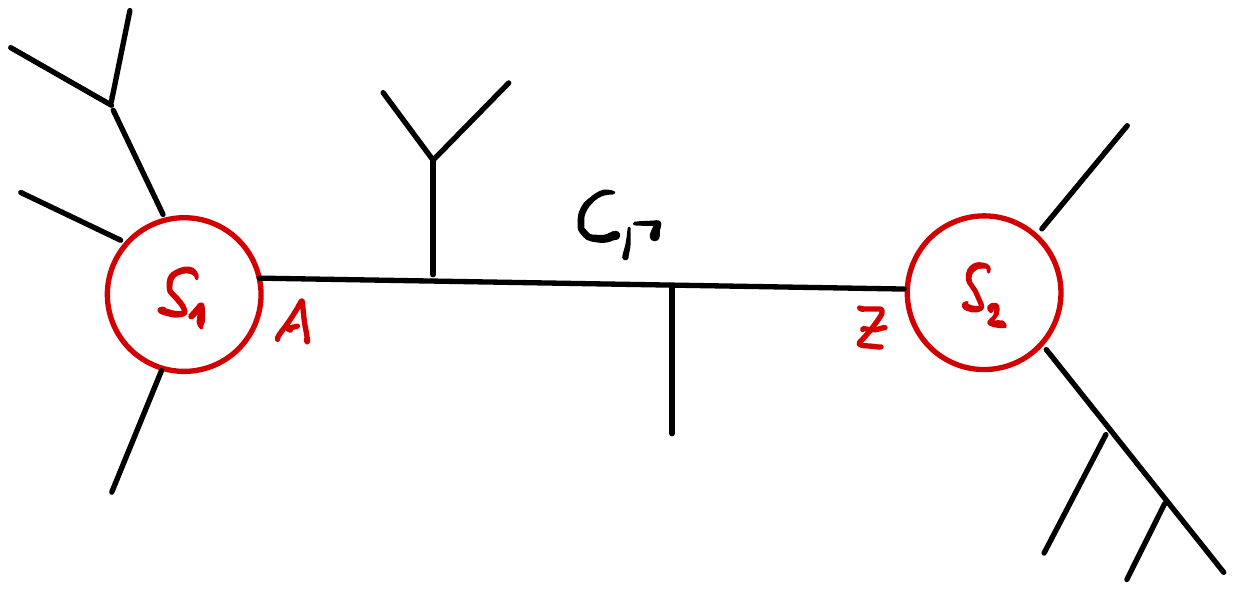}
\vspace{-3cm}
\caption{Tree with two special vertices}
\label{fig:2S} 
\end{center}
\end{figure}
Graphs $\Gamma$ of this type are connected planar 
trees with two special vertices $S_1,S_2$ of degrees $d_1,d_2\geq 1$. 
We consider the numbering of the special vertices 
$S_1,S_2$ part of the data of $\Gamma$.
Then $\Gamma$ can be represented as a union of $d_1-1$ rooted trees 
with roots at $S_1$, $d_2-1$ rooted trees with roots at $S_2$, and 
a {\em connecting tree} $C_\Gamma$ with two roots --- one corresponding to a special
flag (called $A$) at $S_1$ and the other to a special flag (called
$Z$) at $S_2$. Observe the inequality 
\begin{equation}\label{eq:tree-2-spec-num}
d_1+d_2\le s+2.
\end{equation}
Note that equality is achieved if and only if the graph has only one
edge (which must then be doubly special). 

{\bf (iii) Circular graphs. }
Graphs $\Gamma$ of this type are trivalent ribbon graphs without
special vertices whose surface $\Sigma_\Gamma$ is an annulus. Thus
$\Gamma$ has two boundary components and a unique nontrivial embedded cycle. 

{\bf (iv) Circular graphs with one special vertex
(see Figure~\ref{fig:circ}). }
\begin{figure}
\begin{center}
\includegraphics[angle=0,origin=c,width=\textwidth]{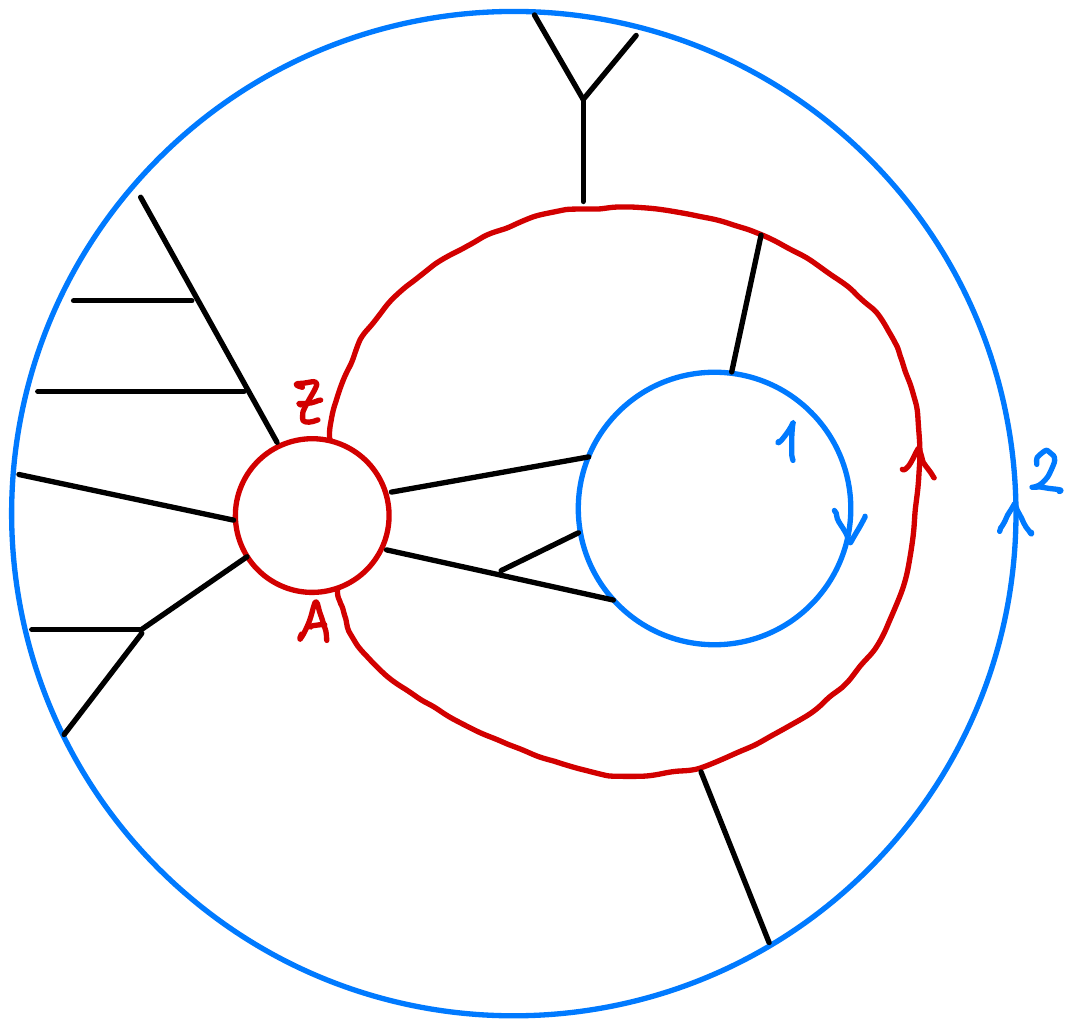}
\vspace{-1cm}
\caption{Circular graph with one special vertex}
\label{fig:circ} 
\end{center}
\end{figure}
Graphs $\Gamma$ of this type are ribbon graphs for the annulus with 
one special vertex $S$ of degree $d$. 
Assume that a numbering of the two boundary components is chosen. Then
we orient the unique nontrivial embedded cycle (drawn in red) by sliding it to the
{\em second} boundary component and picking up the boundary
orientation. Note that the special vertex $S$ may or may not lie on
the cycle. We denote the number of leaves on the $b$-th boundary
component by $s_b$, $b=1,2$. 

If the special vertex $S$ does not lie on the cycle, then there exists
a unique special flag connected to the cycle by a chain of edges not
crossing $S$. We denote this flag by $A$. Observe the inequality 
\begin{equation}\label{eq:circ-1-spec-off-cyc}
  1\le d\le s_1+s_2.
\end{equation}
If the special vertex $S$ lies on the cycle, then
we denote by $A$ the special flag emanating from $S$ in the
direction of the cycle and by $Z$ the incoming one.
Observe the inequality 
\begin{equation}\label{eq:circ-1-spec-on-cyc}
2\le d\le s_1+s_2+2.
\end{equation}
Note that equality $d=s_1+s_2+2$ is achieved if and only if the graph
has only one edge (which must then be doubly special). Note also that 
a doubly special edge must be a self-loop.

\subsection{Labellings, edge and vertex orders}\label{ss:basiccomb}

\begin{definition}\label{def:labelling}
Consider the following additional data on a ribbon graph $\Gamma$: 
\begin{enumerate}
\item a numbering of the boundary components of $\Sigma_\Gamma$ by
   $1,\dots,\ell$; 
\item a numbering of the leaves ending on the $b$-th boundary
  component by $1,\dots,s_b$ compatible with the cyclic order given by the orientation;
\item a numbering of the vertices; 
\item a numbering of the flags at each vertex compatible with the
  cyclic order; 
\item a numbering of the edges by $1,\dots,e$;
\item an orientation of each edge.
\end{enumerate}
We call the data (i) and (ii) a {\em labelling}, the additional data
(iii) -- (vi) an {\em extension of the labelling}, and all data
together an {\em extended labelling}. 
Note that a labelling is uniquely determined by an ordered set of
$\ell$ leaves, the first ones on their boundary components. 
\end{definition}

\begin{remark}\label{rem:labelling}
This notion of a labelling differs from the one
in~\cite{Cieliebak-Fukaya-Latschev} where a ``labelling'' comprises items
(i)--(iv). The reason for this is that the operations associated to
ribbon graphs in~\cite{Cieliebak-Fukaya-Latschev} require a choice of
items (i)--(iv), whereas the operations in this paper require only a choice of items (i)--(ii). 
\end{remark}

An {\em isomorphism} $\Gamma\to\Gamma'$ between labelled ribbon graphs
is an isomorphism of ribbon graphs which preserves the labellings. In
other words, the induced homeomorphism $\Sigma_{\Gamma}\to\Sigma_{\Gamma'}$ 
matches the numberings of the boundary components and leaves. 

Next, we introduce notation for certain isomorphism classes of
connected ribbon graphs and describe some standardizations on
extensions of their labellings in the presence of special vertices. 
We follow the notation of the previous subsection. In particular, all
the graphs are required to satisfy conditions~\eqref{eq:one-boundary-vertex}
and~\eqref{eq:nonspec-trivalent}. 

{\bf Rooted trees. }
Rooted planar trees will always be given the canonical labelling by
numbering leaves counterclockwise from the root.
The set of isomorphism classes of
rooted planar trees with $s$ leaves will be denoted by $RT_s$. The
corresponding set of rooted trees with all nonroot vertices being
trivalent will be called $RT_s^3$. 

{\bf (s) Trivalent graphs. }
The set of isomorphism classes of labelled connected trivalent ribbon graphs
without special vertices of genus $g\ge 0$ with $\ell\ge 1$ boundary components 
will be denoted by $\RR_{\ell,g}$. This notation is borrowed from~\cite{Cieliebak-Volkov}.

{\bf (o) Trivalent trees. }
The set of isomorphism classes of labelled connected trivalent ribbon trees
without special vertices with $s$ leaves will be denoted by $\RR_s$.

{\bf (i) Trees with one special vertex.} 
Consider a labelled planar tree $\Gamma$ with one special vertex
$S$ of degree $d$. We number its rooted components in counterclockwise
order $T_1,\dots,T_d$ where $T_1$ contains the leaf number $1$. We
denote by $A$ the special flag given by the root of $T_1$.
We require that any extension of the labelling be
standardized as follows: the special vertex $S$ is the {\em last} one
in the numbering of vertices, and the special flag $A$ is the first
one in the numbering of flags around $S$. 
We call the labelling on $\Gamma$ {\em canonical} if the induced
labelling on $T_1$ is the canonical one as a rooted tree (i.e., leaf
number $1$ in the labelling comes right after the root in the
counterclockwise order on $T_1$). 
We denote the set of isomorphism classes of labelled planar trees with
$s$ leaves and one special $d$-valent vertex by $\RR_{s;d}$, and its
subset of such trees with canonical labelling by 
$\RR_{s;d}^\can\subset \RR_{s;d}$. 

{\bf (ii) Trees with two special vertices. } 
Given a labelled tree with two special vertices, we require that any
extension of the labelling be standardized as follows: 
the special vertices $S_1,S_2$ are the {\em last} two in the numbering
of the vertices, and the special flags $A$ and $Z$ are the first ones
in the numberings of flags around $S_1$ and $S_2$. 
We denote the set of isomorphism classes of labelled trees with $s$
leaves and two special vertices of degrees $d_1,d_2$ by $\RR_{s;d_1,d_2}$.

{\bf (iv) Circular graphs with one special vertex.}
For an extended labelling on a circular graph with one special vertex,
we require that the special vertex $S$ be the {\em last} one in the
numbering of vertices, and the special flag $A$ be the first one in
the numbering of flags around $S$. We denote the set of isomorphism
classes of labelled circular graphs with one special $d$-valent vertex
and $s_1,s_2$ leaves on the boundary components by $\RR_{s_1,s_2;d}$. 

{\bf Generalized labellings. }
We will need one more variation of the concept of a
labelling. Consider a planar tree $\Gamma$ with one special vertex.
A {\em generalized labelling} on $\Gamma$ is a labelling together with
a numbering of the special flags compatible with their cyclic order. 
Thus, in contrast to the above standardization of an extended
labelling, this allows the special flag $A$ to have {\it any} number
(not necessarily $1$). 
An isomorphism of trees with generalized labellings is a tree isomorphism
that preserves the generalized labellings. The set of isomorpism classes of generalized labelled trees 
with one special $d$-valent vertex and $s$ leaves will be denoted by
$\RR_{s;d}^\gen$. Observe that the cyclic group $\Z_d$ acts freely on 
$\RR_{s;d}^\gen$ by cyclic renumbering of the special flags, and the
subset $\RR_{s;d}\subset\RR_{s;d}^\gen$ (giving the special flag $A$
number $1$) is a fundamental locus for this action.

{\bf Marked graphs. }
An isomorphism class of marked ribbon graphs of a certain type will be
denoted by adding the upper index $m$ to the corresponding notation
for the isomorphism class of graphs without marking,
e.g.~$\RR_{s_1,s_2;d}^m$. It is understood that an isomorphism of
marked graphs must map the marked edge to the marked edge. 

{\bf Edge and vertex order. }
An extended labelling on a ribbon graph $\Gamma$ gives rise to two
bijective maps (where $f=|\Flag(\Gamma)|$)
$$ 
   O_e:\{1,\dots,2e,\dots,f\}\longrightarrow \Flag(\Gamma),\qquad
   O_v:\{1,\dots,f\}\longrightarrow \Flag(\Gamma). 
$$
The first one is called the {\em edge order} on $\Flag(\Gamma)$; it is
determined by items (i), (ii), (v) and (vi) of the extended labelling,
mapping the numbers $1,\dots,2e$ to the flags corresponding to edges
according to (v) and (vi), and the remaining numbers to the leaves
according to (i) and (ii). 
The second one is called the {\em vertex order}; it is determined by
items (iii) and (iv), numbering the flags in the order (iii) of vertices
and using the ordering (iv) at each vertex.   
By composition we obtain the {\em reordering permutation}
\begin{equation}\label{eq:reod}
   \bar R_\Gamma:=O_v^{-1}\circ O_e:\{1,\dots,f\}\longrightarrow 
   \{1,\dots,f\}.
\end{equation}
The map $\bar R_\Gamma$ behaves as follows under changes of the
extended labelling of $\Gamma$: a change in (i), (ii), (v) and (vi) leads to  
precomposition of the edge order $O_e$ with some permutation 
$\eta\in S_f$, whereas a change in (iii) and (iv) leads to precomposition 
of the vertex order $O_v$ with a some permutation $\sigma^{-1}$, so
altogether $\bar R_\Gamma$ is replaced by $\sigma\circ \bar R_\Gamma\circ \eta$. 
We say that $\eta$ and $\sigma$ act on a graph $\Gamma$ with extended labelling 
and give us a graph $\sigma\Gamma\eta$.

{\bf Relabellings. }
Consider a ribbon graph $\Gamma$. Denote by $B$ the (unordered) set of
its boundary components and by $\Leaf_b$ the (unordered) set of leaves on $b\in B$.
A labelling of $\Gamma$ can be described as a collection
$(\phi,\{\psi_b\}_{b\in B})$ of bijections
$$
  \phi:\{1,\dots,\ell\}\stackrel{\cong}\longrightarrow B,\qquad 
  \psi_b:\{1,\dots,s_b\}\stackrel{\cong}\longrightarrow \Leaf_b
$$
such that $\psi_b$ induces the cyclic order on $b$. The group
$$
  S_\ell\times\prod_{b\in B}\Z_{s_b}
$$
acts freely and transitively on labellings of $\Gamma$ by
$$
  (\rho,\{\sigma_b\}_{b\in B})\cdot (\phi,\{\psi_b\}_{b\in B})
  = (\phi\circ\rho,\{\psi_b\circ\sigma_b\}_{b\in B}).
$$
Note that a labelling $(\phi,\{\psi_b\}_{b\in B})$ induces a bijection 
$$
  lb:\{1,\dots,s\}\stackrel{\cong}\longrightarrow \Leaf(\Gamma),
$$
where $s=\sum_{b\in B}s_b$ is the number of leaves of $\Gamma$, 
by sending $1,\dots,s$ to 
$$
  \psi_{\phi(1)}(1),\dots,\psi_{\phi(1)}(s_{\phi(1)}),
  \psi_{\phi(2)}(1),\dots,\psi_{\phi(2)}(s_{\phi(2)}),
  \psi_{\phi(\ell)}(1),\dots,\psi_{\phi(\ell)}(s_{\phi(\ell)}).
$$
This allows us to view labellings as such bijections $lb$ satisfying
suitable conditions. Since any two such bijections $lb,lb'$ are
related by $lb'=lb\circ\eta$ for some $\eta\in S_s$, we get a
canonical injection 
$$ 
  g:S_\ell\times\prod_{b\in B}\Z_{s_b} \into S_s.
$$
under which the above group action on labellings corresponds to the
action 
$$
  lb\mapsto lb\circ\eta,\qquad \eta\in {\rm im}(g)\subset S_s.
$$
Note, however, that the injection $g$ is {\em not} a group homomorphism.
We call 
\begin{equation}\label{eq:relab}
  S(\bs) := {\rm im}(g)\subset S_s
\end{equation}
the {\em set of relabellings} associated to the partition
$\bs=\{s_b\}$ of $s=\sum_{b\in B}s_b$. If the $s_b$ are the numbers
of leaves on the boundary components of a graph $\Gamma$ we 
say that the partition $\bs$ is {\em induced by $\Gamma$}.

Recall that an isomorphism $\phi:(\Gamma,lb)\to(\Gamma',lb')$ is a
graph isomorphism $\phi:\Gamma\to\Gamma'$ such that the following
diagram commutes:
$$
\xymatrix{ 
  \{1,\dots,s\} \ar[r]^{lb} \ar[rd]^{lb'} & \Leaf(\Gamma) \ar[d]^\phi \\
  & \Leaf(\Gamma').
}
$$
Since a relabelling $\eta\in S(\bs)$ acts on $lb$ by precomposition,
its action descends to isomorphism classes of labelled graphs. In
particular, fixing a numbering $1,\dots,\ell$ of the boundary
components, the group $\Z_{s_1}\times\cdots\times\Z_{s_\ell}$ acts by
cyclic renumbering of the leaves on each boundary component on the
set of isomorphism classes of labelled graphs with $s_1,\dots,s_\ell$
leaves on their boundary components. For example, for the sets defined
above we get
\begin{itemize}
\item actions of $\Z_s$ on $\RR_s$, $\RR_{s;d}$ and $\RR_{s;d_1,d_2}$;
\item an action of $\Z_{s_1}\times\Z_{s_2}$ on $\RR_{s_1,s_2;d}$.
\end{itemize}
We also get the involution
\begin{equation}\label{eq:renumber-bdry-basic}
\tau:\RR_{s_1,s_2;d}\stackrel{\cong}{\longrightarrow} \RR_{s_2,s_1;d}
\end{equation}
swapping the order of the two boundary components.



{\bf Special labellings of o-marked ribbon graphs. }
Consider a labelled ribbon graph $\Gamma$ with an o-marked edge $l$.
The labelling is called {\em special} if it has the following
properties (see the right hand side of Figure~\ref{fig:cut-glue}):
\begin{itemize}
\item The boundary component that runs parallel to $l$ (in the
  direction of its orientation) on its left has number $1$ in the
  numbering of boundary components. The first leaf on this boundary
  component encountered when moving in the direction of the boundary
  orientation from the portion to the left of $l$ has number $1$ in
  the numbering of its leaves.
\item If the boundary component that runs parallel to $l$ on its right
  differs from the one on its left, then it has number $2$ in the
  numbering of boundary components. In this case, the first leaf on this boundary
  component encountered when moving in the direction of the boundary
  orientation from the portion to the right of $l$ has number $1$ in
  the numbering of its leaves. 
\end{itemize}

{\bf The sign exponent $\eta_3$. }

\begin{definition}\label{def:eta3}
Let $\Gamma$ be an extended labelled ribbon graph. We recall the basic
properties of $\eta_3(\Gamma)$ defined in~\cite[Appendix
  A]{Cieliebak-Fukaya-Latschev} for connected $\Gamma$. 
The sign exponent $\eta_3(\Gamma)$ does not depend on items (ii) and
(iv) in Definition~\ref{def:labelling}, and it changes by $1$ under
the following operations: 
\begin{itemize}
\item swapping the order of two adjacent boundary components in (i);
\item swapping the order of two adjacent vertices in (iii);
\item swapping the order of two adjacent edges in (v);
\item flipping the orientation of an edge in (vi).
\end{itemize}
For a disconnected graph $\Gamma:=\Gamma_1\amalg\Gamma_2$
with connected $\Gamma_1$ and $\Gamma_2$, we define 
$\eta_3(\Gamma):=\eta_3(\Gamma_1)+\eta_3(\Gamma_2)$ for the canonical
disjoint union extended labelling of $\Gamma$ and extend the definition 
to all other extended labellings of $\Gamma$ using the four items above.
\end{definition}

\subsection{Automorphisms}\label{ss:automorph}

By definition, an {\em automorphism} of a (labelled) ribbon graph is a
self-isomorphism. The following simple result (Lemma~2.1 
in~\cite{Cieliebak-Volkov}) is crucial for the discussion of
automorphisms. 

\begin{lemma}\label{lem:no-auto}
If an automorphism of a connected ribbon graph fixes at least one flag,
then it must be trivial.
\end{lemma}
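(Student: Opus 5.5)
The plan is to reduce everything to what happens at one vertex and then use connectedness to propagate. Let $\phi$ be an automorphism of the connected ribbon graph $\Gamma$, viewed as a bijection of $\Flag(\Gamma)$ preserving leaves, edges, vertices and the cyclic orders at the vertices, and suppose $\phi(x)=x$ for some flag $x$. Let $F\subset\Flag(\Gamma)$ be the set of flags fixed by $\phi$; it is nonempty by assumption. The aim is to show that $F$ is closed under the two elementary moves that connect all flags of a connected graph, so that $F=\Flag(\Gamma)$.

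\emph{Move 1 (rotation at a vertex).} Let $x\in F$ and let $v=[x_1,\dots,x_d]$ be the vertex containing $x=x_i$. Since $\phi$ maps vertices to vertices and $\phi(x)=x$, it maps $v$ onto itself. Since $\phi$ preserves the cyclic ordering, its restriction to $v$ is a cyclic rotation. A rotation of a cyclically ordered set that fixes one element is the identity, so every flag of $v$ lies in $F$.

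\emph{Move 2 (crossing an edge).} Let $x\in F$ belong to an edge $\{x,y\}$. Since $\phi$ maps edges to edges, $\phi(\{x,y\})$ is the edge containing $\phi(x)=x$, which is $\{x,y\}$ itself. Hence $\phi(y)=y$ and $y\in F$.

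\emph{Propagation.} Because $\Gamma$ is connected, any two flags are joined by a finite chain alternating these two moves: pass through the flags at a vertex, then cross an edge to the neighbouring vertex. Leaves are flags at their unique vertex, so they are reached by Move 1. By induction along such a chain, $F=\Flag(\Gamma)$, and therefore $\phi=\id$.

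There is no serious obstacle here. The only point needing care is that ``preserving cyclic orders'' is used exactly in Move 1, to rule out reflections and nontrivial rotations at a vertex. Connectedness of $\Gamma$ is essential: for disconnected graphs the statement fails.
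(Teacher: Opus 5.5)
Your proof is correct and complete. The vertex-rotation step, the edge-crossing step and propagation by connectedness are exactly what is needed, and you correctly identify where preservation of the cyclic orders enters. The paper itself gives no proof here; it quotes the result as Lemma~2.1 of \cite{Cieliebak-Volkov}, and your propagation argument is the standard one for this statement.
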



\begin{lemma}\label{lem:no-auto1}
(a) Connected trees with two special vertices have no nontrivial automorphisms. \\
(b) Any automorphism of a circular graph with one special vertex
that preserves its boundary components must be trivial.
\end{lemma}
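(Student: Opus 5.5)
The plan is to reduce both parts to Lemma~\ref{lem:no-auto}: in each case we exhibit a flag that every automorphism must fix. Connectedness then makes the automorphism trivial.

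For (a), let $\phi$ be an automorphism of a connected tree $\Gamma$ with two special vertices $S_1,S_2$. We regard the numbering of $S_1,S_2$ as part of the data of $\Gamma$, as in \S\ref{ss:specvert}. An isomorphism must respect this structure, so $\phi$ fixes $S_1$ and $S_2$. In a tree there is a unique embedded path from $S_1$ to $S_2$, namely the one through the connecting tree $C_\Gamma$. Since $\phi$ is a graph automorphism fixing both endpoints, it maps this path to itself. Hence $\phi$ fixes its first flag, which is the special flag $A$ at $S_1$, and Lemma~\ref{lem:no-auto} gives $\phi=\id$.

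One degenerate case needs a check: $S_1=S_2$ cannot occur, since there are two distinct special vertices. In the case where $C_\Gamma$ is a single doubly special edge, $A$ is still determined as the flag of that edge at $S_1$, so the argument goes through unchanged. If one instead takes the view that unlabelled isomorphisms need not preserve the numbering of special vertices, the main point to verify is that the paper's convention includes it; I would rely on the phrase ``numbering of the special vertices part of the data''.

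For (b), let $\phi$ be an automorphism of a circular graph with one special vertex $S$ that preserves both boundary components. Then $\phi$ fixes $S$, since it is the unique special vertex. It also maps the unique nontrivial embedded cycle to itself. Because $\phi$ preserves the boundary components, and in particular the second one, it preserves the orientation of the cycle obtained by sliding the cycle to the second boundary component.

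\emph{Case 1: $S$ lies on the cycle.} The flag $A$ is the special flag at $S$ emanating in the direction of the cycle. It is characterized intrinsically, so $\phi(A)=A$.

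\emph{Case 2: $S$ is off the cycle.} Then $A$ is the unique special flag connected to the cycle by a chain of edges not through $S$. This characterization is also intrinsic, so again $\phi(A)=A$.

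In both cases Lemma~\ref{lem:no-auto} yields $\phi=\id$. The main point to check is the uniqueness claims behind $A$, for instance when $S$ lies on the cycle and the cycle is a self-loop at $S$. There $A$ and $Z$ are the two flags of the loop, and the direction of the cycle distinguishes them, so preserving the boundary numbering (hence the cycle orientation) is exactly what rules out swapping $A$ and $Z$.
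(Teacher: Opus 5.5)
Your proof is correct and follows the paper's argument essentially verbatim. In (a) you use that the numbering of $S_1,S_2$ is part of the data to force $\phi$ to preserve the connecting path, hence to fix $A$. In (b) you fix $A$ either via the unique chain from $S$ to the cycle, or via the cycle orientation coming from the preserved boundary numbering. In both parts you conclude with Lemma~\ref{lem:no-auto}, and your remarks on the degenerate cases (a single doubly special edge, a self-loop at $S$) just make explicit what the paper leaves implicit.
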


\begin{proof}
For part (a), let $\phi$ be an automorphism of a tree $\Gamma$ with
two special vertices $S_1,S_2$. Since the numbering of the special
vertices is part of the structure of the graph, we have
$\phi(S_1)=S_1$ and $\phi(S_2)=S_2$. Since the subtree $C_\Gamma$
connecting $S_1$ and $S_2$ is uniquely defined, it must be preserved
by $\phi$. In particular the flags $A$ (adjacent to $S_1$) and $Z$
(adjacent to $S_2$) of $C_\Gamma$ are fixed by $\phi$. By
Lemma~\ref{lem:no-auto}, the automorphism $\phi$ must be trivial. 

For part (b), let $\phi$ be an automorphism of a circular graph
$\Gamma$ with one special vertex $S$ preserving the boundary components.
Then the unique embedded cycle of $\Gamma$ is preserved by $\phi$.
Assume first that the special vertex $S$ does not lie on the cycle.
Since $A$ is the unique flag of $S$ which is connected to 
the cycle via a chain of edges not crossing $S$, we have 
$\phi(A)=A$. Assume now that the special vertex $S$ lies on the
cycle. Since $\phi$ preserves the boundary components, it preserves
the orientation of the cycle. Therefore, $\phi(A)=A$ and $\phi(Z)=Z$.
In both cases, Lemma~\ref{lem:no-auto} implies that the automorphism
$\phi$ must be trivial. 
\end{proof}

Observe that trees without special vertices and trees with one special
vertex can have nontrivial automorphisms. 

Since leaves are flags, Lemma~\ref{lem:no-auto} has the following
immediate consequence. 

\begin{lemma}\label{lem:no-autoribb}
A labelled ribbon graph has no nontrivial automorphisms. 
\end{lemma}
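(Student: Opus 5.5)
The statement to prove is Lemma~\ref{lem:no-autoribb}: a labelled ribbon graph has no nontrivial automorphisms. I will argue componentwise and reduce to Lemma~\ref{lem:no-auto}.

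\emph{Step 1: an automorphism fixes every leaf.} Let $\phi$ be an automorphism of a labelled ribbon graph $(\Gamma,lb)$. By definition, $\phi$ is a graph isomorphism satisfying $\phi\circ lb=lb$, where $lb:\{1,\dots,s\}\to\Leaf(\Gamma)$ is the bijection induced by the labelling. Since $lb$ is surjective, $\phi$ fixes every leaf. Leaves are flags, so $\phi$ fixes every exterior flag.

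\emph{Step 2: each component is mapped to itself.} Write $\Gamma$ as a disjoint union of connected components. By condition~\eqref{eq:one-boundary-vertex}, every boundary component of $\Sigma_\Gamma$ carries a leaf. Every connected component of $\Gamma$ contributes at least one boundary component of $\Sigma_\Gamma$, so every connected component $\Gamma_i$ contains at least one leaf $x_i$. Because $\phi$ preserves adjacency and $\phi(x_i)=x_i$, it maps $\Gamma_i$ onto itself.

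\emph{Step 3: apply Lemma~\ref{lem:no-auto}.} The restriction $\phi|_{\Gamma_i}$ is an automorphism of the connected ribbon graph $\Gamma_i$ that fixes the flag $x_i$. By Lemma~\ref{lem:no-auto}, $\phi|_{\Gamma_i}$ is trivial for every $i$, hence $\phi$ is trivial.

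The only point needing care is Step~2, i.e.\ that each component carries a leaf. This uses condition~\eqref{eq:one-boundary-vertex}, which all graphs considered here satisfy; for a component without leaves the argument, and the statement, would fail. Lemma~\ref{lem:no-auto} is taken as given, since it was quoted earlier.
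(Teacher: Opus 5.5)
Your proof is correct and follows the paper's argument: the paper simply notes that leaves are flags, so they are fixed by any automorphism preserving the labelling, and then invokes Lemma~\ref{lem:no-auto}. Your Step~2 uses condition~\eqref{eq:one-boundary-vertex} to guarantee a fixed leaf in each connected component, which makes explicit a point the paper leaves implicit for disconnected graphs.
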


The following statement is an immediate corollary of
Lemma~\ref{lem:no-auto1}. 

\begin{lemma}\label{lem:free}
The actions of $\Z_s$ on $\RR_{s;d_1,d_2}$ and of $\Z_{s_1}\times
\Z_{s_2}$ on $\RR_{s_1,s_2;d}$ defined in~\S\ref{ss:basiccomb} are free. 
\end{lemma}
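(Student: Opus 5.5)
The plan is to derive Lemma~\ref{lem:free} directly from Lemma~\ref{lem:no-auto1}, using that the relabelling actions are defined by precomposition of the labelling bijection $lb$. Freeness means the following: if a relabelling $\eta$ fixes the isomorphism class of a labelled graph $(\Gamma,lb)$, then $\eta$ is trivial.

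So suppose $\eta\in\Z_s$ (resp.~$\eta\in\Z_{s_1}\times\Z_{s_2}$) satisfies $[(\Gamma,lb\circ\eta)]=[(\Gamma,lb)]$. Then there is an isomorphism $\phi:(\Gamma,lb)\to(\Gamma,lb\circ\eta)$. Its underlying map $\phi:\Gamma\to\Gamma$ is an automorphism of the unlabelled ribbon graph, and it satisfies $\phi\circ lb=lb\circ\eta$ on leaves.

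In the case of $\RR_{s;d_1,d_2}$, this $\phi$ is an automorphism of a connected tree with two numbered special vertices. By Lemma~\ref{lem:no-auto1}(a), $\phi=\id$. Hence $lb=lb\circ\eta$, and since $lb$ is a bijection, $\eta=\id$.

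In the case of $\RR_{s_1,s_2;d}$, the relabelling group $\Z_{s_1}\times\Z_{s_2}$ only cyclically renumbers leaves within each boundary component and keeps the numbering of the two components. Therefore $\phi$ maps the leaves of boundary component $b$ to leaves of the same component. It follows that the induced homeomorphism of $\Sigma_\Gamma$ preserves each boundary component, since each component carries at least one leaf by~\eqref{eq:one-boundary-vertex}. Lemma~\ref{lem:no-auto1}(b) then gives $\phi=\id$, and as before $\eta=\id$.

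The only point requiring care is the boundary-preservation step in case (iv). It relies on condition~\eqref{eq:one-boundary-vertex}: every boundary component has a leaf, so matching leaves component-wise forces $\phi$ to fix both components. This condition ensures the step goes through, and everything else is formal.
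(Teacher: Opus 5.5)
Your proof is correct and follows the paper's route. The paper states the lemma as an immediate corollary of Lemma~\ref{lem:no-auto1}, and you spell out that deduction: a stabilizing relabelling $\eta$ gives an automorphism $\phi$ with $\phi\circ lb=lb\circ\eta$, which is trivial by Lemma~\ref{lem:no-auto1}, forcing $\eta=\id$; in the circular case you also correctly use~\eqref{eq:one-boundary-vertex} to check that $\phi$ preserves the boundary components.
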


\subsection{Operations on ribbon graphs 1}
\label{ss:op-graphs}

In this subsection we describe three operations on ribbon graphs:
disjoint union, cutting, and gluing. 

{\bf Disjoint union.}
Let $\Gamma_1$ and $\Gamma_2$ be two connected (extended) labelled graphs. Then the 
disjoint union $\Gamma_1\amalg\Gamma_2$ inherits a natural (extended)
labelling, putting $\Gamma_1$ before $\Gamma_2$ in all numberings. We
will assume this (extended) labelling on $\Gamma_1\amalg\Gamma_2$
unless otherwise specified. 

{\bf Cutting and gluing (see Figure~\ref{fig:cut-glue}). }
\begin{figure}
\begin{center}
\includegraphics[angle=0,origin=c,width=\textwidth]{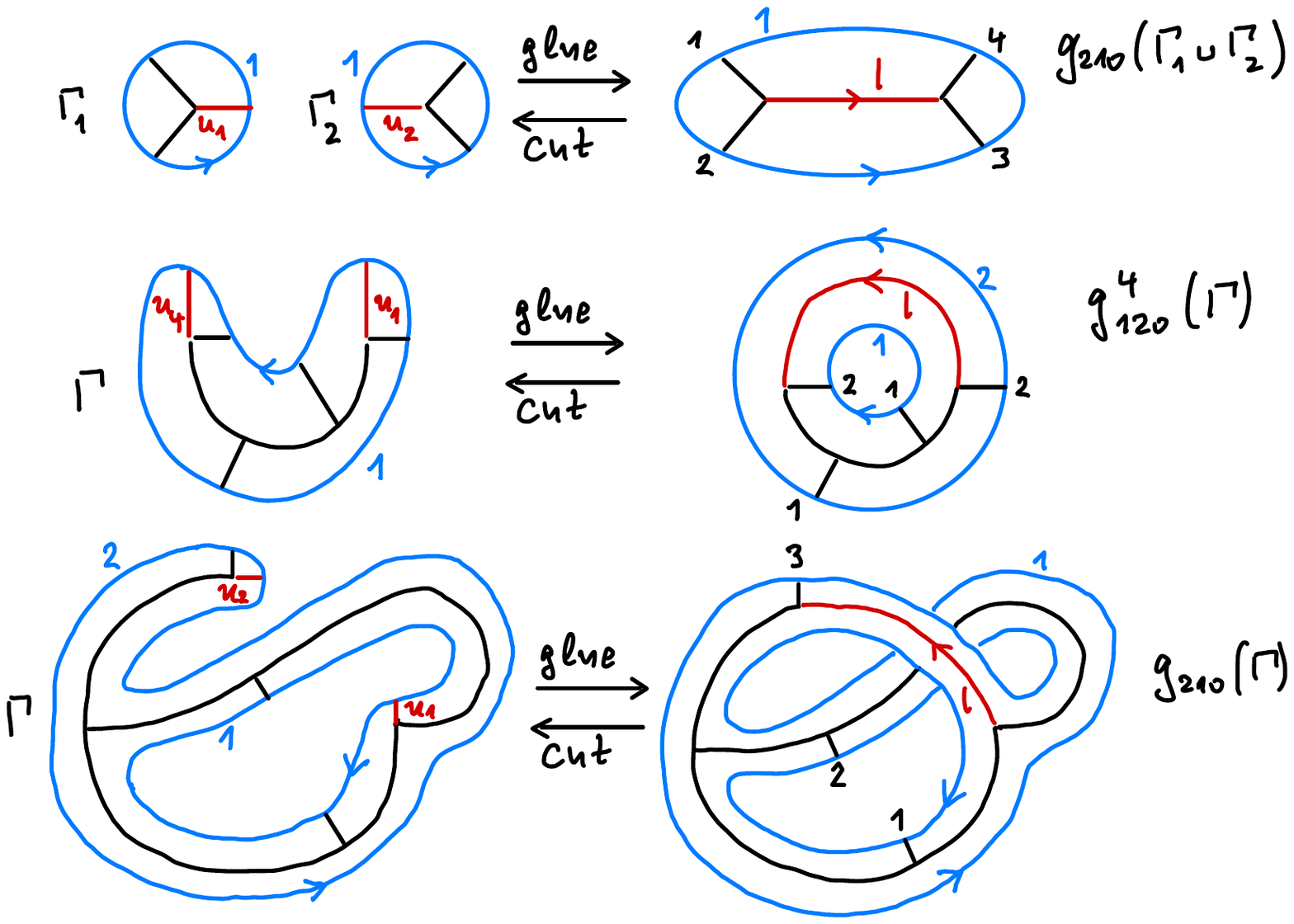}
\caption{Cutting and gluing}
\label{fig:cut-glue} 
\end{center}
\end{figure}
Given a marked graph $(\Gamma,l)$ we define a new graph $\Gamma\setminus
l$ by cutting open the marked edge $l$. Formally, this just means that
we remove the corresponding unordered pair $l=\{u,v\}$ from the set of
edges, so the interior flags $u,v$ become exterior ones. Note
that the resulting graph $\Gamma\setminus l$ is not marked, but it has
a distinguished unordered pair of leaves $u,v$. 

Conversely, given a graph $\Gamma$ with a distinguished
unordered pair of leaves $u,v$ we define a new graph
$\Gamma\cup\{u,v\}$ by gluing $u$ and $v$ to a new edge.
Formally, this just means that we add the unordered pair $l:=\{u,v\}$ to
the set of edges, so the exterior flags $u,v$ become interior
ones. Note that the resulting graph $\Gamma\cup\{u,v\}$ has no more
distinguished unordered pair of leaves, by it is marked by the
interior edge $l=\{u,v\}$.
The resulting cutting and gluing operations
$$
   \{\text{marked graphs}\} \longleftrightarrow \{\text{graphs with
     a given unordered pair of leaves}\} 
$$
are clearly inverse to each other, and they induce operations
$$
   \{\text{o-marked graphs}\} \longleftrightarrow \{\text{graphs with
     a given ordered pair of leaves}\} 
$$

Next we discuss how (extended) labellings get transferred under these operations.
Consider a graph $\Gamma$ with a given ordered pair of
leaves $u,v$. Suppose we are given a labelling for
$\Gamma$ such that $u$ comes before $v$ in the ordering of the
leaves, and denote the difference of their positions by $|v-u|$.
Assume that $u,v$ lie either on the same boundary component (in which
case we require that $u,v$ are not adjacent in the cyclic order on
that component), or on adjacent ones in the ordering of boundary components.  
The glued graph $\Gamma\cup(u,v)$ inherits a labelling by requalifying
the flags $u$ and $v$ from ``exterior'' to ``interior'', and keeping
the ordering of the remaining leaves.  

Assume now that an extension of the labelling of $\Gamma$ is given. 
Then the labelling of the glued graph $\Gamma\cup(u,v)$ inherits an extension by
putting the new oriented edge $l=(u,v)$ in first position for the edge
order.
Note that the vertex order of flags remains the same under the gluing operation.
Since we must move $u$ past $|v-u|-1$ leaves to put it next to
$v$ in the edge order, the sign exponents of the reordering maps of
the two graphs are related by 
\begin{equation}\label{eq:gluing-sign}
   \bar R_{\Gamma\cup(u,v)} \equiv \bar R_\Gamma + |v-u|-1.
\end{equation}
The preceding discussion gives rise to the following gluing operations.

\begin{definition}\label{def:g-prod}
Let $\Gamma$ be a labelled graph with at least two boundary components
having $s_1$ resp.~$s_2$ leaves on the first two boundary components. Assume that $s_1+s_2\ge 3$.
Let $u$ and $v$ be the first leaves on the first two boundary components.
We define the o-marked labelled graph
$$
  g_{210}(\Gamma):=\Gamma\cup (u,v).
$$
\end{definition}

\begin{definition}\label{def:g-coprod}
Let $\Gamma$ be a labelled graph with $s\geq 4$ leaves on the first
boundary component. Let $u_1,u_j$ be the first and the $j$-th leaf on
the first boundary components for some $j\in \{3,\dots,s-1\}$.
We define the o-marked labelled graph
$$
g_{120}^j(\Gamma):=\Gamma\cup (u_1,u_j).
$$
\end{definition}

Next we discuss some special cases in more detail.

{\bf Trees. }
Consider two labelled trees $\Gamma_1,\Gamma_2$ possibly with special
vertices as below: 
\begin{equation}\label{eq:g210-glue}
\begin{cases}
 \text{either } \Gamma_1\in \RR_{s_1;d_1} \text{ and } 
 \Gamma_2\in \RR_{s_2;d_2},\cr 
 \text{or } \Gamma_1\in \RR_{s_1} \text{ and } 
 \Gamma_2\in \RR_{s_2;d_1,d_2}.
 \end{cases}
\end{equation} 
Assume that $s_1+s_2\ge 3$. 
The resulting tree
$g_{210}(\Gamma_1\amalg\Gamma_2)$ has two special vertices, $s:=s_1+s_2-2$ leaves,
and a marked edge oriented towards the second special vertex. As
explained above, this tree is given a special labelling. 
The set of isomorphism classes of such trees 
will be denoted by $\RR_{s;d_1,d_2}^{ms}$. 
Dropping the condition on the labelling being 
special we get the set $\RR_{s;d_1,d_2}^{m}$.
Let $\RR_{s;d_1,d_2}^{\rm sep}$ denote the subset of trees in
$\RR_{s;d_1,d_2}^{m}$ for which the marked edge separates the two
special vertices (i.e., every path connecting the two special vertices
passes though the marked edge), and $\RR_{s;d_1,d_2}^{\rm nonsep}$ the
subset of trees for which the marked edge does not separate the
special vertices, so that we have a natural splitting
\begin{equation}\label{eq:Rom-splitting}
\RR_{s;d_1,d_2}^{m}=\RR_{s;d_1,d_2}^{\rm sep}\amalg \RR_{s;d_1,d_2}^{\rm nonsep}.
\end{equation}
The gluing operation gives thus two natural maps
\begin{equation}\label{eq:def-gl}
\begin{cases}
  gl_1:
  \coprod_{s_1+s_2=s+2}
  \RR_{s_1;d_1}\times 
  \RR_{s_2;d_2}\stackrel{\cong}{\longrightarrow} \RR_{s;d_1,d_2}^{\rm sep}
  \cap \RR_{s;d_1,d_2}^{ms}\subset \RR_{s;d_1,d_2}^{\rm sep}  ,\cr
  gl_2:
  \coprod_{s_1+s_2=s+2}
  \RR_{s_1}\times \RR_{s_2;d_1,d_2}\stackrel{\cong}{\longrightarrow} 
  \RR_{s;d_1,d_2}^{\rm nonsep}\cap
  \RR_{s;d_1,d_2}^{ms}\subset \RR_{s;d_1,d_2}^{\rm nonsep}.
\end{cases}
\end{equation}
The following statement is an immediate corollary of Lemma~\ref{lem:free}.

\begin{lemma}\label{lem:prodfree}
The actions of $\Z_s$ by cyclic relabelling of leaves on $\RR_{s;d_1,d_2}^{\rm sep}$
and on $\RR_{s;d_1,d_2}^{\rm nonsep}$ are free. The images of the maps $gl_1$
and $gl_2$ are fundamental domains for this action.
\end{lemma}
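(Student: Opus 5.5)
Lemma~\ref{lem:prodfree} should follow from Lemma~\ref{lem:free} combined with a bijection between cyclic orbits and special labellings. I will prove the two claims separately: freeness of the $\Z_s$-action, and then that the images of $gl_1$ and $gl_2$ meet every orbit exactly once.

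\emph{Freeness.} First I would check that the splitting~\eqref{eq:Rom-splitting} is $\Z_s$-invariant. A cyclic relabelling of leaves changes neither the underlying o-marked tree nor whether the marked edge separates $S_1$ from $S_2$. So it suffices to prove freeness on $\RR^m_{s;d_1,d_2}$. Suppose $\sigma\in\Z_s$ fixes the isomorphism class of $(\Gamma,l)$. Then there is an isomorphism of o-marked trees $\phi:\Gamma\to\Gamma$ with $lb\circ\sigma=\phi\circ lb$. Forgetting the marking, $\phi$ is an automorphism of a tree with two special vertices. Lemma~\ref{lem:no-auto1}(a) then gives $\phi=\id$, so $lb\circ\sigma=lb$, and since $lb$ is a bijection, $\sigma=\id$. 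Equivalently, I can quote the freeness on $\RR_{s;d_1,d_2}$ from Lemma~\ref{lem:free} and observe that the forgetful map to $\RR_{s;d_1,d_2}$ is $\Z_s$-equivariant; freeness downstairs implies freeness upstairs.

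\emph{Fundamental domains.} The statement~\eqref{eq:def-gl} already says that $gl_1$ and $gl_2$ are bijections onto $\RR^{\rm sep}\cap\RR^{ms}$ and $\RR^{\rm nonsep}\cap\RR^{ms}$. I would verify this by noting that cutting the marked edge inverts gluing, and that under cutting a special labelling restricts exactly to labellings of the two pieces with $u,v$ as first leaves. It then remains to show that each $\Z_s$-orbit in $\RR^m_{s;d_1,d_2}$ contains exactly one specially labelled representative.

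A tree has a single boundary component, so a labelling is determined by which leaf receives number $1$. The special condition picks out one specific leaf: the first leaf met when moving along the boundary, in the direction of its orientation, from the portion running to the left of the oriented edge $l$. This leaf depends only on $(\Gamma,l)$, so exactly one $\sigma\in\Z_s$ moves a given labelling to the special one. The second bullet of the definition is vacuous for trees, because the boundary components on the two sides of $l$ coincide. Existence and uniqueness of the representative then follow, using freeness to rule out two distinct special representatives in one orbit.

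The main obstacle is the well-definedness point: the special leaf must be defined intrinsically and compatibly with isomorphisms of o-marked trees. Condition~\eqref{eq:one-boundary-vertex} ensures $s\ge1$, so some leaf is met. I would also check that the count $s=s_1+s_2-2$ matches the gluing conventions of Definition~\ref{def:g-prod}, where the requirement $s_1+s_2\ge3$ guarantees $s\ge1$.
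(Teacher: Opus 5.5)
Your proposal is correct and follows the paper's route. The paper records this as an immediate corollary of Lemma~\ref{lem:free}, whose freeness rests on Lemma~\ref{lem:no-auto1}(a). The fundamental-domain claim comes, as in your argument, from the fact that each cyclic orbit contains exactly one specially labelled representative. That representative is determined intrinsically by the o-marked tree, so uniqueness actually follows from isomorphism-invariance of the special labelling, even without invoking freeness.
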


{\bf Circular graphs. }
For circular graphs we have three different cases.

{\bf Case 1. }
Consider $\Gamma\in\RR_{s:d}$ with $s\ge 4$.
The glued graph $g_{120}^j(\Gamma)$ is circular with one special vertex.
The numbers of leaves on the boundary components are given by
\begin{equation*}
  s_1=j-1,\qquad s_2=s-j-1.
\end{equation*}
The set of isomorphism classes of labelled graphs 
that arise this way will be denoted by 
$\RR_{s_1,s_2;d}^{cms}$. 
Note also that the marked edge
belongs to the cycle and is oriented positively in the direction of the cycle.
In the decoration ``$cms$''
the letter $c$ stands for ``marked edge belongs to the cycle''; and
the combination``$ms$'' stands for 
``marked special''. Since the orientation of the marked edge is canonical in this case,
we drop it from the notation. Note also 
that the special vertex of 
$g_{120}^j(\Gamma)$ lies on the cycle if and only if the flags 
$u_1,u_j$ belong to 
{\em different} rooted components 
of $\Gamma$.
Dropping the condition that the labelling be special we get the set of isomorphism classes of graphs 
$
\RR_{s_1,s_2;d}^{cm}
$. 
The above gluing operation thus gives us a natural map
\begin{equation}\label{eq:glueg1}
  gl_3:\coprod_{j\in \{3,\dots,s-1\}}\RR_{s:d}\stackrel{\cong}{\longrightarrow}
  \RR_{s_1,s_2;d}^{cms} \subset \RR_{s_1,s_2;d}^{cm}.
\end{equation}
The following statement is an immediate corollary of Lemma~\ref{lem:free}.

\begin{lemma}\label{lem:free1}
The action of $\Z_{s_1}\times \Z_{s_2}$ on 
$\RR_{s_1,s_2;d}^{cm}$ by cyclic relabelling of leaves is free.
The image $\RR_{s_1,s_2;d}^{cms}$ of the map $gl_3$ is a fundamental domain
for this action.
\end{lemma}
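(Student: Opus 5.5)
The statement to prove is Lemma~\ref{lem:free1}. It has two parts: freeness of the $\Z_{s_1}\times\Z_{s_2}$-action on $\RR_{s_1,s_2;d}^{cm}$, and the fact that $\RR_{s_1,s_2;d}^{cms}$ is a fundamental domain.

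\textbf{Freeness.} I would deduce this from Lemma~\ref{lem:free}, exactly as the text suggests. Let $(\rho_1,\rho_2)\in\Z_{s_1}\times\Z_{s_2}$ fix the isomorphism class of a marked labelled graph $(\Gamma,lb,l)$. Then there is an isomorphism $\phi$ from $(\Gamma,lb\circ\eta)$ to $(\Gamma,lb)$, where $\eta=g(\id,\rho_1,\rho_2)$, and $\phi$ maps $l$ to $l$.

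Forgetting the marking, $\phi$ is an automorphism of the underlying unmarked graph. It preserves each boundary component, because the relabelling keeps the numbering of the boundary components. By Lemma~\ref{lem:no-auto1}(b), $\phi$ is therefore trivial. Hence $lb\circ\eta=lb$, and since $lb$ is a bijection, $\eta=\id$, i.e.\ $\rho_1=\rho_2=0$. Equivalently, this is freeness on $\RR_{s_1,s_2;d}$ (Lemma~\ref{lem:free}) lifted along the marking-forgetting map, which is equivariant.

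\textbf{Fundamental domain, existence.} Fix a marked graph $(\Gamma,l)\in\RR_{s_1,s_2;d}^{cm}$. The marked edge $l$ lies on the cycle, and we orient it positively along the cycle. The boundary component lying to the left of $l$ is then determined, and so is the one to the right. These two are distinct, since the cycle separates the annulus.

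I still need to check that the left component is the one numbered $1$ under the given labelling. This should hold because of the orientation convention: the cycle is oriented by sliding it to the \emph{second} boundary component, so the component parallel to $l$ on its left is the first one. I would verify this against Figure~\ref{fig:cut-glue}.

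Next, on each boundary component there is a distinguished leaf: the first leaf met when moving along the boundary orientation from the portion next to $l$. A unique cyclic shift in $\Z_{s_b}$ gives this leaf the number $1$. These shifts produce a relabelling in the orbit whose labelling is special. By construction, special labellings of such graphs are exactly what $gl_3$ produces, so this element lies in $\RR_{s_1,s_2;d}^{cms}$.

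\textbf{Fundamental domain, uniqueness.} The special labelling is determined by $(\Gamma,l)$ alone. Hence any two elements of $\RR_{s_1,s_2;d}^{cms}$ in the same orbit coincide. This also follows from freeness, since the required shifts are unique.

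\textbf{Main obstacle.} The delicate step is confirming that $gl_3$ is a bijection onto the specially labelled graphs. That is, cutting a specially labelled $(\Gamma,l)$ along $l$ must return a tree in $\RR_{s;d}$ with $u=u_1$ first and $u_j$ at position $j=s_1+1$, and gluing must be inverse to cutting.

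I would check this using the explicit leaf ordering in the gluing construction. The leaves of the first boundary component of the glued graph are those strictly between $u_1$ and $u_j$; the leaves of the second component are those after $u_j$. This gives $s_1=j-1$ and $s_2=s-j-1$, and the first leaf on each component is the one adjacent to $l$ in the boundary direction.
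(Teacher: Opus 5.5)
Your proof is correct and follows the paper's approach. The paper simply declares the lemma an immediate corollary of Lemma~\ref{lem:free}, i.e.\ freeness via Lemma~\ref{lem:no-auto1}(b), and leaves the fundamental-domain property implicit. Your special-labelling argument fills in exactly that omitted part, including the check that the cycle-orientation convention puts boundary component $1$ on the left of $l$.

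One small slip: you say the leaves of the first boundary component are those strictly between $u_1$ and $u_j$. There are $j-2$ of these, so in fact $s_1=j-2$, $s_2=s-j$ and $j=s_1+2$. The formulas $s_1=j-1$, $s_2=s-j-1$ that you copied from the text are off by one. This does not affect the argument.
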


{\bf Case 2. }
Let $\Gamma_1\in\RR_{s_1,s_2}$ be a circular graph without special 
vertices, and $\Gamma_2\in\RR_{s_3;d}$ a tree with one 
special vertex. Assume that $s_1+s_3\ge 3$.
Let $\tau_{23}$ be the relabelling swapping the second and third
boundary components of the graph $\Gamma_1\amalg\Gamma_2$.
The glued graph $g_{210}((\Gamma_1\amalg\Gamma_2)\tau_{23})$
is circular with one special vertex.
Here we need the permutation $\tau_{23}$ to get the result of the gluing connected. The numbers of
leaves on the boundary components are given by
\begin{equation*}
  \wt s_1=s_1+s_3-2,\qquad \wt s_2=s_2.
\end{equation*}
The set of isomorphism classes of labelled graphs 
that arise this way will be denoted by $\RR_{\wt s_1,\wt s_2;d}^{ncb1s}$. 
Note also that both the special vertex of the glued graph and its marked edge {\em do not} belong to the cycle. Moreover, the marked edge belongs to the chain of edges 
connecting the cycle to the special vertex and is oriented ``from the cycle to the special vertex''. The 
special vertex lies between the cycle and the second boundary component. In the decoration ``$ncb1ms$''the combination $ncb$ stands for ``The marked edge does not belong to the cycle and lies between''.
The number ``$1$'' stands for the special vertex between the cycle and the second boundary component.
Dropping the condition that the labelling be special we get the set of isomorphism classes of graphs 
$\RR_{\wt s_1,\wt s_2;d}^{ncb1}$. 
The above gluing operation thus gives us a natural map
\begin{equation}\label{eq:glueg2}
  gl_4: \coprod_{\wt s_1=s_1+s_3-2}\RR_{s_1,s_2}\times 
  \RR_{s_3;d}\stackrel{\cong}{\longrightarrow}
  \RR_{\wt s_1,\wt s_2;d}^{ncb1s} \subset \RR_{\wt s_1,\wt s_2;d}^{ncb1}.
\end{equation}
The following statement is an immediate corollary of Lemma~\ref{lem:free}.

\begin{lemma}\label{lem:free2}
The action of $\Z_{\wt s_1}$ on 
$\RR_{\wt s_1,\wt s_2;d}^{ncb1}$ by cyclic relabelling of leaves is free.
The image $\RR_{\wt s_1,\wt s_2;d}^{ncb1s}$ of the map $gl_4$ is a
fundamental domain for this action. 
\end{lemma}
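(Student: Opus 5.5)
The statement has two parts: freeness of the $\Z_{\wt s_1}$-action on $\RR_{\wt s_1,\wt s_2;d}^{ncb1}$, and the fundamental-domain property of $\RR_{\wt s_1,\wt s_2;d}^{ncb1s}=\im(gl_4)$. I will follow the pattern of Lemmas~\ref{lem:free1} and~\ref{lem:prodfree}.

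\emph{Freeness.} Suppose $\sigma\in\Z_{\wt s_1}$ fixes the isomorphism class of a labelled graph $\Gamma\in\RR_{\wt s_1,\wt s_2;d}^{ncb1}$. Then there is a graph isomorphism $\phi:\Gamma\to\Gamma$ that carries the labelling to its $\sigma$-relabelling. Such a $\phi$ preserves the marked edge. It also preserves the numbering of boundary components, since $\sigma$ only renumbers leaves on the first component. So $\phi$ is an automorphism of a circular graph with one special vertex preserving the boundary components. By Lemma~\ref{lem:no-auto1}(b), $\phi$ is trivial. A trivial $\phi$ fixes every leaf, so $\sigma$ must act trivially on the leaf numbering, which forces $\sigma=0$. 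Equivalently, this is Lemma~\ref{lem:free} restricted to the $\Z_{\wt s_1}\times\{0\}$ factor and to the invariant subset $\RR^{ncb1}$; the marking does not obstruct this, because relabelling does not touch it.

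\emph{Fundamental domain.} Let $\Gamma\in\RR^{ncb1}_{\wt s_1,\wt s_2;d}$. Its marked edge $l$ lies on the chain from the cycle to $S$ and is oriented towards $S$. The boundary component running along both sides of $l$ is the one that meets the off-cycle part, and this is the first component. So the special-labelling conditions reduce to one requirement: a single prescribed leaf of component $1$ (the first leaf met after the left side of $l$) must carry number $1$. Since $\Z_{\wt s_1}$ acts simply transitively on cyclic numberings of component $1$, each orbit contains exactly one special labelling. Combined with freeness, $\RR^{ncb1s}$ meets every orbit exactly once.

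It remains to check that $\RR^{ncb1s}$ is exactly $\im(gl_4)$. In one direction, cutting the marked edge $l$ disconnects $\Gamma$ (it is a tree edge off the cycle) into a circular graph without special vertices and a tree with one special vertex. With the special labelling, the cut leaves $u,v$ become the first leaves of the relevant components, up to the swap $\tau_{23}$. This shows $\Gamma=g_{210}((\Gamma_1\amalg\Gamma_2)\tau_{23})$. In the other direction, the gluing and cutting operations are inverse by construction, so $gl_4$ is a bijection onto $\RR^{ncb1s}$, as already asserted in~\eqref{eq:glueg2}.

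The main obstacle is the bookkeeping in this last step. One must verify that after cutting, the leaf numbering induced from the special labelling agrees with the labellings of $\RR_{s_1,s_2}\times\RR_{s_3;d}$ under the swap $\tau_{23}$, with $u,v$ first on their components. I would check this directly from Figure~\ref{fig:cut-glue}, tracking which boundary arc of the annulus becomes which component after the cut.
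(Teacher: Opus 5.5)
Your proposal is correct and follows the paper's route. Freeness comes from Lemma~\ref{lem:no-auto1}(b); equivalently, apply Lemma~\ref{lem:free} after forgetting the marking. The fundamental-domain part is what the paper treats as immediate from the construction of $gl_4$ and the definition of special labellings. The orientation bookkeeping you defer does work out: in $g_{210}((\Gamma_1\amalg\Gamma_2)\tau_{23})$, the leaf numbered $1$ is the one following $u$ on the first boundary component of $\Gamma_1$, and this is exactly the leaf singled out by the special-labelling rule for the oriented marked edge $l=(u,v)$. This also holds when $\Gamma_1$ contributes no other leaves to that component.
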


In analogy with the involution $\tau$
from~\eqref{eq:renumber-bdry-basic}, we have the involution (denoted
by the same letter)
\begin{equation}\label{eq:renumber-bdry}
\tau:\RR_{s_1,s_2;d}^m\stackrel{\cong}{\longrightarrow} \RR_{s_2,s_1;d}^m
\end{equation}
renumbering the two boundary components. We define
\begin{equation}\label{eq:renumber-bdry1}
\RR_{s_2,s_1;d}^{ncb2}:=\tau(\RR_{s_1,s_2;d}^{ncb1}).
\end{equation}

{\bf Case 3. }
Let $\Gamma_1\in\RR_{s_1}$ be a tree without special vertices, and 
$\Gamma_2\in\RR_{s_2,s_3;d}$ a circular graph with one special
vertex. Assume that  $s_1+s_2\ge 3$.
The glued graph $g_{210}(\Gamma_1\amalg\Gamma_2)$ is circular with one special vertex. The numbers of
leaves on the boundary components are given by
\begin{equation*}
  \wt s_1=s_1+s_2-2,\qquad
  \wt s_2=s_3.
\end{equation*}
The set of isomorphism classes of labelled graphs 
that arise this way will be denoted by 
$\RR_{\wt s_1,\wt s_2;d}^{nc1s}$.
In this case the special vertex of the glued graph
lies on the cycle if and only if the corresponding assertion is 
true about $\Gamma_2$. The marked 
edge lies between the first boundary component and the cycle
and is oriented from the boundary component to the cycle. Note that
the marked edge  {\em cannot} lie
between the cycle and the special 
vertex even if the special vertex does 
lie on the cycle. 
Dropping the condition that the labelling be special we get the set of isomorphism classes of graphs 
$\RR_{\wt s_1,\wt s_2;d}^{nc1}$. 
The above gluing operation thus gives us a natural map
\begin{equation}\label{eq:glueg4}
  gl_5: \coprod_{\wt s_1=s_1+s_2-2}\RR_{s_1}\times \RR_{s_2,s_3;d}
  \stackrel{\cong}{\longrightarrow} \RR_{\wt s_1,\wt s_2;d}^{nc1s}
  \subset \RR_{\wt s_1,\wt s_2;d}^{nc1}.
\end{equation}
The following statement is an immediate corollary of Lemma~\ref{lem:free}.

\begin{lemma}\label{lem:free4}
The action of $\Z_{\wt s_1}$ on $\RR_{\wt s_1,\wt s_2;d}^{nc1}$ 
by cyclic relabelling of leaves is free.
The image $\RR_{\wt s_1,\wt s_2;d}^{nc1s}$ of the map $gl_5$
is a fundamental domain for this action.
\end{lemma}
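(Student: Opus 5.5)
The plan is to derive Lemma~\ref{lem:free4} from Lemma~\ref{lem:free}, in the same way as Lemmas~\ref{lem:free1} and~\ref{lem:free2}. The only additional ingredient is a combinatorial description of the special labellings on $\RR_{\wt s_1,\wt s_2;d}^{nc1}$.

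Freeness comes first. The group $\Z_{\wt s_1}$ acts on $\RR_{\wt s_1,\wt s_2;d}^{nc1}$ as the subgroup $\Z_{\wt s_1}\times\{0\}$ of $\Z_{\wt s_1}\times\Z_{\wt s_2}$, which relabels leaves cyclically on the first boundary component. This action does not change the underlying marked graph. It only precomposes the labelling, and the numbering of the boundary components is left fixed. Consequently the subset $\RR^{nc1}$ is invariant: the marked edge still lies between the first boundary component and the cycle. The action of $\Z_{\wt s_1}\times\Z_{\wt s_2}$ on $\RR_{\wt s_1,\wt s_2;d}$ is free by Lemma~\ref{lem:free}, and this freeness persists for the marked version. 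To see this, suppose a relabelling fixes the isomorphism class of a marked graph. Then there is an isomorphism of the underlying circular graphs that carries the marked edge to itself and preserves the boundary components. By Lemma~\ref{lem:no-auto1}(b) this isomorphism is the identity, so the relabelling is trivial. Restricting a free action to a subgroup gives freeness of the $\Z_{\wt s_1}$-action.

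The second step is the fundamental domain. Take an element $\Gamma$ of $\RR^{nc1}$. The marked edge $l$ is oriented from the first boundary component to the cycle, and both sides of $l$ border the same boundary component, namely the first one. This is because $l$ is not on the cycle, so cutting $l$ disconnects the graph. The special labelling condition therefore reduces to one requirement on the first boundary component: leaf number $1$ must be the first leaf met after passing $l$ on its left. The second condition in the definition of a special labelling is vacuous here, and the numbering of boundary components is already fixed by the $nc1$ condition. Within each $\Z_{\wt s_1}$-orbit, which has exactly $\wt s_1$ elements by freeness, there is exactly one cyclic shift that makes a prescribed leaf carry number $1$. Hence each orbit meets the special labellings in exactly one point, and so $\RR^{nc1s}$ is a fundamental domain.

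The third step identifies $\RR^{nc1s}$ with the image of $gl_5$. Given a special $\Gamma$, cut the marked edge $l$. This produces a tree $\Gamma_1$ without special vertices, on the side of the first boundary component, and a circular graph $\Gamma_2$ with one special vertex. The two new leaves are $u$ on $\Gamma_1$ and $v$ on $\Gamma_2$. Each of $\Gamma_1$ and $\Gamma_2$ receives the labelling in which its new leaf is first, and the remaining leaves follow the order of the special labelling. This construction inverts $g_{210}$, so $gl_5$ is a bijection onto $\RR^{nc1s}$.

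I expect the main obstacle to be the bookkeeping that shows the inherited labellings are compatible. One has to check that the leaf order of $\Gamma$ really is the concatenation (leaves of $\Gamma_1$ after $u$) followed by (leaves on the first boundary of $\Gamma_2$ after $v$), and that this matches the convention in Definition~\ref{def:g-prod}. That verification is a careful walk around the boundary of the ribbon surface near $l$.
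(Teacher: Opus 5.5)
Your proposal is correct and takes essentially the paper's route. The paper records this lemma as an immediate corollary of Lemma~\ref{lem:free}, which itself rests on Lemma~\ref{lem:no-auto1}(b). Your three steps spell out what that one-line justification leaves implicit: freeness via the automorphism lemma; exactly one special labelling per $\Z_{\wt s_1}$-orbit, because both sides of the separating marked edge lie on the first boundary component; and cutting the marked edge as the inverse of $g_{210}$.
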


Recall from~\eqref{eq:renumber-bdry} the involution $\tau$ renumbering
the boundary components of the graph and define
\begin{equation}\label{eq:renumber-bdry2}
\RR_{s_2,s_1;d}^{nc2}:=\tau(\RR_{s_1,s_2;d}^{nc1}).
\end{equation}

Observe the following relation:
\begin{equation}
\label{eq:omcirc} 
\RR_{s_1,s_2;d}^{m}=
\RR_{s_1,s_2;d}^{cm}\amalg
\left(\RR_{s_1,s_2;d}^{ncb1}
\amalg 
\RR_{s_1,s_2;d}^{ncb2}\right)
\amalg
\left(\RR_{s_1,s_2;d}^{nc1}
\amalg
\RR_{s_1,s_2;d}^{nc2}\right).
\end{equation}

\subsection{Operations on ribbon graphs 2}\label{ss:op-graphs2}

In this subsection we introduce three more operations: duality,
attaching a leg, and attaching a tree.

{\bf Duality (see Figure~\ref{fig:duality}). }
\begin{figure}
\begin{center}
\includegraphics[width=\textwidth]{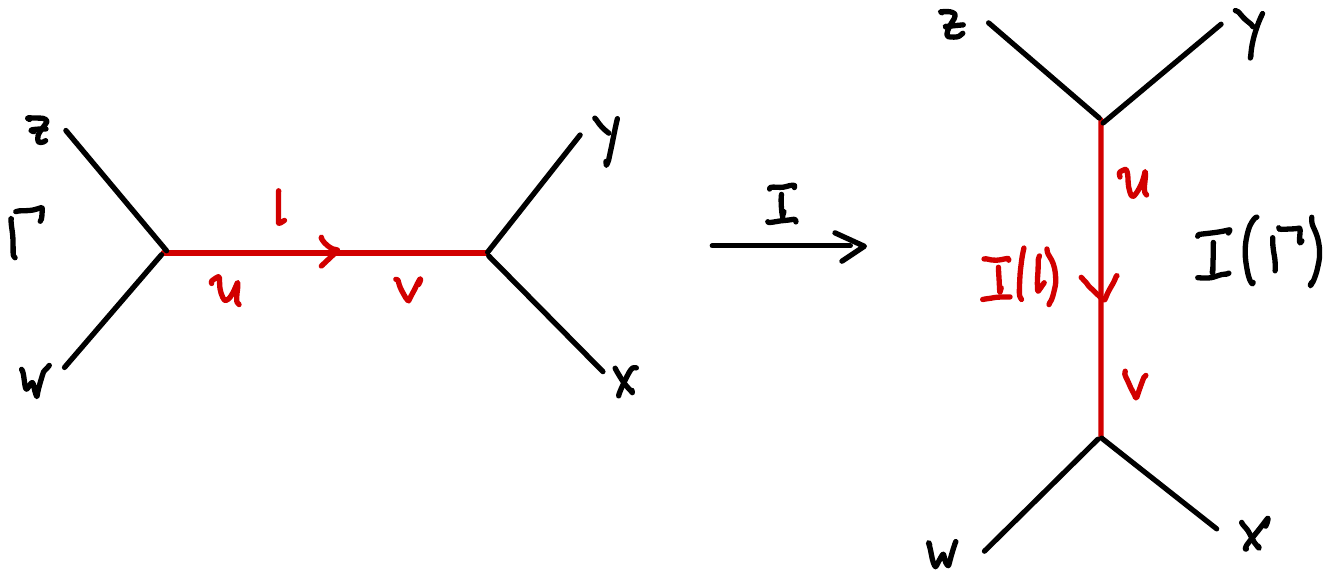}
\vspace{-3.5cm}
\caption{The duality operation $I$}
\label{fig:duality} 
\end{center}
\end{figure}
The following duality operation on ribbon graphs plays a crucial role
in~\cite{Cieliebak-Volkov}.
Let $(\Gamma,l)$ be an o-marked labelled trivalent ribbon graph with
$l=(u,v)$ the oriented marked nonspecial edge. Let $(z,w,u)$ and $(v,x,y)$ be the
two vertices connected by $l$ as shown in Figure~\ref{fig:duality}. We
define the o-marked labelled graph $(I(\Gamma),I(l))$ using the same
set of flags, but assembling them into vertices and edges slightly
differently. Namely, we let $I(l):=(u,v)$ be the oriented marked edge
of $I(\Gamma)$ and $(y,z,u)$ and $(v,w,x)$ be its adjacent vertices,
see Figure~\ref{fig:duality}. The other vertices and edges stay the same. 
Geometrically, the operation $I$ is cutting out a subtree with $4$
leaves and pasting back the dual subtree. In particular, the type of the graph 
remains the same. Since the two graphs have the same boundary
components and leaves, $I(\Gamma)$ inherits the labelling from $\Gamma$.   
 
{\bf Attaching a leg (see Figure~\ref{fig:leg}). }
\begin{figure}
\begin{center}
\includegraphics[origin=c,width=\textwidth]{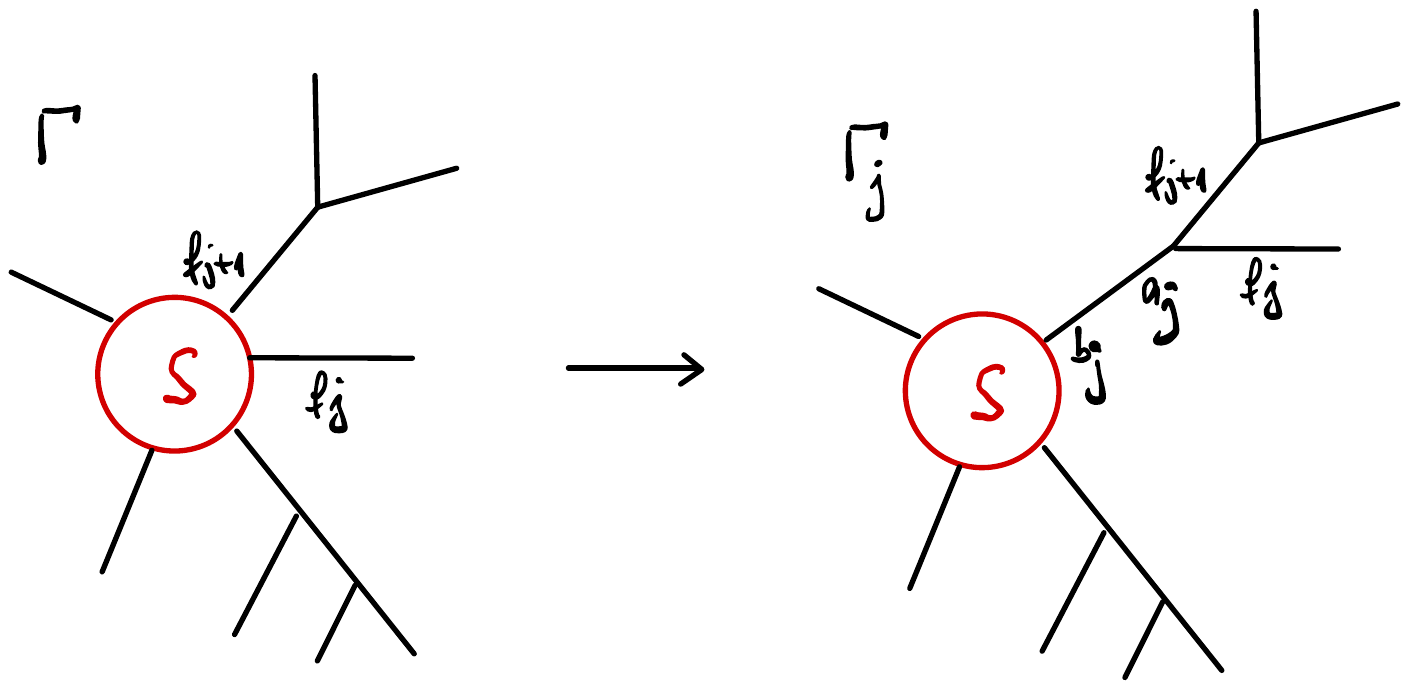}
\vspace{-3cm}
\caption{Attaching a leg}
\label{fig:leg} 
\end{center}
\end{figure}
Let $\Gamma$ be a labelled graph with special vertices and let 
$$
S=(f_1,\dots,f_d)
$$
be a special vertex of $\Gamma$. Assume that $d\ge
2$ and its flags are ordered according to our standardization.
Fix some $j\in \{1,\dots,d\}$. 
We add two more flags $a_j$ and $b_j$ to the set of flags of $\Gamma$ and two more
relations: an edge $\{a_j,b_j\}$ and a vertex $[f_j,f_{j+1},a_j]$,
where we set $f_{j+1}:=f_1$ for $j=d$. Note that the new ordered set of
special flags at $S$ is
$$
  (f_1,\dots,f_{j-1},b_j,f_{j+2},\dots,f_d),
$$
so the flags $f_j$ and $f_{j+1}$ have become nonspecial. 
If $j=d$, then the 
new ordered set of
special flags at $S$ is
$$
  (b_d,f_2,\dots,f_{d-1}).
$$
Note that this ordering is compatible with our standardization of
extensions of labellings in~\S\ref{ss:basiccomb}.
We call the resulting labelled graph $\Gamma_j$.

If the labelling of $\Gamma$ is given an extension, then we give the
labelling of $\Gamma_j$ the following extension: the new edge
$(a_j,b_j)$ is given the first position in the ordering of edges and
oriented as written, and the new vertex $(f_j,f_{j+1},a_j)$
is given the last number in the ordering 
of nonspecial vertices with its flags ordered as written.

{\bf Attaching a tree. } 
Let $\Gamma$ be a graph and $T$ a rooted tree 
(see the beginning of~\S\ref{ss:specvert} ). Let $l$ be a leaf of 
$\Gamma$ and $v$ its vertex. The result of {\em attaching $T$
to $\Gamma$ along $l$} is the quotient
$\Gamma\amalg T/\sim$, where the root vertex of $T$ is identified with
$v$ and the root flag of $T$ is identified with $l$.

\section{Integrals over configuration spaces}\label{sec:intconfig}

In this section we continue our discussion of integrals over
configuration spaces. Their analytical aspects were dealt with
in~\S\ref{sec:general graphs} and~\S\ref{sec:analysis}. Now we give
precise definitions with regards to signs and orientations, prove
their independence of the extension of a labelling, and discuss the
effect of a change of labelling. 

Throughout this section, $M$ denotes a closed oriented manifold of
dimension $n$, and we use the notation and terminology
from~\S\ref{sec:graphs}.

\subsection{Configuration spaces associated to ribbon graphs}
\label{ss:configspec}

Let $\Gamma$ be a ribbon graph, possibly with special vertices, with
an extended labelling. 
We denote by $d_j$ ($j\in\Ver$) the valencies of the nonspecial
vertices, by $d$ the total number of special flags, by $e$ the number
of edges, by $s$ the number of leaves, and by $f$ the number of flags.  
Recall from~\S\ref{ss:basiccomb} the vertex and edge orders induced by
the extended labelling and the reordering permutation $\bar R_\Gamma$. 

Recall from~\S\ref{ss:graphs} the following spaces: 
$X_\Gamma,Y_\Gamma$ with the reordering diffeomorphism
\begin{equation*}
  R_\Gamma:Y_\Gamma=\Bigl(\prod_{j\in\Ver}M^{d_j}\Bigr)\times
M^d\stackrel{\cong}{\longrightarrow} X_\Gamma=(M^2)^e\times M^s;
\end{equation*}
the slim diagonals $M\cong M_j\subset M^{d_j}$; the vertex diagonal
\begin{equation*}
  \Delta_\ver = \Delta_{\ver}^\Gamma = \prod_{j\in\Ver} M_j\subset \prod_{j\in\Ver} M^{d_j};
\end{equation*}
the diagonal $\Delta_2\subset M^2$; the double diagonal
$$
 \Delta_2^l = (M^2\times\cdots\times M^2\times \Delta_2\times
  M^2\times\cdots\times M^2)\times M^s\subset X_\Gamma
$$
corresponding to an edge $l\in \Edge$; and the (fat) {\em edge diagonal}
$$
   \Delta_2^\Gamma = \bigcup_{l\in\Edge(\Gamma)}\Delta_2^l\subset X_\Gamma.
$$
Moreover, we define
\begin{align*}
  \ol{\Delta}^l_{2} &:= R_\Gamma^{-1}(\Delta_2^l) \cap (\Delta_{\ver}\times
   M^d),\qquad l\in\Edge(\Gamma), \cr
   \ol\Delta_2^Y &:= R_\Gamma^{-1}(\Delta_2^\Gamma)\cap (\Delta_{\ver}\times
   M^d)=\bigcup_{l\in \Edge(\Gamma)}
   \ol{\Delta}^l_{2}
   \subset\Delta_{\ver}\times M^d\subset Y_\Gamma.
\end{align*}
The relevant uncompactified configuration space is 
\begin{equation}\label{eq:open-config}
   \overset\circ{\XX_\Gamma}:= (\Delta_{\ver} \times M^d)\setminus\ol\Delta_2^Y.
\end{equation}
The projection 
\begin{equation}\label{eq:pi-signs}
  \pi:\Delta_{\ver}\times M^d \longrightarrow M^d
\end{equation}
is the canonical map forgetting the factor $\Delta_{\ver}$.

\subsection{Definition of the integrals}\label{ss:Integrals}

We retain the notation from~\S\ref{ss:configspec}.
Let $\wt G\in\Om^{n-1}(\wt M^2)$ be a propagator as
in~\S\ref{sec:prop},
and $G$ its integrable pushforward to $M^2$
(which is smooth outside the diagonal).
Integrability of $G$ follows from Lemma~\ref{lem:graph-L1}
for the graph with two vertices (both nonspecial) and one edge connecting them.
Let 
$$ 
   \alpha = \alpha_1\otimes\dots\otimes\alpha_s 
$$
be a decomposable tensor of differential forms $\alpha_j\in\Om^*(M)$
associated to the leaves of $\Gamma$. We call such $\alpha$ {\em
  adapted to $\Gamma$}. It induces via cross product a form  
$$
  \cross(\alpha) := \alpha_1\times\cdots\times\alpha_s \in \Om^*(M^s).
$$
To this data we associate the following differential forms, where for the
last two we assume in addition that a marked oriented edge $l$ 
has been chosen:
\begin{itemize}
\item $G^e:=G\times\dots\times G$ on $(M^2)^e$;
\item $G^e(\alpha):=G^e\times\cross(\alpha)$ on $X_\Gamma$; 
\item $G^e_l:=G\times\dots\times dG\times\dots\times G$ on
  $(M^2)^e$ (with $dG$ at the position $n(l)$ of $l$);
\item $G^e_l(\alpha):=G^e_l\times\cross(\alpha)$ on $X_\Gamma$.
\end{itemize}
We define
\begin{equation}\label{eq:defI}
   I_\Gamma(\alpha) := (-1)^{\bar R_\Gamma
     +(n-1)\eta_3(\Gamma)}\int_{\Delta_\ver}R_\Gamma^*G^e(\alpha) 
\end{equation}   
\begin{equation}\label{eq:defIl}   
   I_{\Gamma,l}(\alpha) := (-1)^{\bar R_\Gamma
     +(n-1)(\eta_3(\Gamma)+n(l)-1)}\int_{\Delta_\ver}R_\Gamma^*G^e_l(\alpha). 
\end{equation}
Here the integral sign denotes the pushforward along the map 
$\pi$ in~\eqref{eq:pi-signs} in the sense of~\S\ref{sec:fibre} (which
equals the usual Lebesgue integral in the case $d=0$ without special vertices). 
The number $n(l)$ is the position of the marked edge $l$ in the
ordering of edges, and $\eta_3(\Gamma)$ is the sign exponent from
Definition~\ref{def:eta3}. 

{\bf Existence of the integrals. } 
We asssume the following condition:
\begin{equation}\label{eq:no-selfloops}
\text {The graph $\Gamma$ does not have self-loops at nonspecial vertices.} 
\end{equation}
Then, by Lemma~\ref{lem:graph-L1}, the integrands above define
integrable forms on $\Delta_\ver\times M^d$ (which are smooth on its full
measure open subspace $\overset\circ{\XX_\Gamma}$ defined
in~\eqref{eq:open-config}).
According to Lemma~\ref{lem:push-forward-int}, the pushforwards
in~\eqref{eq:defI} and~\eqref{eq:defIl} are therefore well-defined as
integrable forms on $M^d$.

The subsequent discussion follows closely the one in~\S\ref{sec:general
  graphs} and~\S\ref{sec:analysis}, this time keeping track of the
reordering diffeomorphism $R_\Gamma$. For now we restrict to the case
$\WW=\{\pt\}$. 
The reordering diffeomorphism $R_\Gamma:Y_\Gamma\longrightarrow X_\Gamma$
lifts to a diffeomorphism
\begin{equation}\label{eq:defYorient}
  \wt R_\Gamma:\wt Y_\Gamma=\Bl(Y_\Gamma,R_\Gamma^{-1}(\Delta_2^\Gamma))
\longrightarrow \wt X_\Gamma=\Bl(X_\Gamma,\Delta_2^\Gamma).
\end{equation}
Recall that the integrable form $G^e(\alpha)$ on $X_\Gamma$ defined
above lifts to a smooth form $\wt G^e(\alpha)$ on $\wt X_\Gamma$,
whose pullback $\wt R_\Gamma^*\wt G^e(\alpha)\in\Om^*(\wt Y_\Gamma)$
we integrate over the compactified configuration space $\XX_\Gamma$.

\begin{remark}\label{rem:deg-reg}
(A) 
Assume that $\Gamma$ is a tree with one special vertex. Then by
Lemma~\ref{lem:PTtransverse} the compactification
$$
  \XX_\Gamma = \Bl(\Delta_\ver\times M^d,\ol\Delta_2^Y)
$$ 
is a manifold with corners.
Moreover, by the obvious extension of 
Lemma~\ref{lem:tree-fibre} from the case of a univalent special vertex
to a $d$-valent one, the projection $\pi$ from~\eqref{eq:pi-signs}
extends to a fibration of compact manifolds with corners that we denote by 
the same letter
$$
  \pi:\XX_\Gamma\longrightarrow M^d
$$
and we get
$$
  \int_{\Delta_\ver} R_\Gamma^*G^e(\alpha) = \pi_*\wt R_\Gamma^*\wt G^e(\alpha).
$$
By Remark~\ref{rem:fibre-smooth} the right hand side is smooth, so 
$I_\Gamma(\alpha)$ is smooth in this case.
Moreover, $I_\Gamma(\alpha)$ is the cross product of a decomposable
tensor of differential forms on $M$, which has homogeneous degree if
the tensor $\alpha$ does.

(B) Assume now that $\Gamma$ is a tree with two special vertices,
possibly with a marked edge $l$. Then $I_\Gamma(\alpha),I_{\Gamma,l}(\alpha)\in
\Om_{int}^*(M^{d_1}\times M^{d_2})$ are integrable forms in the sense
of Definition~\ref{def:L1}. 
(In fact, they are smooth outside the diagonal $D\subset M^d$ where the
variables associated to the flags $A$ and $Z$ in Figure~\ref{fig:2S}
are equal; we will not need this fact, so we omit the proof.)
For future use we split the forms according to their bidegrees as
\begin{equation}\label{eq:bi-deg}
  I_\Gamma(\alpha) = \sum_{p,q}I_\Gamma^{p,q}(\alpha), \qquad 
  I_{\Gamma,l}(\alpha) = \sum_{p,q}I_{\Gamma,l}^{p,q}(\alpha). 
\end{equation}
\end{remark}

{\bf Types of graphs. }
For the subsequent discussion of signs we assume in addition:
\begin{equation}\label{eq:odd-valent}
\text{All nonspecial vertices of $\Gamma$ are odd-valent.} 
\end{equation}
For the proofs of our main results in~\S\ref{sec:rel-stringtop} we
will only need graphs $\Gamma$ of the following types (see~\S\ref{ss:specvert}):
\begin{itemize}
\item (o) trivalent trees;
\item (i) trees with one special vertex (see Figure~\ref{fig:1S});
\item (ii) trees with two special vertices (see Figure~\ref{fig:2S});
\item (iii) circular graphs without special vertices;
\item (iv) circular graphs with one special vertex (see Figure~\ref{fig:circ});
\end{itemize}
as well as disconnected graphs of the types
$(o)+(ii)$, $(i)+(i)$, $(o)+(iv)$, $(i)+(iii)$.
For the definition of the Maurer-Cartan element in~\S\ref{ss:MCan} we will need 
\begin{itemize}
\item (s) connected trivalent ribbon graphs without special vertices
  of genus $g\ge 0$ with $\ell\ge 1$ boundary components.
\end{itemize}
Recall from~\S\ref{ss:specvert} that for the types of graphs above, all
nonspecial vertices are trivalent and each boundary component has at
least one leaf ending on it. Note that this implies
conditions~\eqref{eq:no-selfloops} and~\eqref{eq:odd-valent}. 


\begin{remark}
Recall that graphs of types (ii) and (iv) above have two distinguished
special flags $A,Z$. In these cases let $D\subset M^d$ be the subset
in which the variables corresponding to $A$ and $Z$ are equal; in all
other cases set $D=\emptyset$. Then  
the integrable forms on $M^d$ defined by~\eqref{eq:defI}
and~\eqref{eq:defIl} are actually smooth away from $D$. We will not
use this in the sequel, but is useful to keep in mind.
\end{remark}

\subsection{Change of extension}\label{ss:intconfigext}

In this subsection we show that $I_\Gamma$ and $I_{\Gamma,l}$ do not
depend on the extension of the labelling, 
and we describe the effect of rotating the numbering
at a special vertex.

Recall the standardizations on the extension of a labelling of 
described after Definition~\ref{def:labelling},
which fix in all cases the positions of the special vertices in
the numbering of vertices and the numbering of flags around each
special vertex.
Recall from~\S\ref{ss:basiccomb} that a change in the order and
orientation of edges is described by precomposition of the edge order
with some permutation $\tau$ of $\{1,\dots,f\}$. 
A change in the order of the nonspecial vertices and numbering of 
the flags around each nonspecial vertex is described by 
precomposition of the vertex order with some permutation $\sigma^{-1}$.
We write the action on the graph as $\Gamma\mapsto \sigma\Gamma\tau$.
A permutation $\rho$ of $\{1,\dots,f\}$ induces a diffeomorphism
$$
   M^\rho:M^f\to M^f,\qquad (x_1,\dots,x_f)\mapsto (x_{\rho(1)},\dots x_{\rho(f)}).
$$
It follows directly from these definitions and~\eqref{eq:reod} that 
\begin{equation}\label{eq:R-trans}
  \bar R_{\sigma\Gamma\tau}=\sigma\circ \bar R_\Gamma\circ\tau \quad\text{and}\quad
  R_{\sigma\Gamma\tau}=M^\tau\circ R_\Gamma\circ M^\sigma.
\end{equation} 
Moreover, by~\cite[equation~(99)]{Cieliebak-Volkov} we have the
invariance properties 
\begin{equation}
\begin{aligned}\label{eq:formstrans}
  (M^\tau)^*G^e(\alpha) &= (-1)^\fs G^e(\alpha),\cr
   (M^\tau)^*G^e_l(\alpha) &= (-1)^{\fs+(n-1)(n'(l)-n(l))}G^e_l(\alpha)
\end{aligned}
\end{equation}
where $n'(l)$ is the position of the marked edge $l$ in the
  numbering of edges of $\Gamma\tau$, and using the sign exponent
  $\eta_3$ from Definition~\ref{def:eta3} we set 
$$
  \fs:=\tau+(n-1)(\eta_3(\Gamma\tau)-\eta_3(\Gamma)).
$$

\begin{lemma}\label{lem:ext-indep-basic}
 In the above setting we have
\begin{equation}\label{eq:ext-indep-downst}
 (-1)^{\bar R_{\sigma\Gamma\tau}+(n-1)\eta_3(\sigma\Gamma\tau)}R_{\sigma\Gamma\tau}^*G^e(\alpha)=
 (-1)^{\bar R_\Gamma+(n-1)\eta_3(\Gamma)+n\sigma}
 (M^\sigma)^*R_\Gamma^*G^e(\alpha).
\end{equation}
\end{lemma}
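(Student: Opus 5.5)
The proof is a direct substitution of \eqref{eq:R-trans} and \eqref{eq:formstrans}, followed by sign bookkeeping.

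By \eqref{eq:R-trans} we have $R_{\sigma\Gamma\tau}=M^\tau\circ R_\Gamma\circ M^\sigma$, hence
$$
R_{\sigma\Gamma\tau}^*G^e(\alpha)=(M^\sigma)^*R_\Gamma^*(M^\tau)^*G^e(\alpha).
$$
The first line of \eqref{eq:formstrans} turns the innermost pullback into $(-1)^{\tau+(n-1)(\eta_3(\Gamma\tau)-\eta_3(\Gamma))}G^e(\alpha)$. Also by \eqref{eq:R-trans}, the sign exponent of the reordering permutation satisfies
$$
\bar R_{\sigma\Gamma\tau}\equiv\sigma+\bar R_\Gamma+\tau \pmod 2.
$$
Substituting both into the left hand side of \eqref{eq:ext-indep-downst}, the two contributions of $\tau$ cancel. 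The identity is then reduced to the congruence
$$
(n-1)\eta_3(\sigma\Gamma\tau)+\sigma+(n-1)\bigl(\eta_3(\Gamma\tau)-\eta_3(\Gamma)\bigr)\equiv (n-1)\eta_3(\Gamma)+n\sigma \pmod 2,
$$
that is, to
$$
(n-1)\bigl(\eta_3(\sigma\Gamma\tau)-\eta_3(\Gamma\tau)\bigr)\equiv (n-1)\sigma \pmod 2 .
$$
It therefore suffices to show $\eta_3(\sigma\Gamma'')\equiv\eta_3(\Gamma'')+\sigma$ modulo 2 whenever $n$ is even, with $\Gamma''=\Gamma\tau$.

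This last congruence is where the main work lies, and I would obtain it from Definition~\ref{def:eta3} by factoring $\sigma$. By the standardizations in \S\ref{ss:basiccomb}, the special vertices and their flag orders are fixed, so $\sigma$ only permutes nonspecial vertices and reorders flags within nonspecial vertices compatibly with the cyclic order. I would write $\sigma$ as a product of two kinds of moves.

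\emph{Swapping two adjacent nonspecial vertices.} Each such swap changes $\eta_3$ by $1$. As a permutation of flags it exchanges two blocks of sizes $d_i,d_j$, so its sign is $(-1)^{d_id_j}=-1$, because all nonspecial vertices are odd-valent by \eqref{eq:odd-valent}. Both sides therefore change by $1$.

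\emph{Cyclic rotation of the flags at one vertex.} Here $\eta_3$ is unchanged, since it does not depend on item (iv) of Definition~\ref{def:labelling}. A cyclic permutation of $d_j$ elements has sign $(-1)^{d_j-1}=+1$ since $d_j$ is odd, so both sides are unchanged.

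Combining the two cases gives $\eta_3(\sigma\Gamma'')\equiv\eta_3(\Gamma'')+\sigma$, which is exactly what is needed when $n$ is even; when $n$ is odd the congruence holds trivially. The only care required is to check that the convention for the sign $(-1)^\sigma$ of $\sigma$ (as opposed to $\sigma^{-1}$) is consistent with \eqref{eq:R-trans}. Since $\sigma$ and $\sigma^{-1}$ have the same sign, this does not affect the argument.
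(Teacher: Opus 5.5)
Your proposal is correct and is essentially the paper's own proof. You substitute \eqref{eq:R-trans} and the first line of \eqref{eq:formstrans}, let the two contributions of $\tau$ cancel, and reduce to $\eta_3(\sigma\Gamma\tau)-\eta_3(\Gamma\tau)\equiv\sigma$. Both you and the paper obtain this by splitting $\sigma$ into cyclic rotations at the odd-valent nonspecial vertices (even permutations) and a permutation of the vertices. Your remark that swapping two adjacent vertex blocks of odd sizes $d_i,d_j$ has sign $(-1)^{d_id_j}=-1$ spells out the step that the paper compresses into ``$\sigma_\ver=\sigma$''.
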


\begin{proof}
We compute
\begin{equation}\label{eq:integrand-compute}
\begin{aligned}
&(-1)^{\bar R_{\sigma\Gamma\tau}+(n-1)\eta_3(\sigma\Gamma\tau)}R_{\sigma\Gamma\tau}^*G^e(\alpha)\cr
&\stackrel{\eqref{eq:R-trans}}{=}(-1)^{\bar R_{\sigma\Gamma\tau}+(n-1)\eta_3(\sigma\Gamma\tau)}
(M^\sigma)^*R_\Gamma^*(M^\tau)^*G^e(\alpha)\cr
&\stackrel{\eqref{eq:formstrans}}{=}(-1)^{\bar R_{\sigma\Gamma\tau}+(n-1)\eta_3(\sigma\Gamma\tau)+\fs}
(M^\sigma)^*R_\Gamma^*G^e(\alpha).
\end{aligned}
\end{equation}
Note that the permutation $\sigma$ changing the vertex order consists
of cyclic relabellings at nonspecial vertices, which are even
permutations because of assumption~\eqref{eq:odd-valent}, and a
permutation $\sigma_\ver$ of the vertices.
The sign exponent $\eta_3$ associated to the graph $\sigma\Gamma\tau$
differs from that associated to the graph $\Gamma\tau$ by the sign
exponent $\sigma_\ver=\sigma$ and we get
$$
  \eta_3(\sigma\Gamma\tau)-\eta_3(\Gamma\tau)=\sigma.
$$
Using this and the first equation in~\eqref{eq:R-trans}, we compute the sign exponent
\begin{align*}
&\bar R_{\sigma\Gamma\tau}+(n-1)\eta_3(\sigma\Gamma\tau)+\fs\cr
&=(\bar R_\Gamma+\sigma+\tau)+(n-1)(\eta_3(\Gamma\tau)+\sigma)+[\tau+(n-1)(\eta_3(\Gamma\tau)-\eta_3(\Gamma))]\cr
&=\bar R_\Gamma+(n-1)\eta_3(\Gamma)+n\sigma.
\end{align*}
This together with~\eqref{eq:integrand-compute} gives us equation~\eqref{eq:ext-indep-downst}.
\end{proof}

The following lemma was established in~\cite[Lemma~7.5]{Cieliebak-Volkov}
in the case without special vertices. 

\begin{lemma}\label{lem:extindep}
In the above setting we have
$$
  I_{\sigma\Gamma\tau}(\alpha)=I_{\Gamma}(\alpha) \quad\text{and}\quad
  I_{\sigma\Gamma\tau,l}(\alpha)=I_{\Gamma,l}(\alpha).
$$
\end{lemma}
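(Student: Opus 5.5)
The plan is to integrate the pointwise identity of Lemma~\ref{lem:ext-indep-basic} along the fibres of the projection $\pi:\Delta_\ver\times M^d\to M^d$ from~\eqref{eq:pi-signs}. The whole argument reduces to checking that the pullback $(M^\sigma)^*$ induced by the vertex reordering commutes with fibre integration up to exactly the sign $(-1)^{n\sigma}$ appearing in~\eqref{eq:ext-indep-downst}.

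\textbf{Step 1.} Since special vertices are last and their flag order is standardized, the permutation $\sigma$ acts only on $\prod_{j\in\Ver}M^{d_j}$. It is the identity on $M^d$. Hence $M^\sigma$ restricts to a diffeomorphism of $\Delta_\ver\times M^d$ that covers the identity of $M^d$.

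\textbf{Step 2.} On $\Delta_\ver\cong M^{|\Ver|}$, the map $M^\sigma$ permutes the vertex factors by $\sigma_\ver$. The cyclic rotations of flags inside a vertex act trivially on the slim diagonal. Permuting $n$-dimensional factors changes orientation by $(-1)^{n\sigma_\ver}=(-1)^{n\sigma}$, using that cyclic relabellings are even by~\eqref{eq:odd-valent}. The same parity bookkeeping appears in the proof of Lemma~\ref{lem:ext-indep-basic}.

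\textbf{Step 3.} Axiom (NAT) of Lemma~\ref{lem:push-forward-int}, or equivalently Corollary~\ref{cor:pullback-aut} with $g=\id$, then gives
$$\int_{\Delta_\ver}(M^\sigma)^*\om=(-1)^{n\sigma}\int_{\Delta_\ver}\om.$$
We apply this to the integrable form $\om=R_\Gamma^*G^e(\alpha)$. Its integrability comes from Lemma~\ref{lem:graph-L1}, which requires~\eqref{eq:no-selfloops}.

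\textbf{Step 4.} Multiplying by the prefactors in~\eqref{eq:defI} and combining with~\eqref{eq:ext-indep-downst}, the two factors $(-1)^{n\sigma}$ cancel. This gives $I_{\sigma\Gamma\tau}(\alpha)=I_\Gamma(\alpha)$.

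\textbf{Step 5.} For the marked version, we first redo Lemma~\ref{lem:ext-indep-basic} with $G^e_l$, using the second line of~\eqref{eq:formstrans}. The extra exponent $(n-1)(n'(l)-n(l))$ is exactly absorbed by the change of the term $(n-1)(n(l)-1)$ in the prefactor of~\eqref{eq:defIl}. Steps 3 and 4 then go through unchanged.

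One technical point concerns Remark~\ref{rem:nonlin}: fibre integration is only defined up to the exceptional set $Z_{min}$. By (NAT), this set is preserved under $M^\sigma$, so the equality holds as integrable forms.

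The main obstacle is Step 2: confirming that the orientation sign of $M^\sigma$ on $\Delta_\ver$ is precisely $(-1)^{n\sigma}$. This requires checking how $\Delta_\ver$ is oriented as a product of slim diagonals, and checking that the fibre-first convention for $\Delta_\ver\times M^d\to M^d$ introduces no further sign. I would verify this directly in local coordinates, treating a transposition of two adjacent nonspecial vertices.
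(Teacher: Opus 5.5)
Your proposal is correct and is essentially the paper's proof: apply $\int_{\Delta_\ver}$ to \eqref{eq:ext-indep-downst} and use invariance of fibre integration under $M^\sigma|_{\Delta_\ver}$ (Corollary~\ref{cor:pullback-aut}), which contributes exactly the sign $(-1)^{n\sigma_\ver}=(-1)^{n\sigma}$ and cancels the extra factor. Your Step~5 makes explicit the marked case that the paper calls ``analogous'': there the term $(n-1)(n'(l)-n(l))$ from \eqref{eq:formstrans} is absorbed by the change of $(n-1)(n(l)-1)$ in the prefactor of \eqref{eq:defIl}, as you say.
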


\begin{proof}
We apply the pushforward $\int_{\Delta_\ver}$ to both sides of
equation~\eqref{eq:ext-indep-downst} to get
\begin{align*}
I_{\sigma\Gamma\tau}(\alpha)
&\stackrel{\eqref{eq:ext-indep-downst}}{=}
(-1)^{\bar R_\Gamma+(n-1)\eta_3(\Gamma)+n\sigma}
\int_{\Delta_\ver}(M^\sigma)^*R_\Gamma^*G^e(\alpha)
\cr
&=
(-1)^{\bar R_\Gamma+(n-1)\eta_3(\Gamma)}
\int_{\Delta_\ver}R_\Gamma^*G^e(\alpha)\cr
&=I_\Gamma(\alpha).
\end{align*}
Here the last equality is the definition of the operation
$I_\Gamma$. The second equality follows by invariance of integration under
$M^\sigma|_{\Delta_\ver}$ (Corollary~\ref{cor:pullback-aut})
with the sign exponent $n\sigma_\ver=n\sigma$, where $\sigma_\ver$
is the permutation of vertices induced by $\sigma$ as in the proof
of Lemma~\ref{lem:ext-indep-basic}.
This proves the first assertion; the proof of the second one is analogous.
\end{proof}

In view of Lemma~\ref{lem:extindep}, we can unambiguously define
$I_\Gamma$ and $I_{\Gamma,l}$ for any {\em labelled} graph $\Gamma$
(with a marked edge in the second case) satisfying
conditions~\eqref{eq:no-selfloops} and~\eqref{eq:odd-valent}, 
by applying definitions~\eqref{eq:defI} and~\eqref{eq:defIl} for any
extension of the labelling. 

Consider now a tree $\Gamma$ with one special $d$-valent vertex and
generalized labelling. Recall from~\S\ref{ss:basiccomb} that the
latter is a numbering of the leaves (a labelling) and a numbering of
the special flags, both compatible with the given cyclic orders. 
We can extend this to an ``extended labelling'' without the
standardization for its special flags and define $I_\Gamma$ and
$I_{\Gamma,l}$ by~\eqref{eq:defI} and~\eqref{eq:defIl}. By the
preceding discussion these definitions do not depend on the
``extension'', but they depend on the generalized labelling as follows. 
Recall from equation~\eqref{eq:def-sigma-an} the analytic action $\sigma_{an}$ of
$\sigma\in\Z^d$ on $\Om^*(M^d)$. 
The following lemma was established in~\cite[Lemma~7.9]{Cieliebak-Volkov}
in the case without special vertices. 

\begin{lemma}\label{lem:rotation-spec}
Let $\Gamma$ be a tree with one special $d$-valent vertex and a
generalized labelling. For $\sigma\in\Z_d$, consider the graph 
$\sigma\Gamma$ with special flags cyclicly renumbered according to $\sigma$.
Then 
\begin{equation}\label{eq:rotation-spec}
  I_{\sigma\Gamma}(\alpha) = \sigma_{an}^{-1}(I_\Gamma(\alpha)).
\end{equation}
\end{lemma}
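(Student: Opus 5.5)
The plan is to reduce Lemma~\ref{lem:rotation-spec} to Lemma~\ref{lem:ext-indep-basic} and Lemma~\ref{lem:extindep}, keeping careful track of how the cyclic renumbering of the special flags acts on the factor $M^d$ rather than on $\Delta_\ver$.

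\textbf{Step 1: compare reordering maps.} Fix an extension of the generalized labelling of $\Gamma$ and give $\sigma\Gamma$ the same extension, except that the special flags are renumbered by $\sigma$. The edge order $O_e$ is unchanged, while the vertex order $O_v$ is precomposed with a permutation $\hat\sigma$ of $\{1,\dots,f\}$. This permutation is the identity on nonspecial flags and equals the cyclic permutation $\sigma$ on the last block of $d$ special flags. By \eqref{eq:R-trans},
\[
\bar R_{\sigma\Gamma}=\hat\sigma\circ\bar R_\Gamma
\quad\text{and}\quad
R_{\sigma\Gamma}=R_\Gamma\circ(\id\times M^{\sigma}),
\]
where $M^\sigma$ acts on $M^d$ only. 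Hence $(-1)^{\bar R_{\sigma\Gamma}}=(-1)^{\bar R_\Gamma+\sigma}$. Moreover $\eta_3(\sigma\Gamma)=\eta_3(\Gamma)$, since by Definition~\ref{def:eta3} $\eta_3$ does not depend on item (iv), the numbering of flags at a vertex. Substituting into \eqref{eq:defI} gives
\[
I_{\sigma\Gamma}(\alpha)=(-1)^{\sigma}\int_{\Delta_\ver}(\id\times M^\sigma)^*\bigl((-1)^{\bar R_\Gamma+(n-1)\eta_3(\Gamma)}R_\Gamma^*G^e(\alpha)\bigr).
\]

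\textbf{Step 2: commute pullback with fibre integration.} The map $\id\times M^\sigma$ is a diffeomorphism of $\Delta_\ver\times M^d$ covering $M^\sigma$ on $M^d$. It preserves the fibres $\Delta_\ver$ together with their orientation. Corollary~\ref{cor:pullback-aut} therefore applies, with the plus sign, and gives
\[
\int_{\Delta_\ver}(\id\times M^\sigma)^*\omega=(M^\sigma)^*\int_{\Delta_\ver}\omega .
\]
The integrability hypothesis of the corollary is supplied by Lemma~\ref{lem:graph-L1}. Alternatively, Remark~\ref{rem:deg-reg}(A) shows that everything is smooth here. Combining the two steps,
\[
I_{\sigma\Gamma}(\alpha)=(-1)^\sigma(M^\sigma)^*I_\Gamma(\alpha).
\]

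\textbf{Step 3: match with the analytic action.} The remaining and main task is to check that $(-1)^\sigma(M^\sigma)^*=\sigma_{an}^{-1}$ on $\Om^*(M^d)$. By definition $t_{an}=(-1)^{d-1}(M^{\tau_{1\to2}})^*$ and $\sigma_{an}=t_{an}^m$ for $\sigma=\tau_{1\to2}^m$. Since $(-1)^{\tau_{1\to2}}=(-1)^{d-1}$ and $(M^{\rho\rho'})^*=(M^{\rho'})^*$-type composition rules hold, one gets $\sigma_{an}=(-1)^\sigma(M^{\sigma^{\pm1}})^*$. The main obstacle is purely bookkeeping: pinning down whether the renumbering $\sigma\Gamma$ corresponds to $M^\sigma$ or to $M^{\sigma^{-1}}$. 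This is where the inverse in \eqref{eq:rotation-spec} arises, because the action by precomposition is a right action, as noted in the Remark in \S\ref{ss:gradedvect}. I would settle this by testing the case $d=2$ with $\sigma$ the flip, and the case of $\tau_{1\to2}$ on a cross product $\beta_1\times\dots\times\beta_d$, using $t_{an}\circ\times=\times\circ t_{an}$. If needed, I would compare with the argument of \cite[Lemma~7.9]{Cieliebak-Volkov}.
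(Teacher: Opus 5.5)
Your proposal is the paper's proof. The paper likewise uses $R_{\sigma\Gamma}=R_\Gamma\circ M^\sigma$ from \eqref{eq:R-trans}, the invariance $\eta_3(\sigma\Gamma)=\eta_3(\Gamma)$, and Corollary~\ref{cor:pullback-aut} (with $M^\sigma$ acting as the identity on the fibres $\Delta_\ver$) to reach $I_{\sigma\Gamma}(\alpha)=(-1)^\sigma(M^\sigma)^*I_\Gamma(\alpha)$. The one step you leave open, Step~3, is closed in the paper by Lemma~\ref{lem:acta} rather than by unwinding \eqref{eq:def-sigma-an}. It is applied with the trivial partition, $\eta=\sigma$, $\eta_b=0$, and $\beta$ chosen so that $\cross(\beta)=I_\Gamma(\alpha)$; such $\beta$ exists by Remark~\ref{rem:deg-reg}(A). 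This gives $(-1)^\sigma(M^\sigma)^*\cross(\beta)=\cross(\sigma_{an}^{-1}\beta)$ directly.

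You are right that the $\pm1$ is a matter of convention, and it helps to know which way it goes.
\begin{itemize}
\item Since $M^\rho\circ M^{\rho'}=M^{\rho'\rho}$, the assignment $\rho\mapsto(M^\rho)^*$ is a homomorphism.
\item So unwinding \eqref{eq:def-sigma-an} literally gives $t_{an}^m=(-1)^\sigma(M^\sigma)^*$. For this form-level operator you would get $I_{\sigma\Gamma}(\alpha)=\sigma_{an}(I_\Gamma(\alpha))$, with no inverse.
\item The inverse appears when $\sigma_{an}$ is read as the tensor-level analytic action of \S\ref{ss:gradedvect}, transported through $\cross$. The two readings differ by inversion because the form-level $t_{an}$ is the tensor action of $\tau_{2\to1}=\tau_{1\to2}^{-1}$.
\end{itemize}
The paper's final step uses the tensor-level reading. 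Nothing downstream depends on the choice: the lemma is only used through Lemma~\ref{lem:rotation-spec-cor}, where the sum over all of $\Z_d$ is unchanged when $\sigma$ is replaced by $\sigma^{-1}$. So your planned test cases would only fix a convention; they would not resolve a mathematical issue.
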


\begin{proof}
Equation~\eqref{eq:R-trans} with $\tau=\id$ yields
$$
  R_{\sigma\Gamma}= R_\Gamma\circ M^\sigma.
$$  
Since the numberings of vertices and edges and the orientations of
edges are the same for $\sigma\Gamma$ and $\Gamma$, we have  
$$
\eta_3(\sigma\Gamma)=\eta_3(\Gamma).
$$
Using this, we compute
\begin{align*}
I_{\sigma\Gamma}(\alpha)
&\stackrel{(1)}{=}(-1)^{\bar R_{\sigma\Gamma}+
(n-1)\eta_3(\sigma\Gamma)}
\int_{\Delta_\ver}(M^\sigma)^*
R_\Gamma^*G^e(\alpha)\cr
&\stackrel{(2)}{=}(-1)^{\bar R_\Gamma+\sigma+(n-1)\eta_3(\Gamma)}
(M^\sigma)^*\int_{\Delta_\ver}
R_\Gamma^*G^e(\alpha)\cr
&\stackrel{(3)}{=}(-1)^\sigma (M^\sigma)^* 
I_\Gamma(\alpha))\cr
&\stackrel{(4)}{=}\sigma_{an}^{-1}(I_\Gamma(\alpha)).
\end{align*}
Here equality~(1) follows from definition~\eqref{eq:defI} of $I_\Gamma$;
equality~(2) from the first equation in~\eqref{eq:R-trans} and
invariance of integration under $M^\sigma$
(Corollary~\ref{cor:pullback-aut}, noting that $M^\sigma$ is the
identity between the fibres of~\eqref{eq:pi-signs});
equality~(3) again from the definition of $I_\Gamma$;
and equality~(4) from Lemma~\ref{lem:acta} below with the trivial 
partition of $d$, $\eta=\sigma$, $\eta_b=0$, and $\beta=I_\Gamma(\alpha)$.
\end{proof}

Recall from~\S\ref{ss:basiccomb} that the cyclic group $\Z_d$ acts
freely by cyclic renumbering of the special flags on the set
$\RR_{s;d}^\gen$ of isomorphism classes of generalized labelled trees
with one special $d$-valent vertex and $s$ leaves. 
Recall the operation $N_{an}$ on $\Om^*(M^d)$  
from~\eqref{eq:def-Nalg}. Now Lemma~\ref{lem:rotation-spec}
yields the following result.

\begin{lemma}\label{lem:rotation-spec-cor}
Let $\Gamma$ be a tree with one special $d$-valent vertex and $s$
leaves and a generalized labelling. Let $\Z_d\Gamma$ denote the orbit
of $\Gamma$ under the free $\Z_d$-action on the set $\RR_{s;d}^\gen$. 
Then  
\begin{equation}\label{eq:rotation-spec-cor}
  \sum_{\wh\Gamma\in \Z_d\Gamma}I_{\wh\Gamma}
  = \sum_{\sigma\in \Z_d}I_{\sigma\Gamma}
  = \sum_{\sigma\in \Z_d}\sigma_{an}^{-1}\circ I_{\Gamma}
  = N_{an}\circ I_\Gamma.
\end{equation}
\end{lemma}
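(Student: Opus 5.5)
The plan is to derive this directly from Lemma~\ref{lem:rotation-spec}, combined with freeness of the $\Z_d$-action on $\RR_{s;d}^\gen$ and the definition of $N_{an}$ in~\eqref{eq:def-Nalg}. The statement is a chain of three equalities, and I will justify each one separately.

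For the first equality, I use that $\Z_d$ acts freely on $\RR_{s;d}^\gen$, as noted in~\S\ref{ss:basiccomb}. Freeness means the map $\sigma\mapsto\sigma\Gamma$ is a bijection from $\Z_d$ onto the orbit $\Z_d\Gamma$. I also need $I_{\wh\Gamma}$ to depend only on the isomorphism class of the generalized labelled tree. This holds because, by Lemma~\ref{lem:extindep} (whose proof applies verbatim to generalized labellings), the value is independent of the extension. Moreover, an isomorphism of generalized labelled trees transports an extension to an extension with the same $R_\Gamma$ and $\eta_3$. Reindexing the sum over the orbit by $\sigma\in\Z_d$ then gives the first equality.

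The second equality follows termwise from Lemma~\ref{lem:rotation-spec}, which gives $I_{\sigma\Gamma}(\alpha)=\sigma_{an}^{-1}(I_\Gamma(\alpha))$ for each $\sigma$.

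The third equality is the only point needing care. As $\sigma$ runs over $\Z_d$, so does $\sigma^{-1}$. Writing $\sigma=\tau_{1\to2}^m$, definition~\eqref{eq:def-sigma-an} gives $\sigma_{an}=t_{an}^m$. Since $t_{an}^d=\id$ on $\Om^*(M^d)$ (it is a signed pullback under a cyclic permutation of order $d$, with $((-1)^{d-1})^d$ compensated appropriately), the assignment $\sigma\mapsto\sigma_{an}$ is a homomorphism. I must check this power identity, since the sign $(-1)^{d(d-1)}=1$ makes it hold. Hence $\sigma_{an}^{-1}=(\sigma^{-1})_{an}$, and summing gives $\sum_{m=0}^{d-1}t_{an}^m=N_{an}$. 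The main (mild) obstacle is precisely this verification that $t_{an}$ on $\Om^*(M^d)$ has order $d$, so that inversion merely permutes the summands of $N_{an}$.
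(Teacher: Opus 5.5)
Your proof is correct and takes the same approach as the paper, which derives the lemma directly from Lemma~\ref{lem:rotation-spec}, freeness of the $\Z_d$-action on $\RR_{s;d}^\gen$, and the definition of $N_{an}$. Your check that $t_{an}^d=\id$ on $\Om^*(M^d)$, so that $\sum_\sigma\sigma_{an}^{-1}=\sum_\sigma\sigma_{an}=N_{an}$, and your remark that $I_{\wh\Gamma}$ depends only on the isomorphism class, spell out steps the paper leaves implicit.
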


\subsection{Change of labelling}\label{ss:intconfiglab}

Now we examine what happens if we change the labelling. 
Let $\Gamma$ be an
extended labelled graph, possibly with a marked
edge $l$. Let $\eta\in S(\bs)\subset
S_s$ be a relabelling associated to a partition $\bs$ of $s$ as
in~\S\ref{ss:basiccomb}. By a slight abuse of 
language, we denote by $\eta$ also the induced permutation of 
$\{1,\dots,f\}$ acting as the identity on interior flags.
As usual, $\Gamma\eta$ denotes the graph $\Gamma$ with the new
labelling. Equation~\eqref{eq:R-trans} specializes to
\begin{equation}\label{eq:R-trans-2}
  \bar R_{\Gamma\eta} = \bar R_\Gamma\circ\eta\quad\text{and}\quad
  R_{\Gamma\eta} = M^\eta\circ R_\Gamma. 
\end{equation}
Recall from~\S\ref{ss:gradedvect} the three actions of $\eta$ on
$\alpha\in(\Om^*(M))^{\otimes s}$: the naive action $\eta(\alpha)$,
the analytic action $\eta_{an}(\alpha)$, and the algebraic action 
$\eta_{alg}(\alpha)$. 
Recall also that $\eta_b$ denotes the permutation of boundary
components corresponding to $\eta$. 


\begin{lemma}[{\cite[\S7]{Cieliebak-Volkov}}]\label{lem:acta}
For $\eta\in S(\bs)$ and $\beta\in\Om^*(M)^{\otimes s}$ we have
$$
   (M^\eta)^*\cross(\beta) = (-1)^{\eta+(n-1)\eta_b} \cross\bigl(\eta_{an}^{-1}(\beta)\bigr).
$$
\end{lemma}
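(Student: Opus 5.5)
The plan is to reduce the identity to decomposable tensors and then to a direct sign computation for a single naive permutation of the factors of a cross product. By linearity of both sides in $\beta$, it suffices to treat $\beta=\beta_1\otimes\dots\otimes\beta_s$ with homogeneous $\beta_j\in\Om^*(M)$. By definition, $M^\eta(x_1,\dots,x_s)=(x_{\eta(1)},\dots,x_{\eta(s)})$, so $M^\eta=\pi_{\eta(1)}\times\dots\times\pi_{\eta(s)}$ in terms of the projections $\pi_j:M^s\to M$. Pulling back $\cross(\beta)=\pi_1^*\beta_1\wedge\dots\wedge\pi_s^*\beta_s$ therefore gives
$$
  (M^\eta)^*\cross(\beta)=\pi_{\eta(1)}^*\beta_1\wedge\dots\wedge\pi_{\eta(s)}^*\beta_s .
$$

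The next step is to reorder the wedge factors so that the projection indices increase. This turns the form into $(-1)^{\epsilon}\,\cross(\eta^{-1}(\beta))$, where $\eta^{-1}(\beta)$ is the naive inverse action and $\epsilon$ is the Koszul sign of moving the factors $\beta_j$ past each other with their de Rham degrees $\deg\beta_j$.

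It then remains to compare this with the analytic action. By definition, $\eta^{-1}_{an}(\beta)=(-1)^{\eta^{-1}+(n-1)(\eta^{-1})_b+(\eta^{-1})_{an}(\beta)}\,\eta^{-1}(\beta)$. The exponent $(\eta^{-1})_{an}(\beta)$ is exactly the Koszul sign $\epsilon$, computed with unshifted degrees in $A$. Moreover $(-1)^{\eta^{-1}}=(-1)^\eta$, and $(\eta^{-1})_b=\eta_b^{-1}$ has the same parity as $\eta_b$. Substituting these, the signs combine to $(-1)^{\eta+(n-1)\eta_b}$, which is the claimed formula.

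One subtlety is that $S(\bs)$ is not a group, so $\eta^{-1}$ need not lie in $S(\bs)$. I would handle this by noting that $\eta^{-1}$ is again a block-cyclic relabelling for the permuted partition $\bs\eta_b$, so the action of $\eta^{-1}$ is defined with respect to $\bs\eta_b$ and the same sign bookkeeping applies.

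The main obstacle is carrying out this sign bookkeeping consistently with the right/left action conventions and the precomposition convention of the Remark in \S\ref{ss:gradedvect}. Beyond that the argument is routine; one can also check it against the already stated case $t_{an}\circ\times=\times\circ t_{an}$.
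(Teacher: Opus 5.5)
Your proof is correct and is the direct sign computation behind this lemma; the paper gives no proof of its own and simply cites \cite{Cieliebak-Volkov}. Pulling back gives $\pi_{\eta(1)}^*\beta_1\wedge\dots\wedge\pi_{\eta(s)}^*\beta_s$, reordering produces exactly the Koszul exponent $(\eta^{-1})_{an}(\beta)$, and the signs $(-1)^{\eta^{-1}+(n-1)(\eta^{-1})_b}$ built into the analytic action cancel the prefactor. The notation $\eta_{an}^{-1}$ is unambiguous: $(\eta^{-1})_{an}$ coincides with the inverse map $(\eta_{an})^{-1}$, because the naive action is a right action and a rearrangement has the same Koszul sign as its reverse.
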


Since $M^\eta$ acts only on the variables corresponding to $\alpha$,
we compute:
\begin{align*}
   R_{\Gamma\eta}^*G^e(\alpha) 
   &\stackrel{\eqref{eq:R-trans-2}}{=} R_\Gamma^*(M^\eta)^*G^e(\alpha) 
   = R_\Gamma^*\bigl(G^e\times(M^\eta)^*\cross(\alpha)\bigr) \cr
   &\stackrel{\rm Lemma~\ref{lem:acta}}{=} (-1)^{\eta+(n-1)\eta_b}R_\Gamma^*G^e(\eta_{an}^{-1}\alpha). 
\end{align*}
Combining this with the sign used in the definitions of
$I_{\Gamma}(\alpha)$ and $I_{\Gamma,l}(\alpha)$, we get
\begin{equation}\label{eq:good-an-equiv}
\begin{aligned}
   &(-1)^{\bar R_{\Gamma\eta}+(n-1)\eta_3(\Gamma\eta)}R_{\Gamma\eta}^*G^e(\alpha)\cr
   &= (-1)^{\bar R_{\Gamma\eta}+\eta+(n-1)\eta_b+(n-1)\eta_3(\Gamma\eta)}R_\Gamma^*G^e(\eta_{an}^{-1}\alpha)\cr
   &=(-1)^{\bar R_{\Gamma}+(n-1)\eta_3(\Gamma)}
   R_\Gamma^*G^e(\eta_{an}^{-1}\alpha).
\end{aligned}
\end{equation}
Here the first equality follows from the equality just above, and the
second one follows from~\eqref{eq:R-trans-2} and
$\eta_3(\Gamma\eta)=\eta_3(\Gamma)+\eta_b$ (see Definition~\ref{def:eta3}).
The same holds with $G_l^e(\alpha)$ in place of $G^e(\alpha)$.
In view of definitions~\eqref{eq:defI} and~\eqref{eq:defIl}, this implies
\begin{equation}
  I_{\Gamma\eta}(\alpha) = I_\Gamma(\eta_{an}^{-1}(\alpha))\quad\text{and}\quad
  I_{\Gamma\eta,l}(\alpha) = I_{\Gamma,l}(\eta_{an}^{-1}(\alpha)).
\end{equation}

\begin{definition}\label{def:good-exp}
Let $\Gamma$ be a 
labelled graph, possibly with special vertices and a marked edge $l$. 
If $\Gamma$ has two special vertices, we fix in addition a bidegree $(p,q)$.
Let $\alpha\in (\Om^*(M))^s$ be a decomposable tensor of homogeneous degree. 
A sign exponent depending on $(\Gamma,\alpha,p,q)$ is called {\em
  nonessential} if it depends only on the total degrees of $\alpha$
and $I_\Gamma(\alpha)$, the bidegree $(p,q)$, and the numerics of $\Gamma$,
but {\em not} on the individual tensor factors of $\alpha$ and
$I_\Gamma(\alpha)$ or the labelling of $\Gamma$.
\end{definition}

\begin{definition}\label{def:good-alg}
Let $\Gamma$ be a labelled graph, $N$ any manifold, and
$\Om_{int}^*(N)$ the space of integrable forms in Definition~\ref{def:L1}.
Let $\bs$ be the partition induced by $\Gamma$, and $S(\bs)$ the set
of relabellings defined by equation~\eqref{eq:relab}. 
A linear map
$$
  J_\Gamma:(\Om^*(M))^{\otimes s}\longrightarrow \Om_{int}^*(N)
$$
is called a {\em good operation} if 
it satisfies the following equivariance property
for any $\eta\in S(\bs)$:
$$
J_{\Gamma\eta}(\alpha)=J_\Gamma(\eta_{alg}^{-1}\alpha).
$$
\end{definition}

Recall the maps $P,P_b$ relating the algebraic and analytic actions in
the commuting diagram~\eqref{eq:alg-ana-action2}. The following lemma
relates Definitions~\ref{def:good-exp} and~\ref{def:good-alg}.

\begin{lemma}\label{lem:alg-ana-action2}
In the setting above, let $\fs$ be a sign exponent of the form
\begin{equation}\label{eq:exp-main}
  \fs(\alpha) = P(\alpha)+(n-1)P_b(\alpha) + \fs_{noness}(\alpha),
\end{equation}
where $\fs_{noness}$ is a nonessential sign exponent. Then the operation
$$
  J_\Gamma:=(-1)^{\fs}I_\Gamma
$$
is a good operation.
\end{lemma}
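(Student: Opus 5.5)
The plan is to verify the equivariance property of Definition~\ref{def:good-alg} directly, by combining the already established analytic equivariance of $I_\Gamma$ with the commuting diagram~\eqref{eq:alg-ana-action2}. Fix $\eta\in S(\bs)$ and a decomposable homogeneous $\alpha$. By the discussion just before Definition~\ref{def:good-exp} we have
$$
  I_{\Gamma\eta}(\alpha)=I_\Gamma(\eta_{an}^{-1}\alpha).
$$
Hence
$$
  J_{\Gamma\eta}(\alpha)=(-1)^{\fs_{\Gamma\eta}(\alpha)}I_\Gamma(\eta_{an}^{-1}\alpha),
  \qquad
  J_\Gamma(\eta_{alg}^{-1}\alpha)=(-1)^{\fs_\Gamma(\eta_{alg}^{-1}\alpha)}I_\Gamma(\eta_{alg}^{-1}\alpha).
$$
Here $\fs_{\Gamma\eta}$ means the sign exponent~\eqref{eq:exp-main} computed with the partition induced by $\Gamma\eta$, namely $\bs\eta_b$. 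Note also that $\eta(\alpha)$ is, up to sign, a decomposable tensor with the same factors.

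First I would compare $\eta_{an}^{-1}\alpha$ and $\eta_{alg}^{-1}\alpha$. Both are $\pm$ the same naive permutation of $\alpha$. Diagram~\eqref{eq:alg-ana-action2}, applied to the inverse relabelling, says
$$
  P_bP\,\eta_{an}^{-1}=\eta_{alg}^{-1}\,P_bP.
$$
Since $P$ and $P_b$ act by the signs $(-1)^{P(\cdot)}$ and $(-1)^{(n-1)P_b(\cdot)}$, this gives
$$
  \eta_{alg}^{-1}\alpha=(-1)^{P(\alpha)+(n-1)P_b(\alpha)+P(\eta_{an}^{-1}\alpha)+(n-1)P_b(\eta_{an}^{-1}\alpha)}\,\eta_{an}^{-1}\alpha.
$$
The subtle point is the bookkeeping of partitions: $P_b(\alpha)$ must be computed with respect to $\bs\eta_b$ for $\alpha$ viewed as an input of $\Gamma\eta$, and with respect to $\bs$ for $\eta_{an}^{-1}\alpha$ viewed as an input of $\Gamma$. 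I would check that this matches the source and target of the horizontal arrows in~\eqref{eq:alg-ana-action2}. By linearity of $I_\Gamma$, it then suffices to show
$$
  \fs_{\Gamma\eta}(\alpha)\equiv \fs_\Gamma(\eta_{alg}^{-1}\alpha)+P(\alpha)+(n-1)P_b(\alpha)+P(\eta_{an}^{-1}\alpha)+(n-1)P_b(\eta_{an}^{-1}\alpha)\pmod 2.
$$

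Next I would substitute~\eqref{eq:exp-main} into both sides. The $P$ and $P_b$ terms of $\fs_{\Gamma\eta}(\alpha)$ cancel against $P(\alpha)+(n-1)P_b(\alpha)$. The terms $P(\eta_{alg}^{-1}\alpha)$ and $P_b(\eta_{alg}^{-1}\alpha)$ depend only on the degrees of the factors in their positions. Since $\eta_{alg}^{-1}\alpha$ and $\eta_{an}^{-1}\alpha$ agree up to sign as tensors, these terms coincide with $P(\eta_{an}^{-1}\alpha)$ and $P_b(\eta_{an}^{-1}\alpha)$ and cancel mod $2$. What remains is the identity
$$
  \fs_{noness}(\Gamma\eta,\alpha)=\fs_{noness}(\Gamma,\eta_{alg}^{-1}\alpha).
$$
This holds by Definition~\ref{def:good-exp}, because a nonessential exponent depends only on total degrees (unchanged by permutation), the numerics of the graph (unchanged by relabelling), the bidegree, and the total degree of the output. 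For the output, note that $I_{\Gamma\eta}(\alpha)$ and $I_\Gamma(\eta_{alg}^{-1}\alpha)$ agree up to sign, so the same total degree, and, if $\Gamma$ has two special vertices, the same bidegree component $(p,q)$, is used on both sides.

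The main obstacle I expect is the partition bookkeeping: justifying that the $P_b$ exponent is taken with respect to the correct partition on each side, so that the diagram~\eqref{eq:alg-ana-action2} (stated for maps $\Alg(\bs)\to\Alg(\bs\eta_b)$) applies literally. I would handle this by writing out the diagram for $\eta^{-1}$ with source partition $\bs\eta_b$, and by checking on a single transposition of boundary components and a single cyclic rotation, since $S(\bs)$ is generated by such moves up to composition.
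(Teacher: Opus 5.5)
Your proposal is correct and follows the paper's own argument: you combine the analytic equivariance $I_{\Gamma\eta}(\alpha)=I_\Gamma(\eta_{an}^{-1}\alpha)$ from \eqref{eq:good-an-equiv} with the diagram \eqref{eq:alg-ana-action2}, so that the $P+(n-1)P_b$ part of the exponent converts $\eta_{an}^{-1}$ into $\eta_{alg}^{-1}$, while the nonessential part is unchanged under relabelling by definition. You make explicit the sign and partition bookkeeping ($\bs$ versus $\bs\eta_b$) that the paper compresses into one sentence, and the fallback reduction to generators at the end is unnecessary, since the diagram is already stated for arbitrary $\eta\in S(\bs)$.
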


\begin{proof}
Since by diagram~\eqref{eq:alg-ana-action2} the sign exponent
$P(\alpha)+(n-1)P_b(\alpha)$ intertwines the algebraic and analytic
actions, this follows from equation~\eqref{eq:good-an-equiv} and the
definition of a nonessential sign exponent. 
\end{proof}

\begin{remark}\label{rem:def-good}
(A) For further examples of good operations relevant for our purposes
see~\S\ref{ss:MCan} and~\S\ref{sec:ops} below.

(B) For a fixed target, the space of good operations has a natural
linear structure. 
Postcomposition of a good operation with a pullback or fibre
integration or (if the target is $\Om_{int}^*(N_1\times N_2)$) projection
onto a bidegree is again a good operation.
Therefore, by Lemma~\ref{lem:alg-ana-action2}, a map
$$
  \alpha\mapsto\sum_{p,q}(-1)^{\fs_{\Gamma,p,q}(\alpha)}I_\Gamma(\alpha)^{p,q}
$$
is a good operation if each $\fs_{\Gamma,p,q}$ has the form~\eqref{eq:exp-main}.

(C) If $\Gamma$ has a marked edge $l$, then everything carries over to
$I_{\Gamma,l}$ in place of $I_\Gamma$. 
\end{remark}

\subsection{The Maurer-Cartan element}\label{ss:MCan}

Consider now the de Rham algebra $(\Om^*(M),d,(\cdot,\cdot))$
with the intersection pairing~\eqref{eq:defineintpair}.
Recall that this pairing is nondegenerate but not perfect.
Let $\HH\subset\Om^*(M)$ be the harmonic subspace associated to the
propagator $\wt G$ above.
The pairing $(\cdot,\cdot)$ restricts to $\HH$ as a perfect pairing,
so we get a cyclic cochain complex $(\HH, d=0, (\cdot,\cdot))$.
Proposition~\ref{prop:canondIBL} associates to this cyclic complex a
canonical dIBL-algebra (with trivial differential)  
\begin{equation}\label{eq:deRhamcanon}
   \dIBL(\HH) = \Bigl((B^{{\text{\rm
         cyc}}*}\HH)[2-n],\fp_{1,1,0}=0,\,\fp_{1,2,0},\,\fp_{2,1,0}\Bigr). 
\end{equation}
Recall from~\S\ref{ss:basiccomb} the set $\RR_{\ell,g}$ of isomorphism
classes of labelled trivalent ribbon graphs with $\ell\geq 1$ boundary
components and genus $g\geq 0$. We define
\begin{equation}\label{eq:mlg}
   \m_{\ell,g} := \frac{1}{\ell!}\sum_{\Gamma\in \RR_{\ell,g}}\m_{\Gamma}\in 
   (B\HH[3-n]^{\otimes\ell})^*,
\end{equation}
where
\begin{equation}\label{eq:mGamma}
   \m_{\Gamma }(\alpha):=(-1)^{s_\Gamma(\alpha)}
   I_\Gamma(\alpha) 
\end{equation}
for a decomposable $\alpha=\alpha_1\otimes\dots\otimes\alpha_s\in
\HH^{\otimes s}$ of homogeneous degree and 
\begin{equation}\label{eq:sGamma}
  s_\Gamma(\alpha):= n\ell+s(s+1)/2+
  P(\alpha)+(n-1)\bigl((\ell+1)(s+1) + P_b(\alpha)\bigr).
\end{equation}
It is proved in~\cite[\S8]{Cieliebak-Volkov} that 
$\{\m_{\ell,g}\}$ is indeed a Maurer-Cartan element for $\dIBL(\HH)$. 

\subsection{Boundary strata}\label{ss:bdryloci}

In this subsection we show that the signed integrals of $\wt
R_\Gamma^*\wt G^e(\alpha)$ over regular boundary loci of $\XX_\Gamma$
do not depend on the extension of the labelling of $\Gamma$.
We continue in the setting of the previous subsections, where now in
addition we consider a pair $(M^d\times\WW,\ZZ)$ as
in~\S\ref{sec:general graphs} and its associated basic pair from
Definition~\ref{def:basic-pair}, 
\begin{equation*}
  (\YY_\Gamma,\XX_\Gamma) = \bigl(\wt Y_\Gamma\times \WW,PT(\Delta_{\ver}\times \ZZ)\bigr).
\end{equation*}
We denote the pullback of $\wt R_\Gamma^*\wt G^e(\alpha)$ to
$\YY_\Gamma$ under the projection $\wt Y_\Gamma\times \WW\to \wt
Y_\Gamma$ by the same expression.

We need some preparation. Let $\sigma$ and $\tau$ be permutations of
the set  $\{1,\dots,f\}$ as in~\S\ref{ss:intconfigext}, where $\tau$
changes the edge order and $\sigma$ the vertex order. 

(A) Assume first that $\tau=\id$. Equation~\eqref{eq:R-trans} with
$\tau=\id$ yields $R_{\sigma\Gamma}=R_\Gamma\circ M^\sigma$. Therefore,
$M^\sigma:Y_{\sigma\Gamma}\to Y_\Gamma$
restricts to a diffeomorphism between the blow-up loci
$R_\Gamma^{-1}(\Delta_2^{\sigma\Gamma})\to R_\Gamma^{-1}(\Delta_2^\Gamma)$
and thus lifts to a diffeomorphism between the blow-ups
$\wt M^\sigma:\wt Y_{\sigma\Gamma}\longrightarrow \wt Y_\Gamma$.
We take the product of this map with the identity on $\WW$ and
denote the result by the same letter to get the diffeomorphism
$$
  \wt M^\sigma:\YY_{\sigma\Gamma}\longrightarrow \YY_\Gamma
$$
(note that $\Delta_2^{\sigma\Gamma}=\Delta_2^\Gamma$).
On the other hand, $M^\sigma$ gives rise to a diffeomorphism
$$
  \Delta_\ver^{\sigma\Gamma}\times \ZZ\longrightarrow
  \Delta_\ver^\Gamma\times \ZZ.
$$
Therefore, by Remark~\ref{rem:diffeo-pairs}, $\wt M^\sigma$ restricts
to a diffeomorphism of quasi-regular submanifolds with boundary
$$
  \wh\XX_{\sigma\Gamma}\longrightarrow \wh\XX_\Gamma
$$
defined as in equation~\eqref{eq:Xhat}. 
This yields an identification of the respective primary boundaries.
In particular, the primary faces that correspond
an edge $l$ of $\Gamma$ are related by
\begin{equation}\label{eq:key-diffeo}
  \wt M^\sigma:\p_l\XX_{\sigma\Gamma}\stackrel{\cong}\longrightarrow\p_l\XX_\Gamma. 
\end{equation}

(B) In the setting of~\S\ref{ss:intconfigext},
equation~\eqref{eq:ext-indep-downst} of Lemma~\ref{lem:ext-indep-basic}
lifts to
\begin{equation}\label{eq:ext-indep-upst}
 (-1)^{\bar R_{\sigma\Gamma\tau}+(n-1)\eta_3(\sigma\Gamma\tau)}
 \wt R_{\sigma\Gamma\tau}^*\wt G^e(\alpha)=
 (-1)^{\bar R_\Gamma+(n-1)\eta_3(\Gamma)+n\sigma}
 (\wt M^\sigma)^*\wt R_\Gamma^*\wt G^e(\alpha).
\end{equation}
Indeed, both sides of~\eqref{eq:ext-indep-upst}
agree on the interior of $\wt Y_{\sigma\Gamma\tau}$ due
to~\eqref{eq:ext-indep-downst} and we conclude by taking the closure.

The goal of this subsection is the following result about integration
over boundary strata. 

\begin{lemma}\label{lem:bdryindep}
Let $\Gamma$ be a labelled graph as in~\S\ref{ss:Integrals}.
For an edge $l$ of $\Gamma$ and an extension of the labelling of $\Gamma$
consider the expression
$$
  S(\Gamma,l):=(-1)^{\bar R_\Gamma+(n-1)\eta_3(\Gamma)}
  \int_{\p_l\XX_\Gamma}\wt R_\Gamma^*\wt G^e(\alpha),
$$
where $\p_l\XX_\Gamma$ denotes the boundary stratum corresponding to
the edge $l$. Then $S(\Gamma,l)$ does not depend on the chosen
extension of the labelling of $\Gamma$.
\end{lemma}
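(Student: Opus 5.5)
Any two extensions of the labelling of $\Gamma$ are related by the action $\Gamma\mapsto\sigma\Gamma\tau$ of \S\ref{ss:intconfigext}. Here $\tau$ changes the edge order and orientations, and $\sigma$ changes the order of the nonspecial vertices and the numbering of flags at them. So it suffices to show $S(\sigma\Gamma\tau,l)=S(\Gamma,l)$. I would reduce this to the lifted identity \eqref{eq:ext-indep-upst}, the boundary diffeomorphism \eqref{eq:key-diffeo}, and an orientation count, exactly parallel to the proof of Lemma~\ref{lem:extindep} but on the boundary stratum $\p_l\XX$ rather than on $\Delta_\ver$.

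First I would treat the edge change $\tau$. The space $Y_\Gamma$, the vertex diagonal and $\ZZ$ depend only on the vertex order. Hence $\YY_{\Gamma\tau}=\YY_\Gamma$ and $\XX_{\Gamma\tau}=\XX_\Gamma$ as subsets, and $\p_l\XX_{\Gamma\tau}=\p_l\XX_\Gamma$ with the same orientation. Note that flipping the orientation of $l$ does not change the blow-up locus $\Delta_2^l$. The integrands are related by \eqref{eq:ext-indep-upst} with $\sigma=\id$, which gives equality of the signed integrands pointwise on $\YY_\Gamma$. Thus $S(\Gamma\tau,l)=S(\Gamma,l)$.

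Next the vertex change $\sigma$, with $\tau=\id$. By \eqref{eq:ext-indep-upst},
\[
S(\sigma\Gamma,l)=(-1)^{\bar R_\Gamma+(n-1)\eta_3(\Gamma)+n\sigma}\int_{\p_l\XX_{\sigma\Gamma}}(\wt M^\sigma)^*\wt R_\Gamma^*\wt G^e(\alpha).
\]
Then I would apply \eqref{eq:key-diffeo} and change variables along $\wt M^\sigma:\p_l\XX_{\sigma\Gamma}\to\p_l\XX_\Gamma$. It remains to check that this diffeomorphism changes orientation by exactly $(-1)^{n\sigma}$. On the interior, $\XX_0$ is identified with an open subset of $\Delta_\ver\times\ZZ_0$. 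There $M^\sigma$ permutes the $M$-factors of $\Delta_\ver$ according to the vertex permutation $\sigma_\ver$, which gives the sign $(-1)^{n\sigma_\ver}$. Cyclic relabellings at the odd-valent vertices act trivially on the slim diagonals, so $(-1)^{n\sigma_\ver}=(-1)^{n\sigma}$ as in the proof of Lemma~\ref{lem:ext-indep-basic}. Since $\wt M^\sigma$ is a diffeomorphism of the ambient pairs carrying $\wh\XX_{\sigma\Gamma}$ onto $\wh\XX_\Gamma$ (Remark~\ref{rem:diffeo-pairs}), the boundary orientations, including the odd $0$-form $\eps$, are transported compatibly. The boundary orientation sign therefore agrees with the interior one. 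The two signs $(-1)^{n\sigma}$ cancel, giving $S(\sigma\Gamma,l)=S(\Gamma,l)$. Combining both steps, and noting $\sigma\Gamma\tau=\sigma(\Gamma\tau)$, proves the lemma.

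The main obstacle is the orientation bookkeeping on the boundary. One must make sure that the boundary orientation of $\p_l\XX$, induced from $\XX_0$ via the quasi-regular structure, transforms under $\wt M^\sigma$ with the same sign as the interior. The integral should also be read with the multiplicity form $\eps$, and one must check that $\eps$ is preserved. I would handle this by working on the full-measure part of $\p_l\XX$ where $\XX$ is a genuine $C^1$ manifold with boundary near the face. There the statement is the standard fact that an orientation-preserving (resp.\ reversing) diffeomorphism of manifolds with boundary induces an orientation-preserving (resp.\ reversing) map on boundaries. I would also note that the integrals exist by Stokes' theorem for the pair (Lemma~\ref{lem:Stokes-an}).
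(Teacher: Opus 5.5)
Your proposal is correct and follows essentially the same route as the paper. The edge change $\tau$ leaves $\YY$, $\XX$, $\p_l\XX$ and their orientations unchanged, and \eqref{eq:ext-indep-upst} with $\sigma=\id$ identifies the signed integrands. For the vertex change $\sigma$, the cancellation comes from \eqref{eq:ext-indep-upst}, the diffeomorphism \eqref{eq:key-diffeo}, and the orientation change $(-1)^{n\sigma}$ of $M^\sigma$ on $\Delta_\ver$, which is inherited by $\XX$ and its boundary strata. Your extra care about transporting the boundary orientation and $\eps$ via Remark~\ref{rem:diffeo-pairs} simply spells out what the paper condenses into ``the same change of orientation appears on $\XX_\Gamma$ and thus on its boundary strata.''
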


\begin{proof}
Let $\Gamma'$ denote the graph $\Gamma$ with a potentially different
extension of the labelling. Assume first that $\Gamma'$ differs from
$\Gamma$ by the ordering and orientation of edges. Then 
$\Gamma'=\Gamma\tau$ for some permutation $\tau$ responsible for
the change in the ordering and orientation of edges.
Equation~\eqref{eq:ext-indep-upst} with $\sigma=\id$ implies
that the integrands involved in $S(\Gamma',l)$ and $S(\Gamma,l)$ coincide.
Note that the two families $R_{\Gamma'}^{-1}(\Delta_2^{\Gamma'})$
and $R_\Gamma^{-1}(\Delta_2^\Gamma)$ differ only by renumbering their members.
Therefore, the respective blow-ups $\YY_{\Gamma'}=\wt
Y_{\Gamma'}\times \WW$ and $\YY_\Gamma=\wt Y_\Gamma\times \WW$
(see equation~\eqref{eq:defYorient}) coincide. Since
$\Delta_\ver^\Gamma=\Delta_\ver^{\Gamma'}$, we get that
$\XX_\Gamma=\XX_{\Gamma'}$ and thus $\p_l\XX_\Gamma=\p_l\XX_{\Gamma'}$,
so the spaces over which we integrate also coincide and the integrals agree.

Assume now that $\Gamma'$ differs from $\Gamma$ by the ordering of
nonspecial vertices and cyclic permutations of the flags around each vertex.
The $\Gamma'=\sigma\Gamma$ for some permutation $\sigma$ responsible for
this change of extension. We compute
\begin{align*}
S(\Gamma',l)
&\stackrel{(1)}{=}(-1)^{\bar R_{\sigma\Gamma}+(n-1)\eta_3(\sigma\Gamma)}
\int_{\p_l\XX_{\sigma\Gamma}}\wt R_{\sigma\Gamma}^*\wt G^e(\alpha)\cr
&\stackrel{(2)}{=} (-1)^{\bar R_\Gamma+(n-1)\eta_3(\Gamma)+n\sigma}
\int_{\p_l\XX_{\sigma\Gamma}}(\wt M^\sigma)^*\wt R_\Gamma^*\wt G^e(\alpha)\cr
&\stackrel{(3)}{=} (-1)^{\bar R_\Gamma+(n-1)\eta_3(\Gamma)}
\int_{\p_l\XX_\Gamma}\wt R_\Gamma^*\wt G^e(\alpha)\cr
&\stackrel{(4)}{=} S(\Gamma,l).
\end{align*}
Here equality~(1) is the definition of $S(\Gamma',l)$, and equality~(2)
follows from equation~\eqref{eq:ext-indep-upst} with $\tau=\id$.
Equality~(3) follows from invariance of integration 
(Corollary~\ref{cor:pullback-aut}) under the map $\wt M^\sigma$
in~\eqref{eq:key-diffeo};
here $M^\sigma:\Delta_\ver\to\Delta_\ver$  
changes the orientation by $(-1)^{n\sigma_\ver}=(-1)^{n\sigma}$, where
$\sigma_\ver$ is the permutation of vertices induced by $\sigma$ 
as in the proof of Lemma~\ref{lem:ext-indep-basic}, so the same change
of orientation appears on $\XX_\Gamma=PT(\Delta_\ver\times\ZZ)$ and
thus on its boundary strata. 
Equality~(4) is the definition of $S(\Gamma,l)$.
\end{proof}

\begin{remark}\label{rem:edgechoice}
Lemma~\ref{lem:bdryindep} allows us to choose the extension of the
labelling of $\Gamma$ as we please when integrating over the boundary
of $\XX_\Gamma$. This freedom will be used e.g.~for the cancellation
of boundary strata in~\S\ref{ss:chainprod}.
If a graph appears as the result of attaching a leg to some other
graph, then the edge created as the result of attaching comes first in
the ordering of edges and is oriented towards the corresponding 
special vertex. Moreover, the new vertex created as the result of
attaching comes last in the ordering of vertices,
see~\S\ref{ss:op-graphs2}.  
If a connected tree with two special vertices has a doubly special edge, 
then the edge comes first in the ordering of edges and 
is oriented from the first special vertex to the second one. If a
circular graph contains a doubly special edge, then it comes first in
the ordering of edges and is oriented by sliding it to the second
boundary component of the graph and picking up the boundary
orientation, see~\S\ref{ss:ribbonbasicdef}.
\end{remark}

\section{The operations $\G$, $\G^2$ and $\F^2$}\label{sec:ops}

In this section we define chain homotopies $\G^2$ and $\F^2$ that will
be used in the next section to prove compatibility for the product and
the coproduct. Moreover, we redefine the chain map $\G$ from
equation~\eqref{eq:bG} in a more convenient form and establish some
basic properties of the maps $\G$, $\G^2$ and $\F^2$. 
We retain the notation from the previous section. Thus $M$ is a
closed oriented $n$-dimensional manifold, $\HH\subset\Om^*(M)$ a
harmonic subspace, and $\wt G$ an associated propagator. We abbreviate
$$
  \Om := \Om^*(M).
$$
Recall from~\S\ref{ss:basiccomb} the sets
$\RR_{s;d}$, $\RR_{s;d_1,d_2}$ and $\RR_{s_1,s_2;d}$.
By a slight abuse of language, we will often identify a graph $\Gamma$
with its isomorphism class in one of these sets. 

\subsection{The chain map $\G$}\label{ss:defG}



We will define $\G$ (as well as $\G^2$ and $\F^2$) on decomposable
tensors of homogeneous degree and extend it by linearity.
Fix positive integers $1\le d\le s$ and consider 
$\alpha=\alpha_1\otimes\dots\otimes\alpha_s\in \HH^{\otimes s}$ of
homogeneous degree. For $\Gamma\in \RR_{s;d}$ (a tree with one special
vertex of degree $d$) we set 
\begin{equation}\label{eq:def-G-Gamma}
  \G_\Gamma^{alg}(\alpha):=(-1)^{s_\Gamma^{alg}(\alpha)}I_\Gamma(\alpha),\qquad
  \G_\Gamma(\alpha):=(-1)^{s_\Gamma(\alpha)}I_\Gamma(\alpha),
\end{equation}
where $I_\Gamma(\alpha)$ is defined by~\eqref{eq:defI}
and the sign exponents are defined below. 
Recall from Remark~\ref{rem:deg-reg} that
$I_\Gamma(\alpha)\in \Om^{\otimes d}$.
Using this, we define
\begin{equation}\label{eq:def-G-sd}
  \G_{s;d}^{alg}(\alpha):=\sum_{\Gamma\in \RR_{s;d}}\G_\Gamma^{alg}(\alpha)
,\qquad
\G_{s;d}(\alpha):=\sum_{\Gamma\in \RR_{s;d}}\G_\Gamma(\alpha),
\end{equation}
\begin{equation}\label{eq:G-redef}
  \G:=\bigoplus_{1\le d\le s}\G_{s;d}: \bigoplus_{1\le
    s}\HH^{\otimes s}\to \bigoplus_{1\le d}\Om^{\otimes d}.
\end{equation}
(The maps $\G_\Gamma^{alg}$ and $\G_{s;d}^{alg}$ will only be
used in the proof of Proposition~\ref{prop:G-consist} below.) 
Recall the sign operators $P$ from~\eqref{eq:defP} and $Q$
from~\eqref{eq:defQ}, with their sign exponents denoted by the same letters.
Using these, we define the sign exponents
$$
  s_\Gamma^{alg}(\alpha) := \frac{s(s+1)}{2}+\frac{d(d+1)}{2}+P(\alpha)+P(I_\Gamma(\alpha))
$$
and
\begin{equation}\label{eq:def-G-an-alg}
\begin{aligned}
  s_\Gamma(\alpha) &:= s_\Gamma^{alg}(\alpha) + P(I_\Gamma(\alpha))+Q(I_\Gamma(\alpha))+n+1\cr
  &= P(\alpha)+\frac{s(s+1)}{2}+I_\Gamma(\alpha)(d+1)+n+1.
\end{aligned}
\end{equation}
Note that $P_b=0$ (see equation~\eqref{eq:defPb}) because there is only one
boundary component.
Thus, the sign exponents $s_\Gamma(\alpha)$ and $s_\Gamma^{alg}(\alpha)$ have the
form~\eqref{eq:exp-main}, and by Lemma~\ref{lem:alg-ana-action2} the
operations $\G_\Gamma$ and $\G_\Gamma^{alg}$ are good
(see Definition~\ref{def:good-alg}). 



\begin{proposition}\label{prop:G-consist}
The definition of $\G$ in~\eqref{eq:G-redef}
agrees with the one in~\eqref{eq:bG}. 
\end{proposition}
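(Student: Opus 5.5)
The plan is to compare both definitions component by component, on a fixed decomposable $\alpha=\alpha_1\otimes\dots\otimes\alpha_s\in\HH^{\otimes s}$ of homogeneous degree, and for each $d$ separately. Recall that \eqref{eq:bG} defines $\G=(-1)^{n+1}QP\fG$, where $\fG_{s;d}$ is given by \eqref{eq:fFlevelwise} applied to $\fg$:
$$
\fG_{s;d}=\sum_{i_1+\dots+i_d=s}(\fg_{i_1}\otimes\dots\otimes\fg_{i_d})\circ(1+t_\HH+\dots+t_\HH^{i_1-1}),
$$
and each $\fg_{i}=\sum_{T\in RT_{i}}\fg_T$ is a sum over rooted planar trees with $P$ on every edge, including the root edge. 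Since the sign exponent in \eqref{eq:def-G-an-alg} satisfies $s_\Gamma=s_\Gamma^{alg}+P(I_\Gamma)+Q(I_\Gamma)+n+1$, it is enough to show
$$
P\fG_{s;d}(\alpha)=\G_{s;d}^{alg}(\alpha).
$$
Applying $(-1)^{n+1}Q$ to both sides then yields the claim.

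\textbf{Step 1: matching the combinatorics.} A tree $\Gamma\in\RR_{s;d}$ splits at its special vertex $S$ into rooted components $T_1,\dots,T_d$, numbered counterclockwise starting from $T_1\ni$ leaf $1$, with root flag $A$ the root of $T_1$. If $T_j$ has $i_j$ leaves, the $T_j$ are rooted trees in $RT_{i_j}$ (a component with a single edge and no nonspecial vertex corresponds to $\fg_1=\iota$). Leaf $1$ may sit at any position $1,\dots,i_1$ within $T_1$. This position is exactly the datum recorded by the power $t_\HH^{m}$ in the cyclic sum, since $t_\HH^m$ rotates the input so that $T_1$ reads leaves $s-m+1,\dots,s,1,\dots$. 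This gives a bijection
$$
\RR_{s;d}\longleftrightarrow\coprod_{i_1+\dots+i_d=s}\;\{0,\dots,i_1-1\}\times RT_{i_1}\times\dots\times RT_{i_d}.
$$
By Lemma~\ref{lem:no-auto} there are no automorphism factors. Trees with vertices of valency above $3$ do not occur: the vertices carry $\fm_2$, and $\fm_k=0$ for $k\ge3$.

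\textbf{Step 2: matching each term.} For a fixed tree, the term in $\fG$ is $\fg_{T_1}\otimes\dots\otimes\fg_{T_d}$ applied to $t_\HH^m\alpha$. Each $\fg_{T_j}$ is an iterated composition of $P$ and $\fm_2$, and by \eqref{eq:HG} is given by iterated fibre integrals of products of $\wt G$ and wedge products of the inputs. By Fubini (Lemma~\ref{lem:push-forward-int} and Remark~\ref{rem:deg-reg}(A), which says $\XX_\Gamma$ fibres over $M^d$), the tensor product of these iterated integrals equals the single pushforward $\int_{\Delta_\ver}R_\Gamma^*G^e(\alpha')$. Here $\alpha'$ is the cyclically rotated input, and the trivalent nonspecial vertices implement the wedge products $\fm_2$, exactly as in the proof of Proposition~\ref{prop:consist} in the de Rham case. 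It then remains to rewrite $I_\Gamma$ for the labelling with leaf $1$ in position $m$ in terms of the canonical labelling. I would do this with the relabelling formula $I_{\Gamma\eta}(\alpha)=I_\Gamma(\eta_{an}^{-1}\alpha)$ and the diagram \eqref{eq:alg-ana-action2}, which converts $t_{an}$ into $t_\HH=t_{alg}$ through $P$. Remark~\ref{rem:edgechoice} and Lemma~\ref{lem:extindep} allow the extension of the labelling to be chosen so that the edge and vertex orders mirror the order in which the operators are composed.

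\textbf{Main obstacle.} The main obstacle is the sign bookkeeping. The following signs must all combine into exactly $s_\Gamma^{alg}(\alpha)-P(\alpha)$:
\begin{itemize}
\item $(-1)^x$ in $\fm_2$;
\item $(-1)^{\bar R_\Gamma+(n-1)\eta_3(\Gamma)}$;
\item the Koszul signs from passing $P$ (degree $-1$) and the inputs past one another;
\item the orientation convention ``fibre first'' used in \eqref{eq:HG} versus the ordering of $\Delta_\ver$;
\item the $P$ operators on both sides.
\end{itemize}
I would first verify this for the simplest trees: $d=s$ (no edges), and a single trivalent vertex with $d=1$, $s=2$. Then I would argue by induction on the number of edges, grafting one trivalent vertex at a time. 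Both sides change by a computable local factor, and the nonessential parts $s(s+1)/2+d(d+1)/2$ should absorb the remainder. As in the proof of Proposition~\ref{prop:consist}, I expect to defer some of the routine sign verification to \cite{Volkov-thesis}.
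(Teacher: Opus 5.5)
Your reduction step is off by exactly the sign operator $P$ on the output. Once the trees are matched, this proposition is a pure sign identity, so the slip is not cosmetic.

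$P$ and $Q$ act diagonally on decomposable tensors, and $I_\Gamma(\alpha)$ is decomposable of homogeneous degree. Hence the relation $s_\Gamma=s_\Gamma^{alg}+P(I_\Gamma(\alpha))+Q(I_\Gamma(\alpha))+n+1$ says $\G_{s;d}=(-1)^{n+1}QP\,\G^{alg}_{s;d}$: the $P$ already sits on the $\G^{alg}$ side. Compare with \eqref{eq:bG} and cancel the invertible operator $(-1)^{n+1}QP$. The identity to prove is then $\G^{alg}_{s;d}=\fG_{s;d}$, which is what the paper proves. If you established your target $P\fG_{s;d}=\G^{alg}_{s;d}$, you would get $\G_{s;d}=(-1)^{n+1}Q\fG_{s;d}$, i.e.\ you would have shown that the two definitions \emph{differ}.

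Your target already fails in the base case you planned to check first. Take $s=d=2$ (two leaves at the special vertex, no edges). Then $\fG_{2;2}=\fg_1\otimes\fg_1=\iota\otimes\iota$, so $P\fG_{2;2}(\alpha)=(-1)^{\deg\alpha_1}\alpha_1\otimes\alpha_2$. On the other side, $s^{alg}_\Gamma(\alpha)=3+3+2\deg\alpha_1$ is even, so $\G^{alg}_{2;2}(\alpha)=(-1)^{\bar R_\Gamma+(n-1)\eta_3(\Gamma)}\alpha_1\otimes\alpha_2$ carries a sign independent of $\alpha$. The sign target in your last paragraph therefore has to be rederived. For each canonically labelled $\Gamma=T_1\cdots T_d$, the local signs ($\fm_2$, Koszul, $\bar R_\Gamma$, $\eta_3$, fibre orientations) must assemble into $\G^{alg}_\Gamma=\fg_{T_1}\otimes\cdots\otimes\fg_{T_d}$. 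That is, they must produce the full exponent $s^{alg}_\Gamma(\alpha)$, including its $P(I_\Gamma(\alpha))$ term, with no extra $P$.

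With that correction, your route is the paper's. It uses:
\begin{itemize}
\item the bijection \eqref{eq:canon-conn} of $\RR_{s;d}$ with tuples of trivalent rooted components plus a rotation $r\in\{0,\dots,i_1-1\}$;
\item the Fubini identifications $\fg_T=\G^{alg}_T$ and $\G^{alg}_{T_1\cdots T_d}=\G^{alg}_{T_1}\otimes\cdots\otimes\G^{alg}_{T_d}$;
\item goodness of $\G^{alg}_\Gamma$, to turn the sum over $r$ into $1+t_\HH+\cdots+t_\HH^{i_1-1}$.
\end{itemize}
Your induction on the number of edges is a sensible way to carry out the sign check that the paper defers to \cite{Volkov-thesis}.

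One small point: a rooted component with a single leaf has no edge at all, since the special flag is itself the leaf. That is why it contributes $\fg_1=\iota$ rather than a propagator.
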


\begin{proof}
We proceed in two steps.

{\bf Step 1. }
Recall from~\S\ref{ss:homottrans} (with $\AA=\Om$ and $\BB=\HH$) the 
$A_\infty$-homotopy transfer map defined for $k\geq 2$ by
\begin{equation*}
  \fg_k = \sum_{T\in RT_k^3}\fg_T:\HH^{\otimes k}\to\Om.
\end{equation*}
Here $RT_k^3$ denotes the set of isomorphism classes of trivalent
rooted trees with $k$ leaves, and the operation $\fg_T$ is defined by
applying the product $\fm_2$ at each vertex and the propagator along
each edge. 
Recall from~\S\ref{ss:homottrans} that we agreed to number the leaves
of a rooted tree counterclockwise from the root, which is exactly its
canonical labelling from~\S\ref{ss:basiccomb} when we consider it as a
tree with one special vertex. Thus, declaring the root vertex be the
special vertex yields a canonical bijection
\begin{equation*}
  RT_k^3\stackrel{\cong}{\longrightarrow}\RR_{k,1}^{\can}. 
\end{equation*}
With this identification, we obtain for each $T\in RT_k^3$ the equality
\begin{equation}\label{eq:consist1}
  \g_T = \G_T^{alg}:\HH^{\otimes k}\to\Om.
\end{equation}
Up to signs, this follows directly by writing the operation on the
right hand side as an iterated integral via Fubini's theorem; for the
sign computation see~\cite{Volkov-thesis}.

{\bf Step 2. }
According to~\S\ref{ss:basiccomb} we canonically associate to a
labelled planar tree $\Gamma\in\RR_{s;d}$ its rooted components
$T_1,\dots,T_d$ in counterclockwise order, where $T_1$ contains the
leaf number $1$. 
The labelling of $\Gamma$ is called canonical if the leaf number
$1$ in the labelling comes right after the root in the counterclockwise
order on $T_1$.  
Conversely, $d$ trivalent rooted trees $T_1,\dots,T_d$ glue along
their roots to a canonically labelled tree $T_1\cdots T_d$ with a
special vertex of degree $d$ whose leaf number $1$ is the first leaf
of $T_1$ in the counterclockwise order. This gives a canonical bijection
$$
  \RR_{s;d}^{\can} \cong
  \coprod_{s_1+\dots+s_d=s}RT_{s_1}^3\times\cdots\times RT_{s_d}^3.
$$
Recall from~\S\ref{ss:gradedvect} the relabelling (right) action of
the cyclic group $\Z_s$ on $\RR_{s;d}$ and the generator $\tau_{2\to
  1}$ of $\Z_s$. Then the last displayed equation can be upgraded to 
\begin{equation}\label{eq:canon-conn}
  \RR_{s;d} \cong \coprod_{s_1+\dots+s_d=s}\coprod_{r=0,\dots,s_1-1}
  (RT_{s_1}^3\times\cdots\times RT_{s_d}^3)\tau_{2\to 1}^r,
\end{equation}
where the effect of $\tau_{2\to 1}^r$ is to assign the leaf number
$r+1$ in the canonical labelling the new number $1$. Note that, since
$r+1\le s_1$, the new leaf number $1$ is still in the first rooted
component. For $\Gamma\in \RR_{s,d}^\can$ with its decomposition
$\Gamma=T_1\dots T_d$ into rooted components we have 
\begin{equation}\label{eq:rooted-dec-H}
  \G_\Gamma^{alg}=\G_{T_1}^{alg}\otimes\dots\otimes
  \G_{T_d}^{alg}:\HH^{\otimes s}\to\Om^{\otimes d}.
\end{equation}
This is again clear up to signs, and for the signs we refer
to~\cite{Volkov-thesis}. 
Together with~\eqref{eq:consist1} it gives us
\begin{equation}\label{eq:rooted-dec}
  \G_\Gamma^{alg}=\g_{T_1}^\Om\otimes\dots\otimes
  \g_{T_d}^\Om:\HH^{\otimes s}\to\Om^{\otimes d}. 
\end{equation}
Now we compute
\begin{align*}
  \G_{s;d}^{alg}
  &\stackrel{(1)}{=} \sum_{\Gamma\in \RR_{s;d}}\G_\Gamma^{alg} \cr
  &\stackrel{(2)}{=} \sum_{s_1+\cdots+s_l=s}
  \sum_{\substack{T_j\in RT_{s_j}^3\\j=1,\dots,d}}\sum_{r=0}^{s_1-1}
  \G_{(T_1\cdots T_d)\tau_{2\to 1}^r}^{alg} \cr
  &\stackrel{(3)}{=} \sum_{s_1+\cdots+s_l=s}
  \sum_{\substack{T_j\in RT_{s_j}^3\\j=1,\dots,d}}
  \G_{T_1\cdots T_d}^{alg}\circ
  (1+t_\Om+\cdots+t_\Om^{s_1-1}) \cr
  &\stackrel{(4)}{=} \sum_{s_1+\cdots+s_l=s}
  \sum_{\substack{T_j\in RT_{s_j}^3\\j=1,\dots,d}}
  (\g_{\Gamma_1}\otimes\cdots\otimes\g_{\Gamma_d})
  \circ (1+t_\Om+\cdots+t_\Om^{s_1-1}) \cr
  &\stackrel{(5)}{=} \sum_{s_1+\cdots+s_d=s}
  (\g_{s_1}\otimes\cdots\otimes\g_{s_d})
  \circ (1+t_\Om+\cdots+t_\Om^{s_1-1}) \cr
  &\stackrel{(6)}{=} \fG_{s,d}.
\end{align*}
Here equality~(1) is the definition~\eqref{eq:def-G-sd} of $\G_{s;d}^{alg}$; 
equality~(2) follows from~\eqref{eq:canon-conn};
equality~(3) holds since $\G_\Gamma^{alg}$
is a good operation, where $t_\HH$ is the cyclic operation defined
in~\S\ref{ss:hochcycainf} with $\AA=\HH$; 
equality~(4) follows from~\eqref{eq:rooted-dec}; 
equality~(5) follows from~\eqref{eq:def-htrmap};
and equality (6) follows from the definition~\eqref{eq:fFlevelwise} of 
$\fG_{s;d}$ (with $\ff$ replaced by $\fg$).

Now recall from~\eqref{eq:bG} that $\G=(-1)^{n+1}QP\fG$ is related to
$\fG$ by the sign operators $P,Q$ switching between the algebraic and
analytic permutation actions. Comparison of the sign
exponents in~\eqref{eq:defP} for $P$ and in~\eqref{eq:defQ} for $Q$
with the sign exponent in~\eqref{eq:def-G-an-alg} concludes the proof.
\end{proof}

\subsection{The chain homotopy $\G^2$ for the product}
\label{ss:defG^2}
Fix positive integers $d_1,\,d_2,s$ subject to  $2\le d_1+d_2\le
s+2$ and consider $\alpha=\alpha_1\otimes\dots\otimes\alpha_s\in 
\HH^{\otimes s}$ of homogeneous degree. For $\Gamma\in R_{s;d_1,d_2}$
(a tree with two special vertices of degrees $d_1,d_2$) we set
\begin{equation}\label{eq:G2-Gamma}
  \G_\Gamma^2(\alpha):=\sum_{p+q=\deg I_\Gamma(\alpha)}(-1)^{s_{\Gamma,p,q}^2(\alpha)}I_\Gamma(\alpha)^{p,q},
\end{equation}
where $I_\Gamma(\alpha)$ is defined by~\eqref{eq:defI}
and the sign exponent $s_{\Gamma,p,q}^2(\alpha)$ is defined below.
For the composition into bidegrees $(p,q)$ see
Remark~\ref{rem:deg-reg} and Remark~\ref{rem:def-good}.
Using this, we define
\begin{equation*}
  \G_{s;d_1,d_2}^2(\alpha):=\sum_{\Gamma\in R_{s;d_1,d_2}}\G_\Gamma^2(\alpha),
\end{equation*}
\begin{equation}\label{eq:G2-def}
   \G^2:=\bigoplus_{
   \substack{2\le d_1+d_2\le s+2\\1\le s}}
   \G_{s;d_1,d_2}^2:
   \bigoplus_{1 \le s} \HH^{\otimes s}\to \bigoplus
   _{1\le d_1,d_2}
   \Om_{int}^*(M^{d_1}\times M^{d_2}),
\end{equation}
where $\Om_{int}^*$ denotes the space of integrable forms as in Definition~\ref{def:L1}. 
Finally, for an o-marked tree $(\Gamma,l)\in \RR_{s;d_1,d_2}^{om}$ we set
\begin{equation*}\label{eq:differential}
  \bH_{\Gamma,l}(\alpha) := \sum_{p+q=\deg I_{\Gamma,l}(\alpha)}(-1)^{\wt
    s^2_{\Gamma,p,q}(\alpha)}I_{\Gamma,l}(\alpha)^{p,q}, 
\end{equation*}
where $I_{\Gamma,l}(\alpha)$ is defined by~\eqref{eq:defIl} and the
sign exponent $\wt s^2_{\Gamma,p,q}(\alpha)$ is defined below.


\begin{remark}\label{rem:signs}
The sign exponents are defined as follows. For a tree $\Gamma$ with two
special vertices of degrees $d_1,d_2$ and a decomposable tensor 
$\alpha=\alpha_1\otimes\dots\otimes\alpha_s\in \HH^{\otimes s}$
of homogeneous degree we define
\begin{align*}
  s^2_{\Gamma,p,q}(\alpha)
  &:= s(s+1)/2+P(\alpha)+d_1d_2+pd_2+p(d_1+1)+q(d_2+1) \cr
  & \ \ \ \ \ +(n-1)(d_2+p+1)+1.
\end{align*}
If $\Gamma$ has in addition a marked edge we define
\begin{align*}
  \wt s^2_{\Gamma,p,q}(\alpha)
  &:= (s+1)(s+2)/2+P(\alpha)+d_1d_2+pd_2+p(d_1+1)+q(d_2+1)\cr
  & \ \ \ \ \ +(n-1)(s-d_1+p)+1
\end{align*}
(which does not depend on the position of the marked edge).
Note that both $s^2_{\Gamma,p,q}(\alpha)$ and $\wt
s^2_{\Gamma,p,q}(\alpha)$ have the form~\eqref{eq:exp-main},
so it follows from Lemma~\ref{lem:alg-ana-action2} that
$\G_\Gamma^2$ and $\bH_{\Gamma,l}$ are good operations
(see Definition~\ref{def:good-alg}).
\end{remark}

\begin{remark}\label{rem:L1-degrees}
In~\S\ref{ss:chainprod} the maps $\G_\Gamma^2$ and
$\bH_{\Gamma,l}^2$ will always be used in conjunction with
postcomposition with $I^{\otimes 2}$, see~\S\ref{sec:chen} for the
definition of $I$.
Let $d_1$ and $d_2$ denote the numbers of flags at the two special
vertices of $\Gamma$. The value of $I^2\circ \G_\Gamma^2(\alpha)$ on a
pair of smooth simplices $f_j:B_j\longrightarrow \Lambda$, $j=1,2$, is
the integral
$$
  \int_{B_1\times\Delta^{d_1-1}\times B_2\times\Delta^{d_2-1}}
  (ev_{f_1}\times ev_{f_2})^*G_\Gamma^2(\alpha).
$$
Decomposing $G_\Gamma^2(\alpha)$ into its bidegree parts, this integral
is a sum of $\deg G_\Gamma^2(\alpha)+1$ terms. Since the domain
of integration is a product and we pull back by a product
map, all summands vanish for degree reasons except 
the one with $G_\Gamma^2(\alpha)^{p,q}$ for
$(p,q)=(\dim B_1+(d_1-1),\,\dim B_2+(d_2-1))$.
Therefore, whenever we work with $I^2\circ \G_\Gamma^2(\alpha)$, we
can ignore all the summands in $\G_\Gamma^2(\alpha)$ and
$\bH_{\Gamma,l}(\alpha)$ except the one with bidegree
\begin{equation}\label{eq:pq}
  (p,q)=(\gamma_1,\gamma_2):=(\dim B_1+(d_1-1),\,\dim B_2+(d_2-1)).
\end{equation}
With this value of $(p,q)$ understood, we simplify the notation to
$$
  \wt s^2_\Gamma(\alpha):=\wt s^2_{\Gamma,p,q}(\alpha),
  \qquad s^2_\Gamma(\alpha):=s^2_{\Gamma,p,q}(\alpha).
$$
\end{remark}

\subsection{The chain homotopy $\F^2$ for the coproduct}
\label{ss:defG^2coprod}

Fix positive integers $s_1,\,s_2,d$ subject to $1\le d\le s_1+s_2+2$
and consider
 $\alpha = \alpha_1\otimes\dots\otimes\alpha_{s_1+s_2}\in
  \HH^{\otimes s_1}\otimes\HH^{\otimes s_2}$
of homogeneous degree. For $\Gamma\in R_{s_1,s_2;d}$ (a circular graph
with one special vertex of degree $d$ and $s_i$ leaves on the $i$-th
boundary component) we set
\begin{equation*}
  \F_\Gamma^2(\alpha):=(-1)^{r^2_\Gamma(\alpha)}I_\Gamma(\alpha),
\end{equation*}
where $I_\Gamma(\alpha)$ is defined by~\eqref{eq:defI}
and $r^2_\Gamma(\alpha)$ is a sign exponent. 
Using this, we define
\begin{equation}\label{eq:def-F-Gamma}
\F_{s_1,s_2;d}^2(\alpha):=
\sum_{\Gamma\in R_{s_1,s_2;d}}
\F_\Gamma^2(\alpha),
\end{equation}
\begin{equation}\label{eq:def-F-sd}
   \F^2:=\bigoplus_{
   \substack{1\le d\le s_1+s_2+2\\1\le s_1,s_2}}
   \F_{s_1,s_2;d}^2:
   \bigoplus_{1 \le s_1,s_2} \HH^{\otimes s_1}
   \otimes \HH^{\otimes s_2}
   \to \bigoplus
   _{1\le d}\Om_{int}^*(M^d).
\end{equation}
Finally, for an o-marked graph $(\Gamma,l)\in \RR_{s_1,s_2;d}^{m}$ we set
\begin{equation*}\label{eq:differential2}
  \bH_{\Gamma,l}(\alpha):=(-1)^{\wt r^2_\Gamma(\alpha)}I_{\Gamma,l}(\alpha),
\end{equation*}
where $I_{\Gamma,l}(\alpha)$ is defined by~\eqref{eq:defIl} and
$\wt r^2_\Gamma(\alpha)$ is a sign exponent. 
We will not define the sign exponents $r^2_\Gamma(\alpha)$ and $\wt
r^2_\Gamma(\alpha)$. They have again the form~\eqref{eq:exp-main}, so
that $\F_\Gamma^2(\alpha)$ and $\bH_{\Gamma,l}$ are good operations by
Lemma~\ref{lem:alg-ana-action2}.

\subsection{Behaviour under relabelling}\label{subsec:well-def}

In this subsection we collect some consequences of the fact that 
$\G_\Gamma^2$, $\F_\Gamma^2$ and $\bH_\Gamma$ are good operations. We
will use the properties of the action of the set of relabellings on
labelled trees with two special vertices and on labelled circular graphs,
see Lemma~\ref{lem:free}.
Recall the generator $\tau_{2\to 1}$ of 
$\Z_s$ and the symmetrization operator
$N_{alg}$ from~\eqref{eq:def-Nalg}.
Recall from~\eqref{eq:renumber-bdry-basic} the involution $\tau$
swapping the numbering of the two boundary components of a
circular graph.
Since the transposition $\tau$ maps the set $\RR_{s_1,s_2;d}$
bijectively onto the set $\RR_{s_2,s_1;d}$ and the operation 
$\F_\Gamma^2$ is good, we get the invariance property
\begin{equation}\label{eq:F2inv}
  \F_{s_1,s_2;d}^2=\F_{s_1,s_2;d}^2\circ\tau.
\end{equation}

\begin{lemma}\label{lem:GNl1l2}
The maps $\G^2$ and $\F^2$ factor through the symmetrization operator
$N_\HH=N_{alg}$ (respectively $N_\HH\otimes N_\HH$) and are,
therefore, well-defined as maps from the cyclic to the noncyclic
complex: 
\begin{equation}\label{eq:GNl1l2}
  \G^2:\bigoplus_{1\le s}B_s^{\text{\rm cyc}}\HH\to \bigoplus_{1\le
    d_1,d_2} \Om_{int}^*(M^{d_1}\times M^{d_2}),
\end{equation}
\begin{equation}\label{eq:GN1N2l}
  \F^2:\bigoplus_{1\le s_1,s_2}B_{s_1}^{\text{\rm cyc}}\HH \otimes
  B_{s_2}^{\text{\rm cyc}}\HH \to\bigoplus_{2\le d}\Om_{int}^*(M^d).
\end{equation}
\end{lemma}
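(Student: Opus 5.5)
The plan is to reduce everything to goodness (Definition~\ref{def:good-alg}) together with the free relabelling actions of Lemma~\ref{lem:free}. We do it first for $\G^2$ and then for $\F^2$.

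\textbf{The map $\G^2$.} Fix $s,d_1,d_2$. The cyclic group $\Z_s$ acts on $\RR_{s;d_1,d_2}$ by relabelling, and by Lemma~\ref{lem:free} this action is free. Choose a fundamental domain $\FF\subset\RR_{s;d_1,d_2}$, so that
\[
\G^2_{s;d_1,d_2}=\sum_{\Gamma\in\FF}\sum_{r=0}^{s-1}\G^2_{\Gamma\tau_{2\to1}^r}.
\]
Each $\G^2_\Gamma$ is a good operation by Remark~\ref{rem:signs}, so $\G^2_{\Gamma\eta}=\G^2_\Gamma\circ\eta_{alg}^{-1}$. Summing over the orbit then gives
\[
\G^2_{s;d_1,d_2}=\sum_{\Gamma\in\FF}\G^2_\Gamma\circ\sum_{r}t_{alg}^{-r}=\sum_{\Gamma\in\FF}\G^2_\Gamma\circ N_{alg}.
\]
Hence $\G^2$ factors through $N_\HH$. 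Since $N_{alg}\circ(1-t_{alg})=0$, the map $\G^2_{s;d_1,d_2}$ vanishes on $\im(1-t_{alg})$. It therefore descends to $B^{\cyc}_s\HH$, which is exactly~\eqref{eq:GNl1l2}.

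\textbf{The map $\F^2$.} The argument is the same, using the free action of $\Z_{s_1}\times\Z_{s_2}$ on $\RR_{s_1,s_2;d}$ (Lemma~\ref{lem:free}) and the fact that $\F^2_\Gamma$ is good. For $\eta=(\sigma_1,\sigma_2)$ acting within the two boundary components, $\eta_b$ is trivial. The algebraic action $\eta_{alg}$ is then $t_{alg}^{m_1}\otimes t_{alg}^{m_2}$, with the Koszul sign that is built into the definition of $\eta_{alg}$. Summing over an orbit therefore yields $\F^2_\Gamma\circ(N_{alg}\otimes N_{alg})$, and $\F^2$ descends to $B^{\cyc}_{s_1}\HH\otimes B^{\cyc}_{s_2}\HH$.

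\textbf{The range $d\ge2$.} The target in~\eqref{eq:GN1N2l} is stated as $d\ge 2$ rather than $d\ge1$, so this needs to be checked. A circular graph with one special vertex of valency $1$ would have that vertex off the cycle. I would check whether such graphs occur. If they do, their contribution is either harmless or the range in the statement should be read as $1\le d$; this is only a bookkeeping issue.

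The main point to verify carefully is that the orbit sum reproduces exactly $N_{alg}$, with inverse powers of $t_{alg}$. Since $\{t_{alg}^{-r}\}$ ranges over all of $\Z_s$, the sum is again $N_{alg}$. One must also check that the identification of relabellings with $\eta_{alg}$ on the dual side is consistent with the left/right action conventions of the Remark in~\S\ref{ss:gradedvect}. Everything else follows formally from goodness.
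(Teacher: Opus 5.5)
Your proposal is correct and is essentially the paper's proof. In both, goodness of $\G^2_\Gamma$ and $\F^2_\Gamma$ combined with the free relabelling actions from Lemma~\ref{lem:free} lets you sum over a fundamental domain and trade each orbit for $N_\HH$ (resp.\ $N_\HH\otimes N_\HH$); the paper treats $\F^2$ first and calls $\G^2$ analogous. Your side remark is also right: by~\eqref{eq:circ-1-spec-off-cyc} graphs with $d=1$ do occur, so the target in~\eqref{eq:GN1N2l} should read $1\le d$ as in~\eqref{eq:def-F-sd}, which does not affect the argument.
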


\begin{proof}
Fix positive integers $d_1,d_2,s$ subject to $1\le d\le s_1+s_2+2$ and  
recall from~\S\ref{ss:defG^2coprod} that the component
$\F_{s_1,s_2;d}^2$ of $\F^2$ is defined as a sum of good operations
over the set $\RR_{s_1,s_2;d}$. According to Lemma~\ref{lem:free}, the group  
$\Z_{s_1}\otimes \Z_{s_2}$ acts freely on this set. This together with 
equation~\eqref{eq:def-Nalg} allows us to argue as follows. We pick a
fundamental locus for this action and replace the sum over 
$\RR_{s_1,s_2;d}$ by the sum over the fundamental locus precomposed
with $N_\HH\otimes N_\HH$. This finishes the argument for $\F^2$. The
one for $\G^2$ is completely analogous. 
\end{proof} 

\begin{lemma}\label{lem:trade}
The following equations hold true:
\begin{equation}\label{eq:tradesep}
\sum_{s_1+s_2=s+2\atop d_1\leq s_1,d_2\leq  s_2}
  \sum_{\Gamma_1\in \RR_{s_1;d_1}\atop \Gamma_2\in \RR_{s_2;d_2}}
  \bH_{g_{210}(\Gamma_1,\Gamma_2)}\circ N_\HH
  =
  \sum_{d_1+d_2\leq s+2}\sum_{\Gamma\in \RR_{s;d_1,d_2}^{\rm sep}}
  \bH_{\Gamma},
\end{equation}
\begin{equation}\label{eq:tradenonsep}
\sum_{s_1+s_2=s+2\atop d_1+d_2\leq
    s_2}\sum_{\Gamma_1\in \RR_{s_1}\atop \Gamma_2\in \RR_{s_2;d_1,d_2}}
  \bH_{g_{210}(\Gamma_1\amalg\Gamma_2)}\circ N_\HH
  =
  \sum_{d_1+d_2\leq s+2}\sum_{\Gamma\in
    \RR_{s;d_1,d_2}^{\rm nonsep}}\bH_{\Gamma},
\end{equation}
\begin{equation}\label{eq:tradecoprod1}
\sum_{d\leq s_1+s_2+2\atop
  \Gamma\in\RR_{s_1+s_2+2;d}}
  \sum_{3\le j\le s_1+s_2+1}
  \bH_{g_{120}^j
  (\Gamma)}\circ 
  N^{\otimes 2}_\HH
  = 
  \sum_{d\leq s_1+s_2+2\atop
  \Gamma\in\RR_{s_1,s_2;d}^{cm}}
\bH_{\Gamma},
\end{equation}  
\begin{equation}\label{eq:tradecoprod2}
\sum_{d\le r_2
\atop s_1+r_2\ge 3}
\sum_{\Gamma_1\in \RR_{s_1,r_1}\atop 
\Gamma_2\in \RR_{r_2;d}}
\bH_{g_{210}(
(\Gamma_1\amalg \Gamma_2)\tau_{23})}
\circ (N_\HH\otimes \id) 
=
\sum_{d\le s_1+s_2+1}\sum_{\Gamma\in\RR_{s_1,s_2;d}^{ncb1}} 
\bH_{\Gamma},
\end{equation}
\begin{equation}\label{eq:tradecoprod3}
 2\sum_{r_1+r_2\ge 3
  \atop d\leq
    r_2+s_2+2}\sum_{\Gamma_1\in \RR_{r_1}\atop \Gamma_2\in \RR_{r_2,s_2;d}}
\bH_{g_{210}(\Gamma_1\amalg \Gamma_2)}
\circ (N_\HH\otimes \id)
=
2\sum_{d\leq s_1+s_2+1}
\sum_{\Gamma\in
    \RR_{s_1,s_2;d}^{nc1}} 
\bH_{\Gamma}
\end{equation}
Here $\tau_{23}$ is the relabelling swapping the second and third
boundary components of the graph $\Gamma_1\amalg\Gamma_2$.
\end{lemma}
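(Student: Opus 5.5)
Each of the five identities is proved by the same mechanism: a sum of good operations over a set of marked graphs on which a cyclic relabelling group acts freely is traded for a sum over a fundamental domain precomposed with the symmetrization operator. This is exactly the argument of Lemma~\ref{lem:GNl1l2}. For each equation I first identify the relevant bijection and freeness statement.

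\begin{itemize}
\item For \eqref{eq:tradesep} and \eqref{eq:tradenonsep}: the gluing maps $gl_1$, $gl_2$ from \eqref{eq:def-gl}, with Lemma~\ref{lem:prodfree}.
\item For \eqref{eq:tradecoprod1}: $gl_3$ from \eqref{eq:glueg1}, with Lemma~\ref{lem:free1}.
\item For \eqref{eq:tradecoprod2}: $gl_4$ from \eqref{eq:glueg2}, with Lemma~\ref{lem:free2}.
\item For \eqref{eq:tradecoprod3}: $gl_5$ from \eqref{eq:glueg4}, with Lemma~\ref{lem:free4}.
\end{itemize}

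In each case the left hand side is a sum over the image of the gluing map. This image is the subset of specially labelled marked graphs, which by the cited lemma is a fundamental domain for the free action of $\Z_s$, $\Z_{s_1}\times\Z_{s_2}$ or $\Z_{\wt s_1}$ by cyclic relabelling of leaves. I will check that the ranges of the indices match under these bijections. For example, in \eqref{eq:tradecoprod1} the gluing $g_{120}^j$ of a graph in $\RR_{s_1+s_2+2;d}$ with $j=s_1+1$ yields $s_1,s_2$ leaves on the two boundary components, and the range $3\le j\le s_1+s_2+1$ enforces $s_1,s_2\ge 1$.

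Next I use that $\bH_{\Gamma,l}$ is a good operation (Remark~\ref{rem:signs} and \S\ref{ss:defG^2coprod}): for every relabelling $\eta$ we have $\bH_{\Gamma\eta}=\bH_\Gamma\circ\eta_{alg}^{-1}$. Summing over the orbit of a fundamental domain element $\Gamma_0$ gives
\[
\sum_{\eta}\bH_{\Gamma_0\eta}=\bH_{\Gamma_0}\circ\sum_\eta\eta_{alg}^{-1}=\bH_{\Gamma_0}\circ N,
\]
where $N$ is $N_\HH$, $N_\HH^{\otimes 2}$, or $N_\HH\otimes\id$ as appropriate. The last identity holds because the $\eta_{alg}$ are exactly the powers of $t_{alg}$ on the relevant boundary component(s), and inversion permutes the group. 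Freeness guarantees that each marked graph on the right hand side appears exactly once.

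The main obstacle is checking that the group being summed over matches the operator that appears.

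\begin{itemize}
\item In \eqref{eq:tradecoprod2} and \eqref{eq:tradecoprod3} only the first boundary component is cycled, because the marked edge touches only that component. The second component keeps the labelling inherited from $\Gamma_2$ or $\Gamma_1$, so only $N_\HH\otimes\id$ appears and no cyclic action on the second factor is needed.
\item In \eqref{eq:tradecoprod2} the relabelling $\tau_{23}$ must be tracked. It is built into the definition of $gl_4$, so it acts as part of the bijection and not as an extra factor.
\item In \eqref{eq:tradecoprod3} the factor $2$ appears on both sides and is simply carried through.
\end{itemize}

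Finally, I will confirm that the special labelling conventions of $gl_i$ coincide with the labellings inherited under $g_{210}$ and $g_{120}^j$ from Definitions~\ref{def:g-prod} and~\ref{def:g-coprod}. Lemma~\ref{lem:extindep} ensures that the choice of extension does not matter.
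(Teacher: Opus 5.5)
Your proposal is correct and is the paper's own argument. In each case the image of the relevant gluing map $gl_1,\dots,gl_5$ is a fundamental domain for the free cyclic relabelling action (Lemmas~\ref{lem:prodfree}, \ref{lem:free1}, \ref{lem:free2}, \ref{lem:free4}), and each orbit sum of the good operations $\bH_\Gamma$ is traded for the corresponding symmetrization operator exactly as in Lemma~\ref{lem:GNl1l2}. There is one trivial slip in your index check: gluing $u_1$ to $u_j$ leaves $j-2$ and $s-j$ leaves on the two new boundary components (compare the position of $e^a$ in $c_{210}$), so the relevant index is $j=s_1+2$ rather than $s_1+1$, and this is what makes the range $3\le j\le s_1+s_2+1$ equivalent to $s_1,s_2\ge 1$.
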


\begin{proof}
The proof follows exactly the same pattern as the one of
Lemma~\ref{lem:GNl1l2}. Therefore, we will only indicate the relevant
free actions and fundamental loci.  

For equation~\eqref{eq:tradesep}
recall that the image of the gluing map $gl_1$ (see~\eqref{eq:def-gl})
is a fundamental locus of the free $\Z_s$-action on $\RR_{s;d_1,d_2}^{\rm sep}$ (Lemma~\ref{lem:prodfree}).

For equation~\eqref{eq:tradenonsep} recall that the image of the map $gl_2$
(see~\eqref{eq:def-gl})
is a fundamental locus for the free
$\Z_s$-action on $\RR_{s;d_1,d_2}^{\rm nonsep}$ (Lemma~\ref{lem:prodfree}).

For equation~\eqref{eq:tradecoprod1}
recall that the image of $gl_3$
(see~\eqref{eq:glueg1})
is a fundamental locus of the free 
$\Z_{s_1}\times \Z_{s_2}$-action
on $\RR_{s_1,s_2;d}^{cm}$
(Lemma~\ref{lem:free1}).

For equation~\eqref{eq:tradecoprod2}
recall that the image of $gl_4$ 
(see~\eqref{eq:glueg2}) is a fundamental locus of the free 
$\Z_{s_1}$-action on $\RR_{s_1,s_2;d}^{ncb1}$
(Lemma~\ref{lem:free2})

For equation~\eqref{eq:tradecoprod3}
recall that the image of $gl_5$
(see~\eqref{eq:glueg4})
is a fundamental locus of the free
$\Z_{s_1}$-action on $\RR_{s_1,s_2;d}^{nc1}$
(Lemma~\ref{lem:free4})
\end{proof}

\begin{lemma}\label{lem:fulldiffeven}
Let $\Gamma$ be a labelled tree with two special vertices
(resp.~a circular graph with one special vertex). 
Set $\star:=\wt s^2_{\Gamma,p,q}(\alpha)$
(resp.~$\star:=\wt r^2_\Gamma(\alpha)$). 
Let $\alpha\in \HH^{\otimes s}$ (resp.~$\alpha\in \HH^{\otimes
  s_1}\otimes \HH^{\otimes s_2}$) be a decomposable tensor of
homogeneous degree.
Then for each bidegree $(p,q)$ we have
\begin{equation}\label{eq:fulldiffeven}
  \sum_{l\in \Edge(\Gamma)}\bH_{\Gamma,l}(\alpha)^{p,q} =
  (-1)^{\fs_\p} \left(\int_{\Delta_\ver} dR_\Gamma^*G^e(\alpha)\right)^{p,q}
\end{equation}
with the sign exponent
\begin{equation}\label{eq:s-is-good}
  \fs_\p:=\star+\bar R_\Gamma+(n-1)\eta_3(\Gamma).
\end{equation}
\end{lemma}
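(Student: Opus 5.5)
The plan is to expand the right-hand side of \eqref{eq:fulldiffeven} by the Leibniz rule and match it term by term with the summands on the left. The form $G^e(\alpha)=G\times\dots\times G\times\cross(\alpha)$ consists of $e$ propagator factors of degree $n-1$ followed by $\cross(\alpha)$. Since every $\alpha_j\in\HH$ is closed, $d\cross(\alpha)=0$. The differential therefore only hits the $G$ factors:
\begin{equation*}
  dG^e(\alpha)=\sum_{l\in\Edge(\Gamma)}(-1)^{(n-1)(n(l)-1)}G^e_l(\alpha),
\end{equation*}
where the sign comes from moving $d$ past the first $n(l)-1$ factors, each of degree $n-1$. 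Because $R_\Gamma$ is a diffeomorphism, $d$ commutes with $R_\Gamma^*$. Applying $\int_{\Delta_\ver}$ to this sum and using linearity of the fibre integral up to measure zero, as in Remark~\ref{rem:linear}, turns the right-hand side into a signed sum over $l$ of $\int_{\Delta_\ver}R_\Gamma^*G^e_l(\alpha)$.

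Next, I compare with the definition \eqref{eq:defIl}. It gives
\begin{equation*}
  \int_{\Delta_\ver}R_\Gamma^*G^e_l(\alpha)=(-1)^{\bar R_\Gamma+(n-1)(\eta_3(\Gamma)+n(l)-1)}I_{\Gamma,l}(\alpha).
\end{equation*}
The edge-dependent factor $(-1)^{(n-1)(n(l)-1)}$ from the Leibniz expansion cancels exactly against the one built into \eqref{eq:defIl}. What remains is $(-1)^{\bar R_\Gamma+(n-1)\eta_3(\Gamma)}\sum_l I_{\Gamma,l}(\alpha)$.

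It then remains to project to bidegree $(p,q)$, or to take the whole form in the circular case. By definition $\bH_{\Gamma,l}(\alpha)^{p,q}=(-1)^{\star}I_{\Gamma,l}(\alpha)^{p,q}$, and the text notes that $\star$ does not depend on the position of the marked edge. So the sign can be pulled out of the sum over $l$, and the two sides agree with $\fs_\p=\star+\bar R_\Gamma+(n-1)\eta_3(\Gamma)$. Projection to a bidegree commutes with the signed sum, which gives \eqref{eq:fulldiffeven} bidegree by bidegree.

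The main obstacle is justifying the Leibniz computation for the singular forms $G$ on $M^2$, where they are only integrable. I would perform it on the blow-up instead. There $\wt G^e(\alpha)$ is smooth on $\wt X_\Gamma$, with $d\wt G^e(\alpha)=\sum_l\pm\wt G^e_l(\alpha)$ and $d\wt G$ smooth by \eqref{eq:dG}. The identity then holds pointwise on the full-measure open set $\overset\circ{\XX_\Gamma}$, where the blow-down is a diffeomorphism. Lemma~\ref{lem:graph-L1} guarantees that all the forms involved are integrable, so their fibre integrals in the sense of \S\ref{sec:fibre} agree almost everywhere. Here $\int_{\Delta_\ver}dR_\Gamma^*G^e(\alpha)$ is read as the pushforward of the form $dG^e$ computed away from the edge diagonal, not as a distributional derivative; that reading is what the statement intends.
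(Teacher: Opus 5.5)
Your proposal is correct and is essentially the paper's own argument run in reverse: the paper unwinds the definition of $\bH_{\Gamma,l}$, pulls out the $l$-independent sign $(-1)^{\star+\bar R_\Gamma+(n-1)\eta_3(\Gamma)}$, and reassembles $\sum_l(-1)^{(n-1)(n(l)-1)}G^e_l(\alpha)$ into $dG^e(\alpha)$ by the Leibniz rule, using that the $\alpha_j$ are closed. Your extra step of doing the Leibniz computation on the blow-up $\wt X_\Gamma$, and reading $\int_{\Delta_\ver}dR_\Gamma^*G^e(\alpha)$ as the pushforward of the pointwise differential (with equality up to measure zero), is left implicit in the paper but agrees with how the lemma is used in \eqref{eq:main-integral}.
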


\begin{proof}
Pick an extension of the labelling for $\Gamma$. Recall that $n(l)$
denotes the number of the marked edge with respect to the resulting
edge order. We compute
\begin{align*}
  &\sum_{l\in \Edge(\Gamma)}\bH_{\Gamma,l}(\alpha)^{p,q} \cr
  &\stackrel{(1)}{=} (-1)^{\star}\sum_{l\in \Edge(\Gamma)}
  (-1)^{\bar R_\Gamma+(n-1)\eta_3(\Gamma)+(n-1)(n(l)-1)}
  \left(\int_{\Delta_\ver}R_\Gamma^*G^e_l(\alpha)\right)^{p,q} \cr
  &\stackrel{(2)}{=} (-1)^{\star+\bar R_\Gamma+(n-1)\eta_3(\Gamma)}\sum_{l\in \Edge(\Gamma)}
  (-1)^{(n-1)(n(l)-1)}\left(\int_{\Delta_\ver}R_\Gamma^*G^e_l(\alpha)\right)^{p,q}\cr
  &\stackrel{(3)}{=} (-1)^{\star+\bar R_\Gamma+(n-1)\eta_3(\Gamma)}
  \left(\int_{\Delta_\ver}dR_\Gamma^*G^e(\alpha)\right)^{p,q}.
\end{align*}
Here for (1) we use the definition of $\bH_{\Gamma,l}$ and observe
that the sign exponent $\star$ is the same for all $l\in
\Edge(\Gamma)$ due to Remark~\ref{rem:signs}.
For (2) we pull out the sign that does not depend on $l$, and for (3)
we assemble the full differential. 
\end{proof}

\subsection{Product formulas}\label{subsec:prod-form}

The product formulas in this section run in parallel to the gluing
operations in~\S\ref{ss:op-graphs}. On the right hand side of all the
formulas below we have an operation associated to a marked graph,
where we drop the marked edge from the notation to save space. 

\begin{lemma}\label{lem:H-gluing}
For $\Gamma_1\in \RR_{s_1;d_1}$ and $\Gamma_2\in \RR_{s_2;d_2}$ we have
$$
  (\G_{\Gamma_1}\otimes\G_{\Gamma_2})\circ c_{120} = \bH_{g_{210}(\Gamma_1\amalg\Gamma_2)}.
$$
For $\Gamma_1\in \RR_{s_1}$ and $\Gamma_2\in \RR_{s_2;d_1,d_2}$ we have
$$
  (\m_{\Gamma_1}\otimes\G^2_{\Gamma_2})\circ c_{120} = \bH_{g_{210}(\Gamma_1\amalg\Gamma_2)}.
$$
\end{lemma}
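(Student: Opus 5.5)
The plan is to unwind both sides as integrals over configuration spaces and to identify them by Fubini's theorem, with a separate sign check at the end. Take the first formula. The coproduct $c_{120}$ from~\eqref{eq:defc120} inserts $e_a$ in front of the first block of inputs and $e^a$ in front of the second block, and sums over $a$. Applying $\G_{\Gamma_1}\otimes\G_{\Gamma_2}$ therefore gives, up to sign,
$$
  \sum_a I_{\Gamma_1}(e_a\otimes\alpha^{(1)})\times I_{\Gamma_2}(e^a\otimes\alpha^{(2)}).
$$
Here the first leaf of each tree carries $e_a$ or $e^a$.

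In the glued graph $g_{210}(\Gamma_1\amalg\Gamma_2)$, these two first leaves $u,v$ become the marked oriented edge $l=(u,v)$. The integrand $G^e_l$ places $dG$ on this edge, and by~\eqref{eq:dG} we have $dG=\sum_a e_a\times e^a$. The form $G^e_l(\alpha)$ on $X_{g_{210}(\Gamma_1\amalg\Gamma_2)}$ is therefore literally the cross product of $G^{e_1}(e_a\otimes\alpha^{(1)})$ and $G^{e_2}(e^a\otimes\alpha^{(2)})$, summed over $a$, up to reordering the factors. In addition, $\Delta_\ver$ of the glued graph is the product of the two vertex diagonals. By Fubini's theorem, the pushforward $\int_{\Delta_\ver}$ of this cross product is the cross product of the two pushforwards. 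This is justified by Lemma~\ref{lem:push-forward-int} and Lemma~\ref{lem:graph-L1}, and both factors are in fact smooth by Remark~\ref{rem:deg-reg}(A). Since $dG$ is smooth, no blow-up along $l$ is needed, so there is no analytic subtlety. This proves the identity up to sign.

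The second formula is handled in exactly the same way. The only change is that $\Gamma_1$ has no special vertex, so its integral is a number given by the integral over $\Delta_\ver^{\Gamma_1}$, and it yields $\m_{\Gamma_1}$ through~\eqref{eq:mGamma}. For the tree with two special vertices, the pushforward of a product is again a product, with the bidegree $(p,q)$ of $\G^2_{\Gamma_2}$ preserved.

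The main obstacle is the sign bookkeeping. For each side I would choose the convenient extended labelling allowed by Lemma~\ref{lem:extindep}: on the glued graph, the edge $l$ comes first as prescribed in~\S\ref{ss:op-graphs}, followed by the edges of $\Gamma_1$ and then those of $\Gamma_2$, and the vertices of $\Gamma_1$ precede those of $\Gamma_2$. I would then collect the following contributions:
\begin{itemize}
\item the reordering sign from~\eqref{eq:gluing-sign};
\item $\eta_3$ of a disjoint union as in Definition~\ref{def:eta3};
\item the sign from $n(l)=1$ in~\eqref{eq:defIl};
\item the Koszul signs from moving $e_a$ and $e^a$ past the $\alpha_j$ and past the forms;
\item the signs in~\eqref{eq:defc120};
\item the sign exponents $s_\Gamma$, $s^2_{\Gamma,p,q}$, $\wt s^2_{\Gamma,p,q}$ and $s_\Gamma$ of~\eqref{eq:sGamma}.
\end{itemize}
Finally I would check mod $2$ that these contributions cancel. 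Since both sides are good operations, as established after~\eqref{eq:def-G-an-alg} and in Remark~\ref{rem:signs}, it suffices to do this check for a single labelling, namely the special one. The detailed computation would be deferred to~\cite{Volkov-thesis}, in the same way as the analogous sign claims in Proposition~\ref{prop:G-consist}.
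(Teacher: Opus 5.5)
Your proposal is correct and follows essentially the same route as the paper: expand $c_{120}$, use $\Delta_\ver^{g_{210}(\Gamma_1\amalg\Gamma_2)}=\Delta_\ver^{\Gamma_1}\times\Delta_\ver^{\Gamma_2}$ together with $dG=e_a\times e^a$ on the new edge, apply Fubini for fibre integration, and handle the second identity bidegree by bidegree, deferring the signs to \cite{Volkov-thesis} exactly as the paper does. Your closing appeal to goodness to reduce the sign check to a single labelling is unnecessary but harmless: the glued graph already carries the special labelling by construction, relabelling $\Gamma_1$ or $\Gamma_2$ would change which leaves get glued, and the sign computation is uniform in any case.
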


\begin{proof}
For the first equation, let $\Gamma_1\in \RR_{s_1;d_1}$, $\Gamma_2\in
\RR_{s_2;d_2}$, and denote $\wt\Gamma:=g_{210}(\Gamma_1\amalg\Gamma_2)$.
Let $e_j$ denote the number of edges of the graph $\Gamma_j$,
$j=1,2$. Observe that $s:=s_1+s_2-2$ is the number of leaves of
$\wt\Gamma$. Let $\alpha\in \HH^{\otimes s}$ be a decomposable tensor
adapted to $\wt\Gamma$. We write $\alpha=\alpha_1\otimes\alpha_2$
with $\alpha_1\in \HH^{s_1-1}$ and $\alpha_2\in \HH^{s_2-1}$, so that
the form $e_a\alpha_1$ is adapted to $\Gamma_1$,  
$e^a\alpha_1$ is adapted to $\Gamma_2$, and $e_a\alpha_1\otimes
e^a\alpha_2$ is one of the summands in $c_{120}(\alpha)$.
Here $(e_a)$ is a basis of $\HH$ and $(e^a)$ its dual basis.
In the following computation we assume the Einstein summation
convention, ignore signs, and identify $X_\Gamma$ with $Y_\Gamma$ via 
$R_\Gamma$ for all graphs $\Gamma$ involved.
Since the number of vertices of the graph $\Gamma_1\amalg\Gamma_2$
equals the number of vertices of $\wt\Gamma$, we get 
\begin{equation}\label{eq:triplediageq}
  \Delta_\ver^{\Gamma_1}\times \Delta_\ver^{\Gamma_2} =
  \Delta_\ver^{\wt\Gamma}.
\end{equation}
We compute with suitable sign exponents which we will not spell out:
\begin{align*}
  (\G_{\Gamma_1}\otimes\G_{\Gamma_2})\circ c_{120}(\alpha)
  &\stackrel{(1)}{=} (-1)^\star\int_{\Delta_\ver^{\Gamma_1}}e_a\alpha_1G^{e_1} 
  \int_{\Delta_\ver^{\Gamma_2}}e^a\alpha_2G^{e_2} \cr
  &\stackrel{(2)}{=} (-1)^{\star\star}\int_{\Delta_\ver^{\Gamma_1}\times 
  \Delta_\ver^{\Gamma_2}}e_a\alpha_1e^a\alpha_2G^{e_1}G^{e_2} \cr
  &\stackrel{(3)}{=} (-1)^{\star\star\star}\int_{\Delta_\ver^{\wt\Gamma}}dG\times
  G^e\alpha \cr
  &\stackrel{(4)}{=} \bH_{\wt\Gamma}(\alpha).
\end{align*}
Here equality~(1) follows from the definition of the operations
$\G_\Gamma$ and $c_{120}$ (see~\eqref{eq:defc120}), and
equality~(2) follows from Fubini's theorem for fibre integration. 
For equality~(3) we use equation~\eqref{eq:triplediageq}, bring $e^a$
to the left past $\alpha_1$, and use that $dG=e_a\times e^a$.
Equality~(4) follows from the definition of the operation $\bH$.

The second equation is proved analogously,
where one has to compute separately for each bidegree part of
$(\G_{\Gamma_1}\otimes\G^2_{\Gamma_2})\circ c_{120}(\alpha)$
because the definition of $\G^2_{\Gamma}$ in~\eqref{eq:G2-Gamma} has
different signs in front of different bidegree parts.
\end{proof}

The proofs of the following lemmas are similar to that of
Lemma~\ref{lem:H-gluing} and therefore omitted.

\begin{lemma}\label{lem:H-gluing1}
Let $\Gamma\in \RR_{s;d}$ with $s\ge 4$, and $j\in\{3,\dots,s-1\}$. Then 
$$
  \G_{\Gamma}\circ c_{210} = \frac{1}{2} \sum_{3\le j\le s_1+s_2+1}\bH_{g_{120}^j(\Gamma)}.
$$
\end{lemma}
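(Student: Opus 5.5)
The plan is to repeat the proof of Lemma~\ref{lem:H-gluing}, now with the coproduct $c_{210}$ in place of $c_{120}$, and to track signs only up to the nonessential exponents of Definition~\ref{def:good-exp}. Fix $\Gamma\in\RR_{s;d}$ and a decomposable $\alpha=\alpha_1\otimes\alpha_2\in\HH^{\otimes(s_1+s_2)}$ of homogeneous degree, with $s=s_1+s_2+2$. By~\eqref{eq:defc210}, the only summand of $c_{210}(\alpha)$ is $\tfrac12\, e_a\otimes\alpha_1\otimes e^a\otimes\alpha_2$ (up to sign, with Einstein summation). This tensor is adapted to $\Gamma$, with $e_a$ sitting at leaf $1$ and $e^a$ at leaf $j=s_1+2$. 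Note that this $j$ lies in $\{3,\dots,s-1\}$ exactly because $s_1,s_2\ge1$. So $\G_\Gamma\circ c_{210}(\alpha)$ is, up to sign, $\tfrac12\int_{\Delta_\ver^\Gamma}G^e\times e_a\alpha_1e^a\alpha_2$.

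The next step identifies this integral with $\bH_{g_{120}^j(\Gamma)}(\alpha)$. Gluing does not change the vertices, so $\Delta^{g_{120}^j(\Gamma)}_\ver=\Delta^\Gamma_\ver$. Reordering the cross product moves $e^a$ next to $e_a$, and then~\eqref{eq:dG} gives $e_a\times e^a=dG$ placed on the new edge $(u_1,u_j)$. The result is exactly the integrand $G^e_l(\alpha)$ with $l$ the marked edge. In the edge order of the extended labelling from Definition~\ref{def:g-coprod}, this edge comes first, so the reordering sign is governed by~\eqref{eq:gluing-sign}. The leaves of $g_{120}^j(\Gamma)$ then split into $s_1=j-1$ leaves carrying $\alpha_1$ and $s_2=s-j-1$ leaves carrying $\alpha_2$, as described in Case~1 of~\S\ref{ss:op-graphs}.

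Summation comes next. For fixed $(s_1,s_2)$ only one $j$ contributes, namely $j=s_1+2$, since other $j$ give glued graphs with different leaf counts and land in other components. The displayed sum over $j$ should therefore be read componentwise in $\HH^{\otimes s_1}\otimes\HH^{\otimes s_2}$. Both sides are good operations (Remark~\ref{rem:def-good}), and the map $gl_3$ is a bijection. Hence after summing over $\Gamma$ and $d$ the statement becomes compatible with~\eqref{eq:tradecoprod1}. The factor $\tfrac12$ is the one carried by $c_{210}$.

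The main obstacle is the sign bookkeeping. The integral-side contributions are: the Koszul sign from moving $e^a$ past $\alpha_1$, the change $\bar R$ under gluing from~\eqref{eq:gluing-sign}, the $(n-1)(\eta_3+n(l)-1)$ term in~\eqref{eq:defIl}, and the change of $\eta_3$ when a boundary component is added. These must be matched against the sign in~\eqref{eq:defc210} together with the difference between $s_\Gamma$ in~\eqref{eq:def-G-an-alg} and the unspecified exponent $\wt r^2_\Gamma$. The paper leaves $\wt r^2_\Gamma$ undefined, so I would fix it by this very identity. The remaining task is then to check that the resulting exponent has the form~\eqref{eq:exp-main}, which is needed for $\bH_{\Gamma,l}$ to be good; the detailed computation, as in the paper, is deferred to~\cite{Volkov-thesis}.
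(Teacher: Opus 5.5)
Your proposal follows the argument the paper intends; the paper omits this proof as analogous to Lemma~\ref{lem:H-gluing}. That argument isolates the single term $\tfrac12\,e_a\alpha_1e^a\alpha_2$ of $c_{210}$, uses $\Delta_\ver^{g_{120}^j(\Gamma)}=\Delta_\ver^\Gamma$ and $e_a\times e^a=dG$ on the new edge, reads the sum over $j$ componentwise, and defers the signs to~\cite{Volkov-thesis}.

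Two small points need fixing. First, with $j=s_1+2$ the glued graph has $j-2$ and $s-j$ leaves on its two boundary components; the formula $s_1=j-1$, $s_2=s-j-1$ from Case~1 of~\S\ref{ss:op-graphs}, which you copied, is off by one and contradicts your own $j=s_1+2$. Second, if you fix $\wt r^2_\Gamma$ by this identity, you must still check that it does not depend on the marked edge and agrees with the exponents forced by Lemmas~\ref{lem:H-gluing2} and~\ref{lem:H-gluing3}, because Lemma~\ref{lem:fulldiffeven} uses a single exponent for all edges of $\Gamma$.
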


\begin{lemma}\label{lem:H-gluing2}
Let $\Gamma_1\in\RR_{s_1,s_2}$ and $\Gamma_2\in\RR_{s_3;d}$ with $s_1+s_3\ge 3$. Then 
$$
  \tau_{23}[\m_{\Gamma_1}\otimes\G_{\Gamma_2}]
  \circ (c_{120}\otimes\id) = \bH_{g_{210}(
  (\Gamma_1\amalg \Gamma_2)\tau_{23})}.
$$
Here on the right hand side $\tau_{23}$ is the relabelling swapping
the orders of the second and third boundary components, and on the
left hand side its algebraic action on $(B^{cyc*}\HH)^{\otimes 3}$ 
is denoted by the same letter.
\end{lemma}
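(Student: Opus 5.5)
We imitate the proof of Lemma~\ref{lem:H-gluing}, working throughout up to sign exponents, which we afterwards match using that all operations involved are good. Fix $\Gamma_1\in\RR_{s_1,s_2}$, a circular graph without special vertices, and $\Gamma_2\in\RR_{s_3;d}$, and put $\wt\Gamma:=g_{210}((\Gamma_1\amalg\Gamma_2)\tau_{23})$. Take a decomposable tensor $\alpha$ adapted to $\wt\Gamma$ and write it as $\alpha=\alpha_1\otimes\alpha_3\otimes\alpha_2$. Here $\alpha_1\in\HH^{\otimes(s_1-1)}$ sits on the first boundary component of $\Gamma_1$ minus the glued leaf $u$, $\alpha_3\in\HH^{\otimes(s_3-1)}$ sits on $\Gamma_2$ minus the glued leaf $v$, and $\alpha_2\in\HH^{\otimes s_2}$ sits on the second boundary component of $\Gamma_1$, which becomes the second boundary component of $\wt\Gamma$. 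We expand $(c_{120}\otimes\id)$ applied to the reordered input. Only the summand with $e_a$ in front of $\alpha_1$ and $e^a$ in front of $\alpha_3$ is paired against $\m_{\Gamma_1}\otimes\G_{\Gamma_2}$ after the reordering $\tau_{23}$. It produces
\[
\m_{\Gamma_1}(e_a\alpha_1\otimes\alpha_2)\,\G_{\Gamma_2}(e^a\alpha_3).
\]

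The analytic core runs as follows. First, $\m_{\Gamma_1}$ is, up to sign, the integral $\int_{\Delta_\ver^{\Gamma_1}}G^{e_1}\times\cross(\cdot)$ over the full configuration space, by \eqref{eq:mGamma} and \eqref{eq:defI} with $d=0$. Second, $\G_{\Gamma_2}$ is the fibre integral over $\Delta_\ver^{\Gamma_2}$ along $\pi:\Delta_\ver^{\Gamma_2}\times M^d\to M^d$. The integrand for $\Gamma_1$ is integrable by Lemma~\ref{lem:graph-L1}, and the one for $\Gamma_2$ is smooth by Remark~\ref{rem:deg-reg}(A). Hence Fubini's theorem for fibre integration, i.e.\ \eqref{eq:fibre-Fubini} together with axiom (WEDGE), lets us combine the product into a single fibre integral over $\Delta_\ver^{\Gamma_1}\times\Delta_\ver^{\Gamma_2}$ of the cross product of the two integrands. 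Gluing adds no vertices, so the analogue of \eqref{eq:triplediageq} holds: $\Delta_\ver^{\Gamma_1}\times\Delta_\ver^{\Gamma_2}=\Delta_\ver^{\wt\Gamma}$. Moving $e^a$ next to $e_a$ and using $dG=\sum_a e_a\times e^a$ from \eqref{eq:dG}, the two leaf factors combine into the factor $dG$ at the new marked edge $l=(u,v)$. This identifies the integral with $\int_{\Delta_\ver^{\wt\Gamma}}R_{\wt\Gamma}^*G^e_l(\alpha)$, which up to sign is $\bH_{\wt\Gamma}(\alpha)$ by \eqref{eq:defIl}. The reordering permutations are compatible because of \eqref{eq:gluing-sign} and the placement of $l$ first in the edge order.

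The main obstacle is the sign bookkeeping. It collects contributions from the sign in $c_{120}$ \eqref{eq:defc120}, from $s_\Gamma$ in \eqref{eq:sGamma} with $\ell=2$, which now involves a nonzero $P_b$, from $s_\Gamma$ in \eqref{eq:def-G-an-alg}, from the algebraic action of $\tau_{23}$ with its $(n-1)$-shifted degrees, from the unspecified exponent $\wt r^2$, and from $\eta_3$, which changes under $\tau_{23}$ and the gluing by Definition~\ref{def:eta3}. Rather than tracking individual tensor factors, we reduce the problem using goodness. Both sides are good operations in $\alpha$ (Lemma~\ref{lem:alg-ana-action2} and Remark~\ref{rem:def-good}), and the relabelling action is compatible with gluing. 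So it suffices to compare signs for one convenient labelling, namely the special one, and one extension, for which Lemma~\ref{lem:extindep} gives freedom. There the residual sign exponent depends only on nonessential data: degrees, $s_i$, $d$, and $n$. We then fix $\wt r^2$ so that this residual sign vanishes, exactly as is implicitly done for the analogous Lemma~\ref{lem:H-gluing}. The remaining verification is the tedious computation deferred to \cite{Volkov-thesis}.
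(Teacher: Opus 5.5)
This is essentially the paper's argument. The paper omits the proof as analogous to Lemma~\ref{lem:H-gluing}: keep the single $c_{120}$ summand of the right lengths, apply Fubini for fibre integration, use $\Delta_\ver^{\Gamma_1}\times\Delta_\ver^{\Gamma_2}=\Delta_\ver^{\wt\Gamma}$ and $dG=e_a\times e^a$, and defer signs to \cite{Volkov-thesis}. Your identification of the integrals is correct.

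One caution on your sign paragraph. The glued labelling is already forced to be the special one, so goodness buys no reduction here. Also, $\wt r^2_\Gamma$ is not yours to tune for this lemma alone: Lemma~\ref{lem:fulldiffeven} needs it independent of the marked edge, and the same choice must also make Lemmas~\ref{lem:H-gluing1} and~\ref{lem:H-gluing3} hold. So what is really being deferred is that the residual signs are nonessential and agree across all types of marked edges.
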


\begin{lemma}\label{lem:H-gluing3}
Let $\Gamma_1\in\RR_{s_1}$ and $\Gamma_2\in\RR_{s_2,s_3;d}$ with $s_1+s_2\ge 3$. Then 
$$
  (\m_{\Gamma_1}\otimes \G_{\Gamma_2}^2)\circ (c_{120}\otimes \id)=\bH_{g_{210}(\Gamma_1\amalg\Gamma_2)}. 
$$
\end{lemma}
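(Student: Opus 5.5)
The plan is to follow the proof of Lemma~\ref{lem:H-gluing} verbatim, with the second factor now a circular graph. Here $\G^2_{\Gamma_2}$ denotes the signed configuration space integral attached to $\Gamma_2\in\RR_{s_2,s_3;d}$. Exactly as in~\eqref{eq:G2-Gamma}, it is $I_{\Gamma_2}$ twisted by a sign exponent of the form~\eqref{eq:exp-main}. As in the second half of that proof, I would work one (bi)degree component at a time, so that the sign in front of $I_{\Gamma_2}$ is a fixed nonessential exponent throughout.

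Set $\wt\Gamma:=g_{210}(\Gamma_1\amalg\Gamma_2)$. By Definition~\ref{def:g-prod}, the marked edge $l=(u,v)$ joins the first leaf $u$ of $\Gamma_1$ to the first leaf $v$ on the first boundary component of $\Gamma_2$. The new graph $\wt\Gamma$ has $\wt s_1=s_1+s_2-2$ leaves on its first boundary component and $s_3$ leaves on its second. Take a decomposable $\alpha\otimes\beta$ adapted to $\wt\Gamma$, written as
\[
\alpha=\alpha_1\otimes\alpha_2\in\HH^{\otimes(s_1-1)}\otimes\HH^{\otimes(s_2-1)},\qquad \beta\in\HH^{\otimes s_3}.
\]
By~\eqref{eq:defc120}, $(c_{120}\otimes\id)(\alpha\otimes\beta)$ is, up to sign, the sum over $a$ of
\[
(e_a\alpha_1)\otimes(e^a\alpha_2\otimes\beta).
\]
In this sum, $e_a\alpha_1$ is adapted to $\Gamma_1$ and $e^a\alpha_2\otimes\beta$ is adapted to $\Gamma_2$.

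Since gluing along $l$ creates no new vertices, we have
\[
\Delta_\ver^{\Gamma_1}\times\Delta_\ver^{\Gamma_2}=\Delta_\ver^{\wt\Gamma}.
\]
Moreover, the special vertex and its fibre integration over $M^d$ are inherited from $\Gamma_2$. Choosing extended labellings so that the new edge $l$ comes first, the computation proceeds in four steps.
\begin{enumerate}
\item Write $\m_{\Gamma_1}(e_a\alpha_1)\cdot\G^2_{\Gamma_2}(e^a\alpha_2\otimes\beta)$ as a product of an ordinary integral over $\Delta_\ver^{\Gamma_1}$ and a fibre integral over $\Delta_\ver^{\Gamma_2}$, using~\eqref{eq:mGamma} and~\eqref{eq:defI}.
\item Combine them into a single fibre integral over $\Delta_\ver^{\wt\Gamma}$ by Fubini for fibre integration, i.e.\ the (WEDGE) and (PROD) axioms and~\eqref{eq:fibre-Fubini} of Lemma~\ref{lem:push-forward-int}. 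This step applies because all integrands are integrable by Lemma~\ref{lem:graph-L1}.
\item Move $e^a$ leftward past $\alpha_1$ and recognize $\sum_a e_a\times e^a=dG$ on the flags $(u,v)$, using~\eqref{eq:dG}. This produces $R_{\wt\Gamma}^*G^e_l(\alpha\otimes\beta)$.
\item Compare with the definition of $\bH_{\wt\Gamma,l}$ via~\eqref{eq:defIl}.
\end{enumerate}

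The main obstacle is the sign bookkeeping, which the earlier lemma also deferred. Three contributions must be collected: the sign in~\eqref{eq:defc120}, the Koszul sign from moving $e^a$ past $\alpha_1$ and the fibre-integration order, and the change $\bar R_{\wt\Gamma}\equiv\bar R_{\Gamma_1\amalg\Gamma_2}+|v-u|-1$ from~\eqref{eq:gluing-sign}. One must also track the change of $\eta_3$ under merging two boundary components. These must then be matched against the exponents $s_{\Gamma_1}$, the exponent for $\G^2_{\Gamma_2}$, and $\wt r^2$ or $\wt s^2$.

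I would first reduce to a single convenient labelling. This is legitimate because all operations involved are good operations (Lemma~\ref{lem:alg-ana-action2}) and boundary/edge integrals are independent of the extension (Lemmas~\ref{lem:extindep} and~\ref{lem:bdryindep}). The remaining check is then that every discrepancy is nonessential and cancels, which I would verify on a single elementary example per bidegree.
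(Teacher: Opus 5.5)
Your proposal follows the paper's argument. The paper omits this proof as similar to that of Lemma~\ref{lem:H-gluing}: unwrap $\m_{\Gamma_1}$, the operation on $\Gamma_2$ and $c_{120}$; combine the two integrals by Fubini using $\Delta_\ver^{\Gamma_1}\times\Delta_\ver^{\Gamma_2}=\Delta_\ver^{\wt\Gamma}$; move $e^a$ past $\alpha_1$ and use $dG=e_a\times e^a$; and read off $\bH_{\wt\Gamma}$, with signs deferred to~\cite{Volkov-thesis}.

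Two small points. First, since $\Gamma_2$ is circular with one special vertex, the operation meant here is really $\F^2_{\Gamma_2}$, as in its use in \S\ref{ss:chaincoprod}. So there is no bidegree splitting, and the relevant exponents are $r^2_{\Gamma_2}$ and $\wt r^2$. Second, checking one example would not by itself settle a sign identity that must hold for all graph numerics and degrees.
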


\section{Proof of the main theorem}\label{sec:rel-stringtop}

  
In this section we prove Theorem~\ref{thm:intro} and
Corollary~\ref{cor:intro} from the Introduction.
In~\S\ref{ss:mainrun} we reduce Theorem~\ref{thm:intro} (restated as
Theorem~\ref{thm:prodcoprod}) to two chain level statements, and 
in~\ref{ss:simpconn} we derive Corollary~\ref{cor:intro} (restated as
Corollary~\ref{cor:prodcoprod}).  
In~\S\ref{ss:chainprod} and~\S\ref{ss:chaincoprod} we prove the two
chain level statements.

Throughout this section we use the setup from~\S\ref{sec:stringtop}:
$M$ is a closed oriented connected $n$-dimensional 
manifold, $\Lambda:=C^\infty(S^1,M)$ its loop space, and
$\Lambda_0\subset \Lambda$ the subspace of constant loops. We fix
a basepoint $q_0\in M\cong\Lambda_0$.

%

\subsection{The main theorem}\label{ss:mainrun}

For $M$ as above consider its de Rham cyclic DGA
$$
  \Om = \Om^*(M).
$$
Let $\HH\subset\Om$ be a harmonic subspace as in Section~\ref{ss:coch}.
Recall from~\S\ref{ss:gradedvect} its bar complex 
$$
  B\HH = \bigoplus_{k=1}^{\infty}\HH[1]^{\otimes k},
$$
cyclic bar complex $B^{\text{\rm cyc}}\HH$, dual cyclic bar complex
$B^{\text{\rm cyc}*}\HH$, and the cyclization operator 
$$
  N_\HH=N_{alg}:B^\cyc\HH\to B\HH.
$$
By~\eqref{eq:deRhamcanon} the degree shifted dual cyclic bar complex
carries a canonical dIBL-structure 
\begin{equation*}
   \dIBL(\HH) = \Bigl((B^{{\text{\rm
         cyc}}*}\HH)[2-n],\fp_{1,1,0}=0,\,\fp_{1,2,0},\,\fp_{2,1,0}\Bigr) 
\end{equation*}
with the operations defined by~\eqref{eq:dIBLcanon} as 
\begin{equation*}
\fp_{2,1,0}:=(c_{120}\circ N_{alg})^*,\qquad
\fp_{1,2,0}:=(c_{210}\circ N_{alg}^{\otimes 2})^*.
\end{equation*}
Here $c_{120}$ and $c_{210}$ are the coproduct and product on $B\HH$
defined in~\eqref{eq:defc120} and~\eqref{eq:defc210}, respectively.

Let $\wt G$ be a propagator as in Proposition~\ref{prop:existsprop}.
It induces via equation~\eqref{eq:mlg} a Maurer-Cartan element  
$$
  \m = \{\m_{\ell,g}\}
$$ 
on $\dIBL(\HH)$. We will use the following twisted
operations from~\eqref{eq:twistedoper}:  
\begin{equation*}
  \fp_{1,1,0}^\fm = \fp_{2,1,0}(\fm_{1,0},\cdot),\qquad
  \fp_{2,1,0}^\fm = \fp_{2,1,0},\qquad
  \fp_{1,2,0}^\fm = \fp_{1,2,0} + \wh\fp_{2,1,0}^{conn}(\fm_{2,0},\cdot).
\end{equation*}
By homotopy transfer (Proposition~\ref{prop:KS}), the propagator $\wt
G$ induces an $A_\infty$-structure on $\HH$. We view $B\HH$ as a chain
complex with the induced Hochschild differential.
Recall from~\S\ref{ss:dga1} the Hochschild complex $C(\Om)$ of $\Om$
viewed as a DGA (which differs from $B\Om$ for $\Om$ viewed as an
$A_\infty$-algebra by signs and a total degree shift of $1$), the
Connes cyclic complex $C^\lambda(\Om)$, its dual $C_\lambda^*(\Om)$,   
and the inclusion of the reduced subcomplex
$$
  \iota: \ol C_\lambda^*(\Om) \into C_\lambda^*(\Om).
$$ 
Homotopy transfer combined with suitable sign operators in~\eqref{eq:bG}
yields degree $0$ chain maps
$$
  \G:B\HH \stackrel{\sim}\longrightarrow C(\Om)[1],\qquad
  \G_\lambda:B^\cyc\HH \stackrel{\sim}\longrightarrow C^\lambda(\Om),
$$
where according to equation~\ref{eq:G-iso} the latter one induces
a degree $0$ isomorphism
\begin{equation}\label{eq:G-iso2}
  \G_\lambda^*:HC_\lambda^*(\Om)\stackrel{\cong}\longrightarrow
  H(B^{\cyc *}\HH,\fp_{1,1,0}^\m).
\end{equation}
Recall from Theorem~\ref{thm:cyc} Chen's iterated integral map
$$
   \bar J_\lambda: C_*(\Lambda_{S^1},q_0)\to \ol C^*_\lambda(\Om),
$$
and from~\S\ref{ss:stringtop} the equivariant string topology operations
$\mu^{S^1}$, $\lambda^{S^1}$ on reduced $S^1$-equivariant loop space
homology $H_*^{S^1}(\Lambda,q_0)$.   
We denote the maps $\fp_{2,1,0}$, $\fp_{1,2,0}^\m$ by the same
letters on chain level and on the level of homology.
Consider the induced maps on homology
\begin{equation*}
  H_*^{S^1}(\Lambda,q_0)\stackrel{\bar J_{\lambda *}}{\longrightarrow}
  \ol{HC}_\lambda^*(\Om)\stackrel{\iota_*}{\longrightarrow}
  HC_\lambda^*(\Om)\stackrel{\G_\lambda^*}{\longrightarrow}
  H(B^{\text{\rm cyc}*}\HH,\fp_{1,1,0}^{\m}).
\end{equation*} 
Here on the right hand side we use $B^{\text{\rm cyc}*}\HH$
without the degree shift by $2-n$ because with this convention the map
$\G_\lambda^*$ has degree zero. Since the maps $\iota_*$ and $J_{\lambda *}$
also have degree zero, so does the composed map. 

The following theorem corresponds to Theorem~\ref{thm:intro} from the
Introduction. 

\begin{theorem}\label{thm:prodcoprod}
In the setup above, the composed degree $0$ map on homology
\begin{equation}\label{eq:F}
  \F := \G_\lambda^*\circ\iota_*\circ \bar J_{\lambda *}:H_*^{S^1}(\Lambda,q_0)\longrightarrow
H(B^{\text{\rm cyc}*}\HH,\fp_{1,1,0}^{\m})
\end{equation}
intertwines the string bracket $\mu^{S^1}$ with $\fp_{2,1,0}$,
\begin{equation}\label{eq:prodhom}
  \F\circ\mu^{S^1} = \fp_{2,1,0}\circ \F^{\otimes 2}: H_*^{S^1}(\Lambda,q_0)^{\otimes 2}
  \longrightarrow H(B^{\text{\rm cyc}*}\HH,\fp_{1,1,0}^{\m}),
\end{equation}
and the string cobracket $\lambda^{S^1}$ with $2\,\fp_{1,2,0}^\m$,
\begin{equation}\label{eq:coprodhom}
  2\,\fp_{1,2,0}^{\m}\circ\F = \F^{\otimes 2}\circ\lambda^{S^1}: H_*^{S^1}(\Lambda,q_0)
  \longrightarrow H(B^{\text{\rm cyc}*}\HH,\fp_{1,1,0}^{\m})^{\otimes 2}.
\end{equation}
\end{theorem}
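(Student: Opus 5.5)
We reduce both identities to chain-level statements that come from Stokes' theorem on the compactified configuration spaces. On homology we use the relations $\bar J_{\lambda*}=-J_*\MM$ and $J_{\lambda*}=\bar J_{\lambda*}\EE$ from~\eqref{eq:cycaux}, together with $\mu^{S^1}=\EE\mu(\MM\otimes\MM)$ and $\lambda^{S^1}=(\EE\otimes\EE)\lambda\MM$. These let us rewrite both sides of~\eqref{eq:prodhom} and~\eqref{eq:coprodhom} in terms of the non-equivariant Chen map $J$ evaluated on $\MM$ of equivariant cycles. Such cycles are represented by nondegenerate analytic chains $f:B\to\Lambda$ (Lemma~\ref{lem:trans}). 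Next we dualize. The composite $\G_\lambda^*\circ\iota_*\circ\bar J_{\lambda*}$ evaluated on a tensor $\alpha\in B^\cyc\HH$ becomes $\la\G_\lambda(\alpha),\cdot\ra_\cyc$, which by Proposition~\ref{prop:G-consist} is the sum over $\Gamma\in\RR_{s;d}$ of configuration-space integrals $\int_{B\times\Delta^{d-1}}ev_f^*\G_\Gamma(\alpha)$. By~\eqref{eq:angenfin} each of these equals an integral over the interior $\XX_0$ of the basic pair attached to $\Gamma$ and $\phi=ev_f$.

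For the bracket we aim at the chain-level identity
\[
I^{\otimes 2}\circ\G^{\otimes 2}\circ c_{120}\circ N_\HH - I\circ\G\circ\mu\ \text{-type term} \;=\; \pm\bigl(\p\text{-terms}\bigr) + \bigl(\text{Hochschild differential terms}\bigr),
\]
with $\G^2$ from~\S\ref{ss:defG^2} playing the role of the chain homotopy. Concretely, we fix $f_1,f_2$ with transverse time-zero evaluations and apply Proposition~\ref{prop:mainStokes}, case (a), to $\int_{\XX_0}d\bigl(\wt R_\Gamma^*\wt G^e(\alpha)\bigr)$ for every $\Gamma\in\RR_{s;d_1,d_2}$. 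The left side is computed by Lemma~\ref{lem:fulldiffeven}: it is a sum of $\bH_{\Gamma,l}$ (the forms $dG=e_ae^a$ placed on each edge) plus $d\alpha=0$. Using Lemma~\ref{lem:trade} and Lemma~\ref{lem:H-gluing}, the separating edges reassemble into $(\G\otimes\G)\circ c_{120}\circ N_\HH$, which is $\fp_{2,1,0}\circ\F^{\otimes2}$ on the chain level. The non-separating edges reassemble into $(\m\otimes\G^2)\circ c_{120}$, which is $\G^2\circ b_\HH=\G^2\circ\fp^\m_{1,1,0}$-dual.

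The primary boundary of $\XX$ splits into two parts. The first is $\p_1\WW$, which gives $\G^2$ of $\p f_1\times f_2$ and of $f_1\times\p f_2$; it also contains the faces $t_i=t_{i+1}$ and $t=0,1$ of the simplices, which yield the Hochschild differential on the $\Om$ side. The second consists of the edge faces $\p_l\XX$ (Proposition~\ref{prop:reg-char}(C)). For a non-doubly-special edge, fibre integration over $S^{n-1}$ with $\int_{\p F}\wt G=(-1)^n$ contracts the edge, and these contributions cancel in pairs between graphs related by the duality $I$ or by attaching a leg. For the doubly special edge of the tree with $d_1+d_2=s+2$ and a single edge, the face lies over $D_{\mu(f_1,f_2)}$ and gives exactly $J$ of the Chas--Sullivan product $\mu(f_1,f_2)$. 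Hidden faces vanish by Proposition~\ref{prop:mainStokes}. Passing to homology, and remembering that $\MM$ inserts a BV rotation that matches the cyclic symmetrisation $N_{an}$ (Lemma~\ref{lem:rotation-spec-cor}), gives~\eqref{eq:prodhom}.

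The cobracket is handled in the same way, with circular graphs $\RR_{s_1,s_2;d}$ and chain homotopy $\F^2$, using case (b) of Proposition~\ref{prop:mainStokes} for nondegenerate analytic $f$. The marked-edge decomposition~\eqref{eq:omcirc} together with Lemmas~\ref{lem:trade}, \ref{lem:H-gluing1}, \ref{lem:H-gluing2} and~\ref{lem:H-gluing3} produces four kinds of terms:
\begin{itemize}
\item $2\,\G\circ c_{210}$, which is the $\fp_{1,2,0}$ part;
\item $2(\m_{2,0}\otimes\G)\tau_{23}$, which is the $\wh\fp^{conn}_{2,1,0}(\m_{2,0},\cdot)$ part, the factor $2$ coming from~\eqref{eq:hat-unwrap};
\item $\m_{1,0}$-terms, giving the twisted differential on $\F^2$;
\item the doubly special self-loop face.
\end{itemize}
By Lemma~\ref{lem:DSd} and Remark~\ref{rem:smooth-L1}, the self-loop face sits over $D_f$ and reproduces $J^{\otimes2}$ of $\ol\lambda f$ from~\eqref{eq:chain-coproduct}. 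The faces over $Z_f\times\{0,1\}$ land in constant loops and are killed by Chen's integrals. The invariance~\eqref{eq:F2inv} accounts for the two orderings of the boundary components.

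The main obstacle is twofold. First, we must identify precisely the doubly special faces and the $\p_1\WW$ faces in case (b), where $\XX$ is not a manifold with corners in general. Second, we must get all the signs to match the factor $2$ and the signs of $\fp^\m_{1,2,0}$. We would first verify the cancellation of non-doubly-special edge faces via Lemma~\ref{lem:bdryindep}, choosing convenient extensions as in Remark~\ref{rem:edgechoice}. For the signs we would rely on the goodness of all operations, which lets us fix a canonical labelling per orbit.
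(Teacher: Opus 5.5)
Your overall route is the paper's: chain-level homotopies $\G^2$ and $\F^2$ obtained from Stokes' theorem on $\XX_\Gamma$, marked-edge terms reassembled via Lemmas~\ref{lem:fulldiffeven}, \ref{lem:trade} and \ref{lem:H-gluing}--\ref{lem:H-gluing3}, then passage to homology via \eqref{eq:cycaux}. However, the step that actually produces the string topology operation is wrong.

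\textbf{The doubly special faces.} You assert that the doubly special face comes only from the one-edge tree with $d_1+d_2=s+2$, and that it gives $J$ of $\mu(f_1,f_2)$. In fact every $\Gamma\in\RR_{s;d_1,d_2}$ whose connecting tree $C_\Gamma$ is a single edge $(A,Z)$ has a primary face $\p_l\XX_\Gamma$ lying over $D_{\mu(f_1,f_2)}$, whatever rooted trees hang off $S_1$ and $S_2$. These faces cancel against nothing: duality only pairs nonspecial edges, and attaching a leg only creates singly special edges. They are also indispensable. The one-edge trees contract to the edgeless tree, so they only see $\G_{s;s}(\alpha)=\pm\alpha$ viewed in $\Om^{\otimes s}$. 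That is just the linear part $\iota:\HH\into\Om$ of the homotopy transfer, not $\G_\lambda(\alpha)=\sum_d\G_{s;d}(\alpha)$.

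The missing argument runs as follows.
\begin{enumerate}
\item Contracting the doubly special edge identifies $\coprod_{d_1+d_2=d+2}\RR^{DS}_{s;d_1,d_2}$ with pairs $(\Gamma,k)$, where $\Gamma\in\RR^{\rm gen}_{s;d}$ and $0\le k\le d$; see \eqref{eq:DS}.
\item Integrating out the $S^{n-1}$-fibre and using \eqref{eq:liprod} and \eqref{eq:keychenan1} turns the face of $\Gamma_k$ into $\int_{B_{12}\times\Delta^d_k}\wh{ev_{\mu(f_1\otimes f_2)}}^*I_\Gamma(\alpha)$.
\item Summing over $k$ uses the splitting $\Delta^d=\bigcup_k\Delta^d_k$ from \eqref{eq:splitsimpl}.
\item Summing over the free $\Z_d$-orbits in $\RR^{\rm gen}_{s;d}$ produces $N_{an}\circ I_\Gamma$ by Lemma~\ref{lem:rotation-spec-cor}.
\end{enumerate}
Only this yields $\la I_\lambda\circ\G_\lambda(\alpha),\mu(f_1\otimes f_2)\ra$.

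\textbf{Cyclicity must be produced at chain level.} It cannot be deferred to ``$\MM$ inserting a BV rotation'' on homology. The homology step needs exactly $J_{\lambda*}\mu(\MM c_1\otimes\MM c_2)=\bar J_{\lambda*}\mu^{S^1}(c_1\otimes c_2)$ from \eqref{eq:cycaux}, where the $f_i$ already represent $\MM c_i$. The $\MM$'s are already used on the bracket side, in $\F(c_i)=-\G^*J_*\MM c_i$.

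\textbf{The cobracket.} The same correction applies. The self-loop faces of all graphs in $\RR^{DS}_{s_1,s_2;d}$ must be matched with $\la I^2_\lambda\circ\G_\lambda^{\otimes 2}(\alpha_1\otimes\alpha_2),\ol\lambda(f)\ra$, not with $J^{\otimes 2}$ of $\ol\lambda f$. This uses the collapsing bijection \eqref{eq:collaps-graph} onto $\RR^{\rm gen}_{s_1;d_1}\times\RR^{\rm gen}_{s_2;d_2}$, the reparametrization $\psi$ of $\overset{\circ}{D_f^d}$, and again Lemma~\ref{lem:rotation-spec-cor}.

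\textbf{Simplex faces.} Your bookkeeping here double counts. The faces $\p^{\Delta^{d_i-1}}_j\XX_\Gamma$ are precisely the partners of the leg faces $\p_l\XX_{\Gamma^i_j}$, via the diffeomorphism $\Psi$ and $\int_{\p F}\wt G=(-1)^n$. So they do not also survive as an $\Om$-side Hochschild term in \eqref{eq:homotopy2}.
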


The main steps in the proof are the following two propositions. The
first one gives a chain level statement which will imply
equation~\eqref{eq:prodhom}. The second one  
gives a chain level statement for equation~\eqref{eq:coprodhom}.
Recall from~\S\ref{ss:defchainprod} and~\S\ref{ss:transvers} the
chain-level definitions of the loop product $\mu$ and the
Goresky--Hingston coproduct $\ol\lambda$.    
Recall from~\S\ref{sec:chen} the Chen maps 
\begin{equation*}
  I:C_*(\Om)\to C^*(\Lambda),\qquad 
  I_\lambda:C_*^\lambda(\Om)\to C^*(\Lambda),\qquad
   I_\lambda^2:C_*^\lambda(\Om)^{\otimes 2}\to C^*(\Lambda\times\Lambda)
\end{equation*}
and their adjoint maps $J,J_\lambda,J_\lambda^2$. 
Recall from~\S\ref{sec:ainf} the operations $b_\HH,t_\HH,N_\HH$ on the
Hochschild complex $B\HH$ of the $A_\infty$-algebra $\HH$ (with its
$A_\infty$ structure induced from $\Om$ via
Proposition~\ref{prop:KS}). We denote the differentials induced by
the singular differential $\p$ and the Hochschild differential $b_\HH$
on the tensor products $C_*(\Lambda)^{\otimes 2}$
resp.~$(B\HH/\im(1-t_\HH))^{\otimes 2}$ (as derivations) by the same symbols. 

\begin{proposition}\label{prop:chainprod}
There exists a chain homotopy $I^{\otimes 2}\circ\G^2$ of degree
$n-1$ such that for any $\alpha\in B\HH$ and all smooth simplices
$f_j:B_j\longrightarrow \Lambda$, $j=1,2$ with transverse time zero evaluations
the following relation holds: 
\begin{equation}\label{eq:homotopy2}
\begin{aligned}
&\left<I^{\otimes 2}\circ\G^{\otimes 2}\circ c_{120}\circ N_\HH(\alpha),f_1\otimes f_2\right>+
\left<I^{\otimes 2}\circ\G^2\circ b_\HH(\alpha),f_1\otimes f_2\right> \cr
= &\left<I_\lambda\circ\G_\lambda(\alpha),\mu(f_1\otimes f_2)\right>
+\left<I^{\otimes 2}\circ\G^2(\alpha),\p(f_1\otimes f_2)\right>.
\end{aligned}
\end{equation}
\end{proposition}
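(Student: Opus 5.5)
The plan is to take the chain homotopy to be $\G^2$ from~\S\ref{ss:defG^2}, postcomposed with $I^{\otimes 2}$, and to obtain~\eqref{eq:homotopy2} from Stokes' theorem on the compactified configuration spaces of case (a) in~\S\ref{ss:general-setting}. Fix $\alpha\in\HH^{\otimes s}$ decomposable of homogeneous degree, simplices $f_1,f_2$ with transverse time zero evaluations, and a tree $\Gamma\in\RR_{s;d_1,d_2}$. Put $\WW:=B_1\times\Delta^{d_1-1}\times B_2\times\Delta^{d_2-1}$ and $\phi:=ev_{f_1}\times ev_{f_2}$. By Remark~\ref{rem:L1-degrees} only the bidegree $(\gamma_1,\gamma_2)$ enters, and~\eqref{eq:keychenan1} and~\eqref{eq:angenfin} give
$$\la I^{\otimes 2}\G^2_\Gamma(\alpha),f_1\otimes f_2\ra=\pm\int_{\XX_0}\wt R_\Gamma^*\wt G^e(\alpha).$$
By Proposition~\ref{prop:mainStokes}, condition (T) holds, $\XX$ is a manifold with corners, and Stokes' theorem gives
$$\int_{\XX_0}d\wt R_\Gamma^*\wt G^e(\alpha)=\int_{\p^\main\XX}\wt R_\Gamma^*\wt G^e(\alpha).$$
The identity then comes from evaluating the two sides of this equation, summing over $\Gamma$, and matching terms.

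\textbf{Left side (interior).} Since $d\wt G=\sum_a e_a\times e^a$ by~\eqref{eq:dG} and $d\alpha=0$ on $\HH$, Lemma~\ref{lem:fulldiffeven} turns the left side into $\sum_l\bH_{\Gamma,l}$. Using~\eqref{eq:Rom-splitting}, the marked trees split into separating and nonseparating ones.
\begin{itemize}
\item For the separating marked trees, \eqref{eq:tradesep} and the first formula of Lemma~\ref{lem:H-gluing} identify their sum with $(\G\otimes\G)\circ c_{120}\circ N_\HH$. After composing with $I^{\otimes 2}$, this is the first term on the left of~\eqref{eq:homotopy2}.
\item For the nonseparating marked trees, \eqref{eq:tradenonsep} and the second formula of Lemma~\ref{lem:H-gluing} give $\sum(\m_{\Gamma_1}\otimes\G^2_{\Gamma_2})\circ c_{120}\circ N_\HH$, that is, $\G^2\circ\fp_{2,1,0}(\m,\cdot)^*$. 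By Propositions~\ref{prop:ainfmc} and~\ref{prop:consist} together with~\eqref{eq:p110m}, this is $\G^2\circ b_\HH$ up to the pieces involving $\fm_1=0$ on $\HH$. This produces the second term on the left.
\end{itemize}

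\textbf{Right side (boundary).} By Remark~\ref{rem:descr-primary}, the primary faces come in two groups.
\begin{itemize}
\item Faces lying over $\p_1\WW$. Faces over $\p B_j$ give $\la I^{\otimes 2}\G^2(\alpha),\p(f_1\otimes f_2)\ra$. Faces over the interior simplex faces $t_i=t_{i+1}$ of $\Delta^{d_j-1}$ correspond to merging adjacent special flags, and they should cancel against the faces $\p_l\XX$ of non-doubly-special edges, as explained next. Faces over $t_1=0$ and $t_{d_j-1}=1$ are treated the same way, via the attached-leg graphs of~\S\ref{ss:op-graphs2}.
\item Edge faces $\p_l\XX$ from Proposition~\ref{prop:reg-char}(C), which are $S^{n-1}$-fibrations. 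Fibre integration over the sphere uses $\int_{\p F}\wt G=(-1)^n$ from~\eqref{eq:fibre-int-G}. For an edge at a nonspecial trivalent vertex joined to another nonspecial vertex, this collapses the edge to a $4$-valent vertex, and these contributions cancel in pairs under the duality $I$ of~\S\ref{ss:op-graphs2}. For an edge joining a special flag to a trivalent vertex, collapsing produces the graph $\Gamma$ with that leg contracted, which exactly cancels the simplex-face terms above. Lemma~\ref{lem:bdryindep} and Remark~\ref{rem:edgechoice} let me choose the extensions so that these signs match.
\item The doubly special edge $(A,Z)$. Its face fibres over $D_{\mu(f_1,f_2)}\times\Delta^{d_1-1}\times\Delta^{d_2-1}$. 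Integrating out the sphere identifies it with the tree obtained by gluing $\Gamma_1\in\RR_{s_1;d_1}$ and $\Gamma_2\in\RR_{s_2;d_2}$ at a single special vertex of degree $d_1+d_2-1$, evaluated on the concatenated loop $\mu(f_1,f_2)$, whose parameter splits as $t\in[0,1/2]\cup[1/2,1]$. Summing over the cyclic positions of $A$ and $Z$ and using Lemma~\ref{lem:rotation-spec-cor} with $N_{an}$ reproduces the cyclic Chen pairing~\eqref{eq:cycchenpair} of $\mu(f_1,f_2)$ against $\G_\lambda(\alpha)$. This gives $\la I_\lambda\G_\lambda(\alpha),\mu(f_1\otimes f_2)\ra$.
\end{itemize}

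\textbf{Expected obstacle.} I expect the main difficulty to be the bookkeeping in the boundary step: checking that the simplex-face terms and the special-edge collapse terms cancel exactly, with consistent signs, and that the doubly special face reproduces the cyclic symmetrization with the right multiplicities and no factor $2$. I would handle this by reducing to a canonical labelling and using the goodness of all the operations (Lemma~\ref{lem:alg-ana-action2}), deferring the explicit sign exponents to~\cite{Volkov-thesis}.
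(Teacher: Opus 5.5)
Your proposal follows the paper's proof essentially step for step. The left-hand terms are assembled into $\sum_l\bH_{\Gamma,l}$ via Lemma~\ref{lem:H-gluing}, Lemma~\ref{lem:trade} and \eqref{eq:p110m}, and then converted to primary-boundary integrals by Lemma~\ref{lem:fulldiffeven}, \eqref{eq:angenfin} and Proposition~\ref{prop:mainStokes}. On the boundary, nonspecial edge faces cancel by duality, the $\p B_j$ faces give the $\p(f_1\otimes f_2)$ term, simplex faces cancel against attached-leg edges, and the doubly special edge gives the Chas--Sullivan term.

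There is one bookkeeping slip in the doubly special edge step. Collapsing that edge in $\Gamma\in\RR_{s;d_1,d_2}$ gives a single tree in $\RR_{s;d}^\gen$ with $d=d_1+d_2-2$. It is not a gluing of $\Gamma_1\in\RR_{s_1;d_1}$ and $\Gamma_2\in\RR_{s_2;d_2}$ at a vertex of degree $d_1+d_2-1$; that two-tree picture belongs to the coproduct case. The paper then recovers the full $\Delta^d$ of the cyclic Chen pairing by summing over the splitting position $k$, using $\Delta^d=\bigcup_k\Delta^d_k$, before trading generalized labellings for $N_{an}$.
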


\begin{proposition}\label{prop:chaincoprod}
There exists a chain homotopy $I\circ\F^2$ of degree
$n-2$ such that for any $\alpha_1,\alpha_2\in B\HH$ 
and each nondegenerate smooth simplex $f:B\longrightarrow \Lambda$
(see Definition~\ref{def:nondeg}) the following relation holds:
\begin{equation}\label{eq:homotopy2coprod}
\begin{aligned}
&\left<I\circ\G\circ 2c_{210}\circ N_\HH^{\otimes 2}(\alpha_1\otimes
  \alpha_2),f\right> \cr
&+\wh\fp_{210}^{conn}(2\m_{2,0}, (\G^*\circ J)(f))(\alpha_1\otimes \alpha_2)
+\left<I\circ\F^2\circ b_\HH(\alpha_1\otimes \alpha_2),f\right>\cr
= &-\left<I_\lambda^{ 2}\circ \G_\lambda^{\otimes 2}(\alpha_1\otimes
\alpha_2),\ol\lambda(f)\right>
-\left<I\circ\F^2(\alpha_1\otimes \alpha_2),\p f\right>.
\end{aligned}
\end{equation}
\end{proposition}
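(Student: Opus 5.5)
The chain homotopy is $I\circ\F^2$, with $\F^2$ the operation of \S\ref{ss:defG^2coprod} built from circular graphs with one special vertex. I will prove \eqref{eq:homotopy2coprod} by applying Stokes' theorem to the compactified configuration spaces of \S\ref{sec:analysis}, case (b). Fix a nondegenerate simplex $f:B\to\Lambda$; by Lemma~\ref{lem:trans} we may take $f$ real analytic. For each $\Gamma\in\RR_{s_1,s_2;d}$ set $\WW=B\times\Delta^{d-1}$ and $\phi=ev_f$, and let $(\YY,\XX)$ be the basic pair \eqref{eq:graphtopair2}. By \eqref{eq:angenfin},
$$\langle I\circ\F^2_\Gamma(\alpha_1\otimes\alpha_2),f\rangle=\pm\int_{\XX_0}\wt R_\Gamma^*\wt G^e(\alpha).$$
Proposition~\ref{prop:mainStokes} gives Stokes' theorem for $(\YY,\XX)$ and shows that integrals over hidden faces vanish. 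Hence
$$\int_{\XX_0} d\bigl(\wt R_\Gamma^*\wt G^e(\alpha)\bigr)=\int_{\p^\main\XX}\wt R_\Gamma^*\wt G^e(\alpha).$$
The left side is evaluated with Lemma~\ref{lem:fulldiffeven}. It becomes a sum over marked edges $l$ of $\bH_{\Gamma,l}(\alpha)$, together with terms where $d$ falls on the leaf forms; since these are harmonic, those terms vanish. Summing over $\Gamma$ and using the decomposition \eqref{eq:omcirc} of $\RR^m_{s_1,s_2;d}$, I split into five groups. Lemma~\ref{lem:trade} together with Lemmas~\ref{lem:H-gluing1}, \ref{lem:H-gluing2} and \ref{lem:H-gluing3} identifies them as follows. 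The $cm$ group gives $\langle I\circ\G\circ 2c_{210}\circ N_\HH^{\otimes2}(\alpha),f\rangle$. The two $ncb$ groups give the term $\wh\fp_{210}^{conn}(2\m_{2,0},\G^*J(f))$, via \eqref{eq:hat-unwrap} and the swap $\tau$ of \eqref{eq:renumber-bdry}. The two $nc$ groups give $\F^2\circ b_\HH$, because $b_\HH^*=\fp_{2,1,0}(\m,\cdot)$ by \eqref{eq:p110m}; here the factor $2$ in \eqref{eq:tradecoprod3} absorbs both boundary components through \eqref{eq:F2inv}.

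The right side comes from the primary faces, which fall into the two groups of Remark~\ref{rem:descr-primary}. The faces over $\p_1\WW=\p_1(B\times\Delta^{d-1})$ split further. Faces over $\p B\times\Delta^{d-1}$ give $\langle I\circ\F^2(\alpha),\p f\rangle$. Faces where two consecutive times $t_i=t_{i+1}$ collide must cancel in pairs against the faces $\p_l\XX$ of non-doubly-special edges $l$, via the ``attaching a leg'' operation of \S\ref{ss:op-graphs2}. For such an edge, Proposition~\ref{prop:reg-char}(C) describes $\p_l\XX$ as an $S^{n-1}$-bundle over $\wh\Delta^l_2$. Integrating over this fibre with \eqref{eq:fibre-Int-G}-type normalisation $\int_{\p F}\wt G=(-1)^n$ collapses the edge $l$ into the special vertex. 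The resulting contribution equals minus the collision face of the graph $\Gamma_j$ from which $\Gamma$ arises by attaching a leg. Lemma~\ref{lem:bdryindep} and the conventions of Remark~\ref{rem:edgechoice} fix the extensions needed to compare signs. The faces $t_1=0$ or $t_{d-1}=1$, where a cyclic neighbour meets the basepoint, are treated the same way, using the cyclic invariance of Lemma~\ref{lem:rotation-spec-cor}.

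The remaining faces are $\p_l\XX$ for a doubly special self-loop $l$. By Lemma~\ref{lem:DSd} these lie in a manifold with corners, and the face fibres, through the blow-up $\wt B=\Bl(B\times[0,1],D_f)$ of Lemma~\ref{lem:DS}, over $D_f$. On this face $\wt G$ restricts to the fibre form $\pm1$ on $S^{n-1}$, and the graph splits into two trees $\Gamma_1,\Gamma_2$ with one special vertex each, as in \eqref{eq:smooth-L1}. By \eqref{eq:prodsimpl} and Lemma~\ref{lem:homolprod} the integral becomes $\pm\langle I_\lambda^2\circ\G_\lambda^{\otimes2}(\alpha),\ol\lambda(f)\rangle$, where $\ol\lambda f$ is defined on $D_f$ by \eqref{eq:chain-coproduct}. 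The two cyclic summations come from Lemma~\ref{lem:rotation-spec-cor}.

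I expect two obstacles. The first is checking that the cancellations and identifications close up exactly, including the full set of sign exponents $r^2_\Gamma$ and $\wt r^2_\Gamma$, which have not been written out. The second is justifying the doubly special face analytically on the full measure set \eqref{eq:smooth-L1}. In particular, the faces $\p_0D_f$ and $\p_1D_f$ over $Z_f$ must contribute nothing to the right side; they land in constant loops, on which $I_\lambda$ vanishes. I would settle the signs first in the simplest graph $\Gamma_{DS}$, and then propagate them by attaching trees via Lemma~\ref{lem:tree-attach}.
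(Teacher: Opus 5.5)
Your treatment of the left-hand side is the paper's: the five classes in \eqref{eq:omcirc}, Lemmas~\ref{lem:H-gluing1}--\ref{lem:H-gluing3} with Lemma~\ref{lem:trade}, then Lemma~\ref{lem:fulldiffeven}, \eqref{eq:angenfin} and Proposition~\ref{prop:mainStokes}. The gap is on the right-hand side, where your boundary bookkeeping does not close. You assert that the faces $\p_l\XX$ of \emph{all} non-doubly-special edges cancel against the collision faces $\{t_{j-1}=t_j\}$ of the simplex, with the fibre integration ``collapsing $l$ into the special vertex''. That is true only for singly special edges. The collision faces of graphs with a $(d+1)$-valent special vertex correspond bijectively to the graphs obtained by attaching a leg, that is, to edges with exactly one special flag. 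An edge $l=(u,v)$ joining two trivalent \emph{nonspecial} vertices $(z,w,u)$ and $(v,x,y)$ is also covered by Proposition~\ref{prop:reg-char}(C). However, integrating out its $S^{n-1}$ fibre produces a configuration with a $4$-valent nonspecial vertex $[z,w,x,y]$. This matches no simplex face and no leg attachment, so in your argument these faces are never accounted for.

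The missing idea is the duality $\Gamma\mapsto I(\Gamma)$ of \S\ref{ss:op-graphs2} (Figure~\ref{fig:duality}). It replaces the four-flag subtree around $l$ by the other planar trivalent tree with the same cyclic order of $z,w,x,y$, and it preserves the class $\RR_{s_1,s_2;d}$. Collapsing $l$ in $\Gamma$ and collapsing $I(l)$ in $I(\Gamma)$ give the same $4$-valent vertex. Hence, after fibre integration using \eqref{eq:fibre-int-G}, the faces $\p_l\XX_\Gamma$ and $\p_{I(l)}\XX_{I(\Gamma)}$ give the same integral, and they enter with opposite signs. This is the configuration-space form of associativity of $\wedge$, as in \S7.2 of \cite{Cieliebak-Volkov}. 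This pairwise cancellation is a separate step, item~1 in the paper's list. Only after it do the remaining edge faces split into singly special ones, which cancel the collision faces, and doubly special ones, which give the Goresky--Hingston term.

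Two smaller corrections. First, the faces $t_1=0$ and $t_{d-1}=1$ need no cyclic invariance; Lemma~\ref{lem:rotation-spec-cor} concerns trees, not circular graphs, where $A$ is fixed by the graph. These faces are collisions of $A$ with its cyclic neighbours and are handled by attaching a leg at $j=1$ and $j=d$, with $f_{d+1}:=f_1$. Second, for the Goresky--Hingston term you cannot invoke Lemma~\ref{lem:homolprod}, which holds only on homology. The chain-level identification comes from the reparametrization $\psi:\overset{\circ}{D_f}\times\overset{\circ}{\Delta^{d_1}}\times\overset{\circ}{\Delta^{d_2}}\to\overset{\circ}{D_f^d}$, given by $t_j=ts_j$ and $\wh t_j=t+\wh s_j(1-t)$, combined directly with the definition \eqref{eq:prodpair} of $I_\lambda^2$.
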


Equations~\eqref{eq:homotopy2} and~\eqref{eq:homotopy2coprod} are
illustrated in the following two diagrams, where the outer rectangles
commute and the dashed diagonal arrows denote (part of) the chain
homotopies:
\begin{equation*}
\xymatrix{
  B_*^{\cyc}\HH \ar[d]_{c_{120}\circ N_\HH} \ar@{-->}[rd]^{\G^2}
  \ar[r]^{\G_\lambda}
  & C_*^\lambda(\Om) \ar[r]^{I_\lambda} & C^*(\Lambda) \ar[d]^{\mu^\vee} \\
  B_*\HH^{\otimes 2} \ar[r]^{\G^{\otimes 2}} & C_*(\Om)^{\otimes 2} 
  \ar[r]^{I^{\otimes 2}} & C^*(\Lambda)^{\otimes 2} \\
}
\end{equation*}
\begin{equation*}
\xymatrix{
  B_*^{\cyc}\HH^{\otimes 2} \ar[d]_{2\,c_{210}\circ N_\HH^{\otimes 2}} \ar@{-->}[rd]^{\F^2}
  \ar[r]^{\G_\lambda^{\otimes 2}} & C_*^\lambda(\Om)^{\otimes 2} 
  \ar[r]^{I_\lambda^2} & C^*(\Lambda\times\Lambda) \ar[d]^{\ol\lambda^\vee} \\ 
  B_*\HH \ar[r]^\G & C_*(\Om) \ar[r]^I & C^*(\Lambda) \\
}
\end{equation*}

\begin{remark}
In the preceding two diagrams all maps have degree $0$ except the
following ones: $\mu^\vee$ has degree $n$, $\ol\lambda^\vee,\G^2$ have
degree $n-1$, $c_{120},c_{210},\F^2$ have degree $n-2$, and $\G$ has degree $1$.
Note that $\G$ picks up degree $1$ because its target space $C_*(\Om)$
does not have the degree shift by $1$. The reason for this seemingly
strange convention is to have the subsequent map $I$ of degree $0$.
\end{remark}

Note that equation~\eqref{eq:homotopy2} mixes
equivariant operations ($I_\lambda$, $c_{120}\circ N_\HH$) with
nonequivariant ones ($I$, $\mu$), and the signs are not the expected
ones for a chain homotopy equation. Likewise for equation~\eqref{eq:homotopy2coprod}. 
The reason is that $\G_\lambda$ maps a cyclic complex to another cyclic complex,
whereas $\G^2$ and $\F^2$ map a cyclic complex to a noncylic one.
These apparent discrepancies fall into place in the following proof.


\begin{proof}[Proof of Theorem~\ref{thm:prodcoprod} assuming
    Propositions~\eqref{prop:chainprod} and~\eqref{prop:chaincoprod}]
In this proof we will not distinguish between
$J_\lambda:C_*(\Lambda)\to C_\lambda^*(\Om)$ and its composition with
the inclusion $C_\lambda^*(\Om)\into C^*(\Om)$ (see
Remark~\ref{rem:cyc-noncyc}), and we will not distinguish between
$\G_\lambda$ and its composition with $\iota$. 

We first prove equation~\eqref{eq:prodhom}. Consider $\alpha\in B\HH$
and two smooth simplices $f_j:B_j\longrightarrow \Lambda$, $j=1,2$
with transverse evaluations at $t=0$. Let 
$$
  h := \left<I^{\otimes 2}\circ\G^2(\alpha),\p(f_1\otimes f_2)\right>
  -\left<I^{\otimes 2}\circ\G^2\circ b_\HH(\alpha),f_1\otimes f_2\right>
$$
be the terms in~\eqref{eq:homotopy2} which will die in homology.
Since $I$ and $J$ as well as $I_\lambda$ and $J_\lambda$ are adjoint
pairs (see~\S\ref{sec:chen}), equation~\eqref{eq:homotopy2} implies
$$
  \left<c_{120}\circ N_\HH(\alpha),(\G^*)^{\otimes 2}\circ J^{\otimes 2}(f_1\otimes f_2)\right>
  = \left<\alpha,\G_\lambda^*\circ J_\lambda\circ \mu(f_1\otimes f_2)\right>+h.
$$
The same holds true for finite sums of smooth simplices in
place of $f_1$ and $f_2$ if elements of the first sum are transverse
to elements of the second sum. Assume now that $b_\HH\alpha=0$ and
consider $c_1,c_2\in H_*^{S^1}(\Lambda)$.
Recall the mark and erase maps $\MM,\EE$ from equation~\eqref{eq:Gysin}.
We represent $\MM c_1,\MM c_2$ 
by finite sums $f_1,f_2$ as above. Then, using
$\fp_{2,1,0} = (c_{120}\circ N_\HH)^*$
(see equation~\eqref{eq:dIBLcanon}), the last displayed equation descends to homology as
$$
  \left<[\alpha],\fp_{2,1,0}\circ(\G^*)^{\otimes 2}\circ J_*^{\otimes 2}(\MM c_1\otimes \MM c_2)\right>=
  \left<[\alpha],\G_\lambda^*\circ J_{\lambda*}\circ\mu(\MM c_1\otimes \MM c_2)\right>.
$$
Since $[\alpha]$ was arbitrary, using~\eqref{eq:cycaux} and
$\mu^{S^1}=\EE\circ\mu \circ(\MM\otimes\MM)$ from~\S\ref{ss:stringtop} we get
$$
\fp_{2,1,0}\circ(\G^*)^{\otimes 2}\circ \bar J_{\lambda*}^{\otimes 2}(c_1\otimes c_2)=
\G_\lambda^*\circ \bar J_{\lambda *}\circ\mu^{S^1}(c_1\otimes c_1).
$$ 

The proof of equation~\eqref{eq:coprodhom} is similar. Consider a
nondegenerate smooth simplex $f:B\longrightarrow \Lambda$
and $\alpha_1,\alpha_2\in B\HH$. Set 
$$
h:=-\left<I\circ\F^2\circ b_\HH(\alpha_1\otimes \alpha_2),f\right>
-\left<I\circ\F^2(\alpha_1\otimes \alpha_2),\p f\right>,
$$
so that equation~\eqref{eq:homotopy2coprod} implies
\begin{align*}
  & \left<2c_{210}\circ N_\HH^{\otimes 2}(\alpha_1\otimes \alpha_2), \G^*\circ J(f)\right>+ 
\wh\fp_{210}^{conn}(2\m_{2,0}, (\G^*\circ J)(f))(\alpha_1\otimes \alpha_2) \cr
  &= -\left<\alpha_1\otimes \alpha_2,(\G_\lambda^*)^{\otimes 2}\circ J_\lambda^2\circ
\ol\lambda(f)\right>+h.
\end{align*}
Again, the same holds true for a finite sum of nondegenerate
smooth simplices in place of $f$. Assume now that 
$b_\HH\alpha_1=b_\HH\alpha_2=0$ and consider $c\in H_*^{S^1}(\Lambda)$. 
We represent $\MM c$ by a finite sum $f$ as above. We combine the
terms on the left hand side using $\fp_{1,2,0} = (c_{210}\circ
N_\HH^{\otimes 2})^*$ from equation~\eqref{eq:dIBLcanon}
and the definition of the twisted coproduct $\fp_{1,2,0}^\m$, 
so that the last displayed equation descends to homology as
$$
  -\left<[\alpha_1]\otimes[\alpha_2], 2\fp_{1,2,0}^\m\circ\G^*\circ
  J_*(\MM c)\right> = 
  \left<[\alpha_1]\otimes[\alpha_2],(\G_\lambda^*)^{\otimes 2}\circ 
  J_{\lambda *}^{\otimes 2}\circ \ol\lambda(\MM c)\right>.
$$
Now on the left hand side we use $\bar
J_{\lambda *}=-J_*\MM$ from~\eqref{eq:cycaux}. For the right hand side
we compute 
$$
  J_{\lambda *}^{\otimes 2}\circ ol\lambda\circ\MM
  = \bar J_{\lambda *}^{\otimes 2}\circ\EE^{\otimes 2}\circ \ol\lambda\circ\MM
  = \bar J_{\lambda *}^{\otimes 2}\circ \ol\lambda^{S^1}
  = \bar J_{\lambda *}^{\otimes 2}\circ\lambda^{S^1}.
$$
Here the first equality holds by~\eqref{eq:cycaux}, the second one
by~\eqref{eq:GH-relation}, and the third one because the Chen map  
$\bar J_{\lambda *}=-J_*\MM$ vanishes on constant loops
by~\eqref{eq:mark0}.
Since $[\alpha_1]\otimes [\alpha_2]$ was arbitrary, inserting these
expressions we get 
$$
  2\fp_{1,2,0}^\m\circ \G_\lambda^*\circ \bar J_{\lambda*}(c) =
  (\G_\lambda^*)^{\otimes 2}\circ \bar J_{\lambda *}^{\otimes 2}\circ\lambda^{S^1}(c).
$$
\end{proof}

\subsection{The simply connected case}\label{ss:simpconn}

Recall from~\ref{ss:stringtop} that the string bracket and cobracket
descend to operations $\mu^{S^1}$ and $\lambda^{S^1}$ on
$H_*(\Lambda,q_0)$ defining an involutive Lie bialgebra structure.
In view of Theorem~\ref{thm:prodcoprod}, these should correspond to
the operations induced by $\fp_{2,1,0}$ and $2\,\fp_{1,2,0}^\m$ on the
{\em reduced homology} of $(B^{\text{\rm cyc}*}\HH,\fp_{1,1,0}^\m)$. 
Unfortunately, the differential $\fp_{1,1,0}^\m$ does {\em not} descend to
the reduced subspace of $B^{\text{\rm cyc}*}\HH$ (see
Remark~\ref{rem:reduced} below). 
On the other hand, the differential on $C_\lambda^*(\Om)$ {\em does} descend
to its reduced subcomplex $\ol C_\lambda^*(\Om)$. We use this in
connection with the isomorphism $\G_\lambda^*$ from~\eqref{eq:G-iso2} to define
the reduced homology
$$  
  \ol{H}(B^{\text{\rm cyc}*}\HH,\fp_{1,1,0}^\m) := \ol{HC}_\lambda^*(\Om).
$$
Now we specialize to the case that $M$ is simply connected. 
Then $\bar J_{\lambda*}$ is an isomorphism by Theorem~\ref{thm:cyc}
and we obtain the following corollary, which corresponds to
Corollary~\ref{cor:intro} from the Introduction.  

\begin{cor}\label{cor:prodcoprod}
In Theorem~\ref{thm:prodcoprod}, assume in addition that
$M$ is simply connected. Then the operations $\fp_{2,1,0}$ and
$2\,\fp_{1,2,0}^\m$ descend to operations $\ol\fp_{2,1,0}$ and $2\,\ol\fp_{1,2,0}^\m$
on $\ol{H}(B^{\text{\rm cyc}*}\HH,\fp_{1,1,0}^\m)$ which correspond to
$\mu^{S^1}$ and $\lambda^{S^1}$ under the isomorphism
\begin{equation*}
  \bar J_{\lambda *}:H_*^{S^1}(\Lambda,q_0)\stackrel{\cong}\longrightarrow
  \ol{H}(B^{\text{\rm cyc}*}\HH,\fp_{1,1,0}^{\m}).
\end{equation*}
\end{cor}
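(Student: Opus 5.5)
The plan is to transport the involutive Lie bialgebra structure along the isomorphism $\bar J_{\lambda *}$ and to use Theorem~\ref{thm:prodcoprod} to identify the transported operations with the ones induced by $\fp_{2,1,0}$ and $2\,\fp_{1,2,0}^\m$. Since $M$ is simply connected, Theorem~\ref{thm:cyc} gives that $\bar J_{\lambda *}:H_*^{S^1}(\Lambda,q_0)\to\ol{HC}_\lambda^*(\Om)=\ol H(B^{\cyc*}\HH,\fp_{1,1,0}^\m)$ is an isomorphism. I would then simply define
$$
  \ol\fp_{2,1,0}:=\bar J_{\lambda *}\circ\mu^{S^1}\circ(\bar J_{\lambda *}^{-1})^{\otimes 2},\qquad
  2\,\ol\fp_{1,2,0}^\m:=\bar J_{\lambda *}^{\otimes 2}\circ\lambda^{S^1}\circ\bar J_{\lambda *}^{-1}.
$$
The content of the corollary is that these are the operations ``descended'' from $\fp_{2,1,0}$ and $2\,\fp_{1,2,0}^\m$. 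Concretely, with $\Phi:=\G_\lambda^*\circ\iota_*:\ol H(B^{\cyc*}\HH,\fp_{1,1,0}^\m)\to H(B^{\cyc*}\HH,\fp_{1,1,0}^\m)$, this means $\Phi\circ\ol\fp_{2,1,0}=\fp_{2,1,0}\circ\Phi^{\otimes 2}$ and $\Phi^{\otimes 2}\circ 2\,\ol\fp_{1,2,0}^\m=2\,\fp_{1,2,0}^\m\circ\Phi$.

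Both intertwining identities follow immediately from Theorem~\ref{thm:prodcoprod}. Since $\F=\Phi\circ\bar J_{\lambda*}$, equation~\eqref{eq:prodhom} gives $\Phi\circ\bar J_{\lambda*}\circ\mu^{S^1}=\fp_{2,1,0}\circ\Phi^{\otimes 2}\circ\bar J_{\lambda*}^{\otimes 2}$. Precomposing with $(\bar J_{\lambda*}^{-1})^{\otimes 2}$ yields the first identity. The second identity follows from~\eqref{eq:coprodhom} in the same way.

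The step needing care is whether ``descend'' requires that $\fp_{2,1,0}$ and $2\,\fp_{1,2,0}^\m$ actually preserve the image of $\Phi$, together with injectivity of $\Phi$. Preservation of the image is automatic from the identities above: the images of $\fp_{2,1,0}\circ\Phi^{\otimes2}$ and of $2\,\fp_{1,2,0}^\m\circ\Phi$ lie in $\im\Phi$, respectively $\im\Phi^{\otimes 2}$.

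Injectivity of $\Phi$ reduces to injectivity of $\iota_*:\ol{HC}^*_\lambda(\Om)\to HC^*_\lambda(\Om)$, because $\G_\lambda^*$ is an isomorphism. This is the main obstacle. I would first try to obtain it by dualizing the splitting of reduced cyclic homology for a connected unital DGA, namely $HC^\lambda_*=\ol{HC}^\lambda_*\oplus HC^\lambda_*(\R)$. The finite dimensionality in each degree from Remark~\ref{rem:homfindim} makes the dualization legitimate. Alternatively, I could argue topologically through Proposition~\ref{prop:const} and the inclusion $(\Lambda,q_0)\subset(\Lambda,\Lambda_0)$.

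If injectivity turned out to fail, I would state the corollary using the transported operations defined above. They are then characterized uniquely by the intertwining relations modulo $\ker\Phi$. Under $\bar J_{\lambda*}$ they correspond to $\mu^{S^1}$ and $\lambda^{S^1}$ by construction, and therefore they inherit the involutive Lie bialgebra relations from $(\mu^{S^1},\lambda^{S^1})$.
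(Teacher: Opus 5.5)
Your proposal is essentially the paper's proof. The paper also defines $\ol\fp_{1,2,0}^\m$ (and analogously $\ol\fp_{2,1,0}$) by transporting $\lambda^{S^1}$ (resp.\ $\mu^{S^1}$) along the isomorphism $\bar J_{\lambda*}$, takes \emph{descends} to mean that the square with $\G_\lambda^*\circ\iota_*$ commutes, and deduces this from Theorem~\ref{thm:prodcoprod} exactly as you do. Your discussion of injectivity of $\iota_*$ is therefore an optional strengthening the paper does not attempt; if you pursue it, the splitting requires $\Om^*(M)$ to be augmented (via evaluation at $q_0$) rather than connected, which it is not in degree $0$.
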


\begin{proof}
For the coproduct, consider the following diagram (where we have
dropped the differential $\fp_{1,1,0}^\m$):
$$
\xymatrix{
  H_*^{S^1}(\Lambda,q_0) \ar[d]^{\bar J_{\lambda*}}_\cong 
  \ar[rrr]^{\lambda^{S^1}}
  & & & H_*^{S^1}(\Lambda,q_0)^{\otimes 2} \ar[d]^{\bar J_{\lambda*}^{\otimes 2}}_\cong \\
  \ol{H}(B^{\text{\rm cyc}*}\HH) \ar[r]^{\G_\lambda^*\circ\iota_*} \ar@/_2pc/[rrr]^{2\,\ol\fp_{1,2,0}^\m}
  & H(B^{\text{\rm cyc}*}\HH) \ar[r]^{2\,\fp_{1,2,0}^\m}
  & H(B^{\text{\rm cyc}*}\HH)^{\otimes 2}
  & \ol{H}(B^{\text{\rm cyc}*}\HH)^{\otimes
    2}. \ar[l]_{(\G_\lambda^*\circ\iota_*)^{\otimes 2}}
}
$$
Here the square commutes by equation~\eqref{eq:coprodhom}, and we have
defined  
$$
  \ol\fp_{1,2,0}^\m := \frac12 \bar J_{\lambda*}^{\otimes 2}\circ
  \lambda^{S^1} \circ (\bar J_{\lambda*})^{-1}: \ol{H}(B^{\text{\rm
      cyc}*}\HH) \to \ol{H}(B^{\text{\rm cyc}*}\HH)^{\otimes 2}. 
$$
This definition is made so that $\bar J_{\lambda *}$ intertwines
$\lambda^{S^1}$ and $\ol\fp_{1,2,0}^\m$. Moreover, it follows that
the lower square in the diagram also commutes, which means that
$\ol\fp_{1,2,0}^\m$ is descended from $\fp_{1,2,0}^\m$. 
The proof for the product is analogous.
\end{proof}

\begin{remark}\label{rem:reduced}
Recall that the differential $\fp_{1,1,0}^\m$ on $B^{\text{\rm cyc}*}\HH$ 
is defined in terms of rooted trees decorated with the wedge product
at the vertices, the homotopy operator $P$ at the edges (including the
root edge), the inclusion $\iota:\HH\into\Om$ at the leaves, and the
projection $\pi:\Om\to\HH$ at the root vertex. This preserves the 
reduced subspace $\ol{B^{\text{\rm cyc}*}\HH}$ of operations vanishing on
words containing a $1$ if $P$ has the properties $P\circ\pi=0$ and
$P\circ P=0$ (corresponding to a ``special propagator'' in the
terminology of~\cite{Cieliebak-Hajek-Volkov}). 
Moreover, by~\cite[Corollary~6.20]{Cieliebak-Hajek-Volkov}, the
twisted operation $\fp_{1,2,0}^\m$ then descends to $\ol{B^{\text{\rm cyc}*}\HH}$
where it coincides with the untwisted operation $\fp_{1,2,0}$. 
Unfortunately, while $P\circ\pi=0$ can always be achieved, we do not
know how to achieve $P\circ P=0$ in our analytic setting. Therefore,
we cannot assume that $\fp_{1,1,0}^\m$ preserves $\ol{B^{\text{\rm cyc}*}\HH}$
and apply the preceding argument.
It is interesting to compare this with
Theorem~1.2 and Theorem~1.3 of~\cite{Naef-Willwacher}, which provide 
the noncyclic version of Corollary~\ref{cor:prodcoprod} above and have
no twisting in the formula for the coproduct.
The explanation for this appears to be that the condition $P\circ P=0$
can be arranged in the algebraic approach of~\cite{Naef-Willwacher}.
\end{remark}

The rest of this section is devoted to the proof of
Propositions~\ref{prop:chainprod} and~\ref{prop:chaincoprod}.
The proofs will be somewhat sketchy with respect to signs and
orientations.
Details on signs and orientations
can be found in~\cite{Volkov-thesis}.

\subsection{Proof of Proposition~\ref{prop:chainprod}}\label{ss:chainprod}

Recall  
the gluing operation $g_{210}$ on labelled graphs from
Definition~\ref{def:g-prod} applied in the first case of
equation~\eqref{eq:g210-glue}. 
The first step is to collect the terms on the left hand side of
equation~\eqref{eq:homotopy2} to a full differential under the integral. 
We fix a positive integer $s$, two smooth simplices $f_i:B_i\to \Lambda$,
$i=1,2$ with transverse time zero evaluations, and a decomposable  
$\alpha\in\HH^{\otimes s}$ of homogeneous degree.  
To simplify notation, 
for an integrable form $\beta$ on $M^{d_1}\times M^{d_2}$ we abbreviate
(using the notation from~\S\ref{ss:chenHoch})
$$
  \phi_{f_1f_2}(\beta)
  :=\left<I^{\otimes 2}(\beta),f_1\otimes f_2\right>
  =\int_{B_1\times \Delta^{d_1-1}\times B_2\times
    \Delta^{d_2-1}}(ev_{f_1}\times ev_{f_2})^*\beta. 
$$
Let $left_1$ denote the first term on the left hand side
of~\eqref{eq:homotopy2} and $left_2$ the second one, without the form
$\alpha$ inserted. We compute
\begin{align*}
  left_1
  &\stackrel{(1)}{=} \phi_{f_1f_2}\circ\G^{\otimes 2}\circ c_{120}\circ
  N_\HH \cr
  &\stackrel{(2)}{=} \sum_{s_1+s_2=s+2\atop d_1\leq s_1,d_2\leq  s_2}
  \sum_{\Gamma_1\in \RR_{s_1;d_1}\atop \Gamma_2\in \RR_{s_2;d_2}}
  \phi_{f_1f_2}\circ(\G_{\Gamma_1}\otimes\G_{\Gamma_2})\circ
  c_{120}\circ N_\HH \cr
  &\stackrel{(3)}{=} \sum_{s_1+s_2=s+2\atop d_1\leq s_1,d_2\leq  s_2}
  \sum_{\Gamma_1\in \RR_{s_1;d_1}\atop \Gamma_2\in \RR_{s_2;d_2}}
  \phi_{f_1f_2}\circ \bH_{g_{210}(\Gamma_1,\Gamma_2)}\circ N_\HH \cr
  &\stackrel{(4)}{=} \sum_{d_1+d_2\leq s+2}\sum_{\Gamma\in \RR_{s;d_1,d_2}^{\rm sep}}
  \phi_{f_1f_2}\circ\bH_{\Gamma}.
\end{align*}
Here equality (1) is simply the definition of $left_1$ and $\phi_{f_1f_2}$;
equality (2) is writing out the definition of $\G$
in~\eqref{eq:G-redef} (which agrees with the one in~\eqref{eq:bG} by 
Proposition~\ref{prop:G-consist});
equality (3) is the first assertion in Lemma~\ref{lem:H-gluing};
and equality (4) is equation~\eqref{eq:tradesep}.
Similarly, we compute
\begin{align*}
  left_2
  &\stackrel{(1)}{=} \phi_{f_1f_2}\circ\G^2 \circ b_\HH \cr
  &\stackrel{(2)}{=} \fp_{2,1,0}\bigl(\m_{1,0}\otimes(\phi_{f_1f_2}\circ
  \G^2)\bigr) \cr
  &\stackrel{(3)}{=} \fp_{2,1,0}\bigl(\phi_{f_1f_2}(\m_{1,0}\otimes \G^2)\bigr) \cr
  &\stackrel{(4)}{=} \sum_{s_1+s_2=s+2\atop d_1+d_2\leq
    s_2}\sum_{\Gamma_1\in \RR_{s_1}\atop \Gamma_2\in \RR_{s_2;d_1,d_2}}
  \fp_{2,1,0}\bigl(\phi_{f_1f_2}(\m_{\Gamma_1}\otimes\G^2_{\Gamma_2})\bigr) \cr
  &\stackrel{(5)}{=} \sum_{s_1+s_2=s+2\atop d_1+d_2\leq
    s_2}\sum_{\Gamma_1\in \RR_{s_1}\atop \Gamma_2\in \RR_{s_2;d_1,d_2}}
  \phi_{f_1f_2}\circ(\m_{\Gamma_1}\otimes\G^2_{\Gamma_2})\circ c_{120}\circ
  N_\HH \cr
  &\stackrel{(6)}{=} \sum_{s_1+s_2=s+2\atop d_1+d_2\leq
    s_2}\sum_{\Gamma_1\in \RR_{s_1}\atop \Gamma_2\in \RR_{s_2;d_1,d_2}}
  \phi_{f_1f_2}\circ\bH_{g_{210}(\Gamma_1\amalg\Gamma_2)}\circ N_\HH \cr
  &\stackrel{(7)}{=} \sum_{d_1+d_2\leq s+1}\sum_{\Gamma\in
    \RR_{s;d_1,d_2}^{\rm nonsep}} \phi_{f_1f_2}\circ\bH_{\Gamma}. 
\end{align*}
Here equality (1) is simply the definition of $left_2$ and $\phi_{f_1f_2}$;
for (2) we recall $\m_{1,0}$ from~\S\ref{ss:MCan} and
use equation~\eqref{eq:p110m};
for (3) we interchange ``$\phi_{f_1f_2}$'' and 
``$\m_{1,0}\otimes$'', which holds in view of
\begin{align*}
  \m_{1,0}\otimes(\phi_{f_1f_2}\circ\G^2)(\beta_1\otimes \beta_2)
  &= \m_{1,0}(\beta_1)(\phi_{f_1f_2}\circ\G^2)(\beta_2) \cr
  = \phi_{f_1f_2}\bigl(\m_{1,0}(\beta_1)\G^2(\beta_2)\bigr) 
  &= \phi_{f_1f_2}\bigl((\m_{1,0}\otimes\G^2)(\beta_1\otimes\beta_2)\bigr),
\end{align*}
where we use the convention from~\cite{Cieliebak-Fukaya-Latschev} for 
pulling operations past elements without signs.
For (4) we write out the definition of $\m_{1,0}$ using~\eqref{eq:mlg}
with $(\ell,g)=(1,0)$
and the definition~\eqref{eq:G2-def} of $\G^2$ in terms of graphs;
for (5) we use $\fp_{2,1,0} = (c_{120}\circ N_\HH)^*$ from~\eqref{eq:dIBLcanon};
for (6) we use the second assertion in Lemma~\ref{lem:H-gluing};
and for (7) we use equation~\eqref{eq:tradenonsep}.

Combining the two computations using~\eqref{eq:Rom-splitting} and
inserting $\alpha\in B\HH$ we obtain 
\begin{equation*}
(left_1+left_2)(\alpha)
=\sum_{d_1+d_2\le s+2}\sum_{\Gamma\in R_{s;d_1,d_2}}
\sum_{l\in \Edge(\Gamma)}\phi_{f_1f_2}\circ \bH_{\Gamma,l}(\alpha).            
\end{equation*}
Recall the sign exponent
$$
  \fs_\p=\bar R_\Gamma+(n-1)\eta_3(\Gamma)+
  \wt s^2_\Gamma(\alpha)
$$
from equation~\eqref{eq:s-is-good}.
Here in accordance with Remark~\ref{rem:L1-degrees} we write $\wt
s^2_\Gamma(\alpha)$ in place of $\wt s^2_{\Gamma,p,q}(\alpha)$
because the bidegree $(p,q)$ is determined by~\eqref{eq:pq}.  
Then
\begin{equation}\label{eq:main-integral}
\begin{aligned}
  (left_1+left_2)(\alpha)
  &\stackrel{(1)}{=}\sum_{d_1+d_2\le s+2}\sum_{\Gamma\in
  R_{s;d_1,d_2}}(-1)^{\fs_\p}\phi_{f_1f_2}\int_{\Delta^\Gamma_\ver}dR_\Gamma^*G^e(\alpha) \cr
  &\stackrel{(2)}{=}\sum_{d_1+d_2\leq s+2}\sum_{\Gamma\in \RR_{s;d_1,d_2}}(-1)^{\fs_\p}
  \int_{(\XX_\Gamma)_0}d\wt R_\Gamma^*\wt G^e(\alpha) \cr 
  &\stackrel{(3)}{=}\sum_{d_1+d_2\leq s+2}\sum_{\Gamma\in \RR_{s;d_1,d_2}}(-1)^{\fs_\p}
  \int_{\p^\main\XX_\Gamma}\wt R_\Gamma^*\wt G^e(\alpha). 
\end{aligned}
\end{equation}
Here for equality (1) we use Lemma~\ref{lem:fulldiffeven}; 
for (2) we use equation~\eqref{eq:angenfin}; 
and for (3) we use Proposition~\ref{prop:mainStokes}.
The space $\XX=\XX_\Gamma$ appearing in the last two displayed
equations is defined in~\S\ref{ss:general-setting}. It is associated
to a tree $\Gamma$ with an extended labelling and two special vertices
of degrees $d_1,d_2$ and the evaluation map 
\begin{equation}\label{eq:phi-ev}
   \phi = \ev_{f_1}\times\ev_{f_2}: \WW = B_1\times
   \Delta^{d_1-1}\times B_2\times \Delta^{d_2-1} \to M^{d_1}\times M^{d_2}
\end{equation}
as the proper transform of the vertex diagonal
$\Delta_\ver=\Delta^\Gamma_\ver$ times the graph of $\phi$,
$$
   \XX_\Gamma = PT(\Delta_\ver\times gr(\phi))\subset \wt X_\Gamma\times\WW.
$$
By Proposition~\ref{prop:mainStokes}, $\XX_\Gamma$ is a manifold with corners.
According to Remark~\ref{rem:descr-primary}, its primary boundary components
fall into two groups: those corresponding to the boundary of $\WW$,
and those corresponding to the edges of $\Gamma$.
We decompose the first group further as follows, ignoring sets of
measure zero. Let $\pi_{B_i}$ and $\pi_{\Delta^{d_i-1}}$ denote the
projection maps from $\XX_\Gamma$ to $B_i$ and $\Delta^{d_i-1}$, respectively. Set
$$
  \p_{B_i}\XX_\Gamma := \pi_{B_i}^{-1}(\p B_i),\qquad
  \p^{\Delta^{d_i-1}}_j\XX_\Gamma := \pi_{\Delta^{d_i-1}}^{-1}(\p_j\Delta^{d_i-1}),
$$
where $\p_j\Delta^{d_i-1}$ is the $j$-th boundary component of
$\Delta^{d_i-1}$, see~\eqref{eq:bdrycyc}. 
We now discuss the contributions of the various boundary components to
the last integral in~\eqref{eq:main-integral}. 

1. The boundary components $\p_l\XX_\Gamma$ corresponding to nonspecial edges
$l$ cancel by duality, see Figure~\ref{fig:duality}. This is analogous
to the discussion in~\S 7.2 of~\cite{Cieliebak-Volkov}.  

2. The boundary components $\p_{B_1}\XX_\Gamma$ and
$\p_{B_2}\XX_\Gamma$ give rise to the
second term on the right hand side of~\eqref{eq:homotopy2}. 

3. For $i=1,2$ and $j\in\{1,\dots,d_i\}$, the boundary component
$\p^{\Delta^{d_i-1}}_j\XX_\Gamma$ cancels with $\p_l\XX_{\Gamma_j^i}$.
Here the tree $\Gamma_j^i$ is obtained from $\Gamma$ by attaching a
leg at the flag number $j$ at the $i$-th special vertex
(see~\S\ref{ss:op-graphs2}), and $\p_l\XX_{\Gamma_j^i}$ is the
boundary component corresponding to the new special (but not doubly
special) edge $l$ created by attaching the leg.

To see this, we use the freedom from Lemma~\ref{lem:bdryindep} to
choose as extension of the labelling on the tree $\Gamma_j^i$
the canonical one from~\S\ref{ss:op-graphs2}. 
Consider $i=1$ and $j\in \{1,\dots,d_1\}$ (the case $i=2$ is
analogous). To simplify notation we set $\Gamma_j:=\Gamma_j^1$.
Figure~\ref{fig:cancel} shows the situation at the first special vertex.
\begin{figure}
\begin{center}
\includegraphics[angle=0,origin=c,width=\textwidth]{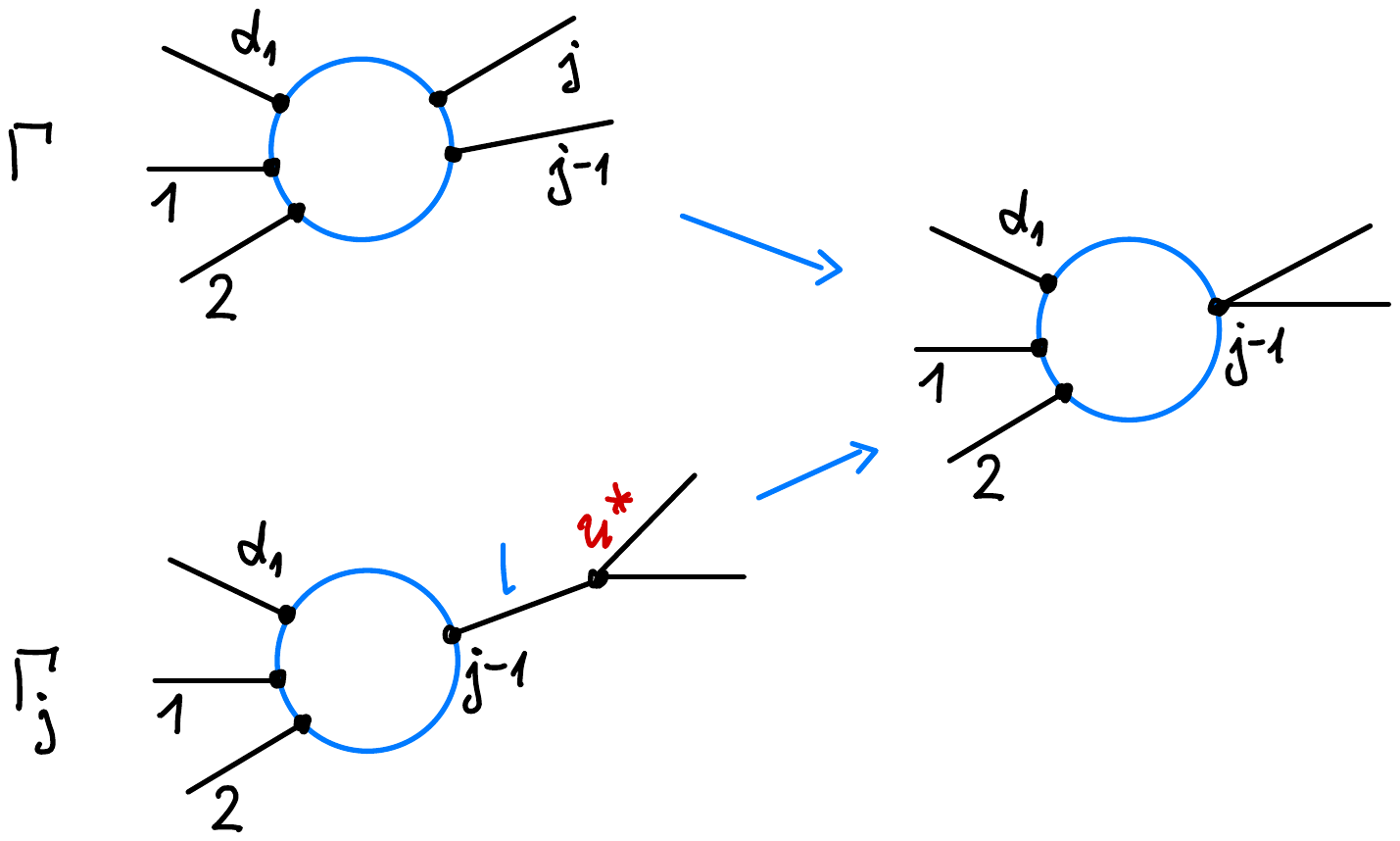}
\vspace{-2cm}
\caption{Cancellation of boundary strata}
\label{fig:cancel} 
\end{center}
\end{figure}
Recall that 
$$
  \p_j\Delta^{d_1-1}=\{t_{j-1}=t_j\},
$$
where we set $t_0=0$ and $t_{d_1+1}=1$. 
Let $V$ denote the number of nonspecial vertices of $\Gamma$ and
$(v_1,\dots,v_V)$ the corresponding variables in $M$. The variables
corresponding to the nonspecial vertices of $\Gamma_j$
are denoted by $(u_1,\dots,u_V,u^*)$, where $u^*$ is the variable
corresponding to the new vertex created by attaching the leg. Set
\begin{equation*}
  V_{\Gamma_j} := \{((u_1,\dots,u_V,u^*),p,(\tau_1,\dots,\tau_{d_1-2}))
  \in \Delta^{\Gamma_j}_\ver\times B_1
  \times\Delta^{d_1-2}\}
\end{equation*}
and 
\begin{equation*}
  V_\Gamma^j := \{((v_1,\dots,v_V),p,(t_1,\dots,t_{d_1-1})
  \in \Delta^\Gamma_\ver\times B_1\times\p_j\Delta^{d_1-1}\}.
\end{equation*}
Note that in $V_\Gamma^j$ we have $t_{j-1}=t_j$. Set 
$$
  W_{\Gamma_j}:=V_{\Gamma_j}\times B_2\times \Delta^{d_2-1}\qquad\text{and}\qquad 
  W_\Gamma^j:=V_\Gamma^j\times B_2\times \Delta^{d_2-1}.
$$
The embedding $\iota^\phi$ from~\eqref{eq:canphi} gives 
us the full measure inclusion
\begin{equation}\label{eq:WGammaj}
  \p^{\Delta^{d_1-1}}_j\XX_\Gamma\into W_\Gamma^j\,.
\end{equation}
On the other hand, recall from Remark~\ref{rem:descr-primary}
and~\eqref{eq:diagdtea} that $\p_j\XX_{\Gamma_j}$ is an 
$S^{n-1}$-fibration over $(\iota_\ver\times\phi_0)^{-1}X_{\{l\}}$. Here
$\phi_0:=\phi|_{\WW_0}$ and $X_{\{l\}}\subset X_{\Gamma_j}$ is the subset
where precisely the variables at the ends of the new edge $l$ agree. 
In other words, we have a fibration
\begin{equation}\label{eq:fibration}
  S^{n-1}\to \p_l\XX_{\Gamma_j}\to 
  (\iota_\ver\times\phi_0)^{-1}X_{\{l\}}.
\end{equation}
The embedding $\iota^\phi$ from~\eqref{eq:canphi} gives 
us the full measure inclusion
\begin{equation}\label{eq:WGammajj}
(\iota_\ver\times\phi_0)^{-1}X_{\{l\}}\into W_{\Gamma_j}.
\end{equation}
Observe now that there is a diffeomorphism 
$$
  \Psi:V_{\Gamma_j}\stackrel{\cong}{\longrightarrow} V_\Gamma^j
$$
defined by
\begin{align*}
  (t_1,\dots,t_{d_1-1}) &:= (\tau_1,\dots,\tau_{j-1},\tau_{j-1},\dots,\tau_{d_1-2}),\cr
  (v_1,\dots,v_V) &:= (u_1,\dots,u_V).
\end{align*}
The diffeomorphism $\Psi$ induces a diffeomorphism 
\begin{equation*}
  \Psi\times\id:W_{\Gamma_j}\stackrel{\cong}{\longrightarrow} W_\Gamma^j.
\end{equation*}
Since the graph $\Gamma_j$ has the edges of $\Gamma$ plus the new edge
$l$, the integrands differ by one Green kernel factor, 
$$
  \wt R_{\Gamma_j}^*\wt G^{e+1}(\alpha) = \pm \wt G\wedge \wt R_\Gamma^*\wt G^e(\alpha).  
$$
Therefore, we get
\begin{equation*}
  \int_{\p_l\XX_{\Gamma_j}}\wt R_{\Gamma_j}^*
  \wt G^{e+1}(\alpha)
  = \pm \int_{W_{\Gamma_j}}R_\Gamma^*G^e(\alpha) 
  = \pm \int_{W_\Gamma^j}R_\Gamma^*G^e(\alpha) 
  = \pm \int_{\p^{\Delta^{d_1-1}}_j\XX_\Gamma}\wt R_\Gamma^*\wt G^e(\alpha).
\end{equation*}
Here the first equality follows by integration over the fibre of the
fibration~\eqref{eq:fibration} using~\eqref{eq:fibre-int-G}
and the inclusion~\eqref{eq:WGammajj}, the
second one by invariance of integration under the diffeomorphism
$\Psi\times\id$, and the third one from  
the inclusion~\eqref{eq:WGammaj}.  
This proves the asserted cancellation modulo signs. 
We refer to~\cite{Volkov-thesis} for the straightforward but tedious
computation that these two terms come with opposite signs. 


4. The boundary components from doubly special edges give rise to the
Chas--Sullivan term --- the first term on the right hand side
of~\eqref{eq:homotopy2}.

To see this, we write the target $M^{d_1}\times M^{d_2}$
of the evaluation map $\phi$ from~\eqref{eq:phi-ev}
as $M^2\times M^{d_1-1}\times M^{d_2-1}$,
where the second $M$ factor has been moved 
to the second place from the place number
$d_1+1$. Then the evaluation map can be written as 
$$
  \phi = \ev_{f_1}^0\times \ev_{f_2}^0 \times \wh {\ev_{f_1}}\times \wh {\ev_{f_2}},
$$
where $\ev_f^0$ denotes the evaluation at time $0$ and $\wh{\ev_f}$
the evaluation at the other times as in~\eqref{ev-hat0}.
Recall from~\S\ref{ss:defchainprod} that the domain of
the Chas--Sullivan loop product $\mu(f_1\otimes f_2)$ is defined as the
fibre product 
$$
  B_{12} = D_{\mu(f_1,f_2)} := (\ev_{f_1}^0\times \ev_{f_2}^0)^{-1}(\Delta_2) =
  \{(p_1,p_2)\in B_1\times B_2\mid f_{1,p_1}(0)=f_{2,p_2}(0)\}.
$$
So the restriction of $\phi$ to $B_{12}\times \Delta^{d_1-1}\times
\Delta^{d_2-1}$ writes out as 
\begin{equation}\label{eq:liprod}
  (p,t^1,t^2)\mapsto \Bigl((\ev_{f_1}^0\times \ev_{f_2}^0)(p),
  \wh{ev_{\mu(f_1\otimes f_2)}}\bigl(p,\frac{1}{2}(t^1t^2)\bigr)\Bigr),
\end{equation}
where $\frac{1}{2}(t^1t^2)$ is obtained by
adding $1$ to all entries of $t^2$ and putting them to the right
of $t^1$, and then dividing all the entries by $2$.

Consider now a labelled tree $\Gamma$ with one special vertex and
generalized labelling as in~\S\ref{ss:basiccomb}, i.e., a
numbering $1,\dots,s$ of its leaves and a numbering 
$1,\dots,d$ of its
special flags, where $1\leq d\leq s$. 
For $0\leq k\leq d$ let $\Gamma_k$ be the labelled tree obtained
from $\Gamma$ by splitting its special vertex into two special
vertices $1,2$ connected by an edge such that the special flags
$1,\dots,k$ end up on special vertex $1$. See Figure~\ref{fig:cs}.
\begin{figure}
\begin{center}
\includegraphics[angle=0,origin=c,width=\textwidth]{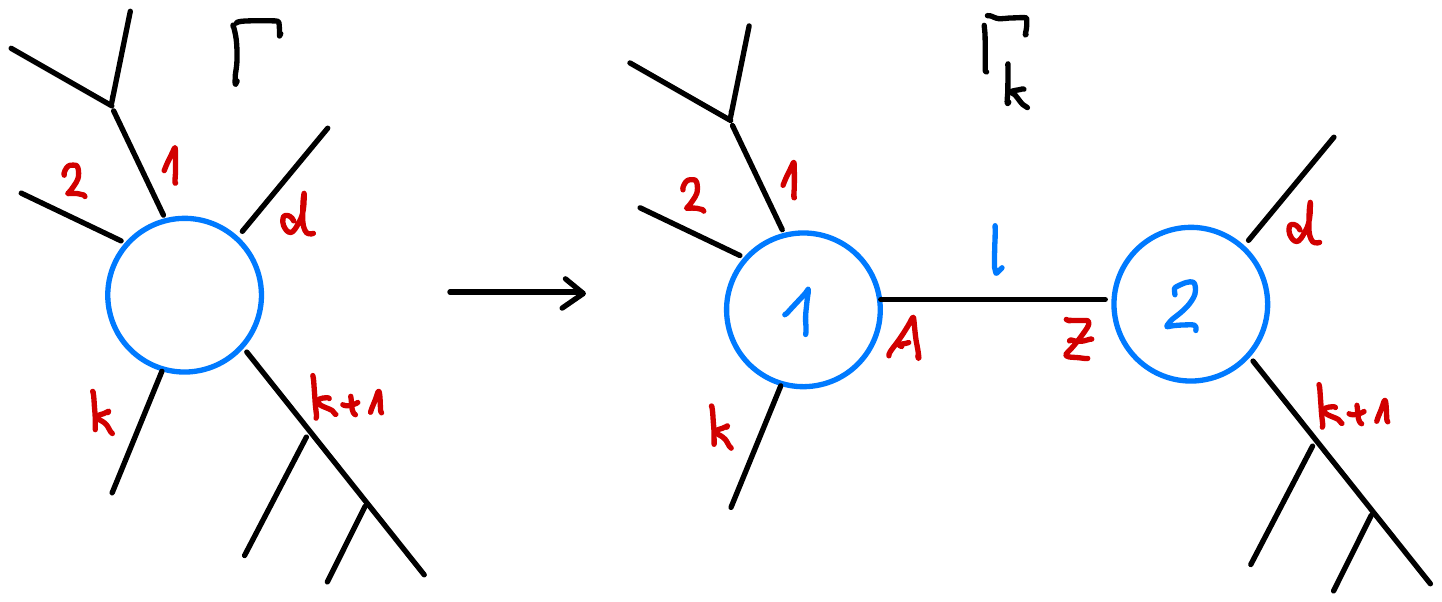}
\vspace{-3cm}
\caption{The Chas--Sullivan term}
\label{fig:cs} 
\end{center}
\end{figure}
We denote the doubly special edge of $\Gamma_k$ by $l$ and its flags
on special vertices $1$ and $2$ by $A$ and $Z$, respectively, so that
$l=(A,Z)$.

\begin{remark}\label{rem:del-doubly-spec}
The edge and vertex orders of $\Gamma$ induce ones for $\Gamma_k$ by
making its doubly special edge the first one in the edge order. 
This allows us to compare the reordering maps $\bar R_\Gamma$ and 
$\bar R_{\Gamma_k}$. Let $f$ denote the number of flags of $\Gamma$,
so that $\Gamma_k$ has $f+2$ flags. The 
last $f$ flags of $\Gamma_k$
are canonically identified with the flags of $\Gamma$ and the new
flags $A$ and $Z$ get numbers $1$ and $2$, respectively. Denoting
by $\id_{\{1,2\}}$ the identity map on the set $\{1,2\}$, we
have the relation 
\begin{equation*}
\bar R_{\Gamma_k}=r\circ (\id_{\{1,2\}}\times \bar R_\Gamma),
\end{equation*}
where $r$ is the bijection responsible for moving the flags $A$ and
$Z$ to their positions in the vertex order of 
$\Gamma_k$. This
involves moving $(A,Z)$ as a whole and then moving $Z$ to the right
past the first $k$ special flags of $\Gamma$. Hence, the sign
exponents are related by
\begin{equation*}
  \bar R_{\Gamma_k}=\bar R_\Gamma+k.
\end{equation*}
A straightforward but tedious analysis of the relation between the
orientation complexes for $\Gamma$ and $\Gamma_k$
(see~\cite{Volkov-thesis}) reveals that
\begin{equation*}
\eta_3(\Gamma_k)=\eta_3(\Gamma).
\end{equation*}
\end{remark}

Let
$$
  \Delta^m_k := \{t\in \Delta^m\mid 0\leq t_1\leq\dots\leq t_k\leq 1/2
  \leq t_{k+1}\leq\dots\leq t_m\leq 1\}\subset \Delta^m.
$$
Note that multiplication with $1/2$ maps $\Delta^k\times\Delta^{m-k}$
diffeomorphically onto $\Delta^m_k$.
We identify $\Delta^k\times\Delta^{m-k}$ with $\Delta^m_k$
under this diffeomorphism and observe the splitting
\begin{equation}\label{eq:splitsimpl}
  \Delta^m = \bigcup_{k=0}^m\Delta^m_k,
\end{equation}
where the subsimplices intersect only along faces.

Consider the compactified configuration space $\XX_{\Gamma_k}$
associated to $\Gamma_k$ and its primary boundary component
$\p_l\XX_{\Gamma_k}$ corresponding to the doubly special edge $l$.  
Our task is to reinterpret the integral
$$
  K_{\Gamma_k}(\alpha) := (-1)^{\fs_\p} \int_{\p_l\XX_{\Gamma_k}}R_{\Gamma_k}^*\wt G^{e+1}(\alpha),
$$
with the sign exponent $\fs_\p$ from 
equation~\eqref{eq:s-is-good} and $e$ the number of edges of $\Gamma$. 
Recall from Remark~\ref{rem:descr-primary}
and~\eqref{eq:diagdtea} that $\p_j\XX_{\Gamma_j}$ is an 
$S^{n-1}$-fibration over $(\iota_\ver\times\phi_0)^{-1}X_{\{l\}}$, that is
\begin{equation*}
  S^{n-1}\to \p_l\XX_{\Gamma_j}\to 
  (\iota_\ver\times\phi_0)^{-1}X_{\{l\}}.
\end{equation*}
The embedding $\iota^\phi$ from~\eqref{eq:canphi} gives 
us the full measure inclusion
\begin{equation*}
(\iota_\ver\times\phi_0)^{-1}X_{\{l\}}\into 
\Delta_\ver\times B_{12}\times\Delta^{d}_k.
\end{equation*}
We use fibre integration to kill the $\wt G$ factor in the integrand
corresponding to the doubly special edge to get
$$
  K_{\Gamma_k}(\alpha)
  = (-1)^{\star\star}\int_{\Delta_\ver\times B_{12}\times\Delta^{d}_k}
  (\id\times\wh{ev_{\mu(f_1\otimes f_2)}})
  ^*R_{\Gamma}^*
  G^e(\alpha)
$$
for some sign exponent $\star\star$. 
The pullback in the
integrand follows from formula~\eqref{eq:liprod}. 
We apply equation~\eqref{eq:keychenan1} to the right hand side of the last displayed equation to get
\begin{equation}\label{eq:Kalpha}
\begin{aligned}
K_{\Gamma_k}(\alpha)=&(-1)^{\star\star}\int_{B_{12}\times\Delta^{d}_k}
\wh{ev_{\mu(f_1\otimes f_2)}}^*
\int_{\Delta_\ver}R_{\Gamma}^*
  G^e(\alpha)
  \cr
=&(-1)^{\star\star\star}\int_{B_{12}\times
\Delta^{d}_k}
\wh{ev_{\mu(f_1\otimes f_2)}}^*
I_\Gamma(\alpha),
\end{aligned}
\end{equation}
where for the second equality we have used definition~\eqref{eq:defI} 
of the operation $I_\Gamma$.
Let now $\RR_\Gamma=\{\Gamma_k\}_{k=0}^{d}$ denote the set of all
labelled trees (up to isomorphism) with two special vertices arising
by splitting the special vertex of $\Gamma$ as above. Let us denote by 
$\RR_{s;d_1,d_2}^{DS}$ the subset of $\RR_{s;d_1,d_2}$ corresponding
to graphs with a doubly special edge.
The set of isomorpism classes of generalized labelled trees 
with one special $d$-valent vertex and $s$ leaves will be denoted by
$\RR_{s;d}^\gen$. These sets are related by 
\begin{equation}\label{eq:DS}
\coprod_{d_1+d_2=d+2}\RR_{s;d_1,d_2}^{DS}=
\coprod_{\Gamma\in \RR_{s;{d}}^\gen}\RR_\Gamma.
\end{equation}
In order to see the desired contribution to the first term on the 
right hand side of equation~\eqref{eq:homotopy2}
we compute 
\begin{align*}
&\sum_{d_1+d_2=d+2}\sum_{\wh\Gamma\in\RR_{s;d_1,d_2}^{DS}}K_{\wh\Gamma}(\alpha)\cr
&\stackrel{(1)}{=}\sum_{\Gamma\in\RR_{s;{d}}^\gen} \sum_{k=0}^dK_{\Gamma_k}(\alpha)\cr
&\stackrel{(2)}{=}\sum_{\Gamma\in\RR_{s;{d}}^\gen} (-1)^{\star\star\star}\int_{B_{12}\times
\Delta^{d}}
  \wh{ev_{\mu(f_1\otimes f_2)}}^*I_\Gamma(\alpha)\cr
&\stackrel{(3)}{=}\sum_{\Gamma\in\RR_{s;
{d}}} (-1)^{\star\star\star}\int_{B_{12}\times\Delta^{d}}
\wh{ev_{\mu(f_1\otimes f_2)}}^*(N_{an}\circ I_\Gamma(\alpha))\cr
&\stackrel{(4)}{=}\sum_{\Gamma\in\RR_{s;{d}}} (-1)^{\star\star\star\star}\int_{B_{12}\times
\Delta^{d}}
\wh{ev_{\mu(f_1\otimes f_2)}}^*(N_{an}\circ \G_\Gamma(\alpha))\cr
&\stackrel{(5)}{=}\left<I_\lambda\circ
\G_{s;{d}}(\alpha),\mu(f_1\otimes f_2)\right>.
\end{align*}
Here for equality~(1) we use~\eqref{eq:DS}, and
for equality~(2) we use~\eqref{eq:splitsimpl} and~\eqref{eq:Kalpha}.
For equality~(3) we recall that $\RR_{s;d}$ is a fundamental
locus for the free $\Z_{d}$ action on $\RR_{s;d}^\gen$ by
cyclicly relabelling the special flags, and we trade each orbit for the
symmetrization operator $N_{an}$ using~\eqref{eq:rotation-spec-cor}. 
Equality~(4) follows (up to sign) from
definition~\eqref{eq:def-G-Gamma} of $\G_\Gamma$, 
and equality~(5) follows (up to sign) from the second equality
in~\eqref{eq:cycchenpair} and definition~\eqref{eq:def-G-sd}
of $\G_{s;d}$.

Since the last displayed expression is the first term on the right
hand side of~\eqref{eq:homotopy2}, this concludes the proof of 
Proposition~\ref{prop:chainprod}.

\subsection{Proof of Proposition~\ref{prop:chaincoprod}}
\label{ss:chaincoprod}

We follow the same strategy as for Proposition~\ref{prop:chainprod}.
The first step is to collect the terms on the left hand side of
equation~\eqref{eq:homotopy2coprod} to a full differential under the integral. 
We fix a nondegenerate smooth simplex $f:B\to \Lambda$ and decomposable  
$\alpha_i\in\HH^{\otimes s_i}$ of homogeneous degree, $i=1,2$.  
We denote the three terms in the left hand side
of~\eqref{eq:homotopy2coprod} without the form
$\alpha_1\otimes\alpha_2$ inserted by $left_1$, $left_2$ and $left_3$.
For an integrable form $\beta$ on $M^d$ we abbreviate (using the notation from~\S\ref{ss:chenHoch})
$$
  \phi_f(\beta)
  :=\left<I(\beta),f\right>
  =\int_{B\times \Delta^{d-1}}
  ev_f^*\beta. 
$$
Recall
the operation $g_{120}^j$ on labelled graphs from
Definition~\ref{def:g-coprod} applied in the situation of
equation~\eqref{eq:glueg1}.
We compute
\begin{align*}
left_1
&\stackrel{(1)}{=}\phi_f\circ\G\circ 2c_{210}\circ N^{\otimes 2}_\HH\cr
&\stackrel{(2)}{=}\sum_{d\leq s_1+s_2+2}\sum_{\Gamma\in\RR_{s_1+s_2+2;d}}
\phi_f\circ\G_\Gamma\circ 2c_{210}\circ N^{\otimes 2}_\HH\cr
&\stackrel{(3)}{=}\sum_{d\leq s_1+s_2+2}\sum_{\Gamma\in\RR_{s_1+s_2+2;d}}
\sum_{3\le j\le s_1+s_2+1}\phi_f\circ\bH_{g_{120}^j(\Gamma)}\circ N^{\otimes 2}_\HH\cr
&\stackrel{(4)}{=}\sum_{d\leq s_1+s_2+2}\sum_{\Gamma\in\RR_{s_1,s_2;d}^{cm}}
\phi_f\circ\bH_{\Gamma}.
\end{align*}
Here equality~(1) is simply the definition of $left_1$ and 
$\phi_f$;
equality~(2) is writing out the definition
~\eqref{eq:def-G-sd} and~\eqref{eq:G-redef} of $\G$;
equality~(3) is Lemma~\ref{lem:H-gluing1};
and equality~(4) is equation~\eqref{eq:tradecoprod1}.

For $left_2$ we introduce the following notation. The map $\tau_{23}$
is the relabelling swapping the order of the second  
and the third boundary components of a graph with three boundary components; its 
(algebraic) action on $(B^{cyc*}\HH)^{\otimes 3}$ 
swapping the last two factors is denoted by the same letter.
The map $\tau$ from~\eqref{eq:renumber-bdry} 
is the relabelling swapping the order of the boundary components of a
graph with two boundary components; its (algebraic) action on on  
$B^{cyc*}\HH\otimes B^{cyc*}\HH$ swapping the two factors is denoted
by the same letter.
Recall the operation $g_{210}$ on labelled
graphs from Definition~\ref{def:g-prod} applied in the situation of 
equation~\eqref{eq:glueg2}.
Recall $\m_{2,0}$ from~\S\ref{ss:MCan} and abbreviate
$\psi:=(\G^*\circ J)(f)$.  
Then
\begin{align*}
&left_2\cr
&\stackrel{(1)}{=}\wh\fp_{2,1,0}^\conn(2\m_{2,0}\otimes\psi)\cr
&\stackrel{(2)}{=}
2p_{210}^{12}
(\tau_{23}(2\m_{2,0}\otimes\psi))\cr
&\stackrel{(3)}{=} 
2\tau_{23}(2\m_{2,0}\otimes 
(\G^*\circ J)(f))\circ (c_{120}\otimes\id)
\circ (N_\HH\otimes \id)\cr
&\stackrel{(4)}{=}
2\sum_{d\le r_2\atop s_1+r_2\ge 3}\sum_{\Gamma_1\in \RR_{s_1,r_1}}\sum_{\Gamma_2\in \RR_{r_2;d}}
\tau_{23}(\m_{\Gamma_1}\otimes 
(\phi_f\circ\G_{\Gamma_2}))\circ (c_{120}\otimes\id)
\circ (N_\HH\otimes \id)\cr
&\stackrel{(5)}{=}
2\sum_{d\le r_2\atop s_1+r_2\ge 3}\sum_{\Gamma_1\in \RR_{s_1,r_1}}\sum_{\Gamma_2\in \RR_{r_2;d}}
\phi_f\circ
\tau_{23}[\m_{\Gamma_1}\otimes
\G_{\Gamma_2}]
\circ (c_{120}\otimes\id)
\circ (N_\HH\otimes \id)\cr
&\stackrel{(6)}{=}
2\sum_{d\le r_2
\atop s_1+r_2\ge 3}\sum_{\Gamma_1\in \RR_{s_1,r_1}}\sum_{\Gamma_2\in \RR_{r_2;d}}
\phi_f\circ 
\bH_{g_{210}((\Gamma_1\amalg \Gamma_2)\tau_{23})}
\circ (N_\HH\otimes \id)\cr 
&\stackrel{(7)}{=}
2\sum_{d\le s_1+s_2}\sum_{\Gamma\in\RR_{s_1,s_2;d}^{ncb1}}
\phi_f\circ 
\bH_{\Gamma}\stackrel{(8)}{=}
2\sum_{d\le s_1+s_2}\left(\frac{1}{2}(\id+\tau)\right)\sum_{\Gamma\in\RR_{s_1,s_2;d}^{ncb1}}
\phi_f\circ 
\bH_{\Gamma}\cr
&\stackrel{(9)}{=}\sum_{d\le s_1+s_2}
\sum_{\Gamma\in\RR_{s_1,s_2;d}^{ncb1}}
\phi_f\circ 
(\bH_{\Gamma}+\bH_{\tau\Gamma})\cr
&\stackrel{(10)}{=}\sum_{d\le s_1+s_2}\sum_{\Gamma\in\RR_{s_1,s_2;d}^{ncb1}\amalg \RR_{s_1,s_2;d}^{ncb2}}
\phi_f\circ 
\bH_{\Gamma}.
\end{align*}
Here equality (1) follows from the definition of $left_2$ and $\psi$;
equality (2) from~\eqref{eq:hat-unwrap}; and 
equality~(3) from~\eqref{eq:p21012} and~\eqref{eq:dIBLcanon} and the
definition of $\psi$. 
For~(4) we write out the definition of $\m_{2,0}$ using~\eqref{eq:mlg}
and~\eqref{eq:mGamma} with $(\ell,g)=(2,0)$
and the definition~\eqref{eq:def-G-Gamma},~\eqref{eq:def-G-sd}
and~\eqref{eq:G-redef} of $\G$ in terms of graphs and make use of
$\phi_f$. 
Equality~(5) follows from 
\begin{align*}
[\m_{\Gamma_1}\otimes(\phi_f\circ \G_{\Gamma_2})]
(\beta_1\otimes\beta_2\otimes\beta_3)
&= \m_{\Gamma_1}(\beta_1\otimes\beta_2)
(\phi_f\circ \G_{\Gamma_2})(\beta_3)\cr
&= \phi_f[\m_{\Gamma_1}(\beta_1\otimes\beta_2)\G_{\Gamma_2}(\beta_3)]\cr
&= \phi_f\circ[\m_{\Gamma_1}\otimes
\G_{\Gamma_2}](\beta_1\otimes\beta_2\otimes\beta_3)
\end{align*}
for any $\beta_j\in B^{cyc}\HH$, $j=1,2,3$. 
Equality~(6) is Lemma~\ref{lem:H-gluing2}; 
equality~(7) is equation~\eqref{eq:tradecoprod2};
equality~(8) follows from the fact that 
the expression in big round brackets is the identity on 
$\wh E_2B^{cyc*}\HH[2-n]$;
equality~(9) holds since $\bH_\Gamma$ is a good operation (see Definition~\ref{def:good-alg});
and equality~(10) follows from equation~\eqref{eq:renumber-bdry1}.

For $left_3$
we again use the map $\tau$ from~\eqref{eq:renumber-bdry}.
Abbreviating $\psi:=(\F^{2*}\circ J)(f)$, we rewrite
\begin{align}\label{eq:left3}
  left_3
  = \psi\circ (b_\HH\otimes\id+\id\otimes b_\HH)
  = 2\psi\circ (b_\HH\otimes\id)
  = 2p_{210}^{12}(\m_{1,0}\otimes\psi).
\end{align}
Here the first equality is the definition of $left_3$;
the second one follows from the $\tau$-invariance of $\psi$ (which
holds because $\F^2$ is $\tau$-invariant, see~\eqref{eq:F2inv}); 
and third one follows from~\eqref{eq:p110m} and the
definition~\eqref{eq:p21012} of $p_{210}^{12}$.
Recall now the operation $g_{210}$ on labelled graphs
from Definition~\ref{def:g-prod} applied in the situation of 
equation~\eqref{eq:glueg4}.
We compute
\begin{align*}
&left_3\cr
&\stackrel{(1)}{=}
2p_{210}^{12}
(\m_{1,0}\otimes\psi)\cr
&\stackrel{(2)}{=} 
2(\m_{1,0}\otimes 
(\F^{2*}\circ J)(f))\circ (c_{120}\otimes\id)
\circ (N_\HH\otimes \id)\cr
&\stackrel{(3)}{=}
2\sum_{r_1+r_2\ge 3
\atop d\leq
    r_2+s_2+2}\sum_{\Gamma_1\in \RR_{r_1}\atop \Gamma_2\in \RR_{r_2,s_2;d}}
(\m_{\Gamma_1}\otimes 
(\phi_f\circ\F_{\Gamma_2}^2))\circ (c_{120}\otimes\id)
\circ (N_\HH\otimes \id)\cr
&\stackrel{(4)}{=}
2\sum_{r_1+r_2\ge 3
  \atop d\leq
    r_2+s_2+2}\sum_{\Gamma_1\in \RR_{r_1}\atop \Gamma_2\in \RR_{r_2,s_2;d}}
\phi_f\circ
[\m_{\Gamma_1}\otimes
\F_{\Gamma_2}^2]
\circ (c_{120}\otimes\id)
\circ (N_\HH\otimes \id)\cr
&\stackrel{(5)}{=}
2\sum_{r_1+r_2\ge 3
  \atop d\leq
    r_2+s_2+2}\sum_{\Gamma_1\in \RR_{r_1}\atop \Gamma_2\in \RR_{r_2,s_2;d}}
\phi_f\circ 
\bH_{g_{210}(\Gamma_1\amalg \Gamma_2)}
\circ (N_\HH\otimes \id)\cr
&\stackrel{(6)}{=}
2\sum_{d\leq s_1+s_2+1}
\sum_{\Gamma\in
    \RR_{s_1,s_2;d}^{nc1}}\phi_f\circ 
\bH_{\Gamma}\stackrel{(7)}{=}
2\sum_{d\leq s_1+s_2+1}\left(\frac{1}{2}(\id+\tau)\right)
\sum_{\Gamma\in
    \RR_{s_1,s_2;d}^{nc1}}\phi_f\circ 
\bH_{\Gamma}\cr
&\stackrel{(8)}{=}\sum_{d\leq s_1+s_2+1}\sum_{\Gamma\in
    \RR_{s_1,s_2;d}^{nc1}}
\phi_f\circ 
(\bH_{\Gamma}+\bH_{\Gamma\tau})\cr
&\stackrel{(9)}{=}\sum_{d\leq s_1+s_2+1}\sum_{\Gamma\in
    \RR_{s_1,s_2;d}^{nc1}\amalg
    \RR_{s_1,s_2;d}^{nc2}}\phi_f\circ \bH_{\Gamma}.
\end{align*}
Here equality~(1) follows from~\eqref{eq:left3}. 
For equality~(2) recall that $p_{210}^{12}$
is just $p_{120}=(c_{120}\circ N_{\HH})^*$ 
applied to the first two factors.
For equality~(3) we write out the definition of 
$\m_{1,0}$ using~\eqref{eq:mlg}
and~\eqref{eq:mGamma} with $(\ell,g)=(1,0)$
and $\F^2$ using~\eqref{eq:def-F-Gamma} 
and~\eqref{eq:def-F-sd} in terms of graphs and make use of $\phi_f$.
Equality~(4) follows from a computation analogous to the one for
$left_2$ above.
Equality~(5) is Lemma~\ref{lem:H-gluing3};
equality~(6) is equation~\eqref{eq:tradecoprod3};
equality~(7) follows from the fact that 
the expression in big round brackets is the identity on 
$\wh E_2B^{cyc*}\HH[2-n]$;
equality~(8) holds since $\bH_\Gamma$ is a good operation (see Definition~\ref{def:good-alg});
and equality~(9) follows from equation~\eqref{eq:renumber-bdry2}.

In view of equation~\eqref{eq:omcirc}, the three terms on the left
hand side with $\alpha_1\otimes\alpha_2$ inserted combine to
\begin{align}\label{eq:fdiffcirc}
 (left_1+left_2+left_3)(\alpha_1\otimes\alpha_2)=
&\sum_{d\leq s_1+s_2+1}\sum_{\Gamma\in
    \RR_{s_1,s_2;d}^{m}} 
    \phi_f\circ\bH_{\Gamma}(\alpha_1\otimes\alpha_2)\cr
=&\sum_{d\leq s_1+s_2+1}\sum_{\Gamma\in
    \RR_{s_1,s_2;d}}
    \sum_{l\in\Edge}
    \phi_f\circ\bH_{\Gamma,l}(\alpha_1\otimes\alpha_2).
\end{align}
We recall the sign exponent
$$
\fs_\p=\bar R_\Gamma+(n-1)\eta_3(\Gamma)+
\wt r^2_\Gamma(\alpha)
$$
from equation~\eqref{eq:s-is-good} and continue as follows:
\begin{align*}
&(left_1+left_2+left_3)(\alpha_1\otimes\alpha_2)\cr
&\stackrel{(1)}{=}\sum_{d\leq s_1+s_2+1}\sum_{\Gamma\in \RR_{s_1,s_2;d}}(-1)^{\fs_\p}
\phi_f\int_{\Delta^\Gamma_\ver}dR_\Gamma^*G^e(\alpha_1\otimes\alpha_2)\cr
&\stackrel{(2)}{=}
  \sum_{d\leq s_1+s_2+1}\sum_{\Gamma\in \RR_{s_1,s_2;d}}(-1)^{\fs_\p}
  \int_{(\XX_\Gamma)_0}d\wt R_\Gamma^*\wt G^e(\alpha_1\otimes\alpha_2)\cr 
  &\stackrel{(3)}{=}
  \sum_{d\leq s_1+s_2+1}\sum_{\Gamma\in \RR_{s_1,s_2;d}}(-1)^{\fs_\p}
  \int_{\p^\main\XX_\Gamma}\wt R_\Gamma^*\wt G^e(\alpha_1\otimes\alpha_2). 
\end{align*}
Here for equality (1) we use 
Lemma~\ref{lem:fulldiffeven};
for (2) we use equation~\eqref{eq:angenfin}; 
and for (3) we use Proposition~\ref{prop:mainStokes}.

Here the space $\XX=\XX_\Gamma$ appearing in the last two displayed
equations is defined in~\S\ref{ss:general-setting}. It is associated
to a circular graph $\Gamma$ with an extended labelling and one special vertex
of degree $d$ and the evaluation map 
\begin{equation}\label{eq:phi-ev2}
   \phi = \ev_f: \WW = B\times\Delta^{d-1} \to M^d
\end{equation}
as the proper transform of the vertex diagonal
$\Delta_\ver=\Delta^\Gamma_\ver$ times the graph of $\phi$,
$$
   \XX_\Gamma = PT(\Delta_\ver\times gr(\phi))\subset \wt
   X_\Gamma\times\WW = \YY_\Gamma.
$$
According to Remark~\ref{rem:descr-primary}, its primary boundary components
fall into two groups: those corresponding to the boundary of $\WW$,
and those corresponding to the edges of $\Gamma$.
We decompose the first group further as follows, ignoring sets of
measure zero. Let $\pi_{B}$ and $\pi_{\Delta^{d-1}}$ denote the
projection maps from $\XX_\Gamma$ to $B$ and $\Delta^{d-1}$, respectively. Set
$$
  \p_{B}\XX_\Gamma := \pi_{B}^{-1}(\p B),\qquad
  \p^{\Delta^{d-1}}_j\XX_\Gamma := \pi_{\Delta^{d-1}}^{-1}(\p_j\Delta^{d-1}),
$$
where $\p_j\Delta^{d-1}$ is the $j$-th boundary component of
$\Delta^{d-1}$, see~\eqref{eq:bdrycyc}. 
We now discuss the contributions of the various boundary components to
the last displayed integral. 

1. The boundary components $\p_l\XX_\Gamma$ corresponding to nonspecial edges
$l$ cancel by duality, see Figure~\ref{fig:duality}. This is analogous
to the discussion in~\S 7.2 of~\cite{Cieliebak-Volkov}.  

2. The boundary components $\p_{B}\XX_\Gamma$ give rise to the
second term on the right hand side of~\eqref{eq:homotopy2}. 

3. For $j\in\{1,\dots,d_i\}$, the boundary component
$\p^{\Delta^{d-1}}_j\XX_\Gamma$ cancels with $\p_l\XX_{\Gamma_j}$.
Here the tree $\Gamma_j$ is obtained from $\Gamma$ by attaching a
leg at the flag number $j$ at the special vertex
(see~\S\ref{ss:op-graphs2}), and $\p_l\XX_{\Gamma_j}$ is the
boundary component corresponding to the new special (but not doubly
special) edge $l$ created by attaching the leg.
This is analogous to the corresponding cancellation in~\S\ref{ss:chainprod}.

4. The boundary components from doubly special edges give rise to the
Goresky--Hingston term --- the first term on the right hand side
of~\eqref{eq:homotopy2coprod}. 

To see this, consider a labelled circular graph $\Gamma$ with
$s_i$ leaves on the $i$-th boundary component, one special
vertex of degree $d$, and a doubly special edge $l=(A,Z)$. See
Figure~\ref{fig:gh}. 
\begin{figure}
\begin{center}
\includegraphics[angle=0,origin=c,width=\textwidth]{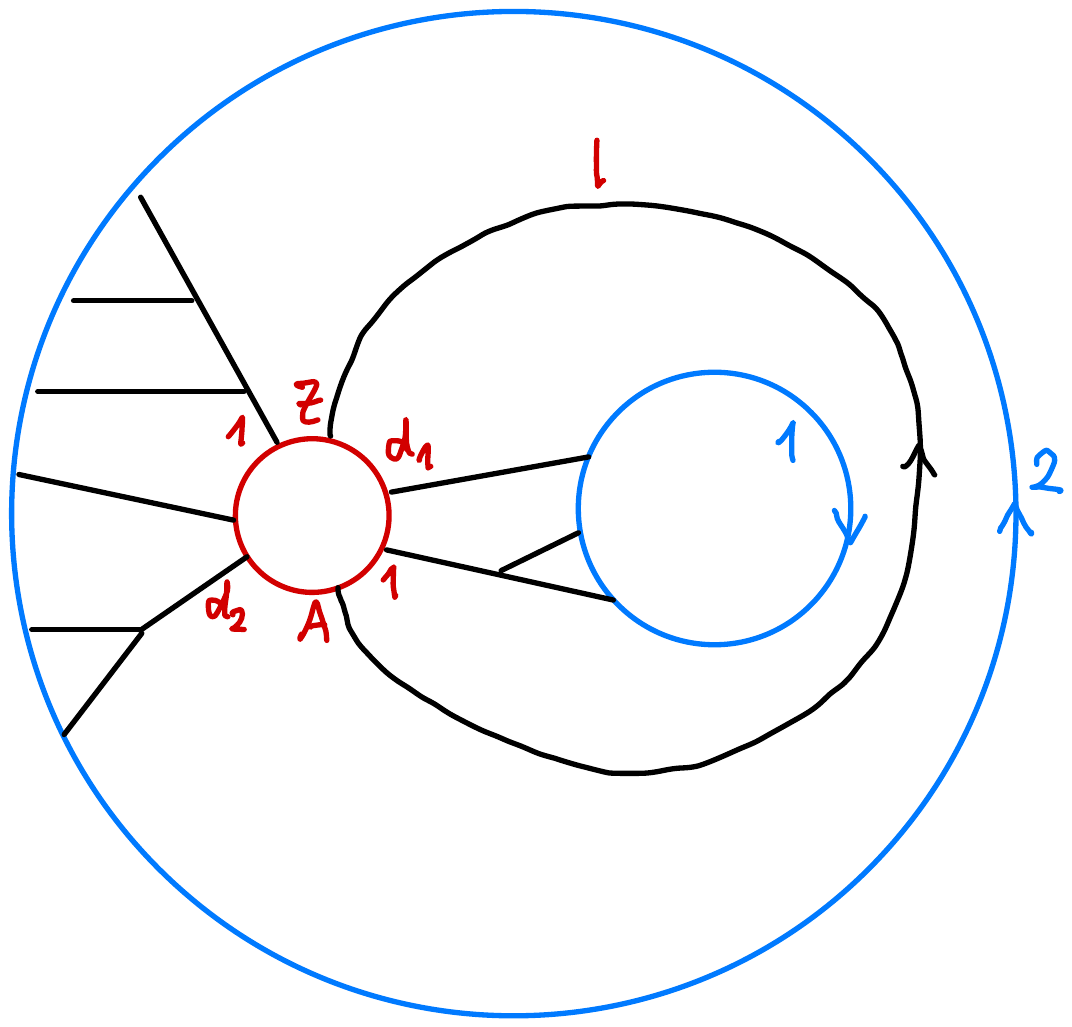}
\vspace{-1cm}
\caption{The Goresky--Hingston term}
\label{fig:gh} 
\end{center}
\end{figure}
Recall that according to our convention from~\S\ref{ss:specvert} the
doubly special edge $l$ inherits its orientation from the second
boundary component. 
We number the special flags between $A$ and $Z$ in the cyclic order
(connected to the first boundary component) by $1,\dots,d_1$, and the
special flags between $Z$ and $A$ by $1,\dots,d_2$. Note that $1\leq
d_i\leq s_i$ and 
$$
  d=d_1+d_2+2 \geq 4.
$$
The following operation on $\Gamma$ will be central for the subsequent
discussion. We collapse the doubly special edge of $\Gamma$ to a
point to obtain two trees $\Gamma_1$ and $\Gamma_2$. Here the tree
$\Gamma_j$, $j=1,2$ has $s_j$ leaves and one special vertex with 
$d_j$ special flags. Both trees naturally come equipped with a generalized labelling.
On the level of ribbon surfaces, this operation corresponds to collapsing the 
central circle of an annulus to a point to get two discs glued at
their centers. This way the trees $\Gamma_1$ and $\Gamma_2$ come out
glued at their special vertices, but this does not affect 
the operations $I_{\Gamma_j}$ associated to them.
Let $\RR_{s_1,s_2;d}^{DS}$ denote the set of isomorphism classes of circular graphs  
with $s_i$ leaves on the $i$-th boundary component, one special
vertex of degree $d$, and a doubly special edge. The collapsing operation above 
yields the bijection
\begin{equation}\label{eq:collaps-graph}
  \RR_{s_1,s_2;d}^{DS}\stackrel{\cong}{\longrightarrow}
\coprod_{d_1+d_2=d-2\atop d_1,d_2\ge 0}
\RR_{s_1;d_1}^{\gen}\times \RR_{s_2;d_2}^{\gen}.
\end{equation}
We abbreviate $\alpha:=\alpha_1\otimes\alpha_2$. Our task is to reinterpret the integral 
$$
  K_\Gamma(\alpha):=(-1)^{\fs_\p}\int_{\p_l\XX_\Gamma}
  \wt R_\Gamma^*\wt G^e(\alpha),
$$
where $\p_l\XX_\Gamma$ denotes the boundary component corresponding to
the doubly special edge $l$
and the sign exponent $\fs_\p$ is defined in~\eqref{eq:s-is-good}.
Recall the subset $D_f\subset B\times[0,1]$ defined
in~\eqref{eq:defDf}, which is a nice submanifold by Lemma~\ref{lem:coprod-trans}(b).
Let 
$$
  \ol t:=(t_1,\dots,t_{d_1},t,\wh t_1,\dots,\wh t_{d_2})\in \Delta^{d-1}
$$
denote the variables on the corresponding simplex $\Delta^{d-1}$. Here
evaluation at $0$ and $t$ corresponds to the flags $A$ and $Z$
(see~\S\ref{ss:specvert}) forming the doubly special edge. Therefore, we set
$$
  D_f^d:=\{(p,\ol t)\in B\times \Delta^{d-1}\mid  (p,t)\in D_f\}.
$$
This differs slightly from the simplified case of
Lemma~\ref{lem:DS-d}, where the special flag $Z$
had number $2$ in the ordering of special flags around the special
vertex. The rest of the notation carries over.

In the following discussion $*,**,***,****$ denote suitable sign
exponents which we will not spell out. 
We use Remark~\ref{rem:smooth-L1} to integrate over the $S^{n-1}$
fibre and kill the $\wt G$ factor in $\wt G^e(\alpha)$, rewriting 
$K_\Gamma(\alpha)$ as an integral over $\wh \Delta_2^l$, and then use
equation~\eqref{eq:smooth-L1} to conclude
$$
  K_\Gamma(\alpha) = (-1)^\star\int_{\Delta_\ver^{\Gamma_1}\times
    \Delta_\ver^{\Gamma_2}  \times gr(\phi_0|_{\overset{\circ}
      {D_f^d}})}(R_{\Gamma_1}\times R_{\Gamma_2})^*\left(
  (G^{e_1}\times G^{e_2})(\alpha_1\otimes\alpha_2)\right).
$$
We apply invariance of integration under the product of the identity map on 
$
\Delta_\ver^{\Gamma_1}\times \Delta_\ver^{\Gamma_2}
$
with the graphical embedding of the source of 
$\phi_0$ into its graph. Observe that the form
$
(G^{e_1}\times G^{e_2})(\alpha_1\otimes \alpha_2)
$
does not depend on the source of $\phi_0$, so the pullback under the
graphical embedding of $\phi_0$ is the same as the pullback under
$\phi_0$ itself. This gives us 
$$
  K_\Gamma(\alpha) =
  (-1)^{\star\star}\int_{\Delta_\ver^{\Gamma_1}\times
    \Delta_\ver^{\Gamma_2}\times \overset{\circ}{D_f^d}} 
  (\id\times\phi_0)^*(R_{\Gamma_1}\times R_{\Gamma_2})^*\left(
  (G^{e_1}\times G^{e_2})(\alpha_1\otimes\alpha_2)\right).
$$
We apply equation~\eqref{eq:keychenan1} to the right hand side of this
equation to get 
$$
  K_\Gamma(\alpha) = (-1)^{\star\star}\int_{\overset{\circ}{D_f^d}}
  \phi_0^*\int_{\Delta_\ver^{\Gamma_1}\times
    \Delta_\ver^{\Gamma_2}}(R_{\Gamma_1}\times R_{\Gamma_2})^*\left( 
  (G^{e_1}\times G^{e_2})(\alpha_1\otimes\alpha_2)\right).
$$
By Fubini's theorem for fibre integration and the 
definition~\eqref{eq:defI} of the operation $I_\Gamma$ this becomes
\begin{equation}\label{eq:Iprefin}
K_\Gamma(\alpha)=
(-1)^{\star\star\star}\int_{\overset{\circ} {D_f^d}}\phi_0^*(I_{\Gamma_1}(\alpha_1)\times 
I_{\Gamma_2}(\alpha_2)).
\end{equation}
The restriction $ev_f|_{D_f^d}$ writes out as
\begin{equation}\label{eq:evf}
\begin{aligned}
&ev_f|_{D_f^d}(p,t_1,\dots,t_{d_1},t,\wh t_1,\dots,\wh t_{d_2})\cr
&=\bigl(f_p(0),f_p(t_1),\dots,f_p(t_{d_1}),f_p(t),
f_p(\wh t_1),\dots,f_p(\wh t_{d_2})\bigr).
\end{aligned}
\end{equation}
Note that $f_p(0)=f_p(t)$ since we have restricted to $D_f^d$. 
In what follows a circle over a manifold with corners will denote its
interior. Motivated by the definition of the loop coproduct 
in~\S\ref{ss:transvers}, we introduce the reparametrization diffeomorphism 
\begin{equation*}\label{eq:reparam}
\psi:\overset{\circ}{D_f}\times 
\overset{\circ}{\Delta^{d_1}}\times 
\overset{\circ}{\Delta^{d_2}}
\longrightarrow \overset{\circ}{D_f^d},\qquad
  \bigl((p,\tau),(s_1,\dots,s_{d_1}),(\wh s_1,\dots,\wh s_{d_2})\bigr)
  \mapsto (p,\ol t)
\end{equation*}
defined by 
$$
  t:=\tau,\qquad t_j:=ts_j,\,j=1,\dots,d_1,\qquad 
\wh t_j:=t+\wh s_j(1-t),\, j=1,\dots,d_2. 
$$
Since we have restricted to the interiors, the map $\psi$ is a
diffeomorphism. 
Recall from~\eqref{eq:chain-coproduct} 
the chain level coproduct
$$
   \ol\lambda f=(\lambda^1f,\lambda^2f):D_f^d\to \Lambda\times\Lambda.
$$
Recall that $\phi=ev_f$. Denote by 
$$
  \ev^0f:\overset{\circ}{D_f}\to M,\qquad
  \wh{ev_{\lambda^1f}}:\overset{\circ}{D_f}\times\overset{\circ}{\Delta^{d_1}}\to M^{d_1},\qquad
  \wh{ev_{\lambda^2f}}:\overset{\circ}{D_f}\times\overset{\circ}{\Delta^{d_2}}\to M^{d_2}
$$
the evaluation maps at time $0$, at times $s_1,\dots,s_{d_1}$, and at
times $\wh s_1,\dots,\wh s_{d_2}$, respectively. 
Then by definition of $\ol\lambda f$ and $\psi$ we have the relation
\begin{equation}\label{eq:commpsi}
  ev_f\circ\psi = \ev^0_f\times \wh{ev_{\lambda^1f}}\times \ev^0_f\times 
  \wh{ev_{\lambda^2f}}.
\end{equation}
This allows us to finish the manipulation of $K_{\Gamma}(\alpha)$ to get
\begin{equation}\label{eq:KGamma-fin}
  K_{\Gamma}(\alpha) = (-1)^{\star\star\star\star}\int_{\overset{\circ}{D_f}\times 
  \overset{\circ}{\Delta^{d_1}}\times \overset{\circ}{\Delta^{d_2}}}
  \wh{ev_{\ol\lambda f}}^*(I_{\Gamma_1}(\alpha_1)\times 
I_{\Gamma_2}(\alpha_2))
\end{equation}
To see this, we apply invariance of integration under $\psi$
to $K_\Gamma(\alpha)$ from~\eqref{eq:Iprefin}, use 
relation~\eqref{eq:commpsi}, and note that the form  
$I_{\Gamma_1}(\alpha_1)\times I_{\Gamma_2}(\alpha_2))$ 
does not depend on the factors
occupied by the maps $\ev^0_f$ in~\eqref{eq:commpsi}.

Finally, we sum equation~\eqref{eq:KGamma-fin} over all
$\Gamma\in\RR_{s_1,s_2;d}^{DS}$ to obtain 
\begin{align*}
&\sum_{\Gamma\in \RR_{s_1,s_2;d}^{DS}}
K_\Gamma(\alpha_1\otimes\alpha_2)\cr
&\stackrel{(1)}{=}
\sum_{d_1+d_2=d-2\atop
d_1,d_2\ge 0}\sum_{\Gamma_j\in \RR_{s_j;d_j}^\gen}
(-1)^{\star\star\star\star}
\int_{\overset{\circ}{D_f}\times 
\overset{\circ}{\Delta^{d_1}}\times 
\overset{\circ}{\Delta^{d_2}}}
\wh{ev_{\ol\lambda f}}^*(I_{\Gamma_1}(\alpha_1)\times 
I_{\Gamma_2}(\alpha_2))
\cr
&\stackrel{(2)}{=}
\sum_{d_1+d_2=d-2\atop
d_1,d_2\ge 0}\sum_{\Gamma_j\in \RR_{s_j;d_j}}
(-1)^{\star\star\star\star}
\int_{\overset{\circ}{D_f}\times 
\overset{\circ}{\Delta^{d_1}}\times 
\overset{\circ}{\Delta^{d_2}}}
\wh{ev_{\ol\lambda f}}^*(N_{an}\circ I_{\Gamma_1}(\alpha_1)
\times N_{an}\circ I_{\Gamma_2}(\alpha_2))\cr
&\stackrel{(3)}{=}
\sum_{d_1+d_2=d-2\atop
d_1,d_2\ge 0}\sum_{\Gamma_j\in \RR_{s_j;d_j}}
I_\lambda^2(
\G_{\Gamma_1}(\alpha_1)\otimes
\G_{\Gamma_2}(\alpha_2))(\ol\lambda f)\cr
&\stackrel{(4)}{=}
\sum_{d_1+d_2=d-2\atop
d_1,d_2\ge 0}
I_\lambda^2(
\G_{s_1;d_1}(\alpha_1)\otimes
\G_{s_2;d_2}(\alpha_2))(\ol\lambda f)
\end{align*}
Here for equality~(1) we use equations~\eqref{eq:collaps-graph}
and~\eqref{eq:KGamma-fin}. 
For equality~(2) we recall that $\RR_{s_j;d_j}$, $j=1,2$ is a
fundamental locus for the free $\Z_{d_j}$ action on $\RR_{s_j;d_j}^\gen$
by cyclicly relabelling special flags, and we trade  
each orbit for the symmetrization operator $N_{an}$
using~\eqref{eq:rotation-spec-cor}. 
Equality~(3) follows from the definition~\eqref{eq:prodpair} of the
Chen map $I_\lambda^2$ and the definition~\eqref{eq:def-G-Gamma} of 
$\G_\Gamma$ with $\Gamma\in\{\Gamma_1,\Gamma_2\}$;
and equality~(4) follows from the definition~\eqref{eq:def-G-sd} 
of $\G_{s;d}$ with $(s,d)\in \{(s_1,d_1),(s_2,d_2)\}$.

Since the last displayed expression is the first term on the right
hand side of~\eqref{eq:homotopy2coprod}, this concludes the proof of 
Proposition~\ref{prop:chaincoprod}.

\appendix

\section{Proof of Proposition~\ref{prop:const2}}\label{app:const2}

The proof of Proposition~\ref{prop:const2} follows by combining Jones' 
article~\cite{Jones} and Goodwillie's theorem~\cite{Goodwillie}. 
It is most elegantly carried out in the formalism of mixed complexes
due to Kassel~\cite{Kassel87}, which we will first recall following the
notation in~\cite{Cieliebak-Volkov-cyc}.

\begin{definition}\label{def:mixed}
A {\em mixed complex}\footnote{
The degrees are opposite to those in~\cite{Cieliebak-Volkov-cyc} where
$|\delta|=+1$, $|D|=-1$, and $|u|=+2$.}
$(C,\delta,D)$ is a $\Z$-graded $\R$-vector space
$$
  C=\bigoplus_{k\in\Z}C_k
$$ 
with two linear maps $\delta,D:C\to C$ of degrees $|\delta|=-1$ and $|D|=+1$ satisfying 
$$
   \delta^2=0,\qquad D^2=0,\qquad \delta D+D\delta=0. 
$$
\end{definition}

Let $u$ be a formal variable of degree $|u|=-2$. To a mixed complex 
$(C,\delta,D)$
we associate the chain complex 
$$
   C[[u,u^{-1}] := \bigoplus_{k\in\Z}C_k[[u,u^{-1}],\qquad
   \delta_u:=\delta+uD,
$$
where $C_k[[u,u^{-1}]$ denotes the space of Laurent series
$\sum_{i\geq i_0}c_iu^i$ with $c_i\in C_{k+2i}$.
This complex has the subcomplex $C[[u]]$ of degreewise power series
in $u$ and the quotient complex 
\begin{equation}\label{eq:cyc-versions}
  C[u^{-1}] := C[[u,u^{-1}]/uC[[u]].
\end{equation}
The corresponding homologies are denoted by $HC_*^{[[u,u^{-1}]]}$
etc. They fit into the tautological exact sequence
(see~\cite[Proposition 2.5]{Cieliebak-Volkov-cyc})
\begin{equation}\label{eq:tautseq}
\dots\longrightarrow HC_{*+2}^{[[u]]}
\stackrel{\cdot u}{\longrightarrow} 
HC_*^{[[u,u^{-1}]}
\stackrel{p_*}{\longrightarrow}
HC_*^{[u^{-1}]}
\stackrel{D_{0*}}{\longrightarrow}
HC_{*+1}^{[[u]]}\longrightarrow\dots
\end{equation}
where $p$ is the map forgetting positive powers of $u$, and $D_0$ is
the map $D$ applied to the constant term in $u$. An easy staircase
argument shows that if $D$ vanishes on the $\delta$-homology of $C$,
then the connecting morphism $D_{0*}$ in the tautological exact sequence
vanishes.

This formalism (due to H.~Cartan) provides an alternative description of 
equivariant homology.
Namely, an $S^1$-space $Y$ gives rise to a mixed complex $(C_*(Y),d,\Delta)$,
where $(C_*(Y),d)$ is the singular chain complex and $\Delta$ is the
BV operator (called $Q$ in~\cite{Cieliebak-Volkov-cyc}).
We denote the homology groups corresponding to this mixed complex
by $H_*^{[[u,u^{-1}]}Y$ etc. 
Now we have the following canonical isomorphism (see~\cite{Jones})
\begin{equation}\label{eq:Cartan}
H_*^{S^1}Y\cong HC_*^{[u^{-1}]}Y.
\end{equation}
An analogous statement holds for relative homology. In the proof below
we will identify the two sides of~\eqref{eq:Cartan}. 
Our main example will be the loop space $\Lambda$ with the obvious 
$S^1$-action be reparametrizing loops.

Now we are ready to prove Proposition~\ref{prop:const2}, which we
restate for the reader's convenience.

\begin{proposition}
Let $X$ be a simply connected topological space, $\Lambda=C^0(S^1,X)$
its loop space, and $\Lambda_0\subset\Lambda$ the subspace of constant
loops. Pick a basepoint $q_0\in X$ and consider the inclusion of pairs
$\iota_0:(\Lambda,q_0)\rightarrow (\Lambda,\Lambda_0)$. Then the induced map 
\begin{equation}
  \iota_{0*}^{S^1}:H_*^{S^1}(\Lambda,q_0)\rightarrow H_*^{S^1}(\Lambda,\Lambda_0)
\end{equation}
on relative equivariant homology is injective.
\end{proposition}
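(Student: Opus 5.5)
We will compare the long exact sequences in equivariant homology of the triple $q_0\subset\Lambda_0\subset\Lambda$. The relevant portion is
$$
\cdots\to H_*^{S^1}(\Lambda_0,q_0)\to H_*^{S^1}(\Lambda,q_0)\stackrel{\iota^{S^1}_{0*}}\longrightarrow H_*^{S^1}(\Lambda,\Lambda_0)\to\cdots,
$$
so injectivity of $\iota^{S^1}_{0*}$ is equivalent to the vanishing of the map $j:H_*^{S^1}(\Lambda_0,q_0)\to H_*^{S^1}(\Lambda,q_0)$ induced by the inclusion of constant loops. Since the action on $\Lambda_0\cong X$ is trivial, we have $H_*^{S^1}(\Lambda_0,q_0)\cong \tilde H_*(X)[u]$. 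The goal is therefore to show that $j$ kills $\tilde H_*(X)\otimes u^k$ for every $k$, i.e.\ that in equivariant homology relative to a point the constant loops carrying nontrivial classes of $X$ die.

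We work throughout in the mixed complex language and use Cartan's identification~\eqref{eq:Cartan}, $H^{S^1}_*\cong HC_*^{[u^{-1}]}$, in its relative form. The tautological sequence~\eqref{eq:tautseq} for $(C_*(\Lambda_0,q_0),d,\Delta)$ has $\Delta=0$, so all three theories are free modules over the appropriate ring in $u$, with coefficients $\tilde H_*(X)$. For $(\Lambda,q_0)$ we invoke Goodwillie's theorem: for simply connected $X$ the periodic theory $HC_*^{[[u,u^{-1}]}(\Lambda,q_0)$ vanishes rationally (real coefficients), or more precisely the inclusion $\Lambda_0\into\Lambda$ induces an isomorphism on periodic equivariant homology. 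Combined with Jones' identification of these theories with cyclic homology of cochains, this is the key input. The map $j$ then factors through the periodic theory: we first check that on $\Lambda_0$ the projection $p_*:HC^{[[u,u^{-1}]}_*(\Lambda_0,q_0)\to HC^{[u^{-1}]}_*(\Lambda_0,q_0)$ is surjective, because $D_{0*}=0$ there since $\Delta=0$. By naturality, every class in the image of $j$ lies in $p_*$ of the image of the periodic homology of $(\Lambda_0,q_0)$ in $HC_*^{[[u,u^{-1}]}(\Lambda,q_0)$.

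The remaining step is to show that the map on periodic homology $HC^{[[u,u^{-1}]}_*(\Lambda_0,q_0)\to HC^{[[u,u^{-1}]}_*(\Lambda,q_0)$ composed with $p_*$ is zero. This is the main obstacle. The naive form of Goodwillie's theorem asserts an isomorphism on the periodic theory rather than vanishing, so it alone is insufficient. Our first attempt uses the Frauenfelder--Pajitnov type argument recalled in the Remark of~\S\ref{ss:loop}. Working relative to $q_0$, Goodwillie's theorem together with simple connectivity says that the periodic homology of $(\Lambda,q_0)$ is concentrated on the constant-loop component. Moreover, the localization map $u^{-1}$ acts invertibly on the periodic theory, whereas the elements coming from $\tilde H_*(X)u^k$ are killed by $u^{k+1}$ after applying $p_*$. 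An element that is infinitely $u$-divisible, yet killed in $HC^{[u^{-1}]}$ by a power of $u$, must map to zero. We expect the bookkeeping of $u$-divisibility versus $u$-torsion in the tautological sequence for $(\Lambda,q_0)$, together with the degreewise finiteness from Lemma~\ref{lem:loophomfindim}, to be the delicate point. If this route stalls, we will fall back on the fact that the statement is equivalent to the smooth version already proved in Proposition~\ref{prop:const} via Chen's iterated integrals, and approximate $X$ by a simply connected manifold up to homotopy in a given range of degrees.
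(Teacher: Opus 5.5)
Your reduction is correct. By the exact sequence of the triple $q_0\subset\Lambda_0\subset\Lambda$, injectivity of $\iota^{S^1}_{0*}$ is equivalent to $j=0$. Since $D_{0*}=0$ for the trivially acting pair $(\Lambda_0,q_0)$, the map $p_*$ is surjective there, and naturality gives $\im j\subset p_*\bigl(H^{[[u,u^{-1}]}_*(\Lambda,q_0)\bigr)$.

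The gap is in how you then use Goodwillie's theorem. Your two formulations are not equivalent, and the ``more precise'' one is false. Since $\Delta=0$ on $\Lambda_0$, the map $H^{[[u,u^{-1}]}_*(q_0)\to H^{[[u,u^{-1}]}_*(\Lambda_0)$ only hits the $H_0(X)$ part, so it is not surjective when $\tilde H_*(X;\R)\neq 0$. On the other hand, the composite into $H^{[[u,u^{-1}]}_*(\Lambda)$ is an isomorphism by Goodwillie's theorem. Hence $\Lambda_0\into\Lambda$ cannot be a periodic isomorphism unless $\tilde H_*(X;\R)=0$.

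Having adopted this wrong version, you treat the decisive step as open. You then try to close it with the principle that a class in $HC^{[u^{-1}]}$ that is both infinitely $u$-divisible and $u$-torsion must vanish. That principle already fails for a point: every class in $H^{[u^{-1}]}_*(q_0)=\R[u^{-1}]$ lies in the image of $p_*$ from $\R[u,u^{-1}]$ and is killed by a power of $u$, yet it is nonzero. The fallback through Proposition~\ref{prop:const} is only a sketch. That result (like Lemma~\ref{lem:loophomfindim}) concerns closed manifolds, and you do not carry out the reduction of an arbitrary simply connected space to that case.

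The missing step is supplied by your own first formulation, which is exactly the input the paper uses. For simply connected $X$, Goodwillie's theorem says that the inclusion of the basepoint induces an isomorphism $H^{[[u,u^{-1}]}_*(q_0)\cong H^{[[u,u^{-1}]}_*(\Lambda)$. Because the constant map $\Lambda\to q_0$ is an equivariant retraction, this is the same as $H^{[[u,u^{-1}]}_*(\Lambda,q_0)=0$. Plugging this into your factorization gives $\im j\subset p_*(0)=0$ at once, with no divisibility bookkeeping.

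With that repair, your argument coincides in substance with the paper's. The paper works in absolute homology: it shows $\im\iota_*\subset\ker D_{0*}=\im p_*=\im\alpha$, where $\iota:\Lambda_0\into\Lambda$ and $\alpha$ is induced by $q_0\into\Lambda$. It then compares the exact sequences of the pairs $(\Lambda,q_0)$ and $(\Lambda,\Lambda_0)$ to identify $\iota^{S^1}_{0*}$ with the injection $H^{S^1}_*\Lambda/\im\iota_*\into H^{S^1}_*(\Lambda,\Lambda_0)$.
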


\begin{proof}
On the subspace of constant loops $\Lambda_0$ the $S^1$ action is
trivial, so the BV operator vanishes on homology and we get 
\begin{equation}\label{eq:Qvanish}
  H_*^{[u^{-1}]}\Lambda_{0*}\stackrel{\Delta_{0*}=0}{\longrightarrow}
  H_{*+1}^{[[u,u^{-1}]}\Lambda_0.
\end{equation}
Consider the following diagram:
\begin{equation*}
\begin{tikzcd}
{H_*^{[[u,u^{-1}]}\Lambda} \arrow[rr, "p_*"]                &  & {H_*^{[u^{-1}]}\Lambda} \arrow[r, "\Delta_{0*}"]        & {} \\
{H_*^{[[u,u^{-1}]}(q_0)} \arrow[d, "p_*"] \arrow[u, "\cong"] &  &                                                 &    \\
{H_*^{[u^{-1}]}(q_0)} \arrow[rr] \arrow[rruu, "\alpha"]      &  &
{H_*^{[u^{-1}]}\Lambda_0} \arrow[uu, "\iota_*"]  &   
\end{tikzcd}
\end{equation*}
Here the first vertical arrow induced by the inclusion of a point is
an isomorphism by Goodwillie's theorem~\cite[Corollary V.3.3]{Goodwillie}.
The upper row is part of the tautological exact
sequence~\eqref{eq:tautseq} for the mixed complex associated to the
$S^1$-space $\Lambda$, the map $\iota$ is the natural inclusion of 
constant loops, and the map $\alpha$ is induced by the inclusion of
the point $q_0$.   
We have the following sequence of inclusions and equalities:
$$
\im\iota_*\stackrel{(1)}{\subset}
\ker \Delta_{0*}\stackrel{(2)}{=}\im\alpha
\stackrel{(3)}{\subset}\im\iota_*.
$$
Here inclusion~(1) follows from~\eqref{eq:Qvanish}; equality~(2) follows 
from $\im p_*=\im\alpha$ and exactness of the upper row of the
diagram; and inclusion~(3) is clear. We conclude
\begin{equation}\label{eq:iotaalpha}
 \im\iota_*=\im\alpha.
\end{equation}
Consider now the diagram
\begin{equation*}
\begin{tikzcd}
{H_*^{[u^{-1}]}(q_0)} \arrow[r, "\alpha", hook] \arrow[d] & {H_*^{[u^{-1}]}\Lambda} \arrow[r, "\beta", tail] \arrow[d, "="] & {H_*^{[u^{-1}]}(\Lambda,q_0)} \arrow[r, "0"] \arrow[d, "\iota_{0*}^{S^1}"] & {H_{*-1}^{[u^{-1}]}(q_0)} \\
{H_*^{[u^{-1}]}\Lambda_0} \arrow[r, "\iota_*"]           & {H_*^{[u^{-1}]}\Lambda} \arrow[r, "\kappa"]                     & {H_*^{[u^{-1}]}(\Lambda,\Lambda_0).}                                  &                         
\end{tikzcd}
\end{equation*}
Here the horizontal rows are the exact sequences for the pairs
$(\Lambda,q_0)$ and $(\Lambda,\Lambda_0)$, and the vertical 
arrows are induced by the obvious inclusions. Since the inclusion of
the point $q_0$ in $\Lambda$ admits an equivariant left inverse (the
constant map $\Lambda\to q_0$), we get that $\alpha$ 
(induced by the inclusion of $q_0$) admits a left inverse as
well. Thus $\alpha$ is injective, the rightmost map in the
corresponding exact sequence is zero, and $\beta$ is
surjective. Therefore, the above diagram induces the diagram
\begin{equation*}
\begin{tikzcd}
{H_*^{[u^{-1}]}\Lambda/\im\alpha} \arrow[d, "="] \arrow[r,"\bar\beta", "\cong"'] & {H_*^{[u^{-1}]}(\Lambda,q_0)} \arrow[d, "\iota_{0*}^{S^1}"] \\
{H_*^{[u^{-1}]}\Lambda/\im\iota_*} \arrow[r, hook, "\bar\kappa"]                       & {H_*^{[u^{-1}]}(\Lambda,\Lambda_0)}                  
\end{tikzcd}
\end{equation*}
Here the map $\bar\beta$ induced by $\beta$ is an isomorphism because
$\beta$ is surjective, and the left vertical arrow is the identity by
equation~\eqref{eq:iotaalpha}. Together with injecivity of the map
$\bar\kappa$ induced by $\kappa$, this implies that the map
$\iota_{0*}^{S^1}$ is injective. 
\end{proof}


\bibliographystyle{abbrv}
\bibliography{./000_chen}

\end{document}